	\newcommand{\Ext}{\ensuremath{\operatorname{Ext}}}
	\newcommand{\multialg}[1]{\mathcal{M}(#1)\xspace}
	\newcommand{\corona}[1]{\mathcal{Q}(#1)\xspace}	
	\newcommand{\Prim}{\ensuremath{\operatorname{Prim}}}
	\newcommand{\nuc}{\mathrm{nuc}}
	\renewcommand{\span}{\mathrm{span}}
	\newcommand{\Ad}{\ensuremath{\operatorname{Ad}}\xspace}
	\newcommand{\rnuc}{\mathrm{r\textrm{-}nuc}}
	\newcommand{\spec}{\mathrm{spec}}
	\newcommand{\Hom}{\mathrm{Hom}}
	\newcommand{\Mod}{\mathfrak{Mod}}
	\newcommand{\op}{\mathrm{op}}
	\newcommand{\reg}{\mathrm{reg}}
	\newenvironment{lbmatrix}{\left[\begin{smallmatrix}}{\end{smallmatrix}\right]}
	\theoremstyle{plain}
	\newtheorem{thm}{Theorem}[section]
	\newtheorem{lemma}[thm]{Lemma}
	\newtheorem{theorem}[thm]{Theorem}
	\newtheorem{proposition}[thm]{Proposition}
	\newtheorem{corollary}[thm]{Corollary}
	\theoremstyle{definition}
	\newtheorem{definition}[thm]{Definition}
	\newtheorem{remark}[thm]{Remark}
	\newtheorem{notation}[thm]{Notation}
	\newtheorem{example}[thm]{Example}
	\newtheorem{warning}[thm]{Warning}
	\newtheorem{question}[thm]{Question}
	\newtheorem{observation}[thm]{Observation}
	\numberwithin{equation}{section}
\begin{document}
	\title{Absorbing representations with respect to closed operator convex cones}
	\author{James Gabe}
        \address{Department of Mathematical Sciences \\
        University of Copenhagen\\
        Universitetsparken~5 \\
        DK-2100 Copenhagen, Denmark}
        \email{jamiegabe123@hotmail.com}

	\author{Efren Ruiz}
        \address{Department of Mathematics\\University of Hawaii,
Hilo\\200 W. Kawili St.\\
Hilo, Hawaii\\
96720-4091 USA}
        \email{ruize@hawaii.edu}
	\subjclass[2000]{46L05, 46L35, 46L80}
	\keywords{Operator convex cones, $KK$-theory, corona factorisation property, classification of non-simple $C^\ast$-algebras}
        \thanks{This work was partially supported by the Danish National Research Foundation through the Centre for Symmetry and Deformation (DNRF92) and a grant from the Simons Foundation (\#279369 to Efren Ruiz)}
        
\begin{abstract}
 We initiate the study of absorbing representations of $C^\ast$-algebras with respect to closed operator convex cones. 
 We completely determine when such absorbing representations exist, which leads to the question of characterising
 when a representation is absorbing, as in the classical Weyl--von Neumann type theorem of Voiculescu. 
 In the classical case, this was solved by Elliott and Kucerovsky who proved that a representation is nuclearly absorbing if and only if it induces a purely large extension.
 By considering a related problem for extensions of $C^\ast$-algebras, which we call the purely large problem, we ask when
 a purely largeness condition similar to the one defined by Elliott and Kucerovsky, implies absorption with respect to some given closed operator convex cone.
 
 We solve this question for a special type of closed operator convex cone induced by actions of finite topological spaces on $C^\ast$-algebras. 
 As an application of this result, we give $K$-theoretic classification for certain $C^\ast$-algebras containing a purely infinite, two-sided, closed ideal for which the quotient is an AF algebra. 
 This generalises a similar result by the second author, S.~Eilers and G.~Restorff in which all extensions had to be full.
\end{abstract}

\maketitle

\section{Introduction}

The study of absorbing representations dates back, in retrospect, more than a century to Weyl \cite{Weyl-WvN} who proved that any bounded self-adjoint operator on a separable Hilbert space is a compact perturbation of a diagonal operator. 
This implies that the only points in the spectrum of a self-adjoint operator which are affected by compact perturbation, are the isolated points of finite multiplicity.
The result was improved by von Neumann \cite{vonNeumann-WvN} who showed that the compact operator in question may be chosen to be a Hilbert--Schmidt operator of arbitrarily small norm. 
An important corollary, which is often referred to as \emph{the Weyl--von Neumann theorem}, is that any two bounded self-adjoint operators $S$ and $T$ on a separable Hilbert space are \emph{approximately unitarily equivalent} 
if and only if they have the same spectrum with the same multiplicity in all discrete points.
Approximate unitary equivalence means that there exists a sequence of unitaries $(U_n)$ such that $U_n^\ast S U_n - T$ is compact for each $n$ and tends to zero.

In 1970, Halmos \cite{Halmos-tenproblems} published ten open problems in Hilbert space theory. 
One of these problems was whether it was possible to generalise Weyl's theorem to normal operators. 
Shortly after, this question was answered in the positive, independently, by Sikonia \cite{Sikonia-WvN} and Berg \cite{Berg-WvN}. 
Consequently they also obtained a generalisation of the Weyl--von Neumann theorem, for normal operators instead of self-adjoint operators.

Another problem of Halmos was the following: can every operator on a separable, infinite dimensional Hilbert space $\mathcal H$ be approximated in norm by reducible operators? 
In 1976, Voiculescu \cite{Voiculescu-WvN} provided an affirmative answer to this question.
In proving this, he made a ground breaking generalisation of the other Weyl--von Neumann type theorems. 
He showed that any two unital representations $\mathfrak A \to \mathbb B(\mathcal H)$, which are faithful modulo the compacts,
are approximately unitarily equivalent, for \emph{any} separable, unital $C^\ast$-algebra $\mathfrak A$. 
This is equivalent to saying that any such representation $\Phi$ is \emph{absorbing}, i.e.~for any unital representation $\Psi$, the diagonal sum $\Phi \oplus \Psi$ is approximately unitarily equivalent to $\Phi$.

The progress of Sikonia and Berg, motivated Brown, Douglas and Fillmore \cite{BrownDouglasFillmore-unitaryeq} to classify normal operators in the Calkin algebra up to unitary equivalence.
They found that it was equivalent to classify unital \emph{extensions} of certain commutative $C^\ast$-algebras by the $C^\ast$-algebra of compact operators $\mathbb K$.
This led them to construct the semigroup $\Ext(\mathfrak A)$ which they construct first for separable, unital, commutative $C^\ast$-algebras $\mathfrak A$ and later in \cite{BrownDouglasFillmore-Ext} for any separable, unital $C^\ast$-algebra $\mathfrak A$.
It follows from Voiculescu's Weyl--von Neumann type theorem that $\Ext(\mathfrak A)$ always has a zero element.
In \cite{Arveson-extensions}, Arveson combined this with the dilation theorem of Stinespring \cite{Stinespring-positive}, 
to show that an extension of $\mathfrak A$ by the compact operators has an inverse in $\Ext(\mathfrak A)$ if and only if the extension has a completely positive splitting.
In particular, by the lifting theorem of Choi and Effros \cite{ChoiEffros-lifting}, $\Ext(\mathfrak A)$ is a group if $\mathfrak A$ is separable and nuclear.

The theory of studying extensions was revolutionised by the work of Kasparov in \cite{Kasparov-KKExt} when he constructed the commutative semigroup $\Ext(\mathfrak A, \mathfrak B)$ 
generated by extensions of $\mathfrak A$ by $\mathfrak B \otimes \mathbb K$. 
In \cite{Kasparov-Stinespring} Kasparov generalises the Stinespring theorem, using Hilbert modules instead of Hilbert spaces, and proves that there \emph{exists} a zero element in $\Ext(\mathfrak A, \mathfrak B)$ if every completely positive map
from $\mathfrak A$ to $\mathfrak B$ is \emph{nuclear}.
By the exact same methods developed by Arveson, it follows that $\Ext(\mathfrak A, \mathfrak B)$ is a group when $\mathfrak A$ is nuclear, and Kasparov also proves that this group is isomorphic to $KK^1(\mathfrak A, \mathfrak B)$.

Say that a completely positive map $\phi$ from $\mathfrak A$ to the multiplier algebra $\multialg{\mathfrak B}$ is \emph{weakly nuclear} if $b^\ast \phi(-)b \colon \mathfrak A \to \mathfrak B$ is nuclear for all $b\in \mathfrak B$.
What Kasparov actually proves when showing the existence of the zero element in $\Ext(\mathfrak A, \mathfrak B)$, is that there is a unital representation $\Phi \colon \mathfrak A \to \multialg{\mathfrak B \otimes \mathbb K}$ which is weakly nuclear, and
such that the Cuntz sum (sometimes referred to as the BDF sum)
with any weakly nuclear, unital representation $\mathfrak A \to \multialg{\mathfrak B \otimes \mathbb K}$ is approximately unitarily equivalent to $\Phi$.
Any $\Phi$ which satisfies this latter condition is called \emph{nuclearly absorbing}, since it absorbs any weakly nuclear representation.
However, where as Voiculescu's Weyl--von Neumann type theorem says that \emph{any} representation, which is faithful modulo the compacts, is absorbing, Kasparov's result only shows that a somewhat small class of representations are nuclearly absorbing.
Kasparov notes in \cite{Kasparov-Stinespring} that in general, a representation $\Phi \colon \mathfrak A \to \multialg{\mathfrak B\otimes \mathbb K}$ 
which is faithful modulo $\mathfrak B \otimes \mathbb K$, is not necessarily nuclearly absorbing, and remarks that it would be desirable to find conditions which determine when a representation is nuclearly absorbing.

Kirchberg showed in \cite{Kirchberg-simple}, exactly when the obvious generalisation of Voiculescu's theorem holds: 
Let $\mathfrak B$ be a $\sigma$-unital, stable $C^\ast$-algebra which is not $\mathbb K$. Then $\mathfrak B$ is purely infinite and simple if and only if
for any separable, unital $C^\ast$-algebra $\mathfrak A$, any unital representation $\Phi \colon \mathfrak A \to \multialg{\mathfrak B}$ which is faithful modulo $\mathfrak B$, is nuclearly absorbing.
This was used by Kirchberg to prove, amongst other things, that any separable, exact $C^\ast$-algebra embeds in the Cuntz algebra $\mathcal O_2$, and also played an important part in his classification of the Kirchberg algebras.

It was not until 2001 that Elliott and Kucerovsky \cite{ElliottKucerovsky-extensions} found general conditions which determine nuclear absorption. 
We say that an extension of separable $C^\ast$-algebras $0 \to \mathfrak B \to \mathfrak E \to \mathfrak A \to 0$ is \emph{purely large},
if for any $x\in \mathfrak E \setminus \mathfrak B$, the $C^\ast$-algebra $\overline{x^\ast \mathfrak B x}$ contains a stable $C^\ast$-subalgebra which is full in $\mathfrak B$. 
They prove that if $\mathfrak A$ and $\mathfrak B$ are separable, $\mathfrak A$ is unital and $\mathfrak B$ is stable, then a unital representation $\Phi \colon \mathfrak A \to \multialg{\mathfrak B}$ is nuclearly absorbing if and only if
the extension generated by $\Phi$ is purely large. 
To simplify the purely large condition, the concept of a $C^\ast$-algebra having the \emph{corona factorisation property} was introduced (see \cite{Kucerovsky-largeFredholm} and \cite{KucerovskyNg-corona}).
The following was proved: let $\mathfrak B$ be a separable, stable $C^\ast$-algebra. Then $\mathfrak B$ has the corona factorisation property if and only if
for any separable, unital $C^\ast$-algebra $\mathfrak A$, any unital, \emph{full} representation $\Phi \colon \mathfrak A \to \multialg{\mathfrak B}$ is nuclearly absorbing.
It turns out that many $C^\ast$-algebras of interest have the corona factorisation property, including every $\sigma$-unital $C^\ast$-algebra with finite nuclear dimension \cite{Robert-nucdim} and all separable $C^\ast$-algebras which absorb the Jiang--Su algebra
\cite{KirchbergRordam-cschar}. 

These Weyl--von Neumann type theorems have been used in the classification program to classify simple $C^\ast$-algebras as well as non-simple $C^\ast$-algebras.
Rørdam \cite{Rordam-classsixterm} used the Weyl--von Neumann theorem of Kirchberg to classify extensions of UCT Kirchberg algebras. 
Embarking on the same idea, the second author, Eilers and Restorff \cite{EilersRestorffRuiz-classext} classified a large class of extensions of $C^\ast$-algebras, by applying the Weyl--von Neumann theorem of Elliott and Kucerovsky.
A necessary condition for this method to work is that the extensions have to be \emph{full}. 
However, fullness of an extension is a huge limitation on the primitive ideal space of the extension algebra, and thus only very specific non-simple $C^\ast$-algebras can be classified in this way.
The main motivation for this paper, was to generalise the Weyl--von Neumann theorem of Elliott and Kucerovsky in order to apply this to classify extensions which are \emph{not} necessarily full.
We found that the right way of doing this, was to consider a much more general problem using \emph{closed operator convex cones}.

Closed operator convex cones were introduced by Kirchberg and have been important in the study of non-simple $C^\ast$-algebras, see e.g.~\cite{KirchbergRordam-zero}, \cite{KirchbergRordam-absorbingOinfty} and \cite{Kirchberg-Abel}.
A closed operator convex cone $\mathscr C$ is a collection of completely positive maps $\mathfrak A \to \mathfrak B$ satisfying certain conditions (see Definition \ref{d:opconcon}). 
Usually $\mathscr C$ will be induced from an action of a topological space $\mathsf X$ on $\mathfrak A$ and $\mathfrak B$, and will often consist only of maps which are nuclear in some suitable generalised sense.
An example, which should be thought of as ``the classical case'' is when $\mathscr C$ consists exactly of all nuclear maps from $\mathfrak A$ to $\mathfrak B$. 
In this case one has chosen a trivial action of $\mathsf X$ to construct the closed operator convex cone.
We say that a completely positive map $\phi \colon \mathfrak A \to \multialg{\mathfrak B}$ is \emph{weakly in $\mathscr C$} if $b^\ast \phi(-) b$ is in $\mathscr C$ for all $b\in \mathfrak B$.
In the classical case, this corresponds to a map being weakly nuclear. 
If $\mathfrak B$ is stable, it makes sense to say that a representation $\Phi \colon \mathfrak A \to \multialg{\mathfrak B}$ is \emph{$\mathscr C$-absorbing}, if it absorbs any representation which is weakly in $\mathscr C$.
Again, the classical case corresponds to $\Phi$ being nuclearly absorbing. Two natural questions arise: do there exist any $\mathscr C$-absorbing representations which are themselves weakly in $\mathscr C$? 
And is there a way of determining which representations are $\mathscr C$-absorbing, i.e.~can we obtain Weyl--von Neumann type theorems for $\mathscr C$-absorbing representations?

The first of these questions is completely answered in Theorem \ref{t:absrep} and relies only on whether or not $\mathscr C$ is countably generated as a closed operator convex cone.
If $\mathfrak A$ and $\mathfrak B$ are both separable, then any closed operator convex cone is countably generated.
Thus it remains to find Weyl--von Neumann type theorems which determine $\mathscr C$-absorption.
By considering $\mathscr C$-purely largeness, a generalised version of purely largeness which takes the closed operator convex cone into consideration, we obtain what we refer to as \emph{the purely large problem}.
The problem basically is: 
can we determine classes of closed operator convex cones $\mathscr C$, for which a representation $\Phi \colon \mathfrak A \to \multialg{\mathfrak B}$ is $\mathscr C$-absorbing if and only if the induced extension is $\mathscr C$-purely large.
We solve this problem (Theorem \ref{t:purelylargefinite}) for closed operator convex cones induced by actions of \emph{finite} topological spaces $\mathsf X$.
In these cases, we also obtain Weyl--von Neumann type theorems resembling those of Kucerovsky (Theorem \ref{t:cfpequiv}) and of Kirchberg (Theorem \ref{t:WvN}).

Finally, we apply our Weyl--von Neumann type results to classify certain non-simple $C^\ast$-algebras. 
What is basically proven is, that if $\mathfrak A$ is a $C^\ast$-algebra of real rank zero containing a two-sided, closed ideal $\mathfrak I$, such that $\mathfrak A/\mathfrak I$ is an AF algebra, 
and $\mathfrak I$ falls into a class of separable, nuclear, purely infinite $C^\ast$-algebras with finite primitive ideal spaces which are strongly classified by a $K$-theoretic invariant, then $\mathfrak A$ can be classified by a $K$-theoretic invariant.
Note that we are \emph{not} requiring that the extension $0 \to \mathfrak I \to \mathfrak A \to \mathfrak A/\mathfrak I \to 0$ is full, thus allowing $\mathfrak A$ to have any (finite) primitive ideal space. 
This generalises a similar result by the second author, Eilers and Restorff \cite{EilersRestorffRuiz-classfininf} where this extension had to be full.
We use this result to classify all graph $C^\ast$-algebras $\mathfrak A$ which contain a purely infinite two-sided, closed ideal $\mathfrak I$ with $\Prim \mathfrak I$ finite, such that $\mathfrak A/\mathfrak I$ is an AF algebra.
In \cite{EilersRestorffRuiz-four}, the second author, Eilers and Restorff consider the classification of graph $C^\ast$-algebras for which the primitive ideal spaces have at most four points.
Our method shows, that we have classification in the seperated cases, where the two-sided, closed ideal is purely infinite and the quotient is AF.

The paper is divided up as follows. In Section \ref{s:cocc} we prove many basic properties about closed operator convex cones, and in particular, we prove dilation theorems a la Kasparov and Stinespring. 
In Section \ref{s:rep} we prove all basic theorems regarding absorbing representations with respect to closed operator convex cones, including an existence theorem of such.
In Section \ref{s:plp} we consider the purely large problem with respect to closed operator convex cones.
In Section \ref{s:approxsection} we introduce and prove many basic properties about actions of topological spaces on $C^\ast$-algebras, including a generalised notion of fullness for completely positive $\mathsf X$-equivariant maps. 
We also prove that residually $\mathsf X$-nuclear maps satisfy a certain approximation property similar to the classical approximation property of nuclear maps.
In Section \ref{s:finiteplp} we prove that for a large class of $\mathsf X$-$C^\ast$-algebras, when $\mathsf X$ is finite, we obtain a Weyl--von Neumann type theorem a la Elliott and Kucerovsky, which determines when representations are
weakly residually $\mathsf X$-nuclearly absorbing. We also obtain a version using the corona factorisation property, where we may replace our generalised purely large condition, 
with the much simpler condition that the representation is full in the $\mathsf X$-equivariant sense.
Section \ref{s:KirchbergWvN} contains a generalised version of Kirchberg's Weyl--von Neumann type theorem, for $C^\ast$-algebras with a finite primitive ideal space.
Using this, we obtain a nice classification theorem using the ideal-related $KK^1$-classes for extensions by such $C^\ast$-algebras.
Finally, in Section \ref{s:applications} we use the above result to obtain $K$-theoretic classification of certain $C^\ast$-algebras, where we can separate the finite and the infinite part.

\subsection{Notation} As for general notation, we let $\multialg{\mathfrak B}$ denote the multiplier algebra of the $C^\ast$-algebra $\mathfrak B$, $\corona{\mathfrak B} := \multialg{\mathfrak B}/\mathfrak B$ the corona algebra, and
$\pi$ denote the quotient map $\multialg{\mathfrak B} \to \corona{\mathfrak B}$. Any other quotient map $\mathfrak E \twoheadrightarrow \mathfrak A$ of $C^\ast$-algebras will usually be denoted by $p$.

By $\mathbb K$ we denote the $C^\ast$-algebra of compact operators on a separable, infinite dimensional Hilbert space.

As is common in the literature, we write \emph{c.p.~map} instead of completely positive map. We let $CP(\mathfrak A, \mathfrak B)$ denote the cone of c.p.~maps from $\mathfrak A$ to $\mathfrak B$.

For a $C^\ast$-algebra $\mathfrak A$, we let $\mathfrak A^\dagger$ denote the \emph{forced unitisation} of $\mathfrak A$, i.e.~we add a unit to $\mathfrak A$ regardless if $\mathfrak A$ already had a unit.
If $\phi \colon \mathfrak A \to \mathfrak B$ is a contractive c.p.~map and $\mathfrak B$ is unital, then we let $\phi^\dagger \colon \mathfrak A^\dagger \to \mathfrak B$ denote the map given by
\[
 \phi^\dagger(a + \lambda 1) = \phi(a) + \lambda 1_\mathfrak{B}, \qquad \text{for } a \in \mathfrak A, \text{ and }\lambda \in \mathbb C.
\]
It is well-known that $\phi^\dagger$ is a unital c.p.~map, and that if $\phi$ is a $\ast$-homomorphism, then $\phi^\dagger$ is a unital $\ast$-homomorphism.

If $\mathfrak e : 0 \to \mathfrak B \to \mathfrak E \xrightarrow{p} \mathfrak A \to 0$ is an extension of $C^\ast$-algebras, then the \emph{unitised extension} $\mathfrak e^\dagger$ is the short exact sequence
$0 \to \mathfrak B \to \mathfrak E^\dagger \xrightarrow{p^\dagger} \mathfrak A^\dagger \to 0$.

We write $a \approx_\epsilon b$ if $\| a- b \| < \epsilon$. At times we will write $a \approx b$ to just mean that $a$ and $b$ can be approximated arbitrarily well. This will severely simplify notation in certain proofs.
An example of this could be, that if $\mathfrak A$ is a simple, purely infinite $C^\ast$-algebra, and $a,b\in \mathfrak A$ are non-zero, positive elements then there are $c\in \mathfrak A$ such that $a \approx c^\ast b c$.
We write $a\in_\epsilon S$ to mean that there is an $x\in S$ such that $a \approx_\epsilon x$.

We will say that $s_1,s_2\in \mathfrak C$ are \emph{$\mathcal O_2$-isometries} if $s_1$ and $s_2$ are isometries such that $s_1s_1^\ast + s_2s_2^\ast = 1_\mathfrak{C}$.


\section{Closed operator convex cones and dilation theorems}\label{s:cocc}

In this section we prove some very general things about closed operator convex cones. 
Given a closed operator convex cone in $CP(\mathfrak A, \mathfrak B)$ we consider two pictures of representations with respect to closed operator convex cones; a Hilbert module picture where we study maps
$\mathfrak A \to \mathbb B(E)$ for some (countably generated) Hilbert $\mathfrak B$-module $E$, and a multiplier algebra picture where we (mainly) study maps $\mathfrak A \to \multialg{\mathfrak B \otimes \mathbb K}$. 
We show that we have nice Kasparov--Stinespring dilation type theorems in both cases.

\begin{definition}\label{d:opconcon}
Let $\mathfrak A$ and $\mathfrak B$ be $C^\ast$-algebras and let $CP(\mathfrak A,\mathfrak B)$ denote the cone of all completely positive (c.p.) maps from $\mathfrak A$ to $\mathfrak B$. 
A subset $\mathscr C$ of $CP(\mathfrak A,\mathfrak B)$ is called a \emph{(matricially) operator convex cone} if it satisfies the following:
\begin{itemize}
\item[$(1)$] $\mathscr C$ is a cone,
\item[$(2)$] If $\phi \in \mathscr C$ and $b$ in $\mathfrak B$ then $b^\ast \phi(-) b \in \mathscr C$,
\item[$(3)$] If $\phi \in \mathscr C$, $a_1,\dots,a_n \in \mathfrak A$, and $b_1,\dots,b_n \in \mathfrak B$ then the map
\begin{equation}\label{eq:basicmap}
\sum_{i,j=1}^n b_i^\ast \phi(a_i^\ast (-) a_j)b_j
\end{equation}
is in $\mathscr C$.
\end{itemize}
We equip $\mathscr C$ with the point-norm topology, and say that it is a \emph{closed} operator convex cone, if it is closed as a subspace of $CP(\mathfrak A,\mathfrak B)$.

Given a subset $\mathscr S \subset CP(\mathfrak A, \mathfrak B)$ we let $K(\mathscr S)$ denote the smallest closed operator convex cone containing $\mathscr S$.

We will say that a closed operator convex cone $\mathscr C$ is \emph{countably generated} if there is a countable set $\mathscr S$, such that $\mathscr C= K(\mathscr S)$.
\end{definition}

We will mainly be considering closed operator convex cones.

\begin{example}\label{e:nuclearmaps}
 A c.p.~map is called \emph{factorable} if it factors through a matrix algebra by c.p.~maps. The set $CP_\textrm{fact}(\mathfrak A, \mathfrak B)\subset CP(\mathfrak A, \mathfrak B)$ of all factorable maps is an operator convex cone.
 
 A c.p.~map is called \emph{nuclear} if it can be approximated point-norm by factorable maps, i.e.~if it is in the point-norm closure of $CP_\textrm{fact}(\mathfrak A, \mathfrak B)$. 
 The set $CP_\nuc(\mathfrak A,\mathfrak B)$ of nuclear c.p.~maps is a closed operator convex cone.
\end{example}

The following lemma is well-known by many, but the authors have not been able to find a proof in the literature. Thus a proof is also provided.
The lemma mainly states, that the definition of nuclear maps given above, agrees with the classical notion of nuclear maps.

\begin{lemma}\label{l:classicalnuc}
 Let $\phi \colon \mathfrak A \to \mathfrak B$ be a nuclear map as defined in Example \ref{e:nuclearmaps}. Then $\phi$ may be approximated in point-norm by c.p.~maps of the form $\rho \circ \psi$ where $\psi \colon \mathfrak A \to M_n$ is a contractive c.p.~map
 and $\rho \colon M_n \to \mathfrak B$ is a c.p.~map with $\| \rho \| \leq \| \phi\|$.
\end{lemma}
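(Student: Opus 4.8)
The plan is to start from the definition and upgrade a crude factorisation to one with the required norm bounds by a two-sided compression trick. First I would fix a finite set $F \subset \mathfrak A$ and $\epsilon > 0$ and, using Example \ref{e:nuclearmaps}, choose a single factorable map $\theta = \rho_0 \circ \psi_0$ with $\psi_0 \colon \mathfrak A \to M_n$ and $\rho_0 \colon M_n \to \mathfrak B$ c.p.\ and $\theta(a) \approx_\epsilon \phi(a)$ for all $a$ in a slightly enlarged finite set. Rescaling $\psi_0$ and absorbing the scalar into $\rho_0$ makes $\psi_0$ contractive for free, so the entire difficulty is to simultaneously arrange $\|\rho_0\| \le \|\phi\|$: a priori $\|\rho_0\|$ and $\|\psi_0\|$ may both be enormous, with the largeness cancelling in the composition, and it is exactly this cancellation that must be exploited.

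The key device is to read off the norms from the value at an approximate unit. I would pick an element $e$ of an approximate unit of $\mathfrak A$ (with $e = 1$ if $\mathfrak A$ is unital) such that $\|e^{1/2} a e^{1/2} - a\| < \epsilon$ for all $a \in F$; since $\phi$ is bounded this gives $\phi(e^{1/2} a e^{1/2}) \approx \phi(a)$. Setting $h := \psi_0(e) \in M_n^+$ and $h_\delta := h + \delta 1_n$ (invertible in the finite-dimensional $M_n$), I define the corrected maps
\[
\psi(a) := h_\delta^{-1/2}\, \psi_0(e^{1/2} a e^{1/2})\, h_\delta^{-1/2}, \qquad \rho(x) := \rho_0\!\left(h_\delta^{1/2}\, x\, h_\delta^{1/2}\right),
\]
which are c.p.\ as compositions of c.p.\ maps. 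The whole point of the two-sided conjugation is that the factors $h_\delta^{\pm 1/2}$ cancel in the composition, so $\rho(\psi(a)) = \rho_0(\psi_0(e^{1/2} a e^{1/2})) = \theta(e^{1/2} a e^{1/2})$, which is within $O(\epsilon)$ of $\phi(a)$ on $F$.

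It remains to estimate the two norms, and this is where the compression pays off. Since $M_n$ is finite dimensional, the increasing net $\psi(e_\mu)$ converges in norm to $\psi^{\ast\ast}(1)$, and a short computation shows $\psi^{\ast\ast}(1) = h_\delta^{-1/2} h h_\delta^{-1/2}$; as $h \le h_\delta$ this element is $\le 1_n$, so $\|\psi\| = \|\psi^{\ast\ast}(1)\| \le 1$ and $\psi$ is contractive. For $\rho$, complete positivity gives $\|\rho\| = \|\rho(1_n)\| = \|\rho_0(h_\delta)\| \le \|\rho_0(h)\| + \delta\|\rho_0\|$; here $\rho_0(h) = \theta(e) \approx_\epsilon \phi(e)$, so $\|\rho_0(h)\| \le \|\phi\| + \epsilon$, while $\delta\|\rho_0\| < \epsilon$ once $\delta$ is chosen small \emph{after} $\rho_0$ has been fixed. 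Thus $\|\rho\| \le \|\phi\| + 2\epsilon$, and a final rescaling of $\rho$ by $\|\phi\|/(\|\phi\| + 2\epsilon)$ (the case $\phi = 0$ being trivial) achieves $\|\rho\| \le \|\phi\|$ at the cost of only an additional $O(\epsilon)$ error in the approximation.

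The main obstacle, and the crux of the argument, is the simultaneous norm control. The nonunital case is what forces the domain compression by $e^{1/2}(-)e^{1/2}$, so that the corrected $\psi$ \emph{sees} only $\psi_0(e)$ rather than the possibly much larger $\psi_0^{\ast\ast}(1)$; and the finite dimensionality of $M_n$ is precisely what legitimises passing to the norm limit of the monotone net $\psi(e_\mu)$ when computing $\|\psi\|$.
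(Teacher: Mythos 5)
Your proof is correct, and it reaches the lemma by a somewhat different (and leaner) route than the paper's. Both arguments hinge on the same cancellation trick: conjugate the matrix-level map by square roots of (a version of) $\psi_0$ applied to the unit, and absorb the inverse factors into $\rho$, so that the composition is unchanged while the norm is redistributed. But the packaging differs. The paper first proves the unital case, making $\psi_\alpha(1)$ invertible by cutting down to the corner of $M_{n_\alpha}$ that it generates, and then handles the non-unital case by reduction to the unital one: it compresses $\phi$ by approximate-unit elements to obtain maps $\phi_\lambda(a+\mu 1) = \phi(a_\lambda a a_\lambda + \mu a_\lambda^2)$ on the unitisation $\mathfrak A^\dagger$, checks that each $\phi_\lambda$ is again nuclear, applies the unital case to $\phi_\lambda$, and restricts back to $\mathfrak A$. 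You instead run a single argument covering both cases at once, replacing the corner reduction by the regularisation $h_\delta = \psi_0(e) + \delta 1_n$ and replacing the unitisation detour by the domain compression $e^{1/2}(-)e^{1/2}$; the price is the $\delta$-bookkeeping and a final rescaling of $\rho$ (where the paper instead perturbs $\rho_\alpha$ to get $\|\rho_\alpha\| = \|\phi\|$ exactly). What your version buys is that one never has to verify nuclearity of an auxiliary map on $\mathfrak A^\dagger$; what the paper's version buys is that in the unital case the normalised $\psi^0_\alpha$ is genuinely \emph{unital} rather than merely contractive, which is occasionally useful elsewhere.

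Two small points to tidy up. First, the element $e$ must be chosen \emph{before} the factorable map $\theta$: the enlarged finite set on which $\theta$ approximates $\phi$ has to contain $e$ and the elements $e^{1/2}ae^{1/2}$ for $a \in F$, and these depend on $e$. As written you fix $\theta$ first; there is no circularity, since $e$ depends only on $F$, $\epsilon$ and $\phi$, so simply reorder the choices. Second, the computation $\|\psi\| = \bigl\| h_\delta^{-1/2}\,\psi_0(e)\,h_\delta^{-1/2}\bigr\| \leq 1$ does not really rest on finite-dimensionality of $M_n$: since $e^{1/2}e_\mu e^{1/2} \to e$ in norm in $\mathfrak A$ and $\psi_0$ is bounded, the net $\psi(e_\mu)$ converges in norm to $h_\delta^{-1/2}\psi_0(e)h_\delta^{-1/2}$, and for a positive map one has $\|\psi\| = \lim_\mu \|\psi(e_\mu)\|$; that is all that is needed.
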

\begin{proof}
Let $\psi_\alpha \colon \mathfrak A \to M_{n_\alpha}$ and $\rho_\alpha \colon M_{n_\alpha} \to \mathfrak B$ be nets of c.p.~maps such that $\rho_\alpha \circ \psi_\alpha$ converges point-norm to $\phi$.  
Start by assuming that $\mathfrak A$ is unital.
For each $\alpha$, $\psi_\alpha(1)$ generates a corner in $M_{n_\alpha}$, say $M_{k_\alpha}$, such that $\rho_\alpha \circ \psi_\alpha = \rho_{\alpha}|_{M_{k_\alpha}} \circ \psi_{\alpha}|^{M_{k_\alpha}}$. 
Hence we may assume, without loss of generality, that $\psi_\alpha(1)$ generates all of $M_{n_\alpha}$ as a corner, i.e.~that $x_\alpha := \psi_\alpha(1)$ is invertible.
By letting $\psi_\alpha^0 := x_\alpha^{-1/2} \psi_\alpha(-) x_\alpha^{-1/2}$ and $\rho_\alpha^0 := \rho_\alpha(x_\alpha^{1/2}(-)x_\alpha^{1/2})$ we get $\rho_\alpha \circ \psi_\alpha = \rho_\alpha^{0} \circ \psi_\alpha^0$. 
Thus we may in fact assume that $\psi_\alpha$ is unital,
in particular $\| \psi_\alpha\| = 1$.

Since $\| \phi\| = \|\phi(1)\| \approx \| \rho_\alpha(\psi_\alpha(1))\| = \| \rho_\alpha(1)\| = \| \rho_\alpha\|$, we may perturb $\rho_\alpha$ slightly to obtain $\| \rho_\alpha\| = \| \phi\|$ and we still get an approximation of $\phi$. 
This finishes the case when $\mathfrak A$ is unital.

If $\mathfrak A$ is not unital, let $\mathfrak A^\dagger$ denote the unitisation of $\mathfrak A$. Let $(a_\lambda)$ be an approximate identity (of positive contractions) in $\mathfrak A$. 
Construct a net of c.p.~maps $\phi_\lambda \colon \mathfrak A^\dagger \to \mathfrak B$ given by $\phi_\lambda(a + \mu 1) = \phi(a_\lambda a a_\lambda + \mu a_\lambda^2)$. 
We have $\| \phi_\lambda\| = \|\phi_\lambda(1)\|= \| \phi(a_\lambda^2)\| \leq \| \phi\|$ and clearly $\phi_\lambda|_\mathfrak{A}$ converges point-norm to $\phi$.
Let $\psi_{\alpha,\lambda} \colon \mathfrak A^\dagger \to M_{n_\alpha}$ be the c.p.~map given by $\psi_{\alpha,\lambda}(a + \mu 1) = \psi_\alpha(a_\lambda a a_\lambda + \mu a_\lambda^2)$.
Clearly $\rho_\alpha \circ \psi_{\alpha,\lambda}$ converges point-norm to $\phi_\lambda$ and thus $\phi_\lambda$ is nuclear.
By the unital case, $\phi_\lambda$ has the desired approximation, and so does the restriction to $\mathfrak A$ which approximates $\phi$.
\end{proof}

The following is a classical example where the closed operator convex cone is generated by an action of a topological space.
Many more examples, using the structure of $\mathsf X$-$C^\ast$-algebras, will be provided in Remarks \ref{r:Xeqcone} and \ref{r:rnuccone}.

\begin{example}\label{e:C(X)-maps}
 Let $\mathfrak A$ and $\mathfrak B$ be continuous $C_0(\mathsf X)$-algebras over some locally compact Hausdorff space $\mathsf X$. 
 The set $CP(\mathsf X; \mathfrak A, \mathfrak B)$ of all $C_0(\mathsf X)$-linear c.p.~maps is a closed operator convex cone.
\end{example}

\begin{remark}\label{r:conjmulti}
If $\mathscr C \subset CP(\mathfrak A,\mathfrak B)$ is point-norm closed, then condition $(3)$ implies condition $(2)$ in Definition \ref{d:opconcon}, by letting $n=1$, $b=b_1$, and letting $a_1$ run through an approximate identity in $\mathfrak A$. 

Moreover, if $\mathscr C$ is a closed operator convex cone, $\phi \in \mathscr C$, $a_1,\dots,a_n \in \multialg{\mathfrak A}$,
and $b_1,\dots,b_n \in \multialg{\mathfrak B}$, then by a similar argument as above, the map in equation \eqref{eq:basicmap} is in $\mathscr C$, since the unit ball in any $C^\ast$-algebra is strictly dense in the unit ball of its multiplier algebra.
\end{remark}

\begin{remark}\label{r:generatedcone}
 The point-norm closure of an operator convex cone is clearly a closed operator convex cone. Let $\mathscr S \subset CP(\mathfrak A, \mathfrak B)$ be a subset. It can easily be seen that the subset $C(\mathscr S) \subset CP(\mathfrak A, \mathfrak B)$ consisting
 of finite sums of maps of the type in equation \eqref{eq:basicmap} with $\phi\in \mathscr S$, is an operator convex cone. In general $C(\mathscr S)$ does \emph{not} contain $\mathscr S$, however every map in $\mathscr S$ is in the point-norm closure of
 $C(\mathscr S)$. Hence $K(\mathscr S)$ is the point-norm closure of $C(\mathscr S)$. In other words, any map in $K(\mathscr S)$ may be approximated
 point-norm by sums of maps of the form $\sum_{i,j=1}^n b_i^\ast \phi(a_i^\ast (-) a_j)b_j$ with $\phi \in \mathscr S$, $a_1, \dots,a_n\in \mathfrak A$ and $b_1,\dots,b_n\in \mathfrak B$.
\end{remark}

\begin{remark}
 Suppose that $E$ is a right Hilbert $\mathfrak B$-module and that $\phi \colon \mathfrak A \to \mathbb B(E)$ is a c.p.~map. Recall, e.g.~from \cite[Chapter 5]{Lance-book-Hilbertmodules}, that we may construct a right Hilbert $\mathfrak B$ module
 $\mathfrak A \otimes_\phi E$ by taking the algebraic tensor product $\mathfrak A \odot E$ equipped with the semi-inner product induced by $\langle a \otimes x, b \otimes y \rangle = \langle x, \phi(a^\ast b) y\rangle_E$, quotient out length zero vectors,
 and taking the completion. We will abuse notation slightly and write $a\otimes x \in \mathfrak A \otimes_\phi E$ for the element induced by the elementary tensor $a \otimes x \in \mathfrak A \odot E$. There is a canonical $\ast$-homomorphism
 $\omega \colon \mathfrak A \to \mathbb B(\mathfrak A \otimes_\phi E)$ given by left multiplication on the left tensor. We will often refer to this $\ast$-homomorphism as the \emph{dilating $\ast$-homomorphism}.
 
 Condition $(3)$ in the above definition is easily seen to be equivalent to that $\langle y, \omega(-)y \rangle_{\mathfrak A \otimes_\phi B}$ is in $\mathscr C$ for all $y = \sum_{i=1}^n a_i \otimes b_i \in \mathfrak A \otimes_\phi B$.
 
 If $\mathscr C$ is a point-norm closed subset of $CP(\mathfrak A,\mathfrak B)$ then condition $(3)$ is equivalent to that $\langle y, \omega(-)y \rangle_{\mathfrak A \otimes_\phi B}$ is in $\mathscr C$ for all $y \in \mathfrak A \otimes_\phi B$.
\end{remark}

The above remarks show, that closed operator convex cones behave nicely with respect to both multiplier algebras and Hilbert $C^\ast$-modules. 
We will first study the relations to Hilbert $C^\ast$-modules and use this to derive results in a multiplier algebra picture (for stable $C^\ast$-algebras).

\subsection{The Hilbert $C^\ast$-module picture}

\begin{definition}
 Let $\mathfrak A$ and $\mathfrak B$ be $C^\ast$-algebras and let $\mathscr C\subset CP(\mathfrak A,\mathfrak B)$ be a closed operator convex cone. Let $E$ be a Hilbert $\mathfrak B$-module. A c.p.~map 
 $\phi \colon \mathfrak A \to \mathbb B( E)$ is said to be \emph{weakly in $\mathscr C$} if the map
 \begin{equation}\label{eq:innerprodcp}
  \mathfrak A \ni a \mapsto \sum_{i,j=1}^n \langle x_i, \phi(a^\ast_i a a_j) x_j \rangle_E \in \mathfrak B
 \end{equation}
  is in $\mathscr C$ for every $x_1,\dots, x_n\in  E$ and $a_1,\dots, a_n \in \mathfrak A$.
\end{definition}

\begin{remark}\label{r:weaklyinB}
By condition $(3)$ above every $\phi \in \mathscr C$ is weakly in $\mathscr C$ when considered as a map $\mathfrak A \to B \cong \mathbb K(B)$. Moreover, it is obvious that if $E$ is a complemented Hilbert $\mathfrak B$-submodule of $F$, and
$\phi \colon \mathfrak A \to \mathbb B(E)$ is weakly in $\mathscr C$, then the composition $\mathfrak A \xrightarrow{\phi} \mathbb B(E) \hookrightarrow \mathbb B(F)$ is weakly in $\mathscr C$.

Let $\mathcal H_\mathfrak{B} = \bigoplus_{n=1}^\infty \mathfrak B$, and let $\iota$ be the map $\mathfrak B \cong \mathbb K(\mathfrak B) \hookrightarrow \mathbb K(\mathcal H_\mathfrak{B})$ 
induced by embedding $\mathfrak B \hookrightarrow \mathcal H_\mathfrak{B}$ in the first coordinate. 
By the above, a c.p.~map $\phi \colon \mathfrak A \to \mathfrak B$ is in $\mathscr C$ if and only if $\iota \circ \phi$ is weakly in $\mathscr C$.
\end{remark}

\begin{proposition}\label{p:starhomweakly}
 Let $\mathscr C \subset CP(\mathfrak A,\mathfrak B)$ be a closed operator convex cone, $E$ be a Hilbert $\mathfrak B$-module, and $\Phi \colon \mathfrak A \to \mathbb B(E)$ be a $\ast$-homomorphism. Then $\Phi$ is weakly in $\mathscr C$ if and only if
 $\langle x , \Phi(-) x \rangle_E$ is in $\mathscr C$ for every $x\in E$.
 
 In particular, a c.p.~map $\phi \colon \mathfrak A \to \mathbb B(E)$ is weakly in $\mathscr C$ if and only if the dilating $\ast$-homomorphism $\omega \colon \mathfrak A \to \mathbb B(\mathfrak A \otimes_\phi E)$ is weakly in $\mathscr C$.
\end{proposition}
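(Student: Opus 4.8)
The plan is to treat the two assertions in turn, and then use the first (the $\ast$-homomorphism characterisation) to deduce the ``in particular'' statement about the dilating $\ast$-homomorphism $\omega$.

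For the $\ast$-homomorphism part, the reverse implication is a direct algebraic collapse. Assuming $\langle x, \Phi(-)x\rangle_E \in \mathscr C$ for every $x \in E$, fix $x_1,\dots,x_n \in E$ and $a_1,\dots,a_n \in \mathfrak A$. Since $\Phi$ is multiplicative and $\ast$-preserving, $\Phi(a_i^\ast a a_j) = \Phi(a_i)^\ast \Phi(a) \Phi(a_j)$, so the defining map becomes
\[
 a \longmapsto \sum_{i,j=1}^n \langle x_i, \Phi(a_i)^\ast \Phi(a) \Phi(a_j) x_j\rangle_E = \sum_{i,j=1}^n \langle \Phi(a_i) x_i, \Phi(a)\Phi(a_j) x_j\rangle_E = \langle y, \Phi(-) y\rangle_E,
\]
where $y := \sum_{i=1}^n \Phi(a_i) x_i \in E$. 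The right-hand side lies in $\mathscr C$ by hypothesis, so $\Phi$ is weakly in $\mathscr C$. For the forward implication I would use an approximate identity $(e_\lambda)$ of positive contractions in $\mathfrak A$: the $n=1$ instance of the definition (with $x_1 = x$, $a_1 = e_\lambda$) says $a \mapsto \langle x, \Phi(e_\lambda a e_\lambda) x\rangle_E$ lies in $\mathscr C$ for each $\lambda$. Since $e_\lambda a e_\lambda \to a$ in norm and $c \mapsto \langle x, \Phi(c) x\rangle_E$ is norm-continuous, these maps converge point-norm to $\langle x, \Phi(-) x\rangle_E$, which therefore lies in $\mathscr C$ because $\mathscr C$ is point-norm closed.

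For the ``in particular'' statement, recall that $\omega$ acts by $\omega(a)(b \otimes x) = ab \otimes x$ and that the inner product on $\mathfrak A \otimes_\phi E$ satisfies $\langle a \otimes x, b\otimes y\rangle = \langle x, \phi(a^\ast b) y\rangle_E$. A short computation then shows that for $z = \sum_{i=1}^n a_i \otimes x_i$ one has $\langle z, \omega(a) z\rangle = \sum_{i,j} \langle x_i, \phi(a_i^\ast a a_j) x_j\rangle_E$; that is, the maps appearing in the definition of ``$\phi$ weakly in $\mathscr C$'' are \emph{exactly} the maps $\langle z, \omega(-) z\rangle$ with $z$ ranging over the finite sums of elementary tensors. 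Such $z$ are dense in $\mathfrak A \otimes_\phi E$, and $z \mapsto \langle z, \omega(-) z\rangle$ is continuous from the module norm into the point-norm topology, so (again using that $\mathscr C$ is closed) the condition ``$\langle z, \omega(-) z\rangle \in \mathscr C$ for all elementary-tensor sums $z$'' is equivalent to ``$\langle z, \omega(-) z\rangle \in \mathscr C$ for all $z \in \mathfrak A \otimes_\phi E$''. By the first part applied to the $\ast$-homomorphism $\omega$, the latter is equivalent to $\omega$ being weakly in $\mathscr C$, which gives the claim.

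The computations are routine; the only steps requiring genuine care are the two passages to the closure. In the forward direction I must avoid assuming $\Phi$ is non-degenerate (so $\Phi(e_\lambda) x$ need not converge to $x$), which is why I approximate $\langle x, \Phi(e_\lambda(-)e_\lambda) x\rangle_E$ directly and read the limit off from $e_\lambda a e_\lambda \to a$, rather than trying to move the $e_\lambda$ outside the inner product. In the dilation step the main point is simply to identify the generating maps of the two weak-membership conditions and then invoke density together with the closedness of $\mathscr C$.
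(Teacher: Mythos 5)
Your proof is correct and follows essentially the same route as the paper's: the ``if'' direction via the identity $\sum_{i,j}\langle x_i,\Phi(a_i^\ast a a_j)x_j\rangle_E = \langle y,\Phi(a)y\rangle_E$ with $y=\sum_i\Phi(a_i)x_i$, and the ``only if'' direction via an approximate identity together with point-norm closedness of $\mathscr C$. Your explicit treatment of the ``in particular'' statement (identifying the defining maps of weak membership for $\phi$ with the maps $\langle z,\omega(-)z\rangle$ over elementary-tensor sums, then passing to all of $\mathfrak A\otimes_\phi E$ by density and closedness) is exactly the content of the remark preceding the proposition in the paper, which the paper leaves implicit.
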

\begin{proof}
 The ``if'' part follows from the observation that
 \[
  \sum_{i,j=1}^n \langle x_i, \Phi(a^\ast_i a a_j) x_j \rangle_E = \langle x , \Phi(a) x \rangle_E
 \]
 for $x = \sum_{i=1}^n \Phi(a_i)x_i$. For the converse, suppose $\Phi$ is weakly in $\mathscr C$, let $x\in E$ and $(a_\lambda)$ be an approximate identity in $\mathfrak A$. 
 Then the maps $a \mapsto \langle x, \Phi(a_\lambda^\ast a a_\lambda)x \rangle_E$ are in $\mathscr C$ and converge point-norm to $\langle x , \Phi(-) x \rangle_E$, which is thus in $\mathscr C$ since $\mathscr C$ is closed.
\end{proof}

The above proposition, although of interest itself, also allows us to prove a Kasparov--Stinespring type theorem (c.f.~\cite{Kasparov-Stinespring}) for c.p.~maps weakly in $\mathscr C$. 

\begin{remark}\label{r:infrepeat}
Let $\phi\colon \mathfrak A \to \mathbb B(E)$ be a c.p.~map, and let $\phi_\infty$ denote the infinite repeat $\mathfrak A \to \mathbb B(E^\infty)$ where $E^\infty := E \oplus E \oplus \dots$. 
If $\mathscr C$ is a closed operator convex cone and $\phi$ is weakly in $\mathscr C$, then $\phi_\infty$ is weakly in $\mathscr C$. 
In fact, let $\omega \colon \mathfrak A \to \mathbb B(\mathfrak A \otimes_\phi E)$ be the dilating $\ast$-homomorphism which is weakly in $\mathscr C$ by the above proposition. 
The infinite repeat $\omega_\infty$ of $\omega$ may be identified with the dilating $\ast$-homomorphism of $\phi_\infty$ in a canonical way, so by the above proposition it suffices to show that $\omega_\infty$ is weakly in $\mathscr C$.
If $(x_n) \in (\mathfrak A \otimes_{\phi} E)^\infty$ then $\langle (x_n), \omega_\infty(-) (x_n) \rangle = \sum_{n=1}^\infty \langle x_n, \omega(-) x_n\rangle$ is a point-norm
limit of maps in $\mathscr C$, and is thus also itself in $\mathscr C$. 
\end{remark}

Recall that a Hilbert $\mathfrak B$-module $E$ is called \emph{full} if $\overline{\span}\{ \langle x,x\rangle : x \in E\} = \mathfrak B$.

In our Kasparov--Stinespring theorem below, there is both a non-unital and a unital version. 
However, there is a clear obstruction, given a closed operator convex cone $\mathscr C$, for when there can exist a unital completely positive map $\phi \colon \mathfrak A \to \mathbb B(E)$ weakly in $\mathscr C$.
In fact, we must have that $\langle x, \phi(1) x\rangle = \langle x, x \rangle$ is a value obtained by some map in $\mathscr C$ for every $x$. 
Thus if $E$ is full then $\mathscr C$ can not factor through any proper two-sided, closed ideal $\mathfrak J$ in $\mathfrak B$, i.e.~there is a $\psi \in \mathscr C$ such that $\psi(\mathfrak A) \not \subset \mathfrak J$. 
It turns out that this is the only obstruction, as can be seen in the Kasparov--Stinespring theorem below. The observation motivates the following definition.

\begin{definition}\label{d:nondegcone}
 Let $\mathscr C \subset CP(\mathfrak A,\mathfrak B)$ be a closed operator convex cone. 
 We say that $\mathscr C$ is \emph{non-degenerate} if for any proper two-sided, closed ideal $\mathfrak J$ in $\mathfrak B$ there is a map $\phi\in \mathscr C$ such that $\phi(\mathfrak A) \not \subset \mathfrak J$.
\end{definition}

It is obvious that $\mathscr C$ is non-degenerate if and only if the two-sided, closed ideal generated by $\{ \phi(a) : a\in \mathfrak A , \phi \in \mathscr C\}$ is all of $\mathfrak B$.

In the case when $\mathfrak A$ is $\sigma$-unital and $h\in \mathfrak A$ is strictly positive, then $\mathscr C$ is non-degenerate if and only if the two-sided, closed ideal generated by $\{ \phi(h) : \phi \in \mathscr C\}$ is all of $\mathfrak B$.
This follows easily from the fact that any positive element in $\mathfrak A$ can be approximated by a positive element of the form $(a h^{1/2})^\ast (a h^{1/2}) \leq \| a\|^2 h$.

\begin{theorem}[Kasparov--Stinespring Theorem for Hilbert $C^\ast$-modules]\label{t:Stinespringmodules}
 Let $\mathfrak A$ and $\mathfrak B$ be $C^\ast$-algebras with $\mathfrak A$ separable and $\mathfrak B$ $\sigma$-unital, let $\mathscr C \subset CP(\mathfrak A,\mathfrak B)$ be a closed operator convex cone, let $E$ be a countably generated, full
 Hilbert $\mathfrak B$-module, and let $\phi \colon \mathfrak A \to \mathbb B(E)$ be a contractive completely positive map weakly in $\mathscr C$. 
 Then there is a representation $\Phi \colon \mathfrak A \to \mathbb B(\mathcal H_\mathfrak{B})$ weakly in $\mathscr C$ and isometries $V,W \in \mathbb B(E,\mathcal H_\mathfrak{B})$ with $VV^\ast + WW^\ast =1$, such that $V^\ast \Phi(-) V = \phi$.
  
  If, in addition, $\mathfrak A$ is unital and $\mathscr C$ is non-degenerate, then there is a \emph{unital} representation
  $\Phi \colon \mathfrak A \to \mathbb B(\mathcal H_\mathfrak{B})$ weakly in $\mathscr C$ and an element $V\in \mathbb B(E,\mathcal H_\mathfrak{B})$ such that $V^\ast \Phi(-) V = \phi$.
  If $\phi$ is unital, then such a $V$ exists, for which there is another isometry $W$ with $VV^\ast + WW^\ast = 1$.
\end{theorem}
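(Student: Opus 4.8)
The plan is to follow the classical Kasparov--Stinespring strategy, using the dilating $\ast$-homomorphism together with Proposition \ref{p:starhomweakly} to transport the ``weakly in $\mathscr C$'' condition, and then to import the given module into $\mathcal H_\mathfrak B$ by means of the stabilisation theorem and fullness of $E$. First I would form the dilation module $F := \mathfrak A \otimes_\phi E$ with its dilating $\ast$-homomorphism $\omega \colon \mathfrak A \to \mathbb B(F)$. Since $\phi$ is weakly in $\mathscr C$, Proposition \ref{p:starhomweakly} gives that $\omega$ is weakly in $\mathscr C$, and $F$ is countably generated because $\mathfrak A$ is separable and $E$ is countably generated. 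There is a canonical contraction $T \colon E \to F$ (sending $x$ to the class of $a_\lambda \otimes x$ along an approximate identity $(a_\lambda)$ of $\mathfrak A$, or $1 \otimes x$ if $\mathfrak A$ is unital) satisfying $T^\ast \omega(-) T = \phi$ and $T^\ast T \leq 1$. To turn $T$ into an isometry I would pass to $F \oplus E$ and set $V_0 x := (Tx,\,(1 - T^\ast T)^{1/2} x)$, so that $V_0^\ast V_0 = 1$ while $V_0^\ast (\omega \oplus 0) V_0 = \phi$, the degenerate second summand contributing nothing.

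Next I would realise everything inside $\mathcal H_\mathfrak B$. The key module-theoretic input is that a full countably generated Hilbert $\mathfrak B$-module over a $\sigma$-unital $\mathfrak B$ satisfies $E^\infty \cong \mathcal H_\mathfrak B$: fullness makes $\mathcal H_\mathfrak B$ a complemented submodule of $E^\infty$, the stabilisation theorem makes $E^\infty$ a complemented submodule of $\mathcal H_\mathfrak B$, and an Eilenberg-swindle on $E^\infty \cong E^\infty \oplus E^\infty$ and $\mathcal H_\mathfrak B \cong \mathcal H_\mathfrak B \oplus \mathcal H_\mathfrak B$ identifies them. Taking infinite repeats of $\omega \oplus 0$ and using the resulting identification, I obtain $\Phi \colon \mathfrak A \to \mathbb B(\mathcal H_\mathfrak B)$ that is weakly in $\mathscr C$ by Remark \ref{r:infrepeat} (for $\omega_\infty$) and Remark \ref{r:weaklyinB} (passing to the complemented summand and adjoining the degenerate part). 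Composing $V_0$ with the first-coordinate embedding yields the isometry $V$ with $V^\ast \Phi V = \phi$; identifying the ambient module with two complemented copies of $E$, which fullness and the stabilisation theorem allow, provides the second isometry $W$ with $VV^\ast + WW^\ast = 1$.

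For the unital statement the degenerate padding must be replaced by a \emph{unital} representation weakly in $\mathscr C$, and this is precisely where non-degeneracy of $\mathscr C$ (Definition \ref{d:nondegcone}) enters: since the two-sided closed ideal generated by $\{\psi(1) : \psi \in \mathscr C\}$ is all of $\mathfrak B$, I can select $\psi_n \in \mathscr C$ and multipliers so that a suitably conjugated diagonal sum $a \mapsto \mathrm{diag}(\psi_1(a), \psi_2(a), \dots)$ is a unital c.p.~map $\mathfrak A \to \mathbb B(\mathcal H_\mathfrak B)$ weakly in $\mathscr C$; dilating this map and absorbing it into $\Phi$ upgrades $\Phi$ to a unital representation while only enlarging the complement, so that $V^\ast \Phi V = \phi$ persists. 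When $\phi$ is itself unital, $V^\ast V = V^\ast \Phi(1) V = \phi(1) = 1$ forces $V$ to be an isometry, and arranging the complementary copy of $E$ again furnishes $W$ with $VV^\ast + WW^\ast = 1$.

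The step I expect to be the main obstacle is maintaining the ``weakly in $\mathscr C$'' property under all of these module manipulations \emph{simultaneously} with producing the exact Cuntz-type decomposition $VV^\ast + WW^\ast = 1$ and, in the unital case, genuine unitality of $\Phi$. The delicate point is that non-degeneracy must be leveraged to build a unital weakly-in-$\mathscr C$ map on $\mathcal H_\mathfrak B$ without destroying the inner-product conditions defining membership; verifying that the diagonal-and-conjugation construction stays inside $\mathscr C$ at every finite stage and survives the strict limit is the crux of the argument.
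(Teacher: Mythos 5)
Your overall strategy --- dilate $\phi$, transport ``weakly in $\mathscr C$'' through Proposition \ref{p:starhomweakly} and Remark \ref{r:infrepeat}, identify the ambient module with $\mathcal H_\mathfrak{B}$ via fullness and stabilisation, and use non-degeneracy to pad with a unital representation --- is indeed the paper's strategy, but your very first step contains a genuine gap: when $\mathfrak A$ is not unital there is in general \emph{no} operator $T \in \mathbb B(E, \mathfrak A \otimes_\phi E)$ with $T^\ast \omega(-) T = \phi$, canonical or otherwise, and the net $a_\lambda \otimes x$ you propose to use is not Cauchy. Concretely, let $\mathfrak A = C_0((0,1])$, $\mathfrak B = E = C([0,1])$, $\mathscr C = CP(\mathfrak A, \mathfrak B)$, and let $\phi$ be the inclusion $C_0((0,1]) \subset C([0,1])$. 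Then $F = \mathfrak A \otimes_\phi E$ is isomorphic to the Hilbert $C([0,1])$-module $\{ h \in C([0,1]) : h(0) = 0 \}$ with $\omega$ acting by multiplication; every adjointable $T \colon E \to F$ is multiplication by some $g$ with $g(0) = 0$, so $T^\ast \omega(f) T = |g|^2 f \neq \phi(f)$ for suitable $f$, and for the approximate identity $a_n(t) = \min(nt,1)$ one computes $\| a_n \otimes 1 - a_m \otimes 1 \|^2 = \| a_n - a_m \|_\infty^2 \geq 1/4$ whenever $m \geq 2n$. Everything you build from $T$ (the isometry $V_0 = (T, (1-T^\ast T)^{1/2})$ and the compression identity) collapses with it. This is exactly why the paper dilates the \emph{unitised} map: extend $\phi$ to a unital c.p.~map $\phi^\dagger \colon \mathfrak A^\dagger \to \mathbb B(E)$ and set $F = \mathfrak A^\dagger \otimes_{\phi^\dagger} E$, so that $V(x) = 1 \otimes x$ is a genuine isometry; the price is an extra computation (done in the paper with an approximate identity $(c_\lambda)$ of $\mathfrak A$) showing that $\langle y, \omega(-) y \rangle \in \mathscr C$ for $y = \sum_i (a_i + \lambda_i 1) \otimes x_i$, i.e.~that the dilation of $\phi^\dagger$ is still weakly in the \emph{original} cone $\mathscr C$.

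Two further weaknesses. First, your unital-case construction of a unital map weakly in $\mathscr C$ is underspecified: a diagonal sum $a \mapsto \mathrm{diag}(\psi_1(a), \psi_2(a), \dots)$ takes the value $\mathrm{diag}(\psi_n(1))$ at the unit, with $\psi_n(1) \in \mathfrak B$, and no ``suitable conjugation'' makes this the identity of $\mathbb B(\mathcal H_\mathfrak{B})$ unless you prove that $1$ is a strictly convergent sum $\sum_i b_i^\ast \psi_{n_i}(1) b_i$ --- a nontrivial claim you do not address. The paper avoids this issue: choosing $\phi_n \in \mathscr C$ with $h \in_{1/n} \overline{\mathfrak B \phi_n(1) \mathfrak B}$ for $h$ strictly positive, the module $G = \bigoplus_n \mathfrak A \otimes_{\phi_n} \mathfrak B$ is full and countably generated, left multiplication on $G$ is \emph{automatically} a unital representation weakly in $\mathscr C$ (no unitality of any c.p.~map into $\mathbb B(\mathcal H_\mathfrak{B})$ is ever needed), and then $G^\infty \cong \mathcal H_\mathfrak{B}$. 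Second, your mechanism for producing $W$, namely ``identifying the ambient module with two complemented copies of $E$,'' cannot work: already for $E = \mathfrak B = \mathbb C$ there are no isometries $V, W \in \mathbb B(E, \mathcal H_\mathfrak{B})$ with $VV^\ast + WW^\ast = 1$, since that sum has finite rank. What the construction actually yields is $V \in \mathbb B(E, \mathcal H_\mathfrak{B})$ together with an isometry $W$ \emph{of} $\mathcal H_\mathfrak{B}$ whose range complements $VE$, obtained from the decomposition $F \cong (1 \otimes E) \oplus K$ and a reassociation of the infinite direct sum $F^\infty$.
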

\begin{proof}
 The construction of $\Phi$ follows the original construction of Kasparov \cite{Kasparov-Stinespring} very closely, however with subtle changes. Since we need this explicit construction in our proof we will repeat most of Kasparov's proof.
 
 We may extend $\phi$ to a unital c.p.~map $\phi^\dagger \colon \mathfrak A^\dagger \to \mathbb B(E)$ (see e.g.~\cite[Section 2.2]{BrownOzawa-book-approx}). Construct the Hilbert $\mathfrak B$-module $F = \mathfrak A^\dagger \otimes_{\phi^\dagger} E$, and let
 $\omega \colon \mathfrak A \to \mathbb B(F)$ be the $\ast$-homomorphism which is left multiplication on the left tensor. Since $\mathfrak A$ is separable, $F$ is countably generated and it is clearly full (since $\langle 1 \otimes x,1\otimes x \rangle_F =
 \langle x, x \rangle_E$). Also, since $\mathfrak B$ is $\sigma$-unital, it follows by \cite[Theorem 1.9]{MingoPhillips-Hilbertmodules} that $F^\infty := F \oplus F \oplus \dots \cong \mathcal H_\mathfrak{B}$. Hence it suffices to prove the result for $F^\infty$
 in place of $\mathcal H_\mathfrak{B}$. Let $V,W \in \mathbb B(E,F^\infty)$ be given by
 \[
  V(x) = (1\otimes x,0,0,\dots), \quad W(x) = (0,1\otimes x,1 \otimes x , \dots)
 \]
 which have adjoints induced by
 \[
  V^\ast(a \otimes x , y_2,y_3,\dots) = \phi^\dagger(a)x, \quad W^\ast(y_1,a_2\otimes x_2,\dots) = \sum_{k=1}^\infty \phi^\dagger(a_k)x_k.
 \]
 One should of course check that these in fact induce operators and that $V$ and $W$ are isometries for which $VV^\ast + WW^\ast = 1$. Now, let $\Phi = \omega_\infty \colon \mathfrak A \to \mathbb B(F^\infty)$ be the infinite repeat.
 Then clearly $V^\ast \Phi(-) V = \phi$. To see that $\Phi$ is weakly in $\mathscr C$, it suffices by Proposition \ref{p:starhomweakly} and Remark \ref{r:infrepeat} to show that $\langle y, \omega(-)y \rangle $ is in $\mathscr C$ for every $y\in F$. 
 It suffices to check for elements of the form $y=\sum_{i=1}^n (a_i + \lambda_i1) \otimes x_i$ where $x_i \in E$, $a_i \in \mathfrak A$ and $\lambda_i \in \mathbb C$. Letting $(c_\lambda)$ be an approximate identity in $\mathfrak A$ we get that
 \begin{eqnarray*}
  \langle y , \omega(a) y \rangle_F &=& \sum_{i,j=1}^n \langle x_i , \phi^\dagger((a_i + \lambda_i 1)^\ast a (a_j + \lambda_j1)^\ast) x_j\rangle_E \\
  &=& \lim_\lambda \sum_{i,j=1}^n \langle x_i , \phi((a_i + \lambda_i c_\lambda)^\ast a (a_j + \lambda_jc_\lambda)^\ast) x_j\rangle_E
 \end{eqnarray*}
 which is in $\mathscr C$ as a function of $a$, since $\mathscr C$ is closed. Thus $\Phi$ is weakly in $\mathscr C$.
 
 Now suppose $\mathfrak A$ is unital. If $\phi$ is unital, then a proof exactly as above (with $\mathfrak A$ instead of $\mathfrak A^\dagger$) yields a unital representation $\Phi$ weakly in $\mathscr C$, 
 and $\mathcal O_2$-isometries $V,W$ such that $V^\ast \Phi(-) V = \phi$. So it remains to prove the case where $\phi$ is not necessarily unital.
 
 We will start by proving that there exists a unital representation $\Psi \colon \mathfrak A \to \mathbb B(\mathcal H_\mathfrak{B})$ weakly in $\mathscr C$, by using that $\mathscr C$ is non-degenerate.
 As above, it suffices to show that there is a unital
 representation $\Psi \colon \mathfrak A \to \mathbb B(G)$ weakly in $\mathscr C$ where $G$ is countably generated and full. In fact, then the infinite repeat will do the trick, since $G^\infty \cong \mathcal H_\mathfrak{B}$.
 
 Fix $h\in \mathfrak B$ a strictly positive element. Since $\mathscr C$ is non-degenerate, we may for each $n$ find $\phi_n\in \mathscr C$ such that $h \in_{1/n} \overline{\mathfrak B \phi_n(1) \mathfrak B}$ (a priori we find a finite subset $\mathscr S_n$ of
 $\mathscr C$ such that $h\in_{1/n} \overline{\mathfrak B \{ \phi'(1):\phi' \in \mathscr S_n\} \mathfrak B}$, but then $\phi_n = \sum_{\phi' \in \mathscr S_n} \phi'$ also works). Letting $G_n = \mathfrak A \otimes_{\phi_n} \mathfrak B$, we clearly get that
 $h \in_{1/n} \langle G_n,G_n \rangle$. Hence if $G := \bigoplus G_n$ then $h \in \langle G, G \rangle$ and thus $G$ is full and also countably generated. 
 Clearly there is an induced unital representation $\Psi \colon \mathfrak A \to \mathbb B(G)$ weakly in $\mathscr C$.
 
 Let $F = \mathfrak A \otimes_\phi E$ which is countably generated, 
 and $\omega \colon \mathfrak A \to \mathbb B(F)$ be the canonical $\ast$-homomorphism which is weakly in $\mathscr C$ by Proposition \ref{p:starhomweakly}. By Kasparov's stabilisation theorem \cite[Theorem 2]{Kasparov-Stinespring} 
 $F \oplus \mathcal H_\mathfrak{B} \cong \mathcal H_\mathfrak{B}$, so it suffices to prove the result for $F \oplus \mathcal H_\mathfrak{B}$ in place of $\mathcal H_\mathfrak{B}$. 
 Let $\Psi \colon \mathfrak A \to \mathbb B(\mathcal H_\mathfrak{B})$ be a unital representation weakly in $\mathscr C$, and let $\Phi = \omega \oplus \Psi \colon \mathfrak A \to \mathbb B(F \oplus \mathcal H_\mathfrak{B})$ 
 which is a unital $\ast$-homomorphism weakly in $\mathscr C$ since both $\omega$ and $\Psi$ are unital and weakly in $\mathscr C$. 
 If $V \in \mathbb B(E , F \oplus \mathcal H_\mathfrak{B})$ is given by $V(x) = (1 \otimes x,0)$ then $V^\ast \Phi(-) V = \phi$.
\end{proof}

\begin{corollary}\label{c:unitalrepiff}
 Let $\mathfrak A$ be separable and unital, and $\mathfrak B$ be $\sigma$-unital and let $\mathscr C \subset CP(\mathfrak A, \mathfrak B)$ be a closed operator convex cone. 
 Then there exists a unital representation $\mathfrak A \to \mathbb B(\mathcal H_\mathfrak{B})$ weakly in $\mathscr C$ if and only if $\mathscr C$ is non-degenerate.
\end{corollary}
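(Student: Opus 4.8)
The plan is to prove the two implications separately, using Theorem \ref{t:Stinespringmodules} for the substantive direction and an elementary fullness argument for the other.

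For the \emph{only if} direction, I would suppose $\Phi \colon \mathfrak A \to \mathbb B(\mathcal H_\mathfrak{B})$ is a unital representation weakly in $\mathscr C$. Since $\Phi$ is a $\ast$-homomorphism, Proposition \ref{p:starhomweakly} gives that $\psi_x := \langle x, \Phi(-) x\rangle_{\mathcal H_\mathfrak{B}}$ lies in $\mathscr C$ for every $x \in \mathcal H_\mathfrak{B}$, and unitality of $\Phi$ yields $\psi_x(1) = \langle x, \Phi(1) x\rangle = \langle x, x\rangle$. I would then invoke the reformulation of non-degeneracy recorded just after Definition \ref{d:nondegcone}: it suffices to show that the two-sided, closed ideal $\mathfrak I$ generated by $\{\phi(a) : a \in \mathfrak A,\ \phi \in \mathscr C\}$ is all of $\mathfrak B$. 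Each value $\langle x, x\rangle = \psi_x(1)$ belongs to $\mathfrak I$, and $\mathcal H_\mathfrak{B}$ is full — indeed $\langle (b,0,0,\dots),(b,0,0,\dots)\rangle = b^\ast b$, so $\overline{\span}\{\langle x,x\rangle : x \in \mathcal H_\mathfrak{B}\} = \mathfrak B$. As $\mathfrak I$ is a closed subspace, it contains this closed span, forcing $\mathfrak I = \mathfrak B$, i.e.\ $\mathscr C$ is non-degenerate.

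For the \emph{if} direction, suppose $\mathscr C$ is non-degenerate. This is exactly the hypothesis under which the proof of Theorem \ref{t:Stinespringmodules} already constructs a unital representation $\Psi \colon \mathfrak A \to \mathbb B(\mathcal H_\mathfrak{B})$ weakly in $\mathscr C$, so I would simply extract that construction: fixing a strictly positive $h \in \mathfrak B$, non-degeneracy supplies maps $\phi_n \in \mathscr C$ with $h \in_{1/n} \overline{\mathfrak B \phi_n(1)\mathfrak B}$; the modules $G_n = \mathfrak A \otimes_{\phi_n} \mathfrak B$ assemble into a full, countably generated module $G = \bigoplus_n G_n$; and the induced unital $\ast$-homomorphism $\mathfrak A \to \mathbb B(G)$ is weakly in $\mathscr C$, whose infinite repeat gives the desired representation on $G^\infty \cong \mathcal H_\mathfrak{B}$.

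The only point requiring care — and hence the main obstacle in presenting this as a corollary rather than an immediate consequence — is that Theorem \ref{t:Stinespringmodules} is phrased so as to dilate a \emph{given} map $\phi$ weakly in $\mathscr C$, whereas here no such $\phi$ is assumed a priori. The resolution is the observation that the sub-construction of $\Psi$ inside that proof uses \emph{only} the non-degeneracy of $\mathscr C$ together with the fixed strictly positive element of $\mathfrak B$, and is entirely independent of any initial dilation datum. Thus the corollary amounts to isolating this self-contained piece of the theorem and pairing it with the elementary converse above.
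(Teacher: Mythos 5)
Your proof is correct and takes essentially the same route as the paper: the corollary is stated there without proof precisely because the ``if'' direction is the self-contained sub-construction of the unital representation $\Psi$ inside the proof of Theorem \ref{t:Stinespringmodules} (which, as you observe, uses only non-degeneracy and not the dilation datum $\phi$), while the ``only if'' direction is the fullness observation the authors record just before Definition \ref{d:nondegcone}, which you have spelled out via Proposition \ref{p:starhomweakly} and the fullness of $\mathcal H_\mathfrak{B}$.
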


\subsection{The multiplier algebra picture}

In order to obtain a similar Kasparov--Stinespring result for multiplier algebras, we need to construct new operator convex cones as below.

\begin{lemma}\label{l:hilbertcone}
  Let $\mathfrak A$ and $\mathfrak B$ be $C^\ast$-algebras and let $\mathscr C\subset CP(\mathfrak A,\mathfrak B)$ be a closed operator convex cone, and let $E$ be a Hilbert $\mathfrak B$-module. Then $\mathscr C_E \subset CP(\mathfrak A,\mathbb K(E))$
  given by
  \[
   \mathscr C_E := \{ \phi \in CP(\mathfrak A,\mathbb K(E)) \mid \phi \colon \mathfrak A \to \mathbb K(E) \subset \mathbb B(E) \text{ is weakly in } \mathscr C\}
  \]
  is a closed operator convex cone.
  
  Moreover, a c.p.~map $\phi \colon \mathfrak A \to \mathbb B(E)$ is weakly in $\mathscr C$ if and only if $T^\ast \phi(-) T$ is in $\mathscr C_E$ for every $T\in \mathbb K(E)$.
\end{lemma}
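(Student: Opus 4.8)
The plan is to reduce everything to one bookkeeping identity: for a c.p.\ map $\phi\colon\mathfrak A\to\mathbb B(E)$ and any $T\in\mathbb B(E)$, moving $T$ through the inner product gives
\[
\sum_{i,j=1}^n\langle x_i,T^\ast\phi(a_i^\ast a a_j)Tx_j\rangle_E=\sum_{i,j=1}^n\langle Tx_i,\phi(a_i^\ast a a_j)Tx_j\rangle_E
\]
for all $a\in\mathfrak A$, $x_1,\dots,x_n\in E$ and $a_1,\dots,a_n\in\mathfrak A$. The right-hand side is precisely the map witnessing that $\phi$ is weakly in $\mathscr C$ at the moved vectors $Tx_1,\dots,Tx_n$. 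Hence, if $\phi$ is weakly in $\mathscr C$, then so is $T^\ast\phi(-)T$ for \emph{every} $T\in\mathbb B(E)$. Specialising to $T\in\mathbb K(E)$, the range lands in the ideal $\mathbb K(E)$, so $T^\ast\phi(-)T\in\mathscr C_E$; this already gives the forward implication of the ``moreover'' statement, and, applied to $\phi\in\mathscr C_E$, it verifies axiom $(2)$ of Definition~\ref{d:opconcon} for $\mathscr C_E$.

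For the remaining cone axioms I would argue directly. The cone property is immediate: for $\phi,\psi\in\mathscr C_E$ and $\lambda,\mu\geq0$ the map $\lambda\phi+\mu\psi$ still takes values in $\mathbb K(E)$, and its associated inner-product maps split additively, landing in $\mathscr C$ because $\mathscr C$ is a cone. For axiom $(3)$, with $\phi\in\mathscr C_E$, $a_1,\dots,a_n\in\mathfrak A$ and $S_1,\dots,S_n\in\mathbb K(E)$, I would expand the inner-product map of $\Psi:=\sum_{i,j}S_i^\ast\phi(a_i^\ast(-)a_j)S_j$ at vectors $x_1,\dots,x_m$ and elements $c_1,\dots,c_m$ and absorb the $S_i$ into the vectors and the $a_i$ into the elements: setting $y_{(i,k)}:=S_ix_k$ and $d_{(i,k)}:=c_ka_i$ collapses the quadruple sum to $\sum_{\alpha,\beta}\langle y_\alpha,\phi(d_\alpha^\ast a d_\beta)y_\beta\rangle_E$, which lies in $\mathscr C$ since $\phi$ is weakly in $\mathscr C$; as $\Psi$ takes values in $\mathbb K(E)$ we conclude $\Psi\in\mathscr C_E$. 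Closedness follows from joint continuity of the inner product: if $\phi_\gamma\to\phi$ point-norm with $\phi_\gamma\in\mathscr C_E$, then each associated inner-product map converges point-norm, its limit lies in $\mathscr C$ because $\mathscr C$ is closed, and $\phi(a)\in\mathbb K(E)$ because $\mathbb K(E)$ is norm-closed.

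The one step with genuine content is the reverse implication of the ``moreover'' statement. The difficulty is that the vectors $x_i$ appearing in the definition of $\phi$ being weakly in $\mathscr C$ need not lie in the range of any single compact operator, so one cannot simply read off the conclusion from a single conjugation. I would circumvent this with an approximate unit $(u_\lambda)$ of $\mathbb K(E)$. By hypothesis $u_\lambda^\ast\phi(-)u_\lambda\in\mathscr C_E$, so for fixed $x_1,\dots,x_n\in E$ and $a_1,\dots,a_n\in\mathfrak A$ the maps
\[
a\longmapsto\sum_{i,j=1}^n\langle u_\lambda x_i,\phi(a_i^\ast a a_j)u_\lambda x_j\rangle_E
\]
all lie in $\mathscr C$. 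Since $u_\lambda x_i\to x_i$ and the inner product is jointly continuous, these converge point-norm to $a\mapsto\sum_{i,j}\langle x_i,\phi(a_i^\ast a a_j)x_j\rangle_E$, which is therefore in $\mathscr C$ by closedness of $\mathscr C$; as the $x_i$ and $a_i$ were arbitrary, $\phi$ is weakly in $\mathscr C$. (One could instead factor $x_i=Ty_i$ simultaneously through a single $T\in\mathbb K(E)$ by Cohen's theorem, but the approximate-unit argument is cleaner and uses only that $\mathscr C$ is closed.) The main obstacle is exactly this passage from conjugations by compacts back to arbitrary vectors, and the approximate unit together with closedness of $\mathscr C$ resolves it.
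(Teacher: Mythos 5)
Your proposal is correct and follows essentially the same route as the paper's proof: the same reindexing identity (absorbing the compacts $S_i$ into the vectors and the $a_i$ into the algebra elements) establishes axiom $(3)$, and the same approximate-unit argument in $\mathbb K(E)$, combined with closedness of $\mathscr C$, handles the reverse implication of the ``moreover'' statement. The only cosmetic difference is that you verify axiom $(2)$ directly from the forward implication, whereas the paper dispenses with it by citing that, for point-norm closed sets, condition $(3)$ implies condition $(2)$ (Remark \ref{r:conjmulti}).
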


\begin{proof}
 $\mathscr C_E$ is obviously point-norm closed, and clearly satisfies conditions $(1)$ of Definition \ref{d:opconcon}. Since $\mathscr C_E$ is closed it suffices to check condition $(3)$.
 
 Let $\phi \in \mathscr C_E$, and let $a_1,\dots ,a_n \in \mathfrak A$ and $T_1,\dots,T_n \in \mathbb K(E)$. We should show that
\[
 a \mapsto \Phi(a) := \sum_{i,j=1}^n T_i^\ast \phi(a_i^\ast a a_j) T_j
\]
is weakly in $\mathscr C$. Let $x_1,\dots ,x_m \in E$ and $c_1,\dots,c_m \in \mathfrak A$. Then
\[
 a \mapsto \sum_{k,l=1}^m \langle x_k , \Phi( c_k^\ast a c_l) x_l \rangle = \sum_{k,l=1}^m \sum_{i,j=1}^n \langle T_ix_k , \phi((c_k a_i)^\ast a (c_l a_j)) T_j x_l \rangle
\]
is in $\mathscr C$, since $\phi$ is weakly in $\mathscr C$. Thus $\mathscr C_E$ is a closed operator convex cone.

If $\phi$ is weakly in $\mathscr{C}$ then $T^\ast \phi(-) T$ is weakly in $\mathscr C_E$ for $T$ since
\[
 \sum_{i,j=1}^n \langle x_i, T^\ast \phi(a_i^\ast a a_j) T x_j\rangle = \sum_{i,j=1}^n \langle Tx_i, \phi(a_i^\ast a a_j) (T x_j)\rangle.
\]
If conversely $T^\ast \phi(-) T$ is weakly in $\mathscr C_E$ for every $T$, and $(T_\lambda)$ is an approximate identity in $\mathbb K(E)$, then $\phi$ is weakly in $\mathscr C$ since
\[
 \sum_{i,j=1}^n \langle x_i, \phi(a_i^\ast a a_j) x_j\rangle = \lim_\lambda \sum_{i,j=1}^n \langle x_i, T_\lambda^\ast \phi(a_i^\ast a a_j)T_\lambda  x_j)\rangle.
\]
\end{proof}

In light of Remark \ref{r:weaklyinB} we observe, that if we identify $\mathfrak B$ with $\mathbb K(\mathfrak B)$, then $\mathscr C_{\mathbb K(\mathfrak B)} = \mathscr C$. Thus, by Lemma \ref{l:hilbertcone}, it makes sense to make the following definition.

\begin{definition}
 Let $\mathscr C \subset CP(\mathfrak A , \mathfrak B)$ be a closed operator convex cone. A c.p.~map $\phi \colon \mathfrak A \to \multialg{\mathfrak B}$ is said to be \emph{weakly in $\mathscr C$} if $b^\ast \phi(-)b$ is in
 $\mathscr C$ for every $b\in \mathfrak B$.
\end{definition}

To be fair, this definition is the original definition. The name \emph{weakly} is due to the fact, that $\phi \colon \mathfrak A \to \multialg{\mathfrak B}$ is weakly in $\mathscr C$ exactly when the composition 
$\mathfrak A \xrightarrow{\phi} \multialg{\mathfrak B} \hookrightarrow \mathfrak B^{\ast \ast}$ is in the point-weak closure of $\mathscr C$ in $CP(\mathfrak A, \mathfrak B^{\ast \ast})$. The proof of this is a simple Hahn--Banach separation argument.

To obtain a multiplier algebra version of the Kasparov--Stinespring Theorem, we apply the version for Hilbert modules, and obtain a dilating $\ast$-homomorphism $\Phi \colon \mathfrak A \to \multialg{\mathfrak B \otimes \mathbb K}$ which would be weakly in 
$\mathscr C^s$, 
where $\mathscr C^s \subset CP(\mathfrak A, \mathfrak B \otimes \mathbb K)$ is the closed operator convex cone corresponding to $\mathscr C_{\mathcal H_\mathfrak B}$ when identifying $\mathfrak B \otimes \mathbb K$ with $\mathbb K(\mathcal H_\mathfrak{B})$. 
However, it seems more desirable, if $\mathfrak B$ is stable, to obtain a dilating $\ast$-homomorphism $\Phi \colon \mathfrak A \to \multialg{\mathfrak B}$ which is weakly in $\mathscr C$. 

First we will show a one-to-one correspondence between closed operator convex cones of $CP(\mathfrak A, \mathfrak B)$ and $CP(\mathfrak A, \mathfrak B \otimes \mathbb K)$ which preserves countably generated cones.
In fact, we prove something more general.

\begin{proposition}\label{p:stablecone}
 Let $\mathfrak A,\mathfrak B$ and $\mathfrak D$ be $C^\ast$-algebras such that $\mathfrak D$ is simple and nuclear. Then there is a bijection 
 \begin{eqnarray*}
  \left\{ \begin{array}{c} \text{closed operator convex cones} \\ \text{in } CP(\mathfrak A, \mathfrak B) \end{array} \right\} &\leftrightarrow &
  \left\{ \begin{array}{c} \text{closed operator convex cones} \\ \text{in } CP(\mathfrak A, \mathfrak B \otimes \mathfrak D) \end{array} \right\} \\
  \mathscr C & \mapsto & \mathscr C \otimes \mathfrak D := K(\{ \phi(-) \otimes d : \phi \in \mathscr C\}) \\
  \{ (id_\mathfrak B \otimes \eta) \circ \psi  : \psi \in \mathscr K, \eta \text{ is a state on } \mathfrak D \} & \mapsfrom & \mathscr K,
 \end{eqnarray*}
   where $d\in \mathfrak D$ is any fixed non-zero, positive element, and $id_\mathfrak{B} \otimes \eta \colon \mathfrak B \otimes \mathfrak D \to \mathfrak B$ is the slice map.
   
 Moreover, if $\mathfrak D$ is separable, then $\mathscr C$ is countably generated if and only if $\mathscr C \otimes \mathfrak D$ is countably generated.
\end{proposition}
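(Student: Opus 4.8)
The plan is to identify the set-theoretic backward map with a manifestly well-behaved object and then verify the two round-trips. Given a closed operator convex cone $\mathscr K\subset CP(\mathfrak A,\mathfrak B\otimes\mathfrak D)$, I set $\Theta'(\mathscr K):=\{\sigma\in CP(\mathfrak A,\mathfrak B): \sigma(-)\otimes d\in\mathscr K\}$. Since $X\mapsto X\otimes d$ is linear, isometric up to the factor $\|d\|$, and satisfies $(b^\ast\sigma(-)b)\otimes d=(b\otimes1)^\ast(\sigma(-)\otimes d)(b\otimes1)$ together with the analogous identity for the maps in \eqref{eq:basicmap}, one checks directly that $\Theta'(\mathscr K)$ is a closed operator convex cone (conditions $(1)$–$(3)$ and point-norm closedness all transfer from $\mathscr K$, using the multiplier version of Remark \ref{r:conjmulti} for the conjugators $b_i\otimes1$, which are multipliers of $\mathfrak B\otimes\mathfrak D$). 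The first goal is to show that the backward map $\Theta(\mathscr K):=\{(id_\mathfrak B\otimes\eta)\circ\psi:\psi\in\mathscr K,\ \eta\text{ a state on }\mathfrak D\}$ equals $\Theta'(\mathscr K)$, so that it is automatically a closed operator convex cone; the bijection then amounts to the two identities $\Theta(\mathscr C\otimes\mathfrak D)=\mathscr C$ and $\Theta(\mathscr K)\otimes\mathfrak D=\mathscr K$.

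The inclusion $\Theta'(\mathscr K)\subseteq\Theta(\mathscr K)$ is immediate: if $\sigma\otimes d\in\mathscr K$ and $\eta$ is a state with $\eta(d)>0$ (one exists as $d\neq0$), then $\tfrac1{\eta(d)}\sigma\otimes d\in\mathscr K$ and $(id_\mathfrak B\otimes\eta)(\tfrac1{\eta(d)}\sigma\otimes d)=\sigma$. I can prove the identity $\Theta(\mathscr C\otimes\mathfrak D)=\mathscr C$ outright using only elementary facts: the inclusion $\mathscr C\subseteq\Theta(\mathscr C\otimes\mathfrak D)$ is the same rescaling trick applied to $\phi\otimes d$ with $\phi\in\mathscr C$, while for the reverse inclusion I take, by Remark \ref{r:generatedcone}, a generator-expression $\Psi=\sum_{i,j}B_i^\ast(\phi(a_i^\ast(-)a_j)\otimes d)B_j$ with $\phi\in\mathscr C$ and $B_i\in\mathfrak B\otimes\mathfrak D$, expand each $B_i$ in elementary tensors, and apply $id_\mathfrak B\otimes\eta$. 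Writing $e_\mu=d^{1/2}d_\mu$ for the $\mathfrak D$-legs, the coefficients become the positive Gram matrix $[\eta(e_\mu^\ast e_\nu)]$, and a Cholesky factorisation $\eta(e_\mu^\ast e_\nu)=\sum_r\overline{S_{r\mu}}S_{r\nu}$ collapses the double sum into $\sum_r\sum_{\mu,\nu}(S_{r\mu}\gamma_\mu)^\ast\phi(c_\mu^\ast(-)c_\nu)(S_{r\nu}\gamma_\nu)$, which lies in $\mathscr C$ by condition $(3)$. Continuity of $id_\mathfrak B\otimes\eta$ and closedness of $\mathscr C$ then give $(id_\mathfrak B\otimes\eta)\circ\psi\in\mathscr C$ for every $\psi\in\mathscr C\otimes\mathfrak D$.

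It remains to prove $\Theta(\mathscr K)\otimes\mathfrak D=\mathscr K$, and this is where the hypotheses on $\mathfrak D$ do the real work. Writing $\mathscr C=\Theta(\mathscr K)=\Theta'(\mathscr K)$, the inclusion $\mathscr C\otimes\mathfrak D\subseteq\mathscr K$ is trivial since its generators $\sigma\otimes d$ (with $\sigma\in\Theta'(\mathscr K)$) already lie in $\mathscr K$. The two genuinely hard points reduce to one statement about $\mathfrak D$. First, the outstanding inclusion $\Theta\subseteq\Theta'$, namely $(id_\mathfrak B\otimes\eta)(\psi)\otimes d\in\mathscr K$, follows once $x\mapsto\eta(x)d$ is a point-norm limit of \emph{elementary} maps $x\mapsto\sum_k f_k^\ast x f_k$ ($f_k\in\mathfrak D$): conjugating $\psi$ by the multipliers $1\otimes f_k$ keeps one in $\mathscr K$, and the limit $(id_\mathfrak B\otimes\eta)(\psi(-))\otimes d$ then lies in $\mathscr K$ by closedness. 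Second, the reconstruction $\mathscr K\subseteq\mathscr C\otimes\mathfrak D$ proceeds by approximating $id_\mathfrak D\approx\beta\circ\alpha$ through a matrix algebra $M_n$ (nuclearity), so that $\psi\approx\sum_{k,l}z_{kl}(-)\otimes c_{kl}$ with $z_{kl}=(id_\mathfrak B\otimes\alpha_{kl})\circ\psi$ and $[c_{kl}]\geq0$, and recognising this assembled map in $\mathscr C\otimes\mathfrak D$. Both points rest on the key lemma that for $\mathfrak D$ simple and nuclear the elementary maps are point-norm dense in $CP(\mathfrak D,\mathfrak D)$ (equivalently $K(\{id_\mathfrak D\})=CP(\mathfrak D,\mathfrak D)$), together with the auxiliary simplicity fact that $\{\sum_k g_k^\ast d g_k\}$ is norm-dense in $\mathfrak D_+$. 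Here nuclearity reduces an arbitrary c.p. map to one factoring through some $M_n$, and simplicity is used to transplant the finitely many Stinespring vectors into $\mathfrak D$ itself, exactly as in the model identity $\theta_{\zeta,\mu}^\ast\,x\,\theta_{\zeta,\mu}=\langle\zeta,x\zeta\rangle\,\theta_{\mu,\mu}$ on $\mathbb K$. I expect establishing this density lemma, and in particular its separability-free transplantation step, to be the main obstacle.

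For the final clause I assume $\mathfrak D$ separable. If $\mathscr C=K(\mathscr S)$ with $\mathscr S$ countable, then approximating each $\phi\in\mathscr C$ by generator-expressions over $\mathscr S$ and tensoring with the fixed $d$ (using continuity of $X\mapsto X\otimes d$) shows $\mathscr C\otimes\mathfrak D=K(\{\phi\otimes d:\phi\in\mathscr S\})$, a countable set; this direction uses neither simplicity nor separability. Conversely, if $\mathscr C\otimes\mathfrak D=K(\mathscr T)$ with $\mathscr T$ countable, I choose a countable weak$^\ast$-dense set of states $\{\eta_m\}$ on $\mathfrak D$ (possible by separability). Since $\eta\mapsto(id_\mathfrak B\otimes\eta)(X)$ is continuous from the weak$^\ast$-topology to the norm (uniform bound $\|id_\mathfrak B\otimes\eta\|=1$ plus convergence on elementary tensors), every slice $(id_\mathfrak B\otimes\eta)\circ\psi$ is a point-norm limit of slices by the $\eta_m$; combining this with the Gram computation above (which turns a slice of a generator-expression over $\mathscr T$ into a generator-expression over the slices $(id_\mathfrak B\otimes\eta_m)\circ\tau$) yields $\mathscr C=\Theta(\mathscr C\otimes\mathfrak D)=K(\{(id_\mathfrak B\otimes\eta_m)\circ\tau:m\in\mathbb N,\ \tau\in\mathscr T\})$, which is countably generated.
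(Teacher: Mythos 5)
Your overall architecture is reasonable, and one piece of it is genuinely correct: the identity $\Theta(\mathscr C\otimes\mathfrak D)=\mathscr C$ does follow from your Gram--Cholesky computation, precisely because the $\mathfrak D$-legs of the generators of $\mathscr C\otimes\mathfrak D$ all involve the \emph{fixed} element $d$, so that slicing a generator expression produces a scalar positive matrix that can be factored and absorbed into the $\mathfrak B$-legs. The auxiliary cone $\Theta'(\mathscr K)=\{\sigma:\sigma\otimes d\in\mathscr K\}$ is also a legitimate closed operator convex cone. But the proposal does not prove the proposition, because everything that remains --- the inclusion $\Theta(\mathscr K)\subseteq\Theta'(\mathscr K)$ (hence well-definedness of the backward map and the inclusion $\Theta(\mathscr K)\otimes\mathfrak D\subseteq\mathscr K$), and the reconstruction $\mathscr K\subseteq\Theta(\mathscr K)\otimes\mathfrak D$ --- is deferred to a ``key lemma'' ($K(\{id_\mathfrak D\})=CP(\mathfrak D,\mathfrak D)$ for $\mathfrak D$ simple and nuclear) that you explicitly flag as the main obstacle and never establish. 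This lemma is exactly where simplicity and nuclearity must do real work, so leaving it unproven leaves the proposition unproven. The excision route you sketch for it has concrete problems: Akemann--Anderson--Pedersen excision applies to states in the weak-$\ast$ closure of the pure states (so you need Glimm's theorem in the non-type-I case and a separate argument for $M_n$ and $\mathbb K$), and the transplantation step needs a bound on $\sum_k\|g_k\|^2$ in $c\approx\sum_k g_k^\ast d_\alpha^2 g_k$ that is uniform along the excising net, which is not available for free since the $g_k$ must be chosen \emph{after} $d_\alpha$. Moreover, even granting the key lemma, your reconstruction step is incomplete: factoring $id_\mathfrak D$ through $M_n$ yields $\psi\approx\sum_{k,l}z_{kl}(-)\otimes c_{kl}$ whose off-diagonal entries $z_{kl}$ are slices by matrix-coefficient functionals rather than states, and ``recognising this assembled map in $\mathscr C\otimes\mathfrak D$'' requires a joint positivity argument (combining the Stinespring structure of $\alpha$ with a factorisation $[c_{kl}]=Y^\ast Y$) that you do not supply; the same issue undermines the backward direction of your countable-generation claim, since your Gram computation applies to generator expressions over sets of the form $\{\phi\otimes d\}$, not over an arbitrary countable generating set $\mathscr T$ of $\mathscr C\otimes\mathfrak D$.

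For contrast, the paper's proof bypasses all of this by running everything through Theorem \ref{t:Kirchbergcone}: membership in a generated cone is characterised by ideal membership in $C^\ast(\mathbb F_\infty)\otimes_{\max{}}(-)$, and the hypotheses on $\mathfrak D$ enter only through the lattice isomorphism $\mathfrak J\mapsto\mathfrak J\otimes\mathfrak D$ between two-sided, closed ideals of $C^\ast(\mathbb F_\infty)\otimes_{\max{}}\mathfrak B$ and of $C^\ast(\mathbb F_\infty)\otimes_{\max{}}\mathfrak B\otimes\mathfrak D$, together with two elementary facts about how this isomorphism interacts with generators and with slice maps. Note that your key lemma is itself an easy consequence of that machinery: for $\mathscr S=\{id_\mathfrak D\}$ the ideal $\mathfrak I(c)$ of Theorem \ref{t:Kirchbergcone} is just the ideal generated by $c$, which by the lattice isomorphism has the form $\mathfrak J\otimes\mathfrak D$, and $(id\otimes\phi)(\mathfrak J\otimes\mathfrak D)\subseteq\mathfrak J\otimes\mathfrak D$ holds for \emph{every} $\phi\in CP(\mathfrak D,\mathfrak D)$. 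So the most economical repair of your argument is to prove or invoke Theorem \ref{t:Kirchbergcone} first --- at which point the detour through elementary maps and excision becomes unnecessary and the ideal-theoretic argument can be run directly.
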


To prove Proposition \ref{p:stablecone} we will need the following amazing result of Kirchberg. The result is a part of \cite[Theorem 9.3]{Kirchberg-permanence}. We have included a proof for completion.

\begin{theorem}[Kirchberg]\label{t:Kirchbergcone}
 Let $\mathscr S \subset CP(\mathfrak A, \mathfrak B)$ be a subset and $\phi \in CP(\mathfrak A, \mathfrak B)$. Then $\phi \in K(\mathscr S)$ if and only if for every positive $c\in C^\ast(\mathbb F_\infty) \otimes_{\max{}} \mathfrak A$, 
 the element $(id \otimes \phi)(c)$ is in the closed, two-sided ideal $\mathfrak I(c)$ of $C^\ast(\mathbb F_\infty) \otimes_{\max{}} \mathfrak B$ generated by
 \[
  \{ (id \otimes \psi)((1 \otimes a^\ast)c(1 \otimes a)) : \psi \in \mathscr S, a \in \mathfrak A\}.
 \]
\end{theorem}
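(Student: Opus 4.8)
The plan is to prove the two implications by showing that the set
\[
 \mathscr D := \{ \phi \in CP(\mathfrak A,\mathfrak B) : (id \otimes \phi)(c) \in \mathfrak I(c) \text{ for every positive } c \in C^\ast(\mathbb F_\infty) \otimes_{\max} \mathfrak A \}
\]
coincides with $K(\mathscr S)$. Throughout write $F = C^\ast(\mathbb F_\infty)$, let $u_1, u_2, \dots$ be the canonical unitary generators, and let $\tau$ be the canonical tracial state with $\tau(w) = 0$ for $e \neq w \in \mathbb F_\infty$, giving slice maps $\tau(u_p (-) u_q^\ast) \otimes id_\mathfrak{B} \colon F \otimes_{\max} \mathfrak B \to \mathfrak B$. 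The standing observations are that $\mathfrak I(c)$ depends only on $\mathscr S$ and $c$ (not on $\phi$), and that a positive $c$ may be approximated by $d^\ast d$ with $d = \sum_i u_i \otimes a_i$ a finite sum, $a_i \in \mathfrak A$.

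First I would prove $K(\mathscr S) \subseteq \mathscr D$ by checking that $\mathscr D$ is a closed operator convex cone containing $\mathscr S$; minimality of $K(\mathscr S)$ then gives the inclusion. That $\mathscr S \subseteq \mathscr D$ holds because $(id \otimes \psi)((1 \otimes a_\lambda^\ast) c (1 \otimes a_\lambda)) \to (id \otimes \psi)(c)$ along an approximate unit $(a_\lambda)$, exhibiting $(id \otimes \psi)(c)$ as a norm limit of generators of $\mathfrak I(c)$. The cone property is linearity of $c \mapsto (id \otimes \phi)(c)$, and condition $(2)$ holds since $(id \otimes (b^\ast \phi(-) b))(c) = (1 \otimes b^\ast)(id \otimes \phi)(c)(1 \otimes b)$ lies in the two-sided ideal $\mathfrak I(c)$. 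The only step requiring an idea is condition $(3)$: writing $(id \otimes \Phi)(c) = \sum_{i,j}(1 \otimes b_i^\ast)(id \otimes \phi)((1 \otimes a_i^\ast) c (1 \otimes a_j))(1 \otimes b_j)$, the off-diagonal compressions $(1 \otimes a_i^\ast) c (1 \otimes a_j)$ are not positive, but by polarization each is a finite linear combination of positive elements $(1 \otimes w^\ast) c (1 \otimes w)$ with $w \in \mathfrak A$; applying the defining condition to each such positive element, together with the easy inclusion $\mathfrak I((1 \otimes w^\ast) c (1 \otimes w)) \subseteq \mathfrak I(c)$, places every summand in $\mathfrak I(c)$. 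Closedness under point-norm limits is routine modulo a uniform-boundedness argument for the approximating net (as in Lemma \ref{l:classicalnuc}), which reduces the matter to finite tensors where convergence is transparent.

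The substance of the theorem is the reverse inclusion $\mathscr D \subseteq K(\mathscr S)$, for which I would argue by contraposition and Hahn--Banach separation. Since $K(\mathscr S)$ is a point-norm closed convex cone in $CP(\mathfrak A, \mathfrak B)$, a map $\phi \notin K(\mathscr S)$ can be separated from it by a point-norm continuous real functional $\Lambda(\psi) = \mathrm{Re} \sum_{k=1}^m f_k(\psi(a_k))$ with $\Lambda \leq 0$ on $K(\mathscr S)$ and $\Lambda(\phi) > 0$, where $a_k \in \mathfrak A$ and $f_k \in \mathfrak B^\ast$. The goal is to manufacture from this data a single positive element $c_0 = d^\ast d \in F \otimes_{\max} \mathfrak A$, with $d = \sum_k u_k \otimes a_k$, and a bounded functional $\Theta$ on $F \otimes_{\max} \mathfrak B$ assembled from the $f_k$ and the trace-slices $\tau(u_p(-)u_q^\ast) \otimes id_\mathfrak{B}$, using the orthogonality of the free generators to read off the off-diagonal ``matrix entries'' $\phi(a_p^\ast a_q)$. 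The separation inequalities, combined with invariance of $K(\mathscr S)$ under conditions $(2)$ and $(3)$, should then force $\Theta$ to annihilate every generator $(id \otimes \psi)((1 \otimes a^\ast) c_0 (1 \otimes a))$ of $\mathfrak I(c_0)$ while taking a nonzero value on $(id \otimes \phi)(c_0)$, whence $(id \otimes \phi)(c_0) \notin \mathfrak I(c_0)$ and $\phi \notin \mathscr D$.

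I expect the \textbf{main obstacle} to be precisely the construction of $\Theta$ annihilating the \emph{entire} two-sided ideal $\mathfrak I(c_0)$ rather than merely its generators: a general element of $\mathfrak I(c_0)$ is a norm limit of sums $\sum_l z_l g_l z_l'$ with arbitrary multipliers $z_l, z_l' \in (F \otimes_{\max} \mathfrak B)^\dagger$, and since the slice maps are not multiplicative one cannot naively distribute $\Theta$ across such products. This is exactly where the universal (lifting) property of $C^\ast(\mathbb F_\infty) \otimes_{\max} \mathfrak A$ and the abundance of mutually free generators are essential: one uses the free unitaries both to encode the finite index set $\{a_k\}$ orthogonally and to absorb the multipliers $z_l, z_l'$ into combinations of the form permitted by conditions $(2)$ and $(3)$, so that their contributions stay inside the half-space cut out by $\Lambda$. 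Making this absorption precise, and controlling the passage to the norm-closure defining $\mathfrak I(c_0)$, is the technical heart of the argument.
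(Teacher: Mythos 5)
Your first inclusion $K(\mathscr S) \subseteq \mathscr D$ is correct and is essentially the paper's own ``only if'' argument: polarization (the paper calls it polar decomposition of Hermitian forms) plus the cone axioms, together with the easy inclusion $\mathfrak I\bigl((1\otimes w)^\ast c(1\otimes w)\bigr) \subseteq \mathfrak I(c)$. The hard direction, however, has a genuine gap, and it sits exactly where you place the ``technical heart''. A certificate for $(id\otimes\phi)(c) \notin \mathfrak I(c)$ must be a functional (equivalently, a representation) that vanishes \emph{identically} on the closed two-sided ideal $\mathfrak I(c)$; since an ideal is a linear subspace invariant under two-sided multiplication, sign or half-space information of the type ``$\Lambda \le 0$ on $K(\mathscr S)$'' contributes nothing, so your plan of keeping the multipliers' contributions ``inside the half-space cut out by $\Lambda$'' cannot be turned into an argument. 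Moreover, the multipliers $z_l, z_l'$ have arbitrary legs in $\mathfrak B$, while free-group combinatorics acts only on the $C^\ast(\mathbb F_\infty)$ leg: the trace-slice functionals have no invariance property with respect to conjugation by $1 \otimes b$, $b \in \mathfrak B$, so they cannot absorb these multipliers. A further slip: the closed linear span of the free generators $u_i$ is a proper subspace of $C^\ast(\mathbb F_\infty)$ (the canonical trace kills every $u_i$ but not $1$), so positive elements are \emph{not} approximable by $d^\ast d$ with $d = \sum_i u_i \otimes a_i$; and you offer no argument that your explicit $c_0$, which only sees the products $a_p^\ast a_q$ rather than the elements $a_k$ at which the separation takes place, witnesses non-membership at all.

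The paper's resolution is genuinely different and supplies precisely the mechanism you are missing. By \cite[Lemma 7.17 (i)]{KirchbergRordam-absorbingOinfty} the separating states are realised as $f_i = \langle \pi(-)c_i\xi,\xi\rangle$ with $\pi$ a cyclic representation of $\mathfrak B$ and $c_i \in \pi(\mathfrak B)'$, and the universal property of $C^\ast(\mathbb F_\infty)$ is used to produce a surjection $p$ onto $C^\ast(1,c_1,\dots,c_n) \subseteq \pi(\mathfrak B)'$. This commutant positioning --- not an orthogonal encoding of indices --- is the true role of $\mathbb F_\infty$: it makes $p \times \pi$ a representation of $C^\ast(\mathbb F_\infty)\otimes_{\max}\mathfrak B$, so that $g = \langle (p\times\pi)(-)\xi,\xi\rangle$ is a \emph{positive} functional. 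Positivity is what converts ``vanishing on generators'' into ``vanishing on the ideal'': it suffices that the generators lie in the kernel of the GNS representation of $g$, i.e.\ that $g(w^\ast x w) = 0$ for every generator $x$ and every $w$, and this is checked by using cyclicity of $\xi$ and the commutation to replace $w$ by an element of the form $1\otimes b$, whose conjugation is then absorbed by the cone conditions $(2)$ and $(3)$. Your $\Theta$ is not positive, so even this route is unavailable as you set things up. Finally, the witness is not an explicit element: the paper forms the weak-$\ast$ closed, conjugation-invariant cone $\mathscr C_\rho$ of functionals $\rho_{\psi'} = g \circ (id\otimes\psi')$, $\psi' \in K(\mathscr S)$, and invokes the cone--ideal duality of \cite[Lemma 7.17 (ii)]{KirchbergRordam-absorbingOinfty}: since $\rho_\phi \notin \mathscr C_\rho$ by the separation, $\rho_\phi$ cannot vanish on the ideal $\mathfrak I = \{z : \rho'(z^\ast z)=0 \text{ for all } \rho' \in \mathscr C_\rho\}$, and any $z \in \mathfrak I$ with $\rho_\phi(z^\ast z) > 0$ yields the desired positive element $c = z^\ast z$. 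Without this representation-theoretic machinery, or an equivalent substitute for it, the second half of your proposal does not go through.
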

\begin{proof}
 ``only if'': For any positive $c\in C^\ast(\mathbb F_\infty) \otimes_{\max{}} \mathfrak A $ let $\mathscr C_c \subset CP(\mathfrak A, \mathfrak B)$ be the set of all maps $\psi$ such that $(id \otimes \psi)((1 \otimes a^\ast)c(1 \otimes a))$ is in $\mathfrak I(c)$.
 By polar decomposition of Hermitian forms, it follows that $(id \otimes \psi)((1 \otimes a^\ast)c(1 \otimes b)) \in \mathfrak I(c)$ for all $a, b\in \mathfrak A$ and $\psi \in \mathscr C_c$. 
 Hence it easily follows that $\mathscr C_c$ is a closed operator convex cone containing $\mathscr S$, and thus $K(\mathscr S) \subset \mathscr C_c$.
 
 ``if'': Suppose that $\phi \notin K(\mathscr S)$.
 A standard Hahn--Banach argument implies that there are $a_1,\dots, a_n\in \mathfrak A$, $\epsilon >0$ and $f_1,\dots, f_n$ states on $\mathfrak B$ such that for any $\psi \in K(\mathscr S)$ we have 
 $|f_i(\phi(a_i)) - f_i(\psi(a_i))| \geq \epsilon$ for some $i=1,\dots, n$.
 
 By \cite[Lemma 7.17 (i)]{KirchbergRordam-absorbingOinfty} there is a representation $\pi \colon \mathfrak B \to \mathbb B(\mathcal H)$ with a cyclic vector $\xi$, and $c_1,\dots, c_n \in \pi(\mathfrak B)'$ such that 
 $f_i(b) = \langle \pi(b) c_i \xi, \xi \rangle$. Let $p \colon C^\ast(\mathbb F_\infty) \to C^\ast(1,c_1,\dots,c_n)$ be a $\ast$-epimorphism and $d_1,\dots,d_n \in C^\ast(\mathbb F_\infty)$ be such that $p(d_i) = c_i$.
 For any $\phi' \in CP(\mathfrak A, \mathfrak B)$ we may construct a positive linear functional $\rho_{\phi'}$ on $C^\ast(\mathbb F_\infty) \otimes_{\max{}} \mathfrak A$ given on elementary tensors by
 $\rho_{\phi'}(d \otimes a) = \langle \pi(\phi'(a)) p(d) \xi , \xi\rangle$. In fact, this is just the composition
 \begin{equation}\label{eq:composition}
  C^\ast(\mathbb F_\infty) \otimes_{\max{}} \mathfrak A \xrightarrow{id \otimes \phi'} C^\ast(\mathbb F_\infty) \otimes_{\max{}} \mathfrak B \xrightarrow{p \times \pi} \mathbb B(\mathcal H) \xrightarrow{\langle (-)\xi,\xi\rangle} \mathbb C.
 \end{equation}
 Let $\mathscr C_\rho$ be the weak-$\ast$ closure of $\{ \rho_{\psi'} : \psi' \in K(\mathscr S)\}$.
 Since $K(\mathscr S)$ is a closed operator convex cone and $\xi$ is cyclic for the image of $\pi$, one easily checks (as done in the proof of \cite[Lemma 7.18]{KirchbergRordam-absorbingOinfty}) that $\mathscr C_\rho$ is a cone such that
 $\rho'(z^\ast(-)z)$ is in $\mathscr C_\rho$ for all $z \in C^\ast(\mathbb F_\infty) \otimes_{\max{}} \mathfrak A$ and all $\rho' \in \mathscr C_\rho$. 
 
 Let $\mathfrak I = \{ z \in C^\ast(\mathbb F_\infty) \otimes_{\max{}} \mathfrak A : \rho'(z^\ast z) = 0, \text{ for all } \rho' \in \mathscr C_\rho\}$. 
 By \cite[Lemma 7.17 (ii)]{KirchbergRordam-absorbingOinfty} $\mathfrak I$ is a two-sided, closed ideal in $C^\ast(\mathbb F_\infty) \otimes_{\max{}} \mathfrak A$ such that any positive linear functional vanishing on $\mathfrak I$ is in $\mathscr C_\rho$.
 Thus if $\rho_\phi$ vanished on $\mathfrak I$, then $\rho_\phi \in \mathscr C_\rho$ which implies that there would be a $\psi \in K(\mathscr S)$ such that $|\rho_\phi(a_i\otimes d_i) - \rho_\psi(a_i\otimes d_i)| < \epsilon$ for $i=1,\dots,n$. 
 However, this would imply that 
 \[
  f_i(\phi(a_i)) = \langle \pi(\phi(a_i)) p(d_i) \xi , \xi \rangle = \rho_\phi(a_i \otimes d_i) \approx_\epsilon \rho_\psi(a_i \otimes d_i) = f_i(\psi(a_i)),
 \]
 a contradiction. Thus there is a $z\in \mathfrak I$ such that $\rho_\phi(z^\ast z) >0$. Since $\mathfrak I$ is a two-sided, closed ideal containing the generators of $\mathfrak I(z^\ast z)$, it follows that $\mathfrak I(z^\ast z) \subset \mathfrak I$, 
 and thus $(id \otimes \phi)(z^\ast z) \notin \mathfrak I(z^\ast z)$.
\end{proof}

\begin{corollary}\label{c:Kirchbergcone}
 Let $\mathscr C \subset CP(\mathfrak A, \mathfrak B)$ be a closed operator convex cone and $\phi \in CP(\mathfrak A, \mathfrak B)$. Then $\phi \in \mathscr C$ if and only if for every positive $c\in C^\ast(\mathbb F_\infty) \otimes_{\max{}} \mathfrak A$, 
 the element $(id \otimes \phi)(c)$ is in the two-sided, closed ideal $C^\ast(\mathbb F_\infty) \otimes_{\max{}} \mathfrak B$ generated by
 \[
  \{ (id \otimes \psi)(c) : \psi \in \mathscr C\}.
 \]
\end{corollary}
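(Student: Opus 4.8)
The plan is to obtain this as an immediate consequence of Theorem \ref{t:Kirchbergcone} applied with $\mathscr S = \mathscr C$. Since $\mathscr C$ is itself a closed operator convex cone, $K(\mathscr C) = \mathscr C$, so the theorem already says that $\phi \in \mathscr C$ if and only if $(id \otimes \phi)(c)$ lies in $\mathfrak I(c)$ for every positive $c \in C^\ast(\mathbb F_\infty) \otimes_{\max{}} \mathfrak A$, where $\mathfrak I(c)$ is the two-sided, closed ideal generated by the elements $(id \otimes \psi)((1 \otimes a^\ast)c(1 \otimes a))$ with $\psi \in \mathscr C$ and $a \in \mathfrak A$. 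The entire content of the corollary is therefore to show that, because $\mathscr C$ is a closed operator convex cone rather than an arbitrary subset, this ideal coincides with the ideal $\mathfrak J(c)$ generated by the simpler set $\{(id \otimes \psi)(c) : \psi \in \mathscr C\}$.

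For the inclusion $\mathfrak J(c) \subset \mathfrak I(c)$, I would fix $\psi \in \mathscr C$ and let $(a_\lambda)$ be an approximate identity in $\mathfrak A$. As $C^\ast(\mathbb F_\infty)$ is unital, $(1 \otimes a_\lambda)$ is a two-sided approximate identity for $C^\ast(\mathbb F_\infty) \otimes_{\max{}} \mathfrak A$, so $(1 \otimes a_\lambda)c(1 \otimes a_\lambda) \to c$; applying the contractive map $id \otimes \psi$ exhibits $(id \otimes \psi)(c)$ as a norm-limit of generators of $\mathfrak I(c)$, and hence as an element of the closed ideal $\mathfrak I(c)$.

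For the reverse inclusion $\mathfrak I(c) \subset \mathfrak J(c)$, the key point is that for each fixed $a \in \mathfrak A$ the map $\psi' := \psi(a^\ast(-)a)$ again lies in $\mathscr C$; this is condition $(3)$ of Definition \ref{d:opconcon} with $n = 1$, in the form permitting multiplier coefficients from Remark \ref{r:conjmulti}, taking the $\mathfrak A$-coefficient to be $a$ and the $\mathfrak B$-coefficient to be $1 \in \multialg{\mathfrak B}$. Comparing the two maps on elementary tensors $x \otimes y$ gives $x \otimes \psi(a^\ast y a)$ in both cases, so by linearity and continuity
\[
(id \otimes \psi)((1 \otimes a^\ast)c(1 \otimes a)) = (id \otimes \psi')(c)
\]
for every $c$. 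As $\psi' \in \mathscr C$, the right-hand side is a generator of $\mathfrak J(c)$, whence every generator of $\mathfrak I(c)$ lies in $\mathfrak J(c)$. Combining the two inclusions gives $\mathfrak I(c) = \mathfrak J(c)$, and the corollary follows from Theorem \ref{t:Kirchbergcone}. The only mildly delicate step is the displayed identity, which is precisely where the operator convex cone structure of $\mathscr C$ is used; the rest is a routine approximate-identity argument.
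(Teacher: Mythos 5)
Your proposal is correct and takes essentially the same route as the paper: the paper's own one-line proof applies Theorem~\ref{t:Kirchbergcone} with $\mathscr S = \mathscr C$ and invokes exactly your displayed identity $(id \otimes \psi)((1 \otimes a^\ast)c(1 \otimes a)) = (id \otimes \psi(a^\ast(-)a))(c)$ together with the fact that $\psi(a^\ast(-)a) \in \mathscr C$. The only difference is that you also spell out the reverse inclusion $\mathfrak J(c) \subset \mathfrak I(c)$ by the approximate-identity argument --- a routine step the paper leaves implicit, even though it is the inclusion actually needed for the nontrivial ``if'' direction --- so your write-up is, if anything, the more complete one.
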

\begin{proof}
 This follow from Theorem \ref{t:Kirchbergcone} since $(id \otimes \psi)((1 \otimes a^\ast)c(1 \otimes a)) = (id \otimes \psi(a^\ast (-) a))(c)$ and $\psi(a^\ast (-) a)$ is in $\mathscr C$, for all $\psi \in \mathscr C$ and $a\in \mathfrak A$.
\end{proof}

Using that two-sided, closed ideals are hereditary $C^\ast$-subalgebras, we immediately obtain the following corollary. 
Although the corollary will not be used in this paper, it somehow illustrates how large a closed operator convex cone necessarily is, even though these are often generated by only a single c.p.~map (see e.g.~Corollary \ref{c:Cfullmap}).

\begin{corollary}\label{c:hercone}
 Any closed operator convex cone $\mathscr C$ is hereditary, i.e.~if $\phi,\psi$ are c.p.~maps such that $\phi + \psi \in \mathscr C$, then $\phi,\psi \in \mathscr C$.
\end{corollary}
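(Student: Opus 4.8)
The plan is to deduce this immediately from the ideal-theoretic characterisation of membership in $\mathscr C$ provided by Corollary \ref{c:Kirchbergcone}, combined with the hint that two-sided, closed ideals of a $C^\ast$-algebra are hereditary. Suppose $\phi + \psi \in \mathscr C$; by symmetry it suffices to show $\phi \in \mathscr C$. Fix a positive element $c \in C^\ast(\mathbb F_\infty) \otimes_{\max} \mathfrak A$ and let $\mathfrak J(c)$ denote the two-sided, closed ideal of $C^\ast(\mathbb F_\infty) \otimes_{\max} \mathfrak B$ generated by $\{ (id \otimes \psi')(c) : \psi' \in \mathscr C\}$.

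First I would observe that the maps $id \otimes \phi$ and $id \otimes \psi$ are completely positive on the maximal tensor product, being tensor products of the $\ast$-homomorphism $id$ on $C^\ast(\mathbb F_\infty)$ with c.p.~maps. Consequently both $(id \otimes \phi)(c)$ and $(id \otimes \psi)(c)$ are positive elements of $C^\ast(\mathbb F_\infty) \otimes_{\max} \mathfrak B$. Their sum equals $(id \otimes (\phi+\psi))(c)$, which lies in $\mathfrak J(c)$ precisely because $\phi + \psi \in \mathscr C$, by the ``only if'' direction of Corollary \ref{c:Kirchbergcone}.

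The key step is then the order estimate $0 \leq (id \otimes \phi)(c) \leq (id \otimes \phi)(c) + (id \otimes \psi)(c) = (id \otimes (\phi + \psi))(c) \in \mathfrak J(c)$. Since $\mathfrak J(c)$ is a two-sided, closed ideal, it is a hereditary $C^\ast$-subalgebra, so the positive element $(id \otimes \phi)(c)$, being dominated by an element of $\mathfrak J(c)$, itself belongs to $\mathfrak J(c)$. As $c$ ranged over an arbitrary positive element of $C^\ast(\mathbb F_\infty) \otimes_{\max} \mathfrak A$, the ``if'' direction of Corollary \ref{c:Kirchbergcone} now yields $\phi \in \mathscr C$; interchanging the roles of $\phi$ and $\psi$ gives $\psi \in \mathscr C$ as well.

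Since the entire analytic content has already been absorbed into the genuinely difficult Kirchberg-type Corollary \ref{c:Kirchbergcone}, I do not anticipate any real obstacle in this argument. The only point deserving a moment's care is confirming the positivity of $(id \otimes \phi)(c)$ and $(id \otimes \psi)(c)$, which is immediate from complete positivity of the relevant maps on the maximal tensor product, and the standard fact that two-sided, closed ideals are order-hereditary.
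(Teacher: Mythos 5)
Your proof is correct and is precisely the argument the paper intends: the paper derives this corollary "immediately" from Corollary \ref{c:Kirchbergcone} together with the fact that two-sided, closed ideals are hereditary, which is exactly your order estimate $0 \leq (id \otimes \phi)(c) \leq (id \otimes (\phi+\psi))(c) \in \mathfrak J(c)$. The only cosmetic simplification available is that $(id \otimes (\phi+\psi))(c)$ is itself one of the generators of $\mathfrak J(c)$, so invoking the "only if" direction of Corollary \ref{c:Kirchbergcone} is not even needed for that step.
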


\begin{proof}[Proof of Proposition \ref{p:stablecone}]
  Since $\mathfrak D$ is nuclear we have 
  \[
    C^\ast(\mathbb F_\infty) \otimes_{\max{}} (\mathfrak B \otimes \mathfrak D) = (C^\ast(\mathbb F_\infty) \otimes_{\max{}} \mathfrak B) \otimes \mathfrak D, 
  \]
  thus we will simply write 
  $C^\ast(\mathbb F_\infty) \otimes_{\max{}} \mathfrak B \otimes \mathfrak D$. Since $\mathfrak D$ is simple and nuclear, there is a lattice isomorphism between the lattice of two-sided, closed ideals in $C^\ast(\mathbb F_\infty) \otimes_{\max{}} \mathfrak B$ and in
  $C^\ast(\mathbb F_\infty) \otimes_{\max{}} \mathfrak B \otimes \mathfrak D$ given by $\mathfrak J \mapsto \mathfrak J \otimes \mathfrak D$.
  
  Let $\mathfrak C = C^\ast(\mathbb F_\infty) \otimes_{\max{}} \mathfrak B$. For a subset $\mathscr S \subset CP(\mathfrak A, \mathfrak B)$ and $c\in C^\ast(\mathbb F_\infty) \otimes_{\max{}} \mathfrak A$, 
  let $I_\mathscr{S}(c)$ be the two-sided, closed ideal in $\mathfrak C$ generated by 
  \[
   \{ (id \otimes \psi)((1 \otimes a^\ast)c(1 \otimes a)) : \psi\in \mathscr S, a\in \mathfrak A\}. 
  \]
  By Theorem \ref{t:Kirchbergcone} a c.p.~map $\phi \in CP(\mathfrak A, \mathfrak B)$ is in $K(\mathscr S)$ if and only if $(id \otimes \phi)(c) \in I_\mathscr{S}(c)$ for all positive $c \in C^\ast(\mathbb F_\infty) \otimes_{\max{}} \mathfrak A$.
  
  Similarly, for a subset $\mathscr S_0 \subset CP(\mathfrak A, \mathfrak B \otimes \mathfrak D)$ and $c\in C^\ast(\mathbb F_\infty) \otimes_{\max{}} \mathfrak A$, let $J_{\mathscr S_0}(c)$ be the two-sided, closed ideal in $\mathfrak C$ 
  (the existence of which follows from the lattice isomorphism above), such that $J_{\mathscr{S}_0}(c) \otimes \mathfrak D$
  is the two-sided, closed ideal generated by 
  \[
   \{ (id \otimes \psi)((1\otimes a^\ast)c(1 \otimes a)) : \psi \in \mathscr S_0, a\in \mathfrak A\}. 
  \]
  Again by Theorem \ref{t:Kirchbergcone} a c.p.~map $\psi \in CP(\mathfrak A, \mathfrak B\otimes \mathfrak D)$ is in $K(\mathscr S_0)$ if and only if $(id \otimes \psi)(c) \in J_{\mathscr{S}_0}(c)\otimes \mathfrak D$ 
  for all positive $c \in C^\ast(\mathbb F_\infty) \otimes_{\max{}} \mathfrak A$.
  
  Let $F$ be the map which takes a closed operator convex cone $\mathscr K$ in $CP(\mathfrak A, \mathfrak B \otimes \mathfrak D)$ to
  \[
   K(\{ (id_\mathfrak B \otimes \eta) \circ \psi  : \psi \in \mathscr K, \eta \in S(\mathfrak D) \}),
  \]
  where $S(\mathfrak D)$ is the space of states on $\mathfrak D$. Note that this is the closed operator convex cone \emph{generated} by the set which, in the statement of the proposition, is claimed to be a closed operator convex cone.
  
  We will show that $F(\mathscr C \otimes \mathfrak D) = \mathscr C$ and $F(\mathscr K) \otimes \mathfrak D = \mathscr K$.  
  We will use the following facts which we state without proof. 
  
  Fact 1: Let $\mathfrak S \subset \mathfrak C$ be a set of positive elements and $\mathfrak J(\mathfrak S)$ be the two-sided, closed ideal in $\mathfrak C$ generated by $\mathfrak S$.
  Then for any positive non-zero $d\in \mathfrak D$ (which is full in $\mathfrak D$ since $\mathfrak D$ is simple), $\mathfrak J(\mathfrak S) \otimes \mathfrak D$ is the two-sided, closed ideal generated by the set 
  \[
   \{ x \otimes d : x \in \mathfrak S\}.
  \]
    
  Fact 2: (Cf.~\cite[Corollary IV.3.4.2]{Blackadar-book-opalg}) 
  Similarly, suppose $\mathfrak S_0 \subset \mathfrak C \otimes \mathfrak D$. Let $\mathfrak J(\mathfrak S_0)$ be the two-sided, closed ideal in $\mathfrak C$ such that $\mathfrak J(\mathfrak S_0) \otimes \mathfrak D$ 
  is the two-sided, closed ideal in $\mathfrak C \otimes \mathfrak D$ generated by $\mathfrak S_0$ (the existence of $\mathfrak J(\mathfrak S_0)$ follows from what we noted above). 
  Then $\mathfrak J(\mathfrak S_0)$ is the two-sided, closed ideal generated by the set 
  \[
   \{ (id_{\mathfrak C} \otimes \eta)(y) : y\in \mathfrak S_0, \eta \in S(\mathfrak D)\}.
  \]
   If $\mathfrak D$ is separable, it suffices to take the $\eta$ above only in a countable dense subset of $S( \mathfrak D)$.
  
  From Fact 1 it follows that if $\mathscr C \subset CP(\mathfrak A, \mathfrak B)$ is a closed operator convex cone, then $I_\mathscr{C}(c) = J_{\mathscr C \otimes \mathfrak D}(c)$ for any positive $c\in C^\ast(\mathbb F_\infty) \otimes_{\max{}} \mathfrak A$.
  To see this, let $\mathscr C_d := \{ \phi \otimes d : \phi \in \mathscr C\}$ and note that by definition $\mathscr C \otimes \mathfrak D = K(\mathscr C_d)$. Thus from Fact 1 we get that
  \[
   I_\mathscr{C}(c) \otimes \mathfrak D = ideal(\{ (id \otimes \phi \otimes d)((1\otimes a)^\ast c (1 \otimes a)): \phi \in \mathscr C, a\in \mathfrak A\}) = J_{\mathscr C_d}(c) \otimes \mathfrak D = J_{\mathscr C \otimes \mathfrak D}(c) \otimes \mathfrak D,
  \]
  where $ideal(\mathfrak S_0)$ means the two-sided, closed ideal generated by $\mathfrak S_0$.
  
  Similarly, from Fact 2 it follows that if $\mathscr K \subset CP(\mathfrak A, \mathfrak B \otimes \mathfrak D)$ is a closed operator convex cone, 
  then $I_{F(\mathscr K)}(c) = J_{\mathscr K}(c)$ for any positive $c\in C^\ast(\mathbb F_\infty) \otimes_{\max{}} \mathfrak A$.
  Thus 
  \[
   I_{F(\mathscr C \otimes \mathfrak D)}(c) = J_{\mathscr C \otimes \mathfrak D}(c) = I_{\mathscr C}(c)
  \]
  and
  \[
   J_{F(\mathscr K) \otimes \mathfrak D}(c) = I_{F(\mathscr K)}(c) = J_\mathscr{K}(c),
  \]
  for all positive $c\in C^\ast(\mathbb F_\infty) \otimes_{\max{}} \mathfrak A$. Thus it follows that $\mathscr C = F(\mathscr C \otimes \mathfrak D)$ and $\mathscr K = F(\mathscr K) \otimes \mathfrak D$.
  
  It remains to show that $\mathscr S := \{ (id_\mathfrak B \otimes \eta) \circ \psi  : \psi \in \mathscr K, \eta \in S(\mathfrak D) \}$ is a closed operator convex cone for any closed operator convex cone $\mathscr K$,
  and that $\mathscr C$ is countably generated if and only if $\mathscr C \otimes \mathfrak D$ is countably generated.
  
  It follows from what we have already proven that $\mathscr K$ above is of the form $\mathscr C \otimes \mathfrak D$. Thus for any $\phi \in \mathscr C$, $\phi \otimes d \in \mathscr K$. 
  Choosing a state $\eta$ on $\mathfrak D$ such that $\eta(d) > 0$, it follows that 
  \[
   \phi = (id \otimes \eta)\circ ((\eta(d)^{-1}\phi) \otimes d) \in \mathscr S
  \]
  and thus $\mathscr C \subset \mathscr S$. Since $K(\mathscr S) = \mathscr C$ it follows that $\mathscr S = \mathscr C$.
  
  It follows from Fact 1 that $\mathscr C \otimes \mathfrak D$ is countably generated if $\mathscr C$ is countably generated. To see this, let $\mathscr S \subset \mathscr C$ be a countable subset generating $\mathscr C$.
  Then $I_{\mathscr S}(c) = I_{\mathscr C}(c) = J_{\mathscr C \otimes \mathfrak D}(c)$ for all positive $c\in C^\ast(\mathbb F_\infty) \otimes_{\max{}} \mathfrak A$.
  Let $\mathscr S_d = \{ \phi \otimes d : \phi \in \mathscr S\}$. It follows from Fact 1 that
  \[
   J_{\mathscr C \otimes \mathfrak D}(c) \otimes \mathfrak D = I_{\mathscr S}(c) \otimes \mathfrak D = ideal(\{ (id \otimes \phi \otimes d)((1 \otimes a)^\ast c(1 \otimes a)) : \phi \in \mathscr S, a\in \mathfrak A\}) = J_{\mathscr S_d}(c) \otimes \mathfrak D
  \]
  which implies that $\mathscr C \otimes \mathfrak D$ is generated by the countable set $\mathscr S_d$.
  
  A similar argument implies, that if $\mathfrak D$ is separable then $F(\mathscr K)$ is countably generated if $\mathscr K$ is countably generated.
\end{proof}

\begin{notation}
 Let $\mathscr C \subset CP(\mathfrak A, \mathfrak B)$ be a closed operator convex cone. We let $\mathscr C^s\subset CP(\mathfrak A, \mathfrak B \otimes \mathbb K)$ denote the closed operator convex cone generated by $\mathscr C$ by Proposition \ref{p:stablecone}. 
\end{notation}

In this section we will only apply the following lemma when $p$ is an isomorphism. However, the more general statement will be applied when proving a Choi--Effros type theorem, Corollary \ref{c:choieffros}.

\begin{lemma}\label{l:closedliftable}
Let $\mathfrak A, \mathfrak B$ and $\mathfrak C$ be $C^\ast$-algebras with $\mathfrak A$ separable, and let $p \colon \mathfrak B \to \mathfrak C$ be a surjective $\ast$-homomorphism.
Let $\mathscr C \subset CP(\mathfrak A,\mathfrak B)$ be a closed operator convex cone. Then
\[
 p(\mathscr C) := \{ p \circ \phi : \phi \in \mathscr C\}
\]
is a closed operator convex cone. Also, the set
\[
 \{ p \circ \phi : \phi \in \mathscr C, \| \phi \| \leq 1\}
\]
is point-norm closed.
\end{lemma}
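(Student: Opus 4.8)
The plan is to verify the algebraic cone axioms directly, and then to isolate the point-norm closedness of the \emph{norm-bounded} set as the single technical core from which both closedness assertions follow.

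First I would check that $p(\mathscr C)$ satisfies conditions $(1)$ and $(3)$ of Definition \ref{d:opconcon}. Condition $(1)$ is immediate since $p\circ(\lambda\phi+\psi)=\lambda(p\circ\phi)+(p\circ\psi)$. For condition $(3)$, given $p\circ\phi\in p(\mathscr C)$, elements $a_1,\dots,a_n\in\mathfrak A$ and $c_1,\dots,c_n\in\mathfrak C$, I use surjectivity of $p$ to choose lifts $c_i=p(b_i)$ with $b_i\in\mathfrak B$; then $\sum_{i,j}c_i^\ast(p\circ\phi)(a_i^\ast(-)a_j)c_j=p\circ\big(\sum_{i,j}b_i^\ast\phi(a_i^\ast(-)a_j)b_j\big)$ lies in $p(\mathscr C)$ because the inner sum is in $\mathscr C$. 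Once closedness is established, condition $(2)$ follows from $(3)$ by Remark \ref{r:conjmulti}.

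The heart is the second assertion: $\{p\circ\phi:\phi\in\mathscr C,\ \|\phi\|\le1\}$ is point-norm closed. Suppose $p\circ\phi_n\to\chi$ point-norm with $\phi_n\in\mathscr C$ contractive; using that $\mathfrak A$ is separable I fix a dense sequence $(a_k)$ and pass to a subsequence with $\|p(\phi_{n+1}(a_k))-p(\phi_n(a_k))\|<2^{-n}$ for $k\le n$. Let $\mathfrak J=\ker p$ and let $(u_\lambda)$ be a quasicentral approximate unit for $\mathfrak J$ in $\mathfrak B$. I construct contractive maps $\psi_n\in\mathscr C$ lifting $p\circ\phi_n$ by the recursion $\psi_1=\phi_1$ and
\[
\psi_{n+1}=(1-u_n)^{1/2}\phi_{n+1}(-)(1-u_n)^{1/2}+u_n^{1/2}\psi_n(-)u_n^{1/2},
\]
where $u_n$ is chosen far enough along the net (depending on the already-constructed $\psi_n$, on $\phi_{n+1}$, and on $a_1,\dots,a_n$). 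Each $\psi_n$ is contractive and lies in $\mathscr C$: the first summand is a conjugation by $(1-u_n)^{1/2}\in\mathfrak B^\dagger\subset\multialg{\mathfrak B}$, handled by Remark \ref{r:conjmulti}, and the second by condition $(2)$; and since $p(u_n)=0$ one gets $p\circ\psi_{n+1}=p\circ\phi_{n+1}$, so $p\circ\psi_n=p\circ\phi_n$ throughout. The key estimate is that, for each fixed $a_k$, the difference $\psi_{n+1}(a_k)-\psi_n(a_k)$ becomes small: writing it as $(1-u_n)^{1/2}(\phi_{n+1}(a_k)-\psi_n(a_k))(1-u_n)^{1/2}$ plus a quasicentral error tending to $0$, and using that $\phi_{n+1}(a_k)-\psi_n(a_k)$ is within $2^{-n}$ of $\mathfrak J$ (its image under $p$ is exactly $p\phi_{n+1}(a_k)-p\phi_n(a_k)$), the factor $(1-u_n)^{1/2}$ annihilates the $\mathfrak J$-part once $u_n$ is large. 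Thus $(\psi_n(a_k))_n$ is Cauchy for every $k$, and uniform contractivity promotes convergence on the dense set to a point-norm limit $\phi\in\mathscr C$ (as $\mathscr C$ is closed) with $\|\phi\|\le1$ and $p\circ\phi=\lim p\circ\phi_n=\chi$.

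Finally I deduce that $p(\mathscr C)$ itself is closed. Given $\chi$ in the point-norm closure, a defining net $p\circ\phi_i\to\chi$ converges at every point of $\mathfrak A$, so by the uniform boundedness principle $\sup_i\|p\circ\phi_i\|=:M<\infty$; extracting (via separability) a subsequence convergent on a dense set and using this uniform bound, I obtain $\phi_n\in\mathscr C$ with $\|p\circ\phi_n\|\le M$ and $p\circ\phi_n\to\chi$ point-norm. A norm-reduction step then replaces each $\phi_n$ by a map of norm $\le M$ with the same image: taking $\hat x_n\in\multialg{\mathfrak B}$ the strict limit of $\phi_n$ on an approximate unit and $e_n=f(\hat x_n)$ for a cutoff function $f$ equal to $1$ on $[0,M]$, the map $e_n\phi_n(-)e_n$ lies in $\mathscr C$ by Remark \ref{r:conjmulti}, has norm $\le M$, and satisfies $p\circ(e_n\phi_n(-)e_n)=p\circ\phi_n$ since $f$ acts as the identity on the spectrum of the (unital) extension of $p$ applied to $\hat x_n$, which has norm $\le M$. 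Scaling the bounded statement by $M$ now places $\chi$ in $p(\mathscr C)$. I expect the main obstacle to be the interpolation estimate of the third paragraph: arranging a single choice of $u_n$ that \emph{simultaneously} keeps $\psi_{n+1}$ in $\mathscr C$, preserves the image $p\circ\phi_{n+1}$ exactly, and forces norm-convergence of $(\psi_n(a_k))_n$, since these requirements pull in different directions and all rest on a careful use of the quasicentrality and approximate-unit properties of $(u_\lambda)$.
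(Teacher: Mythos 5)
Your verification of the cone axioms and your proof that $\{p\circ\phi:\phi\in\mathscr C,\ \|\phi\|\le 1\}$ is point-norm closed are correct, and the latter is precisely the argument the paper has in mind: the paper's proof consists of the remark that one runs Arveson's proof that liftable contractive c.p.~maps form a closed set, and your recursion $\psi_{n+1}=(1-u_n)^{1/2}\phi_{n+1}(-)(1-u_n)^{1/2}+u_n^{1/2}\psi_n(-)u_n^{1/2}$ is the faithful adaptation that keeps all lifts inside $\mathscr C$. The genuine problems are in your last paragraph, where you deduce that the unbounded set $p(\mathscr C)$ is closed. First, you apply the uniform boundedness principle to a \emph{net}: this fails. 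Point-norm closedness must be tested on nets, since the point-norm topology on $CP(\mathfrak A,\mathfrak C)$ is not first countable on unbounded sets even for separable $\mathfrak A$ (metrizability via a dense sequence requires equicontinuity, i.e.~a norm bound). A pointwise convergent net need not be pointwise bounded --- only a tail, depending on the point, is bounded --- so $\sup_i\|p\circ\phi_i\|<\infty$ is unjustified, and without that bound convergence on a dense subset does not propagate to all of $\mathfrak A$, so you cannot even extract your sequence.

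Second, your norm-reduction step rests on an object that need not exist: for a c.p.~map $\phi_n\colon\mathfrak A\to\mathfrak B$ the net $(\phi_n(e_\lambda))_\lambda$ generally has no strict limit in $\multialg{\mathfrak B}$, because c.p.~maps need not be strict. Already the inclusion $C_0((0,1])\hookrightarrow C([0,1])$ fails: here the strict topology on $\multialg{C([0,1])}=C([0,1])$ is the norm topology, and an approximate unit of $C_0((0,1])$ does not converge in norm. So $\hat x_n$, and hence the cutoff $e_n=f(\hat x_n)$, is undefined; passing to weak-$\ast$ limits in $\mathfrak B^{\ast\ast}$ does not help, because conjugation by elements of $\mathfrak B^{\ast\ast}$ does not preserve $\mathscr C$. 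Both gaps can be closed using the cone structure itself, in the spirit of Remark~\ref{r:conjmulti}. Given $\chi$ in the closure of $p(\mathscr C)$, a finite set $F$ of contractions and $\epsilon>0$, choose a positive contraction $e\in\mathfrak A$ with $\|eae-a\|$ small for $a\in F$, and $\phi\in\mathscr C$ with $p\circ\phi$ close to $\chi$ on $F\cup\{e^2\}\cup eFe$. Then $\phi(e(-)e)\in\mathscr C$ (condition $(3)$ plus closedness), its composition with $p$ is still close to $\chi$ on $F$, and its norm equals $\|\phi(e^2)\|$, i.e.~it is witnessed by the single element $\phi(e^2)$, whose distance to $\ker p$ is $\|p\phi(e^2)\|\le\|\chi\|+\epsilon$. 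Compressing once more by $(1-u_\lambda)^{1/2}$, with $(u_\lambda)$ an approximate unit for $\ker p$ and $\lambda$ large, does not change the composition with $p$ (as $(1-u_\lambda)^{1/2}-1\in\ker p$) but kills the $\ker p$-part of $\phi(e^2)$, producing a map in $\mathscr C$ of norm at most $\|\chi\|+2\epsilon$. Hence $\chi$ lies in the point-norm closure of a norm-bounded part of $p(\mathscr C)$, and your (correct, rescaled) contractive-case argument then yields $\chi\in p(\mathscr C)$.
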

\begin{proof}
 Clearly $p(\mathscr C)$ is an operator convex cone. The proof that the two sets above are point-norm closed is essentially the exact same proof as \cite[Theorem 6]{Arveson-extensions} (that the set of liftable (contractive) c.p.~maps is closed).
\end{proof}

Using Theorem \ref{t:Kirchbergcone} the following is obvious.

\begin{corollary}\label{c:stablecone2}
 Let $\mathscr C \subset CP(\mathfrak A,\mathfrak B)$ be a closed operator convex cone and suppose that $\mathfrak B = \mathfrak B_0 \otimes \mathbb K$. If $\Psi' \colon \mathbb K \otimes \mathbb K \to \mathbb K$ is an isomorphism and
 $\Psi = id_{\mathfrak{B}_0} \otimes \Psi' \colon \mathfrak B \otimes \mathbb K \to \mathfrak B$ is the induced isomorphism, then $\Psi(\mathscr C^s) = \mathscr C$. 
\end{corollary}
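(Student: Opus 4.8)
The plan is to establish the two inclusions $\Psi(\mathscr C^s)\subseteq\mathscr C$ and $\mathscr C\subseteq\Psi(\mathscr C^s)$ directly, exploiting the fact that every $\ast$-isomorphism of the compacts is spatial. First I would record that $\Psi(\mathscr C^s)=\{\Psi\circ\psi:\psi\in\mathscr C^s\}$ is again a closed operator convex cone, since post-composing with the $\ast$-isomorphism $\Psi$ preserves all of conditions $(1)$--$(3)$ of Definition \ref{d:opconcon} together with point-norm closedness. Writing $\mathfrak B=\mathfrak B_0\otimes\mathbb K$ and $\Psi=id_{\mathfrak B_0}\otimes\Psi'$, recall from Proposition \ref{p:stablecone} that $\mathscr C^s=\mathscr C\otimes\mathbb K=K(\{\phi(-)\otimes d:\phi\in\mathscr C\})$ for a fixed non-zero positive $d\in\mathbb K$. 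As $\Psi$ commutes with the $K(-)$ operation, it suffices for the first inclusion to show that each generator $\Psi\circ(\phi(-)\otimes d)$ lies in $\mathscr C$, and for the second to recover each $\phi\in\mathscr C$ from such a generator by cone operations.

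For the key computation I would use that the isomorphism $\Psi'\colon\mathbb K\otimes\mathbb K\to\mathbb K$ is implemented by a unitary $U\colon\mathcal H\otimes\mathcal H\to\mathcal H$, so that $\Psi'(T)=UTU^\ast$. Diagonalising $d=\sum_n\mu_n\lvert e_n\rangle\langle e_n\rvert$ over an orthonormal eigenbasis with $\mu_n>0$, and setting $V_n\zeta=\zeta\otimes e_n$ and $c_n=UV_n$, the identity $V_n m V_n^\ast=m\otimes\lvert e_n\rangle\langle e_n\rvert$ gives
\[
\Psi(x\otimes d)=\sum_n\mu_n\,(1\otimes c_n)\,x\,(1\otimes c_n)^\ast,\qquad x\in\mathfrak B,
\]
with partial sums converging in norm uniformly for $\|x\|\le 1$, since $\lVert d-\sum_{n\le N}\mu_n\lvert e_n\rangle\langle e_n\rvert\rVert\to 0$. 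Each $c_n$ is an isometry, so $1\otimes c_n\in\multialg{\mathfrak B}$ (it maps $b_0\otimes m\mapsto b_0\otimes c_n m$, and $c_n m\in\mathbb K$). Thus the finite partial sums of $\Psi\circ(\phi(-)\otimes d)$ are multiplier-conjugation sums of $\phi$ and lie in $\mathscr C$ by Remark \ref{r:conjmulti}; since $\mathscr C$ is point-norm closed, $\Psi\circ(\phi(-)\otimes d)\in\mathscr C$, which yields $\Psi(\mathscr C^s)\subseteq\mathscr C$.

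For the reverse inclusion I would exploit the orthogonality $c_n^\ast c_m=\delta_{nm}1$ (coming from $V_n^\ast V_m=\delta_{nm}1$). Choosing an index with $\mu_1>0$, which exists as $d\neq 0$, and conjugating by the single multiplier $b=1\otimes\mu_1^{-1/2}c_1\in\multialg{\mathfrak B}$ collapses the sum above to one surviving term, giving
\[
b^\ast\,\Psi(\phi(-)\otimes d)\,b=\phi(-).
\]
Since $\Psi\circ(\phi(-)\otimes d)\in\Psi(\mathscr C^s)$ and this cone is closed under conjugation by multipliers of $\mathfrak B$ (Remark \ref{r:conjmulti}), I conclude $\phi\in\Psi(\mathscr C^s)$, completing $\mathscr C\subseteq\Psi(\mathscr C^s)$. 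The main obstacle is really just the bookkeeping in the identity for $\Psi(x\otimes d)$ and checking that $1\otimes c_n$ genuinely defines a multiplier of $\mathfrak B_0\otimes\mathbb K$; everything else is formal. Alternatively, and in line with the stated hint, one can avoid the spatial picture entirely: the proof of Proposition \ref{p:stablecone} already establishes the ideal identity $I_{\mathscr C^s}(c)=I_{\mathscr C}(c)\otimes\mathbb K$ for every positive $c\in C^\ast(\mathbb F_\infty)\otimes_{\max{}}\mathfrak A$, and the stabilisation isomorphism induced by $\Psi$ carries $I_{\mathscr C}(c)\otimes\mathbb K$ back onto $I_{\mathscr C}(c)$ (using that $\mathbb K$ is simple, so that $J\otimes\mathbb K\mapsto J$ on ideals), whence Corollary \ref{c:Kirchbergcone} applied to both cones forces the equality at once.
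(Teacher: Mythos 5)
Your proposal is correct, and your primary argument takes a genuinely different route from the paper's. The paper disposes of this corollary in a single sentence: it invokes Theorem \ref{t:Kirchbergcone}, exactly as in your closing ``alternative'' paragraph --- membership of $\psi$ in $\mathscr C^s$ and of $\Psi\circ\psi$ in $\mathscr C$ are detected by the same ideals, since the proof of Proposition \ref{p:stablecone} gives $I_{\mathscr C^s}(c)=I_{\mathscr C}(c)\otimes\mathbb K$ and the isomorphism $id\otimes\Psi$ carries $\mathfrak J\otimes\mathbb K$ onto $\mathfrak J$ for every two-sided, closed ideal $\mathfrak J$ of $C^\ast(\mathbb F_\infty)\otimes_{\max{}}\mathfrak B$ (all such ideals being of the form $\mathfrak J_0\otimes\mathbb K$, by the lattice isomorphism used there). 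Your main, spatial argument instead works entirely at the level of the maps: writing $\Psi'=\Ad U$ and diagonalising $d$, you exhibit the generator $\Psi\circ(\phi(-)\otimes d)$ of $\Psi(\mathscr C^s)$ as a norm-limit of finite sums $\sum_{n\le N}\mu_n(1\otimes c_n)\phi(-)(1\otimes c_n)^\ast$ of multiplier conjugates of $\phi$, which lie in $\mathscr C$ by Remark \ref{r:conjmulti} and whose limit stays in $\mathscr C$ by point-norm closedness; and you recover $\phi$ from that generator by conjugating with the single multiplier $1\otimes\mu_1^{-1/2}c_1$, using $c_1^\ast c_n=\delta_{1n}1$. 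Together with the (correct) observations that $\Psi(\mathscr C^s)$ is again a closed operator convex cone (this is also Lemma \ref{l:closedliftable} with $p=\Psi$) and that $\Psi(K(\mathscr S))=K(\Psi(\mathscr S))$ for an isomorphism $\Psi$, this yields both inclusions; the analytic details you flag (the identity for $\Psi(x\otimes d)$ checked on elementary tensors and extended by linearity and norm-continuity, and $1\otimes c_n$ being a multiplier) all go through. What your route buys is independence from Kirchberg's Theorem \ref{t:Kirchbergcone} --- a deep result --- and a concrete explanation of \emph{why} stabilisation does not change the cone: the stabilisation isomorphism acts as an infinite sum of multiplier conjugations, which is precisely the kind of operation closed operator convex cones absorb. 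What the paper's route buys is brevity and uniformity: it is the same ideal-theoretic bookkeeping already carried out for an arbitrary simple nuclear $\mathfrak D$ in Proposition \ref{p:stablecone}, whereas your spatial picture is special to $\mathfrak D=\mathbb K$.
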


Later in the paper, we will also need the following lemma, which looks similar to Lemma \ref{l:closedliftable} but is very different in nature.

\begin{lemma}\label{l:quotcone}
 Let $\mathscr C \subset CP(\mathfrak A, \mathfrak B)$ be a closed operator convex cone, and let $p \colon \mathfrak C \to \mathfrak A$ be a surjective $\ast$-homomorphism. Then
 \[
  \mathscr C_0 := \{ \phi \circ p : \phi \in \mathscr C\}
 \]
 is a closed operator convex cone. Moreover, if $\mathscr S \subset \mathscr C$ generates $\mathscr C$, then
 \[
  \mathscr S_0 := \{ \phi \circ p : \phi \in \mathscr S\}
 \]
 generates $\mathscr C_0$.
\end{lemma}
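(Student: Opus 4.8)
The plan is to verify the three defining conditions of Definition \ref{d:opconcon} together with closedness for $\mathscr C_0$, and then to deduce the generation statement from the explicit description of generated cones in Remark \ref{r:generatedcone}. The algebraic conditions are entirely formal and reduce to the corresponding conditions for $\mathscr C$ by pushing the composition with $p$ outside: that $\mathscr C_0$ is a cone is immediate; for condition $(2)$ one has $b^\ast(\phi\circ p)(-)b = (b^\ast\phi(-)b)\circ p$; and for condition $(3)$, given $c_1,\dots,c_n\in\mathfrak C$ and $b_1,\dots,b_n\in\mathfrak B$, the fact that $p$ is a $\ast$-homomorphism gives $p(c_i^\ast(-)c_j)=p(c_i)^\ast p(-)p(c_j)$, so with $a_i:=p(c_i)$ the map \eqref{eq:basicmap} built from $\phi\circ p$ equals $\bigl(\sum_{i,j}b_i^\ast\phi(a_i^\ast(-)a_j)b_j\bigr)\circ p$, which lies in $\mathscr C_0$ because the inner sum lies in $\mathscr C$.

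The only substantive point, and the step I expect to be the main obstacle, is closedness of $\mathscr C_0$. Here I would take a net $(\phi_\alpha)$ in $\mathscr C$ with $\phi_\alpha\circ p$ converging point-norm to some $\psi\in CP(\mathfrak C,\mathfrak B)$ and show $\psi\in\mathscr C_0$. The key observation is that each $\phi_\alpha\circ p$ vanishes on $\ker p$, hence so does the point-norm limit $\psi$; equivalently, for $a\in\mathfrak A$ the limit $\lim_\alpha\phi_\alpha(a)=\lim_\alpha(\phi_\alpha\circ p)(c)=\psi(c)$ is independent of the choice of preimage $c\in\mathfrak C$ with $p(c)=a$ (using surjectivity of $p$). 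This lets me define $\phi\colon\mathfrak A\to\mathfrak B$ by $\phi(a):=\psi(c)$ for any such $c$; it is well defined and satisfies $\phi\circ p=\psi$. By construction $\phi_\alpha\to\phi$ point-norm, so $\phi$ is completely positive (as a point-norm limit of c.p.\ maps) and, since $\mathscr C$ is closed, $\phi\in\mathscr C$. Therefore $\psi=\phi\circ p\in\mathscr C_0$, proving that $\mathscr C_0$ is a closed operator convex cone.

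For the generation statement, the inclusion $K(\mathscr S_0)\subseteq\mathscr C_0$ is immediate, since $\mathscr C_0$ is a closed operator convex cone containing $\mathscr S_0$. For the reverse inclusion I would take $\phi\circ p\in\mathscr C_0$ with $\phi\in\mathscr C=K(\mathscr S)$ and invoke Remark \ref{r:generatedcone}: $\phi$ may be approximated point-norm by finite sums $\sum_{i,j}b_i^\ast\psi'(a_i^\ast(-)a_j)b_j$ with $\psi'\in\mathscr S$, $a_i\in\mathfrak A$, $b_i\in\mathfrak B$. Writing each $a_i=p(c_i)$ by surjectivity and precomposing with $p$ turns such a sum into $\sum_{i,j}b_i^\ast(\psi'\circ p)(c_i^\ast(-)c_j)b_j$, which is a basic map of the form \eqref{eq:basicmap} over $\mathscr S_0$, hence lies in $K(\mathscr S_0)$. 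Since precomposition with $p$ preserves point-norm approximation, $\phi\circ p$ is a point-norm limit of such maps and therefore lies in the closed cone $K(\mathscr S_0)$. Thus $\mathscr C_0\subseteq K(\mathscr S_0)$, giving equality.
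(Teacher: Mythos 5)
Your proof is correct and follows essentially the same route as the paper's: closedness via the observation that the point-norm limit vanishes on $\ker p$ and hence factors as $\phi\circ p$ with $\phi\in\mathscr C$ by closedness of $\mathscr C$, and the generation statement by lifting the elements $a_i=p(c_i)$ in the basic maps of Remark \ref{r:generatedcone} and precomposing with $p$. The only difference is cosmetic: you spell out the cone axioms and the inclusion $K(\mathscr S_0)\subseteq\mathscr C_0$, which the paper dismisses as clear.
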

\begin{proof}
 Clearly $\mathscr C_0$ is an operator convex cone. Let $(\phi_\lambda \circ p)$ be a net in $\mathscr C_0$, with $\phi_\lambda \in \mathscr C$, which converges point-norm to a c.p.~map $\psi \colon \mathfrak C \to \mathfrak B$. 
 Since every $\phi_\lambda \circ p$ vanishes on $\ker p$, so does $\psi$ and thus $\psi = \phi \circ p$ for some c.p.~map $\phi \colon \mathfrak A \to \mathfrak B$. Clearly $\phi_\lambda$ converges point-norm to $\phi$ so $\phi \in \mathscr C$.
 It follows that $\psi \in \mathscr C_0$, so $\mathscr C_0$ is point-norm closed.
 
 Suppose $\mathscr S$ generates $\mathscr C$. Let $\psi \in \mathscr C_0$ and write $\psi = \phi \circ p$ with $\phi \in \mathscr C$.
 By Remark \ref{r:generatedcone}, $\phi$ may be approximated point-norm by sums of maps of the form
 \[
  a \mapsto \sum_{i,j=1}^n b_i^\ast \phi'(a_i^\ast a a_j) b_j = \sum_{i,j=1}^n b_i^\ast \phi'(p(c_i)^\ast a p(c_j)) b_j
 \]
 with $\phi'\in \mathscr S$, and where $p(c_i) = a_i$ for each $i$. It easily follows that $\phi \circ p$ may be approximated point-norm by sums of maps of the form
 \[
  c \mapsto \sum_{i,j=1}^n b_i^\ast \phi'(p(c_i)^\ast p(c) p(c_j)) b_j = \sum_{i,j}^n b_i^\ast \psi'(c_i^\ast c c_j) b_j
 \]
 where $\psi' = \phi' \circ p \in \mathscr S_0$. Hence $\mathscr S_0$ generates $\mathscr C_0$.
\end{proof}

Recall that if $\mathcal H_\mathfrak{B} = \bigoplus_{\mathbb N} \mathfrak B$ is the canonical Hilbert $\mathfrak B$-module, then there is a canonical isomorphism $\eta \colon \mathbb B(\mathcal H_\mathfrak{B}) \to \multialg{\mathfrak B \otimes \mathbb K}$
for which the restriction $\mathbb K(\mathcal H_\mathfrak{B}) \to \mathfrak B \otimes \mathbb K$ is also an isomorphism, the restricted isomorphism given by $\theta_{(b)_n,(c)_m} \mapsto bc^\ast \otimes e_{nm}$. 
Here $(b)_n \in \mathcal H_\mathfrak{B}$ denotes the element which is $b$ on the $n$'th coordinate and zero everywhere else.

\begin{corollary}\label{c:ctblygeniff}
With the notation as above, we have
\[
 \mathscr C^s = \eta(\mathscr C_{\mathcal H_{\mathfrak B}}).
\]

Moreover, $\mathscr C$ is countably generated if and only if $\mathscr C_{\mathcal H_{\mathfrak B}}$ is countably generated.
\end{corollary}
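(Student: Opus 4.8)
The plan is to prove the two assertions separately, in both cases reducing to the already-established properties of the assignment $\mathscr C \mapsto \mathscr C^s = \mathscr C \otimes \mathbb K$ from Proposition \ref{p:stablecone}. First I would record that $\eta(\mathscr C_{\mathcal H_\mathfrak B})$ really is a closed operator convex cone in $CP(\mathfrak A, \mathfrak B \otimes \mathbb K)$: indeed $\mathscr C_{\mathcal H_\mathfrak B}$ is one by Lemma \ref{l:hilbertcone}, and conjugating by the $\ast$-isomorphism $\eta|_{\mathbb K(\mathcal H_\mathfrak B)} \colon \mathbb K(\mathcal H_\mathfrak B) \to \mathfrak B \otimes \mathbb K$ preserves all three defining conditions of Definition \ref{d:opconcon} as well as the point-norm topology. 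Thus it makes sense to compare $\eta(\mathscr C_{\mathcal H_\mathfrak B})$ with $\mathscr C^s$, and I would establish equality by proving two inclusions.

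For $\mathscr C^s \subseteq \eta(\mathscr C_{\mathcal H_\mathfrak B})$ I would use the minimal projection $e_{11} \in \mathbb K$, so that $\mathscr C^s = K(\{\phi(-) \otimes e_{11} : \phi \in \mathscr C\})$. Tracing through the restricted isomorphism $\theta_{(b)_n,(c)_m} \mapsto bc^\ast \otimes e_{nm}$, the map $\eta^{-1}(\phi(-) \otimes e_{11})$ is exactly the composition $\iota \circ \phi$, where $\iota \colon \mathfrak B \cong \mathbb K(\mathfrak B) \hookrightarrow \mathbb K(\mathcal H_\mathfrak B)$ is the embedding into the first coordinate. By Remark \ref{r:weaklyinB}, $\iota \circ \phi$ is weakly in $\mathscr C$ precisely because $\phi \in \mathscr C$, i.e.\ $\iota \circ \phi \in \mathscr C_{\mathcal H_\mathfrak B}$. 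Hence each generator $\phi(-) \otimes e_{11}$ lies in $\eta(\mathscr C_{\mathcal H_\mathfrak B})$, and since the latter is a closed operator convex cone it contains all of $\mathscr C^s$.

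The reverse inclusion is the substantive part. Here I would exploit that the assignment of Proposition \ref{p:stablecone} is a bijection with inverse $F$, so that $\eta(\mathscr C_{\mathcal H_\mathfrak B}) = F(\eta(\mathscr C_{\mathcal H_\mathfrak B})) \otimes \mathbb K$; it therefore suffices to prove $F(\eta(\mathscr C_{\mathcal H_\mathfrak B})) \subseteq \mathscr C$, for then $\eta(\mathscr C_{\mathcal H_\mathfrak B}) \subseteq \mathscr C \otimes \mathbb K = \mathscr C^s$ by monotonicity of $\,\cdot \otimes \mathbb K$. Unwinding $F$, I must show that for every $\psi \in \mathscr C_{\mathcal H_\mathfrak B}$ and every state $\eta_0$ on $\mathbb K$ the slice map $(\mathrm{id}_\mathfrak B \otimes \eta_0) \circ \eta \circ \psi$ lies in $\mathscr C$. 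Writing a state on $\mathbb K$ via its density operator as $\eta_0 = \sum_i \lambda_i \omega_{\xi_i}$, with $\lambda_i \geq 0$, $\sum_i \lambda_i = 1$, $\{\xi_i\}$ an orthonormal family, and $\omega_\xi(k) = \langle \xi, k\xi\rangle$ the vector states, the key computation is that $(\mathrm{id}_\mathfrak B \otimes \omega_\xi)(\eta(T)) = V_\xi^\ast T V_\xi$ for the isometry $V_\xi \colon \mathfrak B \to \mathcal H_\mathfrak B$, $V_\xi(b) = (\xi_k b)_k$; consequently $(\mathrm{id}_\mathfrak B \otimes \eta_0)(\eta(\psi(-))) = \sum_i \lambda_i V_{\xi_i}^\ast \psi(-) V_{\xi_i}$. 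The crux is then to verify $V_\xi^\ast \psi(-) V_\xi \in \mathscr C$ whenever $\psi$ is weakly in $\mathscr C$. I expect this to be the main obstacle, the subtlety being that $V_\xi$ need not come from an element of the module $\mathcal H_\mathfrak B$ when $\mathfrak B$ is nonunital and $\xi$ has infinite support. I would handle it by first reducing to finitely supported $\xi$ using $\|V_\xi - V_{\xi'}\| = \|\xi - \xi'\|$ together with closedness of $\mathscr C$; then, for finitely supported $\xi = \sum_{k \leq N} c_k \delta_k$, sandwiching by an element $b_0 \in \mathfrak B$ and recognising $b_0^\ast (V_\xi^\ast \psi(-) V_\xi) b_0$ as one of the maps $a \mapsto \sum_{k,l} \langle w_k, \psi(a_k^\ast a a_l) w_l\rangle$ from the definition of weakly in $\mathscr C$ (with $w_k \in \mathcal H_\mathfrak B$ having $c_k b_0$ in the $k$-th coordinate, and an approximate-identity limit absorbing the $a_k^\ast (-) a_l$); and finally letting $b_0$ run through an approximate identity of $\mathfrak B$. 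Summing over $i$ against $\lambda_i$ and using again that $\mathscr C$ is a closed cone then gives the claim.

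Finally, for the countable-generation statement I would chain equivalences: $\mathscr C$ is countably generated if and only if $\mathscr C^s = \mathscr C \otimes \mathbb K$ is, by the ``moreover'' clause of Proposition \ref{p:stablecone} (as $\mathbb K$ is separable); and since $\mathscr C^s = \eta(\mathscr C_{\mathcal H_\mathfrak B})$ with $\eta$ a $\ast$-isomorphism, which carries any generating set of $\mathscr C_{\mathcal H_\mathfrak B}$ to one of $\eta(\mathscr C_{\mathcal H_\mathfrak B})$ and conversely (because $\eta(K(\mathscr S)) = K(\eta(\mathscr S))$), the cone $\eta(\mathscr C_{\mathcal H_\mathfrak B})$ is countably generated if and only if $\mathscr C_{\mathcal H_\mathfrak B}$ is. Combining these yields the equivalence.
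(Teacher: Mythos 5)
Your proof is correct, and it fills in details the paper does not give: the paper's entire proof of this corollary is the single sentence that it ``follows easily from Proposition \ref{p:stablecone} and Theorem \ref{t:Kirchbergcone}'', so the intended route is evidently to apply Kirchberg's ideal-theoretic membership criterion a second time (beyond its use inside Proposition \ref{p:stablecone}) to see that $\mathscr C^s$ and $\eta(\mathscr C_{\mathcal H_\mathfrak{B}})$ are cut out by the same ideals $I_{\mathscr C}(c)$ in $C^\ast(\mathbb F_\infty)\otimes_{\max{}}\mathfrak B \otimes \mathbb K$. You share the skeleton --- both directions are routed through the bijection $\mathscr C \mapsto \mathscr C \otimes \mathbb K$ and its slice-map inverse $F$, and the countable-generation claim is chained through the ``moreover'' clause of Proposition \ref{p:stablecone} together with the fact that the $\ast$-isomorphism $\eta$ matches up generating sets --- but you replace the second appeal to Theorem \ref{t:Kirchbergcone} by a concrete Hilbert-module computation: slice maps against vector states $\omega_\xi$ are implemented as $V_\xi^\ast(-)V_\xi$ for the isometries $V_\xi(b) = (\xi_k b)_k$, and conjugation by $V_\xi$ preserves $\mathscr C$ because $b_0^\ast \bigl(V_\xi^\ast \psi(-) V_\xi\bigr) b_0 = \langle V_\xi(b_0), \psi(-)V_\xi(b_0)\rangle$ lies in $\mathscr C$ by the very definition of being weakly in $\mathscr C$ (an approximate identity of $\mathfrak A$ absorbing the $a_i$'s, as in Proposition \ref{p:starhomweakly}), after which an approximate identity of $\mathfrak B$ and point-norm closedness finish the step. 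Each approach has its merits: the paper's is shorter given the machinery already in place, while yours is more self-contained and makes the analytic content of the slice-map inverse visible. One small remark: your reduction to finitely supported $\xi$ is unnecessary, since $V_\xi(b_0) \in \mathcal H_\mathfrak{B}$ for \emph{every} $\xi \in \ell^2$ and every $b_0 \in \mathfrak B$, so the sandwiching argument applies verbatim to arbitrary $\xi$; this only shortens, and does not affect, your proof.
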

\begin{proof}
 This follows easily from Proposition \ref{p:stablecone} and Theorem \ref{t:Kirchbergcone}.
\end{proof}

Thus combining the Kasparov--Stinespring Theorem for Hilbert modules with Corollaries \ref{c:stablecone2} and \ref{c:ctblygeniff} we get the following multiplier algebra version.

\begin{corollary}[Kasparov--Stinespring Theorem for multiplier algebras]
 Let $\mathfrak A$ and $\mathfrak B$ be $C^\ast$-algebras with $\mathfrak A$ separable and $\mathfrak B$ $\sigma$-unital and stable, let $\mathscr C \subset CP(\mathfrak A,\mathfrak B)$ be a closed operator convex cone, and let 
 $\phi \colon \mathfrak A \to \multialg{\mathfrak B}$ be a contractive completely positive map weakly in $\mathscr C$. 
 Then there is a $\ast$-homomorphism $\Phi \colon \mathfrak A \to \multialg{\mathfrak{B}}$ weakly in $\mathscr C$ and isometries $V,W \in \multialg{\mathfrak B}$ with $VV^\ast + WW^\ast =1$, such that $V^\ast \Phi(-) V = \phi$.
  
  If, in addition, $\mathfrak A$ is unital $\mathscr C$ is non-degenerate, then there is a \emph{unital} $\ast$-homomorphism 
  $\Phi \colon \mathfrak A \to \multialg{\mathfrak B}$ weakly in $\mathscr C$ and an element $V\in \multialg{\mathfrak B}$ such that $V^\ast \Phi(-) V = \phi$.
  If $\phi$ is unital, then such a $V$ exists, for which there is another isometry $W$ with $VV^\ast + WW^\ast = 1$.
\end{corollary}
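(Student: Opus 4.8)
The plan is to transport $\phi$ into the Hilbert module picture over the module $E = \mathcal H_\mathfrak B$, apply the Kasparov--Stinespring Theorem for Hilbert modules (Theorem \ref{t:Stinespringmodules}), and transport the resulting dilation back, using stability of $\mathfrak B$ to keep everything over $\mathfrak B$ rather than $\mathfrak B \otimes \mathbb K$. The only genuine work is matching up the three incarnations of the cone --- $\mathscr C$ in the multiplier picture, $\mathscr C_{\mathcal H_\mathfrak B}$ in the module picture, and $\mathscr C^s$ --- which is done by assembling isomorphisms already at hand.

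Concretely, since $\mathfrak B$ is stable I would first fix an isomorphism realising $\mathfrak B \cong \mathfrak B \otimes \mathbb K$, so that Corollary \ref{c:stablecone2} applies and produces an isomorphism $\Psi \colon \mathfrak B \otimes \mathbb K \to \mathfrak B$ with $\Psi(\mathscr C^s) = \mathscr C$. Composing the extension to multiplier algebras of the canonical isomorphism $\eta \colon \mathbb B(\mathcal H_\mathfrak B) \to \multialg{\mathfrak B \otimes \mathbb K}$ with $\Psi$ gives an isomorphism $\Theta := \overline{\Psi} \circ \eta \colon \mathbb B(\mathcal H_\mathfrak B) \xrightarrow{\cong} \multialg{\mathfrak B}$ whose restriction $\theta := \Theta|_{\mathbb K(\mathcal H_\mathfrak B)} \colon \mathbb K(\mathcal H_\mathfrak B) \to \mathfrak B$ satisfies $\theta(\mathscr C_{\mathcal H_\mathfrak B}) = \Psi(\mathscr C^s) = \mathscr C$, the first equality being Corollary \ref{c:ctblygeniff}.

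The key observation I would then record is that $\Theta$ intertwines the two notions of \emph{weakly in $\mathscr C$}: a c.p.~map $\psi \colon \mathfrak A \to \mathbb B(\mathcal H_\mathfrak B)$ is weakly in $\mathscr C$ in the Hilbert module sense if and only if $\Theta \circ \psi \colon \mathfrak A \to \multialg{\mathfrak B}$ is weakly in $\mathscr C$ in the multiplier sense. This follows from the ``moreover'' part of Lemma \ref{l:hilbertcone}, which says $\psi$ is weakly in $\mathscr C$ iff $T^\ast \psi(-) T \in \mathscr C_{\mathcal H_\mathfrak B}$ for every $T \in \mathbb K(\mathcal H_\mathfrak B)$: applying $\theta$ turns this into the condition $b^\ast (\Theta \circ \psi)(-) b \in \mathscr C$ for every $b = \theta(T) \in \mathfrak B$, which is exactly the definition of $\Theta \circ \psi$ being weakly in $\mathscr C$.

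With this dictionary in place the proof is mechanical. Setting $\phi'' := \Theta^{-1} \circ \phi \colon \mathfrak A \to \mathbb B(\mathcal H_\mathfrak B)$, the map $\phi''$ is a contractive c.p.~map weakly in $\mathscr C$ in the module sense, and $E = \mathcal H_\mathfrak B$ is countably generated and full, so Theorem \ref{t:Stinespringmodules} yields a $\ast$-homomorphism $\Phi'' \colon \mathfrak A \to \mathbb B(\mathcal H_\mathfrak B)$ weakly in $\mathscr C$ together with isometries $V'', W'' \in \mathbb B(\mathcal H_\mathfrak B)$ satisfying $V'' V''^\ast + W'' W''^\ast = 1$ and $V''^\ast \Phi''(-) V'' = \phi''$. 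Then $\Phi := \Theta \circ \Phi''$, $V := \Theta(V'')$ and $W := \Theta(W'')$ do the job: $\Phi$ is weakly in $\mathscr C$ by the dictionary, $V, W$ are isometries in $\multialg{\mathfrak B}$ with $VV^\ast + WW^\ast = \Theta(1) = 1$, and applying $\Theta$ to $V''^\ast \Phi''(-) V'' = \phi''$ gives $V^\ast \Phi(-) V = \phi$. The unital statements are obtained identically, feeding the non-degeneracy of $\mathscr C$ (and, when $\phi$ is unital, the unitality of $\phi'' = \Theta^{-1}\circ\phi$) into the second half of Theorem \ref{t:Stinespringmodules}. The main obstacle is purely bookkeeping: ensuring that the cone really is carried to $\mathscr C$, and not merely to $\mathscr C^s$, under the identifications --- which is precisely what stability and Corollary \ref{c:stablecone2} buy us.
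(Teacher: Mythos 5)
Your proof is correct and follows exactly the route the paper intends: the paper derives this corollary by "combining" Theorem \ref{t:Stinespringmodules} with Corollaries \ref{c:stablecone2} and \ref{c:ctblygeniff}, which is precisely your identification $\Theta = \overline{\Psi}\circ\eta$ together with the dictionary (via the ``moreover'' part of Lemma \ref{l:hilbertcone}) matching the two notions of being weakly in $\mathscr C$. In fact you supply more detail than the paper, which leaves this transport argument implicit.
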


We also get another corollary which will be applied by the first named author in \cite{Gabe-cplifting}.

\begin{corollary}
 Let $\mathfrak A$ be a separable, unital $C^\ast$-algebra, $\mathfrak B$ be a $\sigma$-unital $C^\ast$-algebra, and let $\mathscr C\subset CP(\mathfrak A, \mathfrak B)$ be a closed operator convex cone.
 Then there exists a \emph{unital} c.p.~map $\phi \colon \mathfrak A \to \multialg{\mathfrak B}$ weakly in $\mathscr C$ if and only if $\mathscr C$ is non-degenerate.
\end{corollary}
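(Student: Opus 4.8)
The plan is to deduce the corollary from Corollary \ref{c:unitalrepiff} together with the canonical identification of $\multialg{\mathfrak B}$ with the adjointable operators $\mathbb B(\mathfrak B)$ on $\mathfrak B$ regarded as a Hilbert module over itself. Both implications are short.

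For the ``only if'' direction I would reuse the obstruction noted just before Definition \ref{d:nondegcone}. Assuming a unital c.p.\ map $\phi \colon \mathfrak A \to \multialg{\mathfrak B}$ weakly in $\mathscr C$ exists, suppose for contradiction that $\mathscr C$ is degenerate, so that some proper two-sided, closed ideal $\mathfrak J \subsetneq \mathfrak B$ satisfies $\psi(\mathfrak A) \subset \mathfrak J$ for all $\psi \in \mathscr C$. For each $b \in \mathfrak B$ the map $b^\ast \phi(-) b$ lies in $\mathscr C$, so its image lies in $\mathfrak J$; evaluating at the unit gives $b^\ast b = b^\ast \phi(1) b \in \mathfrak J$. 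As positive elements of this form span $\mathfrak B$, this forces $\mathfrak J = \mathfrak B$, a contradiction. Hence $\mathscr C$ is non-degenerate.

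For the ``if'' direction, assume $\mathscr C$ is non-degenerate. Corollary \ref{c:unitalrepiff} then supplies a unital representation $\Psi \colon \mathfrak A \to \mathbb B(\mathcal H_{\mathfrak B})$ weakly in $\mathscr C$, and I would compress it to the first coordinate. Letting $P \in \mathbb B(\mathcal H_{\mathfrak B})$ be the projection onto the copy of $\mathfrak B$ sitting in the first coordinate, one has $P\mathcal H_{\mathfrak B} \cong \mathfrak B$ and $\mathbb B(P\mathcal H_{\mathfrak B}) \cong \multialg{\mathfrak B}$, so that $\phi := P\Psi(-)P|_{P\mathcal H_{\mathfrak B}}$ defines a c.p.\ map $\mathfrak A \to \multialg{\mathfrak B}$ which is unital because $\Psi$ is. Writing $\xi_b$ for the element of $\mathcal H_{\mathfrak B}$ equal to $b$ in the first coordinate and $0$ elsewhere, the key identity to establish is
\[
 b^\ast \phi(a) b = \langle \xi_b, \Psi(a)\xi_b\rangle_{\mathcal H_{\mathfrak B}}, \qquad a \in \mathfrak A,\ b\in \mathfrak B,
\]
after which Proposition \ref{p:starhomweakly} immediately yields $b^\ast \phi(-)b \in \mathscr C$, so that $\phi$ is weakly in $\mathscr C$ in the multiplier sense.

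The only step requiring care is this last identity. Under the identifications $\multialg{\mathfrak B} = \mathbb B(\mathfrak B)$ (with a multiplier acting by left multiplication) and $P\mathcal H_{\mathfrak B} \cong \mathfrak B$, the multiplier $\phi(a)$ applied to $b$ equals $P\Psi(a)\xi_b$, and pairing on the left with $\xi_b$ recovers $b^\ast \phi(a) b$; this is a routine Hilbert-module computation. I would also emphasise that no stability assumption on $\mathfrak B$ enters, since $\mathcal H_{\mathfrak B}$ and Corollary \ref{c:unitalrepiff} are available for every $\sigma$-unital $\mathfrak B$.
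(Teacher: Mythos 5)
Your proof is correct, and it takes a genuinely more elementary route than the paper's. Both arguments hinge on the same compression: the paper likewise feeds non-degeneracy into Corollary \ref{c:unitalrepiff} to get a unital representation and then cuts it down by the projection $1\otimes e_{11}$, which under the identification $\multialg{\mathfrak B\otimes\mathbb K}\cong\mathbb B(\mathcal H_\mathfrak{B})$ is exactly your $P$. The difference is in how the compressed map is certified to be weakly in $\mathscr C$. The paper works in the multiplier picture, so it must first note that $\mathscr C^s$ is non-degenerate, pass between $\mathscr C_{\mathcal H_\mathfrak{B}}$ and $\mathscr C^s$ via Corollary \ref{c:ctblygeniff}, and then invoke the slice-map correspondence of Proposition \ref{p:stablecone} --- machinery ultimately resting on Kirchberg's Theorem \ref{t:Kirchbergcone}. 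You instead stay entirely in the Hilbert-module picture, where the identity $b^\ast\phi(a)b=\langle\xi_b,\Psi(a)\xi_b\rangle_{\mathcal H_\mathfrak{B}}$ together with Proposition \ref{p:starhomweakly} yields $b^\ast\phi(-)b\in\mathscr C$ directly; this avoids the stable-cone apparatus altogether and makes it transparent that no stability of $\mathfrak B$ enters. Your ``only if'' direction is also marginally cleaner: evaluating $b^\ast\phi(-)b$ at the unit for arbitrary $b$, so that every positive element $b^\ast b$ lands in the putative proper ideal, replaces the paper's use of a strictly positive element $h$ and the single map $h^{1/2}\phi(-)h^{1/2}$, and in particular your argument for that implication does not even use $\sigma$-unitality of $\mathfrak B$. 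What the paper's longer route buys is consistency with the multiplier-algebra formalism it has set up for the surrounding results; what yours buys is a shorter, self-contained verification.
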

\begin{proof}
 If $\phi$ is unital and weakly in $\mathscr C$, and $h\in \mathfrak B$ is a strictly positive element, then $h^{1/2}\phi(-)h^{1/2}$ does not factor through any proper, two-sided, closed ideal in $\mathfrak J$. Thus $\mathscr C$ is non-degenerate.
 
 Conversely, it easily follows from Proposition \ref{p:stablecone} that $\mathscr C^s$ is also non-degenerate. 
 By Corollary \ref{c:unitalrepiff} and Corollary \ref{c:ctblygeniff} there is a unital $\ast$-homomorphism $\Phi \colon \mathfrak A \to \multialg{\mathfrak B \otimes \mathbb K}$ weakly in $\mathscr C^s$.
 Let $\psi = (1 \otimes e_{11})\Phi(-)(1 \otimes e_{11}) \colon \mathfrak A \to \multialg{\mathfrak B} \otimes e_{11} \subset \multialg{\mathfrak B \otimes \mathbb K}$. 
 Regarding this as a c.p.~map $\mathfrak A \to \multialg{\mathfrak B}$ it follows by Proposition \ref{p:stablecone} that this c.p.~map is weakly in $\mathscr C$. Since $\Phi$ is unital it also follows that $\psi$ is unital.
\end{proof}


\section{Absorbing representations}\label{s:rep}

We find necessary and sufficient conditions for when there exist representations weakly in $\mathscr C$ which absorb any representation weakly in $\mathscr C$. We obtain this result in a unital and a non-unital version.

\begin{definition}\label{d:unitaryeq}
Let $\mathfrak A$ and $\mathfrak B$ be $C^\ast$-algebras with $\mathfrak A$ separable and $\mathfrak B$ $\sigma$-unital, and let $\Phi \colon \mathfrak A \to \mathbb B(E)$ and $\Psi \colon \mathfrak A \to \mathbb B(F)$ be representations with $E$ and $F$
countably generated Hilbert $\mathfrak B$-modules. We say that $\Phi$ is \emph{approximately unitarily equivalent} to $\Psi$, written $\Phi \sim_{ap} \Psi$, if there is a sequence of unitaries $(U_n)$ in $\mathbb B(E,F)$ such that
\begin{itemize}
\item[$(i)$] $\Phi(a) - U_n^\ast \Psi(a) U_n \in \mathbb K(E)$, for all $a\in \mathfrak A$ and $n \in \mathbb N$,
\item[$(ii)$] $\lim_{n\to \infty} \| \Phi(a) - U_n^\ast \Psi(a) U_n \| = 0$, for all $a\in \mathfrak A$.
\end{itemize}
We say that $\Phi$ is \emph{asymptotically unitarily equivalent}\footnote{Sometimes called \emph{unitarily homotopic}.} to $\Psi$, written $\Phi \sim_{as} \Psi$, if there is a norm continuous path $U \colon [1,\infty) \to \mathbb B(E,F)$ of unitaries such that
\begin{itemize}
\item[$(i')$] $\Phi(a) - U_t^\ast \Psi(a) U_t \in \mathbb K(E)$, for all $a\in \mathfrak A$ and $t \in [1,\infty)$,
\item[$(ii')$] $\lim_{t\to \infty} \| \Phi(a) - U_t^\ast \Psi(a) U_t \| = 0$, for all $a\in \mathfrak A$.
\end{itemize}
\end{definition}

We will also need the following related notions.

\begin{definition}\label{d:approxdom}
 Let $\mathfrak A$ and $\mathfrak B$ be $C^\ast$-algebras with $\mathfrak A$ separable and $\mathfrak B$ $\sigma$-unital, let $\Phi \colon \mathfrak A \to \mathbb B(E)$ be a representation and $\phi \colon \mathfrak A \to \mathbb B(F)$ be a c.p.~map,
 where $E$ and $F$ are countably generated Hilbert $\mathfrak B$-modules. We say that \emph{$\Phi$ approximately dominates $\phi$} if there is a bounded sequence $(v_n)$ in $\mathbb B(F,E)$ such that
 \begin{itemize}
  \item[$(i)$] $v_n^\ast \Phi(a) v_n - \phi(a) \in \mathbb K(F)$ for all $a\in \mathfrak A$ and all $n \in \mathbb N$,
  \item[$(ii)$] $\lim_{n\to \infty} \| v_n^\ast \Phi(a) v_n - \phi(a)\| = 0$ for all $a\in \mathfrak A$.
 \end{itemize}
 Furthermore, we say that \emph{$\Phi$ strongly approximately dominates $\phi$} if we may pick a sequence $(v_n)$ satisfying $(i)$ and $(ii)$ above, and also
 \begin{itemize}
  \item[$(iii)$] $\lim_{n \to \infty} \| v_n^\ast T v_n \| = 0$ for all $T \in \mathbb K(E)$.
 \end{itemize}
 
 Moreover, if we may find a norm-continuous bounded path $(v_t)_{t\in [1,\infty)}$ satisfying the obvious analogues of $(i)$, $(ii)$ (and $(iii)$) above, then we say that \emph{$\Phi$ (strongly) asymptotically dominates $\phi$}.
\end{definition}

\begin{remark}\label{r:chooseisometries}
 If $\mathfrak A$ is unital and $\Phi$ and $\phi$ are both unital in the above definition, then we may always pick the bounded sequence $(v_n)$ (or family $(v_t)$) to consist of isometries. This follows easily since unitality implies that
 $v_n^\ast v_n - 1$ is compact and small in norm for large $n$. Thus for large $n$, $w_n := v_n (v_n^\ast v_n)^{-1/2}$ is an isometry identical to $v_n$ modulo compacts, and $w_n$ makes $(i)$, $(ii)$ (and $(iii)$) hold for $\Phi$ and $\phi$ if $v_n$ does.
\end{remark}

\begin{theorem}\label{t:domabs}
 Let $\mathfrak A$ and $\mathfrak B$ be $C^\ast$-algebras with $\mathfrak A$ separable and unital, and $\mathfrak B$ $\sigma$-unital. Let $\Phi,\Psi \colon \mathfrak A \to \mathbb B(\mathcal H_\mathfrak{B})$ be unital representations
 and let $\Psi_\infty \colon \mathfrak A \to \mathbb B(\mathcal H_\mathfrak{B}^\infty)$ be the infinite repeat of $\Psi$. The following are equivalent.
 \begin{itemize}
  \item[$(i)$] $\Phi$ strongly approximately dominates $T^\ast \Psi(-) T$ for every $T \in \mathbb K(\mathcal H_\mathfrak{B})$.
  \item[$(ii)$] $\Phi$ strongly approximately dominates $\Psi$,
  \item[$(iii)$] $\Phi$ strongly asymptotically dominates $\Psi$,
  \item[$(iv)$] There is a unitary $U \in \mathbb B(\mathcal H_\mathfrak{B} \oplus \mathcal H_\mathfrak{B}^\infty, \mathcal H_\mathfrak{B})$ such that
  \[
   U^\ast \Phi(a) U - \Phi(a)\oplus \Psi_\infty(a) \in \mathbb K(\mathcal H_\mathfrak{B} \oplus \mathcal H_\mathfrak{B}^\infty) \text{ for all } a\in \mathfrak A,
  \]
  \item[$(v)$] $\Phi \oplus \Psi_\infty \sim_{ap} \Phi$,
  \item[$(vi)$] $\Phi \oplus \Psi_\infty \sim_{as} \Phi$.

 \end{itemize}
\end{theorem}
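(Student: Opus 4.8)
The plan is to prove the six conditions equivalent through a cycle of implications, isolating the cheap ``downgrades'' from the genuinely substantial constructions. The downgrades are immediate: $(iii)\Rightarrow(ii)$ and $(vi)\Rightarrow(v)$ follow by restricting a norm-continuous path of operators to a sequence $t_n\to\infty$; $(v)\Rightarrow(iv)$ follows by keeping a single unitary from the sequence witnessing $\Phi\oplus\Psi_\infty\sim_{ap}\Phi$; and $(ii)\Rightarrow(i)$ follows by setting $w_n:=v_nT$, where $(v_n)$ witnesses that $\Phi$ strongly approximately dominates $\Psi$, since then $w_n^\ast\Phi(a)w_n=T^\ast v_n^\ast\Phi(a)v_nT$ converges to $T^\ast\Psi(a)T$ modulo $\mathbb K$ and $\|w_n^\ast Sw_n\|\le\|T\|^2\|v_n^\ast Sv_n\|\to0$ for $S\in\mathbb K(\mathcal H_\mathfrak B)$. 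Thus everything reduces to three directions, $(i)\Rightarrow(ii)$, $(ii)\Rightarrow(iv)$, $(iv)\Rightarrow(vi)$, together with the extraction $(vi)\Rightarrow(iii)$; these close the loop $(iii)\Rightarrow(ii)\Rightarrow(iv)\Rightarrow(vi)\Rightarrow(iii)$ and pull in $(i)$ and $(v)$ through the downgrades.

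The two constructive steps $(i)\Rightarrow(ii)$ and $(ii)\Rightarrow(iv)$ share a common engine, which I would develop once. Fix a countable dense subset of $\mathfrak A$ and a quasicentral approximate unit $(g_j)$ of positive compacts for $\mathbb K(\mathcal H_\mathfrak B)$, normalised so that $\sum_jg_j^2=1$ strictly and $\sum_j\|[g_j,\Psi(a)]\|$ is as small as we like on the dense set. Writing $f_N=\sum_{j\le N}g_j^2$, the identity $\sum_{j\le N}g_j\Psi(a)g_j=f_N\Psi(a)-\sum_{j\le N}g_j[g_j,\Psi(a)]$ shows that the \emph{strict} limit $\sum_jg_j\Psi(a)g_j$ differs from $\Psi(a)$ only by the norm-small, \emph{compact} element $\sum_jg_j[g_j,\Psi(a)]$; note that no finite partial sum achieves this, so the full infinite assembly is essential. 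Since each compression $g_j\Psi(-)g_j=g_j^\ast\Psi(-)g_j$ is of the form $T^\ast\Psi(-)T$ with $T\in\mathbb K(\mathcal H_\mathfrak B)$, hypothesis $(i)$ supplies intertwiners strongly approximately dominating it; for $(i)\Rightarrow(ii)$ I would assemble one intertwiner per $j$ into a single $v=\sum_j\tilde v_j$, so that $v^\ast\Phi(a)v$ recovers the strict sum above and hence dominates $\Psi$. For $(ii)\Rightarrow(iv)$ I would instead take countably many intertwiners each dominating all of $\Psi$, stack them as $V=(V_1,V_2,\dots)\colon\mathcal H_\mathfrak B^\infty\to\mathcal H_\mathfrak B$ so that the diagonal blocks reproduce $\mathrm{diag}(\Psi(a),\Psi(a),\dots)=\Psi_\infty(a)$, and reserve one further orthogonal block carrying a full copy of $\Phi$, producing the unitary $U$ with $U^\ast\Phi(a)U-(\Phi\oplus\Psi_\infty)(a)\in\mathbb K$.

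In both assemblies the hard part---and the main obstacle---is forcing the off-diagonal cross terms $\tilde v_i^\ast\Phi(a)\tilde v_j$ (for $i\ne j$) to vanish, and this is exactly what the \emph{strong} clause $(iii)$ of domination is for. Testing that clause against a rank-one module operator $\theta_{x,x}$ gives $\|v_n^\ast x\|^2=\|v_n^\ast\theta_{x,x}v_n\|\to0$, so the intertwiners converge weakly to $0$ and their ranges can be driven off to infinity. I would harness this by passing to a second quasicentral approximate unit for $\mathbb K(\mathcal H_\mathfrak B)$ relative to $\Phi(\mathfrak A)$ and confining the $j$-th intertwiner to a mutually orthogonal, $\Phi$-almost-invariant spectral block $E_j\mathcal H_\mathfrak B$, with $[\Phi(a),E_j]\to0$ and $E_iE_j=0$; then $\tilde v_i^\ast\Phi(a)\tilde v_j\approx\tilde v_i^\ast E_iE_j\Phi(a)\tilde v_j=0$, so the assembled operator is block diagonal modulo $\mathbb K$ as required. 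Keeping all these approximations summably small at once, checking that confinement to a block preserves the domination of each compression, and arranging the residual errors to be genuinely compact (so that $(iv)$ holds exactly, not merely approximately) is the delicate bookkeeping at the core of the proof.

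For the upgrade $(iv)\Rightarrow(vi)$ I would exploit that $\Psi_\infty$ is an infinite repeat, whence $\Psi_\infty\oplus\Psi_\infty\cong\Psi_\infty$; the single-unitary identity $\Phi\cong\Phi\oplus\Psi_\infty$ then provides unlimited room, and a standard Cuntz-sum rotation---interpolating the shift of the $\Psi$-summands against the identity through the continuous rotations $\left[\begin{smallmatrix}\cos t&-\sin t\\ \sin t&\cos t\end{smallmatrix}\right]$---yields a norm-continuous path of unitaries giving $\Phi\oplus\Psi_\infty\sim_{as}\Phi$, each stage a compact perturbation of the target. Finally, for $(vi)\Rightarrow(iii)$ I would embed $\Psi$ as the $k$-th summand of $\Psi_\infty$ via an isometry $j_k$ and set $v_t=U_tj_{k(t)}$; then $v_t^\ast\Phi(a)v_t\to\Psi(a)$ modulo $\mathbb K$, while for compact $S$ one has $\|v_t^\ast Sv_t\|=\|(U_tj_{k(t)})^\ast S(U_tj_{k(t)})\|\to0$ provided $k(t)\to\infty$, because compressing a fixed compact by the weakly null sequence $j_k$ kills it in norm. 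A modest diagonal reparametrisation keeps $t\mapsto v_t$ continuous and delivers the strong asymptotic domination $(iii)$, closing the cycle.
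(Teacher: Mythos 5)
Your reduction cycle is logically complete, and several pieces are correct: the four downgrades, the extraction $(vi)\Rightarrow(iii)$ (which is exactly the paper's own device for $(iv)\Rightarrow(iii)$: compress a fixed compact defect by coordinate isometries escaping to infinity), and the Voiculescu-style assembly $(i)\Rightarrow(ii)$. That assembly works precisely because the intertwiners dominating a compact compression $g_j\Psi(-)g_j$ are automatically compact (take $a=1$: $v^\ast v-g_j^2\in\mathbb K(\mathcal H_{\mathfrak B})$, so $v$ is compact), and compact operators really can be confined to mutually orthogonal approximate-unit blocks. Note, for comparison, that the paper does not attempt any of this by hand: it proves $(v)\Rightarrow(iv)\Rightarrow(iii)\Rightarrow(ii)\Rightarrow(i)$ elementarily and outsources the hard directions, citing \cite[Theorem 2.13]{DadarlatEilers-classification} for $(i)\Rightarrow(v)$ and Arveson's trick plus \cite[Lemma 2.16]{DadarlatEilers-classification} for $(iii)\Rightarrow(vi)$.

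The genuine gap is $(ii)\Rightarrow(iv)$, which is where the whole weight of the theorem sits. The operators $V_j$ witnessing domination of the full $\Psi$ are (by Remark \ref{r:chooseisometries}) isometries, hence never compact, so your block-confinement engine cannot touch them: if $E_j$ is an approximate-unit chunk, then $E_jV_j$ is compact and $(E_jV_j)^\ast\Phi(1)(E_jV_j)$ is compact, so it can never be close to $\Psi(1)=1$. Nor does the strong clause by itself force cross terms to be compact or small, no matter how far out in the witnessing sequences you go: take $\Phi=\Psi_\infty$ (viewed on $\mathcal H_{\mathfrak B}$), let $j_k$ be the coordinate isometries, and set $w_k=\alpha_kj_1+\beta_kj_k$ with $\alpha_k\to0$ and $\alpha_k^2+\beta_k^2=1$; then $(w_k)$ witnesses strong approximate domination of $\Psi$, yet for $2\le k\ne l$ one has $w_k^\ast w_l=\alpha_k\alpha_l\cdot 1$ and $w_k^\ast\Phi(a)w_l=\alpha_k\alpha_l\Psi(a)$, which are not compact. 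Finally, the step ``reserve one further orthogonal block carrying a full copy of $\Phi$'' is circular: there is no a priori reason that the complement of the stacked ranges compresses $\Phi$ to something unitarily equivalent to $\Phi$ modulo compacts --- arranging that is essentially the statement $(iv)$ itself. The known argument (the content of the Dadarlat--Eilers results) instead runs: upgrade $(i)$ for $\Psi$ to $(i)$ for $\Psi_\infty$ (a compression $T^\ast\Psi_\infty(-)T$ with $T$ compact is a norm-limit of finite sums of compressions of $\Psi$ by compacts); run the compression assembly once to get a single isometry $W$ with $W^\ast\Phi(-)W\approx\Psi_\infty$ modulo compacts; apply Arveson's trick to convert this into the intertwining relation $\Phi(a)W-W\Psi_\infty(a)\in\mathbb K(\mathcal H_{\mathfrak B})$, small in norm, so that $\Phi\sim\Psi_\infty\oplus\Phi_1$ modulo compacts, where $\Phi_1$ is the compression of $\Phi$ to the complement of the range of $W$; then shuffle using the exact unitary equivalence $\Psi_\infty\oplus\Psi_\infty\cong\Psi_\infty$, namely $\Phi\oplus\Psi_\infty\sim\Psi_\infty\oplus\Phi_1\oplus\Psi_\infty\cong\Psi_\infty\oplus\Phi_1\sim\Phi$. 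None of this splitting-and-shuffling appears in your sketch. A secondary but also genuine gap is $(iv)\Rightarrow(vi)$: your rotation homotopy keeps the defect compact but provides no mechanism for its norm to decay, and asymptotic unitary equivalence requires both clauses; the only defect-shrinking mechanisms available are the escaping compressions (which give $(iii)$, not $(vi)$) and the intertwining-plus-shuffle argument above in path form.
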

\begin{proof}
 We will prove $(v) \Rightarrow (iv) \Rightarrow (iii) \Rightarrow (ii) \Rightarrow (i) \Rightarrow (v)$ and $(iii) \Rightarrow (vi) \Rightarrow (v)$.

 $(v) \Rightarrow (iv)$: Obvious.
 
 $(iv) \Rightarrow (iii)$: Let $U$ be as in $(iv)$. Let $V_n \in \mathbb B(\mathcal H_\mathfrak{B}, \mathcal H_\mathfrak{B} \oplus \mathcal H_\mathfrak{B}^\infty)$ be the isometry given as the inclusion into the $n$'th coordinate of $\mathcal H_\mathfrak{B}^\infty$.
 Let $V_t := (n+1-t)^{1/2} V_n + (t-n)^{1/2} V_{n+1}$ for $t\in [n,n+1]$. Then $(V_t)_{t\in [1,\infty)}$ is a continuous family of isometries such that $V_t^\ast T V_t \to 0$ for all $T\in \mathbb K(\mathcal H_\mathfrak{B} \oplus \mathcal H_\mathfrak{B}^\infty)$.
 Let $W_t := U V_t$. Since $V_t^\ast(\Phi(a) \oplus \Psi_\infty(a)) V_t = \Psi(a)$ for all $a\in \mathfrak A$, it follows $W^\ast_t \Phi(a) W_t - \Psi(a)$
 is in $\mathbb K(\mathcal H_\mathfrak{B} \oplus \mathcal H_\mathfrak{B}^\infty)$ and tends to $0$ as $t\to \infty$, for all $a\in \mathfrak A$.
 
 $(iii) \Rightarrow (ii)$: Obvious.
 
 $(ii) \Rightarrow (i)$: If $W_n$ implements a strong approximate domination of $\Psi$, and $T\in \mathbb K(\mathcal H_\mathfrak{B})$ is given, then $V_n = TW_n$ implements a strong approximate domination of $T^\ast \Psi(-) T$.
 
 $(i) \Rightarrow (v)$: This follows from \cite[Theorem 2.13]{DadarlatEilers-classification}.

 $(iii) \Rightarrow (vi)$: By $(iii) \Leftrightarrow (iv)$ as proven above, and by identifying $\Psi_\infty$ and $(\Psi_\infty)_\infty$ (which are clearly unitarily equivalent), it follows that $\Phi$ asymptotically dominates $\Psi_\infty$.
 Let $V_t \in \mathbb B(\mathcal H_\mathfrak{B}^\infty,\mathcal H_\mathfrak{B})$ be a bounded continuous family of elements such that $V_t^\ast \Phi(a) V_t - \Psi_\infty(a)$ is compact and tends to zero for every $a$ in $\mathfrak A$. 
 By a standard trick of Arveson \cite[Cf.~proof of Corollary 1]{Arveson-extensions} it follows that $V_t \Psi_\infty(a) - \Phi(a) V_t$ is compact and tends to zero for all $a$ in $\mathfrak A$. 
 By, once again, identifying $\Psi_\infty$ and $(\Psi_\infty)_\infty$, it follows from \cite[Lemma 2.16]{DadarlatEilers-classification} that $\Phi \oplus \Psi_\infty \sim_{as} \Phi$.
 
 $(vi) \Rightarrow (v)$: Obvious.
\end{proof}

\begin{definition}
 Let $\mathscr C \subset CP(\mathfrak A,\mathfrak B)$ be a closed operator convex cone. A (unital) representation $\Phi \colon \mathfrak A \to \mathbb B(E)$, is called \emph{(unitally) $\mathscr C$-absorbing},
 if for any (unital) representation $\Psi \colon \mathfrak A \to \mathbb B(F)$ weakly in $\mathscr C$, we have that $\Phi \oplus \Psi \sim_{as} \Phi$.
Here we implicitly assume that $E$ and $F$ are countably generated Hilbert $\mathfrak B$-modules.
\end{definition}

\begin{remark}
 Suppose that $\Phi \colon \mathfrak A \to \mathbb B(E)$ is a $\mathscr C$-absorbing representation. Since $\Phi$ absorbs the zero representation $\mathfrak A \to \mathbb B(\mathcal H_\mathfrak{B})$ it follows that 
 $E \cong E \oplus \mathcal H_\mathfrak{B} \cong \mathcal H_\mathfrak{B}$ by Kasparov's stabilisation theorem.
 
 Also, if $\Phi \colon \mathfrak A \to \mathbb B(E)$ is a unitally $\mathscr C$-absorbing representation and $\mathscr C$ is non-degenerate,
 then $E \cong \mathcal H_\mathfrak{B}$ since it absorbs a unital representation $\mathfrak A \to \mathbb B(\mathcal H_{\mathfrak B})$ weakly in $\mathscr{C}$, which exists by Corollary \ref{c:unitalrepiff}.
 
 Thus it is essentially no loss of generality only to consider the case when $E = \mathcal H_\mathfrak{B}$, which is also the case that corresponds to the multiplier algebra picture.
\end{remark}

\begin{remark}
 Note that any two (unital) $\mathscr C$-absorbing representations $\Phi$ and $\Phi'$ which are weakly in $\mathscr C$, are asymptotically unitarily equivalent since $\Phi \sim_{as} \Phi \oplus \Phi' \sim_{as} \Phi'$.
\end{remark}

\begin{remark}
 By Theorem \ref{t:domabs}, a unital representation $\Phi$ is unitally $\mathscr C$-absorbing if and only if $\Phi \sim_{ap} \Phi \oplus \Psi$ for any unital representation $\Psi$ weakly in $\mathscr C$.
 
 The same is true in the non-unital case, which follows easily from Proposition \ref{p:unitalvsnonunital} below.
\end{remark}

The following lemma shows, that given a closed operator convex cone in $CP(\mathfrak A, \mathfrak B)$ there is an induced closed operator convex cone in $CP(\mathfrak A^\dagger,\mathfrak B)$. 
Recall that $\mathfrak A^\dagger$ denotes the forced unitisation of $\mathfrak A$, i.e.~we add a unit to $\mathfrak A$ regardless if $\mathfrak A$ is unital or not.

\begin{lemma}\label{unitalcone}
Let $\mathscr C\subset CP(\mathfrak A,\mathfrak B)$ be a closed operator convex cone and define
\[
\mathscr C^\dagger := \{ \phi \in CP(\mathfrak A^\dagger , \mathfrak B) : \phi|_\mathfrak{A} \in \mathscr C\}.
\]
Then $\mathscr C^\dagger$ is a non-degenerate, closed operator convex cone.
\end{lemma}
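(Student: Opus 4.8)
The plan is to verify each defining property of a non-degenerate closed operator convex cone for $\mathscr C^\dagger$, exploiting throughout that $\mathfrak A$ is a two-sided ideal in $\mathfrak A^\dagger$ and that the restriction map $\phi \mapsto \phi|_{\mathfrak A}$ is linear and point-norm continuous.

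First, the cone property (condition $(1)$) is immediate: if $\phi, \psi \in \mathscr C^\dagger$ and $t \geq 0$, then $(\phi + t\psi)|_{\mathfrak A} = \phi|_{\mathfrak A} + t \,\psi|_{\mathfrak A} \in \mathscr C$ since $\mathscr C$ is a cone, so $\phi + t\psi \in \mathscr C^\dagger$. Closedness is equally direct: if a net $\phi_\lambda \in \mathscr C^\dagger$ converges point-norm to some $\phi \in CP(\mathfrak A^\dagger, \mathfrak B)$, then $\phi_\lambda|_{\mathfrak A} \to \phi|_{\mathfrak A}$ point-norm, so $\phi|_{\mathfrak A} \in \mathscr C$ because $\mathscr C$ is closed, whence $\phi \in \mathscr C^\dagger$.

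The key step is condition $(3)$. Given $\phi \in \mathscr C^\dagger$, elements $a_1, \dots, a_n \in \mathfrak A^\dagger$ and $b_1, \dots, b_n \in \mathfrak B$, set $\Theta(x) = \sum_{i,j=1}^n b_i^\ast \phi(a_i^\ast x a_j) b_j$ for $x \in \mathfrak A^\dagger$; I must show $\Theta|_{\mathfrak A} \in \mathscr C$. Since $\mathfrak A$ is a two-sided ideal in $\mathfrak A^\dagger$, for every $a \in \mathfrak A$ each element $a_i^\ast a a_j$ lies in $\mathfrak A$, so $\phi(a_i^\ast a a_j) = (\phi|_{\mathfrak A})(a_i^\ast a a_j)$ and therefore $\Theta|_{\mathfrak A} = \sum_{i,j=1}^n b_i^\ast (\phi|_{\mathfrak A})(a_i^\ast (-) a_j) b_j$ as a map on $\mathfrak A$. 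Now $\phi|_{\mathfrak A} \in \mathscr C$, and under the canonical inclusion $\mathfrak A^\dagger \subseteq \multialg{\mathfrak A}$ the coefficients $a_i$ are multipliers of $\mathfrak A$; hence the extended form of condition $(3)$ recorded in Remark \ref{r:conjmulti} (allowing coefficients from the multiplier algebra) shows $\Theta|_{\mathfrak A} \in \mathscr C$, so $\Theta \in \mathscr C^\dagger$. Condition $(2)$ then follows automatically from condition $(3)$ together with point-norm closedness, by the first observation in Remark \ref{r:conjmulti}.

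Finally, for non-degeneracy, let $\mathfrak J \subsetneq \mathfrak B$ be a proper two-sided closed ideal and choose a positive $c \in \mathfrak B \setminus \mathfrak J$. Define $\phi \colon \mathfrak A^\dagger \to \mathfrak B$ by $\phi(a + \lambda 1) = \lambda c$; this is the composition of the quotient $\ast$-homomorphism $\mathfrak A^\dagger \to \mathbb C$ (killing $\mathfrak A$) with the c.p.~map $\mathbb C \to \mathfrak B$, $z \mapsto zc$, hence is completely positive, and $\phi|_{\mathfrak A} = 0 \in \mathscr C$. Thus $\phi \in \mathscr C^\dagger$ while $\phi(1) = c \notin \mathfrak J$, so $\phi(\mathfrak A^\dagger) \not\subset \mathfrak J$, giving non-degeneracy. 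I expect the only point genuinely requiring care to be condition $(3)$ — precisely the passage back into $\mathscr C$ when the coefficients $a_i$ carry scalar parts and hence lie in $\mathfrak A^\dagger$ rather than $\mathfrak A$ — which is exactly the situation Remark \ref{r:conjmulti} is designed to cover.
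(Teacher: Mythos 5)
Your proof is correct in substance and follows essentially the same route as the paper: the cone property, point-norm closedness, and non-degeneracy arguments coincide (the paper likewise factors through $\mathfrak A^\dagger/\mathfrak A \cong \mathbb C$ and composes with $z \mapsto zb$), and condition $(3)$ is reduced to Remark \ref{r:conjmulti}. The one point you should repair is the phrase ``the canonical inclusion $\mathfrak A^\dagger \subseteq \multialg{\mathfrak A}$'': this is an inclusion only when $\mathfrak A$ is non-unital. When $\mathfrak A$ is unital, $\multialg{\mathfrak A} = \mathfrak A$ and the canonical map $\kappa \colon \mathfrak A^\dagger \to \multialg{\mathfrak A}$, $a + \lambda 1 \mapsto a + \lambda 1_{\mathfrak A}$, has kernel $\mathbb C(1^\dagger - 1_{\mathfrak A})$; this is exactly why the paper splits its verification of condition $(3)$ into two cases, using the multiplier inclusion in the non-unital case and the decomposition $\mathfrak A^\dagger = \mathfrak A \oplus \mathbb C$ (replacing each $a_i$ by its $\mathfrak A$-component $a_i'$) in the unital case. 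Your argument survives with a one-line fix: $\kappa$ is a $\ast$-homomorphism restricting to the identity on $\mathfrak A$, so for $a \in \mathfrak A$ one has $a_i^\ast a a_j = \kappa(a_i)^\ast a\, \kappa(a_j)$, hence $\Theta|_{\mathfrak A} = \sum_{i,j=1}^n b_i^\ast (\phi|_{\mathfrak A})(\kappa(a_i)^\ast (-) \kappa(a_j)) b_j$ with coefficients $\kappa(a_i) \in \multialg{\mathfrak A}$, and Remark \ref{r:conjmulti} applies; in the unital case this is precisely the paper's replacement of $a_i$ by $a_i'$. With that rewording the proof is complete, and your remaining deviations are harmless: deducing condition $(2)$ from $(3)$ plus closedness rather than checking it directly, and phrasing non-degeneracy per proper ideal $\mathfrak J$ rather than noting that the values generate all of $\mathfrak B$, are both equivalent to what the paper does.
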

\begin{proof}
Clearly $\mathscr C^\dagger$ is a cone. Let $\phi\in \mathscr C^\dagger$. Clearly $b^\ast \phi(-) b$ is in $\mathscr C^\dagger$ for every $b\in \mathfrak B$. Let $a_1,\dots,a_n\in \mathfrak A^\dagger$ and $b_1,\dots,b_n\in \mathfrak B$. 
We should show that if $\psi \colon \mathfrak A^\dagger \to \mathfrak B$ is given by
\[
\psi(a) = \sum_{i,j}^n b_i^\ast \phi(a_i^\ast a a_j) b_j,
\]
then $\psi|_\mathfrak{A} \in \mathscr C$. If $\mathfrak A$ is non-unital, the $\mathfrak A^\dagger \subset \multialg{\mathfrak A}$ and the result follows from Remark \ref{r:conjmulti}. If $\mathfrak A$ is unital, then
$\mathfrak A^\dagger = \mathfrak A \oplus \mathbb C$, and $a_i = a_i' \oplus \lambda_i$. Thus when we restrict $\psi$ to $\mathfrak A$, we may replace the $a_i$ with $a_i' \in \mathfrak A$, and thus $\psi$ is in $\mathscr C^\dagger$. It follows that
$\mathscr C^\dagger$ is an operator convex cone.
If $(\phi_n)$ is a sequence in $\mathscr C^\dagger$ which converges point-norm to a c.p. map $\phi\colon \mathfrak A^\dagger \to \mathfrak B$, then $\phi_n|_\mathfrak{A} \to \phi|_\mathfrak{A}$ point-norm and thus $\phi|_\mathfrak{A} \in \mathscr C$. 
Hence $\mathscr C$ is point-norm closed. To see that $\mathscr C^\dagger$ is non-degenerate, let for any positive $b\in \mathfrak B$, $\rho_b \colon \mathbb C \to \mathfrak B$ be the c.p.~map given by $\rho_b(1) = b$. Clearly the composition
\[
 \mathfrak A^\dagger \to \mathfrak A^\dagger/\mathfrak A \cong \mathbb C \xrightarrow{\rho_b} \mathfrak B
\]
is in $\mathscr C^\dagger$ for every positive $b \in \mathfrak B$, and thus $\mathscr C^\dagger$ is non-degenerate.
\end{proof}

The following lemma tells us, that in our quest to find absorbing representations, we may often restrict to the unital case and vice versa.

\begin{proposition}\label{p:unitalvsnonunital}
Let $\mathscr C \subset CP(\mathfrak A,\mathfrak B)$ be a closed operator convex cone. Then $\Phi \colon \mathfrak A \to \mathbb B(\mathcal H_\mathfrak{B})$ is a $\mathscr C$-absorbing representation if and only if 
$\Phi^\dagger \colon \mathfrak A^\dagger \to \mathbb B(\mathcal H_\mathfrak{B})$ is a unitally $\mathscr C^\dagger$-absorbing representation. 

Moreover, if $\mathfrak A$ is unital, $\mathscr C$ is non-degenerate, and $\Phi\colon \mathfrak A \to \mathbb B(\mathcal H_\mathfrak{B})$ is a unital representation, 
then $\Phi$ is unitally $\mathscr C$-absorbing if and only if $\Phi \oplus 0 \colon \mathfrak A \to \mathbb B(\mathcal H_\mathfrak{B} \oplus \mathcal H_\mathfrak{B})$ is $\mathscr C$-absorbing.
\end{proposition}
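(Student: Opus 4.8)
The plan is to treat the two assertions separately: the first by a clean translation through unitisation, the second by reducing to the unital domination theorem, Theorem~\ref{t:domabs}.

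\emph{First assertion.} I would first set up the bijection between unital representations $\widetilde\Psi\colon\mathfrak A^\dagger\to\mathbb B(F)$ and arbitrary representations $\Psi\colon\mathfrak A\to\mathbb B(F)$, given by restriction $\widetilde\Psi\mapsto\widetilde\Psi|_\mathfrak{A}$ with inverse $\Psi\mapsto\Psi^\dagger$ (these are mutually inverse since a unital $\ast$-homomorphism on $\mathfrak A^\dagger$ is determined by its restriction). Two facts then need checking. (a) $\Psi$ is weakly in $\mathscr C$ if and only if $\Psi^\dagger$ is weakly in $\mathscr C^\dagger$: apply Proposition~\ref{p:starhomweakly} to both cones and note that $\langle x,\Psi^\dagger(-)x\rangle_F$ restricts on $\mathfrak A$ to $\langle x,\Psi(-)x\rangle_F$, so membership in $\mathscr C^\dagger$ is by the very definition of $\mathscr C^\dagger$ equivalent to membership of the restriction in $\mathscr C$. (b) $\Phi\oplus\Psi\sim_{as}\Phi$ (over $\mathfrak A$) if and only if $\Phi^\dagger\oplus\Psi^\dagger\sim_{as}\Phi^\dagger$ (over $\mathfrak A^\dagger$): here I use $(\Phi\oplus\Psi)^\dagger=\Phi^\dagger\oplus\Psi^\dagger$ and that the \emph{same} path of unitaries $U_t$ works both ways, because $\Phi^\dagger(a+\lambda 1)-U_t^\ast\Psi^\dagger(a+\lambda 1)U_t=\Phi(a)-U_t^\ast\Psi(a)U_t+\lambda(1-U_t^\ast U_t)$ and $U_t^\ast U_t=1$, so conditions $(i')$ and $(ii')$ of Definition~\ref{d:unitaryeq} are literally identical on $\mathfrak A^\dagger$ and on $\mathfrak A$. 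Combining (a), (b) and the bijection yields the first equivalence.

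\emph{Second assertion: reduction and a splitting lemma.} A preliminary step is that, for unital $\mathfrak A$ and unital $\Phi$, $\Phi$ is unitally $\mathscr C$-absorbing if and only if $\Phi\oplus\Psi_\infty\sim_{as}\Phi$ for every unital $\Psi$ weakly in $\mathscr C$; the forward direction is clear since $\Psi_\infty$ is again unital and weakly in $\mathscr C$ by Remark~\ref{r:infrepeat}, and the reverse follows by adding $\Psi$ to $\Phi\oplus\Psi_\infty\sim_{as}\Phi$ and using $\Psi_\infty\oplus\Psi\cong\Psi_\infty$. Via Theorem~\ref{t:domabs} this recasts ``unitally $\mathscr C$-absorbing'' as ``$\Phi$ strongly approximately dominates every unital $\Psi$ weakly in $\mathscr C$.'' I would also record that any representation $\Theta$ of the unital $\mathfrak A$ weakly in $\mathscr C$ splits as $\Theta_1\oplus 0$ with $\Theta_1$ unital and weakly in $\mathscr C$: put $q=\Theta(1)$, $\Theta_1=\Theta|_{qF}$, and observe $\langle x,\Theta_1(-)x\rangle=\langle x,\Theta(-)x\rangle$ for $x\in qF$, so Proposition~\ref{p:starhomweakly} applies.

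\emph{Second assertion: the two directions.} For $\Rightarrow$, given $\Theta$ weakly in $\mathscr C$ I write $\Theta=\Theta_1\oplus 0_{F_0}$, rearrange $(\Phi\oplus 0)\oplus\Theta\cong\Phi\oplus\Theta_1\oplus 0_{\mathcal H_\mathfrak{B}\oplus F_0}$, collapse the zero part by Kasparov stabilization ($\mathcal H_\mathfrak{B}\oplus F_0\cong\mathcal H_\mathfrak{B}$), absorb $\Theta_1$ into $\Phi$ by unital absorption, and re-add $0_{\mathcal H_\mathfrak{B}}$ to reach $\Phi\oplus 0$. For $\Leftarrow$, fix a unital $\Psi$ weakly in $\mathscr C$; from $(\Phi\oplus 0)\oplus\Psi_\infty\sim_{as}\Phi\oplus 0$ I extract that $\Phi\oplus 0$ strongly approximately dominates $\Psi$ by the coordinate-inclusion argument of the proof of Theorem~\ref{t:domabs} ($(iv)\Rightarrow(iii)$), which uses no unitality. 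I then \emph{split the intertwiner}: writing the dominating operators $w_n\in\mathbb B(F,\mathcal H_\mathfrak{B}\oplus\mathcal H_\mathfrak{B})$ with $v_n=\iota_1^\ast w_n$, the identity $(\Phi\oplus 0)(a)=\iota_1\Phi(a)\iota_1^\ast$ gives $w_n^\ast(\Phi\oplus 0)(a)w_n=v_n^\ast\Phi(a)v_n$, and condition $(iii)$ of Definition~\ref{d:approxdom} for $w_n$ applied to $T'\oplus 0$ gives $(iii)$ for $v_n$; hence $\Phi$ strongly approximately dominates $\Psi$. Theorem~\ref{t:domabs} then produces $\Phi\oplus\Psi_\infty\sim_{as}\Phi$, and the reduction above gives unital absorption.

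\emph{Main obstacle.} The real difficulty is the $\Leftarrow$ direction of the second assertion: one cannot cancel the common zero summand in $\Phi\oplus\Psi\oplus 0\sim_{as}\Phi\oplus 0$, because indeed $\Phi\not\sim_{as}\Phi\oplus 0$ for unital $\Phi$ (the defect projection $0\oplus 1$ is non-compact). The passage from the non-unital statement about $\Phi\oplus 0$ to the unital statement about $\Phi$ must therefore route through the domination picture, where the zero coordinate contributes nothing to $w_n^\ast(\Phi\oplus 0)(a)w_n$ and is discarded cleanly.
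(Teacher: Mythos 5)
Your treatment of the first assertion and of the forward direction of the second is correct, and it is essentially the paper's own argument: the unitisation bijection together with the observation that the very same unitaries implement both equivalences (the $\lambda$-part contributes $\lambda(1-U_t^\ast U_t)=0$), and, for the forward direction, the decomposition $\Theta=\Theta_1\oplus 0$ with Kasparov stabilisation used to collapse the zero summands. The genuine gap is in the reverse direction of the ``moreover'' statement, at the step ``Theorem~\ref{t:domabs} then produces $\Phi\oplus\Psi_\infty\sim_{as}\Phi$.'' Theorem~\ref{t:domabs} is stated (and proved, via \cite{DadarlatEilers-classification}) only for unital representations on $\mathcal H_\mathfrak{B}$, whereas a unitally $\mathscr C$-absorbing representation must absorb unital representations $\Psi\colon\mathfrak A\to\mathbb B(E)$ weakly in $\mathscr C$ on \emph{arbitrary} countably generated Hilbert $\mathfrak B$-modules $E$, and such $E$ need not be isomorphic to $\mathcal H_\mathfrak{B}$ (already for $\mathfrak B=\mathbb C$ one has finite-dimensional unital representations). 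Your intertwiner-splitting argument does show, for every such $E$, that $\Phi$ strongly approximately dominates $\Psi$ --- that part is fine --- but the passage back from domination to absorption is exactly what Theorem~\ref{t:domabs} does not provide when $E\ncong\mathcal H_\mathfrak{B}$; the same defect already undermines your preliminary ``recast'' of unital absorption as domination, which silently quantifies only over $\Psi$ living on $\mathcal H_\mathfrak{B}$.

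This is precisely the point where the hypothesis that $\mathscr C$ is non-degenerate must enter, and it is telling that your proof never uses it. The paper closes the gap as follows: after settling the case $E\cong\mathcal H_\mathfrak{B}$ (by a domination argument in the same spirit as yours), it invokes Corollary~\ref{c:unitalrepiff} --- which is exactly where non-degeneracy is needed --- to produce a unital representation $\Psi_1\colon\mathfrak A\to\mathbb B(\mathcal H_\mathfrak{B})$ weakly in $\mathscr C$; then $\Psi_1\oplus\Psi$ is a unital representation weakly in $\mathscr C$ on $\mathcal H_\mathfrak{B}\oplus E\cong\mathcal H_\mathfrak{B}$ (Kasparov stabilisation), so the already-proved case shows that $\Phi$ absorbs both $\Psi_1$ and $\Psi_1\oplus\Psi$, whence
\[
\Phi \oplus \Psi \sim_{as} \Phi \oplus \Psi_1 \oplus \Psi \sim_{as} \Phi.
\]
Without this sandwich argument (or some substitute valid for general $E$), what you have proved is only that $\Phi$ absorbs unital representations weakly in $\mathscr C$ acting on modules isomorphic to $\mathcal H_\mathfrak{B}$, which is strictly weaker than the statement being claimed.
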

\begin{proof}
Suppose $\Phi^\dagger$ is $\mathscr C^\dagger$-absorbing. Then clearly $\Phi$ is $\mathscr C$-absorbing since every representation weakly in $\mathscr C$ extends to a unital representation weakly in $\mathscr C^\dagger$ and the
induced asymptotic unitary equivalences in the unital case clearly hold in the non-unital case, since we just restrict to $\mathfrak A$.

Now suppose that $\Phi$ is $\mathscr C$-absorbing. Let $\Psi \colon \mathfrak A^\dagger \to \mathbb B(E)$ be a unital representation weakly in $\mathscr C^\dagger$. 
Since $\Psi|_\mathfrak{A}$ is weakly in $\mathscr C$ we may find a path of unitaries $(U_t)$ in $\mathbb B(\mathcal H_\mathfrak{B},\mathcal H_\mathfrak{B}\oplus E)$ such that conditions $(i')$ and $(ii')$ of Definition \ref{d:unitaryeq} are satisfied. Then
\[
 U_t^\ast (\Phi^\dagger \oplus \Psi)(a+ \lambda 1) U_t - \Phi^\dagger(a + \lambda 1) = U_t^\ast (\Phi \oplus \Psi|_\mathfrak{A})(a) U_t - \Phi(a)
\]
and thus $\Phi^\dagger$ clearly absorbs $\Psi$. 

Now suppose that $\mathfrak A$ and $\Phi$ are unital, and that $\mathscr C$ is non-degenerate. Suppose that $\Phi$ is unitally $\mathscr C$-absorbing and let $\Psi\colon \mathfrak A \to \mathbb B(E)$ be a representation weakly in $\mathscr C$.
Letting $P= \Psi(1_\mathfrak{A})$ we get that $E = PE \oplus P^\perp E$, and clearly $\Psi = \Psi_1 \oplus 0 \colon \mathfrak A \to \mathbb B(PE \oplus P^\perp E)$ for a unital representation $\Psi_1$ weakly in $\mathscr C$.
Letting $(U_t)$ be a continuous path of unitaries in $\mathbb B(\mathcal H_\mathfrak{B},\mathcal H_\mathfrak{B} \oplus PE)$ implementing the asymptotic unitary equivalence of $\Phi \oplus \Psi_1$ and $\Phi$, 
we clearly have that 
\[
 U_t \oplus 1 \in \mathbb B(\mathcal H_\mathfrak{B} \oplus ( P^\perp E \oplus \mathcal H_\mathfrak{B}),(\mathcal H_\mathfrak{B} \oplus PE) \oplus ( P^\perp E \oplus \mathcal H_\mathfrak{B}))
\]
implements an asymptotic unitary equivalence of $\Phi \oplus \Psi \oplus 0_{\mathcal H_\mathfrak{B}}$ and 
$\Phi \oplus 0_{P^\perp E \oplus \mathcal H_\mathfrak{B}}$. The result now follows since $\Phi \oplus \Psi \oplus 0_{\mathcal H_\mathfrak{B}}$ is unitarily equivalent to $\Phi \oplus 0_{\mathcal H_{\mathfrak B}} \oplus \Psi$ and 
$\Phi \oplus 0_{P^\perp E \oplus \mathcal H_\mathfrak{B}}$ is unitarily equivalent to $\Phi \oplus 0_{\mathcal H_\mathfrak{B}}$ by Kasparov's stabilisation theorem.

Suppose $\Phi \oplus 0$ is $\mathscr C$-absorbing and let $\Psi \colon \mathfrak A \to \mathbb B(E)$ be a unital representation weakly in $\mathscr C$. 
We first prove the case where $E \cong \mathcal H_\mathfrak{B}$.
Consider the infinite repeat $\Psi_\infty \colon \mathfrak A \to \mathbb B(F)$ where $F= E^\infty$. We may find a continuous path of unitaries in 
$\mathbb B(\mathcal H_\mathfrak{B} \oplus \mathcal H_\mathfrak{B}, \mathcal H_\mathfrak{B} \oplus F \oplus \mathcal H_\mathfrak{B})$ such that
\[
 U_t^\ast ( \Phi(a) \oplus \Psi_\infty(a) \oplus 0) U_t - \Phi(a) \oplus 0
\]
is compact and tends to zero. Let $S \in \mathbb B(\mathcal H_\mathfrak{B} \oplus F, (\mathcal H_\mathfrak{B} \oplus F) \oplus \mathcal H_\mathfrak{B})$ and $T\in \mathbb B(\mathcal H_\mathfrak{B}, \mathcal H_\mathfrak{B} \oplus \mathcal H_\mathfrak{B})$ be the
isometries which are the embedding into the first summands. By letting $V_t = S^\ast U_t T$ we have a contractive continuous family such that
\[
 V_t^\ast (\Phi(a) \oplus \Psi_\infty(a)) V_t - \Phi(a) = T^\ast(U_t^\ast ( \Phi(a) \oplus \Psi_\infty(a) \oplus 0) U_t - \Phi(a) \oplus 0) T
\]
is compact and tends to zero. It easily follows that $\Phi$ strongly asymptotically dominates $\Psi$ and by Theorem \ref{t:domabs} $\Phi$ absorbs $\Psi$.

Now suppose that $E$ is not isomorphic to $\mathcal H_\mathfrak{B}$. By Corollary \ref{c:unitalrepiff} there is a unital representation $\Psi_1 \colon \mathfrak A \to \mathbb B(\mathcal H_\mathfrak{B})$ weakly in $\mathscr C$.
Since $\mathcal H_\mathfrak{B} \oplus E \cong \mathcal H_\mathfrak{B}$ by Kasparov's stabilisation theorem, it follows from what we proved above that $\Phi$ absorbs both $\Psi_1$ and $\Psi_1 \oplus \Psi$. Hence
\[
 \Phi \oplus \Psi \sim_{as} \Phi \oplus \Psi_1 \oplus \Psi \sim_{as} \Phi.\qedhere
\]
\end{proof}

\begin{lemma}\label{l:absdominates}
 Any $\mathscr C$-absorbing representation strongly asymptotically dominates every c.p. map weakly in $\mathscr C$.
 Also, if $\mathscr C$ is non-degenerate, then any unitally $\mathscr C$-absorbing representation strongly asymptotically dominates every c.p.~map weakly in $\mathscr C$.
\end{lemma}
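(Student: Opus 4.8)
The plan is to prove the substantive (second) assertion first, and then deduce the first one by unitisation. The engine for the second assertion is to dilate the given completely positive map to a representation via the Kasparov--Stinespring Theorem, to extract \emph{strong} asymptotic domination of that representation from the absorption hypothesis using Theorem \ref{t:domabs}, and finally to compress back. So suppose $\mathscr C$ is non-degenerate and $\Phi$ is unitally $\mathscr C$-absorbing. By the remarks following the definition of $\mathscr C$-absorbing, non-degeneracy forces the underlying module to be $\mathcal H_{\mathfrak B}$, so $\Phi \colon \mathfrak A \to \mathbb B(\mathcal H_{\mathfrak B})$ is a \emph{unital} representation. Let $\phi \colon \mathfrak A \to \mathbb B(F)$ be a c.p.\ map weakly in $\mathscr C$; since $\mathscr C$ is a cone I may rescale and assume $\phi$ is contractive.

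Next I would produce a dilating representation. To meet the fullness hypothesis of Theorem \ref{t:Stinespringmodules}, replace $F$ by the full module $F \oplus \mathcal H_{\mathfrak B}$ and $\phi$ by $\phi \oplus 0$, which is again contractive and weakly in $\mathscr C$. The unital, non-degenerate case of the Kasparov--Stinespring Theorem \ref{t:Stinespringmodules} then yields a unital representation $\Psi \colon \mathfrak A \to \mathbb B(\mathcal H_{\mathfrak B})$ weakly in $\mathscr C$ and an element of $\mathbb B(F \oplus \mathcal H_{\mathfrak B}, \mathcal H_{\mathfrak B})$ intertwining it with $\phi \oplus 0$; restricting this element to $F$ gives $V \in \mathbb B(F, \mathcal H_{\mathfrak B})$ with $V^\ast \Psi(-) V = \phi$.

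Now I bring in absorption. By Remark \ref{r:infrepeat} the infinite repeat $\Psi_\infty$ is again a unital representation weakly in $\mathscr C$, so unital $\mathscr C$-absorption of $\Phi$ gives $\Phi \oplus \Psi_\infty \sim_{as} \Phi$, which is exactly condition $(vi)$ of Theorem \ref{t:domabs} (applicable since $\mathfrak A$ is unital and $\Phi,\Psi$ are unital representations on $\mathcal H_{\mathfrak B}$). The implication $(vi) \Rightarrow (iii)$ then produces a bounded, norm-continuous path $(w_t)$ witnessing that $\Phi$ strongly asymptotically dominates $\Psi$: $w_t^\ast \Phi(a) w_t - \Psi(a) \in \mathbb K(\mathcal H_{\mathfrak B})$ and tends to $0$, while $\|w_t^\ast T w_t\| \to 0$ for all $T \in \mathbb K(\mathcal H_{\mathfrak B})$. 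Setting $v_t := w_t V$, the identity $v_t^\ast \Phi(a) v_t - \phi(a) = V^\ast\bigl(w_t^\ast \Phi(a) w_t - \Psi(a)\bigr)V$ yields conditions $(i)$ and $(ii)$ of strong asymptotic domination, because compressing a compact operator by the adjointable $V$ keeps it compact and shrinks its norm; and $v_t^\ast T v_t = V^\ast(w_t^\ast T w_t)V$ gives $(iii)$ since $\|v_t^\ast T v_t\| \le \|V\|^2\,\|w_t^\ast T w_t\| \to 0$. Thus $\Phi$ strongly asymptotically dominates $\phi$.

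For the first assertion I would reduce to the case just handled. If $\Phi$ is $\mathscr C$-absorbing then $\Phi^\dagger \colon \mathfrak A^\dagger \to \mathbb B(\mathcal H_{\mathfrak B})$ is unitally $\mathscr C^\dagger$-absorbing by Proposition \ref{p:unitalvsnonunital}, and $\mathscr C^\dagger$ is non-degenerate by Lemma \ref{unitalcone}. Given $\phi \colon \mathfrak A \to \mathbb B(F)$ weakly in $\mathscr C$, its unitisation $\phi^\dagger$ is a unital c.p.\ map, and it is weakly in $\mathscr C^\dagger$: for $a \in \mathfrak A$ the products $a_i^\ast a a_j$ lie in $\mathfrak A$, so each associated inner-product map restricts on $\mathfrak A$ to a map in $\mathscr C$, which is precisely membership in $\mathscr C^\dagger$. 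By the unital case, $\Phi^\dagger$ strongly asymptotically dominates $\phi^\dagger$ via some path $(v_t)$, and restricting the defining conditions to $a \in \mathfrak A$ (where $\Phi^\dagger,\phi^\dagger$ agree with $\Phi,\phi$) shows that $(v_t)$ witnesses $\Phi$ strongly asymptotically dominating $\phi$. The main obstacle throughout is securing the strong condition $(iii)$ rather than mere domination; this is supplied cleanly by the implication $(vi)\Rightarrow(iii)$ of Theorem \ref{t:domabs} (which internally exploits the infinite-repeat structure), and the only real care needed is to transport it from $\Psi$ to $\phi$ through the compression by $V$ and to carry out the routine reductions to the contractive, full, and unital settings.
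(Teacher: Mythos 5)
Your proof is correct and follows essentially the same route as the paper's: dilate the contractive c.p.\ map via the unital, non-degenerate case of Theorem \ref{t:Stinespringmodules}, use absorption of the infinite repeat together with Theorem \ref{t:domabs} to obtain strong asymptotic domination of the dilating representation, compress by $V$, and deduce the non-unital statement by unitisation via Proposition \ref{p:unitalvsnonunital} and Lemma \ref{unitalcone}. The only difference is that you make explicit some details the paper leaves implicit, such as replacing $\phi$ by $\phi \oplus 0$ on $F \oplus \mathcal H_{\mathfrak B}$ to satisfy the fullness hypothesis of the dilation theorem.
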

\begin{proof}
 We do the unital case first. Suppose $\Phi \colon \mathfrak A \to \mathbb B(E)$ is a unitally $\mathscr C$-absorbing and $\psi \colon \mathfrak A \to \mathbb B(F)$ is a c.p.~map weakly in $\mathscr C$.
 By rescaling $\psi$ if necessary, we may assume that $\psi$ is contractive.
 Let $(\Psi,V)$ be a unital Kasparov--Stinespring dilation of $\psi$ weakly in $\mathscr C$ by Theorem \ref{t:Stinespringmodules}. 
 By Theorem \ref{t:domabs} $\Phi$ strongly asymptotically dominates $\Psi$, so let $W_t$ be a bounded family implementing this strong asymptotic domination. Then $W_tV$ implements a strong asymptotic domination of $\psi$.
 
 Now consider the non-unital case. Let $\Phi$ be $\mathscr C$-absorbing and $\psi$ be weakly in $\mathscr C$. By Proposition \ref{p:unitalvsnonunital} $\Phi^\dagger$ is unitally $\mathscr C^\dagger$-absorbing.
 By scaling $\psi$ we may assume that $\psi$ is contractive, and thus we may extend $\psi$ to a unital c.p.~map $\psi^\dagger$. Clearly $\psi^\dagger$ is weakly in $\mathscr C^\dagger$, and thus $\Phi^\dagger$ strongly asymptotically dominates $\psi^\dagger$.
 By restricting to $\mathfrak A \subset \mathfrak A^\dagger$, it follows that $\Phi$ strongly asymptotically dominates $\psi$.
\end{proof}

We may now prove an important theorem. In some sense, the following theorem has been a main ingredient in many absorption results to date. This will be explained in more detail in Remark \ref{r:classicalabs} below.

\begin{theorem}\label{t:domgen}
  Let $\mathfrak A$ and $\mathfrak B$ be $C^\ast$-algebras with $\mathfrak A$ separable and unital, and $\mathfrak B$ $\sigma$-unital, and let $\Phi \colon \mathfrak A \to \mathbb B(\mathcal H_\mathfrak{B})$ be a unital representation. Let 
  $\mathscr C \subset CP(\mathfrak A,\mathfrak B)$ be a non-degenerate, closed operator convex cone and suppose that the subset $\mathscr S \subset CP(\mathfrak A,\mathbb K(\mathcal H_\mathfrak{B}))$ generates $\mathscr C_{\mathcal H_\mathfrak{B}}$. 
  If $\Phi$ strongly approximately dominates every map in $\mathscr S$, then $\Phi$ strongly approximately dominates any completely positive map $\mathfrak A \to \mathbb B(\mathcal H_\mathfrak{B})$ weakly in $\mathscr C$.
  
  In particular, $\Phi$ is unitally $\mathscr C$-absorbing if and only if $\Phi$ strongly approximately dominates every map in $\mathscr S$.
\end{theorem}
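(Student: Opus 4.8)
The plan is to reduce the statement, through a chain of reductions, to the single assertion that $\Phi$ strongly approximately dominates every map in the closed operator convex cone $\mathscr C_{\mathcal H_\mathfrak B}$, and then to prove that by showing that the set of maps dominated by $\Phi$ is itself a closed operator convex cone containing $\mathscr S$. The workhorse for passing between maps is a \emph{compression stability} observation: if $\Phi$ strongly approximately dominates $\psi\colon\mathfrak A\to\mathbb B(F)$ via a bounded sequence $(v_n)$, then for any adjointable $b$ it strongly approximately dominates $b^\ast\psi(-)b$ via $(v_nb)$; conditions $(i)$ and $(ii)$ of Definition \ref{d:approxdom} are preserved because the compacts form an ideal, and condition $(iii)$ is preserved since $\|(v_nb)^\ast T(v_nb)\|\le\|b\|^2\,\|v_n^\ast Tv_n\|$. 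Granting this, I would first treat an arbitrary c.p.~map $\phi\colon\mathfrak A\to\mathbb B(\mathcal H_\mathfrak B)$ weakly in $\mathscr C$: after rescaling to a contraction, apply the unital Kasparov--Stinespring theorem (Theorem \ref{t:Stinespringmodules}, using that $\mathscr C$ is non-degenerate and $\mathfrak A$ is unital) to obtain a unital representation $\omega\colon\mathfrak A\to\mathbb B(\mathcal H_\mathfrak B)$ weakly in $\mathscr C$ and an operator $V$ with $V^\ast\omega(-)V=\phi$. By compression stability it then suffices to dominate the \emph{representation} $\omega$.

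The second reduction invokes Theorem \ref{t:domabs}: since $\Phi$ and $\omega$ are unital representations on $\mathcal H_\mathfrak B$, the implication $(i)\Rightarrow(ii)$ there shows that $\Phi$ strongly approximately dominates $\omega$ as soon as it strongly approximately dominates $T^\ast\omega(-)T$ for every $T\in\mathbb K(\mathcal H_\mathfrak B)$; and by the second part of Lemma \ref{l:hilbertcone} each such compression lies in $\mathscr C_{\mathcal H_\mathfrak B}$ because $\omega$ is weakly in $\mathscr C$. Everything is thus reduced to proving that $\Phi$ strongly approximately dominates every element of $\mathscr C_{\mathcal H_\mathfrak B}=K(\mathscr S)$. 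For this I would show that
\[
\mathcal D:=\{\chi\in CP(\mathfrak A,\mathbb K(\mathcal H_\mathfrak B)):\Phi\text{ strongly approximately dominates }\chi\}
\]
is a closed operator convex cone; as it contains $\mathscr S$ by hypothesis, it then contains $K(\mathscr S)$. Closure under the basic operation \eqref{eq:basicmap} is direct: if $\chi\in\mathcal D$ via $(v_n)$, then $\sum_{i,j}b_i^\ast\chi(a_i^\ast(-)a_j)b_j$ is dominated via $w_n:=\sum_i\Phi(a_i)v_nb_i$, using that $\Phi$ is a $\ast$-homomorphism for $(i)$ and $(ii)$, and that $\Phi(a_i)^\ast T\Phi(a_j)$ is compact for $(iii)$. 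Point-norm closedness of $\mathcal D$ follows from a routine diagonal argument over a dense sequence in $\mathfrak A$ together with an approximate unit of $\mathbb K(\mathcal H_\mathfrak B)$.

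The one genuinely delicate cone axiom, and the step where I expect the real difficulty, is \textbf{additivity}. Given $\chi_1,\chi_2\in\mathcal D$ via $(v_n)$ and $(u_n)$, writing $\chi_1+\chi_2=d^\ast(\chi_1\oplus\chi_2)d$ with $d$ the diagonal column reduces the problem, through compression stability, to dominating the direct sum $\chi_1\oplus\chi_2$ by the row $(v_n,u_n)$; the obstruction is the off-diagonal term $v_n^\ast\Phi(-)u_n$, which must be made compact and vanishing, and which does \emph{not} vanish for the raw sequences. This is exactly where condition $(iii)$ is indispensable: because $v_n$ and $u_n$ escape every compact operator, one can replace $(u_n)$ by an asymptotically orthogonal sequence --- concretely by conjugating with unitaries that are asymptotically central modulo the compacts and rotate the (escaping) range of $u_n$ off that of $v_n$ --- in the spirit of the quasicentral approximate unit arguments of Voiculescu and Kasparov and of the reindexing techniques of Dadarlat--Eilers underlying Theorem \ref{t:domabs}. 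Such a modification preserves domination of $\chi_2$ while killing the cross terms, yielding $\chi_1\oplus\chi_2\in\mathcal D$; isolating and proving this orthogonalization lemma is the crux of the whole argument.

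Finally, for the ``in particular'' clause: if $\Phi$ strongly approximately dominates every map in $\mathscr S$ then, by the above, it dominates every unital representation $\Psi$ weakly in $\mathscr C$, so Theorem \ref{t:domabs} gives $\Phi\oplus\Psi_\infty\sim_{as}\Phi$; since $\Psi\oplus\Psi_\infty\cong\Psi_\infty$ one then deduces $\Phi\oplus\Psi\sim_{as}\Phi$, so $\Phi$ is unitally $\mathscr C$-absorbing. Conversely, if $\Phi$ is unitally $\mathscr C$-absorbing then Lemma \ref{l:absdominates} shows it strongly asymptotically dominates every c.p.~map weakly in $\mathscr C$; as each $\psi\in\mathscr S\subset\mathscr C_{\mathcal H_\mathfrak B}$ is weakly in $\mathscr C$, passing from the asymptotic path to a sequence gives strong approximate domination of each such $\psi$, completing the equivalence.
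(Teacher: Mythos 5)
Your chain of reductions is exactly the one the paper uses: rescale, dilate via Theorem \ref{t:Stinespringmodules}, invoke Theorem \ref{t:domabs} and Lemma \ref{l:hilbertcone} to reduce everything to strong approximate domination of the maps in $\mathscr C_{\mathcal H_\mathfrak{B}}=K(\mathscr S)$, and then verify that the dominated maps are closed under the operations generating a closed operator convex cone; the ``in particular'' clause is also handled as in the paper. The genuine gap is the additivity step, which you correctly isolate as the crux but do not prove, and the mechanism you sketch for it --- replacing $(u_n)$ by $(U_nu_n)$ for unitaries $U_n$ that are ``asymptotically central modulo the compacts'' and rotate ranges --- is not workable as described. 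Preserving domination of $\chi_2$ under such a conjugation requires controlling $u_n^\ast U_n^\ast[\Phi(a),U_n]u_n$, and condition $(iii)$ of Definition \ref{d:approxdom} only kills a \emph{fixed} compact operator along the sequence $(u_n)$, whereas here the compact error $U_n^\ast[\Phi(a),U_n]$ varies with $n$; you would therefore need the commutators to be small in \emph{norm}. Producing unitaries that nearly commute in norm with all of $\Phi(\mathfrak A)$ while moving the range of $u_n$ off that of $v_n$ is not available in a general Hilbert module $\mathcal H_\mathfrak{B}$ (there is no Voiculescu-type quasicentral machinery there), and such a statement is essentially of the same strength as the absorption results being proven, so the argument is circular, or at best unsubstantiated.

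The point you are missing is that no orthogonalization lemma is needed: the cross terms die automatically once the implementing operators are arranged to be compact, which is possible precisely because the maps in $\mathscr C_{\mathcal H_\mathfrak{B}}$ take values in $\mathbb K(\mathcal H_\mathfrak{B})$ (replace $v_n$ by $v_ne$ with $e$ from an approximate unit of $\mathbb K(\mathcal H_\mathfrak{B})$ chosen so that $\|e\chi_1(a)e-\chi_1(a)\|$ is small on the given finite set; this preserves $(i)$--$(iii)$, and $(i)$ even becomes automatic). Then for a fixed compact $v$ and any $T\in\Phi(F\cup F^\ast)\cup\{h\}$, the operator $T^\ast vv^\ast T$ is compact, so condition $(iii)$ for the sequence $(u_m)$ gives $\|v^\ast Tu_m\|^2=\|u_m^\ast T^\ast vv^\ast Tu_m\|\to 0$, and compactness of $v$ also makes the cross term $v^\ast\Phi(a)u_m$ compact. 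A recursive choice --- pick $v_1$, then pick the operator for the next summand far enough along its sequence to make all cross terms against the previously chosen (compact) operators smaller than $\epsilon/n^2$ --- then proves that finite sums of dominated maps into $\mathbb K(\mathcal H_\mathfrak{B})$ are dominated. This is exactly how the paper argues; with this replacement for your ``rotation'' step, your proof coincides with the paper's.
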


\begin{proof}
 This is proven using methods from \cite{ElliottKucerovsky-extensions}. First, we will show that $\Phi$ strongly approximately dominates any map of the form
 \[
  \psi(a) = \sum_{i,j=1}^n T_i^\ast \phi(a_i^\ast a a_j)T_j
 \]
 where $T_i\in \mathbb K(\mathcal H_\mathfrak{B})$ and $\phi \in \mathscr S$. Pick a sequence $(v_k)$ in $\mathbb B(\mathcal H_\mathfrak{B})$ such that $\lim v_k^\ast \Phi(a) v_k = \phi(a)$ and $\lim v_k^\ast T v_k = 0$ for 
 $T\in \mathbb K(\mathcal H_\mathfrak{B})$. We get that
 \[
  \psi(a) = \lim_{k\to \infty} \sum_{i,j=1}^n T_i^\ast v_k^\ast \Phi(a_i^\ast a a_j) v_k T_j = \lim_{k\to \infty} w_k^\ast \Phi(a) w_k
 \]
 where $w_k = \sum_{i=1}^n \Phi(a_i)v_k T_i$. Since $\| w_k^\ast T w_k \| \leq \sum_{i,j=1}^n \|T_i\| \|T_j \| \| v_k^\ast (\Phi(a_i^\ast) T \Phi(a_j)) v_k \| \to 0$ for any $T$, it follows that
 $\Phi$ strongly approximately dominates $\psi$. This is obvious since condition $(i)$ of Definition \ref{d:approxdom} is trivially satisfied since $\psi(a),w_k \in \mathbb K(\mathcal H_\mathfrak{B})$.
 
 We will now prove that if $\psi_1,\dots,\psi_n \colon \mathfrak A \to \mathbb K(\mathcal H_\mathfrak{B})$ such that $\Phi$ strongly approximately dominates each $\psi_i$, then $\Phi$ strongly approximately dominates $\psi = \sum_{i=1}^n \psi_i$.
 First fix $h\in \mathbb K(\mathcal H_\mathfrak{B})$ strictly positive, $0<\epsilon \leq 1$ and a finite subset $F\subset \mathfrak A$ containing $1$. It suffices to find $v\in \mathbb K(\mathcal H_\mathfrak{B})$ with $\| v^\ast v \| \leq \| \psi\| +1$ such that
 $\| v^\ast \Phi(a) v - \psi(a) \|, \| v^\ast h v\| < \epsilon$ for $a\in F$.   
Note that any $v$ satisfying the above (with no norm consideration) will satisfy $\| v^\ast v - \psi(1)\| < \epsilon$ and thus
$\| v^\ast v \| \leq \| \psi(1)\| + \epsilon \leq \| \psi \| + 1$.
 Pick $v_1 \in \mathbb K(\mathcal H_\mathfrak{B})$ such that $\| v_1^\ast \Phi(a) v_1 - \psi_1(a) \|, \| v_1^\ast h v_1\| < \epsilon/n^2$ for $a\in F$. Recursively, pick $v_k \in \mathbb K(\mathcal H_\mathfrak{B})$ such that 
 \[
 \| v_k^\ast \Phi(a) v_k - \psi_k(a) \|, \| v_j^\ast T v_k\|,\| v_k^\ast h v_k \| < \epsilon/n^2,
 \]
 for $a\in F$, $j=1,\dots k-1$, and $T \in \{ h \} \cup \Phi(F \cup F^\ast)$. Letting $v= \sum_{i=1}^n v_i$ we get that
 \[
  \| v^\ast \Phi(a) v - \psi(a)\| \leq \sum_{i=1}^n \| v_i^\ast \Phi(a) v_i - \psi_i(a)\| + \sum_{1\leq j < k \leq n} (\| v_j^\ast \Phi(a) v_k \| + \| v_j^\ast \Phi(a^\ast) v_k\|) < \epsilon
 \]
 for $a\in F$. Moreover, $\| v^\ast h v \| \leq \sum_{i,j=1}^n \|v_i^\ast h v_j \| < \epsilon$ and thus $\Phi$ strongly approximately dominates $\psi$.
 
 Finally, let $\psi \colon \mathfrak A \to \mathbb B(\mathcal H_\mathfrak{B})$ be weakly in $\mathscr C$. We may assume that $\psi$ is a contraction. Let $(\Psi,V)$ be 
a unital Kasparov--Stinespring dilation of $\psi$ weakly in $\mathscr C$ by Theorem \ref{t:Stinespringmodules}.
It suffices to show that $\Phi$ strongly approximately dominates $\Psi$, since if $W_n$ implements a strong approximate domination of $\Psi$, then $W_nV$ implements a strong approximate domination of $\psi$.

By Theorem \ref{t:domabs} $\Phi$ strongly approximate dominates $\Psi$ if and only if $\Phi$ strongly approximately dominates $T^\ast \Psi(-)T$ for any $T\in \mathbb K(\mathcal H_\mathfrak{B})$. 
 Since $T^\ast \Psi(-)T$ is in $\mathscr C_{\mathcal H_\mathfrak{B}}$, which is generated by $\mathscr S$, 
 it follows from Remark \ref{r:generatedcone} that $T^\ast\Psi(-)T$ may be approximated point-norm by a sum of maps of the form considered in the first part of the proof. 
Thus by what we have already proven, $\Phi$ strongly approximately dominates $T^\ast\Psi(-) T$ and thus also $\psi$.
 
 For the ``in particular'', the ``if'' part follows almost immediately from what has already been proven. In fact, if $\Psi$ is a unital representation weakly in $\mathscr C$, then $\Phi$ strongly approximately dominates $\Psi$ and by Theorem \ref{t:domabs}
 \[
 \Phi \sim_{as} \Phi \oplus \Psi_\infty \sim_{as} \Phi \oplus \Psi_\infty \oplus \Psi \sim_{as} \Phi \oplus \Psi.
 \]
 The ``only if'' part follows from Lemma \ref{l:absdominates}.
\end{proof}

\begin{remark}\label{r:classicalabs}
 As mentioned earlier, the above theorem is in some sense the main ingredient, or trick, in the absorption theorems involving nuclear maps. In fact, if $\mathscr C = CP_\nuc(\mathfrak A,\mathfrak B)$ is the closed operator convex cone consisting of all
 nuclear maps, and $\mathfrak B$ is stable, then $\mathscr C$ is generated by the set 
\[
\mathscr S= \{ \mathfrak A \xrightarrow{b\rho(-)} \mathfrak B : \rho \text{ is a pure state on } \mathfrak A, b\in \mathfrak B_+ \}.
\]
 This was basically proven by Kirchberg in the pre-print \cite{Kirchberg-simple}, although not stated in this way, and a similar proof can be found in the proof of \cite[Lemma 10]{ElliottKucerovsky-extensions}.
Now, if $\mathfrak A$ is the quotient of a $C^\ast$-subalgebra $\mathfrak E \subset \multialg{\mathfrak B}$, then any pure state state
$\rho$ on $\mathfrak A$ lifts to a pure state $\tilde \rho$ of $\mathfrak E$. If the extension $0 \to \mathfrak B \to \mathfrak E \xrightarrow{p} \mathfrak A \to 0$ has nice comparison properties (cf.~Lemma \ref{Xpllemma}), then we may
 use excision of pure states in $\mathfrak E$ to show the inclusion $\iota\colon \mathfrak E \hookrightarrow \multialg{\mathfrak B}$ strongly approximately dominates every map in $\mathscr S_0 =\{ \phi \circ p : \phi \in \mathscr S\}$.
 When this is the case, then $\iota$ is $\mathscr C_0$ absorbing by Lemma \ref{l:quotcone} and the above theorem, where $\mathscr C_0 = \{ \phi \circ p : \phi \in CP_\nuc(\mathfrak A, \mathfrak B)\}$.
 This is essentially what will be done when proving Theorem \ref{t:purelylargefinite}.
\end{remark}

We can now prove the main theorem on existence of absorbing representations.

\begin{theorem}[Existence]\label{t:absrep}
 Let $\mathfrak A$ and $\mathfrak B$ be $C^\ast$-algebras, with $\mathfrak A$ separable and $\mathfrak B$ $\sigma$-unital, and let $\mathscr C\subset CP(\mathfrak A,\mathfrak B)$ be a closed operator convex cone. 
 Then there exists a $\mathscr C$-absorbing representation $\Phi \colon \mathfrak A \to \mathbb B(\mathcal H_\mathfrak{B})$ which is weakly in $\mathscr C$ if and only if $\mathscr C$ is countably generated.
 
 Similarly, if $\mathfrak A$ is unital and $\mathscr C$ is non-degenerate, then there exists a unitally $\mathscr C$-absorbing representation $\Phi \colon \mathfrak A \to \mathbb B(\mathcal H_\mathfrak{B})$ which is weakly in $\mathscr C$ 
 if and only if $\mathscr C$ is countably generated.
\end{theorem}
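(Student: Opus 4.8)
The plan is to prove, for each of the two statements, the equivalence ``an absorbing representation weakly in $\mathscr C$ exists $\iff$ $\mathscr C$ is countably generated'', handling the general case by reducing to the unital, non-degenerate one via the forced unitisation. For the ``only if'' direction, suppose $\Phi\colon\mathfrak A\to\mathbb B(\mathcal H_\mathfrak B)$ is (unitally) $\mathscr C$-absorbing and weakly in $\mathscr C$. By Lemma \ref{l:absdominates}, $\Phi$ strongly asymptotically dominates every c.p.\ map weakly in $\mathscr C$. Applying this to the corner embedding $\iota\circ\psi\colon\mathfrak A\to\mathbb K(\mathcal H_\mathfrak B)$ of an arbitrary $\psi\in\mathscr C$ (which is weakly in $\mathscr C$ by Remark \ref{r:weaklyinB}) and compressing the dominating operators to a single Hilbert-module vector $x$ (then running an approximate unit), one finds $\psi$ in the cone generated by $\{\langle x,\Phi(-)x\rangle : x\in\mathcal H_\mathfrak B\}$. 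As each such map lies in $\mathscr C$ by Proposition \ref{p:starhomweakly}, this gives $\mathscr C=K(\{\langle x,\Phi(-)x\rangle : x\in\mathcal H_\mathfrak B\})$, and it remains to cut this down to a countable family.

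The crucial tool here is that every closed operator convex cone is hereditary (Corollary \ref{c:hercone}). Fix a countable set $\{\xi_n\}$ generating $\mathcal H_\mathfrak B$ as a Hilbert $\mathfrak B$-module and set $\mathscr S_0=\{\langle\xi_n,\Phi(-)\xi_n\rangle\}$. For $u,v\in\mathcal H_\mathfrak B$ the parallelogram identity gives
\[
\langle u+v,\Phi(-)(u+v)\rangle+\langle u-v,\Phi(-)(u-v)\rangle=2\langle u,\Phi(-)u\rangle+2\langle v,\Phi(-)v\rangle ,
\]
so whenever the two maps on the right lie in $K(\mathscr S_0)$, so does their sum, and by heredity each of the two c.p.\ summands on the left lies in $K(\mathscr S_0)$; in particular $\langle u+v,\Phi(-)(u+v)\rangle\in K(\mathscr S_0)$. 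Since $\langle\xi_n b,\Phi(-)\xi_n b\rangle=b^\ast\langle\xi_n,\Phi(-)\xi_n\rangle b\in K(\mathscr S_0)$, an induction on the number of summands yields $\langle x,\Phi(-)x\rangle\in K(\mathscr S_0)$ for every finite sum $x=\sum_i\xi_{n_i}b_i$, hence for every $x\in\mathcal H_\mathfrak B$ by point-norm continuity and closedness. Thus $\mathscr C=K(\mathscr S_0)$ with $\mathscr S_0$ countable.

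For the ``if'' direction I would first treat $\mathfrak A$ unital and $\mathscr C$ non-degenerate. By Corollary \ref{c:ctblygeniff} pick contractive maps $\psi_k$ generating $\mathscr C_{\mathcal H_\mathfrak B}$, and dilate each via the unital Kasparov--Stinespring theorem (Theorem \ref{t:Stinespringmodules}) to a unital representation $\Phi_k$ weakly in $\mathscr C$ with $V_k^\ast\Phi_k(-)V_k=\psi_k$. Set $\Phi=\bigoplus_k(\Phi_k)_\infty$, a unital representation weakly in $\mathscr C$ by Remark \ref{r:infrepeat} and closedness. Composing $V_k$ with the inclusion into the $n$-th copy of $\Phi_k$ produces $W_{k,n}$ with $W_{k,n}^\ast\Phi(-)W_{k,n}=\psi_k$ and $\|W_{k,n}^\ast TW_{k,n}\|\to0$ for compact $T$, so $\Phi$ strongly approximately dominates every generator $\psi_k$; Theorem \ref{t:domgen} then shows $\Phi$ is unitally $\mathscr C$-absorbing. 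For the general statement I pass to $\mathfrak A^\dagger$ and $\mathscr C^\dagger$, which is non-degenerate by Lemma \ref{unitalcone}; the unital case applied to $(\mathfrak A^\dagger,\mathscr C^\dagger)$ produces a unitally $\mathscr C^\dagger$-absorbing $\Phi'$ weakly in $\mathscr C^\dagger$, and since $\Phi'=(\Phi'|_\mathfrak A)^\dagger$, Proposition \ref{p:unitalvsnonunital} makes $\Phi'|_\mathfrak A$ a $\mathscr C$-absorbing representation weakly in $\mathscr C$.

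The main obstacle is the bookkeeping around countability. In ``only if'' the difficulty is that $\mathcal H_\mathfrak B$ need not be norm-separable when $\mathfrak B$ is merely $\sigma$-unital, so one cannot simply take a countable norm-dense set of vectors; it is exactly heredity (Corollary \ref{c:hercone}) that lets a countable \emph{module}-generating family suffice, via the parallelogram trick above. In ``if'' the reduction to the unital case forces one to know that $\mathscr C^\dagger$ is countably generated whenever $\mathscr C$ is; I would establish this separately by lifting a countable generating family of $\mathscr C$ to $\mathfrak A^\dagger$ through Stinespring dilations and adjoining the single map $a+\lambda 1\mapsto\lambda h$ for a strictly positive $h\in\mathfrak B$, absorbing the positivity defect at the adjoined unit by heredity once more.
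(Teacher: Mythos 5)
Your ``only if'' direction and your unital ``if'' direction are essentially correct. The ``only if'' argument is in fact a genuinely different route from the paper's: the paper fixes a strictly positive $T\in\mathbb K(\mathcal H_\mathfrak{B})$, uses Lemma \ref{l:absdominates} to show that the \emph{single} map $T\Phi(-)T$ generates $\mathscr C_{\mathcal H_\mathfrak{B}}$ (every compact $V$ is approximately $TW$), and then transfers countable generation back to $\mathscr C$ via Corollary \ref{c:ctblygeniff}; your parallelogram-plus-heredity argument instead produces countably many generators $\langle\xi_n,\Phi(-)\xi_n\rangle$ directly inside $\mathscr C$, trading the strictly-positive-element trick for Corollary \ref{c:hercone}. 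Both work. Your unital ``if'' direction (dilate countably many contractive generators of $\mathscr C_{\mathcal H_\mathfrak{B}}$ by Theorem \ref{t:Stinespringmodules}, take infinite-multiplicity direct sums, conclude by Theorem \ref{t:domgen}) is the paper's proof.

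The genuine gap is the last step: countable generation of $\mathscr C^\dagger$, which your reduction to the unital case requires, is not established, and neither mechanism you name can establish it as described. First, ``lifting a generating family through Stinespring dilations'' does not produce maps in $CP(\mathfrak A^\dagger,\mathfrak B)$: if $\phi_n=V_n^\ast\Phi_n(-)V_n$ with $V_n$ an isometry, the unitised map is $a+\lambda 1\mapsto\phi_n(a)+\lambda V_n^\ast V_n$, and $V_n^\ast V_n=1_{\multialg{\mathfrak B}}$ lies in $\multialg{\mathfrak B}$, not in $\mathfrak B$. Indeed a c.p.\ map $\mathfrak A\to\mathfrak B$ need not admit \emph{any} $\mathfrak B$-valued c.p.\ extension to $\mathfrak A^\dagger$ (the identity of $C_0((0,1])$ is an example: its value at the unit would have to dominate every positive contraction of $C_0((0,1])$, hence equal $1\notin C_0((0,1])$). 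This is precisely why the paper first replaces $\phi_n$ by $h\phi_n(-)h$ for $h\in\mathfrak B$ strictly positive and then extends to $h\phi_n^\dagger(-)h$. Second, and more seriously, heredity cannot ``absorb the positivity defect at the adjoined unit'': for $\psi\in\mathscr C^\dagger$ the value $\psi(1)$ is essentially arbitrary and not determined by $\psi|_\mathfrak{A}$, and Corollary \ref{c:hercone} only lets you pass \emph{from} a sum already known to lie in a cone \emph{to} its summands; it provides no way to prove that $\psi$ itself lies in the cone generated by your candidates. The paper closes exactly this gap with Kirchberg's Theorem \ref{t:Kirchbergcone}, decomposing positive elements of $C^\ast(\mathbb F_\infty)\otimes_{\max{}}\mathfrak A^\dagger$ via split exactness and checking the ideal-membership criterion separately on $C^\ast(\mathbb F_\infty)\otimes_{\max{}}\mathfrak A$ and on $C^\ast(\mathbb F_\infty)$. (An elementary substitute does exist, but it is a different idea from yours: writing
\[
\psi=\lim_\lambda\Bigl[\,\psi|_\mathfrak{A}\bigl(u_\lambda(-)u_\lambda\bigr)+\delta(-)\,\psi(1-u_\lambda^2)\Bigr],
\]
where $(u_\lambda)$ is an approximate identity of $\mathfrak A$ and $\delta\colon\mathfrak A^\dagger\to\mathbb C$ is the quotient character, exhibits $\psi$ as a point-norm limit of sums of a map generated by $\{h\phi_n^\dagger(-)h\}$ and a map generated by $a+\lambda 1\mapsto\lambda h^2$; no heredity enters.)
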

\begin{proof}
We will prove the unital case first. Since $\mathfrak B$ is $\sigma$-unital, so is $\mathbb K(\mathcal H_\mathfrak{B})$. 
Let $T\in \mathbb K(\mathcal H_\mathfrak{B})$ be strictly positive and suppose that $\Phi \colon \mathfrak A \to \mathbb B(\mathcal H_\mathfrak{B})$ is unitally $\mathscr C$-absorbing and weakly in $\mathscr C$.
We will show that $T \Phi(-) T$ generates $\mathscr C_{\mathcal H_\mathfrak{B}}$ as a closed operator convex cone, i.e.~that $K(\{ T \Phi(-) T\}) = \mathscr C_{\mathcal H_\mathfrak{B}}$.

Let $\psi\in \mathscr C_{\mathcal H_\mathfrak{B}}$. 
By Lemma \ref{l:absdominates} $\Phi$ strongly asymptotically dominates $\psi$ and thus $\psi$ can be approximated point-norm by maps of the from $V^\ast \Phi(-) V$ with $V \in \mathbb K(\mathcal H_\mathfrak{B})$.
Since $T$ is strictly positive any such $V$ may be approximated by $TW$ for some $W \in \mathbb K(\mathcal H_\mathfrak{B})$. Hence $\psi$ can be approximated point-norm by maps of the form $W^\ast T \Phi(-) T W \in K(\{ T^\ast \Phi(-) T\})$, and since
$K(\{T\Phi(-)T\})$ is point-norm closed, it contains $\psi$. Hence $\mathscr C_{\mathcal H_\mathfrak{B}} \subset K(\{ T \Phi(-)T\})$ and since the other inclusion is trivial, these two cones are equal. 
It follows that $\mathscr C_{\mathcal H_\mathfrak{B}}$ is countably generated (in fact, it is singly generated). Thus it follows from Corollary \ref{c:ctblygeniff} that $\mathscr C$ is countably generated.

Now suppose $\mathscr C$ is countably generated and non-degenerate. Then $\mathscr C_{\mathcal H_\mathfrak{B}}$ is countably generated by Corollary \ref{c:ctblygeniff}.
Let $(\phi_n)_{n=1}^\infty$ be a sequence of contractive maps in $\mathscr C_{\mathcal H_\mathfrak{B}}$ such that each map is repeated
infinitely often in the sequence, and such that the sequence generates $\mathscr C_{\mathcal H_\mathfrak{B}}$. For each $n\in \mathbb N$ let $(\Phi_n,V_n)$ be a unital Kasparov--Stinespring dilation weakly in $\mathscr C$ as in Theorem \ref{t:Stinespringmodules},
and let $\Phi = \prod \Phi_n \colon \mathfrak A \to \mathbb B(\mathcal H_\mathfrak{B}^\infty)$. It follows from Theorem \ref{t:domgen} that $\Phi$ is unitally $\mathscr C$ absorbing if $\Phi$ strongly approximately dominates each $\phi_n$.
For a fixed $n\in \mathbb N$ let $n_1 < n_2 < \dots$ such that $\phi_{n_k} = \phi_n$ for all $k$. Let $W_k \in \mathbb B(\mathcal H_\mathfrak{B},\mathcal H_\mathfrak{B}^\infty)$ be the isometry which is the embedding into the $n_k$'th coordinate of
$\mathcal H_\mathfrak{B}^\infty$. Then $(W_{n_k} V_{n_k})^\ast \Phi(a) W_{n_k} V_{n_k} = \phi_{n_k}(a) = \phi_n(a)$ for all $a\in \mathfrak A$ and $\|TW_{n_k} V_{n_k}\| \to 0 $ for all $T \in \mathbb K(\mathcal H_{\mathfrak B}^\infty)$.
Hence $\Phi$ strongly approximately dominates each $\phi_n$ which finishes the proof in the unital case.

If $\Phi$ is $\mathscr C$-absorbing, then the exact same proof as in the unital case shows that $\mathscr C_{\mathcal H_\mathfrak{B}}$ is singly generated and thus $\mathscr C$ is countably generated.

Suppose that $\mathscr C$ is countably generated, say by a sequence $(\phi_n)$ of contractive maps in $\mathscr C$. We will show that $\mathscr C^\dagger$ is countably generated, and since this is non-degenerate by Lemma \ref{unitalcone}, it follows from
the unital case that there is a unitally $\mathscr C^\dagger$-absorbing representation $\Psi$ weakly in $\mathscr C^\dagger$. 
Since $\Psi = \Phi^\dagger$ where $\Phi = \Psi|_\mathfrak{A}$, it follows from Proposition \ref{p:unitalvsnonunital} that $\Phi$ is $\mathscr C$-absorbing and weakly in $\mathscr C$ thus finishing
the proof. So it remains to show that $\mathscr C^\dagger$ is countably generated.

By the same argument as above, the sequence $(h \phi_n(-) h)$ also generates $\mathscr C$, where $h \in \mathfrak B$ is some strictly positive element.
For each $n$, let $\phi_n^\dagger \colon \mathfrak A \to \multialg{\mathfrak B}$ be the unital extension of $\phi_n$ which is weakly in $\mathscr C^\dagger$.
Clearly $\mathscr K:=K(\{ h \phi_0^\dagger(-)h,h \phi_1^\dagger(-)h,\dots\}) \subset \mathscr C^\dagger$ where $\phi_0 = 0$. Let $\psi\in \mathscr C^\dagger$ so that we wish to show that $\psi \in \mathscr K$.
We will apply Theorem \ref{t:Kirchbergcone}. By split exactness of the short exact sequence
\[
 0 \to C^\ast(\mathbb F_\infty)\otimes_{\max{}} \mathfrak A \to C^\ast(\mathbb F_\infty)\otimes_{\max{}} \mathfrak A^\dagger \to C^\ast(\mathbb F_\infty) \to 0,
\]
any positive element $C^\ast(\mathbb F_\infty) \otimes_{\max{}} \mathfrak A^\dagger$ may be decomposed into a linear combination of positive elements from $C^\ast(\mathbb F_\infty)\otimes_{\max{}} \mathfrak A$
and $C^\ast(\mathbb F_\infty)$. Hence it suffices to check the condition of Theorem \ref{t:Kirchbergcone} for positive elements in $C^\ast(\mathbb F_\infty)\otimes_{\max{}} \mathfrak A$ and in
$C^\ast(\mathbb F_\infty)$. If $a \in C^\ast(\mathbb F_\infty)\otimes_{\max{}} \mathfrak A$ then $(id \otimes \psi)(a) = (id \otimes \psi|_\mathfrak{A})(a)$ is in the two-sided, closed ideal generated by
\[
 \{ (id \otimes \phi)((1\otimes x)^\ast a (1\otimes x)) : \phi = h\phi_n^\dagger(-)h \text{ for some }n, x \in \mathfrak A^\dagger\}
\]
since $(h\phi_n(-)h)$ generates $\mathscr C$. If $c\in C^\ast(\mathbb F_\infty) \otimes \mathbb C$ is positive, then $(id \otimes \psi)(c) = c\otimes \psi(1)$ is in the two-sided, closed ideal generated by $(id \otimes (h\phi_0^\dagger h))(c) = c \otimes h^2$,
(recall that $\phi_0 = 0$). Thus by Theorem \ref{t:Kirchbergcone}, $\psi \in \mathscr K$, which finishes the proof since $\mathscr C^\dagger = \mathscr K$ is countably generated.
\end{proof}

We will prove that a closed operator convex cone $\mathscr C \subset CP(\mathfrak A, \mathfrak B)$ is countably generated (in fact separable) if $\mathfrak A$ and $\mathfrak B$ are both separable. 
This shows, that in most cases of interest, there always exist $\mathscr C$-absorbing representations which are weakly in $\mathscr C$, and unital ones if $\mathscr C$ is non-degenerate.
Also, this generalises the main result in \cite{Thomsen-absorbing}, which corresponds to the case where $\mathscr C = CP(\mathfrak A,\mathfrak B)$.

The following result should be well-known.

\begin{lemma}\label{l:denseseq}
Let $\mathfrak A$ and $\mathfrak B$ be separable $C^\ast$-algebras. Then $CP(\mathfrak A,\mathfrak B)$ is second countable in the point-norm topology. In particular, any subspace $\mathscr C \subset CP(\mathfrak A,\mathfrak B)$ is separable.
\end{lemma}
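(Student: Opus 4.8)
The plan is to realise the point-norm topology as a subspace topology coming from a countable product of copies of $\mathfrak B$. Fix a countable dense subset $\{a_k : k\in\mathbb N\}$ of $\mathfrak A$ and consider the evaluation map $\iota \colon CP(\mathfrak A,\mathfrak B) \to \prod_{k=1}^\infty \mathfrak B$ given by $\iota(\phi) = (\phi(a_k))_k$, where $\prod_k \mathfrak B$ carries the product of the norm topologies. Since $\mathfrak B$ is separable and metrisable, this countable product is second countable and metrisable, so every subspace of it is again second countable; thus everything reduces to understanding how closely $\iota$ relates the point-norm topology to the subspace topology on its image. The map $\iota$ is clearly continuous, and it is injective because every c.p.\ map is bounded, hence norm-continuous, and therefore determined by its values on the dense set $\{a_k\}$.

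The one genuine obstacle is that $\iota$ is \emph{not} a homeomorphism onto its image on all of $CP(\mathfrak A,\mathfrak B)$: point-norm convergence is strictly finer than coordinatewise convergence of $(\phi(a_k))_k$, precisely because c.p.\ maps can have arbitrarily large norm, so there is no uniform Lipschitz control when passing from $\{a_k\}$ to a general $a\in\mathfrak A$. I would resolve this by cutting the cone into norm-bounded pieces. For $n\in\mathbb N$ let $CP_{\le n} = \{\phi \in CP(\mathfrak A,\mathfrak B) : \|\phi\| \le n\}$, so that $CP(\mathfrak A,\mathfrak B) = \bigcup_{n} CP_{\le n}$. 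On each $CP_{\le n}$ the restriction of $\iota$ \emph{is} a homeomorphism onto its image: given $a\in\mathfrak A$ and $\epsilon>0$, choose $a_k$ with $\|a-a_k\| < \epsilon/(3n)$, so that for $\phi,\psi \in CP_{\le n}$ one has $\|\phi(a)-\psi(a)\| \le \|\phi\|\,\|a-a_k\| + \|\phi(a_k)-\psi(a_k)\| + \|\psi\|\,\|a-a_k\| \le \tfrac{2\epsilon}{3} + \|\phi(a_k)-\psi(a_k)\|$. Hence coordinatewise closeness of $\iota(\phi),\iota(\psi)$ on finitely many $a_k$ forces point-norm closeness of $\phi,\psi$, which is exactly continuity of the inverse. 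Consequently each $CP_{\le n}$ is homeomorphic to a subspace of $\prod_k \mathfrak B$ and is therefore second countable, metrisable, and in particular separable.

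Finally I would assemble the pieces to obtain the separability statements. Each $CP_{\le n}$ is second countable, hence hereditarily separable, so every intersection $\mathscr C \cap CP_{\le n}$ with a subspace $\mathscr C\subset CP(\mathfrak A,\mathfrak B)$ is separable; writing $\mathscr C = \bigcup_n (\mathscr C\cap CP_{\le n})$ as a countable union and taking the union of countable dense subsets of the pieces yields a countable dense subset of $\mathscr C$ (if $D_n$ is dense in $X_n$ then $\bigcup_n D_n$ is dense in $\bigcup_n X_n$, since $X_n\subseteq \overline{D_n}\subseteq \overline{\bigcup_n D_n}$). Taking $\mathscr C = CP(\mathfrak A,\mathfrak B)$ gives separability of the whole cone, and the case of general $\mathscr C$ gives the ``in particular''. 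I expect the bounded-versus-unbounded subtlety in the second paragraph to be the only point requiring real care: on each norm-bounded piece the coordinate embedding into the second-countable product $\prod_k\mathfrak B$ gives a countable base directly, while for the full cone one must pass through this increasing union of bounded pieces rather than through a single global embedding.
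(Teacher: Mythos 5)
Your argument is internally correct and essentially a self-contained fleshing-out of what the paper outsources: the paper's proof is a one-line appeal to the proof of \cite[Lemma 2.3]{Thomsen-absorbing} (asserting that $CP(\mathfrak A,\mathfrak B)$ is metrisable and separable, hence second countable), and your embedding of the norm-bounded pieces $CP_{\le n}:=\{\phi\in CP(\mathfrak A,\mathfrak B):\|\phi\|\le n\}$ into the countable product $\prod_k\mathfrak B$ is the standard way to make such a claim precise. But notice the mismatch in scope: you prove that each $CP_{\le n}$ is second countable and deduce that $CP(\mathfrak A,\mathfrak B)$ is \emph{hereditarily separable}, which is the ``in particular'' clause; you do not prove the lemma's first assertion, that $CP(\mathfrak A,\mathfrak B)$ itself is second countable. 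A countable union of second countable subspaces need not be second countable unless the pieces can be taken open, so your assembly step genuinely yields a weaker conclusion than the one stated.

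The twist is that this gap cannot be closed in general, because the lemma's first assertion is too strong. When $\mathfrak A$ is unital your decomposition does give second countability: for c.p.\ maps $\|\phi\|=\|\phi(1_{\mathfrak A})\|$, so $\phi\mapsto\|\phi\|$ is point-norm continuous, the sets $\{\phi:\|\phi\|<n\}$ are open, and a countable union of open second countable subspaces is second countable. For non-unital $\mathfrak A$, however, $CP(\mathfrak A,\mathfrak B)$ can fail to be even first countable. Take $\mathfrak A=c_0$ and $\mathfrak B=\mathbb C$, so that $CP(\mathfrak A,\mathfrak B)$ is the positive cone of $\ell^1$ with the topology of pointwise convergence on $c_0$: given any sequence of basic neighbourhoods $U_j$ of $0$ determined by finite sets $F_j\subset c_0$ and $\epsilon_j>0$, choose $N_1<N_2<\cdots$ with $|a(N_j)|<\epsilon_j/(2j)$ for all $a\in F_j$; then $\phi_j:=j\,\delta_{N_j}$ lies in $U_j$, yet the element $b\in c_0$ with $b(N_j)=2/j$ and $b=0$ elsewhere satisfies $\phi_j(b)=2$ for every $j$, so the neighbourhood $\{\phi:|\phi(b)|<1\}$ of $0$ contains no $U_j$. (Here each bounded piece is even compact metrisable, so this also shows the assembly step could not possibly be improved.) The same example shows that the metrisability claim quoted from Thomsen fails for non-unital $\mathfrak A$, so the paper's own proof has the same defect as its statement. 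In short: your proof establishes exactly the part of the lemma that is true in general, and fortunately that is the only part the paper uses downstream (Corollary \ref{c:seprep} needs only that every closed operator convex cone has a countable point-norm dense subset). To match the lemma verbatim you should either add the unital-case observation above, or restate the lemma with ``hereditarily separable'' in place of ``second countable''.
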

\begin{proof}
As done in the proof of \cite[Lemma 2.3]{Thomsen-absorbing}, $CP(\mathfrak A,\mathfrak B)$ is metrisable and separable. Thus it is second countable.
\end{proof}

Since a point-norm separable closed operator convex cone is clearly generated by a countable point-norm dense subset, we may apply the above lemma together with Theorem \ref{t:absrep} to get the following.

\begin{corollary}\label{c:seprep}
 Let $\mathfrak A$ and $\mathfrak B$ be separable $C^\ast$-algebras, and let $\mathscr C\subset CP(\mathfrak A,\mathfrak B)$ be a closed operator convex cone. Then there exists a $\mathscr C$-absorbing representation weakly in $\mathscr C$. 
 
 Moreover, if $\mathfrak A$ is unital and $\mathscr C$ is non-degenerate, then there exists a unitally $\mathscr C$-absorbing representation weakly in $\mathscr C$.
\end{corollary}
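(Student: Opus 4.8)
The plan is to reduce this corollary directly to the existence theorem, Theorem \ref{t:absrep}, by verifying that the separability hypotheses force $\mathscr C$ to be countably generated. Since Theorem \ref{t:absrep} already delivers a $\mathscr C$-absorbing representation weakly in $\mathscr C$ (and a unitally $\mathscr C$-absorbing one in the unital, non-degenerate case) precisely when $\mathscr C$ is countably generated, the only thing left to establish is countable generation.

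First I would invoke Lemma \ref{l:denseseq}: since $\mathfrak A$ and $\mathfrak B$ are separable, $CP(\mathfrak A,\mathfrak B)$ is second countable in the point-norm topology, so the subspace $\mathscr C$ is itself separable. Choose a countable point-norm dense subset $\mathscr S \subset \mathscr C$. I then claim $\mathscr C = K(\mathscr S)$. The inclusion $K(\mathscr S) \subset \mathscr C$ is immediate, because $\mathscr C$ is a closed operator convex cone containing $\mathscr S$ and $K(\mathscr S)$ is by definition the \emph{smallest} such cone. For the reverse inclusion, note that $K(\mathscr S)$ is point-norm closed and contains $\mathscr S$, hence it contains the point-norm closure of $\mathscr S$, which is all of $\mathscr C$. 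Thus $\mathscr C = K(\mathscr S)$ is countably generated.

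With countable generation in hand, the first assertion follows at once from the (non-unital part of the) existence theorem, Theorem \ref{t:absrep}. For the moreover statement, I would additionally assume $\mathfrak A$ unital and $\mathscr C$ non-degenerate; the same countable set $\mathscr S$ witnesses countable generation, so the unital part of Theorem \ref{t:absrep} produces a unitally $\mathscr C$-absorbing representation weakly in $\mathscr C$. There is no genuine obstacle here: the entire content of the corollary is the observation that point-norm separability yields a countable generating set, after which the heavy lifting done in Theorem \ref{t:absrep} applies verbatim.
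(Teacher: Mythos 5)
Your proof is correct and matches the paper's argument: the paper likewise combines Lemma \ref{l:denseseq} (point-norm separability of $\mathscr C$) with the observation that a countable point-norm dense subset generates $\mathscr C$ as a closed operator convex cone, and then applies Theorem \ref{t:absrep}. You have merely spelled out the density-implies-generation step, which the paper treats as immediate.
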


\begin{remark}\label{r:hilbertvsmulti}
Every result in this section has an analogue in the multiplier algebra picture, but we need some notation to see this. If $\multialg{\mathfrak B}$ contains $\mathcal O_2$-isometries $s_1,s_2$, i.e.~$s_1$ and $s_2$ are isometries
such that $s_1s_1^\ast + s_2s_2^\ast = 1_{\multialg{\mathfrak B}}$, then we define the Cuntz sum $\oplus_{s_1,s_2}$ in $\multialg{\mathfrak B}$ to be $a\oplus_{s_1,s_2} b = s_1 a s_1^\ast + s_2 b s_2^\ast$.
In particular, this construction can be made when $\mathfrak B$ is stable.
Such a Cuntz sum is unique up to unitary equivalence. In fact, if $t_1,t_2$ are also $\mathcal O_2$-isometries, then $u=t_1s_1^\ast + t_2s_2^\ast$ is a unitary such that $u^\ast(a\oplus_{t_1,t_2} b) u = a \oplus_{s_1,s_2} b$. 
Hence we will often refer to a Cuntz sum as \emph{the} Cuntz sum and simply write $\oplus$ instead of $\oplus_{s_1,s_2}$.
It is not hard to see that, up to some canonical isomorphisms, Cuntz sums correspond to identifying $\mathcal H_\mathfrak{B} \oplus \mathcal H_\mathfrak{B} \cong \mathcal H_\mathfrak{B}$ 
by some unitary.

Similarly, when $\mathfrak B$ is stable, there is an analogue of identifying $\mathcal H_\mathfrak{B}^\infty \cong \mathcal H_\mathfrak{B}$ by some unitary. 
To obtain this analogue, pick a sequence of isometries $t_1,t_2,\dots$ in $\multialg{\mathfrak B}$ such that $\sum_{k=1}^\infty t_k t_k^\ast = 1$,
convergence in the strict topology. Note that existence of such a sequence is equivalent to stability of $\mathfrak B$.
A bounded sequence of elements $(b_n)$ in $\multialg{\mathfrak B}$, which corresponds to a diagonal element in $\mathbb B(\mathcal H_\mathfrak{B}^\infty)$, then corresponds to $\sum_{n=1}^\infty t_n b_n t_n^\ast$ in $\multialg{\mathfrak B}$. 
An argument as above shows that this identification is unique up to unitary equivalence.
In particular, the infinite repeat of an element $m\in \multialg{\mathfrak B}$ will be $\sum_{n=1}^\infty t_n m t_n^\ast$.

Thus, in the multiplier algebra picture, we obtain complete analogues of all results in this section.
\end{remark}

We will end this section by applying the above remark to show, that if $\mathfrak A$ is separable, and $\mathfrak B$ is $\sigma$-unital and stable, then any countably generated, closed operator convex cone is singly generated in an exceptionally nice way.

\begin{corollary}\label{c:Cfullmap}
 Let $\mathfrak A$ and $\mathfrak B$ be $C^\ast$-algebras, with $\mathfrak A$ separable and $\mathfrak B$ is $\sigma$-unital and stable, and let $\mathscr C\subset CP(\mathfrak A,\mathfrak B)$ be a countably generated, closed operator convex cone.
 Then there exists a c.p.~map $\phi \in \mathscr C$ such that $\mathscr C$ is the point-norm closure of the set
 \[
  \{ b^\ast \phi(-) b : b \in \mathfrak B\}.
 \]
\end{corollary}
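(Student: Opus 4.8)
The plan is to obtain $\phi$ as a cut-down of a $\mathscr C$-absorbing representation and to use strong asymptotic domination to recover all of $\mathscr C$ by single conjugations. Since $\mathscr C$ is countably generated, Theorem~\ref{t:absrep} provides a $\mathscr C$-absorbing representation weakly in $\mathscr C$, and because $\mathfrak B$ is stable I would pass to the multiplier picture (Remark~\ref{r:hilbertvsmulti}) and view it as a $\ast$-homomorphism $\Phi\colon\mathfrak A\to\multialg{\mathfrak B}$ with $b^\ast\Phi(-)b\in\mathscr C$ for every $b\in\mathfrak B$. Fixing a strictly positive element $h\in\mathfrak B$, I would set $\phi:=h^{1/2}\Phi(-)h^{1/2}$; taking $b=h^{1/2}$ shows $\phi\in\mathscr C$. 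Since $b^\ast\phi(-)b=(h^{1/2}b)^\ast\Phi(-)(h^{1/2}b)$, since $c\mapsto c^\ast\Phi(-)c$ is point-norm continuous on bounded sets (using $\|\Phi(a)\|\le\|a\|$), and since $\overline{h^{1/2}\mathfrak B}=\mathfrak B$ by strict positivity of $h$, the point-norm closure of $\{b^\ast\phi(-)b:b\in\mathfrak B\}$ equals that of $\{c^\ast\Phi(-)c:c\in\mathfrak B\}$. Thus it suffices to prove that $\mathscr C$ is the point-norm closure of $\{c^\ast\Phi(-)c:c\in\mathfrak B\}$.

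The inclusion $\supseteq$ is immediate, as each $c^\ast\Phi(-)c$ lies in $\mathscr C$ (as $\Phi$ is weakly in $\mathscr C$) and $\mathscr C$ is point-norm closed. For the reverse inclusion, let $\psi\in\mathscr C$; then $\psi$, regarded as a map into $\multialg{\mathfrak B}$, is weakly in $\mathscr C$, so by Lemma~\ref{l:absdominates} (in its multiplier form, Remark~\ref{r:hilbertvsmulti}) the representation $\Phi$ strongly asymptotically dominates $\psi$. Hence there is a bounded family $(v_t)$ in $\multialg{\mathfrak B}$ with $v_t^\ast\Phi(a)v_t-\psi(a)\in\mathfrak B$ and $\lim_t\|v_t^\ast\Phi(a)v_t-\psi(a)\|=0$ for all $a$. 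As $\psi(a)\in\mathfrak B$ this forces $v_t^\ast\Phi(a)v_t\in\mathfrak B$. Now fix a finite set $F\subset\mathfrak A$ and $\epsilon>0$, choose $t$ with $v_t^\ast\Phi(a)v_t\approx_{\epsilon}\psi(a)$ for $a\in F$, and cut down by an approximate unit $(e_\lambda)$ of $\mathfrak B$: since each $v_t^\ast\Phi(a)v_t$ lies in $\mathfrak B$ we have $e_\lambda\bigl(v_t^\ast\Phi(a)v_t\bigr)e_\lambda\to v_t^\ast\Phi(a)v_t$ in norm, so for suitable $\lambda$ the element $b:=v_te_\lambda\in\mathfrak B$ satisfies $b^\ast\Phi(a)b\approx_{2\epsilon}\psi(a)$ for $a\in F$. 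This exhibits $\psi$ as a point-norm limit of maps $b^\ast\Phi(-)b$ with $b\in\mathfrak B$, completing the reverse inclusion.

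The genuinely new content beyond the cited results is the passage from domination to single conjugations, and I expect the main obstacle to be exactly this bookkeeping: the dilating map $\Phi$ only takes values in $\multialg{\mathfrak B}$ and the dominating elements $v_t$ are multipliers rather than elements of $\mathfrak B$, so the compressions $v_t^\ast\Phi(-)v_t$ must be forced back into $\mathfrak B$. This is handled by the observation that $v_t^\ast\Phi(a)v_t$ already lies in $\mathfrak B$ (because $\psi(a)\in\mathfrak B$ and the difference is in $\mathfrak B$), which legitimises the approximate-unit cut-down, together with the density of $h^{1/2}\mathfrak B$ in $\mathfrak B$ that funnels every such conjugation through the single map $\phi$. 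The remaining points—stability of $\mathfrak B$ to identify the Hilbert-module and multiplier pictures and to match $\mathscr C_{\mathcal H_\mathfrak B}$ with $\mathscr C$ via Corollaries~\ref{c:stablecone2} and~\ref{c:ctblygeniff}, and the continuity of $c\mapsto c^\ast\Phi(-)c$—are routine.
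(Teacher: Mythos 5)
Your proposal is correct and follows essentially the same route as the paper's proof: obtain a $\mathscr C$-absorbing $\ast$-homomorphism $\Phi\colon\mathfrak A\to\multialg{\mathfrak B}$ weakly in $\mathscr C$ from Theorem~\ref{t:absrep} and Remark~\ref{r:hilbertvsmulti}, cut it down by a strictly positive element of $\mathfrak B$, and use Lemma~\ref{l:absdominates} to approximate any $\psi\in\mathscr C$ by conjugates of $\Phi$, funnelling these through $\phi$ via density of $h^{1/2}\mathfrak B$ in $\mathfrak B$. Your approximate-unit argument forcing the dominating multipliers $v_t$ into $\mathfrak B$ just makes explicit a step the paper asserts without comment.
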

\begin{proof}
 By Theorem \ref{t:absrep} and Remark \ref{r:hilbertvsmulti} there is a $\mathscr C$-absorbing $\ast$-homomorphism $\Phi \colon \mathfrak A \to \multialg{\mathfrak B}$ which is weakly in $\mathscr C$. 
 Let $h\in \mathfrak B$ be strictly positive and $\phi = h \Phi(-) h$.
 It follows from Lemma \ref{l:absdominates} that for any $\psi\in \mathscr C$ there is $b_0\in \mathfrak B$ such that $b_0^\ast \Phi(-)b_0$ is approximately $\psi$. Since $b_0$ may be approximated by $hb$ for some $b\in \mathfrak B$,
 it follows that $b^\ast \phi(-)b$ approximates $\psi$.
\end{proof}


\section{The purely large problem}\label{s:plp}

When trying to determine when representations are absorbing, it is often useful to consider the induced extension of $C^\ast$-algebras.
This was the strategy of Elliott and Kucerovsky in \cite{ElliottKucerovsky-extensions}, in which they prove that a unital $\ast$-homomorphism $\Phi \colon \mathfrak A \to \multialg{\mathfrak B}$ absorbs any unital weakly nuclear $\ast$-homomorphism exactly when
the extension induced by $\Phi$ is \emph{purely large}.

Motivated by this, we turn our study to extensions of $C^\ast$-algebras with respect to closed operator convex cones. In doing this, it seems more natural to work in the multiplier algebra picture of the theory, which we will thus do.
As mentioned in Remark \ref{r:hilbertvsmulti}, all results in the previous section can be defined in the multiplier algebra picture as well. 
Here $\mathscr C_{\mathcal H_\mathfrak{B}}$ corresponds to $\mathscr C^s$, as in Proposition \ref{p:stablecone}, and if $\mathfrak B$ is stable it corresponds
to $\mathscr C$ by Corollary \ref{c:stablecone2}.

Recall, that a short exact sequence of $C^\ast$-algebras $\mathfrak e: 0 \to \mathfrak B \to \mathfrak E \to \mathfrak A \xrightarrow{p} 0$ is called an \emph{extension} of $\mathfrak A$ by $\mathfrak B$. 
Given such an extension there is an induced $\ast$-homomorphism $\tau \colon \mathfrak A \to \corona{\mathfrak B}$ called the \emph{Busby map}, where $\corona{\mathfrak B} = \multialg{\mathfrak B}/\mathfrak B$ is the corona algebra.
By the universal property of multiplier algebras, there is a canonical $\ast$-homomorphism $\sigma\colon \mathfrak E \to \multialg{\mathfrak B}$. 
The induced $\ast$-homomorphism to the pull-back $(p,\sigma) \colon \mathfrak E \to \mathfrak A \oplus_{\corona{\mathfrak B}} \multialg{\mathfrak B}$ is an isomorphism.
Thus when studying extensions of $\mathfrak A$ by $\mathfrak B$, one might as well study $\ast$-homomorphisms $\tau\colon \mathfrak A \to \corona{\mathfrak B}$. 

Given two Busby maps $\tau_1,\tau_2 \colon \mathfrak A \to \corona{\mathfrak B}$ we say that $\tau_1$ and $\tau_2$ are \emph{equivalent}, written $\tau_1 \sim \tau_2$, if there is a unitary $u\in \multialg{\mathfrak B}$
such that $\tau_1 = \Ad \pi(u) \circ \tau_2$. Such a unitary implementing an equivalence of Busby maps induces an isomorphism of the corresponding extensions.

When $\multialg{\mathfrak B}$ contains a unital copy of $\mathcal O_2$ (e.g.~when $\mathfrak B$ is stable), then we may define Cuntz sums of Busby maps by $\tau_1 \oplus_{s_1,s_2} \tau_2 = \pi(s_1)\tau_1(-)\pi(s_1)^\ast + \pi(s_2) \tau_2(-)\pi(s_2)^\ast$ 
where $s_1,s_2$ are $\mathcal O_2$-isometries in $\multialg{\mathfrak B}$, i.e.~isometries satisfying $s_1s_1^\ast + s_2s_2^\ast = 1$. 
As in Remark \ref{r:hilbertvsmulti} the Cuntz sum is unique up to equivalence of Busby maps, thus we will often refer to \emph{the} Cuntz sum.

We say $\tau_1$ \emph{absorbs} $\tau_2$ (or that the corresponding extension $\mathfrak e_1$ \emph{absorbs} $\mathfrak e_2$) if $\tau_1 \oplus \tau_2 \sim \tau_1$.

\begin{definition}
 Let $0 \to \mathfrak B \to \mathfrak E \to \mathfrak A \to 0$ be an extension of $C^\ast$-algebras and $\mathscr C \subset CP(\mathfrak A,\mathfrak B)$ be a closed operator convex cone. If $\mathfrak E$ is unital (resp.~not necessarily unital) we say that the 
 extension is a \emph{unital $\mathscr C$-extension (resp.~$\mathscr C$-extension)} if it has a unital c.p.~splitting (resp.~a c.p.~splitting) which is weakly in $\mathscr C$.
 
 Moreover, if the splitting can be chosen to be a $\ast$-homomorphism, then we say that the extension is a \emph{trivial, unital $\mathscr C$-extension (resp.~trivial $\mathscr C$-extension)}.
\end{definition}

Note that by assumption any $\mathscr C$-extension (even when $\mathscr C = CP(\mathfrak A,\mathfrak B)$) is assumed to have a c.p.~splitting.

\begin{remark}
A trivial, unital $\mathscr C$-extension is always assumed to have a unital splitting $\ast$-homomorphism.
Thus if we have a unital $\mathscr C$-extension which has a splitting $\ast$-homomorphism which is not necessarily unital, then it is a trivial $\mathscr C$-extension, which just happens to unital, 
but it is \emph{not necessarily} a trivial, unital $\mathscr C$-extension. So there is a difference between a trivial, unital $\mathscr C$-extension and a unital, trivial $\mathscr C$-extension.
However, in this paper we never consider unital, trivial $\mathscr C$-extensions (unless our trivial $\mathscr C$-extension simply happens to be unital, in which case we do not mention unitality). 
So whenever the reader sees the words ``trivial'' and ``unital'' near ``$\mathscr C$-extension'', we will always assume that the splitting $\ast$-homomorphism weakly in $\mathscr C$ can be chosen to be unital.
\end{remark}

As defined in \cite{ElliottKucerovsky-extensions}, we say that an extension $\mathfrak e: 0 \to \mathfrak B \to \mathfrak E \to \mathfrak A \to 0$ of $C^\ast$-algebras
is \emph{purely large} if for every $x\in \mathfrak E \setminus \mathfrak B$, there is a $\sigma$-unital, stable $C^\ast$-subalgebra $\mathfrak D \subset \overline{x^\ast \mathfrak B x}$ which is full in $\mathfrak B$.
Note that we have added the condition that $\mathfrak D$ be $\sigma$-unital. This was redundant in \cite{ElliottKucerovsky-extensions} since they almost always assume that $\mathfrak B$ is separable.

Recall the main result of \cite{ElliottKucerovsky-extensions}. Note that we have changed this in accordance to the correction made by the first named author in \cite{Gabe-nonunitalext}.

\begin{theorem}[\cite{ElliottKucerovsky-extensions}, Theorem 6 and Corollary 16]\label{t:classicpl}
Let $\mathfrak e: 0 \to \mathfrak B \to \mathfrak E \to \mathfrak A \to 0$ be an extension of separable $C^\ast$-algebras, with $\mathfrak B$ stable, and let $\mathscr C = CP_\nuc(\mathfrak A,\mathfrak B)$. Then $\mathfrak e$ absorbs any
trivial $\mathscr C$-extension if and only if $\mathfrak e$ absorbs the zero extension and is purely large.

Moreover, if $\mathfrak e$ is unital, then $\mathfrak e$ absorbs any trivial, unital $\mathscr C$-extension if and only if $\mathfrak e$ is purely large.
\end{theorem}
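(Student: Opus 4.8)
The plan is to obtain both statements as special cases of the absorption machinery of Section~\ref{s:rep}, along the lines indicated in Remark~\ref{r:classicalabs}. Realise the extension through the canonical $\ast$-homomorphism $\sigma\colon\mathfrak E\to\multialg{\mathfrak B}$ furnished by the pull-back description, so that $\sigma|_{\mathfrak B}$ is the inclusion and $\pi\circ\sigma$ vanishes on $\mathfrak B$, inducing the Busby map $\tau\colon\mathfrak A\to\corona{\mathfrak B}$. The first step is a translation into representation language: a (unital) trivial $\mathscr C$-extension is given by a (unital) $\ast$-homomorphism $s\colon\mathfrak A\to\multialg{\mathfrak B}$ weakly in $\mathscr C$, whose composite $s\circ p\colon\mathfrak E\to\multialg{\mathfrak B}$ is weakly in the cone $\mathscr C_0 := \{\phi\circ p : \phi\in CP_\nuc(\mathfrak A,\mathfrak B)\}$. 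One checks that $\sigma\oplus(s\circ p)$ represents the Cuntz sum of the two extensions, the identification $\mathfrak B\oplus 0\cong\mathfrak B$ absorbing the fact that $s\circ p$ kills $\mathfrak B$, so that $\mathfrak e$ absorbs every (unital) trivial $\mathscr C$-extension precisely when $\sigma$ is (unitally) $\mathscr C_0$-absorbing as a representation of $\mathfrak E$. Here $\mathscr C_0$ is a closed operator convex cone by Lemma~\ref{l:quotcone} and is non-degenerate. I would prove the unital statement first and reduce the non-unital one to it through Proposition~\ref{p:unitalvsnonunital}; the extra hypothesis that $\mathfrak e$ absorb the zero extension is exactly what the passage $\Phi\rightsquigarrow\Phi\oplus 0$ of that proposition requires.

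For sufficiency, suppose $\mathfrak e$ is purely large (and absorbs zero, in the non-unital case). Since $\mathfrak B$ is stable I may work in the multiplier picture, and by Theorem~\ref{t:domgen} it is enough to exhibit a generating set of $\mathscr C_0$ that $\sigma$ strongly approximately dominates. By Remark~\ref{r:classicalabs} the cone $CP_\nuc(\mathfrak A,\mathfrak B)$ is generated by the maps $a\mapsto\rho(a)\,b$ with $\rho$ a pure state on $\mathfrak A$ and $b\in\mathfrak B_+$, so by Lemma~\ref{l:quotcone} the cone $\mathscr C_0$ is generated by $\mathscr S_0 = \{\, a'\mapsto \rho(p(a'))\,b \,\}$. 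The crux is to dominate each such map. Lift $\rho$ to the pure state $\rho\circ p$ on $\mathfrak E$, which vanishes on $\mathfrak B$, and excise it by positive contractions $e_n\in\mathfrak E$ with $e_n x e_n - (\rho\circ p)(x)\,e_n^2\to 0$; since $(\rho\circ p)(e_n^2)\to 1$ we have $e_n\in\mathfrak E\setminus\mathfrak B$ eventually. Purely largeness provides a $\sigma$-unital, stable subalgebra of $\overline{e_n\mathfrak B e_n}$ that is full in $\mathfrak B$, and fullness together with stability allow one to produce $w_n\in\mathfrak B$ with $w_n^\ast\sigma(e_n^2)w_n\approx b$ while simultaneously forcing $w_n^\ast\sigma(e_n)\,d\,\sigma(e_n)w_n\to 0$ for $d\in\mathfrak B$. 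Setting $v_n=\sigma(e_n)w_n$ then yields $v_n^\ast\sigma(a')v_n = w_n^\ast\sigma(e_n a' e_n)w_n\approx\rho(p(a'))\,b$ and $\|v_n^\ast d\,v_n\|\to 0$, i.e.\ the strong approximate domination of condition $(iii)$ of Definition~\ref{d:approxdom}. Theorem~\ref{t:domgen} now gives the desired $\mathscr C_0$-absorption.

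For necessity, observe first that the zero extension is itself a trivial $\mathscr C$-extension, the zero map being nuclear, so absorbing every trivial $\mathscr C$-extension automatically yields absorption of the zero extension. To recover purely largeness I would argue contrapositively: if $\mathfrak e$ fails to be purely large there is an $x\in\mathfrak E\setminus\mathfrak B$ for which $\overline{x^\ast\mathfrak B x}$ contains no stable subalgebra full in $\mathfrak B$, and one shows, by analysing $\sigma$ on the hereditary subalgebra generated by $x$, that this comparison failure obstructs the unitary equivalences needed for $\sigma$ to dominate, hence absorb, a suitable trivial $\mathscr C$-extension. This is the converse half of Elliott and Kucerovsky's analysis and rests on their structure theory for the algebras $\overline{x^\ast\mathfrak B x}$.

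The main obstacle is the sufficiency construction: producing the $v_n$ that \emph{simultaneously} implement the domination $v_n^\ast\sigma(\cdot)v_n\to\rho(p(\cdot))\,b$ and the orthogonality $v_n^\ast d\,v_n\to 0$ from the single hypothesis of purely largeness. Extracting enough room for both conditions is exactly where one needs the \emph{stable}, full subalgebra inside $\overline{e_n\mathfrak B e_n}$, and where the non-unital subtleties corrected in \cite{Gabe-nonunitalext} enter; by contrast, the reduction to this statement via Theorem~\ref{t:domgen} and the generating set of Remark~\ref{r:classicalabs} is comparatively formal.
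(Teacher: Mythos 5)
Your sufficiency argument is, in outline, the paper's own route: the paper records this theorem as the one-point-space case of Theorem \ref{t:purelylargefinite}, whose proof does exactly what you sketch --- pull the cone back along the quotient map via Lemma \ref{l:quotcone}, generate it by pure-state maps as in Remark \ref{r:classicalabs} (Lemma \ref{l:statesgen}), excise the pure state, use purely largeness through the comparison Lemma \ref{Xpllemma} to manufacture the dominating elements, and conclude with Theorems \ref{t:domgen} and \ref{t:domabs}. Two corrections to your sketch of this half. First, what you single out as the ``main obstacle'' --- arranging $\|v_n^\ast d\, v_n\|\to 0$ for $d\in\mathfrak B$ simultaneously with the domination --- is not an obstacle: since $\rho\circ p$ vanishes on $\mathfrak B$, excision already forces $e_n h e_n\to 0$ for a strictly positive $h\in\mathfrak B$ (put $h$ into the test set), hence $\|v_n^\ast h v_n\|\to 0$, and boundedness of $(v_n)$ together with strict positivity of $h$ upgrades this to all $d\in\mathfrak B$; this is precisely how the paper obtains condition $(iii)$ of Definition \ref{d:approxdom} for free. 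Second, the non-unital-to-unital reduction needs more than Proposition \ref{p:unitalvsnonunital}: to run the unital argument for $\sigma^\dagger$ one must know that the unitised extension $\mathfrak e^\dagger$ is again purely large, and that is Proposition \ref{p:plpunitimpliesnonunit}, whose proof is where the hypothesis ``$\mathfrak e$ absorbs the zero extension'' genuinely enters (to handle elements $1+y$ with $y\in\mathfrak E$). Also, your opening ``precisely when'' is an overstatement: asymptotic unitary equivalence of representations yields unitary equivalence of Busby maps, but the converse implication is not justified; fortunately only the first direction is used.

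The genuine gap is the necessity direction, ``absorbs $\Rightarrow$ purely large''. You argue contrapositively and then defer to ``the converse half of Elliott and Kucerovsky's analysis''; since the statement being proved \emph{is} their theorem, this is circular, and no argument is actually given for why failure of purely largeness obstructs absorption of some trivial $\mathscr C$-extension. The paper's proof (Proposition \ref{purelylargeprop}) is direct, and its key idea is absent from your proposal: because $\mathfrak e$ absorbs the trivial extension whose Busby map is $\pi\circ\Phi$, where $\Phi$ is the \emph{infinite repeat} of a $\mathscr C$-absorbing representation weakly in $\mathscr C$, one may assume the Busby map of $\mathfrak e$ has the form $\tau\oplus_{s_1,s_2}(\pi\circ\Phi)$; then for positive $x\in\mathfrak E$ one sets $P=0\oplus_{s_1,s_2}1$, $m=\sigma(x)^{1/2}P\sigma(x)^{1/2}$, $m'=m^{1/2}\Phi'(p(x))m^{1/2}$ with $\Phi'=0\oplus_{s_1,s_2}\Phi$, and verifies by hand that $\mathfrak D=\overline{m'\mathfrak B m'}\subseteq\overline{x\mathfrak B x}$ is $\sigma$-unital, full in $\mathfrak B$ (using Lemma \ref{l:idealmap} applied to the absorbing representation), and stable (via the Hjelmborg--R{\o}rdam criterion and a sequence of unitaries commuting with $\Phi'$ that moves $\mathfrak B$ almost orthogonally). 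Without this construction, or a genuine substitute for it, the ``only if'' half of your proof is an appeal to the result itself. (Your observation that absorption of the zero extension comes for free, the zero map being nuclear, is correct.)
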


As we will see as a special case of Theorem \ref{t:purelylargefinite} below, the above theorem also holds when $\mathfrak B$ is only assumed to be $\sigma$-unital.

In this section we address the question, of whether or not we can get a condition similar to extensions being purely large, for extensions with a c.p. splitting which is weakly in $\mathscr C$. 
We will use the following notation.

\begin{notation}
Let $\mathscr C \subset CP(\mathfrak A,\mathfrak B)$ be a closed operator convex cone. We let $\mathbb I(\mathfrak B)$ denote the complete lattice of two-sided, closed ideals in $\mathfrak B$, and let
$\mathfrak B_\mathscr{C} \colon \mathfrak A \to \mathbb I(\mathfrak B)$ denote the map given by
\[
\mathfrak B_\mathscr{C}(a) = \overline{ \mathfrak B \{ \phi(a) : \phi \in \mathscr C\} \mathfrak B }.
\]
\end{notation}

\begin{example}\label{e:trivialcones}
Let $\mathscr C$ denote either $CP(\mathfrak A,\mathfrak B)$ or $CP_\nuc(\mathfrak A,\mathfrak B)$. Then 
\[
\mathfrak B_\mathscr{C}(a) = \left\{ \begin{array}{ll} \mathfrak B, & \text{ if }a \neq 0 \\ 0, & \text{ if } a=0. \end{array} \right.
\]
\end{example} 

\begin{example}
 As mentioned after Definition \ref{d:nondegcone}, if $\mathfrak A$ has a strictly positive element $h$, then $\mathscr C$ is non-degenerate if and only if $\mathfrak B_{\mathscr C}(h) = \mathfrak B$.
\end{example}

A somewhat surprising result of Kirchberg and Rørdam \cite[Proposition 4.2]{KirchbergRordam-zero} says that a closed operator convex cone $\mathscr C \subset CP(\mathfrak A,\mathfrak B)$, where $\mathfrak A$ is separable and nuclear, is 
uniquely determined by the map $\mathfrak B_\mathscr{C}$. This suggests that any closed operator convex cone consists of some given relation between the ideal structures of $\mathfrak A$ and $\mathfrak B$ and some nuclearity conditions on the maps.
However, we will not use their result in this paper, since we almost only consider $C^\ast$-algebras which are not necessarily nuclear.

\begin{lemma}\label{l:idealmap}
Let $\mathfrak A$ and $\mathfrak B$ be $C^\ast$-algebras, with $\mathfrak A$ separable, $\mathfrak B$ $\sigma$-unital and stable, and let $\mathscr C \subset CP(\mathfrak A,\mathfrak B)$ be a countably generated, closed operator convex cone. 
If $\Phi\colon \mathfrak A \to \multialg{\mathfrak B}$ is a $\mathscr C$-absorbing $\ast$-homomorphism weakly in $\mathscr C$, then
\[
\mathfrak B_\mathscr{C}(a) = \overline{\mathfrak B \Phi(a) \mathfrak B}
\]
for every $a\in \mathfrak A$.

Moreover, if $\mathfrak A$ is unital, $\mathscr C$ is non-degenerate and $\Phi \colon \mathfrak A \to \multialg{\mathfrak B}$ is a unitally $\mathscr C$-absorbing $\ast$-homomorphism weakly in $\mathscr C$, then
\[
\mathfrak B_\mathscr{C}(a) = \overline{\mathfrak B \Phi(a) \mathfrak B}
\]
for every $a\in \mathfrak A$.
\end{lemma}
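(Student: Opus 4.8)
The plan is to prove the two inclusions separately. Write $\mathfrak{J} := \overline{\mathfrak{B}\Phi(a)\mathfrak{B}}$; since $b_1\Phi(a)b_2 \in \mathfrak{B}$ for $b_1,b_2 \in \mathfrak{B}$, this is a two-sided, closed ideal of $\mathfrak{B}$, and the goal is exactly $\mathfrak{B}_\mathscr{C}(a) = \mathfrak{J}$. Throughout I will freely use that $\mathfrak{J}$ is invariant under left and right multiplication by $\multialg{\mathfrak{B}}$: if $j \in \mathfrak{J}$ and $m \in \multialg{\mathfrak{B}}$, then writing $j = \lim u_\mu j$ for an approximate identity $(u_\mu)$ of $\mathfrak{J}$ gives $mj = \lim (mu_\mu)j \in \overline{\mathfrak{B}\mathfrak{J}} = \mathfrak{J}$, and symmetrically on the right.

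For the inclusion $\mathfrak{J} \subseteq \mathfrak{B}_\mathscr{C}(a)$, I would use that $\Phi$ is weakly in $\mathscr{C}$: for every $b \in \mathfrak{B}$ the map $b^\ast\Phi(-)b$ lies in $\mathscr{C}$ by condition $(2)$, so $b^\ast\Phi(a)b$ is a value of a map in $\mathscr{C}$ and hence lies in $\mathfrak{B}_\mathscr{C}(a)$. Since $\mathfrak{B}_\mathscr{C}(a)$ is a linear subspace, polarisation in $b$ upgrades this to $b_1^\ast\Phi(a)b_2 \in \mathfrak{B}_\mathscr{C}(a)$ for all $b_1,b_2 \in \mathfrak{B}$; as these elements span a dense subset of $\mathfrak{J}$, this gives $\mathfrak{J} \subseteq \mathfrak{B}_\mathscr{C}(a)$.

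The reverse inclusion $\mathfrak{B}_\mathscr{C}(a) \subseteq \mathfrak{J}$ is the substantive direction, and it suffices to show $\phi(a) \in \mathfrak{J}$ for every $\phi \in \mathscr{C}$, since then the ideal $\mathfrak{B}_\mathscr{C}(a)$ generated by these elements is contained in $\mathfrak{J}$. Viewing $\phi$ as a map $\mathfrak{A} \to \mathfrak{B} \subseteq \multialg{\mathfrak{B}}$, it is weakly in $\mathscr{C}$ in the multiplier-algebra sense (again $b^\ast\phi(-)b \in \mathscr{C}$ by condition $(2)$), so by Lemma \ref{l:absdominates} and the multiplier-algebra translation of Remark \ref{r:hilbertvsmulti}, the $\mathscr{C}$-absorbing $\ast$-homomorphism $\Phi$ strongly asymptotically dominates $\phi$. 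This yields a bounded family $(v_t)$ in $\multialg{\mathfrak{B}}$ with $v_t^\ast\Phi(a)v_t - \phi(a) \in \mathfrak{B}$ and $\|v_t^\ast\Phi(a)v_t - \phi(a)\| \to 0$; in particular $v_t^\ast\Phi(a)v_t \in \mathfrak{B}$ for each $t$.

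The main obstacle is precisely that $v_t \in \multialg{\mathfrak{B}}$ rather than $\mathfrak{B}$, and $\Phi(a) \notin \mathfrak{B}$, so $v_t^\ast\Phi(a)v_t$ is not visibly in $\mathfrak{J}$. I would resolve this by an approximate-identity trick: since $v_t^\ast\Phi(a)v_t$ is a fixed element of $\mathfrak{B}$, for an approximate identity $(e_\lambda)$ of $\mathfrak{B}$ one has $e_\lambda v_t^\ast\Phi(a)v_t e_\lambda \to v_t^\ast\Phi(a)v_t$ in norm. Each term equals $(v_t e_\lambda)^\ast\Phi(a)(v_t e_\lambda)$ with $v_t e_\lambda \in \mathfrak{B}$, hence lies in $\mathfrak{B}\Phi(a)\mathfrak{B} \subseteq \mathfrak{J}$; therefore $v_t^\ast\Phi(a)v_t \in \mathfrak{J}$, and letting $t \to \infty$ gives $\phi(a) \in \mathfrak{J}$. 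Combining the two inclusions proves the first assertion. The unital case is verbatim the same, invoking the second (non-degenerate) clause of Lemma \ref{l:absdominates} for a unitally $\mathscr{C}$-absorbing $\Phi$.
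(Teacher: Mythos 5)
Your proof is correct and follows essentially the same route as the paper: the paper's entire proof is the one-line citation ``This follows from Lemma~\ref{l:absdominates}'', and your argument simply fills in the details that citation suppresses (the easy inclusion from $\Phi$ being weakly in $\mathscr C$, and the reverse inclusion from strong asymptotic domination of each $\phi\in\mathscr C$, with the approximate-identity trick handling the passage from $v_t\in\multialg{\mathfrak B}$ to elements of $\overline{\mathfrak B\Phi(a)\mathfrak B}$). The unital case is likewise handled exactly as the paper intends, via the non-degenerate clause of Lemma~\ref{l:absdominates}.
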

\begin{proof}
This follows from Lemma \ref{l:absdominates}.
\end{proof}

\begin{remark}
 It can easily be seen by the above lemma, that if $a_1$ and $a_2$ in $\mathfrak A$ generate the same two-sided, closed ideal, then $\mathfrak B_\mathscr{C}(a_1) = \mathfrak B_\mathscr{C}(a_2)$ if $\mathfrak A$, $\mathfrak B$ and $\mathscr C$ satisfy the
 (modest) assumptions of the lemma. 
 Hence the map $\mathfrak B_\mathscr{C} \colon \mathfrak A \to \mathbb I(\mathfrak B)$ drops to an action $\mathbb O(\Prim \mathfrak A) \cong \mathbb I(\mathfrak A) \to \mathbb I(\mathfrak B)$ as described in Section \ref{s:Xaction}.
 Thus any such closed operator convex cone induces an action of $\Prim \mathfrak A$ on $\mathfrak B$ which uniquely determines $\mathfrak B_\mathscr{C}$.
\end{remark}

We give a generalised description of purely large extensions as follows.

\begin{definition}\label{d:Cpl}
Let $0 \to \mathfrak B \to \mathfrak E \xrightarrow{p} \mathfrak A \to 0$ be an extension of $C^\ast$-algebras, with $\mathfrak A$ separable, $\mathfrak B$ $\sigma$-unital and stable, and let $\mathscr C \subset CP(\mathfrak A,\mathfrak B)$ be a countably generated,
closed operator convex cone. We will say that the extension is \emph{$\mathscr C$-purely large} if for any $x\in \mathfrak E$, $\overline{x^\ast\mathfrak B x}$ contains a $\sigma$-unital and stable sub-$C^\ast$-algebra $\mathfrak D$ 
which is full in $\mathfrak B_\mathscr{C}(p(x))$.
\end{definition}

Since $\overline{x^\ast \mathfrak B x} = \overline{ x^\ast x \mathfrak B x^\ast x}$ and since $\mathfrak B_{\mathscr C}(p(x)) = \mathfrak B_{\mathscr C}(p(x^\ast x))$ by Lemma \ref{l:idealmap}, it suffices to check the above condition only for positive $x$.

Note that an extension being $\mathscr C$-purely large only depends on the map $\mathfrak B_\mathscr{C}$.

The case above when the map $\mathfrak B_\mathscr{C}$ is the map defined in Example \ref{e:trivialcones}, corresponds to the purely large condition defined in \cite{ElliottKucerovsky-extensions}. 

The following proposition, together with the main result of \cite{ElliottKucerovsky-extensions}, motivates the purely large problem.

\begin{proposition}\label{purelylargeprop}
Let $\mathfrak e: 0 \to \mathfrak B \to \mathfrak E \to \mathfrak A \to 0$ be an extension of $C^\ast$-algebras, with $\mathfrak A$ separable, $\mathfrak B$ $\sigma$-unital and stable, and let $\mathscr C \subset CP(\mathfrak A,\mathfrak B)$ be a countably generated,
closed operator convex cone. If $\mathfrak e$ absorbs any trivial $\mathscr C$-extension, then the extension is $\mathscr C$-purely large.

Moreover, the same holds if $\mathfrak e$ is unital, $\mathscr C$ is non-degenerate, and $\mathfrak e$ absorbs any trivial, unital $\mathscr C$-extension.
\end{proposition}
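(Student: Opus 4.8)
The plan is to verify the $\mathscr C$-purely large condition of Definition \ref{d:Cpl} directly, producing the required stable full subalgebra out of the absorbed trivial extension. Write $\tau\colon\mathfrak A\to\corona{\mathfrak B}$ for the Busby map of $\mathfrak e$ and $\sigma\colon\mathfrak E\to\multialg{\mathfrak B}$ for the canonical $\ast$-homomorphism, so that $\mathfrak E$ is identified with the pull-back and $\sigma$ restricts to the inclusion $\mathfrak B\hookrightarrow\multialg{\mathfrak B}$. As remarked after Definition \ref{d:Cpl} it suffices to treat positive $x\in\mathfrak E$; put $a=p(x)\in\mathfrak A_+$ and $m=\sigma(x)\in\multialg{\mathfrak B}_+$, so that $\pi(m)=\tau(a)$ and $\overline{x^\ast\mathfrak Bx}=\overline{m\mathfrak Bm}$ inside $\mathfrak B$. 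Fix a ($\mathscr C$- resp.\ unitally $\mathscr C$-) absorbing $\ast$-homomorphism $\Phi\colon\mathfrak A\to\multialg{\mathfrak B}$ weakly in $\mathscr C$ (Theorem \ref{t:absrep}); by Lemma \ref{l:idealmap} the ideal we must fill is $\mathfrak B_{\mathscr C}(a)=\overline{\mathfrak B e\mathfrak B}$, where $e:=\Phi(a)$.

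The first step is to extract infinitely many asymptotically orthogonal copies of $e$ inside $m$. Let $t_1,t_2,\dots\in\multialg{\mathfrak B}$ be isometries with orthogonal ranges and $\sum_n t_nt_n^\ast=1$ strictly, and let $\Phi_\infty=\sum_n t_n\Phi(-)t_n^\ast$ be the infinite repeat, which is again a $\ast$-homomorphism weakly in $\mathscr C$ (Remark \ref{r:infrepeat}). Its Busby map $(\tau_\Phi)_\infty$ is a trivial $\mathscr C$-extension which $\mathfrak e$ absorbs: in the non-unital case this is the hypothesis, and in the unital ``moreover'' case it follows from the assumed absorption of the unital trivial extension $\tau_\Phi$ by iterating and appealing to the equivalences of Theorem \ref{t:domabs}. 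Writing the absorption $\tau\oplus(\tau_\Phi)_\infty\sim\tau$ with $\mathcal O_2$-isometries $s_1',s_2'$ and a unitary $u$, the isometry $w:=us_2'$ satisfies $\pi(w)^\ast\tau(a)\pi(w)=(\tau_\Phi)_\infty(a)$, so that $b_0:=w^\ast mw-\Phi_\infty(a)\in\mathfrak B$. Setting $s_n:=wt_n$ one computes $s_n^\ast m s_k=\delta_{nk}e+\beta_{nk}$ with $\beta_{nk}=t_n^\ast b_0 t_k\in\mathfrak B$, and since $b_0\in\mathfrak B$ and $\sum_j t_jt_j^\ast=1$ strictly we have $\|\beta_{nk}\|\le\|b_0t_k\|\to0$ as $k\to\infty$ (and symmetrically as $n\to\infty$); after passing to a subsequence we may assume the $\beta_{nk}$ for $n\neq k$ are as small as needed, with $\|\beta_{ij}\|\to 0$ as $\max(i,j)\to\infty$.

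Next I would construct the candidate subalgebra
\[
 \mathfrak D:=\overline{\operatorname{span}}\{\, m^{1/2}s_n\, c\, s_k^\ast m^{1/2} : c\in\overline{e\mathfrak Be},\ n,k\in\mathbb N\,\}.
\]
Because $\overline{e\mathfrak B e}$ is hereditary one checks, using $s_k^\ast m s_{k'}=\delta_{kk'}e+\beta_{kk'}$ and $\overline{e\mathfrak B e}\,\mathfrak B\,\overline{e\mathfrak Be}\subseteq\overline{e\mathfrak Be}$, that the product of two generators is again a generator; hence $\mathfrak D$ is a genuine (not merely approximate) $C^\ast$-subalgebra, the perturbations $\beta$ causing no trouble here. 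Each generator equals $(m^{1/2}s_n)\,c\,(s_k^\ast m^{1/2})$ with $c\in\overline{e\mathfrak Be}\subseteq\mathfrak B_{\mathscr C}(a)$, so $\mathfrak D\subseteq\overline{m^{1/2}\mathfrak Bm^{1/2}}\cap\mathfrak B_{\mathscr C}(a)=\overline{m\mathfrak Bm}\cap\mathfrak B_{\mathscr C}(a)$. Fullness of $\mathfrak D$ in $\mathfrak B_{\mathscr C}(a)$ follows by compressing: $s_k^\ast m^{1/2}\,\mathfrak D\, m^{1/2}s_k$ lies in the ideal generated by $\mathfrak D$ and contains $(e+\beta_{kk})c(e+\beta_{kk})$, which for large $k$ approximates $ece$; as $c$ ranges over $\overline{e\mathfrak Be}$ these exhaust $\overline{e\mathfrak Be}$, whence the ideal generated by $\mathfrak D$ contains $e$ and hence all of $\overline{\mathfrak Be\mathfrak B}=\mathfrak B_{\mathscr C}(a)$. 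Finally $\mathfrak D$ is $\sigma$-unital, being built from the $\sigma$-unital hereditary algebra $\overline{e\mathfrak Be}$ and countably many $s_n$.

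The main obstacle is establishing that $\mathfrak D$ (or a full subalgebra of it) is \emph{stable}. When $b_0=0$ the elements $m^{1/2}s_n(-)s_k^\ast m^{1/2}$ behave like matrix units compressed by $e$ and $\mathfrak D\cong\overline{e\mathfrak Be}\otimes\mathbb K$; the difficulty is that $e$ is merely positive, not a projection, so the perturbations $\beta_{nk}$ cannot be straightened away by a naive normalisation. I would instead verify stability through the Hjelmborg--R\o rdam criterion: given $d\in\mathfrak D_+$ and $\varepsilon>0$, approximate $d$ by an element supported on indices $\le N$, and then use the copies indexed by a far block $\{M+1,\dots,M+N\}$ with $M$ large. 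Since $\|\beta_{ij}\|\to0$ as $\max(i,j)\to\infty$, this far block carries an element which is approximately Murray--von Neumann equivalent to $d$ while being approximately orthogonal to it, exactly the condition needed. Carefully bookkeeping these estimates—ensuring both the ``copy'' and the ``orthogonality'' bounds hold simultaneously—is the technical heart of the argument, and it is here that stability of $\mathfrak B$, the infinite supply of isometries $s_n$, and the decay of $\beta_{nk}$ are all used. Once stability is in hand, $\mathfrak D\subseteq\overline{m\mathfrak Bm}$ is the desired $\sigma$-unital, stable subalgebra full in $\mathfrak B_{\mathscr C}(p(x))$, proving $\mathscr C$-pure largeness in both the unital and non-unital cases.
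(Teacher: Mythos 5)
Your set-up is the same as the paper's (and as Elliott--Kucerovsky's): absorb the infinite repeat of the absorbing representation from Theorem \ref{t:absrep}, extract an isometry $w$ with $w^\ast m w - \Phi_\infty(a)=:b_0\in\mathfrak B$, identify $\mathfrak B_{\mathscr C}(a)=\overline{\mathfrak B\Phi(a)\mathfrak B}$ via Lemma \ref{l:idealmap}, and your fullness computation is essentially sound. The genuine gap is the stability step, which you yourself defer as ``the technical heart'' --- and it is not mere bookkeeping. The Hjelmborg--R{\o}rdam criterion demands, for each positive $d\in\mathfrak D$ and $\epsilon>0$, a concrete witness $y\in\mathfrak D$ with $\|y^\ast y-d\|<\epsilon$ and $\|(y^\ast y)(yy^\ast)\|<\epsilon$; it is not enough that a far index block ``carries an element approximately Murray--von Neumann equivalent to $d$''. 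Producing $y$ in your $\mathfrak D$ means taking a square root of the matrix $(c_{nk})$ of $d$ inside the twisted matrix algebra over $\bigl(\overline{e\mathfrak Be},\ c\odot_e c':=cec'\bigr)$ and then shifting it by $s_n\mapsto s_{n+M}$; since $e=\Phi(a)$ is only positive (as you note) and the perturbations $\beta_{nk}$ are fixed nonzero elements once the subsequence is chosen, neither the square root nor the approximate multiplicativity of the shift comes for free, and your sketch supplies neither. A secondary gap of the same nature: $\sigma$-unitality of your span $\mathfrak D$ is asserted, not proved. Unlike a hereditary subalgebra, a closed span of this type does not automatically have a strictly positive element, and the natural candidate $\sum_n 2^{-n}m^{1/2}s_n h_e s_n^\ast m^{1/2}$ cannot be handled by Cauchy--Schwarz inside $\mathfrak D$, because the intermediate elements one needs (e.g.\ $c^\ast(e+\beta_{nn})c$) do not lie in $\mathfrak D$.

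The paper's proof avoids both problems at once by choosing $\mathfrak D$ differently: it takes the \emph{hereditary} subalgebra $\mathfrak D=\overline{m'\mathfrak Bm'}$, where $m'=m^{1/2}\Phi'(a)m^{1/2}$ with (in your notation) $\Phi'=0\oplus\Phi_\infty$ and $m$ the compression $\sigma(x)^{1/2}P\sigma(x)^{1/2}$, $P=0\oplus1$. Hereditarity makes $\sigma$-unitality automatic ($m'hm'$ is strictly positive for $h\in\mathfrak B$ strictly positive), and the isomorphism $\overline{m'\mathfrak Bm'}\cong\overline{m''\mathfrak Bm''}$, $m''=\Phi'(a)^{1/2}m\Phi'(a)^{1/2}$, moves the stability question inside $\overline{\Phi'(a)\mathfrak B\Phi'(a)}$. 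There one has unitaries $u_n\in\multialg{\mathfrak B}$ commuting with $\Phi'$ such that $b_1u_nb_2\to0$ for all $b_1,b_2\in\mathfrak B$ (the shift trick of \cite{HjelmborgRordam-stability}), and the witnesses are simply $b_k=(m'')^{1/k}u_{n_k}b^{1/2}$: the identity $(m'')^{1/k}-\Phi'(a)^{2/k}\in\mathfrak B$ lets the compression slide past $u_{n_k}$, yielding $b_k^\ast b_k\to b$ and $(b_k^\ast b_k)(b_kb_k^\ast)\to0$ simultaneously. Some device of this kind --- commuting the compression past a shift modulo $\mathfrak B$ --- is exactly what is missing from your argument; without it, or an equivalent construction, the stability claim for your $\mathfrak D$ does not close.
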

\begin{proof}
The proof is very similar to that of \cite[Theorem 17(iii)]{ElliottKucerovsky-extensions}. We will give the argument for completion and since it is not identical to their proof. We will only prove the non-unital case, since the unital case is identical. 

Let $\tau$ be the Busby map of $\mathfrak e$ and let $\Phi_\mathscr{C} \colon \mathfrak A \to \multialg{\mathfrak B}$ denote a $\mathscr C$-absorbing $\ast$-homo\-morphism weakly in $\mathscr C$ of Theorem \ref{t:absrep}. 
Fix isometries $t_1,t_2,\dots$ such that $\sum_{k=1}^\infty t_kt_k^\ast =1$.
Let $\Phi = \sum_{k=1}^\infty t_k \Phi_\mathscr{C}(-)t_k^\ast$ denote the infinite repeat, and $\mathfrak e'$ denote the extension with Busby map $\tau' = \pi \circ \Phi$. 
Since $\Phi$ is weakly in $\mathscr C$, $\mathfrak e$ absorbs $\mathfrak e'$. Thus, given $\mathcal O_2$-isometries $s_1,s_2\in \multialg{\mathfrak B}$, we may assume without loss of generality that $\mathfrak e$ has Busby map $\tau \oplus_{s_1,s_2} \tau'$. 

Let $\sigma \colon \mathfrak E \to \multialg{\mathfrak B}$ be the canonical map and let $x\in \mathfrak E$ so that we should show the condition in Definition \ref{d:Cpl}. 
We may assume that $\| x\| \leq 1$, and as remarked after the definition, we may also assmue that $x$ is positive. Let $P=0\oplus_{s_1,s_2} 1 \in \multialg{\mathfrak B}$ and $\Phi' = 0 \oplus_{s_1,s_2} \Phi$.
Define $m = \sigma(x)^{1/2}P\sigma(x)^{1/2} \in \multialg{\mathfrak B}$, $a=p(x)$ and note that $m - \Phi'(a) \in \mathfrak B$.  
We get that $0 \leq m':= m^{1/2} \Phi'(a) m^{1/2} \leq m \leq \sigma(x)$ and $0 \leq m'' := \Phi'(a)^{1/2} m \Phi'(a)^{1/2} \leq \Phi'(a)$.
Set $\mathfrak D := \overline{m' \mathfrak B m'} \subseteq \overline{x\mathfrak B x}$ and note that it is $\sigma$-unital as $\mathfrak B$ is $\sigma$-unital, and that $\mathfrak D \cong \overline{ m'' \mathfrak B m''}$. 
Hence it suffices to show that $\mathfrak D$ is full in $\mathfrak B_\mathscr{C}(a)$ and that $\overline{m'' \mathfrak B m''}$ is stable.

By Lemma \ref{l:idealmap} 
\[
\mathfrak B_\mathscr{C}(a) = \overline{\mathfrak B \Phi_\mathscr{C}(a) \mathfrak B} = \overline{\mathfrak B \Phi'(a) \mathfrak B} \supseteq \overline{\mathfrak B \mathfrak D \mathfrak B}.
\]
Note that $m' - \Phi'(a)^2 \in \mathfrak B$ and thus $(m' - \Phi'(a)^2)(0 \oplus_{s_1,s_2} t_k) \to 0$. Any positive element in $\mathfrak B_\mathscr{C}(a)$ is close to an element of the form
\begin{eqnarray*}
\sum_{i=1}^n b_i^\ast (0 \oplus_{s_1,s_2} \Phi_\mathscr{C}(a)^2) b_i &=& \sum_{i=1}^n b_i^\ast (0 \oplus_{s_1,s_2} t_k^\ast t_k \Phi_\mathscr{C}(a)^2 t_k^\ast t_k) b_i \\
&=& \sum_{i=1}^n b_i^\ast (0\oplus_{s_1,s_2} t_k^\ast) \Phi'(a)^2 (0 \oplus_{s_1,s_2} t_k)  b_i \\
&\approx & \sum_{i=1}^n b_i^\ast (0 \oplus_{s_1,s_2} t_k^\ast) m' (0\oplus_{s_1,s_2} t_k) b_i \in \overline{\mathfrak B \mathfrak D \mathfrak B},
\end{eqnarray*}
for large $k$, where $b_1,\dots,b_n\in \mathfrak B$. Hence $\mathfrak D$ is full in $\mathfrak B_\mathscr{C}(a)$.

It remains to show that $\mathfrak D \cong \overline{ m'' \mathfrak B m''}$ is stable. 
By \cite[Theorem 2.1 and Proposition 2.2]{HjelmborgRordam-stability} it suffices to show that for any positive $b\in \overline{ m'' \mathfrak B m''}$, there is a sequence $(b_k)$ of elements in $\overline{ m'' \mathfrak B m''}$ such that
$b_k^\ast b_k - b \to 0$ and $(b_k^\ast b_k)(b_kb_k^\ast) \to 0$.

Fix a positive $b\in \overline{ m'' \mathfrak B m''} \subseteq \overline{ \Phi'(a) \mathfrak B \Phi'(a)}$. Note that $\Phi'(a)^{2/k} b \to b$ and that $(m'')^{1/k} - \Phi'(a)^{2/k}\in \mathfrak B$ for all $k$.
As in \cite[Theorem 17 (iii)]{ElliottKucerovsky-extensions}, which is basically a trick from \cite{HjelmborgRordam-stability}, 
we may pick a sequence of unitaries $(u_n)$ in $\multialg{\mathfrak B}$ such that each $u_n$ commutes with $\Phi'$, and $b_1 u_n b_2 \to 0$ for $b_1,b_2 \in \mathfrak B$.
For each $k$ let $n_k$ be such that
\[
(m'')^{1/k} u_{n_k} b^{1/2} \approx_{1/k} \Phi'(a)^{2/k} u_{n_k} b^{1/2} = u_{n_k} \Phi'(a)^{2/k} b^{1/2}.
\] 
Let $b_k := (m'')^{1/k} u_{n_k} b^{1/2} \in \overline{m'' \mathfrak B m''}$ and note that the sequence $(b_k)$ is bounded. We have that $b_k - u_{n_k} b^{1/2} \to 0$ and thus $b_k^\ast b_k - b \to 0$ since $(b_k)$ is bounded. Since
\begin{eqnarray*}
(b_k^\ast b_k)(b_k b_k^\ast) &=& b_k^\ast (m'')^{1/k} u_{n_k} b^{1/2} (m'')^{1/k} u_{n_k} b^{1/2} b_k^\ast \\
&\approx& b_k^\ast (m'')^{1/k} u_{n_k} ( b^{1/2} u_{n_k} b^{1/2}) b_k^\ast \quad \text{(for large $k$)} \\
& \to & 0
\end{eqnarray*}
it follows that $\overline{m'' \mathfrak B m''}$ is stable by \cite[Theorem 2.1 and Proposition 2.2]{HjelmborgRordam-stability}.
\end{proof}

We raise the following question. 

\begin{question}[The purely large problem]\label{q:plp}
Let $\mathfrak A$ and $\mathfrak B$ be $C^\ast$-algebras with $\mathfrak A$ separable and $\mathfrak B$ stable and $\sigma$-unital. For which countably generated, closed operator convex cones
$\mathscr C \subset CP(\mathfrak A, \mathfrak B)$ do the following hold? For any extension $\mathfrak e$ of $\mathfrak A$ by $\mathfrak B$:
\begin{itemize}
\item[$(Q1)$] $\mathfrak e$ absorbs any trivial $\mathscr C$-extension if and only if $\mathfrak e$ absorbs the zero extension and is $\mathscr C$-purely large,
\item[$(Q2)$] If, in addition, $\mathfrak e$ is unital and $\mathscr C$ is non-degenerate, then $\mathfrak e$ absorb any trivial, unital $\mathscr C$-extension if and only if $\mathfrak e$ is $\mathscr C$-purely large.
\end{itemize}
\end{question}

\begin{definition}\label{d:plp}
 Let $\mathfrak A$, $\mathfrak B$ and $\mathscr C$ be as in Question \ref{q:plp} above. We say that \emph{$\mathscr C$ satisfies the purely large problem} if it satisfies $(Q1)$ above.
 
 If, in addition, $\mathfrak A$ is unital and $\mathscr C$ is non-degenerate, then we say that \emph{$\mathscr C$ satisfies the unital purely large problem} if it satisfies $(Q2)$ above.
\end{definition}

Using the above definition, one could reformulate Theorem \ref{t:classicpl} as saying that if $\mathfrak A$ and $\mathfrak B$ are separable $C^\ast$-algebras with $\mathfrak B$ stable (and $\mathfrak A$ unital), 
then $CP_\nuc(\mathfrak A, \mathfrak B)$ satisfies the (unital) purely large problem.

An affirmative answer to the purely large problem implies a Weyl--von Neumann type theorem as follows.

\begin{proposition}[Abstract Weyl--von Neumann theorem]
Let $\mathfrak A$ and $\mathfrak B$ be $C^\ast$-algebras with $\mathfrak A$ separable and $\mathfrak B$ stable and $\sigma$-unital.
Suppose that $\mathscr C \subset CP(\mathfrak A, \mathfrak B)$ is a countably generated, closed operator convex cone.
If $\mathscr C$ satisfies the purely large problem, then a $\ast$-homomorphism $\Phi \colon \mathfrak A \to \multialg{\mathfrak B}$ is $\mathscr C$-absorbing if and only if
the extension with Busby map $\pi \circ \Phi$ is $\mathscr C$-purely large and absorbs the zero extension.

If $\mathfrak A$ is unital, $\mathscr C$ is non-degenerate and satisfies the unital purely large problem, then a unital $\ast$-homomorphism $\Phi \colon \mathfrak A \to \multialg{\mathfrak B}$ is unitally $\mathscr C$-absorbing 
if and only if the extension with Busby map $\pi \circ \Phi$ is $\mathscr C$-purely large.
\end{proposition}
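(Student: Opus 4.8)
The plan is to reduce the entire statement to the defining property of the (unital) purely large problem. The bridge is the following central equivalence, which I would prove in both a unital and a non-unital form: a $\ast$-homomorphism $\Phi\colon\mathfrak A\to\multialg{\mathfrak B}$ is $\mathscr C$-absorbing (respectively, a unital $\Phi$ is unitally $\mathscr C$-absorbing) if and only if the extension $\mathfrak e$ with Busby map $\tau=\pi\circ\Phi$ absorbs every trivial $\mathscr C$-extension (respectively, every trivial unital $\mathscr C$-extension). Granting this, the unital half of the proposition follows by chaining the unital central equivalence with condition $(Q2)$ of Question~\ref{q:plp}, which holds since $\mathscr C$ is non-degenerate and satisfies the unital purely large problem; the non-unital half follows by chaining with $(Q1)$.

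For the central equivalence the forward direction is straightforward: any trivial (unital) $\mathscr C$-extension has Busby map $\pi\circ\Psi$ for a (unital) $\ast$-homomorphism $\Psi$ weakly in $\mathscr C$, and if $\Phi\oplus\Psi\sim_{as}\Phi$ then evaluating the implementing unitary path at a single parameter and applying $\pi$ gives a unitary $u\in\multialg{\mathfrak B}$ with $\Ad\pi(u)\circ\tau=\tau\oplus(\pi\circ\Psi)$, which is exactly absorption of the extension. The content lies in the reverse direction, which I would settle first in the unital case. Given a unital $\Psi$ weakly in $\mathscr C$, its infinite repeat $\Psi_\infty$ is unital and weakly in $\mathscr C$ by Remark~\ref{r:infrepeat}; absorption of the trivial extension with Busby map $\pi\circ\Psi_\infty$ then yields a unitary $w$ with $w^\ast\Phi(a)w-(\Phi\oplus\Psi_\infty)(a)\in\mathfrak B$ for all $a$. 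Via the identifications of Remark~\ref{r:hilbertvsmulti} this is precisely hypothesis $(iv)$ of Theorem~\ref{t:domabs}, so its equivalence with $(vi)$ gives $\Phi\oplus\Psi_\infty\sim_{as}\Phi$, and then $\Phi\oplus\Psi\sim_{as}\Phi\oplus\Psi_\infty\oplus\Psi\cong\Phi\oplus\Psi_\infty\sim_{as}\Phi$ (using $\Psi_\infty\oplus\Psi\cong\Psi_\infty$) upgrades this to absorption of $\Psi$.

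To drop unitality of $\Phi$ I would unitise. By Proposition~\ref{p:unitalvsnonunital}, $\Phi$ is $\mathscr C$-absorbing precisely when $\Phi^\dagger$ is unitally $\mathscr C^\dagger$-absorbing, and $\mathscr C^\dagger$ is non-degenerate by Lemma~\ref{unitalcone}, so the unital central equivalence applies to $\Phi^\dagger$. It then suffices to match the two classes of trivial extensions: a unital $\ast$-homomorphism $\Psi'\colon\mathfrak A^\dagger\to\multialg{\mathfrak B}$ is weakly in $\mathscr C^\dagger$ exactly when $\Psi'|_\mathfrak A$ is weakly in $\mathscr C$ and $\Psi'=(\Psi'|_\mathfrak A)^\dagger$, while a short computation gives $\tau^\dagger\oplus(\tau')^\dagger=(\tau\oplus\tau')^\dagger$. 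Hence $\mathfrak e^\dagger$ absorbs every trivial unital $\mathscr C^\dagger$-extension if and only if $\mathfrak e$ absorbs every trivial $\mathscr C$-extension, which establishes the non-unital central equivalence and completes the reduction.

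The main obstacle is the mismatch between the two flavours of absorption. Absorption of extensions only provides equivalence of Busby maps, i.e.\ a single unitary making $\Phi$ and $\Phi\oplus\Psi$ agree modulo $\mathfrak B$, whereas $\mathscr C$-absorption of representations demands asymptotic unitary equivalence: a norm-continuous path of unitaries whose differences lie in $\mathfrak B$ and tend to zero. The device that closes this gap is the passage to the infinite repeat $\Psi_\infty$, which converts the weak single-unitary statement into the strong domination hypothesis $(iv)$ of Theorem~\ref{t:domabs}, from which the continuous path is manufactured; care is needed throughout in tracking the stable identifications $\mathcal H_\mathfrak B\oplus\mathcal H_\mathfrak B^\infty\cong\mathcal H_\mathfrak B$ that mediate between the multiplier-algebra and Hilbert-module pictures.
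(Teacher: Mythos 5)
Your proof is correct and follows essentially the same route as the paper: the easy direction is immediate (the paper invokes Proposition \ref{purelylargeprop} where you invoke the ``only if'' halves of $(Q1)$/$(Q2)$), and the substantive direction passes through the infinite repeat $\Psi_\infty$ and the implication $(iv)\Rightarrow(vi)$ of Theorem \ref{t:domabs}, exactly as the paper does. The only organizational difference is in the non-unital case, where the paper reuses the single unitary and verifies directly that it works for $\Phi^\dagger$ and $\Psi^\dagger_\infty$, whereas you reduce wholesale to the unital statement for $\mathscr C^\dagger$ via Proposition \ref{p:unitalvsnonunital} together with a matching of trivial $\mathscr C$-extensions with trivial, unital $\mathscr C^\dagger$-extensions; both arguments rest on the same unitisation machinery.
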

\begin{proof}
If $\Phi$ is (unitally) $\mathscr C$-absorbing then clearly the extension $\mathfrak e$ with Busby map $\pi\circ \Phi$ absorbs every trivial, (unital) $\mathscr C$-extension.
By Proposition \ref{purelylargeprop}, $\mathfrak e$ is $\mathscr C$-purely large. In the non-unital case, $\Phi$ absorbs the zero homomorphism, so $\mathfrak e$ absorbs the zero extension.

We first prove the unital case. Suppose that $\pi \circ \Phi$ is $\mathscr C$-purely large and thus absorbs any trivial, unital $\mathscr C$-extension. 
Let $\Psi \colon \mathfrak A \to \multialg{\mathfrak B}$ be a unital $\ast$-homomorphism weakly in $\mathscr C$, and let $\Psi_\infty$ be an infinite repeat. 
Then there is a unitary $U \in \multialg{\mathfrak B}$ such that $U^\ast(\Phi(a) \oplus \Psi_\infty(a))U - \Phi(a) \in \mathfrak B$ for all $a\in \mathfrak A$.
By Theorem \ref{t:domabs}, $\Phi$ absorbs $\Psi_\infty$ and thus it also absorbs $\Psi$, so $\Phi$ is unitally $\mathscr C$-absorbing.

In the non-unital case, let $\Psi \colon \mathfrak A \to \multialg{\mathfrak B}$ be a $\ast$-homomorphism weakly in $\mathscr C$.
As above we may find a unitary $U\in \multialg{\mathfrak B}$ such that $U^\ast(\Phi(a) \oplus \Psi_\infty(a))U - \Phi(a) \in \mathfrak B$ for all $a\in \mathfrak A$.
As in the proof of Proposition \ref{p:unitalvsnonunital} it follows that $U^\ast(\Phi^\dagger(a) \oplus \Psi^\dagger_\infty(a)) U - \Phi^\dagger(a) \in \mathfrak B$ for all $a\in \mathfrak A^\dagger$.
As above, $\Phi^\dagger$ absorbs $\Psi^\dagger$ which implies that $\Phi$ absorbs $\Psi$, so $\Phi$ is $\mathscr C$-absorbing.
\end{proof}

\begin{remark}
 The purely large problem does not always have an affirmative answer. If $\mathscr C$ satisfies the (unital) purely large problem and $\mathscr C'$ is some other closed operator convex cone with $\mathfrak B_\mathscr{C} = \mathfrak B_\mathscr{C'}$, then
 $\mathscr C'$ does \emph{not} satisfy the (unital) purely large problem.
 
 To see this, suppose that both $\mathscr C$ and $\mathscr C'$ satisfy the (unital) purely large problem, and that $\mathfrak B_\mathscr{C} = \mathfrak B_\mathscr{C'}$. 
 Let $\Phi_\mathscr{C}, \Phi_{\mathscr C'} \colon \mathfrak A \to \multialg{\mathfrak B}$ be the (unital) $\mathscr C$-absorbing and $\mathscr C'$-absorbing $\ast$-homomorphisms weakly in $\mathscr C$ and $\mathscr C'$ respectively.
 The extensions with Busby maps $\pi \circ \Phi_{\mathscr C}$ and $\pi \circ \Phi_{\mathscr C'}$ are both $\mathscr C$-purely large and $\mathscr C'$-purely large, since $\mathfrak B_\mathscr{C} = \mathfrak B_\mathscr{C'}$. 
 Thus it follows that $\Phi_\mathscr{C}$ and $\Phi_{\mathscr C'}$ absorb each other, and thus $\mathscr C = \mathscr C'$.
 
 In particular, if $\mathfrak A$ is a separable, stable, non-nuclear $C^\ast$-algebra, then $CP(\mathfrak A, \mathfrak A)$ does not satisfy the purely large problem by Example \ref{e:trivialcones} and Theorem \ref{t:classicpl}.
\end{remark}

We will prove that if $\mathsf X$ is a finite space, $\mathfrak A$ and $\mathfrak B$ are sufficiently nice $\mathsf X$-$C^\ast$-algebras (e.g.~if both are separable $C^\ast$-algebras over $\mathsf X$) 
and $\mathscr C=CP_\rnuc(\mathsf X;\mathfrak A,\mathfrak B)$ then $\mathscr C$ satisfies the purely large problem. If, in addition, $\mathfrak A$ is unital and $\mathscr C$ is non-degenerate, then $\mathscr C$ satisfies the unital purely large problem.
See Theorem \ref{t:purelylargefinite}.

As an additional, or perhaps even preliminary, question to the purely large problem, one can ask the following.

\begin{question}\label{q:computeB_C}
Is there an easy way to compute the map $\mathfrak B_\mathscr C$?
\end{question}

In the case described above, we have an affirmative answer to this question, see Proposition \ref{p:determineB_C}. A much more general solution to this question, in many cases of interest, will appear in \cite{Gabe-cplifting} by the first author.

It turns out that the purely large problem can always be answered by answering a related \emph{unital} purely large problem. We will need the following lemma.

\begin{lemma}\label{l:B_Cdagger}
 Let $\mathscr C \subset CP(\mathfrak A, \mathfrak B)$ be a closed operator convex cone. If $a\in \mathfrak A \subset \mathfrak A^\dagger$ then $\mathfrak B_{\mathscr C^\dagger}(a) = \mathfrak B_{\mathscr C}(a)$.
 If $a\in \mathfrak A^\dagger \setminus \mathfrak A$ then $\mathfrak B_{\mathscr C^\dagger}(a) = \mathfrak B$.
\end{lemma}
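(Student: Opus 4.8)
The plan is to prove the two statements separately, in each case by a double inclusion of ideals. Recall that by definition $\mathfrak B_{\mathscr C^\dagger}(x) = \overline{\mathfrak B\{\psi(x) : \psi \in \mathscr C^\dagger\}\mathfrak B}$ and $\mathscr C^\dagger = \{\psi \in CP(\mathfrak A^\dagger,\mathfrak B) : \psi|_\mathfrak A \in \mathscr C\}$, so the proof is really about comparing the \emph{values} attained by maps in $\mathscr C$ and in $\mathscr C^\dagger$.

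For the first statement, fix $a \in \mathfrak A$. The inclusion $\mathfrak B_{\mathscr C^\dagger}(a) \subseteq \mathfrak B_\mathscr{C}(a)$ is immediate: for any $\psi \in \mathscr C^\dagger$ we have $\psi(a) = (\psi|_\mathfrak A)(a)$ with $\psi|_\mathfrak A \in \mathscr C$, so every generator $\psi(a)$ already appears among the generators of $\mathfrak B_\mathscr{C}(a)$. For the reverse inclusion I would show that each value $\phi(a)$ with $\phi \in \mathscr C$ lies in $\overline{\{\psi(a) : \psi \in \mathscr C^\dagger\}}$. There is no obvious c.p.\ extension of $\phi \colon \mathfrak A \to \mathfrak B$ to $\mathfrak A^\dagger$ landing in $\mathfrak B$, since $\mathfrak B$ need not be unital, so instead I would use the conjugation trick already employed in the proof of Lemma \ref{l:classicalnuc}. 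Let $(a_\lambda)$ be an approximate identity of $\mathfrak A$ (taken to be the constant net $1_\mathfrak A$ if $\mathfrak A$ is unital), and define $\psi_\lambda \colon \mathfrak A^\dagger \to \mathfrak B$ by $\psi_\lambda(x) = \phi(a_\lambda x a_\lambda)$, which makes sense as $x \mapsto a_\lambda x a_\lambda$ is a c.p.\ map $\mathfrak A^\dagger \to \mathfrak A$. Its restriction $\psi_\lambda|_\mathfrak A = \phi(a_\lambda(-)a_\lambda)$ lies in $\mathscr C$ by the extended form of condition $(3)$ noted in Remark \ref{r:conjmulti} (with $a_1 = a_\lambda \in \mathfrak A$ and $b_1 = 1 \in \multialg{\mathfrak B}$), so $\psi_\lambda \in \mathscr C^\dagger$. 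Since $\psi_\lambda(a) = \phi(a_\lambda a a_\lambda) \to \phi(a)$, we get $\phi(a) \in \overline{\{\psi(a) : \psi \in \mathscr C^\dagger\}}$, whence $\mathfrak B\phi(a)\mathfrak B \subseteq \mathfrak B_{\mathscr C^\dagger}(a)$; taking the closed ideal generated proves $\mathfrak B_\mathscr{C}(a) \subseteq \mathfrak B_{\mathscr C^\dagger}(a)$.

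For the second statement, fix $x \in \mathfrak A^\dagger \setminus \mathfrak A$, and write $x = a + \lambda 1$ with $\lambda \neq 0$. The inclusion $\mathfrak B_{\mathscr C^\dagger}(x) \subseteq \mathfrak B$ is trivial, so it remains to produce enough maps in $\mathscr C^\dagger$ that do not vanish at $x$. Here I would reuse the maps constructed in the non-degeneracy part of the proof of Lemma \ref{unitalcone}: for each positive $b \in \mathfrak B$, the composition $\gamma_b \colon \mathfrak A^\dagger \to \mathfrak A^\dagger/\mathfrak A \cong \mathbb C \xrightarrow{\rho_b} \mathfrak B$, where $\rho_b(1) = b$, lies in $\mathscr C^\dagger$ and satisfies $\gamma_b(x) = \lambda b$. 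Since $\lambda \neq 0$, the set $\{\lambda b : b \in \mathfrak B_+\}$ generates all of $\mathfrak B$ as a two-sided closed ideal, so $\mathfrak B_{\mathscr C^\dagger}(x) \supseteq \overline{\mathfrak B\{\gamma_b(x) : b \in \mathfrak B_+\}\mathfrak B} = \mathfrak B$, giving equality.

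I do not expect any serious obstacle; the only subtle point is the reverse inclusion in the first statement, where one must avoid trying to extend $\phi$ exactly to $\mathfrak A^\dagger$ and instead settle for a point-norm approximation through the conjugated maps $\psi_\lambda$, using that $\mathfrak B_{\mathscr C^\dagger}(a)$ is a \emph{closed} ideal so approximation suffices. Everything else follows directly from the definitions and from constructions already carried out in Lemmas \ref{l:classicalnuc} and \ref{unitalcone}.
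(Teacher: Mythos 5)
Your proof is correct and follows essentially the same route as the paper: for $a \in \mathfrak A$ the paper also uses the conjugated maps $\phi_\lambda(x+\mu 1) = \phi(a_\lambda(x+\mu 1)a_\lambda)$, which lie in $\mathscr C^\dagger$ and converge at $a$ to $\phi(a)$, together with closedness of the ideal $\mathfrak B_{\mathscr C^\dagger}(a)$; and for $a \in \mathfrak A^\dagger\setminus\mathfrak A$ it uses the same compositions $\mathfrak A^\dagger \twoheadrightarrow \mathfrak A^\dagger/\mathfrak A \cong \mathbb C \xrightarrow{\rho_b} \mathfrak B$ from Lemma \ref{unitalcone}. No gaps.
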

\begin{proof}
 Suppose $a\in \mathfrak A$. Clearly $\mathfrak B_{\mathscr C^\dagger}(a) \subset \mathfrak B_{\mathscr C}(a)$. Let $\phi \in \mathscr C$ and $(a_\lambda)$ be an approximate identity in $\mathfrak A$.
 Let $\phi_\lambda \in \mathscr C^\dagger$ be given by $\phi_\lambda(x + \mu 1) = \phi(a_\lambda x a_\lambda + \mu a_\lambda^2)$. Then $\phi_\lambda|_\mathfrak{A} \to \phi$ point-norm. 
 It follows that $\phi(a) \in \mathfrak B_{\mathscr C^\dagger}(a)$ for all $\phi\in \mathscr C$ and thus $\mathfrak B_\mathscr{C}(a) \subset \mathfrak B_{\mathscr C}(a)$.
 
 Let $a\in \mathfrak A^\dagger \setminus \mathfrak A$. For every positive $b\in \mathfrak B$ there is a map $\phi_b$ in $\mathscr C^\dagger$ given as the composition
 \[
  \mathfrak A^\dagger \twoheadrightarrow \mathfrak A^\dagger/\mathfrak A \cong \mathbb C \xrightarrow{\rho_b} \mathfrak B
 \]
 where $\rho_b(\mu) = \mu b$. Since $\phi_b(a) = \mu b$ for some non-zero $\mu \in \mathbb C$, it follows that $b\in \mathfrak B_{\mathscr C^\dagger}(a)$ for every positive $b\in \mathfrak B$. Thus $\mathfrak B_{\mathscr C^\dagger}(a) = \mathfrak B$.
\end{proof}

\begin{proposition}\label{p:plpunitimpliesnonunit}
Let $\mathfrak e: 0 \to \mathfrak B \to \mathfrak E \xrightarrow{p} \mathfrak A \to 0$ be an extension of $C^\ast$-algebras, with $\mathfrak A$ separable and $\mathfrak B$ stable and $\sigma$-unital, 
and let $\mathscr C\subset CP(\mathfrak A, \mathfrak B)$ be a countably generated, closed operator convex cone.
If $\mathfrak e$ is $\mathscr C$-purely large and absorbs the zero extension, then the unitised extension $\mathfrak e^\dagger : 0 \to \mathfrak B \to \mathfrak E^\dagger \xrightarrow{p^\dagger} \mathfrak A^\dagger \to 0$ is $\mathscr C^\dagger$-purely large.

In particular, if $\mathscr C^\dagger$ satisfies the unital purely large problem then $\mathscr C$ satisfies the purely large problem.
\end{proposition}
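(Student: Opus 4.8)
The plan is to verify the defining condition of $\mathscr C^\dagger$-pure largeness for $\mathfrak e^\dagger$ element by element, separating the part coming from $\mathfrak E$ from the scalar part, and then to deduce the ``in particular'' from the already-established dictionary between the unital and non-unital pictures. Since $\mathscr C^\dagger$ is again countably generated (as shown inside the proof of Theorem \ref{t:absrep}), Lemma \ref{l:idealmap} applies to $\mathscr C^\dagger$, so as in the remark after Definition \ref{d:Cpl} it suffices to check the condition for positive $x \in \mathfrak E^\dagger$. Writing $x = e + \lambda 1$ with $e \in \mathfrak E$ and $\lambda \in \mathbb C$, applying the character $\mathfrak E^\dagger \to \mathbb C$ shows $\lambda \geq 0$, and $x \in \mathfrak E$ exactly when $\lambda = 0$. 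If $\lambda = 0$, then $\overline{x^\ast \mathfrak B x}$ is the same computed in $\mathfrak E$ or $\mathfrak E^\dagger$, and $\mathfrak B_{\mathscr C^\dagger}(p^\dagger(x)) = \mathfrak B_{\mathscr C}(p(x))$ by Lemma \ref{l:B_Cdagger}; the required $\sigma$-unital, stable, full subalgebra is then supplied directly by the hypothesis that $\mathfrak e$ is $\mathscr C$-purely large.

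The substance is the case $\lambda > 0$, where $p^\dagger(x) \in \mathfrak A^\dagger \setminus \mathfrak A$ and hence $\mathfrak B_{\mathscr C^\dagger}(p^\dagger(x)) = \mathfrak B$ by Lemma \ref{l:B_Cdagger}: I must produce a $\sigma$-unital, stable subalgebra of $\overline{x \mathfrak B x}$ which is full in all of $\mathfrak B$, and this is where absorption of the zero extension enters. Because $\mathscr C^\dagger$-pure largeness of $\mathfrak e^\dagger$ is invariant under equivalence of $\mathfrak e$ (an equivalence of $\mathfrak e$ induces one of $\mathfrak e^\dagger$ by the same unitary), and since $\mathfrak e$ absorbs the zero extension, I may replace $\mathfrak e$ by an equivalent extension in which there are $\mathcal O_2$-isometries $t_1, t_2, \dots \in \multialg{\mathfrak B}$ with pairwise orthogonal ranges implementing an absorbed infinite repeat of the zero extension; in particular, identifying $\mathfrak E$ with its image under $\sigma \colon \mathfrak E \to \multialg{\mathfrak B}$, one has $t_k^\ast \sigma(y) \in \mathfrak B$ for all $y \in \mathfrak E$ and all $k$. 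Put $r := t_1$, $m := \sigma(e) + \lambda \in \multialg{\mathfrak B}$ and $z := mr$. Using $e = e^\ast$, a direct computation gives $z^\ast z = r^\ast m^2 r = \lambda^2 1 + c$ with $c \in \mathfrak B$ self-adjoint, so that $\pi(z^\ast z) = \lambda^2 1$ is invertible in $\corona{\mathfrak B}$.

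I would then set $\mathfrak D := \overline{z \mathfrak B z^\ast}$. As $r \mathfrak B r^\ast \subseteq \mathfrak B$ we get $z \mathfrak B z^\ast \subseteq m \mathfrak B m$, whence $\mathfrak D \subseteq \overline{m \mathfrak B m} = \overline{x \mathfrak B x}$, and $\mathfrak D$ is $\sigma$-unital as a hereditary subalgebra of the $\sigma$-unital algebra $\mathfrak B$. For fullness, the closed ideal of $\mathfrak B$ generated by $\mathfrak D$ coincides with the one generated by $z z^\ast$, hence with the one generated by $z^\ast z = \lambda^2 1 + c$; and any positive $t \in \multialg{\mathfrak B}$ with $\pi(t)$ invertible is full in $\mathfrak B$, for if it generated a proper ideal $\mathfrak J$ then the canonical unital $\ast$-homomorphism $\corona{\mathfrak B} \to \corona{\mathfrak B/\mathfrak J}$ would send the invertible $\pi(t)$ to $0$, which is impossible since $\mathfrak B/\mathfrak J$ is stable (being a quotient of the stable $\mathfrak B$) and therefore non-unital, so $\corona{\mathfrak B/\mathfrak J} \neq 0$. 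Thus $\mathfrak D$ is full in $\mathfrak B$. The hard part will be stability of $\mathfrak D$: via the partial isometry from the polar decomposition of $z$ in $\mathfrak B^{\ast\ast}$, $\mathfrak D \cong \overline{(z^\ast z) \mathfrak B (z^\ast z)}$ is the hereditary subalgebra generated by an element invertible modulo $\mathfrak B$, and I expect to establish its stability by an argument in the spirit of Proposition \ref{purelylargeprop}, feeding the Hjelmborg--Rørdam criterion (\cite{HjelmborgRordam-stability}, Theorem 2.1 and Proposition 2.2) with the shift unitaries built from the orthogonal isometries $t_1, t_2, \dots$, which commute with $m$ modulo $\mathfrak B$ and hence play the role of the unitaries commuting with $\Phi'$ there. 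This is the only delicate point; everything else is bookkeeping.

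For the ``in particular'', suppose $\mathscr C^\dagger$ satisfies the unital purely large problem. Given an extension $\mathfrak e$ of $\mathfrak A$ by $\mathfrak B$ that absorbs the zero extension and is $\mathscr C$-purely large, the first part shows $\mathfrak e^\dagger$ is $\mathscr C^\dagger$-purely large; as $\mathscr C^\dagger$ is non-degenerate by Lemma \ref{unitalcone} and $\mathfrak e^\dagger$ is unital, $(Q2)$ yields that $\mathfrak e^\dagger$ absorbs every trivial, unital $\mathscr C^\dagger$-extension. Since a trivial $\mathscr C$-extension unitises to a trivial, unital $\mathscr C^\dagger$-extension and absorption passes from $\mathfrak e^\dagger$ back to $\mathfrak e$ by restriction to $\mathfrak A$ (as in Proposition \ref{p:unitalvsnonunital}), $\mathfrak e$ absorbs every trivial $\mathscr C$-extension; combined with the converse direction furnished by Proposition \ref{purelylargeprop}, this is exactly $(Q1)$, so $\mathscr C$ satisfies the purely large problem.
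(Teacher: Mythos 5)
Your outline tracks the paper's strategy (reduce to positive $x$, split according to whether $x \in \mathfrak E$, use absorption of the zero extension to create a corner where $\sigma^\dagger(x)$ is scalar modulo $\mathfrak B$), your treatment of the case $x \in \mathfrak E$ is correct, and your ``in particular'' paragraph is complete and agrees with the paper's. But there is a genuine gap exactly where you flag it: stability of $\mathfrak D = \overline{z\mathfrak B z^\ast}$ is never proved, and the mechanism you sketch would not work as stated. The individual $t_k$ do \emph{not} commute with $m$ modulo $\mathfrak B$; only the one-sided relations $\sigma(\mathfrak E)t_k \subseteq \mathfrak B$ and $t_k^\ast \sigma(\mathfrak E) \subseteq \mathfrak B$ hold. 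What is true is that $P := s_2s_2^\ast$ satisfies $[P,m] \in \mathfrak B$ and $Pm \equiv \lambda P \pmod{\mathfrak B}$, so a unitary which permutes the $t_k$-corners and is the \emph{identity} on the complementary corner $s_1s_1^\ast$ does commute with $m$ modulo $\mathfrak B$ --- but such a unitary fails the requirement $b_1 u_n b_2 \to 0$ precisely because of the untouched $s_1$-corner; while a unitary that also moves the $s_1$-corner has $[u,m]$ involving a lift of $\tau_0(p(e))$, which is not in $\mathfrak B$ in general. Moreover, commutation merely modulo $\mathfrak B$ is not enough to run the argument of Proposition \ref{purelylargeprop}: there the unitaries commute with $\Phi'$ \emph{exactly}, whereas commutators $[u_n,\cdot] \in \mathfrak B$ depending on $n$ do not become small after multiplication by $b^{1/2}$. (A smaller slip: hereditary $C^\ast$-subalgebras of $\sigma$-unital $C^\ast$-algebras need not be $\sigma$-unital; your $\mathfrak D$ is $\sigma$-unital for the better reason that it is the hereditary subalgebra generated by the single positive element $zhz^\ast$, with $h$ strictly positive in $\mathfrak B$.)

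The gap can be closed in two ways. The paper avoids stability arguments altogether: with $P = 0 \oplus_{s_1,s_2} 1$ and $b := P\sigma(y) \in \mathfrak B$, the element $P+b$ lies in the extension algebra $\mathbb C P + \mathfrak B$ of the trivial extension $\mathfrak e_0$ of $\mathbb C$ by $\mathfrak B$ with splitting $1 \mapsto P$; since the Cuntz sum of $P$ with any projection is unitarily equivalent to $P$, $\mathfrak e_0$ absorbs every trivial extension of $\mathbb C$ by $\mathfrak B$, hence is purely large by Proposition \ref{purelylargeprop}, and the required stable, $\sigma$-unital, full subalgebra of $\overline{(P+b)^\ast \mathfrak B (P+b)} \subseteq \overline{(1+y)^\ast \mathfrak B (1+y)}$ comes for free. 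Alternatively, your own reduction finishes the job more cheaply than you feared: by polar decomposition $\mathfrak D \cong \overline{T\mathfrak B T}$ with $T := z^\ast z = \lambda^2 + c$ and $c \in \mathfrak B$, and since $T$ is a \emph{scalar} modulo $\mathfrak B$ the Hjelmborg--R{\o}rdam criterion \cite{HjelmborgRordam-stability} can be verified directly: for positive $b \in \overline{T\mathfrak BT}$ set $b_k := \lambda^{-2}\,T t_k b^{1/2}$, where $(t_k)$ are any isometries with pairwise orthogonal ranges; then $b_k \in \overline{T\mathfrak BT}$ (write $b^{1/2} = \lim_j \gamma_j T$ with $\gamma_j \in \mathfrak B$), $b_k^\ast b_k = b + \lambda^{-4}b^{1/2}t_k^\ast(T^2-\lambda^4)t_k b^{1/2} \to b$ because $T^2 - \lambda^4 \in \mathfrak B$ and $(T^2-\lambda^4)t_k \to 0$, and $(b_k^\ast b_k)(b_k b_k^\ast) \to 0$ because $bT \in \mathfrak B$ forces $bTt_k \to 0$. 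Without one of these repairs, the stability of $\mathfrak D$ --- which is the entire content of the case $x \notin \mathfrak E$ --- remains unproven.
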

\begin{proof}
 Since $\mathfrak e$ absorbs the zero extension we may assume that the Busby map $\tau$ of $\mathfrak e$ is of the form $\tau = \tau_0 \oplus_{s_1,s_2} 0$ for some $\mathcal O_2$-isometries $s_1,s_2 \in \multialg{\mathfrak B}$. 
 Let $x \in \mathfrak E^\dagger$ so that we must show that $\overline{x^\ast \mathfrak B x}$ contains a stable, $\sigma$-unital $C^\ast$-subalgebra $\mathfrak D$ which is full in $\mathfrak B_\mathscr{C^\dagger}(p^\dagger(x))$.
 If $x\in \mathfrak E$, then $\mathfrak B_{\mathscr C^\dagger}(p^\dagger(x)) = \mathfrak B_\mathscr{C}(p(x))$ by Lemma \ref{l:B_Cdagger}, and thus such a $\mathfrak D$ exists, since $\mathfrak e$ is $\mathscr C$-purely large.
 So it remains to check this when $x \in \mathfrak E^\dagger \setminus \mathfrak E$. Clearly, it suffices to do this when $x = 1+y$ with $y\in \mathfrak E$. 
 Let $\sigma \colon \mathfrak E \to \multialg{\mathfrak B}$ be the canonical $\ast$-homomorphism and let $P = 0\oplus_{s_1,s_2} 1 \in \multialg{\mathfrak B}$. 
 Since $\pi(P\sigma(y)) = \pi(P)\tau(p(y)) = 0 \in \corona{\mathfrak B}$ it follows that $b:= P \sigma(y)  \in \mathfrak B$. Note that $\mathfrak B_{\mathscr C^\dagger}(p^\dagger(1+y)) = \mathfrak B$ by Lemma \ref{l:B_Cdagger}. Since 
 \[ 
  \overline{(P+b)^\ast \mathfrak B (P+b)} = \overline{(1+y)^\ast P \mathfrak B P(1+y)} \subset \overline{(1+y)^\ast \mathfrak B (1+y)}
 \]
 it suffices to show that $\overline{(P+b)^\ast \mathfrak B (P+b)}$ contains a stable, $\sigma$-unital $C^\ast$-subalgebra which is full in $\mathfrak B$. Consider the trivial extension $\mathfrak e_0$ of $\mathbb C$ by $\mathfrak B$ with splitting $1 \mapsto P$. 
 This extension clearly absorbs any trivial extension of $\mathbb C$ by $\mathfrak B$, since the Cuntz sum of $P$ with any projection is unitarily equivalent to $P$. By Proposition \ref{purelylargeprop} it follows that $\mathfrak e_0$ is 
 $CP(\mathbb C, \mathfrak B)$-purely large, i.e.~purely large in the classical sense. Since $P+b$ is an element of the extension algebra of $\mathfrak e_0$, it follows that $\overline{(P+b)^\ast \mathfrak B (P+b)}$ contains a stable, $\sigma$-unital
 $C^\ast$-subalgebra which is full in $\mathfrak B$. Hence $\mathfrak e^\dagger$ is $\mathscr C^\dagger$-purely large.
 
 Now, for the ``in particular'' part, let $\mathfrak f$ be a trivial $\mathscr C$-extension. 
 Then $\mathfrak f^\dagger$ is a trivial, unital $\mathscr C^\dagger$-extension and is thus absorbed by $\mathfrak e^\dagger$, since $\mathscr C^\dagger$ satisfies the unital purely large problem.
 It easily follows that $\mathfrak e$ absorbs $\mathfrak f$, and thus $\mathscr C$ satisfies the purely large problem.
\end{proof}

We end this section by applying results of Kirchberg and Rørdam to give a lot of non-trivial examples of cones satisfying the (unital) purely large problem.

\begin{proposition}\label{p:plpinfinite}
 Let $\mathfrak A$ be a separable, nuclear $C^\ast$-algebra and $\mathfrak B$ be a separable, stable, nuclear, strongly purely infinite $C^\ast$-algebra. Then any closed operator convex cone $\mathscr C \subset CP(\mathfrak A, \mathfrak B)$ satisfies the purely
 large problem.
 
 If, in addition, $\mathfrak A$ is unital, then any non-degenerate, closed operator convex cone $\mathscr C \subset CP(\mathfrak A, \mathfrak B)$ satisfies the unital purely large problem.
\end{proposition}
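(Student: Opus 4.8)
The plan is to reduce the whole statement to the \emph{unital} purely large problem and then to show that strong pure infiniteness of $\mathfrak B$ supplies precisely the comparison needed to feed the domination criterion of Theorem \ref{t:domgen}.

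First I would prove only the second assertion, i.e.\ that every non-degenerate closed operator convex cone in $CP(\mathfrak A',\mathfrak B)$ satisfies the unital purely large problem whenever $\mathfrak A'$ is separable, unital and nuclear, and then deduce the first assertion. Indeed, for arbitrary separable nuclear $\mathfrak A$ the forced unitisation $\mathfrak A^\dagger$ is again separable, unital and nuclear, and $\mathscr C^\dagger\subset CP(\mathfrak A^\dagger,\mathfrak B)$ is non-degenerate by Lemma \ref{unitalcone}; granting the unital case, $\mathscr C^\dagger$ satisfies the unital purely large problem, so $\mathscr C$ satisfies the purely large problem by Proposition \ref{p:plpunitimpliesnonunit}. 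Hence it suffices to fix a separable, unital, nuclear $\mathfrak A$ and a non-degenerate cone $\mathscr C$, together with an arbitrary unital extension $\mathfrak e\colon 0\to\mathfrak B\to\mathfrak E\xrightarrow{p}\mathfrak A\to0$. The implication that absorbing every trivial, unital $\mathscr C$-extension forces $\mathscr C$-purely largeness is Proposition \ref{purelylargeprop}, so only the converse requires work.

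Assume $\mathfrak e$ is $\mathscr C$-purely large, with Busby map $\tau$, and let $\Phi_{\mathscr C}\colon\mathfrak A\to\multialg{\mathfrak B}$ be a unitally $\mathscr C$-absorbing $\ast$-homomorphism weakly in $\mathscr C$; this exists by Corollary \ref{c:seprep}, since $\mathscr C$ is separable, hence countably generated, by Lemma \ref{l:denseseq}. As $\Phi_{\mathscr C}$ absorbs every trivial, unital $\mathscr C$-extension, it is enough to prove that $\mathfrak e$ absorbs $\Phi_{\mathscr C}$: then for any unital $\ast$-homomorphism $\Psi$ weakly in $\mathscr C$ one has $\tau\oplus\pi\Psi\sim(\tau\oplus\pi\Phi_{\mathscr C})\oplus\pi\Psi=\tau\oplus\pi(\Phi_{\mathscr C}\oplus\Psi)\sim\tau\oplus\pi\Phi_{\mathscr C}\sim\tau$. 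To prove $\mathfrak e$ absorbs $\Phi_{\mathscr C}$ I would follow the strategy outlined in Remark \ref{r:classicalabs}: pass to the canonical inclusion $\sigma\colon\mathfrak E\hookrightarrow\multialg{\mathfrak B}$ and the cone $\mathscr C_0=\{\phi\circ p:\phi\in\mathscr C\}\subset CP(\mathfrak E,\mathfrak B)$ of Lemma \ref{l:quotcone}, and show that $\sigma$ is $\mathscr C_0$-absorbing; applying $\pi$ to the resulting equivalence $\sigma\oplus(\Phi_{\mathscr C}\circ p)\sim_{as}\sigma$ and using surjectivity of $p$ descends it to $\tau\oplus\pi\Phi_{\mathscr C}\sim\tau$. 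By Theorem \ref{t:domgen} (in the multiplier picture of Remark \ref{r:hilbertvsmulti}, valid since $\mathfrak B$ is stable) it then suffices to show that $\sigma$ strongly approximately dominates each member of a generating set of $\mathscr C_0$. Here nuclearity of $\mathfrak A$ is used twice: it gives $\mathscr C\subset CP_\nuc(\mathfrak A,\mathfrak B)$, and, via the Kirchberg--Rørdam description \cite[Proposition 4.2]{KirchbergRordam-zero}, it shows that $\mathscr C$ is determined by the ideal map $\mathfrak B_{\mathscr C}$ and is generated (cf.\ Remark \ref{r:classicalabs}) by maps $a\mapsto\rho(a)b$ with $\rho$ a pure state on $\mathfrak A$ and $b$ a positive element of an appropriate $\mathfrak B_{\mathscr C}(a_0)$; by Lemma \ref{l:quotcone} the maps $c\mapsto\rho(p(c))b$ then generate $\mathscr C_0$.

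To dominate such a generator I would lift $\rho\circ p$ to a pure state on $\mathfrak E$ and excise it, producing positive contractions $e_n\in\mathfrak E$ with $e_n c e_n-\rho(p(c))e_n^2\to0$ and with $p(e_n)$ concentrated on the ideal of $a_0$, so that $b$ lies in $\mathfrak B_{\mathscr C}(p(x))$ for $x=e_n$ with $n$ large. Since $\mathfrak e$ is $\mathscr C$-purely large, $\overline{x^\ast\mathfrak B x}$ contains a $\sigma$-unital, stable subalgebra that is full in $\mathfrak B_{\mathscr C}(p(x))$, an ideal containing the ideal generated by $b$. This is exactly where strong pure infiniteness of $\mathfrak B$ enters: by the comparison theory for strongly purely infinite algebras \cite{KirchbergRordam-absorbingOinfty}, $b$ is Cuntz-dominated with arbitrarily good control by any element full in an ideal containing its own, so one can combine the excising elements $e_n$ with such comparison data inside the stable full subalgebra to produce a bounded sequence $(v_n)$ with $v_n^\ast\sigma(c)v_n\to\rho(p(c))\,b$ and $v_n^\ast Tv_n\to0$ for all $T\in\mathfrak B$; that is, $\sigma$ strongly approximately dominates the generator. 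Once every generator is dominated, Theorem \ref{t:domgen} gives that $\sigma$ is $\mathscr C_0$-absorbing, which closes the argument. The main obstacle is this last comparison step: one must verify that strong pure infiniteness upgrades the purely large condition---phrased only through the ideal $\mathfrak B_{\mathscr C}(p(x))$---into genuine strong approximate domination, thereby bypassing the delicate, hands-on estimates that the finite-space case of Theorem \ref{t:purelylargefinite} must carry out directly.
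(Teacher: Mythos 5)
Your overall skeleton---reducing to the unital case via Lemma \ref{unitalcone} and Proposition \ref{p:plpunitimpliesnonunit}, passing to $\sigma\colon\mathfrak E\to\multialg{\mathfrak B}$ and the cone $\mathscr C_0=\{\phi\circ p:\phi\in\mathscr C\}$ of Lemma \ref{l:quotcone}, invoking Theorem \ref{t:domgen}, and descending from $\sigma$ to the Busby map by surjectivity of $p$---is exactly the paper's. The gap is the step where you feed Theorem \ref{t:domgen}: the claim that nuclearity of $\mathfrak A$ makes $\mathscr C$ generated by maps $a\mapsto\rho(a)b$ with $\rho$ a pure state. \cite[Proposition 4.2]{KirchbergRordam-zero} only says that $\mathscr C$ is \emph{determined} by the ideal map $\mathfrak B_\mathscr{C}$; it produces no pure-state generators, and in general none exist. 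Concretely, take $\mathfrak A=C([0,1])$ and $\mathfrak B=C([0,1])\otimes\mathcal O_2\otimes\mathbb K$ (separable, stable, nuclear, and strongly purely infinite since $\mathcal O_2\otimes\mathcal O_\infty\cong\mathcal O_2$), and let $\mathscr C=CP([0,1];\mathfrak A,\mathfrak B)$ be the non-degenerate cone of $C([0,1])$-linear c.p.~maps of Example \ref{e:C(X)-maps}. Every pure state on $\mathfrak A$ is a point evaluation $\delta_x$, and if $\delta_x(-)b\in\mathscr C$ then $C([0,1])$-linearity applied with $a=1$ gives $fb=f(x)b$ for all $f\in C([0,1])$, so $b$, viewed as a continuous function $[0,1]\to\mathcal O_2\otimes\mathbb K$, vanishes off $\{x\}$ and hence everywhere. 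Thus the only pure-state map in $\mathscr C$ is $0$, the cone these maps generate is $\{0\}\neq\mathscr C$, and your application of Theorem \ref{t:domgen} yields no absorption at all. This is precisely why Lemma \ref{l:statesgen} assumes $\mathsf X$ finite (so that minimal open sets $\mathsf U^x$ exist): your plan imports the finite-space mechanism into a setting where it is unavailable.

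The paper's proof replaces the missing generating set by a result that dominates all of $\mathscr C_0$ simultaneously. From $\mathscr C$-purely largeness one gets, for positive $x\in\mathfrak E$, that $\mathfrak B_\mathscr{C}(p(x))=\overline{\mathfrak B\mathfrak D\mathfrak B}\subset\overline{\mathfrak Bx\mathfrak B}=\overline{\mathfrak B\sigma(x)\mathfrak B}$, where $\mathfrak D\subset\overline{x\mathfrak Bx}$ is the full stable subalgebra. Since $\mathfrak E$ is nuclear (an extension of nuclear by nuclear), $\mathfrak C:=\sigma(\mathfrak E)$ is a separable nuclear $C^\ast$-subalgebra of $\multialg{\mathfrak B}$, and $\multialg{\mathfrak B}$ is strongly purely infinite by \cite[Theorem 6.3]{Kirchberg-permanence}; then \cite[Corollary 4.3]{KirchbergRordam-zero} shows that each $\phi\circ p$, $\phi\in\mathscr C$, is approximately inner with respect to $\sigma$, and \cite[Theorem 7.21]{KirchbergRordam-absorbingOinfty} upgrades approximately inner to approximately one-step inner, i.e.~a point-norm limit of maps $m^\ast\sigma(-)m$---which is exactly the strong approximate domination needed for Theorem \ref{t:domgen}. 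Note also that your heuristic about where strong pure infiniteness enters is misplaced: had pure-state generators existed, Lemma \ref{Xpllemma} together with excision would dominate them with no infiniteness hypothesis whatsoever (this is the proof of Theorem \ref{t:purelylargefinite}); strong pure infiniteness is needed in this proposition precisely to run the Kirchberg--R{\o}rdam approximate-innerness argument that substitutes for the generators you do not have.
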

\begin{proof}
 By Proposition \ref{p:plpunitimpliesnonunit} it suffices to prove the unital case.
 By \cite[Theorem 6.3]{Kirchberg-permanence} $\multialg{\mathfrak B}$ is strongly purely infinite. For any unital $\mathscr C$-purely large extension $0 \to \mathfrak B \to \mathfrak E \xrightarrow{p} \mathfrak A \to 0$ 
 let $\sigma \colon \mathfrak E \to \multialg{\mathfrak B}$ be the induced $\ast$-homomorphism.
 Then $\mathfrak C := \sigma(\mathfrak E)$ is a separable, nuclear $C^\ast$-subalgebra of $\multialg{\mathfrak B}$, and by \cite[Theorem 7.21]{KirchbergRordam-absorbingOinfty} any approximately inner map
 $\mathfrak C \to \multialg{\mathfrak B}$ is approximately one-step inner. 
 
 We claim that $\mathscr C$-purely largeness implies that $\mathfrak B_\mathscr{C}(p(x)) \subset \overline{\mathfrak B x \mathfrak B}$ for any positive $x\in \mathfrak E$.
 To see this, let $\mathfrak D\subset \overline{x \mathfrak B x}$ be a $\sigma$-unital, stable $C^\ast$-subalgebra which is full in $\mathfrak B_\mathscr{C}(p(x))$.
 Then
 \[
  \mathfrak B_\mathscr{C}(p(x)) = \overline{\mathfrak B \mathfrak D \mathfrak B} \subset \overline{\mathfrak B x \mathfrak B x \mathfrak B} = \overline{\mathfrak B x \mathfrak B} = \overline{\mathfrak B \sigma(x) \mathfrak B}.
 \]
 It follows from \cite[Corollary 4.3]{KirchbergRordam-zero} that any map of the form $\phi \circ p$ with $\phi\in \mathscr C$ is approximately inner with respect to $\sigma$, 
 and thus also approximately one-step inner, i.e.~for any $\phi \in \mathscr C$,
 the c.p.~map $\phi \circ p$ is approximated point-norm by maps of the form $m^\ast \sigma(-) m$ for $m\in \multialg{\mathfrak B}$. 
 It easily follows that $\sigma$ strongly approximately dominates any map $\phi \circ p \colon \mathfrak E \to \mathfrak B$ where $\phi \in \mathscr C$.
 Thus it follows from Theorem \ref{t:domgen} that $\sigma$ absorbs $\Phi \circ p$ where $\Phi \colon \mathfrak A \to \multialg{\mathfrak B}$ is a unitally $\mathscr C$-absorbing $\ast$-homomorphism weakly in $\mathscr C$.
 It follows that $\tau$ absorbs $\pi \circ \Phi$ and thus $\tau$ absorbs any trivial, unital $\mathscr C$-extension.
\end{proof}

\begin{remark}
 Note that we may assume something much weaker than $\mathscr C$-purely largeness in the above proposition. In fact, the only thing that $\mathscr C$-purely largeness is used for, 
 is so that $\mathfrak B_\mathscr{C}(p(x)) \subset \overline{\mathfrak B x \mathfrak B}$ for every
 positive $x\in \mathfrak E$. Hence this may be assumed above instead of $\mathscr C$-purely largeness of the extensions.
 
 In the (classical) case where $\mathfrak B$ in addition is simple and $\mathscr C = CP(\mathfrak A, \mathfrak B)$, this is the same as noting that an extension is purely large exactly when it is essential.
 When $\mathfrak B$ is simple and $\sigma$-unital, then every essential extension by $\mathfrak B$ is purely large if and only if $\mathfrak B$ is either purely infinite or isomorphic to $\mathbb K$.
 This result will be generalised in Theorem \ref{t:WvN}.
\end{remark}


\section{Actions of topological spaces on $C^\ast$-algebras}\label{s:approxsection}

So far, we have only studied closed operator convex cones rather abstractly. In order to construct actual examples, we use the notions of actions of topological spaces on $C^\ast$-algebras.
Such actions have been studied quite extensively. In particular, continuous actions of finite spaces have been well studied, with respect to the classification of separable, nuclear, purely infinite $C^\ast$-algebras with finite primitive ideal spaces,
using $K$-theoretic invariants.

\subsection{The basics}\label{s:Xaction}

When $\mathfrak A$ is a $C^\ast$-algebra we let $\mathbb I(\mathfrak A)$ denote the complete lattice of two-sided, closed ideals in $\mathfrak A$.

If $\mathsf X$ is a topological space we let $\mathbb O(\mathsf X)$ denote the complete lattice of open subsets of $\mathsf X$. For a subset $\mathsf Y$ of $\mathsf X$ we let $\mathsf Y^\circ$ denote the interior of $\mathsf Y$. 
Recall, that if $\Prim \mathfrak A$ is the primitive ideal space of $\mathfrak A$ then there is a canonical complete lattice isomorphism $\mathbb O(\Prim \mathfrak A)\cong \mathbb I(\mathfrak A)$.

\begin{definition}
Let $\mathsf X$ be a topological space. An \emph{action of $\mathsf X$ on a $C^\ast$-algebra $\mathfrak A$} is an order preserving map
\[
\mathbb O(\mathsf X) \to \mathbb I(\mathfrak A), \quad \mathsf U \mapsto \mathfrak A(\mathsf U),
\]
i.e.~a map such that if $\mathsf U \subseteq \mathsf V$ in $\mathbb O(\mathsf X)$ then $\mathfrak A(\mathsf U) \subseteq \mathfrak A(\mathsf V)$.

If $\mathfrak A$ is a $C^\ast$-algebra with an action of $\mathsf X$, 
then we say that $\mathfrak A$ (together with this action) is an \emph{$\mathsf X$-$C^\ast$-algebra}. We will almost always suppress the action in the notation.

A map $\phi \colon \mathfrak A \to \mathfrak B$ of $\mathsf X$-$C^\ast$-algebras is called \emph{$\mathsf X$-equivariant} if $\phi(\mathfrak A(\mathsf U)) \subseteq \mathfrak B(\mathsf U)$ for every $\mathsf U\in \mathbb O(\mathsf X)$.

An \emph{isomorphism of $\mathsf X$-$C^\ast$-algebras} is an $\mathsf X$-equivariant $\ast$-isomorphism $\phi \colon \mathfrak A \to \mathfrak B$ such that the inverse $\phi^{-1}$ is also $\mathsf X$-equivariant.
\end{definition}

\begin{remark}\label{r:Xeqcone}
If $\mathfrak A$ and $\mathfrak B$ are $\mathsf X$-$C^\ast$-algebras, then the set $CP(\mathsf X; \mathfrak A, \mathfrak B)$ of $\mathsf X$-equivariant c.p.~maps is a closed operator convex cone.
\end{remark}

\begin{remark}\label{r:soberspace}
 As explained in \cite[Section 2.5]{MeyerNest-bootstrap}, for any topological space $\mathsf X$, we may find an induced \emph{sober} topological space $\hat{\mathsf X}$ such that there is an induced complete lattice isomorphism 
 $\mathbb O(\mathsf X) \cong \mathbb O(\hat{\mathsf X})$. If $\mathsf X$ is already sober then $\mathsf X = \hat{\mathsf X}$. In particular, we may always assume that $\mathsf X$ is a sober space.
 
 Since any sober space is a $T_0$-space (i.e.~for any two distinct points in $\mathsf X$, one of the points contains an open neighbourhood not containing the other point), we may assume that $\mathsf X$ is a $T_0$-space.
\end{remark}

It is often necessary to impose stronger conditions on our actions.

\begin{definition}
Let $\mathsf X$ be a topological space and $\mathfrak A$ be an $\mathsf X$-$C^\ast$-algebra. We say that $\mathfrak A$ is
\begin{itemize}
 \item \emph{finitely lower semicontinuous} if $\mathfrak A(\mathsf X) = \mathfrak A$, and if it respects finite infima, i.e.~for open subsets $\mathsf U$ and $\mathsf V$ of $\mathsf X$ we have 
 \[
 \mathfrak A(\mathsf U) \cap \mathfrak A(\mathsf V) = \mathfrak A(\mathsf U \cap \mathsf V),
 \]
 \item \emph{lower semicontinuous} if $\mathfrak A(\mathsf X) = \mathfrak A$, and if it respects arbitrary infima, i.e.~for any family $(\mathsf U_\alpha)$ of open subsets of $\mathsf X$ we have
 \[
 \bigcap_\alpha \mathfrak A(\mathsf U_\alpha) = \mathfrak A(\mathsf U),
 \]
 where $\mathsf U$ is the interior of $\bigcap_\alpha \mathsf U_\alpha$,
 \item \emph{finitely upper semicontinuous} if $\mathfrak A(\emptyset) = 0$ and if it respects finite suprema, i.e.~for open subsets $\mathsf U$ and $\mathsf V$ of $\mathsf X$ we have
 \[
  \mathfrak A(\mathsf U) + \mathfrak A(\mathsf V) = \mathfrak A(\mathsf U \cup \mathsf V),
 \]
 \item \emph{monotone upper semicontinuous} if it respects monotone suprema, i.e.~for any increasing net $(\mathsf U_\alpha)$ of open subsets of $\mathsf X$ we have
 \[
  \overline{ \bigcup_\alpha \mathfrak A(\mathsf U_\alpha)} = \mathfrak A(\bigcup_\alpha \mathsf U_\alpha),
 \]
 \item \emph{upper semicontinuous} if it is finitely and monotone upper semicontinuous,
 \item \emph{finitely continuous} if it is finitely lower semicontinuous and finitely upper semicontinuous,
 \item \emph{continuous} if it is lower semicontinuous and upper semicontinuous,
 \item \emph{tight} if the action $\mathbb O(\mathsf X) \to \mathbb I(\mathfrak A)$ is a complete lattice isomorphism.
\end{itemize}
\end{definition}

\begin{remark}
 Note that we require (finitely) lower semicontinuous $\mathsf X$-$C^\ast$-algebras $\mathfrak A$ to satisfy $\mathfrak A(\mathsf X) = \mathfrak A$, and (finitely) upper semicontinuous $\mathsf X$-$C^\ast$-algebras $\mathfrak B$ to satisfy $\mathfrak B(\emptyset) = 0$.
 
 Although we will not use it in this paper, the reason that we require this in the upper semicontinuous case, is so that the map
 \[
  \mathbb I(\mathfrak A) \to \mathbb O(\mathsf X), \qquad \mathfrak I \mapsto \bigcup \{\mathsf U \in \mathbb O(\mathsf X) : \mathfrak A(\mathsf U) \subset \mathfrak I\}
 \]
 is well-defined. The reason in the finitely upper semicontinuous case is basically the same, since the action of any finite filtration (which we define in Section \ref{s:approx}) will be upper semicontinouous.
\end{remark}

\begin{example}\label{e:XvsC(X)}
 If $\mathsf X$ is a locally compact Hausdorff space, then a continuous $\mathsf X$-$C^\ast$-algebra is essentially the same as a continuous $C_0(\mathsf X)$-algebra. 
 In fact, the action of a continuous $\mathsf X$-$C^\ast$-algebra induces a non-degenerate $\ast$-homomorphism from $C_0(\mathsf X)$ to
 the centre of $\multialg{\mathfrak B}$ which defines a continuous $C_0(\mathsf X)$-algebra structure, and vice versa.
 This is shown in \cite[Section 2.2]{MeyerNest-bootstrap}.
 It is also shown that a $\ast$-homomorphism is $\mathsf X$-equivariant if and only if it is $C(\mathsf X)$-linear. The same proof applies to c.p.~maps, thus a c.p.~map is $\mathsf X$-equivariant if and only if it is $C(\mathsf X)$-linear.
\end{example}

\begin{remark}[$\mathsf X$-$C^\ast$-algebras versus $C^\ast$-algebras over $\mathsf X$]
 A \emph{$C^\ast$-algebra over $\mathsf X$} is a $C^\ast$-algebra $\mathfrak A$ together with a continuous map $\Prim \mathfrak A \to \mathsf X$. It is shown in \cite{MeyerNest-bootstrap} that a $C^\ast$-algebra over $\mathsf X$ is essentially
 the same thing as a finitely lower semicontinuous, upper semicontinuous $\mathsf X$-$C^\ast$-algebra. 
 Thus we abuse notation slightly, by saying that a $C^\ast$-algebra over $\mathsf X$ is a finitely lower semicontinuous, upper semicontinuous $\mathsf X$-$C^\ast$-algebra.
 
 In \cite[Section 2.9]{MeyerNest-bootstrap} Meyer and Nest give good reasons for why they only want to consider $C^\ast$-algebras over $\mathsf X$. However, only considering $C^\ast$-algebras over $\mathsf X$ is a great limitation on the constructions we allow
 ourselves to do, and seems to be more of a restraint than a simplification. 
 
 As an example, let us consider one of the most elementary construction for $C^\ast$-algebras; the unitisation. 
 If $\mathfrak A$ is a $C^\ast$-algebra over $\mathsf X$, there is no natural way to give the forced unitisation $\mathfrak A^\dagger$ a $C^\ast$-algebra over $\mathsf X$ structure.
 However, if $\mathfrak A$ is an $\mathsf X$-$C^\ast$-algebra then we may give the forced unitisation $\mathfrak A^\dagger$ an $\mathsf X$-$C^\ast$-algebra structure by letting $\mathfrak A^\dagger(\mathsf U) = \mathfrak A(\mathsf U)$ for 
 $\mathsf U \neq \mathsf X$ and $\mathfrak A^\dagger(\mathsf X) = \mathfrak A^\dagger$. 
 One can easily verify that for $\mathsf X$-$C^\ast$-algebras $\mathfrak A$ and $\mathfrak B$ we have 
 \[
  CP(\mathsf X; \mathfrak A^\dagger, \mathfrak B) = CP(\mathsf X; \mathfrak A, \mathfrak B(\mathsf X))^\dagger,  
 \]
 where we used the notation from Lemma \ref{unitalcone}. 
 
 The forced unitisation always preserves lower semicontinuity and finite lower semicontinuity. 
 However, it preserves monotone upper semicontinuity if and only if $\mathsf X$ is compact, and it preserves finite upper semicontinuity if and only if $\mathsf X$ is \emph{not} the union of two proper open subsets. 
 Thus, for most spaces $\mathsf X$, the forced unitisation of a $C^\ast$-algebra over $\mathsf X$ is not a $C^\ast$-algebra over $\mathsf X$. 
\end{remark}


\subsection{Lower semicontinuous $\mathsf X$-$C^\ast$-algebras and full maps}\label{s:lscaction}

\begin{definition}
Let $\mathfrak A$ be an $\mathsf X$-$C^\ast$-algebra, $a\in \mathfrak A$ and $\mathsf U\in \mathbb O(\mathsf X)$. 
We say that $a$ is \emph{$\mathsf U$-full}, if $\mathsf U$ is the unique smallest open subset of $\mathsf X$ such that $a \in \mathfrak A(\mathsf U)$, i.e.~$a\in \mathfrak A(\mathsf U)$ 
and whenever $\mathsf V \in \mathbb O(\mathsf X)$ such that $a\in \mathfrak A(\mathsf V)$ then $\mathsf U \subseteq \mathsf V$.
\end{definition}

It turns out that every element in an $\mathsf X$-$C^\ast$-algebra being $\mathsf U$-full for some $\mathsf U$, is equivalent to the $\mathsf X$-$C^\ast$-algebra being lower semicontinuous.

\begin{proposition}
Let $\mathfrak A$ be an $\mathsf X$-$C^\ast$-algebra. Then $\mathfrak A$ a lower semicontinuous if and only if every element $a\in \mathfrak A$ is $\mathsf U$-full for some (unique) $\mathsf U\in \mathbb O(\mathsf X)$.
\end{proposition}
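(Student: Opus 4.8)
We must show that an $\mathsf X$-$C^\ast$-algebra $\mathfrak A$ is lower semicontinuous if and only if every element $a \in \mathfrak A$ is $\mathsf U$-full for some $\mathsf U \in \mathbb O(\mathsf X)$. Recall lower semicontinuity means $\mathfrak A(\mathsf X) = \mathfrak A$ together with respecting arbitrary infima: $\bigcap_\alpha \mathfrak A(\mathsf U_\alpha) = \mathfrak A(\mathsf U)$, where $\mathsf U$ is the interior of $\bigcap_\alpha \mathsf U_\alpha$. Being $\mathsf U$-full means $\mathsf U$ is the unique smallest open set with $a \in \mathfrak A(\mathsf U)$.

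Let me think about both directions.

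**Forward direction ($\Rightarrow$).** Assume $\mathfrak A$ is lower semicontinuous. Let me take $a \in \mathfrak A$. Since $\mathfrak A(\mathsf X) = \mathfrak A$, the collection $\{\mathsf V \in \mathbb O(\mathsf X) : a \in \mathfrak A(\mathsf V)\}$ is nonempty (it contains $\mathsf X$). I'd define $\mathsf U$ to be the interior of $\bigcap\{\mathsf V : a \in \mathfrak A(\mathsf V)\}$. By lower semicontinuity applied to this family, $\mathfrak A(\mathsf U) = \bigcap\{\mathfrak A(\mathsf V) : a \in \mathfrak A(\mathsf V)\}$, and since $a$ lies in every $\mathfrak A(\mathsf V)$ in the intersection, $a \in \mathfrak A(\mathsf U)$. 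Now $\mathsf U$ is contained in each such $\mathsf V$, so $\mathsf U$ is the smallest open set containing $a$ in the required sense — establishing $\mathsf U$-fullness.

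**Reverse direction ($\Leftarrow$).** This is the direction I expect to be slightly more delicate. Assume every element is $\mathsf U$-full for some $\mathsf U$. First, $\mathfrak A(\mathsf X) = \mathfrak A$: every $a$ is $\mathsf U$-full for some $\mathsf U \in \mathbb O(\mathsf X)$, in particular $a \in \mathfrak A(\mathsf U) \subseteq \mathfrak A(\mathsf X)$, so $\mathfrak A(\mathsf X) = \mathfrak A$. Next, given a family $(\mathsf U_\alpha)$ with $\mathsf U = (\bigcap_\alpha \mathsf U_\alpha)^\circ$, the inclusion $\mathfrak A(\mathsf U) \subseteq \bigcap_\alpha \mathfrak A(\mathsf U_\alpha)$ is immediate from order-preservation since $\mathsf U \subseteq \mathsf U_\alpha$ for each $\alpha$. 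For the reverse inclusion, I'd take $a \in \bigcap_\alpha \mathfrak A(\mathsf U_\alpha)$ and let $\mathsf W$ be the open set for which $a$ is $\mathsf W$-full. Then $a \in \mathfrak A(\mathsf U_\alpha)$ forces $\mathsf W \subseteq \mathsf U_\alpha$ for every $\alpha$ by the minimality defining fullness, hence $\mathsf W \subseteq \bigcap_\alpha \mathsf U_\alpha$, and since $\mathsf W$ is open, $\mathsf W \subseteq (\bigcap_\alpha \mathsf U_\alpha)^\circ = \mathsf U$. Therefore $a \in \mathfrak A(\mathsf W) \subseteq \mathfrak A(\mathsf U)$ by order-preservation, giving the reverse inclusion.

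**The main obstacle.** The genuinely substantive point is the forward direction, where I must justify that $\mathfrak A(\mathsf U) = \bigcap\{\mathfrak A(\mathsf V) : a \in \mathfrak A(\mathsf V)\}$ with $\mathsf U$ the interior of the corresponding intersection of opens — this is exactly the content of lower semicontinuity applied to the (possibly large) family indexed by all $\mathsf V$ containing $a$. The subtlety is merely to ensure this intersection family is the right one and that $a$ survives into $\mathfrak A(\mathsf U)$; uniqueness of $\mathsf U$ then follows since any smaller open set with this property would contradict membership. The reverse direction is essentially a formal consequence of the minimality built into fullness, with the one careful point being the passage from $\mathsf W \subseteq \bigcap_\alpha \mathsf U_\alpha$ to $\mathsf W \subseteq \mathsf U$ using openness of $\mathsf W$.
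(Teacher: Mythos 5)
Your proof is correct and follows essentially the same route as the paper's: the forward direction applies lower semicontinuity to the family of all open sets $\mathsf V$ with $a \in \mathfrak A(\mathsf V)$, and the reverse direction uses the minimality in the definition of fullness together with openness of $\mathsf W$ to land inside the interior of the intersection. The only cosmetic difference is that you prove $\mathfrak A(\mathsf X) = \mathfrak A$ directly where the paper argues by contradiction.
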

\begin{proof}
Suppose that $\mathfrak A$ is lower semicontinuous and let $a \in \mathfrak A$. Let $\mathcal U = \{ \mathsf V \in \mathbb O (\mathsf X) : a \in \mathfrak A(\mathsf V)\}$, which is non-empty since $\mathfrak A(\mathsf X) = \mathfrak A$, and define
\[
\mathsf U = ( \bigcap_{\mathsf V \in \mathcal U} \mathsf V)^\circ.
\]
By lower semicontinuity $\bigcap_{\mathsf V\in \mathcal U} \mathfrak A(\mathsf V) = \mathfrak A(\mathsf U)$ which contains $a$. Hence by construction $a$ is $\mathsf U$-full.

Suppose that every element in $\mathfrak A$ is $\mathsf U$-full for some $\mathsf U$, and let $(\mathsf U_\alpha)$ be a family of open subsets of $\mathsf X$.  
Define $\mathsf U_0 := (\bigcap_{\alpha} \mathsf U_\alpha)^\circ$ and note that we obviously have an inclusion $\mathfrak A(\mathsf U_0) \subseteq \bigcap \mathfrak A(\mathsf U_\alpha)$.
Let $a\in  \bigcap \mathfrak A(\mathsf U_\alpha)$ and $\mathsf U\in \mathbb O(\mathsf X)$ such that $a$ is $\mathsf U$-full. Since $a\in \mathfrak A(\mathsf U_\alpha)$ for all $\alpha$, $\mathsf U\subseteq \mathsf U_\alpha$ for each $\alpha$. 
Hence $\mathsf U\subseteq \mathsf U_0$, implying that $a\in \mathfrak A(\mathsf U_0)$. Thus $\mathfrak A(\mathsf U_0) = \bigcap \mathfrak A(\mathsf U_\alpha)$. Finally, if $a \in \mathfrak A\setminus \mathfrak A( \mathsf X)$ and $a$ is $\mathsf U$-full for some $\mathsf U \in \mathbb O(\mathsf X)$, then $a \in \mathfrak A(\mathsf U) \subset \mathfrak A(\mathsf X)$ which is a contradiction. Thus $\mathfrak A(\mathsf X) = \mathfrak A$ and thus $\mathfrak A$ is lower semicontinouous. 
\end{proof}

\begin{notation}\label{n:U_a-full}
 If $\mathfrak A$ is a lower semicontinuous $\mathsf X$-$C^\ast$-algebra, and $a \in \mathfrak A$, then we denote by $\mathsf U_a$ the unique open subset of $\mathsf X$ for which $a$ is $\mathsf U_a$-full.
\end{notation}

\begin{example}
 Let $\mathsf X$ be a locally compact Hausdorff space and $\mathfrak A$ be a continuous $C_0(\mathsf X)$-algebra. For $a\in \mathfrak A$ it is easily seen that
 \[
  \mathsf U_a = \{ x \in \mathsf X : \| a_x \|_x > 0 \}.
 \]
 where $a_x$ denotes the fibre at $x$.
\end{example}

\begin{observation}
 Let $\mathfrak A$ and $\mathfrak B$ be $\mathsf X$-$C^\ast$-algebras with $\mathfrak A$ lower semicontinuous. 
 Then a map $\phi \colon \mathfrak A \to \mathfrak B$ is $\mathsf X$-equivariant if and only if for all $a\in \mathfrak A$, $\phi(a) \in \mathfrak B(\mathsf U_a)$.
\end{observation}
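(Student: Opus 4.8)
The plan is to unwind the two definitions involved and observe that everything reduces to the minimality property of $\mathsf U_a$ recorded in Notation \ref{n:U_a-full}. Recall that $\mathsf X$-equivariance of $\phi$ means $\phi(\mathfrak A(\mathsf U)) \subseteq \mathfrak B(\mathsf U)$ for every $\mathsf U \in \mathbb O(\mathsf X)$, while $\mathsf U_a$ is characterised by $a \in \mathfrak A(\mathsf U_a)$ together with the property that $a \in \mathfrak A(\mathsf V)$ forces $\mathsf U_a \subseteq \mathsf V$. Lower semicontinuity of $\mathfrak A$ is exactly what guarantees (by the proposition just above) that such a $\mathsf U_a$ exists for every $a \in \mathfrak A$, so the right-hand condition in the statement is well-posed.

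For the forward implication, I would simply take $a \in \mathfrak A$ and note that $a \in \mathfrak A(\mathsf U_a)$ by definition of $\mathsf U_a$; applying $\mathsf X$-equivariance to the open set $\mathsf U_a$ gives $\phi(a) \in \phi(\mathfrak A(\mathsf U_a)) \subseteq \mathfrak B(\mathsf U_a)$, which is the desired condition.

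For the converse, suppose $\phi(a) \in \mathfrak B(\mathsf U_a)$ for all $a$, and fix an arbitrary $\mathsf U \in \mathbb O(\mathsf X)$ together with $a \in \mathfrak A(\mathsf U)$. Since $a$ lies in $\mathfrak A(\mathsf U)$, the minimality in the definition of $\mathsf U_a$ yields $\mathsf U_a \subseteq \mathsf U$, and then the order-preserving property of the action $\mathbb O(\mathsf X) \to \mathbb I(\mathfrak B)$ gives $\mathfrak B(\mathsf U_a) \subseteq \mathfrak B(\mathsf U)$. Combining this with the hypothesis, $\phi(a) \in \mathfrak B(\mathsf U_a) \subseteq \mathfrak B(\mathsf U)$. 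As $a \in \mathfrak A(\mathsf U)$ was arbitrary, $\phi(\mathfrak A(\mathsf U)) \subseteq \mathfrak B(\mathsf U)$, and as $\mathsf U$ was arbitrary, $\phi$ is $\mathsf X$-equivariant.

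I do not anticipate any genuine obstacle here: the statement is a direct translation between the ``global'' formulation of equivariance (a condition on every ideal $\mathfrak A(\mathsf U)$) and its ``pointwise'' reformulation (a condition on each element $a$), and the only substantive ingredient is that lower semicontinuity lets us attach the smallest open set $\mathsf U_a$ to each element. The one point worth stating carefully is the use of the minimality of $\mathsf U_a$ in the converse direction, since that is precisely where the full strength of $\mathsf U_a$ being the \emph{smallest} such open set is needed.
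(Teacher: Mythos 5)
Your proof is correct and is precisely the routine definitional argument the paper intends by labelling this an Observation and omitting a proof: the forward direction applies equivariance to $\mathsf U_a$, and the converse combines the minimality of $\mathsf U_a$ with the order-preserving property of the action $\mathbb O(\mathsf X) \to \mathbb I(\mathfrak B)$. You also correctly identify that lower semicontinuity of $\mathfrak A$ is needed only to guarantee that $\mathsf U_a$ exists for every $a \in \mathfrak A$, so nothing is missing.
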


Recall, that an element $b$ in a $C^\ast$-algebra $\mathfrak B$ is called \emph{full} if the closed, two-sided ideal generated by $b$ is all of $\mathfrak B$.

\begin{definition}[Full $\mathsf X$-equivariant maps]
 Let $\mathfrak A$ and $\mathfrak B$ be $\mathsf X$-$C^\ast$-algebras with $\mathfrak A$ lower semicontinuous, and let $\phi \colon \mathfrak A \to \mathfrak B$ be an $\mathsf X$-equivariant map. We say that $\phi$ is \emph{full} (or $\mathsf X$-full)
 if $\phi(a)$ is full in $\mathfrak B(\mathsf U_a)$ for all $a \in \mathfrak A$.
\end{definition}

Note that simply the existence of an $\mathsf X$-full map $\phi \colon \mathfrak A \to \mathfrak B$ puts a restraint on the action on $\mathfrak B$. 
In fact, we always have that $0 \in \mathfrak A$ is $\emptyset$-full, and thus $\phi(0) = 0$ must be full in $\mathfrak B(\emptyset)$. Thus $\mathfrak B(\emptyset) = 0$.
However, this is a very small requirement, and since in all our applications $\mathfrak B$ will be finitely upper semicontinuous, this obstruction will never be an issue.

\begin{example}
 Let $\mathsf X = \star$ be the one-point space. Every $C^\ast$-algebra is canonically a continuous $\mathsf X$-$C^\ast$-algebra, and any linear map $\phi \colon \mathfrak A \to \mathfrak B$ is canonically $\mathsf X$-equivariant.
 Clearly any non-zero element $a\in \mathfrak A$ is $\mathsf X$-full, and one always has that $0$ is $\emptyset$-full. Note that $\phi(0) = 0$ is full in $\mathfrak B(\emptyset) = 0$. 
 Hence $\phi$ is full in the $\mathsf X$-equivariant sense exactly when $\phi(a)$ is full in $\mathfrak B$ for every non-zero $a\in \mathfrak A$.
 This is exactly the classical definition of a linear map being full.
\end{example}

One of the applications is that if there exists a full map we get a description of the map $\mathfrak B_\mathscr{C} \colon \mathfrak A \to \mathbb I(\mathfrak B)$ c.f.~Question \ref{q:computeB_C}.

\begin{lemma}\label{l:fullcomputeB_C}
 Let $\mathfrak A$ and $\mathfrak B$ be $\mathsf X$-$C^\ast$-algebras with $\mathfrak A$ lower semicontinuous, and let $\mathscr C \subset CP(\mathsf X; \mathfrak A, \mathfrak B)$ be a closed operator convex cone. 
 Let $a\in \mathfrak A$. If there is $\phi \in \mathscr C$ such that $\phi(a)$ full in $\mathfrak B(\mathsf U_a)$, then $\mathfrak B_\mathscr{C}(a) = \mathfrak B(\mathsf U_a)$.
 
 In particular, if $\mathscr C$ contains an $\mathsf X$-full map, then $\mathfrak B_\mathscr{C}(a) = \mathfrak B(\mathsf U_a)$ for every $a\in \mathfrak A$.
\end{lemma}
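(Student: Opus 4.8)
The plan is to prove the two containments $\mathfrak B_\mathscr{C}(a) \subseteq \mathfrak B(\mathsf U_a)$ and $\mathfrak B(\mathsf U_a) \subseteq \mathfrak B_\mathscr{C}(a)$ separately, where the first is essentially formal (using $\mathsf X$-equivariance of the maps in $\mathscr C$) and the second uses the existence of the full map $\phi$.

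First I would establish the inclusion $\mathfrak B_\mathscr{C}(a) \subseteq \mathfrak B(\mathsf U_a)$. Recall that
\[
\mathfrak B_\mathscr{C}(a) = \overline{\mathfrak B \{ \psi(a) : \psi \in \mathscr C\} \mathfrak B}.
\]
Since every $\psi \in \mathscr C \subseteq CP(\mathsf X; \mathfrak A, \mathfrak B)$ is $\mathsf X$-equivariant and $a$ is $\mathsf U_a$-full (so $a \in \mathfrak A(\mathsf U_a)$), we have $\psi(a) \in \mathfrak B(\mathsf U_a)$ for every $\psi \in \mathscr C$. Because $\mathfrak B(\mathsf U_a)$ is a two-sided, closed ideal of $\mathfrak B$, it absorbs left and right multiplication by $\mathfrak B$ and is closed, so $\overline{\mathfrak B \{ \psi(a) : \psi \in \mathscr C\} \mathfrak B} \subseteq \mathfrak B(\mathsf U_a)$. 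This gives the first containment and requires no fullness hypothesis.

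For the reverse inclusion $\mathfrak B(\mathsf U_a) \subseteq \mathfrak B_\mathscr{C}(a)$, I would invoke the hypothesis that there is $\phi \in \mathscr C$ with $\phi(a)$ full in $\mathfrak B(\mathsf U_a)$. By definition of fullness, the two-sided, closed ideal of $\mathfrak B(\mathsf U_a)$ generated by $\phi(a)$ is all of $\mathfrak B(\mathsf U_a)$, i.e.
\[
\overline{\mathfrak B(\mathsf U_a)\, \phi(a)\, \mathfrak B(\mathsf U_a)} = \mathfrak B(\mathsf U_a).
\]
Since $\mathfrak B(\mathsf U_a) \subseteq \mathfrak B$, we have $\overline{\mathfrak B(\mathsf U_a)\, \phi(a)\, \mathfrak B(\mathsf U_a)} \subseteq \overline{\mathfrak B\, \phi(a)\, \mathfrak B} \subseteq \mathfrak B_\mathscr{C}(a)$, because $\phi \in \mathscr C$. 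Combining, $\mathfrak B(\mathsf U_a) \subseteq \mathfrak B_\mathscr{C}(a)$. Together with the first containment this yields $\mathfrak B_\mathscr{C}(a) = \mathfrak B(\mathsf U_a)$, proving the main statement.

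The ``in particular'' then follows immediately: if $\mathscr C$ contains an $\mathsf X$-full map $\phi$, then by definition $\phi(a)$ is full in $\mathfrak B(\mathsf U_a)$ for \emph{every} $a \in \mathfrak A$, so the hypothesis of the main statement holds for every $a$, giving $\mathfrak B_\mathscr{C}(a) = \mathfrak B(\mathsf U_a)$ for all $a$. I do not expect any genuine obstacle here; the only point requiring slight care is checking that fullness of $\phi(a)$ \emph{within the ideal} $\mathfrak B(\mathsf U_a)$ (rather than within all of $\mathfrak B$) is exactly what is needed, which is why the equivariance step pinning $\mathfrak B_\mathscr{C}(a)$ inside $\mathfrak B(\mathsf U_a)$ must come first.
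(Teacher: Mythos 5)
Your proof is correct and follows essentially the same route as the paper: the inclusion $\mathfrak B_\mathscr{C}(a) \subseteq \mathfrak B(\mathsf U_a)$ from $\mathsf X$-equivariance, and the reverse inclusion from fullness of $\phi(a)$ in $\mathfrak B(\mathsf U_a)$ together with $\overline{\mathfrak B \phi(a) \mathfrak B} \subseteq \mathfrak B_\mathscr{C}(a)$. The only cosmetic difference is that you pass through $\overline{\mathfrak B(\mathsf U_a)\,\phi(a)\,\mathfrak B(\mathsf U_a)}$ while the paper writes $\mathfrak B(\mathsf U_a) = \overline{\mathfrak B\,\phi(a)\,\mathfrak B}$ directly; these coincide since $\mathfrak B(\mathsf U_a)$ is a two-sided, closed ideal.
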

\begin{proof}
 Since every map in $\mathscr C$ is $\mathsf X$-equivariant, we clearly have $\mathfrak B_\mathscr{C}(a) \subset \mathfrak B(\mathsf U_a)$ for all $a\in \mathfrak A$. 
 Let $a\in \mathfrak A$ for which there exists $\phi\in \mathscr C$ such that $\phi(a)$ is full in 
 $\mathfrak B(\mathsf U_a)$. Then $\mathfrak B(\mathsf U_a) = \overline{\mathfrak B \phi(a) \mathfrak B} \subset \mathfrak B_\mathscr{C}(a)$. Thus $\mathfrak B_\mathscr{C}(a) = \mathfrak B(\mathsf U_a)$. 
 The last part of the lemma follows immediately since if $\mathscr{C}$ contains an $\mathsf{X}$-full map $\phi$, then for every $a \in \mathfrak A$, we have that $\phi (a)$ is full in $\mathfrak B( \mathsf{U}_{a})$.
\end{proof}

\subsection{Basic $\mathsf X$-equivariant extension theory}

Given a two-sided, closed ideal $\mathfrak J$ in a $C^\ast$-algebra $\mathfrak B$ there are canonical induced two-sided, closed ideals in the multiplier algebra $\multialg{\mathfrak B}$ and the corona algebra $\corona{\mathfrak B}$, given by
\begin{eqnarray*}
\multialg{\mathfrak B ,\mathfrak J} &=& \{ m \in \multialg{\mathfrak B} : m \mathfrak B \subseteq \mathfrak J \}, \\
\corona{\mathfrak B, \mathfrak J} &=& (\multialg{\mathfrak B, \mathfrak J} + \mathfrak B)/\mathfrak B.
\end{eqnarray*}
Note that $\multialg{\mathfrak B, \mathfrak J} \cap \mathfrak B = \mathfrak J$.

Given an action of a space $\mathsf X$ on a $C^\ast$-algebra $\mathfrak B$, there are induced actions on $\multialg{\mathfrak B}$ and $\corona{\mathfrak B}$ given by $\multialg{\mathfrak B} (\mathsf U) = \multialg{\mathfrak B, \mathfrak B(\mathsf U)}$ and
$\corona{\mathfrak B}(\mathsf U) = \corona{\mathfrak B, \mathfrak B(\mathsf U)}$.

\begin{remark}\label{r:canoniciso}
The embedding $\multialg{\mathfrak B}(\mathsf U) \hookrightarrow \multialg{\mathfrak B}(\mathsf U) + \mathfrak B$ induces an isomorphism
\[
 \frac{\multialg{\mathfrak B}(\mathsf U)}{\mathfrak B(\mathsf U)} = \frac{\multialg{\mathfrak B}(\mathsf U)}{\multialg{\mathfrak B}(\mathsf U) \cap \mathfrak B} \xrightarrow{\cong} \frac{\multialg{\mathfrak B}(\mathsf U) + \mathfrak B}{\mathfrak B} =
 \corona{\mathfrak B}(\mathsf U),
\]
for every $\mathsf U\in \mathbb O(\mathsf X)$. This implies that any element $\corona{\mathfrak B}(\mathsf U)$ lifts to an element in $\multialg{\mathfrak B}(\mathsf U) \subset \multialg{\mathfrak B}$. 
Also, for any $\mathsf U\in \mathbb O(\mathsf X)$ this implies that the sequence $0 \to \mathfrak B(\mathsf U) \to \multialg{\mathfrak B}(\mathsf U) \to \corona{\mathfrak B}(\mathsf U) \to 0$ 
induced by $0 \to \mathfrak B \to \multialg{\mathfrak B} \to \corona{\mathfrak B} \to 0$ is a short exact sequence.
\end{remark}

We have the following.

\begin{lemma}\label{l:actmulticor}
 Let $\mathfrak B$ be an $\mathsf X$-$C^\ast$-algebra.
 \begin{itemize}
  \item[$(a)$] If $\mathfrak B$ is finitely lower semicontinuous, then $\multialg{\mathfrak B}$ and $\corona{\mathfrak B}$ are finitely lower semicontinuous.
  \item[$(b)$] If $\mathfrak B$ is lower semicontinuous, then $\multialg{\mathfrak B}$ is lower semicontinuous.
  \item[$(c)$] If $\mathfrak B$ is $\sigma$-unital and finitely upper semicontinuous, then $\multialg{\mathfrak B}$ and $\corona{\mathfrak B}$ are finitely upper semicontinouous.
 \end{itemize}
\end{lemma}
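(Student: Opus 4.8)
The plan is to reduce every assertion to a statement about the induced ideals $\multialg{\mathfrak B}(\mathsf U)=\multialg{\mathfrak B,\mathfrak B(\mathsf U)}$ and $\corona{\mathfrak B}(\mathsf U)=\corona{\mathfrak B,\mathfrak B(\mathsf U)}$, writing $\mathfrak J=\mathfrak B(\mathsf U)$ and $\mathfrak K=\mathfrak B(\mathsf V)$ throughout. For the multiplier algebra in $(a)$ and $(b)$ the argument is purely formal. Since $\mathfrak B(\mathsf X)=\mathfrak B$ we get $\multialg{\mathfrak B}(\mathsf X)=\{m:m\mathfrak B\subseteq\mathfrak B\}=\multialg{\mathfrak B}$, and for any family $(\mathsf U_\alpha)$ with $\mathsf U=(\bigcap_\alpha\mathsf U_\alpha)^\circ$ an element $m$ lies in $\bigcap_\alpha\multialg{\mathfrak B}(\mathsf U_\alpha)$ if and only if $m\mathfrak B\subseteq\bigcap_\alpha\mathfrak B(\mathsf U_\alpha)$, which by (finite) lower semicontinuity of $\mathfrak B$ equals $\mathfrak B(\mathsf U)$, i.e.\ $m\in\multialg{\mathfrak B}(\mathsf U)$. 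Taking the family to have two members gives finite lower semicontinuity in $(a)$, and the general case gives lower semicontinuity in $(b)$.

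For $\corona{\mathfrak B}$ in $(a)$ I would use that, by Remark \ref{r:canoniciso}, $\corona{\mathfrak B}(\mathsf U)=\pi(\multialg{\mathfrak B}(\mathsf U))$ is a closed two-sided ideal of $\corona{\mathfrak B}$, and that $\pi(\multialg{\mathfrak B}(\mathsf X))=\corona{\mathfrak B}$. The finite infimum is then handled by the elementary fact that two closed two-sided ideals $I,J$ in a $C^\ast$-algebra satisfy $I\cap J=\overline{IJ}$. Indeed, if $x=\pi(m)$ and $y=\pi(n)$ with $m\mathfrak B\subseteq\mathfrak J$ and $n\mathfrak B\subseteq\mathfrak K$, then $mn\mathfrak B\subseteq m\mathfrak K\subseteq\mathfrak K$ and $mn\mathfrak B\subseteq m\mathfrak B\subseteq\mathfrak J$, so $mn\in\multialg{\mathfrak B,\mathfrak J\cap\mathfrak K}$ and $xy\in\corona{\mathfrak B,\mathfrak J\cap\mathfrak K}$. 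Hence $\corona{\mathfrak B}(\mathsf U)\,\corona{\mathfrak B}(\mathsf V)\subseteq\corona{\mathfrak B,\mathfrak J\cap\mathfrak K}$, and taking closures, $\corona{\mathfrak B}(\mathsf U)\cap\corona{\mathfrak B}(\mathsf V)=\overline{\corona{\mathfrak B}(\mathsf U)\,\corona{\mathfrak B}(\mathsf V)}\subseteq\corona{\mathfrak B}(\mathsf U\cap\mathsf V)$, using finite lower semicontinuity $\mathfrak B(\mathsf U\cap\mathsf V)=\mathfrak J\cap\mathfrak K$; the reverse inclusion is monotonicity.

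For $(c)$, the hypothesis $\mathfrak B(\emptyset)=0$ together with essentiality of $\mathfrak B$ in $\multialg{\mathfrak B}$ gives $\multialg{\mathfrak B}(\emptyset)=\{m:m\mathfrak B=0\}=0$, whence $\corona{\mathfrak B}(\emptyset)=0$; and once the multiplier statement is proven, the corona statement follows by applying the $\ast$-homomorphism $\pi$, since $\pi(\multialg{\mathfrak B}(\mathsf U)+\multialg{\mathfrak B}(\mathsf V))=\corona{\mathfrak B}(\mathsf U)+\corona{\mathfrak B}(\mathsf V)$. It therefore remains to prove $\multialg{\mathfrak B,\mathfrak J+\mathfrak K}=\multialg{\mathfrak B,\mathfrak J}+\multialg{\mathfrak B,\mathfrak K}$, where finite upper semicontinuity identifies $\mathfrak B(\mathsf U\cup\mathsf V)=\mathfrak J+\mathfrak K$. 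The inclusion $\supseteq$ is immediate from monotonicity. For $\subseteq$, let $m$ satisfy $m\mathfrak B\subseteq\mathfrak J+\mathfrak K$. First note that any $m'\in\multialg{\mathfrak B}$ preserves the ideal $\mathfrak J$ (by Cohen factorisation $\mathfrak J=\mathfrak J\mathfrak J$, so $m'\mathfrak J\subseteq(m'\mathfrak J)\mathfrak J\subseteq\mathfrak B\mathfrak J\subseteq\mathfrak J$); thus $\mathfrak J$ is a $\sigma$-unital ideal of the $C^\ast$-algebra $\multialg{\mathfrak B}$. Using $\sigma$-unitality, I would choose an increasing approximate unit $(e_n)$ of $\mathfrak J$ with $0\le e_n\le1$ that is quasicentral relative to $m$, i.e.\ $\|e_nm-me_n\|\to0$; it converges strictly to some $e\in\multialg{\mathfrak B,\mathfrak J}$. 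Then $me\,\mathfrak B\subseteq m\mathfrak J\subseteq\mathfrak J$, so $me\in\multialg{\mathfrak B,\mathfrak J}$, and I claim $m(1-e)\in\multialg{\mathfrak B,\mathfrak K}$, which yields the decomposition $m=me+m(1-e)$.

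The main obstacle is exactly this last claim: a naive algebraic split fails, because $e$ need not respect $\mathfrak K$ and the cross terms do not vanish on the nose. The remedy, and the place where $\sigma$-unitality is essential, is to pass to the quotient $\bar q\colon\multialg{\mathfrak B}\to\multialg{\mathfrak B/\mathfrak K}$ (the canonical extension of $q\colon\mathfrak B\to\mathfrak B/\mathfrak K$), for which $\ker\bar q=\multialg{\mathfrak B,\mathfrak K}$ since $\bar q(m')=0\iff q(m'\mathfrak B)=0\iff m'\mathfrak B\subseteq\mathfrak K$. Writing $w=\bar q(m)$ and $\mathfrak J'=q(\mathfrak J)$, the hypothesis gives $w\,\mathfrak B/\mathfrak K\subseteq\mathfrak J'$; moreover $(q(e_n))$ is an approximate unit of $\mathfrak J'$, so $\bar q(e)$ acts as the identity on $\mathfrak J'$, while quasicentrality forces $[\bar q(e),w]=0$. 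Hence, for every $b'\in\mathfrak B/\mathfrak K$, $(1-\bar q(e))wb'=(1-\bar q(e))(wb')=0$ because $wb'\in\mathfrak J'$; by essentiality $w(1-\bar q(e))=(1-\bar q(e))w=0$, so $\bar q(m(1-e))=0$ and $m(1-e)\in\multialg{\mathfrak B,\mathfrak K}$, as claimed. I expect the careful verification that a sequential quasicentral approximate unit of $\mathfrak J$ relative to $m$ exists (Arveson), and that $\bar q$ and $\ker\bar q$ behave as stated, to be the only substantial points; everything else is formal manipulation of the ideals $\multialg{\mathfrak B,\cdot}$ and $\corona{\mathfrak B,\cdot}$.
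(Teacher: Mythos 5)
Parts $(a)$ and $(b)$ of your proposal are fine: the multiplier statements are indeed the formal computation $m\mathfrak B\subseteq\bigcap_\alpha\mathfrak B(\mathsf U_\alpha)=\mathfrak B(\mathsf U)$, and your treatment of the corona in $(a)$ via the identity $I\cap J=\overline{IJ}$ for closed two-sided ideals is a correct (and slightly slicker) alternative to the paper's argument, which instead lifts $x\in\corona{\mathfrak B}(\mathsf U)\cap\corona{\mathfrak B}(\mathsf V)$ to $\multialg{\mathfrak B}(\mathsf U)$ and multiplies by an approximate identity of $\multialg{\mathfrak B}(\mathsf V)$.

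Part $(c)$, however, has a fatal gap at the sentence ``it converges strictly to some $e\in\multialg{\mathfrak B,\mathfrak J}$''. An approximate unit of an ideal $\mathfrak J\triangleleft\mathfrak B$ does not converge strictly in $\multialg{\mathfrak B}$ in general: if it did, the limit $e$ would be an element of $\multialg{\mathfrak B,\mathfrak J}$ acting as a unit on $\mathfrak J$, and your argument moreover needs $(1-e)\mathfrak J\subseteq\mathfrak K$. No element with these two properties exists in general. Concretely, take $\mathsf X=\mathbb R$, $\mathfrak B=C_0(\mathbb R)$ with its tight action, $\mathfrak J=\mathfrak B\bigl((0,2)\bigr)=C_0\bigl((0,2)\bigr)$ and $\mathfrak K=\mathfrak B\bigl((1,3)\bigr)=C_0\bigl((1,3)\bigr)$; this $\mathfrak B$ is $\sigma$-unital and finitely upper semicontinuous, so the hypotheses of $(c)$ hold. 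Any $e\in\multialg{\mathfrak B,\mathfrak J}\subseteq C_b(\mathbb R)$ must vanish on $\mathbb R\setminus(0,2)$, in particular $e(0)=0$, while $(1-e)\mathfrak J\subseteq\mathfrak K$ forces $e\equiv 1$ on $(0,1]$, contradicting continuity at $0$. So the problem is not in the verifications you defer (Arveson quasicentrality, the behaviour of $\bar q$ and its kernel, which are all fine for $\sigma$-unital $\mathfrak B$), but in the existence of $e$ itself: the decomposition $m=me+m(1-e)$ cannot be implemented by a single multiplier lying in $\multialg{\mathfrak B,\mathfrak J}$. What is really needed is a cutting element adapted to $m$ -- morally an $h\in\multialg{\mathfrak B}$, \emph{not} in $\multialg{\mathfrak B,\mathfrak J}$, with $mh\in\multialg{\mathfrak B,\mathfrak J}$ and $m(1-h)\in\multialg{\mathfrak B,\mathfrak K}$ -- and the paper manufactures a substitute for this by a diagonal series trick: choose a countable approximate identity $(b_n)$ of $\mathfrak B$ (here is where $\sigma$-unitality of $\mathfrak B$, not of $\mathfrak J$, enters) with $b_{n+1}b_n=b_n$ and $\|(1-b_n)mb_k\|<2^{-(n+k)}$ for $k<n$, set $d_n=b_n-b_{n-1}$, and show that $m$ lies in the closed ideal generated by $\sum_n d_nmd_n$ together with $\mathfrak B(\mathsf U\cup\mathsf V)$. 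Each term $d_n^{1/2}md_n^{1/2}$ lies in $\mathfrak B(\mathsf U\cup\mathsf V)=\mathfrak B(\mathsf U)+\mathfrak B(\mathsf V)$, hence splits as $x_n+y_n$ with $x_n\in\mathfrak B(\mathsf U)$, $y_n\in\mathfrak B(\mathsf V)$ positive contractions, and this positivity control makes $\sum_n d_n^{1/2}x_nd_n^{1/2}$ and $\sum_n d_n^{1/2}y_nd_n^{1/2}$ converge strictly, giving $\sum_n d_nmd_n\in\multialg{\mathfrak B}(\mathsf U)+\multialg{\mathfrak B}(\mathsf V)$. This termwise splitting inside $\mathfrak B$, rather than a global splitting by one multiplier, is the idea your proposal is missing.
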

\begin{proof}
 Part $(b)$ is obvious, and part $(a)$ for the multiplier algebra is also clear. Thus it suffices to show that $\corona{\mathfrak B}$ is finitely lower semicontinuous whenever $\mathfrak B$ is, where we obviously have $\corona{\mathfrak B}(\mathsf X) = \corona{\mathfrak B}$.
 
 Let $\mathsf U, \mathsf V \in \mathbb O(\mathsf X)$. Clearly $\corona{\mathfrak B}(\mathsf U \cap \mathsf V) \subset \corona{\mathfrak B}(\mathsf U) \cap \corona{\mathfrak B}(\mathsf V)$. It suffices to show that the image of this inclusion is dense.
 Fix $x\in \corona{\mathfrak B}(\mathsf U) \cap \corona{\mathfrak B}(\mathsf V)$. By Remark \ref{r:canoniciso} we may lift $x$ to an element $y\in \multialg{\mathfrak B}(\mathsf U)$.
 Let $(z_\alpha)$ be an approximate identity of $\multialg{\mathfrak B}(\mathsf V)$. Then $yz_\alpha$ is in $\multialg{\mathfrak B}(\mathsf U \cap \mathsf V)$ and thus $\pi(yz_\alpha)$ is in $\corona{\mathfrak B}(\mathsf U \cap \mathsf V)$ and converges to $x$.
 
  Ad $(c)$: Clearly $\multialg{\mathfrak B}(\emptyset) = 0$ and $\corona{\mathfrak B}(\emptyset) = 0$.
  
  We start by showing that $\multialg{\mathfrak B}$ is finitely upper semicontinuous and use this to show that $\corona{\mathfrak B}$ is also finitely upper semicontinuous. Let $\mathsf U, \mathsf V\in \mathbb O(\mathsf X)$.
 Clearly $\multialg{\mathfrak B}(\mathsf U) + \multialg{\mathfrak B}(\mathsf V) \subset \multialg{\mathfrak B}(\mathsf U \cup \mathsf V)$. 
 Let $m \in \multialg{\mathfrak B}(\mathsf U \cup \mathsf V)$ be a positive contraction and let $(b_n)$ be a countable approximate identity in $\mathfrak B$ such that $b_{n+1}b_n = b_n b_{n+1} = b_n$. 
  By passing to a subsequence we may assume that
 \[
  \| (1-b_n)mb_k\| < 2^{-(n+k)},
 \]
 for all $k<n$. Let $d_n = b_n - b_{n-1}$ where we define $b_0 = 0$.
 
 We claim that by using the norm estimate above one gets that $m$ is in the two-sided, closed ideal
 \[
  \overline{\multialg{\mathfrak B}(\sum_{n=1}^\infty d_n m d_n)\multialg{\mathfrak B}} + \mathfrak B(\mathsf U \cup \mathsf V).  
 \]
 To see this note first that $\sum_{n=1}^\infty d_n = 1$ strictly. Thus
 \begin{eqnarray*}
  m &=& \sum_{k,l=1}^\infty d_k m d_l \\
  &=& \sum_{n=1}^\infty d_n m d_n + \sum_{n=1}^\infty (d_{n+1}md_n + d_n m d_{n+1}) + \sum_{l=3}^\infty \sum_{k=1}^{l-2} (d_k m d_l + d_l m d_k).
 \end{eqnarray*}
 The latter of these three sums is in $\mathfrak B(\mathsf U \cup \mathsf V)$ since $\| d_l m d_k \| = \| b_l (1-b_{l-1}) m d_k\| < 2^{-(k+l)}$. 
 Thus it suffices to show that the second sum is in the two-sided, closed ideal generated by the first sum plus something from
 $\mathfrak B(\mathsf U \cup \mathsf V)$. Using that $d_n d_l = 0$ for $n < l-1$, we get that
 \begin{eqnarray*}
   && \left( \sum_{n=1}^\infty d_{n+1}md_n \right)^\ast \left( \sum_{n=1}^\infty d_{n+1}md_n \right) \\
   &=& \sum_{n=1}^\infty d_n m d_{n+1}^2 m d_n + \sum_{n=1}^\infty (d_n m d_{n+1}d_{n+2} m d_{n+1} + d_{n+1} m d_{n+2}d_{n+1} m d_{n})
 \end{eqnarray*}
 Using that $d_{n+1}d_{n+2} = d_{n+2}d_{n+1}$, the same norm consideration as above implies that the second of these sums is in $\mathfrak B(\mathsf U \cup \mathsf V)$. Finally, noting that
 \[
  \sum_{n=1}^\infty d_n m d_{n+1}^2 m d_n \leq \sum_{n=1}^\infty d_n m^2 d_n \leq \sum_{n=1}^\infty d_n m d_n,
 \]
 the claim follows.
 
  By the claim, it suffices to show that $\sum_{n=1}^\infty d_n m d_n$ is in $\multialg{\mathfrak B}(\mathsf U) + \multialg{\mathfrak B}(\mathsf V)$. 
 For each $n$ we may find positive contractions $x_n \in \mathfrak B(\mathsf U)$, $y_n \in \mathfrak B(\mathsf V)$ such that
 $x_n + y_n = d_n^{1/2} m d_n^{1/2}$. Then
 \[
  \sum_{n=1}^\infty d_n m d_n = \sum_{n=1}^\infty d_n^{1/2}x_n d_n^{1/2} + \sum_{n=1}^\infty d_n^{1/2} y_n d_n^{1/2} \in \multialg{\mathfrak B}(\mathsf U) + \multialg{\mathfrak B}(\mathsf V).
 \]
 Hence $m \in \multialg{\mathfrak B}(\mathsf U) + \multialg{\mathfrak B}(\mathsf V)$, so $\multialg{\mathfrak B}$ is finitely upper semicontinuous.
 
 Now for the corona algebra. Clearly $\corona{\mathfrak B}(\mathsf U) + \corona{\mathfrak B}(\mathsf V) \subset \corona{\mathfrak B}(\mathsf U \cup \mathsf V)$. 
 By Remark \ref{r:canoniciso} any element in $\corona{\mathfrak B}(\mathsf U \cup \mathsf V)$ lifts to one in $\multialg{\mathfrak B}(\mathsf U \cup \mathsf V)$,
 which is $\multialg{\mathfrak B}(\mathsf U) + \multialg{\mathfrak B}(\mathsf V)$ by what we have already proven.
 Hence it follows that $\corona{\mathfrak B}$ is upper semicontinuous.
\end{proof}

\begin{example}
 If $\mathfrak B$ is a lower semicontinuous $\mathsf X$-$C^\ast$-algebra, then $\corona{\mathfrak B}$ is not necessarily lower semicontinuous. As an example, take $\mathfrak B = C_0((0,1])$ as a tight $(0,1]$-$C^\ast$-algebra.
 Recall that $\multialg{\mathfrak B}$ may naturally be identified with $C_b((0,1])$, the $C^\ast$-algebra of bounded continuous functions on $(0,1]$.
 Then $\corona{\mathfrak B}((0,1/n)) = \corona{\mathfrak B}$ for all $n$. However, since $\bigcap_{n\in \mathbb N} (0,1/n)$ is empty, if $\corona{\mathfrak B}$ was lower semicontinuous then $\bigcap \corona{\mathfrak B}((0,1/n))$ should be $0$, which it is not.
 
 Similarly, if $\mathfrak B$ is a monotone upper semicontinuous $\mathsf X$-$C^\ast$-algebra, then $\multialg{\mathfrak B}$ and $\corona{\mathfrak B}$ are not necessarily monotone upper semicontinouous. To see this let $\mathfrak B$ be as above.
 Then $\multialg{\mathfrak B}((1/n,1]) = \mathfrak B((1/n,1])$. Hence $\overline{\bigcup_{n=1}^\infty \multialg{\mathfrak B}((1/n,1])} = \mathfrak B \neq \multialg{\mathfrak B}$, even though $\bigcup (1/n,1] = (0,1]$. This also works for the corona algebra,
 since $\corona{\mathfrak B}((1/n,1]) = 0$, but $\corona{\mathfrak B}((0,1]) = \corona{\mathfrak B}$.
\end{example}

\begin{definition}
An extension $0 \to \mathfrak B \to \mathfrak E \to \mathfrak A \to 0$ of $C^\ast$-algebras, for which $\mathfrak A$, $\mathfrak B$ and $\mathfrak E$ are $\mathsf X$-$C^\ast$-algebras is called \emph{$\mathsf X$-equivariant}
if the $\ast$-homomorphisms are $\mathsf X$-equivariant and the induced sequence $0 \to \mathfrak B(\mathsf U) \to \mathfrak E(\mathsf U) \to \mathfrak A(\mathsf U) \to 0$ is exact for each $\mathsf U\in \mathbb O(\mathsf X)$.

When such an extension is $\mathsf X$-equivariant, we will say that it is an \emph{extension of $\mathsf X$-$C^\ast$-algebras}.
\end{definition}

An example of this was given in Remark \ref{r:canoniciso}. Here we saw, that if $\mathfrak B$ is an $\mathsf X$-$C^\ast$-algebra, and $\multialg{\mathfrak B}$ and $\corona{\mathfrak B}$ are given the induced $\mathsf X$-$C^\ast$-algebra structures,
then the extension $0 \to \mathfrak B \to \multialg{\mathfrak B} \to \corona{\mathfrak B} \to 0$ is $\mathsf X$-equivariant.

If a Busby map $\tau \colon \mathfrak A \to \corona{\mathfrak B}$ is $\mathsf X$-equivariant, then the pull-back $\mathfrak A \oplus_{\corona{\mathfrak B}} \multialg{\mathfrak B} = \{ (a,m) \in \mathfrak A \oplus \multialg{\mathfrak B}: \tau(a) = \pi(m) \}$ 
gets an induced action of $\mathsf X$ by
\[
 (\mathfrak A \oplus_{\corona{\mathfrak B}} \multialg{\mathfrak B})(\mathsf U) = (\mathfrak A \oplus_{\corona{\mathfrak B}} \multialg{\mathfrak B}) \cap (\mathfrak A(\mathsf U) \oplus \multialg{\mathfrak B}(\mathsf U))
\]
for $\mathsf U\in \mathbb O(\mathsf X)$.

\begin{proposition}\label{p:basicXext}
Let $0 \to \mathfrak B \to \mathfrak E \xrightarrow{p} \mathfrak A \to 0$ be an extension of $C^\ast$-algebras such that $\mathfrak A$, $\mathfrak B$ and $\mathfrak E$ are $\mathsf X$-$C^\ast$-algebras and the $\ast$-homomorphisms are $\mathsf X$-equivariant.
The following are equivalent.
\begin{itemize}
\item[$(i)$] The extension is $\mathsf X$-equivariant,
\item[$(ii)$] $\mathfrak B(\mathsf U) = \mathfrak B \cap \mathfrak E(\mathsf U)$ and $\mathfrak A(\mathsf U) = p(\mathfrak E(\mathsf U))$ for all $\mathsf U\in \mathbb O(\mathsf X)$,
\item[$(iii)$] The Busby map $\tau$ is $\mathsf X$-equivariant, and the induced isomorphism 
\[
 \mathfrak E \xrightarrow{\cong} \mathfrak A \oplus_{\corona{\mathfrak B}} \multialg{\mathfrak B}
\]
 is an isomorphism of $\mathsf X$-$C^\ast$-algebras.
\end{itemize}
\end{proposition}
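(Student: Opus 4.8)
The plan is to prove $(i) \Leftrightarrow (ii)$ by directly unwinding the definition of exactness, and then to prove $(ii) \Leftrightarrow (iii)$, which carries the real content. Throughout I write $\sigma \colon \mathfrak E \to \multialg{\mathfrak B}$ for the canonical $\ast$-homomorphism, so that $\sigma|_\mathfrak{B}$ is the inclusion $\mathfrak B \hookrightarrow \multialg{\mathfrak B}$ and $\tau \circ p = \pi \circ \sigma$, and I use freely the identity $\multialg{\mathfrak B}(\mathsf U) \cap \mathfrak B = \mathfrak B(\mathsf U)$ noted after the definition of $\multialg{\mathfrak B, \mathfrak J}$. For $(i) \Leftrightarrow (ii)$: since the $\ast$-homomorphisms are assumed $\mathsf X$-equivariant, one always has $\mathfrak B(\mathsf U) \subseteq \mathfrak B \cap \mathfrak E(\mathsf U)$ and $p(\mathfrak E(\mathsf U)) \subseteq \mathfrak A(\mathsf U)$. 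The map $\mathfrak B(\mathsf U) \to \mathfrak E(\mathsf U)$ is automatically injective, and the kernel of $p|_{\mathfrak E(\mathsf U)}$ is exactly $\mathfrak B \cap \mathfrak E(\mathsf U)$; hence exactness of $0 \to \mathfrak B(\mathsf U) \to \mathfrak E(\mathsf U) \to \mathfrak A(\mathsf U) \to 0$ in the middle is equivalent to $\mathfrak B(\mathsf U) = \mathfrak B \cap \mathfrak E(\mathsf U)$, and exactness on the right is equivalent to $p(\mathfrak E(\mathsf U)) = \mathfrak A(\mathsf U)$. This gives $(i) \Leftrightarrow (ii)$ for all $\mathsf U \in \mathbb O(\mathsf X)$ at once.

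For $(ii) \Rightarrow (iii)$, the key observation is that $(ii)$ forces $\sigma(\mathfrak E(\mathsf U)) \subseteq \multialg{\mathfrak B}(\mathsf U)$: for $x \in \mathfrak E(\mathsf U)$ and $b \in \mathfrak B$ we have $\sigma(x) b = xb \in \mathfrak E(\mathsf U) \cap \mathfrak B = \mathfrak B(\mathsf U)$, so $\sigma(x) \in \multialg{\mathfrak B, \mathfrak B(\mathsf U)} = \multialg{\mathfrak B}(\mathsf U)$. Combined with $\tau(\mathfrak A(\mathsf U)) = \tau(p(\mathfrak E(\mathsf U))) = \pi(\sigma(\mathfrak E(\mathsf U))) \subseteq \corona{\mathfrak B}(\mathsf U)$, this shows $\tau$ is $\mathsf X$-equivariant and that $(p,\sigma)$ maps $\mathfrak E(\mathsf U)$ into $(\mathfrak A \oplus_{\corona{\mathfrak B}} \multialg{\mathfrak B})(\mathsf U)$. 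It then remains to verify that the inverse is $\mathsf X$-equivariant: given $(a,m)$ in the pull-back with $a \in \mathfrak A(\mathsf U)$ and $m \in \multialg{\mathfrak B}(\mathsf U)$, use $(ii)$ to choose $x' \in \mathfrak E(\mathsf U)$ with $p(x') = a$; if $x$ is the preimage of $(a,m)$ under $(p,\sigma)$, then $x - x' \in \ker p = \mathfrak B$ and $\sigma(x - x') = m - \sigma(x') \in \multialg{\mathfrak B}(\mathsf U) \cap \mathfrak B = \mathfrak B(\mathsf U) \subseteq \mathfrak E(\mathsf U)$, whence $x \in \mathfrak E(\mathsf U)$.

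For $(iii) \Rightarrow (ii)$, the hypothesis that $(p,\sigma)$ is an isomorphism of $\mathsf X$-$C^\ast$-algebras means precisely that $x \in \mathfrak E(\mathsf U)$ if and only if $p(x) \in \mathfrak A(\mathsf U)$ and $\sigma(x) \in \multialg{\mathfrak B}(\mathsf U)$. The inclusion $p(\mathfrak E(\mathsf U)) \subseteq \mathfrak A(\mathsf U)$ is immediate; for the reverse, given $a \in \mathfrak A(\mathsf U)$ the $\mathsf X$-equivariant Busby map gives $\tau(a) \in \corona{\mathfrak B}(\mathsf U)$, which by Remark \ref{r:canoniciso} lifts to some $m \in \multialg{\mathfrak B}(\mathsf U)$ with $\pi(m) = \tau(a)$; then $(a,m) \in (\mathfrak A \oplus_{\corona{\mathfrak B}} \multialg{\mathfrak B})(\mathsf U)$ and its preimage $x$ lies in $\mathfrak E(\mathsf U)$ with $p(x) = a$, so $a \in p(\mathfrak E(\mathsf U))$. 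For the ideal identity, if $b \in \mathfrak B(\mathsf U)$ then $\sigma(b) = b \in \multialg{\mathfrak B}(\mathsf U)$ and $p(b) = 0 \in \mathfrak A(\mathsf U)$, giving $b \in \mathfrak E(\mathsf U)$; conversely if $b \in \mathfrak B \cap \mathfrak E(\mathsf U)$ then $b = \sigma(b) \in \multialg{\mathfrak B}(\mathsf U) \cap \mathfrak B = \mathfrak B(\mathsf U)$.

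I expect the only genuine subtlety to be the verification that the inverse of the pull-back isomorphism is $\mathsf X$-equivariant in the step $(ii) \Rightarrow (iii)$; everything else is a short diagram chase. The two ingredients that make this work cleanly are the decomposition $x = x' + (x - x')$ reducing the question to the ideal $\mathfrak B$, together with the identity $\multialg{\mathfrak B}(\mathsf U) \cap \mathfrak B = \mathfrak B(\mathsf U)$, and—in the converse direction—the lifting property of Remark \ref{r:canoniciso}, which is exactly what lets an element of $\corona{\mathfrak B}(\mathsf U)$ in the image of $\tau$ be realised inside $\multialg{\mathfrak B}(\mathsf U)$.
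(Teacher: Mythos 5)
Your proof is correct and follows essentially the same route as the paper's: the kernel/image analysis of $p|_{\mathfrak E(\mathsf U)}$ for $(i)\Leftrightarrow(ii)$, the observation $\sigma(x)\mathfrak B\subseteq\mathfrak E(\mathsf U)\cap\mathfrak B=\mathfrak B(\mathsf U)$ to get equivariance of $\sigma$ and $\tau$, the decomposition $x=x'+(x-x')$ together with $\multialg{\mathfrak B}(\mathsf U)\cap\mathfrak B=\mathfrak B(\mathsf U)$ for the inverse of the pull-back map, and the lifting of Remark \ref{r:canoniciso} for $(iii)\Rightarrow(ii)$ all match the paper's argument. The only cosmetic difference is that you phrase $(i)\Leftrightarrow(ii)$ as a single equivalence rather than two implications.
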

\begin{proof}
$(i) \Rightarrow (ii)$: If the extension is $\mathsf X$-equivariant then clearly the map $p$ induces an isomorphism $(\mathfrak E(\mathsf U) + \mathfrak B)/\mathfrak B \xrightarrow{\cong} \mathfrak A(\mathsf U)$. 
The inclusion $\mathfrak E(\mathsf U) \hookrightarrow \mathfrak E(\mathsf U) + \mathfrak B$ induces an isomorphism such that $p|_{\mathfrak E(\mathsf U)} \colon \mathfrak E(\mathsf U) \to \mathfrak A(\mathsf U)$ is the composition
\[
 \mathfrak E(\mathsf U) \twoheadrightarrow \frac{\mathfrak E(\mathsf U)}{\mathfrak E(\mathsf U) \cap \mathfrak B} \xrightarrow{\cong} \frac{\mathfrak E(\mathsf U) + \mathfrak B}{\mathfrak B} \xrightarrow{\cong} \mathfrak A(\mathsf U).
\]
It follows from exactness that $\mathfrak B(\mathsf U) = \ker p|_{\mathfrak E (\mathsf U)} = \mathfrak E(\mathsf U) \cap \mathfrak B$.

$(ii) \Rightarrow (i)$: Since $p \colon \mathfrak E(\mathsf U) \to \mathfrak A(\mathsf U)$ is surjective by assumption, it suffices to show that the kernel is $\mathfrak B(\mathsf U)$. 
As above, the kernel is $\mathfrak E(\mathsf U) \cap \mathfrak B = \mathfrak B(\mathsf U)$.

$(ii) \Rightarrow (iii)$: Let $\sigma \colon \mathfrak E \to \multialg{\mathfrak B}$ be the canonical $\ast$-homomorphism.
Note that 
\[
\mathfrak E(\mathsf U) \cdot \mathfrak B = \mathfrak B(\mathsf U).
\] 
Thus for any $x \in \mathfrak E(\mathsf U)$, we have $x\mathfrak B \subset \mathfrak B(\mathsf U)$,
which implies that $\sigma(x) \in \multialg{\mathfrak B, \mathfrak B(\mathsf U)} = \multialg{\mathfrak B}(\mathsf U)$. Hence $\sigma$ is $\mathsf X$-equivariant.
Recall that $\tau$ is constructed as follows: let $a\in \mathfrak A$, lift it to any element $x\in \mathfrak E$, and then let $\tau(a) = \pi \circ \sigma(a)$. 
If $a \in \mathfrak A(\mathsf U)$, then by $(ii)$ we may lift $a$ to an element $x \in \mathfrak E(\mathsf U)$. Since $\sigma$ and $\pi$ are both $\mathsf X$-equivariant, $\tau(a) = \pi \circ \sigma(x) \in \corona{\mathfrak B}(\mathsf U)$.
Thus $\tau$ is $\mathsf X$-equivariant.
Since $\mathfrak E \xrightarrow{\cong} \mathfrak A \oplus_{\corona{\mathfrak B}} \multialg{\mathfrak B}$ is induced by the $\mathsf X$-equivariant maps $p$ and $\sigma$, it follows that this isomorphism is $\mathsf X$-equivariant.

It remains to show that the inverse is also $\mathsf X$-equivariant. Suppose $(a,m) \in (\mathfrak A \oplus_{\corona{\mathfrak B}} \multialg{\mathfrak B})(\mathsf U)$, and let $x\in \mathfrak E$ be such that $p(x) = a$ and $\sigma(x) = m$.
We must show that $x \in \mathfrak E(\mathsf U)$.
Lift $a$ to an element $y\in \mathfrak E(\mathsf U)$. Then $b:= \sigma(x-y) = m - \sigma(y) \in \mathfrak B \cap \multialg{\mathfrak B, \mathfrak B(\mathsf U)} = \mathfrak B(\mathsf U)$.
Since $\mathfrak B(\mathsf U) \subset \mathfrak E(\mathsf U)$ we have that $b+y \in \mathfrak E(\mathsf U)$, and satisfies $p(b+y) = a$ and $\sigma(b+ y) = m$.
Thus $x=b+y \in \mathfrak E(\mathsf U)$.

$(iii) \Rightarrow (ii)$: Suppose that $\tau$ is $\mathsf X$-equivariant, and that when identifying $\mathfrak E$ with the pull-back $\mathfrak A \oplus_{\corona{\mathfrak B}} \multialg{\mathfrak B}$, then
\[
\mathfrak E(\mathsf U) = (\mathfrak A \oplus_{\corona{\mathfrak B}} \multialg{\mathfrak B}) \cap (\mathfrak A(\mathsf U) \oplus \multialg{\mathfrak B}(\mathsf U))
\]
for all $\mathsf U\in \mathbb O(\mathsf X)$. We have $\mathfrak B(\mathsf U) = \mathfrak B \cap \multialg{\mathfrak B}(\mathsf U) = \mathfrak B \cap \mathfrak E(\mathsf U)$. 
Clearly $p(\mathfrak E(\mathsf U)) \subseteq \mathfrak A(\mathsf U)$ since $p$ is $\mathsf X$-equivariant by assumption.
If $a\in \mathfrak A(\mathsf U)$ we may lift $\tau(a) \in \corona{\mathfrak B}(\mathsf U)$ to an element $m\in \multialg{\mathfrak B}(\mathsf U)$ by Remark \ref{r:canoniciso}.
Hence $(a,m)\in \mathfrak E(\mathsf U)$ which implies that $p(\mathfrak E(\mathsf U)) = \mathfrak A(\mathsf U)$.
\end{proof}

Note that it was proven in $(ii) \Rightarrow (iii)$ above, that if $0 \to \mathfrak B \to \mathfrak E \to \mathfrak A \to 0$ is an extension of $\mathsf X$-$C^\ast$-algebras, 
then the induced $\ast$-homomorphism $\sigma \colon \mathfrak E \to \multialg{\mathfrak B}$ is $\mathsf X$-equivariant.

As an analogue of full extensions of $C^\ast$-algebras, we make a similar definition for $\mathsf X$-equivariant extensions when the action on the quotient algebra is lower semicontinuous.
These will play an important part later, especially in Theorem \ref{t:cfpequiv} and its corollaries.

\begin{definition}
Let $0 \to \mathfrak B \to \mathfrak E \to \mathfrak A \to 0$ be an extension of $\mathsf X$-$C^\ast$-algebras, such that $\mathfrak A$ is lower semicontinuous.
We say that the extension is \emph{full} (or $\mathsf X$-full) if the Busby map is $\mathsf X$-full.
\end{definition}


\subsection{An approximation property for residually $\mathsf X$-nuclear maps}\label{s:approx}

It follows from Theorem \ref{t:domgen}, that in order to show that a closed operator convex cone $\mathscr C$ satisfies the purely large problem, it suffices to find a generating subset $\mathscr S$ consisting of suitably well-behaved maps.
E.g.~in the classical case where $\mathscr C = CP_\nuc(\mathfrak A, \mathfrak B)$, one uses that these maps have the classical approximation property of nuclear maps, i.e.~that the maps approximately factor through matrix algebras.

In order to solve our purely large problem for $\mathsf X$-$C^\ast$-algebras with $\mathsf X$ finite, we will study residually $\mathsf X$-nuclear maps, which we do for not necessarily finite spaces $\mathsf X$. 
In fact, we prove an approximation property for all such maps, which can also be used to prove other nice results, such as a Choi--Effros type lifting theorem for residually $\mathsf X$-nuclear maps, when $\mathsf X$ is finite.

\begin{notation}
Whenever $\Psi \colon \mathfrak A \to \mathfrak B$ is an $\mathsf X$-equivariant linear map and $\mathsf U\in \mathbb O(\mathsf X)$ we let $[\Psi]_\mathsf{U} \colon \mathfrak A/\mathfrak A(\mathsf U) \to \mathfrak B/\mathfrak B(\mathsf U)$ be the induced map.
\end{notation}

\begin{definition}
Let $\mathfrak A$ and $\mathfrak B$ be $\mathsf X$-$C^\ast$-algebras, and let $\Psi \colon \mathfrak A \to \mathfrak B$ be an $\mathsf X$-equivariant nuclear map. We say that $\Psi$ is \emph{residually $\mathsf X$-nuclear} 
if $[\Psi]_\mathsf{U}$ is nuclear for any $\mathsf U \in \mathbb O(\mathsf X)$.

We let $CP_\rnuc(\mathsf X;\mathfrak A,\mathfrak B)$ denote the set of residually $\mathsf X$-nuclear maps.
\end{definition}

\begin{remark}\label{r:rnuccone}
 It is straight forward to verify that $CP_\rnuc(\mathsf X; \mathfrak A, \mathfrak B)$ is a closed operator convex cone.
\end{remark}

\begin{remark}\label{r:exactresnuc}
 Note that if $\mathfrak A$ or $\mathfrak B$ is nuclear, then any $\mathsf X$-equivariant c.p.~map is residually $\mathsf X$-nuclear.
 Moreover, if $\mathfrak A$ is exact (or even
 locally reflexive), then an $\mathsf X$-equivariant c.p.~map $\mathfrak A \to \mathfrak B$ is residually $\mathsf X$-nuclear exactly when it is (non-equivariantly) nuclear, i.e.~
 \[
  CP_\rnuc(\mathsf X;\mathfrak A,\mathfrak B) = CP(\mathsf X;\mathfrak A,\mathfrak B) \cap CP_\nuc(\mathfrak A,\mathfrak B).
 \] 
 This follows from \cite[Proposition 3.2]{Dadarlat-qdmorphisms}.
\end{remark}

\begin{notation}
Let $\mathsf X$ be a topological space and $\mathsf Y$ be a finite space. Then any surjective continuous map $\mathsf f \colon \mathsf X \to \mathsf Y$ is called a \emph{finite filtration} of $\mathsf X$. 
We will often just say that $\mathsf Y$ is a finite filtration of $\mathsf X$.

Given a finite filtration $\mathsf Y$ of $\mathsf X$, then any $\mathsf X$-$C^\ast$-algebra $\mathfrak A$ gets an induced $\mathsf Y$-$C^\ast$-algebra structure, 
by letting $\mathfrak A(\mathsf U) = \mathfrak A(\mathsf f^{-1}(\mathsf U))$ for any $\mathsf X$-$C^\ast$-algebra $\mathfrak A$ and any $\mathsf U\in \mathbb O(\mathsf Y)$.
\end{notation}

When given a topological space $\mathsf X$, a $C^\ast$-algebra $\mathfrak A$, and a non-empty subset $\mathsf A \subset \mathsf X$, we may give $\mathfrak A$ an induced $\mathsf X$-$C^\ast$-algebra structure by letting
$\mathfrak A(\mathsf U)=\mathfrak A$ if $\mathsf A \subseteq \mathsf U$ and $0$ if $\mathsf A \not \subseteq \mathsf U$. We will let $i_\mathsf{A}(\mathfrak A)$ denote $\mathfrak A$ with this $\mathsf X$-$C^\ast$-algebra structure.

\begin{lemma}\label{l:finitedim}
Let $\mathsf Y$ be a finite $T_0$ space and $\mathfrak D$ is a finite dimensional, continuous $\mathsf Y$-$C^\ast$-algebra. Then $\mathfrak D \cong \bigoplus_{y\in Y} i_{\{ y\}} (\mathfrak D_y)$ as $\mathsf Y$-$C^\ast$-algebras 
for some (necessarily finite dimensional) $C^\ast$-algebras $\mathfrak D_y$. 
\end{lemma}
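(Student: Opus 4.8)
The plan is to reduce everything to the lattice of ideals. Since $\mathfrak D$ is finite dimensional we may write $\mathfrak D=\bigoplus_{j=1}^m M_{n_j}$, with minimal central projections $e_1,\dots,e_m$; the ideals of $\mathfrak D$ are exactly the sums $\bigoplus_{j\in S}M_{n_j}$ for $S\subseteq\{1,\dots,m\}$, so $\mathbb I(\mathfrak D)$ is the finite Boolean lattice of subsets of the blocks, with meet given by intersection and join by sum. Because $\mathsf Y$ is finite, all infima and suprema in $\mathbb O(\mathsf Y)$ are finite, so continuity of the action $\mathbb O(\mathsf Y)\to\mathbb I(\mathfrak D)$, $\mathsf U\mapsto\mathfrak D(\mathsf U)$, says precisely that it is a lattice homomorphism: it is order preserving, sends $\emptyset\mapsto 0$ and $\mathsf Y\mapsto\mathfrak D$ (by upper and lower semicontinuity), preserves finite intersections $\mathfrak D(\mathsf U\cap\mathsf V)=\mathfrak D(\mathsf U)\cap\mathfrak D(\mathsf V)$ (finite lower semicontinuity) and finite unions $\mathfrak D(\mathsf U\cup\mathsf V)=\mathfrak D(\mathsf U)+\mathfrak D(\mathsf V)$ (finite upper semicontinuity).

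First I would attach a point of $\mathsf Y$ to each block. Fix $j$. Since $e_j\in\mathfrak D=\mathfrak D(\mathsf Y)$ and the action preserves finite intersections, the open set $\mathsf W_j:=\bigcap\{\mathsf U\in\mathbb O(\mathsf Y): e_j\in\mathfrak D(\mathsf U)\}$ is the unique smallest open set with $e_j\in\mathfrak D(\mathsf W_j)$, and $\mathsf W_j\neq\emptyset$ as $\mathfrak D(\emptyset)=0$. I claim $\mathsf W_j$ is \emph{join-irreducible}, i.e.\ it is not the union of two strictly smaller open sets. Indeed, if $\mathsf W_j=\mathsf U\cup\mathsf V$ then $e_j\in\mathfrak D(\mathsf U)+\mathfrak D(\mathsf V)$; but $\mathfrak D(\mathsf U)+\mathfrak D(\mathsf V)$ is again an ideal, hence a sum of blocks, and a minimal central projection lies in a sum of blocks iff its own block occurs, so $e_j\in\mathfrak D(\mathsf U)$ or $e_j\in\mathfrak D(\mathsf V)$; by minimality of $\mathsf W_j$ this gives $\mathsf W_j\subseteq\mathsf U$ or $\mathsf W_j\subseteq\mathsf V$. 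In a finite $T_0$ space the join-irreducible open sets are exactly the minimal open neighbourhoods $\mathsf U_y:=\bigcap\{\mathsf U\in\mathbb O(\mathsf Y):y\in\mathsf U\}$, and $y\mapsto\mathsf U_y$ is injective; hence there is a unique $y_j\in\mathsf Y$ with $\mathsf W_j=\mathsf U_{y_j}$.

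Finally I would group the blocks by these points: set $\mathfrak D_y:=\bigoplus_{j:\,y_j=y}M_{n_j}$, so that $\mathfrak D=\bigoplus_{y\in\mathsf Y}\mathfrak D_y$ as a $C^\ast$-algebra. It remains to identify the action. For each block, $e_j\in\mathfrak D(\mathsf U)$ holds iff $\mathsf W_j\subseteq\mathsf U$ (by order preservation and minimality of $\mathsf W_j$), and $\mathsf U_{y_j}\subseteq\mathsf U$ iff $y_j\in\mathsf U$ (minimality of the neighbourhood). Therefore $\mathfrak D(\mathsf U)=\bigoplus_{j:\,y_j\in\mathsf U}M_{n_j}=\bigoplus_{y\in\mathsf U}\mathfrak D_y$, which is exactly the value at $\mathsf U$ of the action on $\bigoplus_{y\in\mathsf Y}i_{\{y\}}(\mathfrak D_y)$. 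Consequently the block-regrouping $\ast$-isomorphism is $\mathsf Y$-equivariant with $\mathsf Y$-equivariant inverse, giving the asserted isomorphism of $\mathsf Y$-$C^\ast$-algebras. The main obstacle is the middle step: extracting from continuity the fact that each minimal central projection is supported at a single join-irreducible open set, and matching join-irreducibles with points of $\mathsf Y$ through the $T_0$ hypothesis. Here the finiteness of $\mathsf Y$ is what makes the relevant infima open and reduces the Birkhoff-type representation to the elementary union argument above.
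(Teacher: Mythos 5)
Your proof is correct, but it is organised differently from the paper's. The paper works ``pointwise from the space'': for each $y\in\mathsf Y$ it defines $\mathfrak D_y$ intrinsically as the subquotient $\mathfrak D(\mathsf U^y)/\mathfrak D(\mathsf U^y\setminus\{y\})$, uses the fact that ideals in a finite dimensional $C^\ast$-algebra are direct summands to re-embed $i_{\{y\}}(\mathfrak D_y)$ equivariantly into $\mathfrak D$, gets orthogonality of these images from lower semicontinuity together with $\mathfrak D(\emptyset)=0$, and gets surjectivity from upper semicontinuity via $\mathfrak D(\mathsf U)=\sum_{y\in\mathsf U}\mathfrak D(\mathsf U^y)$. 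You instead work ``blockwise from the algebra'': you decompose $\mathfrak D$ into simple blocks, attach to each block its support $\mathsf W_j$ (whose existence uses finite lower semicontinuity), prove $\mathsf W_j$ is join-irreducible using finite upper semicontinuity, and then invoke the Birkhoff-type correspondence between join-irreducible open sets and minimal open neighbourhoods $\mathsf U^y$ (with the $T_0$ hypothesis giving injectivity of $y\mapsto\mathsf U^y$) to group the blocks by points. The two constructions produce the same subalgebras --- your $\mathfrak D_y$ is exactly the sum of blocks with support equal to $\mathsf U^y$, which is what survives in the paper's subquotient --- but the proofs emphasise different things. The paper's argument is shorter and presents $\mathfrak D_y$ as the canonical ``fibre'' at $y$, consistent with the general theory of $C^\ast$-algebras over spaces where this subquotient need not split back in. Yours makes the lattice combinatorics explicit, isolates precisely where each hypothesis (finiteness, $T_0$, each semicontinuity condition) enters, and shows that the assignment of blocks to points is forced, so the decomposition is unique; the price is a longer detour through minimal central projections and join-irreducibility that the subquotient trick avoids.
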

\begin{proof}
Let $\mathsf U^y$ denote the smallest open subset of $\mathsf Y$ which contains $y$. Note that $\mathsf U^y \setminus \{y\}$ is also open.
Let $\mathfrak D_y = \mathfrak D(\mathsf U^y)/\mathfrak D(\mathsf U^y \setminus\{ y\})$. Since $\mathfrak D$ is finite dimensional, any two-sided, closed ideal is a direct summand. Thus there is a canonical induced $\mathsf Y$-equivariant injective 
$\ast$-homomorphism $\iota_y \colon i_{\{y\}}(\mathfrak D_y) \to \mathfrak D$. 
Since $\mathfrak D$ is lower semicontinuous and $\mathfrak D(\emptyset ) = 0$, it follows that all $\iota_y$ have orthogonal images. 
Thus these induce a $\mathsf Y$-equivariant injective $\ast$-homomorphism $\iota \colon \bigoplus_{y\in Y} i_{\{ y\}} (\mathfrak D_y) \to \mathfrak D$. 
Since $\mathfrak D$ is upper semicontinuous, $\mathfrak D(\mathsf U) = \sum_{y\in \mathsf U} \mathfrak D(\mathsf U^y)$ which is canonically isomorphic to $\bigoplus_{y\in \mathsf U}\mathfrak D_y$ for any $\mathsf U\in \mathsf Y$. 
Thus it easily follows that $\iota$ induces an isomorphism of $\mathsf Y$-$C^\ast$-algebras.
\end{proof}

\begin{lemma}\label{l:liftfinite}
Let $\mathsf Y$ be a finite space, $\mathfrak D$ be a finite dimensional $\mathsf Y$-$C^\ast$-algebra, and $0\to \mathfrak J \to \mathfrak B \xrightarrow{p} \mathfrak B/\mathfrak J \to 0$ be an extension of $\mathsf Y$-$C^\ast$-algebras.
Suppose that $\rho \colon \mathfrak D \to \mathfrak B/\mathfrak J$ is a $\mathsf Y$-equivariant contractive c.p.~map. 
Then there exists a $\mathsf Y$-equivariant contractive c.p.~map $\tilde \rho \colon \mathfrak D \to \mathfrak B$ such that $p \circ \tilde \rho = \rho$.
\end{lemma}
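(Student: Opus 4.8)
The plan is to reduce to the case of matrix blocks, lift $\rho$ block-by-block into the ideals of $\mathfrak B$ forced by $\mathsf Y$-equivariance, and then compress the assembled lift to make it contractive.

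Since $\mathfrak D$ is finite dimensional I would write $\mathfrak D=\bigoplus_k M_{n_k}$ with block projections $\pi_k\colon\mathfrak D\to M_{n_k}$. Every two-sided, closed ideal of $\mathfrak D$ is a direct sub-sum of these blocks, so the action is determined by the sets $S(\mathsf U)=\{k:M_{n_k}\subseteq\mathfrak D(\mathsf U)\}$, with $\mathfrak D(\mathsf U)=\bigoplus_{k\in S(\mathsf U)}M_{n_k}$. For each $k$ I set $\mathcal U_k=\{\mathsf U\in\mathbb O(\mathsf Y):k\in S(\mathsf U)\}$, which is finite as $\mathsf Y$ is finite, and $\mathfrak K_k=\bigcap_{\mathsf U\in\mathcal U_k}\mathfrak B(\mathsf U)\in\mathbb I(\mathfrak B)$. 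The $\mathsf Y$-equivariance of $\rho$ forces $\rho(M_{n_k})\subseteq\bigcap_{\mathsf U\in\mathcal U_k}(\mathfrak B/\mathfrak J)(\mathsf U)$, and since the extension is $\mathsf Y$-equivariant we have $(\mathfrak B/\mathfrak J)(\mathsf U)=p(\mathfrak B(\mathsf U))=(\mathfrak B(\mathsf U)+\mathfrak J)/\mathfrak J$ (Proposition \ref{p:basicXext}). The crucial point is that the lattice $\mathbb I(\mathfrak B)\cong\mathbb O(\Prim\mathfrak B)$ is distributive, so that $\bigcap_{\mathsf U\in\mathcal U_k}(\mathfrak B(\mathsf U)+\mathfrak J)=\big(\bigcap_{\mathsf U\in\mathcal U_k}\mathfrak B(\mathsf U)\big)+\mathfrak J=\mathfrak K_k+\mathfrak J$; hence $\rho(M_{n_k})\subseteq p(\mathfrak K_k)$.

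Next I would lift each block. As $M_{n_k}$ is finite dimensional, the c.p.\ map $\rho|_{M_{n_k}}\colon M_{n_k}\to p(\mathfrak K_k)$ lifts through the surjection $p|_{\mathfrak K_k}\colon\mathfrak K_k\twoheadrightarrow p(\mathfrak K_k)$: by Choi's correspondence this is the same as lifting the positive element $[\rho(e^{(k)}_{ij})]\in M_{n_k}(p(\mathfrak K_k))_+$ to a positive element of $M_{n_k}(\mathfrak K_k)$, which is possible because positive elements lift along surjective $\ast$-homomorphisms. This yields a c.p.\ map $\hat\rho_k\colon M_{n_k}\to\mathfrak K_k$ with $p\circ\hat\rho_k=\rho|_{M_{n_k}}$, and $\hat\rho:=\sum_k\hat\rho_k\circ\pi_k$ is a c.p.\ lift of $\rho$ that is already $\mathsf Y$-equivariant: for $k\in S(\mathsf U)$ one has $\mathsf U\in\mathcal U_k$, hence $\mathfrak K_k\subseteq\mathfrak B(\mathsf U)$, so $\hat\rho(\mathfrak D(\mathsf U))\subseteq\sum_{k\in S(\mathsf U)}\mathfrak K_k\subseteq\mathfrak B(\mathsf U)$.

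The main obstacle is that $\hat\rho$ need not be contractive, since the positive elements $\hat\rho_k(1_{n_k})$ lie in different ideals and need not be orthogonal. I would remove this by compression. Put $e=\hat\rho(1_{\mathfrak D})\ge0$ and $h=1+(e-1)_+\in\mathfrak B^\dagger$; then $h\ge1$ is invertible, $h^{-1/2}=1+r_0$ for some $r_0\in\mathfrak B$, and $e\le h$ (as $h-e=(e-1)_-\ge0$). Define $\tilde\rho(d)=h^{-1/2}\hat\rho(d)h^{-1/2}$. This is c.p.\ and contractive since $\tilde\rho(1_{\mathfrak D})=h^{-1/2}eh^{-1/2}\le1$; it stays $\mathsf Y$-equivariant because each $\mathfrak B(\mathsf U)$ is an ideal and $r_0\in\mathfrak B$, so $h^{-1/2}\mathfrak B(\mathsf U)h^{-1/2}\subseteq\mathfrak B(\mathsf U)$; and it still lifts $\rho$ because $p(e)=\rho(1_{\mathfrak D})\le1$ gives $(p(e)-1)_+=0$, hence $p(h)=1$ and $p(\tilde\rho(d))=\rho(d)$. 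The only genuinely delicate ingredients are the distributivity of the ideal lattice, which makes the per-block target $p(\mathfrak K_k)$ come out exactly right, and this final compression, which reconciles equivariance with contractivity; everything else is routine.
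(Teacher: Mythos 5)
Your proof is correct, and while it shares the overall skeleton with the paper's argument (a blockwise lift followed by a compression to restore contractivity), the mechanism you use for the blockwise lifting is genuinely different. The paper first invokes the Meyer--Nest machinery to replace $\mathsf Y$ by a larger (still finite) $T_0$-space over which all the given $\mathsf Y$-$C^\ast$-algebras become continuous, then applies Lemma \ref{l:finitedim} to decompose $\mathfrak D \cong \bigoplus_{y} i_{\{y\}}(\mathfrak D_y)$ and lifts each $\rho_y$ contractively into $\mathfrak B(\mathsf U^y)$ via the identification $(\mathfrak B/\mathfrak J)(\mathsf U^y) \cong \mathfrak B(\mathsf U^y)/\mathfrak J(\mathsf U^y)$. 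You instead work directly with the given, merely order-preserving, action: you decompose $\mathfrak D$ into matrix blocks $M_{n_k}$, take as target the finite intersection $\mathfrak K_k = \bigcap_{\mathsf U \in \mathcal U_k}\mathfrak B(\mathsf U)$, and use distributivity of the ideal lattice (equivalently, that finite meets and joins in $\mathbb I(\mathfrak B) \cong \mathbb O(\Prim \mathfrak B)$ are intersections and unions of open sets) to get $\bigcap_{\mathsf U \in \mathcal U_k}\bigl(\mathfrak B(\mathsf U)+\mathfrak J\bigr) = \mathfrak K_k + \mathfrak J$, hence $\rho(M_{n_k}) \subseteq p(\mathfrak K_k)$; the per-block lift is then the standard lifting of a positive Choi matrix along the surjection $\mathfrak K_k \twoheadrightarrow p(\mathfrak K_k)$. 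What your route buys is self-containedness and generality: it handles the arbitrary (not necessarily continuous, not necessarily $T_0$) action exactly as stated in the lemma, with no appeal to the Meyer--Nest enlargement or to Lemma \ref{l:finitedim}, and it makes visible that the only structural input is lattice distributivity. What the paper's route buys is brevity on the page -- the decomposition into point-supported summands $i_{\{y\}}(\mathfrak D_y)$ and the minimal open sets $\mathsf U^y$ are already established and reused elsewhere -- at the cost of the (cited, not re-proved) reduction to the continuous case. Your final compression is identical to the paper's: your $h = 1 + (e-1)_+$ is exactly $f(\hat\rho(1_{\mathfrak D}))$ for $f(t) = \max\{1,t\}$, and both proofs conclude with $p^\dagger(h)=1$ so that the compressed map still lifts $\rho$.
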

\begin{proof}
By \cite[Sections 2.5 and 2.9]{MeyerNest-bootstrap} we may assume (by replacing $\mathsf Y$ with a possibly bigger, but still finite, space) that $\mathsf Y$ is $T_0$ and that all given $\mathsf Y$-$C^\ast$-algebras are continuous 
(i.e.~are $C^\ast$-algebras over $\mathsf Y$ since $\mathsf Y$ is finite). 
Write $\mathfrak D = \bigoplus_{y\in \mathsf Y} i_{\{y\}}(\mathfrak D_y)$ with $\mathfrak D_y$ finite dimensional $C^\ast$-algebras. 
Let $\mathsf U^y$ be the smallest open set containing $y$. Since $\rho$ is $\mathsf Y$-equivariant, the restriction $\rho_y$ to $\mathfrak D_y$ factors through $(\mathfrak B/\mathfrak J)(\mathsf U^y) \cong \mathfrak B(\mathsf U^y)/\mathfrak J(\mathsf U^y)$. 
Hence this contractive c.p.~map lifts to a contractive c.p.~map $\widetilde \rho_y \colon \mathfrak D_y \to \mathfrak B(\mathsf U^y)$. Let $\hat \rho \colon \mathfrak D \to \mathfrak B$ be given by
\[
\hat \rho (\oplus_{y\in \mathsf Y}d_y) = \sum_{y\in \mathsf Y} \widetilde \rho_y(d_y),\qquad \text{ for }d_y \in \mathfrak D_y.
\]
This map is clearly $\mathsf Y$-equivariant and completely positive but not necessarily contractive. Let $v=f(\hat \rho(1_\mathfrak{D}))$, defined by functional calculus in the minimal unitisation $\widetilde{\mathfrak B}$, where $f(t) = \max\{1,t\}$. 
Clearly $p(v^{-1/2} b v^{-1/2}) = p(b)$ for all $b\in \mathfrak B$, and thus the $\mathsf Y$-equivariant contractive c.p.~map $\widetilde \rho \colon \mathfrak D \to \mathfrak B$
given by $\widetilde \rho(d) = v^{-1/2} \hat \rho(d) v^{-1/2}$ is also a lift of $\rho$.
\end{proof}

The above trick of using the results of \cite{MeyerNest-bootstrap} to assume that our $\mathsf X$-$C^\ast$-algebras are continuous $\mathsf Y$-$C^\ast$-algebras, for some larger space $\mathsf Y$, will in general not work, 
since the functors taking $\mathsf X$-$C^\ast$-algebras to continuous $\mathsf Y$-$C^\ast$-algebras, will in general \emph{not} take residually $\mathsf X$-nuclear maps to residually $\mathsf Y$-nuclear maps.

Using the notion of finite filtrations of spaces, we get the following approximately $\mathsf X$-equivariant approximation property of residually $\mathsf X$-nuclear maps.

\begin{theorem}\label{t:nucapprox}
Let $\mathfrak A$ and $\mathfrak B$ be $\mathsf X$-$C^\ast$-algebras, with $\mathfrak B$ finitely continuous, and let $\phi \colon \mathfrak A \to \mathfrak B$ be a c.p.~map. Then $\phi$ is residually $\mathsf X$-nuclear 
if and only if the following holds: for any finite subset $F\subseteq \mathfrak A$, any $\epsilon >0$, and any finite filtration $\mathsf Y$ of $\mathsf X$, there are $\mathsf Y$-equivariant c.p.~maps
\[
\psi\colon \mathfrak A \to \mathfrak D, \qquad \rho \colon \mathfrak D \to \mathfrak B,
\]
such that
\[
\| \phi(a) - \rho(\psi(a)) \| < \epsilon, \qquad \text{for all } a \in F,
\]
where $\mathfrak D$ is a finite dimensional $\mathsf Y$-$C^\ast$-algebra, $\| \psi \| \leq 1$ and $\| \rho\| \leq \| \phi\|$.

Moreover, we may choose $\mathfrak D$ to be continuous.
\end{theorem}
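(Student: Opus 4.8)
The plan is to prove both implications, the ``if'' direction being routine and the ``only if'' direction carrying all the weight. For ``if'', assume the approximate factorisation property. To see $\phi$ is $\mathsf X$-equivariant, fix $\mathsf U \in \mathbb O(\mathsf X)$ and apply the hypothesis with the two-point Sierpi\'nski filtration $\mathsf f\colon \mathsf X \to \mathsf Y$ whose open point pulls back to $\mathsf U$; for $a \in \mathfrak A(\mathsf U)$ the equivariant maps give $\rho(\psi(a)) \in \mathfrak B(\mathsf U)$, and since $\mathfrak B(\mathsf U)$ is closed, letting $\epsilon \to 0$ yields $\phi(a) \in \mathfrak B(\mathsf U)$. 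Nuclearity of $\phi$ comes from the trivial one-point filtration, and nuclearity of each $[\phi]_{\mathsf U}$ comes from pushing the factorisation for the Sierpi\'nski filtration at $\mathsf U$ to the quotients, where the intermediate algebra becomes finite-dimensional. Hence $\phi$ is residually $\mathsf X$-nuclear.

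For ``only if'', I would first reduce to $\mathsf X = \mathsf Y$ finite: given a filtration $\mathsf f\colon \mathsf X \to \mathsf Y$, the induced $\mathsf Y$-structures make $\phi$ residually $\mathsf Y$-nuclear (as $[\phi]^{\mathsf Y}_{\mathsf U} = [\phi]^{\mathsf X}_{\mathsf f^{-1}(\mathsf U)}$) and keep $\mathfrak B$ finitely continuous, so it suffices to produce an $\mathsf Y$-equivariant factorisation over the finite space $\mathsf Y$. By Remark \ref{r:soberspace} I may assume $\mathsf Y$ is $T_0$, i.e. a finite poset, and induct on $|\mathsf Y|$. The base case $|\mathsf Y| = 1$ is Lemma \ref{l:classicalnuc} after first factoring through $\mathfrak A/\mathfrak A(\emptyset)$ so that $\psi$ kills $\mathfrak A(\emptyset)$, the matrix algebra being a one-point continuous algebra. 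In the inductive step I pick a closed (maximal) point $y_0$, set $\mathsf U := \mathsf Y \setminus \{y_0\}$, $\mathfrak I := \mathfrak B(\mathsf U)$, $\mathfrak J := \mathfrak A(\mathsf U)$, and treat a ``quotient part'' and an ``ideal part''.

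For the quotient part I factor the nuclear map $[\phi]_{\mathsf U}\colon \mathfrak A/\mathfrak J \to \mathfrak B/\mathfrak I$ through a matrix algebra $M_k$ by Lemma \ref{l:classicalnuc}, obtaining $\beta,\gamma'$ with $\|\beta\| \le 1$, $\|\gamma'\| \le \|\phi\|$. Viewing $M_k$ as $i_{\{y_0\}}(M_k)$, both are automatically $\mathsf Y$-equivariant: $\beta$ because $(\mathfrak A/\mathfrak J)(\mathsf V) = 0$ when $y_0 \notin \mathsf V$, and $\gamma'$ because finite upper semicontinuity gives $\mathfrak B(\mathsf U^{y_0}) + \mathfrak I = \mathfrak B$, whence $(\mathfrak B/\mathfrak I)(\mathsf V) = \mathfrak B/\mathfrak I$ for every open $\mathsf V \ni y_0$. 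I lift $\gamma'$ to a $\mathsf Y$-equivariant contractive c.p.\ map $\gamma$ via Lemma \ref{l:liftfinite} applied to the $\mathsf Y$-equivariant extension $0 \to \mathfrak I \to \mathfrak B \to \mathfrak B/\mathfrak I \to 0$. For the ideal part I pick a quasicentral approximate unit $(e_\lambda) \subset \mathfrak I$ (quasicentral for $\phi(F) \cup \gamma(M_k)$) and, for large $\lambda$, split $\phi(a) \approx e_\lambda^{1/2}\phi(a)e_\lambda^{1/2} + (1-e_\lambda)^{1/2}\gamma(\beta(q(a)))(1-e_\lambda)^{1/2}$ on $F$, the cross terms and the correction being small. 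The map $\mu_\lambda := e_\lambda^{1/2}\phi(-)e_\lambda^{1/2}\colon \mathfrak A \to \mathfrak I$ is nuclear and, crucially by finite lower semicontinuity, satisfies $\mu_\lambda(\mathfrak A(\mathsf V)) \subseteq \mathfrak B(\mathsf V)\cap \mathfrak I = \mathfrak B(\mathsf V \cap \mathsf U)$. I therefore endow the source $\mathfrak A$ with the $\mathsf U$-structure $\mathfrak A^{\flat}(\mathsf W) := \mathfrak A(\mathsf V^{*}_{\mathsf W})$, where $\mathsf V^{*}_{\mathsf W}$ is the largest open subset of $\mathsf Y$ with $\mathsf V^{*}_{\mathsf W} \cap \mathsf U = \mathsf W$; then $\mu_\lambda\colon \mathfrak A^\flat \to \mathfrak I$ is $\mathsf U$-equivariant and residually $\mathsf U$-nuclear (each $[\mu_\lambda]_{\mathsf W}$ factors through the nuclear $[\phi]_{\mathsf V^{*}_{\mathsf W}}$ compressed by $e_\lambda$ into $\mathfrak I/\mathfrak I(\mathsf W)$). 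The inductive hypothesis over $\mathsf U$ (for which $\mathfrak I$ is finitely continuous) yields $\mathsf U$-equivariant c.p.\ maps $\psi_1\colon \mathfrak A^\flat \to \mathfrak D_1$, $\rho_1\colon \mathfrak D_1 \to \mathfrak I$ with $\mathfrak D_1 = \bigoplus_{y \in \mathsf U} i_{\{y\}}(\mathfrak D_y)$ finite-dimensional continuous and $\rho_1\psi_1 \approx \mu_\lambda$ on $F$. I then assemble $\mathfrak D := \mathfrak D_1 \oplus i_{\{y_0\}}(M_k)$ (finite-dimensional, continuous over $\mathsf Y$ by Lemma \ref{l:finitedim}), $\psi := \psi_1 \oplus \beta q$, and $\rho := \rho_1 \oplus (1-e_\lambda)^{1/2}\gamma(-)(1-e_\lambda)^{1/2}$, so that $\rho\psi \approx \phi$ on $F$.

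The two points needing genuine care — and the main obstacles — are the following. First, that $\psi_1$, built over $\mathsf U$, is genuinely $\mathsf Y$-equivariant into $\mathfrak D_1$ regarded as a $\mathsf Y$-algebra: this is precisely what the $\flat$-structure buys, since for $a \in \mathfrak A(\mathsf V)$ one has $a \in \mathfrak A^\flat(\mathsf V \cap \mathsf U)$ and hence $\psi_1(a) \in \bigoplus_{y \in \mathsf V \cap \mathsf U}\mathfrak D_y = \mathfrak D_1(\mathsf V)$, while $\rho_1$ stays equivariant because $\mathsf U^{y} \subseteq \mathsf U$ for $y \ne y_0$. Second, the precise norm bounds $\|\psi\| \le 1$ and $\|\rho\| \le \|\phi\|$: here I would use positivity, noting on approximate units that $(1-e_\lambda)^{1/2}\gamma(-)(1-e_\lambda)^{1/2}$ has value $\le \|\phi\|(1-e_\lambda)$ and that $\mu_\lambda$ has value $\le \|\phi\|\, e_\lambda$, propagating these to bound the direct sum $\rho$, and then obtaining the optimal constants by a final equivariant rescaling of the corner generated by $\psi(1)$ exactly as in the proof of Lemma \ref{l:classicalnuc}.
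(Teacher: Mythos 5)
Your ``if'' direction coincides with the paper's, so I focus on ``only if''. Your induction is correct in structure but genuinely different from, in fact dual to, the paper's. The paper removes a \emph{minimal} non-empty open set $\mathsf U = \{y\}$ (an open point): it treats the quotient $[\phi]_{\mathsf U}$ by the inductive hypothesis over the closed complement $\mathsf Y \setminus \mathsf U$ (with a re-indexed action on the quotients), lifts the resulting map into $\mathfrak B$ by Lemma \ref{l:liftfinite}, and treats the ideal part $b^{1/2}\phi(-)b^{1/2}$ \emph{directly} by classical nuclearity of $[\phi]_{\mathsf V}$ with $\mathsf V = \mathsf Y \setminus \overline{\mathsf U}$, placing the matrix algebra at the open point via $i_{\mathsf U}(\mathfrak D_2)$. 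You instead remove a \emph{closed} point $y_0$: the top quotient $[\phi]_{\mathsf Y\setminus\{y_0\}}$ is treated directly by Lemma \ref{l:classicalnuc} plus the Lemma \ref{l:liftfinite} lift (placing $M_k$ at $y_0$), and the ideal part $\mu_\lambda = e_\lambda^{1/2}\phi(-)e_\lambda^{1/2}$ by induction over $\mathsf U = \mathsf Y\setminus\{y_0\}$, which is what forces your $\flat$-structure on the source. Both schemes rest on the same three ingredients (Lemma \ref{l:classicalnuc}, Lemma \ref{l:liftfinite}, a quasicentral approximate unit splitting), and your structural verifications are sound: $\mathsf V^{*}_{\mathsf W}$ exists and $\mathfrak A^{\flat}$ is order-preserving; $[\mu_\lambda]_{\mathsf W}$ factors as $\kappa \circ [\phi]_{\mathsf V^{*}_{\mathsf W}}$, where $\kappa(b + \mathfrak B(\mathsf V^{*}_{\mathsf W})) = e_\lambda^{1/2} b e_\lambda^{1/2} + \mathfrak B(\mathsf W)$ is well defined precisely by finite lower semicontinuity, so $\mu_\lambda$ is residually $\mathsf U$-nuclear; and all pieces of the assembled $\psi$ and $\rho$ are $\mathsf Y$-equivariant.

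The one place the duality costs you something is the bound $\|\rho\| \le \|\phi\|$, and your first mechanism for it does not work. In the paper's scheme the directly built map is the one landing in the ideal, so one can \emph{arrange} $\rho_2(1_{\mathfrak D_2}) \le b$ at construction time (conjugate the matrix factorisation by $b^{1/2}$ after factoring), while the inductively produced map is compressed by $(1-b)^{1/2}$; the two contributions then add to at most $1$. In your scheme the inductively produced $\rho_1$ satisfies only $\|\rho_1\| \le \|\mu_\lambda\| \le \|\phi\|$: the induction hypothesis gives nothing like $\rho_1(1_{\mathfrak D_1}) \le \|\phi\|\, e_\lambda$, and you cannot post-compress $\rho_1$ by $e_\lambda^{1/2}$ without ruining the approximation (you would then approximate $e_\lambda\phi(-)e_\lambda$, and the splitting identity fails by $\phi(a)(e_\lambda - e_\lambda^2)$, which is not small). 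So ``propagating'' pointwise bounds only yields $\rho(1_{\mathfrak D}) \le \|\phi\|(2 - e_\lambda)$, i.e.~$\|\rho\| \le 2\|\phi\|$. Fortunately your fallback genuinely closes this: run the induction with no norm control, and at the end (with $\mathfrak A$ unital and $1 \in F$) pass to the support projection $p$ of $x = \psi(1)$ in the finite-dimensional $\mathfrak D$, replace $\psi$ by $x^{-1/2}\psi(-)x^{-1/2}$ and $\rho$ by $\rho(x^{1/2}(-)x^{1/2})$. These remain $\mathsf Y$-equivariant (two-sided ideals are stable under conjugation by elements of $p\mathfrak D p$, which inherits a continuous action), the composition is unchanged, and the new $\rho$ has norm $\|\rho(\psi(1))\| \le \|\phi(1)\| + \epsilon'$ \emph{regardless} of how large the old $\|\rho\|$ was; a final scalar rescaling gives $\|\rho\| \le \|\phi\|$. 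The non-unital case then follows by the unitisation trick in the proof of Lemma \ref{l:classicalnuc}, which is legitimate here because the unitised maps stay residually $\mathsf Y$-nuclear for the unitised action (cf.~Lemma \ref{l:unitisationcone}, whose proof does not use the present theorem). Read this way your proof is complete; it is somewhat longer than the paper's, whose choice of induction direction exists precisely so that the norm bookkeeping adds up without any final normalisation.
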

\begin{proof}
For the ``if'' we will start by showing that $\phi$ is $\mathsf X$-equivariant. Let $\mathsf U \in \mathbb O(\mathsf X)$ and $a\in \mathfrak A(\mathsf U)$ so that we should show that $\phi(a) \in \mathfrak B(\mathsf U)$. If $\mathsf U = \mathsf X$, then $\phi(a) \in \mathfrak B = \mathfrak B(\mathsf X)$, since $\mathfrak B$ is finitely continuous. 
Thus we assume that $\mathsf U \neq \mathsf X$. Suppose $\mathsf U \neq \emptyset$.
Let $\mathsf Y = \{ 1,2\}$ be the two-point space with open subsets $\emptyset, \{2\},\mathsf Y$. The map $\mathsf f \colon \mathsf X \to \mathsf Y$ given by $\mathsf f(x) = 2$ if $x\in \mathsf U$ and $\mathsf f(x) = 1$ otherwise, is a continuous surjection.
Hence $\mathsf f \colon \mathsf X \to \mathsf Y$ is a finite filtration. Then choosing any element $a\in \mathfrak A(\mathsf U)$ and any $\epsilon >0$ we can find maps $\rho$ and $\psi$ as above which are $\mathsf Y$-equivariant. 
Hence $\psi(\rho(a)) \in \mathfrak B(\{2\}) = \mathfrak B(\mathsf U)$. Since $\mathfrak B(\mathsf U)$ is closed we get that $\phi(a)\in \mathfrak B(\mathsf U)$. 
If $\mathsf U = \emptyset$ let $\mathsf Y =\{ 1\}$ and $\mathsf f \colon \mathsf X \to \mathsf Y$ be the unique map. The same argument as before shows that $\phi(a) \in \mathfrak B(\emptyset) = 0$.

Clearly $\phi$ is nuclear. In order to prove that $\phi$ is residually $\mathsf X$-nuclear we should show that $[\phi]_\mathsf{U}$ is nuclear. Suppose that $\mathsf U \neq \emptyset$ and let $\mathsf f\colon \mathsf X \to \mathsf Y$ be defined as above. 
Let $F' \subseteq \mathfrak A/ \mathfrak A(\mathsf U)$ be a finite subset and 
$\epsilon >0$. Lift each element $a'$ of $F'$ to an element $a$ of $\mathfrak A$ and obtain a finite subset $F$ of $\mathfrak A$. By the approximation property there are $\mathsf Y$-equivariant $\rho$ and $\psi$ as described above. We get that
\[
\| [\phi]_\mathsf{U}(a') - [\psi]_{\{2\}}( [\rho]_{\{ 2\}} (a')) \| \leq \| \phi(a) - \psi(\rho(a)) \| < \epsilon, \quad \text{for all } a' \in F'.
\]
Hence $[\phi]_\mathsf{U}$ is nuclear. If $\mathsf U = \emptyset$ then a similar argument implies that $[\phi]_\mathsf{U}$ is nuclear, and thus $\phi$ is residually $\mathsf X$-nuclear.

Now for the ``only if''. We may assume, without loss of generality, that $\phi$ is contractive. We will prove the result by induction on the number of elements of $\mathbb O(\mathsf Y)$. 
If $|\mathbb O(\mathsf Y)|=2$ then $\mathbb O(\mathsf Y) = \{ \emptyset,\mathsf Y\}$. Since $\mathfrak B$ is finitely continuous, $\mathfrak B(\emptyset) = 0$. Thus $\phi(\mathfrak A(\emptyset)) = 0$. Since the map $[\phi]_\emptyset$ is nuclear, we may
approximate it by maps of the form $\rho \circ \psi$ where $\psi \colon \mathfrak A/\mathfrak A(\emptyset) \to M_n$ and $\rho \colon M_n \to \mathfrak B$ are contractive by Lemma \ref{l:classicalnuc}. Consider $M_n$ as a continuous $\mathsf Y$-$C^\ast$-algebra
and let $\overline \psi$ be the composition $\mathfrak A \to \mathfrak A/\mathfrak A(\emptyset) \xrightarrow{\psi} M_n$. Clearly $\overline \psi$ and $\rho$ are $\mathsf Y$-equivariant contractive c.p.~maps, and $\rho \circ \overline \psi$ approximates $\phi$.
This shows the case $|\mathbb O(\mathsf Y)| = 2$.

Suppose that there is an approximation, as described in the theorem, for any residually $\mathsf Y$-nuclear map between $\mathsf Y$-$C^\ast$-algebras $\mathfrak A$ and $\mathfrak B$, with $\mathfrak B$ continuous,
for any space $\mathsf Y$ with $|\mathbb O(\mathsf Y)|<n$, where the finite dimensional $\mathfrak D$ is a $\mathsf Y$-$C^\ast$-algebra. Let $F,\epsilon$ and $\mathsf Y$ be given with $|\mathbb O(\mathsf Y)|=n$. 
Since $\mathbb O(\mathsf Y) \cong \mathbb O(\hat{\mathsf Y})$ for a $T_0$-space $\hat{\mathsf Y}$ by Remark \ref{r:soberspace}, we may assume that $\mathsf Y$ is a $T_0$ space. 

Let $\mathsf U\in \mathbb O(\mathsf Y)$ be a non-empty, minimal open set (since $\mathsf Y$ is a finite $T_0$ space this means that $\mathsf U = \{ y \}$ for some $y\in \mathsf Y$).
Equip $\mathfrak C/\mathfrak C(\mathsf U)$ with a $(\mathsf Y\setminus \mathsf U)$-$C^\ast$-algebra structure by defining 
$(\mathfrak C/\mathfrak C(\mathsf U))(\mathsf W) := \mathfrak C(\mathsf W \cup \mathsf U)/\mathfrak C(\mathsf U)$ for $\mathsf W \in \mathbb O(\mathsf Y\setminus \mathsf U)$, and $\mathfrak C \in \{ \mathfrak A,\mathfrak B\}$. It is easily seen that 
$\mathfrak B/\mathfrak B(\mathsf U)$ is a continuous $(\mathsf Y\setminus \mathsf U)$-$C^\ast$-algebra, and that $[\phi]_\mathsf{U}$ is $(\mathsf Y\setminus \mathsf U)$-equivariant. Moreover, for any $\mathsf W \in \mathbb O(\mathsf Y\setminus \mathsf U)$ we
may canonically identify $[[\phi]_\mathsf{U}]_\mathsf{W}$ and $[\phi]_{\mathsf W \cup \mathsf U}$, and thus $[\phi]_\mathsf{U}$ is residually $(\mathsf Y\setminus \mathsf U)$-nuclear.

Hence by assumption there exists a finite dimensional, continuous $(\mathsf Y \setminus \mathsf U)$-$C^\ast$-algebra $\mathfrak D_1$ and $(\mathsf Y\setminus \mathsf U)$-equivariant contractive c.p.~maps
\[
\psi_1 \colon \mathfrak A/\mathfrak A(\mathsf U) \to \mathfrak D_1, \qquad \rho_1 \colon \mathfrak D_1 \to \mathfrak B/\mathfrak B(\mathsf U)
\]
such that
\[
\| [\phi]_{ \mathsf U } (p_\mathfrak{A}(a)) - \rho_1 ( \psi_1 (p_\mathfrak{A} (a))) \| < \epsilon/6
\]
for all $a \in F$, where $p_\mathfrak{A} \colon \mathfrak A \to \mathfrak A/\mathfrak A(\mathsf U)$ is the natural quotient map. Similarly, let $p_\mathfrak{B} \colon \mathfrak B \to \mathfrak B /\mathfrak B(\mathsf U)$ be the natural quotient map. 

Now we consider $\mathfrak C / \mathfrak C(\mathsf U)$ as $\mathsf Y$-$C^\ast$-algebras by $(\mathfrak C / \mathfrak C(\mathsf U))(\mathsf V) = \mathfrak C(\mathsf V \cup \mathsf U) /\mathfrak C(\mathsf U)$ for $\mathsf V \in \mathbb O(\mathsf Y)$,
and $\mathfrak C \in \{ \mathfrak A,\mathfrak B\}$. Also, give $\mathfrak D_1$ a $\mathsf Y$-$C^\ast$-algebra structure, by $\mathfrak D_1(\mathsf V) := \mathfrak D_1(\mathsf V \setminus \mathsf U)$. This action is easily seen to be continuous, and clearly
$\rho_1$, $\psi_1$, $p_\mathfrak{A}$ and $p_\mathfrak{B}$ are $\mathsf Y$-equivariant.

Since $\mathfrak B$ is continuous, we get
\[
\mathfrak B(\mathsf V\cup \mathsf U) /\mathfrak B(\mathsf U) = (\mathfrak B(\mathsf V) + \mathfrak B(\mathsf U))/ \mathfrak B(\mathsf U) \cong \mathfrak B(\mathsf V)/(\mathfrak B(\mathsf U) \cap\mathfrak B(\mathsf V)) = 
\mathfrak B(\mathsf V) / \mathfrak B(\mathsf U \cap \mathsf V).
\]
Hence $0 \to \mathfrak B(\mathsf U) \to \mathfrak B \xrightarrow{p_\mathfrak{B}} \mathfrak B/\mathfrak B(\mathsf U) \to 0$ is a $\mathsf Y$-equivariant extension, where we equip $\mathfrak B(\mathsf U)$ with the $\mathsf Y$-$C^\ast$-algebra structure
$\mathfrak B(\mathsf U)(\mathsf V) = \mathfrak B(\mathsf U \cap \mathsf V)$. 
By Lemma \ref{l:liftfinite} we may lift $\rho_1$ to a $\mathsf Y$-equivariant contractive c.p.~map $\widetilde{\rho_1} \colon \mathfrak D_1 \to \mathfrak B$. Observe that
\[
\| p_\mathfrak{B}(\phi(a) - \widetilde{\rho_1} ( \psi_1(p_\mathfrak{A}(a))) ) \| = \| [\phi]_{ \mathsf U } (p_\mathfrak{A}(a)) - \rho_1 ( \psi_1 (p_\mathfrak{A} (a))) \| < \epsilon/6
\]
for all $a\in F$. We may find a positive contractions $b \in \ker p_{\mathfrak B} = \mathfrak B(\mathsf U)$, e.g. by picking $b$ as an element of a quasi-central approximate identity, such that
\begin{eqnarray*}
\| (1-b) (\phi(a) - \widetilde{\rho_1} ( \psi_1(p_\mathfrak{A}(a))) ) \| &<& \epsilon/6 \\
\| b \phi (a) - b^{1/2} \phi (a) b^{1/2} \| & < & \epsilon/6 \\
\| (1-b) \widetilde{\rho_1} ( \psi_1(p_\mathfrak{A}(a))) - (1-b)^{1/2} \widetilde{\rho_1} ( \psi_1(p_\mathfrak{A}(a))) (1-b)^{1/2} \| & < & \epsilon/6
\end{eqnarray*}
for all $a\in F$. Let $\widetilde \rho = (1-b)^{1/2} \widetilde{\rho_1}(-) (1-b)^{1/2}$ which is also a $\mathsf Y$-equivariant contractive c.p.~lift of $\rho_1$. By the above inequalities we have that
\[
\| \phi(a) - (b^{1/2} \phi(a) b^{1/2} + \widetilde \rho (\psi_1(p_\mathfrak{A}(a))) ) \| < \epsilon/2,
\]
for all $a\in F$. 
Note that the contractive c.p.~map $b^{1/2}\phi(-)b^{1/2}$ factors through $\mathfrak B(\mathsf U)$. Let $\mathsf V := \mathsf Y \setminus \overline{\mathsf U}$. Since $\mathfrak B(\mathsf V) \cap \mathfrak B(\mathsf U) = 0$ by continuity of $\mathfrak B$,
it follows that $b^{1/2}\phi(-)b^{1/2}$ also has a factor $[\phi]_\mathsf{V} \colon \mathfrak A/\mathfrak A(\mathsf V) \to \mathfrak B/\mathfrak B(\mathsf V)$ which is nuclear. Hence we may pick contractive c.p.~maps 
\[
\psi_2 \colon \mathfrak A \to \mathfrak D_2, \qquad \rho_2 \colon \mathfrak D_2 \to \mathfrak B(\mathsf U),
\]
with $\mathfrak D_2$ a finite dimensional $C^\ast$-algebra such that
\[
\| b^{1/2}\phi(a)b^{1/2} - \rho_2(\psi_2(a)) \| < \epsilon/2,
\]
for all $a\in F$, such that $\rho_2(1_{\mathfrak{D}_2}) \leq b$ and such that $\psi_2$ factors through $\mathfrak A/\mathfrak A(\mathsf V)$. 
Now, let $\mathfrak D := \mathfrak D_1 \oplus i_{\mathsf U}(\mathfrak D_2)$. Note that $i_\mathsf{U}(\mathfrak D_2)$ is a continuous $\mathsf Y$-$C^\ast$-algebra since $\mathsf U$ is a non-empty, minimal open set.
It follows that $\mathfrak D$ is a continuous $\mathsf Y$-$C^\ast$-algebra, since it is the direct sum of two continuous $\mathsf Y$-$C^\ast$-algebras. Define
\[
\psi := (\psi_1\circ p_\mathfrak{A}, \psi_2) \colon \mathfrak A \to \mathfrak D, \qquad \rho = \langle \widetilde \rho, \rho_2 \rangle \colon \mathfrak D \to \mathfrak B.
\]
By the above estimates it clearly follows that
\[
\| \phi(a) - \rho(\psi(a))\| < \epsilon
\]
for all $a\in F$, and thus it remains to show that $\psi$ and $\rho$ are $\mathsf Y$-equivariant contractive c.p.~maps. 
Clearly $\psi$ is a contractive c.p.~map since each $\psi_1\circ p_\mathfrak{A}$ and $\psi_2$ is a contractive c.p.~map. 
Moreover, $\rho$ is clearly completely positive and since
\[
\rho(1_\mathfrak{D}) = \widetilde{\rho}(1) + \rho_2(1) = (1-b)^{1/2}\widetilde{\rho_1}(1) (1-b)^{1/2} + \rho_2(1) \leq (1-b) + b =1, 
\]
it follows that $\rho$ is contractive. It suffices to show that each $\psi_1\circ p_\mathfrak{A},\psi_2,\widetilde \rho$ and $\rho_2$ is $\mathsf Y$-equivariant. 

Clearly $\psi_1 \circ p_\mathfrak{A}$ is $\mathsf Y$-equivariant since each $\psi_1$ and $p_\mathfrak A$ is $\mathsf Y$-equivariant. Moreover, $\widetilde \rho$ is $\mathsf Y$-equivariant as noted earlier in the proof. 
Since $\mathfrak B$ is continuous (and thus a $C^\ast$-algebra over $\mathsf Y$) it follows from \cite[Lemma 2.22]{MeyerNest-bootstrap} (which clearly also holds for c.p.~maps) that $\rho_2$ is $\mathsf Y$-equivariant 
exactly if it factors through $\mathfrak B(\mathsf U)$. 
This is satisfied by how we constructed the map. 
Since $\psi_2$ factors through $i_{\mathsf U}(\mathfrak D_2)$, it is $\mathsf Y$-equivariant if and only if $\psi_2(\mathfrak A(\mathsf W)) = 0$ for all 
$\mathsf W \in \mathbb O(\mathsf Y)$ for which $\mathsf U \not \subset \mathsf W$ if and only if $\psi_2(\mathfrak A(\mathsf V)) = 0$ since $\mathsf U$ is minimal. 
This is again satisfied by how we chose $\psi_2$, which finishes the proof.
\end{proof}

\begin{corollary}\label{c:nucapproxfinite}
Let $\mathsf X$ be a finite space, let $\mathfrak A$ and $\mathfrak B$ be $\mathsf X$-$C^\ast$-algebras, with $\mathfrak B$ continuous, and let $\phi \colon \mathfrak A \to \mathfrak B$ be a c.p.~map. 
Then $\phi$ is residually $\mathsf X$-nuclear if and only if the following holds: for any finite subset $F\subseteq \mathfrak A$, and any $\epsilon >0$ there are $\mathsf X$-equivariant c.p.~maps
\[
\psi\colon \mathfrak A \to \mathfrak D, \qquad \rho \colon \mathfrak D \to \mathfrak B,
\]
such that
\[
\| \phi(a) - \rho(\psi(a)) \| < \epsilon, \qquad \text{for all } a \in F,
\]
where $\mathfrak D$ is a finite dimensional, continuous $\mathsf X$-$C^\ast$-algebra, $\| \psi\| \leq 1$ and $\| \rho\| \leq \| \phi\|$.
\end{corollary}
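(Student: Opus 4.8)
The plan is to derive this corollary almost immediately from Theorem \ref{t:nucapprox}, exploiting two elementary observations about the finiteness of $\mathsf X$. First, since $\mathfrak B$ is assumed continuous, it is in particular finitely continuous (lower semicontinuity implies finite lower semicontinuity, and upper semicontinuity implies finite upper semicontinuity), so Theorem \ref{t:nucapprox} does apply to $\phi \colon \mathfrak A \to \mathfrak B$. Second, because $\mathsf X$ is finite, the identity map $\mathrm{id}\colon \mathsf X \to \mathsf X$ is itself a finite filtration of $\mathsf X$, and the $\mathsf X$-$C^\ast$-algebra structure it induces (via $\mathfrak A(\mathsf U) = \mathfrak A(\mathrm{id}^{-1}(\mathsf U)) = \mathfrak A(\mathsf U)$) is just the original one. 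Thus ``$\mathsf X$-equivariance for the identity filtration'' coincides with honest $\mathsf X$-equivariance.

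For the \emph{only if} direction I would assume $\phi$ is residually $\mathsf X$-nuclear and simply invoke Theorem \ref{t:nucapprox} with the finite filtration $\mathsf Y = \mathsf X$ given by $\mathrm{id}$. This produces, for any finite $F \subseteq \mathfrak A$ and $\epsilon > 0$, maps $\psi \colon \mathfrak A \to \mathfrak D$ and $\rho \colon \mathfrak D \to \mathfrak B$ that are $\mathsf X$-equivariant, with $\mathfrak D$ a finite dimensional $\mathsf X$-$C^\ast$-algebra, $\|\psi\| \leq 1$, $\|\rho\| \leq \|\phi\|$, and $\|\phi(a) - \rho(\psi(a))\| < \epsilon$ on $F$. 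The ``moreover'' clause of Theorem \ref{t:nucapprox} lets us additionally take $\mathfrak D$ continuous. This is verbatim the conclusion of the corollary.

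For the \emph{if} direction I would assume the stated $\mathsf X$-equivariant approximation property and verify the hypothesis of the ``if'' half of Theorem \ref{t:nucapprox}, namely that the analogous $\mathsf Y$-equivariant approximation holds for \emph{every} finite filtration $\mathsf f \colon \mathsf X \to \mathsf Y$. The key point is a functoriality check: if $\psi$ (resp.\ $\rho$) is $\mathsf X$-equivariant and $\mathfrak D$ carries the $\mathsf Y$-structure $\mathfrak D(\mathsf U) := \mathfrak D(\mathsf f^{-1}(\mathsf U))$, then for $\mathsf U \in \mathbb O(\mathsf Y)$ one has $\psi(\mathfrak A(\mathsf f^{-1}(\mathsf U))) \subseteq \mathfrak D(\mathsf f^{-1}(\mathsf U))$ by $\mathsf X$-equivariance, i.e.\ $\psi$ is $\mathsf Y$-equivariant for the induced structures; the same holds for $\rho$. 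Since $\mathfrak D$ remains finite dimensional as a $\mathsf Y$-$C^\ast$-algebra (and the ``if'' direction of Theorem \ref{t:nucapprox} requires no continuity of $\mathfrak D$), the maps supplied by the hypothesis serve directly as the required $\mathsf Y$-equivariant factorisation. Theorem \ref{t:nucapprox} then gives that $\phi$ is residually $\mathsf X$-nuclear.

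There is no genuine obstacle here; the entire content is the two bookkeeping observations above, namely that the identity is a finite filtration recovering the original structure, and that $\mathsf X$-equivariance is preserved under the filtration functor $\mathsf X \rightsquigarrow \mathsf Y$. The only point requiring minimal care is matching the continuity/finite-dimensionality requirements on $\mathfrak D$ in the two directions, which is handled by the ``moreover'' clause in one direction and by the absence of a continuity requirement in the other.
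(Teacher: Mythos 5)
Your proof is correct and is essentially the paper's intended argument: the corollary is stated without separate proof precisely because, for finite $\mathsf X$, the identity map is a finite filtration recovering the original structure, so Theorem \ref{t:nucapprox} (whose ``moreover'' clause supplies continuity of $\mathfrak D$) specialises to the ``only if'' direction, while your functoriality observation that $\mathsf X$-equivariance passes to any induced $\mathsf Y$-structure yields the ``if'' direction. Both bookkeeping points are exactly what the paper leaves implicit.
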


Arveson's method of proving the Choi--Effros lifting theorem \cite{Arveson-extensions}, can also be used in the $\mathsf X$-equivariant case, if $\mathsf X$ is finite.

\begin{corollary}\label{c:choieffros}
Let $\mathsf X$ be a finite space, $\mathfrak A$ be a separable $\mathsf X$-$C^\ast$-algebra, $0\to \mathfrak J \to \mathfrak B \xrightarrow{p} \mathfrak B/\mathfrak J \to 0$ be an extension of $\mathsf X$-$C^\ast$-algebras with $\mathfrak B/\mathfrak J$ 
continuous.
Suppose that $\phi \colon \mathfrak A \to \mathfrak B/\mathfrak J$ is a contractive residually $\mathsf X$-nuclear map. Then there exists a contractive residually $\mathsf X$-nuclear map $\tilde \phi \colon \mathfrak A \to \mathfrak B$
such that $p \circ \tilde \phi = \phi$.
\end{corollary}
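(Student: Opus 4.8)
The plan is to mimic Arveson's proof of the Choi--Effros lifting theorem \cite{Arveson-extensions}, which reduces the existence of a lift to a point-norm density argument once one can lift the finite-dimensional pieces of an approximate factorisation. Concretely, I would set $\mathscr C := CP_\rnuc(\mathsf X; \mathfrak A, \mathfrak B)$, which is a closed operator convex cone by Remark \ref{r:rnuccone}, and aim to show that $\phi$ lies in the point-norm closed set
\[
\mathscr L := \{ p \circ \phi' : \phi' \in \mathscr C,\ \| \phi' \| \leq 1 \},
\]
whose closedness is exactly the second assertion of Lemma \ref{l:closedliftable} (applied with the quotient map $p \colon \mathfrak B \to \mathfrak B/\mathfrak J$ and using that $\mathfrak A$ is separable). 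Any element of $\mathscr L$ is by definition $p \circ \tilde\phi$ for a contractive residually $\mathsf X$-nuclear $\tilde\phi \colon \mathfrak A \to \mathfrak B$, so the membership $\phi \in \mathscr L$ immediately yields the desired lift.

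Thus it suffices to approximate $\phi$ point-norm by maps in $\mathscr L$. Here I would invoke Corollary \ref{c:nucapproxfinite}: since $\mathsf X$ is finite, $\mathfrak B/\mathfrak J$ is continuous, and $\phi$ is residually $\mathsf X$-nuclear, for each finite $F \subseteq \mathfrak A$ and $\epsilon>0$ there are $\mathsf X$-equivariant contractive c.p.\ maps $\psi \colon \mathfrak A \to \mathfrak D$ and $\rho \colon \mathfrak D \to \mathfrak B/\mathfrak J$ with $\mathfrak D$ a finite-dimensional continuous $\mathsf X$-$C^\ast$-algebra and $\| \phi(a) - \rho(\psi(a))\| < \epsilon$ on $F$. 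The key point is that the finite-dimensional map $\rho$ lifts: by Lemma \ref{l:liftfinite} (with $\mathsf Y = \mathsf X$) there is an $\mathsf X$-equivariant contractive c.p.\ map $\tilde\rho \colon \mathfrak D \to \mathfrak B$ with $p \circ \tilde\rho = \rho$. Then $\tilde\rho \circ \psi \colon \mathfrak A \to \mathfrak B$ is contractive, $\mathsf X$-equivariant, and satisfies $p \circ (\tilde\rho \circ \psi) = \rho \circ \psi$; moreover it is residually $\mathsf X$-nuclear because it factors $\mathsf X$-equivariantly through the finite-dimensional $\mathfrak D$, so each induced map $[\tilde\rho \circ \psi]_\mathsf{U}$ factors through the finite-dimensional $\mathfrak D/\mathfrak D(\mathsf U)$ and is hence nuclear. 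Therefore $\rho \circ \psi = p\circ(\tilde\rho\circ\psi) \in \mathscr L$.

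To finish, I would exhaust $\mathfrak A$ by an increasing sequence of finite sets with dense union and take $\epsilon = 1/n$, producing maps $\phi_n = \rho_n \circ \psi_n \in \mathscr L$. Since all maps in sight are contractions, point-norm convergence on the dense union upgrades to point-norm convergence $\phi_n \to \phi$ on all of $\mathfrak A$. As $\mathscr L$ is point-norm closed, $\phi \in \mathscr L$, which is the claim.

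I expect the only genuinely delicate point to be bookkeeping rather than a real obstacle: one must check that the lifted composite $\tilde\rho \circ \psi$ is residually $\mathsf X$-nuclear even though $\mathfrak B$ is \emph{not} assumed continuous (so one cannot simply quote the ``if'' direction of Corollary \ref{c:nucapproxfinite}), which is why I verify residual nuclearity directly from finite-dimensionality of $\mathfrak D/\mathfrak D(\mathsf U)$. The continuity of $\mathfrak B/\mathfrak J$ is used only to make Corollary \ref{c:nucapproxfinite} applicable, and the finiteness of $\mathsf X$ is used only through Lemma \ref{l:liftfinite}; all the real work has already been done in those two results together with the closedness statement of Lemma \ref{l:closedliftable}.
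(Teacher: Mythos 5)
Your proof is correct and follows essentially the same route as the paper's: the same cone of liftable contractive residually $\mathsf X$-nuclear maps, its point-norm closedness via Lemma \ref{l:closedliftable}, the approximation from Corollary \ref{c:nucapproxfinite}, and the lifting of the finite-dimensional piece via Lemma \ref{l:liftfinite}. The only difference is that you explicitly verify residual $\mathsf X$-nuclearity of the lifted composite $\tilde\rho\circ\psi$ (via factorisation through the finite-dimensional $\mathfrak D/\mathfrak D(\mathsf U)$), a detail the paper leaves implicit when asserting these maps lie in the cone.
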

\begin{proof}
 Consider the cone 
 \[
  \mathscr C_1:= \{ p \circ \psi \mid \psi \in CP_\rnuc(\mathsf X; \mathfrak A, \mathfrak B), \| \psi \| \leq 1\},
 \]
 i.e.~the cone of all c.p.~maps $\mathfrak A \to \mathfrak B/\mathfrak J$ which lift to a contractive residually $\mathsf X$-nuclear map $\mathfrak A \to \mathfrak B$.
 By Lemma \ref{l:closedliftable} this is point-norm closed. 
 So it suffices to show that $\phi$ can be point-norm approximated by maps in $\mathscr C_1$.
 By Corollary \ref{c:nucapproxfinite} we may approximate $\phi$ with $\mathsf X$-equivariant c.p.~maps which factor $\mathsf X$-equivariantly through finite dimensional $\mathsf X$-$C^\ast$-algebras by contractive maps.
 It follows from Lemma \ref{l:liftfinite} that these maps are in $\mathscr C_1$, and thus $\phi$ is in $\mathscr C_1$.
\end{proof}

The above Choi--Effros type lifting theorem implies the following.

\begin{corollary}\label{c:liftingcor}
 The following hold.
\begin{itemize}
\item[(1)] Suppose that $\mathsf X$ is a finite space and that $0 \to \mathfrak B \to \mathfrak E \to \mathfrak A \to 0$ is an extension of $\mathsf X$-$C^\ast$-algebras. If $\mathfrak A$ is separable and nuclear, 
then the extension has an $\mathsf X$-equivariant contractive c.p.~splitting.
\item[(2)] Suppose that $\mathsf X$ is a finite space and that $\mathfrak A$ is a separable, nuclear $C^\ast$-algebra over $\mathsf X$. Then the functors $KK(\mathsf X;\mathfrak A,-) \cong E(\mathsf X;\mathfrak A,-)$ are naturally isomorphic.
\end{itemize}
\end{corollary}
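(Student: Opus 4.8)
The plan is to derive both parts from the equivariant Choi--Effros lifting theorem, Corollary \ref{c:choieffros}, which is the substantive input; everything else is formal.

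For $(1)$ I would apply Corollary \ref{c:choieffros} to the given extension $0 \to \mathfrak B \to \mathfrak E \xrightarrow{p} \mathfrak A \to 0$, taking $\phi$ to be the identity map $\mathrm{id}_\mathfrak{A} \colon \mathfrak A \to \mathfrak A = \mathfrak E/\mathfrak B$. The identity is $\mathsf X$-equivariant and contractive, and since $\mathfrak A$ — and hence every quotient $\mathfrak A/\mathfrak A(\mathsf U)$ — is nuclear, Remark \ref{r:exactresnuc} shows that $\mathrm{id}_\mathfrak{A}$ is residually $\mathsf X$-nuclear. Because $\mathsf X$ is finite, a $C^\ast$-algebra over $\mathsf X$ is automatically continuous (finite infima and suprema over the finite lattice $\mathbb O(\mathsf X)$ coincide with arbitrary ones), so the continuity hypothesis on the quotient $\mathfrak A$ required by Corollary \ref{c:choieffros} is met. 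The corollary then produces a contractive residually $\mathsf X$-nuclear map $\widetilde{\mathrm{id}} \colon \mathfrak A \to \mathfrak E$ with $p \circ \widetilde{\mathrm{id}} = \mathrm{id}_\mathfrak{A}$; being in particular $\mathsf X$-equivariant, contractive and completely positive, $\widetilde{\mathrm{id}}$ is the required splitting.

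For $(2)$ I would follow the classical comparison between $KK$-theory and $E$-theory for a nuclear first variable, adapted to the $\mathsf X$-equivariant setting over a finite space. Both groups admit descriptions in terms of $\mathsf X$-equivariant asymptotic morphisms between appropriate suspensions and stabilisations of $\mathfrak A$ and $\mathfrak B$: $E(\mathsf X;\mathfrak A,-)$ uses all such asymptotic morphisms, whereas $KK(\mathsf X;\mathfrak A,-)$ is computed using those that are completely positive and contractive. The inclusion of the latter class into the former induces the canonical natural transformation $\gamma \colon KK(\mathsf X;\mathfrak A,-) \to E(\mathsf X;\mathfrak A,-)$. To prove that $\gamma$ is an isomorphism it suffices to show that, up to $\mathsf X$-equivariant homotopy, every $\mathsf X$-equivariant asymptotic morphism out of $\mathfrak A$ (giving surjectivity), and every $\mathsf X$-equivariant homotopy connecting two completely positive contractive ones (giving injectivity), can be replaced by a completely positive contractive $\mathsf X$-equivariant representative. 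Each such replacement is precisely an $\mathsf X$-equivariant completely positive lifting problem for a map out of the separable, nuclear algebra $\mathfrak A$ into an asymptotic (path) algebra, and is solved by the method of part $(1)$, i.e.\ by Corollary \ref{c:choieffros} together with the finite-dimensional approximation property of Corollary \ref{c:nucapproxfinite}. Once these lifts exist, the verification that they assemble into inverse maps is identical to the non-equivariant argument.

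The main obstacle lies in part $(2)$: one must set up the $\mathsf X$-equivariant $E$-theory and the completely positive picture of $KK(\mathsf X)$ precisely, and then check that the equivariant Choi--Effros lifting can be performed \emph{uniformly} in the asymptotic parameter and compatibly with suspensions and homotopies, so that it yields a genuine completely positive contractive $\mathsf X$-equivariant asymptotic morphism rather than merely a family of pointwise lifts. The contractive, norm-controlled form of the lift in Corollary \ref{c:choieffros}, together with Corollary \ref{c:nucapproxfinite}, is exactly what makes this parametrised lifting possible; with it in hand the rest of the comparison is purely formal.
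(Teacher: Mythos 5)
Your proof of part $(1)$ has a genuine gap at the point where you claim the continuity hypothesis of Corollary \ref{c:choieffros} ``is met'' because $\mathsf X$ is finite. What finiteness of $\mathsf X$ gives is that a \emph{$C^\ast$-algebra over $\mathsf X$} (i.e.\ a finitely lower semicontinuous, upper semicontinuous $\mathsf X$-$C^\ast$-algebra) is automatically continuous; but the hypothesis of part $(1)$ only says that $\mathfrak A$ is an $\mathsf X$-$C^\ast$-algebra, which means nothing more than an order-preserving map $\mathbb O(\mathsf X) \to \mathbb I(\mathfrak A)$. Such an action of a finite space can fail finite upper semicontinuity, can fail finite lower semicontinuity, and need not even satisfy $\mathfrak A(\emptyset)=0$ or $\mathfrak A(\mathsf X)=\mathfrak A$ --- the paper's own examples following Remark \ref{r:constructprim} exhibit induced actions of a two-point space that are not finitely upper semicontinuous. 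So Corollary \ref{c:choieffros} cannot be applied directly to the given extension. The paper closes exactly this gap with the Meyer--Nest trick (as in the proof of Lemma \ref{l:liftfinite}): replace $\mathsf X$ by a larger, still finite, space $\mathsf Y$ so that all the given $\mathsf X$-$C^\ast$-algebras become \emph{continuous} $\mathsf Y$-$C^\ast$-algebras \cite{MeyerNest-bootstrap}, apply Corollary \ref{c:choieffros} over $\mathsf Y$ (your choice $\phi = \mathrm{id}_\mathfrak{A}$, residually $\mathsf Y$-nuclear by Remark \ref{r:exactresnuc}, works there), and observe that a $\mathsf Y$-equivariant contractive c.p.\ splitting is in particular $\mathsf X$-equivariant. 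With this repair your argument for $(1)$ coincides with the paper's.

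Part $(2)$ is also problematic, but for a different reason: you propose to rebuild the comparison between $KK(\mathsf X)$ and $E(\mathsf X)$ from scratch --- equivariant asymptotic morphisms, a completely positive picture of $KK(\mathsf X)$, and a parametrised lifting argument --- and you yourself flag the genuinely hard steps (performing the equivariant Choi--Effros lifting uniformly in the asymptotic parameter, compatibly with suspensions and homotopies) without resolving them. That is an outline of a re-proof of the Dadarlat--Meyer comparison theorem, not a proof. The paper's argument is one line: part $(1)$ shows the relevant extensions admit $\mathsf X$-equivariant c.p.\ splittings, and this semisplitness is precisely the hypothesis of \cite[Corollary 5.3]{DadarlatMeyer-E-theory}, which then gives the natural isomorphism $KK(\mathsf X;\mathfrak A,-) \cong E(\mathsf X;\mathfrak A,-)$. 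Since that theorem is available as a citation, the complete argument is to invoke it rather than to attempt the asymptotic-morphism comparison by hand.
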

\begin{proof}
 Ad (1): by applying the trick of Meyer-Nest (see proof of Lemma \ref{l:liftfinite}), we may assume that our $\mathsf X$-$C^\ast$-algebras are continuous $\mathsf Y$-$C^\ast$-algebras, for some larger (but still finite) space $\mathsf Y$. 
 Thus Corollary \ref{c:choieffros} implies the existence of a $\mathsf Y$-equivariant contractive c.p.~splitting, which is thus also an $\mathsf X$-equivariant contractive c.p.~splitting.
 Ad (2): this follows immediately from (1) above and \cite[Corollary 5.3]{DadarlatMeyer-E-theory}.
\end{proof}


\section{Purely large extensions over finite spaces}\label{s:finiteplp}

In this section we prove that for sufficiently nice $\mathsf X$-$C^\ast$-algebras $\mathfrak A$ and $\mathfrak B$ (e.g.~when both are separable and continuous) 
the closed operator convex cone $CP_\rnuc(\mathsf X; \mathfrak A, \mathfrak B)$ satisfies the (unital) purely large problem from Section \ref{s:plp}, under the additional assumption that $\mathsf X$ is finite.
In the case where every $\mathfrak B(\mathsf U)$ has the corona factorisation property we prove that purely largeness may be replaced by the weaker condition of the extensions being $\mathsf X$-full.

\subsection{Absorbing $\ast$-homomorphisms are $\mathsf X$-full}\label{s:fullmaps}

We start by showing that if $\mathsf X$ is finite and $\mathfrak B$ is suitably nice, then $CP_\rnuc(\mathsf X; \mathfrak A, \mathfrak B)$ is generated by pure states in a certain sense. We need the following definition.

\begin{definition}
 Let $\mathfrak B$ be a $\sigma$-unital $\mathsf X$-$C^\ast$-algebra. We say that $\mathfrak B$ is \emph{$\mathsf X$-$\sigma$-unital} if $\mathfrak B(\mathsf U)$ is $\sigma$-unital for every $\mathsf U \in \mathbb O(\mathsf X)$.
\end{definition}

In particular, any separable $\mathsf X$-$C^\ast$-algebra is $\mathsf X$-$\sigma$-unital.

Recall, that if $\mathsf X$ is a finite $T_0$ space, and $x\in \mathsf X$, then we let $\mathsf U^x$ denote the smallest open subset of $\mathsf X$ containing $x$.

\begin{lemma}\label{l:statesgen}
 Let $\mathsf X$ be finite $T_0$ space, and let $\mathfrak B$ be a stable, $\mathsf X$-$\sigma$-unital and continuous $\mathsf X$-$C^\ast$-algebra. Fix strictly positive elements $h_x\in \mathfrak B(\mathsf U^x)$ for $x\in \mathsf X$.
 Then
 \[
  \mathscr S := \{ \mathfrak A \xrightarrow{h_x\rho_x(-)} \mathfrak B \mid x\in \mathsf X, \rho_x \text{ is a pure state on } \mathfrak A, \rho_x(\mathfrak{A}(\mathsf X \setminus \overline{\{ x\}})) = 0 \}
 \]
 generates $CP_\rnuc(\mathsf X;\mathfrak A,\mathfrak B)$ as a closed operator convex cone.
 
 In particular, if $\mathfrak A$ is separable then $CP_\rnuc(\mathsf X;\mathfrak A,\mathfrak B)$ is countably generated.
\end{lemma}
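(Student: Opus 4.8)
The plan is to prove the two inclusions $K(\mathscr S) \subseteq CP_\rnuc(\mathsf X;\mathfrak A,\mathfrak B)$ and $CP_\rnuc(\mathsf X;\mathfrak A,\mathfrak B) \subseteq K(\mathscr S)$ separately, and then to deduce countable generation from separability of the pure state space. Throughout I use that $\mathsf X$ is $T_0$, so each $\mathsf U^x$ is defined, and that (in the specialisation order of a finite $T_0$ space) $\mathsf X \setminus \overline{\{x\}}$ is the largest open set \emph{not} containing $x$, while $\mathsf U^x$ is the smallest open set containing $x$. For the first inclusion, fix $x$ and a pure state $\rho_x$ on $\mathfrak A$ with $\rho_x(\mathfrak A(\mathsf X\setminus\overline{\{x\}})) = 0$. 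I would observe that $a \mapsto \rho_x(a) h_x$ factors as $\mathfrak A \xrightarrow{\rho_x} i_{\{x\}}(\mathbb C) \xrightarrow{\lambda \mapsto \lambda h_x} \mathfrak B$, and that both maps are $\mathsf X$-equivariant: the first because $\rho_x$ kills $\mathfrak A(\mathsf U)$ whenever $x \notin \mathsf U$ (as then $\mathsf U \subseteq \mathsf X\setminus\overline{\{x\}}$), the second because $h_x \in \mathfrak B(\mathsf U^x) \subseteq \mathfrak B(\mathsf U)$ whenever $x \in \mathsf U$. Since $i_{\{x\}}(\mathbb C)$ is a finite-dimensional continuous $\mathsf X$-$C^\ast$-algebra, Corollary \ref{c:nucapproxfinite} shows this map is residually $\mathsf X$-nuclear, so $\mathscr S \subseteq CP_\rnuc(\mathsf X;\mathfrak A,\mathfrak B)$ and hence $K(\mathscr S)\subseteq CP_\rnuc(\mathsf X;\mathfrak A,\mathfrak B)$.

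For the reverse inclusion, let $\phi \in CP_\rnuc(\mathsf X;\mathfrak A,\mathfrak B)$. By Corollary \ref{c:nucapproxfinite} $\phi$ is a point-norm limit of maps $\rho\circ\psi$ with $\psi\colon\mathfrak A\to\mathfrak D$, $\rho\colon\mathfrak D\to\mathfrak B$ $\mathsf X$-equivariant c.p.\ maps and $\mathfrak D$ a finite-dimensional continuous $\mathsf X$-$C^\ast$-algebra; since $K(\mathscr S)$ is point-norm closed it suffices to place each such $\rho\circ\psi$ in $K(\mathscr S)$. Writing $\mathfrak D = \bigoplus_{x} i_{\{x\}}(\mathfrak D_x)$ by Lemma \ref{l:finitedim} and compressing and corestricting to the summands, I get $\rho\circ\psi = \sum_x \sigma_x\circ\tau_x$ with $\tau_x\colon\mathfrak A\to i_{\{x\}}(\mathfrak D_x)$ and $\sigma_x\colon i_{\{x\}}(\mathfrak D_x)\to\mathfrak B$ again $\mathsf X$-equivariant; as $K(\mathscr S)$ is a cone it is enough to treat a single summand. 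Equivariance of $\tau_x$ forces $\tau_x(\mathfrak A(\mathsf X\setminus\overline{\{x\}})) = 0$, so $\tau_x = \bar\tau_x\circ q_x$ factors through $\mathfrak A_x := \mathfrak A/\mathfrak A(\mathsf X\setminus\overline{\{x\}})$; equivariance of $\sigma_x$ together with finite lower semicontinuity of $\mathfrak B$ forces $\sigma_x(\mathfrak D_x)\subseteq\bigcap_{\mathsf U\ni x}\mathfrak B(\mathsf U) = \mathfrak B(\mathsf U^x)$. Thus each summand is $\eta\circ q_x$, where $\eta := \sigma_x\circ\bar\tau_x\colon\mathfrak A_x\to\mathfrak B(\mathsf U^x)$ is nuclear, since it factors through the finite-dimensional $\mathfrak D_x$.

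Now I would invoke the classical generation theorem of Kirchberg (Remark \ref{r:classicalabs}, cf.\ \cite[Lemma 10]{ElliottKucerovsky-extensions}): since $\mathfrak B(\mathsf U^x)$ is an ideal of the stable algebra $\mathfrak B$ it is itself stable, so the nuclear map $\eta$ lies in the closed operator convex cone in $CP(\mathfrak A_x,\mathfrak B(\mathsf U^x))$ generated by the maps $a'\mapsto\rho'(a')b$ with $\rho'$ a pure state on $\mathfrak A_x$ and $b\in\mathfrak B(\mathsf U^x)_+$. Precomposing with $q_x$ (Lemma \ref{l:quotcone}) and using that the pure states of $\mathfrak A_x$ are exactly the pure states of $\mathfrak A$ vanishing on $\mathfrak A(\mathsf X\setminus\overline{\{x\}})$, it remains only to check that each map $a\mapsto\tilde\rho'(a)b$, with $\tilde\rho' = \rho'\circ q_x$ and $b\in\mathfrak B(\mathsf U^x)_+$, lies in $K(\mathscr S)$. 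For this, I start from $a\mapsto\tilde\rho'(a)h_x\in\mathscr S$, apply condition $(2)$ of Definition \ref{d:opconcon} to obtain $a\mapsto\tilde\rho'(a)\,c^\ast h_x c$ for $c\in\mathfrak B(\mathsf U^x)$, and use that $h_x$ is strictly positive in $\mathfrak B(\mathsf U^x)$: as $h_x^{1/2}\mathfrak B(\mathsf U^x)$ is dense in $\mathfrak B(\mathsf U^x)$, the elements $c^\ast h_x c = (h_x^{1/2}c)^\ast(h_x^{1/2}c)$ approximate every positive element, so passing to the point-norm closure yields $a\mapsto\tilde\rho'(a)b\in K(\mathscr S)$. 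Summing over $x$ gives $\rho\circ\psi\in K(\mathscr S)$, proving $CP_\rnuc(\mathsf X;\mathfrak A,\mathfrak B) = K(\mathscr S)$.

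The main obstacle is the equivariant bookkeeping of the second paragraph, namely extracting from $\mathsf X$-equivariance of $\tau_x$ and $\sigma_x$ the precise factorisation through $\mathfrak A_x$ and $\mathfrak B(\mathsf U^x)$, together with the reduction (via strict positivity of $h_x$) from the full classical generating set, with all $b\in\mathfrak B_+$, to the single element $h_x$; these are where the hypotheses (finite $T_0$, continuity, stability, and $\mathsf X$-$\sigma$-unitality through the $h_x$) are genuinely used. Finally, for the ``in particular'', if $\mathfrak A$ is separable then each $\mathfrak A_x$ is separable, so its state space is weak-$\ast$ metrisable and the pure states $P(\mathfrak A_x)$ admit a countable weak-$\ast$ dense subset $\{\rho_x^{(n)}\}_n$. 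For fixed $h_x$, weak-$\ast$ convergence $\rho_x^{(n)}\to\rho_x$ gives point-norm convergence $\rho_x^{(n)}(-)h_x\to\rho_x(-)h_x$, so the countable set $\{a\mapsto\rho_x^{(n)}(a)h_x : x\in\mathsf X,\ n\in\mathbb N\}$ already generates $K(\mathscr S) = CP_\rnuc(\mathsf X;\mathfrak A,\mathfrak B)$.
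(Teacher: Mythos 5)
Your proof is correct, and its skeleton is the same as the paper's: both directions of the inclusion, the reduction via Corollary \ref{c:nucapproxfinite} and Lemma \ref{l:finitedim} to a single summand, the observation that equivariance forces $\tau_x$ to factor through $\mathfrak A_x = \mathfrak A/\mathfrak A(\mathsf X\setminus\overline{\{x\}})$ and $\sigma_x$ to land in $\mathfrak B(\mathsf U^x)$ (the paper cites \cite[Lemma 2.22]{MeyerNest-bootstrap} for this), and the weak-$\ast$ separability argument for the ``in particular''. Where you genuinely diverge is the last step. The paper does not invoke the full Kirchberg generation theorem of Remark \ref{r:classicalabs}: it only repeats, inline, the \emph{first part} of the proof of \cite[Lemma 10]{ElliottKucerovsky-extensions}, which handles precisely maps factoring through a matrix algebra, writing $\psi_x'\circ\phi_x'$ as a limit of sums $a'\mapsto\sum_{i,j}b_i'^\ast\rho_x'(a_i'^\ast a' a_j')b_j'$ and then substituting $b_i' = h_x^{1/2}b_i$ (strict positivity of $h_x$) so that the resulting map is literally of the form \eqref{eq:basicmap} applied to the generator $h_x\rho_x(-)\in\mathscr S$, after lifting the $a_i'$ and $\rho_x'$. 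You instead cite the full statement that $CP_\nuc(\mathfrak A_x,\mathfrak B(\mathsf U^x))$ is generated by all pure-state maps $b\rho'(-)$, transport it along $q_x$ via Lemma \ref{l:quotcone}, and then separately shrink the generating set from all $b\in\mathfrak B(\mathsf U^x)_+$ to the single $h_x$ using $\overline{\{c^\ast h_x c\}} \supseteq \mathfrak B(\mathsf U^x)_+$. Your route is more modular (and your generating-set reduction is a clean reusable observation), but it leans on a strictly stronger black box — Kirchberg's unpublished theorem, stated in Remark \ref{r:classicalabs} only informally and in a context where $\mathfrak A$ is separable and unital — whereas the paper's inline argument needs only the matrix-algebra case, which is exactly what Corollary \ref{c:nucapproxfinite} has already put in hand and which carries no separability or unitality hypotheses on $\mathfrak A$; this matters because Lemma \ref{l:statesgen} itself assumes separability of $\mathfrak A$ only for the ``in particular'' clause. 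Since you in fact only apply the black box to the map $\eta$, which factors through the finite-dimensional $\mathfrak D_x$, your argument could be repaired to match the paper's economy by citing only the matrix-algebra portion of the Elliott--Kucerovsky argument, as the paper does.
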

\begin{proof}
Let $h_x\rho_x(-)\in \mathscr S$, and let $a\in \mathfrak A(\mathsf U)$. If $x \notin \mathsf U$, then $\mathsf U \subset \mathsf X \setminus \overline{\{ \mathsf x\}}$ and thus $h_x \rho_x(a) = 0 \in \mathfrak B(\mathsf U)$.
If $x\in \mathsf U$ then $\mathsf U^x \subset \mathsf U$, so $h_x \rho_x(a) \in \mathfrak B(\mathsf U^x) \subset \mathfrak B(\mathsf U)$. Thus $h_x \rho_x(-)$ is $\mathsf X$-equivariant, and it is clearly residually $\mathsf X$-nuclear,
so $\mathscr S \subset CP_\rnuc(\mathsf X; \mathfrak A, \mathfrak B)$.

By Corollary \ref{c:nucapproxfinite} any map in $CP_\rnuc(\mathsf X; \mathfrak A, \mathfrak B)$ can be approximated by a composition $\mathfrak A \xrightarrow{\phi} \mathfrak D \xrightarrow{\psi} \mathfrak B$, where $\mathfrak D$ is a finite dimensional, continuous 
$\mathsf X$-$C^\ast$-algebra and $\phi$ and $\psi$ are $\mathsf X$-equivariant c.p.~maps. By Lemma \ref{l:finitedim} we may decompose $\mathfrak D = \bigoplus_{x\in \mathsf X} i_x(\mathfrak D_x)$. 
Hence we may decompose $\phi$ and $\psi$ into $\mathsf X$-equivariant c.p.~maps 
$\phi_x \colon \mathfrak A \to i_x(\mathfrak D_x)$ and $\psi_x \colon i_x(\mathfrak D_x) \to \mathfrak B$. It follows that $\psi \circ \phi = \sum_{x\in \mathsf X} \psi_x \circ \phi_x$. Hence it suffices to show that $\psi_x \circ \phi_x$ are in the closed operator
convex cone generated by $\mathscr S$. Recall, as in \cite[Lemma 2.22]{MeyerNest-bootstrap}, that $CP(\mathsf X; \mathfrak A, i_x(\mathfrak D_x)) \cong CP(\mathfrak A/\mathfrak A(\mathsf X\setminus \overline{\{ x\}}), \mathfrak D_x)$ and
$CP(\mathsf X;i_x(\mathfrak D_x), \mathfrak B) \cong CP(\mathfrak D_x, \mathfrak B(\mathsf U^x))$ naturally. As in the classical case of nuclearity, we may assume that $\mathfrak D_x$ is a matrix algebra, say $M_{n_x}$. 
Let $\psi_x'\colon \mathfrak A/\mathfrak A(\mathsf X\setminus \overline{\{ x\}}) \to M_{n_x}$ and $\psi_x' \colon M_{n_x} \to \mathfrak B(\mathsf U^x)$ be the induced maps.

As in the first part of the proof of \cite[Lemma 10]{ElliottKucerovsky-extensions}, we may approximate $\psi_x' \circ \phi_x'$ by a sum of c.p.~maps of the form
\begin{equation}\label{eq:statesgen}
 a' \mapsto \sum_{i,j=1}^{n_x} {b'}_i^\ast \rho_x'(a_i'^\ast a' a_j') b'_j,
\end{equation}
with $a'_1,\dots,a'_{n_x} \in \mathfrak A/\mathfrak A(\mathsf X \setminus \overline{\{ x\} })$, $b'_1,\dots, b'_n \in \mathfrak B(\mathsf U^x)$ and $\rho_x$ a pure state on $\mathfrak A/\mathfrak A(\mathsf X \setminus \overline{\{ x\} })$.
Since $h_x$ is strictly positive in $\mathfrak B(\mathsf U^x)$ we may assume, by replacing $b_i'$ with a small perturbation, that $b'_i = h_x^{1/2} b_i$ for some $b_i \in \mathfrak B(\mathsf U^x)$. 
Let $a_i \in \mathfrak A$ lifts of $a'_i$, and let $\rho_x$ be a pure state on $\mathfrak A$ which vanishes on $\mathfrak A(\mathsf X \setminus \overline{ \{ x\} })$ and lifts $\rho'_x$. 
Then the map in \eqref{eq:statesgen} corresponds to the $\mathsf X$-equivariant c.p.~map $\mathfrak A \to \mathfrak B$ given by
\[
 a \mapsto \sum_{i,j=1}^{n_x} b_i^\ast h_x\rho_x(a_i^\ast a a_j) b_j,
\]
which is in the operator convex cone generated by $\mathscr S$. It follows that $\psi_x \circ \phi_x$ can be approximated by sums of maps on the above form, which in turn are in the closed operator convex cone generated by $\mathscr S$. 

The ``in particular'' part follows since the pure state space of a separable $C^\ast$-algebra is separable in the weak-$\ast$ topology.
Hence $\mathscr S$ generates the same closed operator convex cone as a set where we only choose countably many weak-$\ast$ dense pure states.
\end{proof}

The following proposition shows that in nice cases when $\mathscr C = CP_\rnuc(\mathsf X; \mathfrak A, \mathfrak B)$, we may describe the map $\mathfrak B_\mathscr{C}$ in terms of the actions of $\mathsf X$ on $\mathfrak A$ and $\mathfrak B$.
This gives a solution to Question \ref{q:computeB_C} for a large class of closed operator convex cones.

\begin{proposition}\label{p:determineB_C}
Let $\mathsf X$ be a finite space and let $\mathfrak A$ and $\mathfrak B$ be $\mathsf X$-$C^\ast$-algebras with $\mathfrak A$ separable and lower semicontinuous, and $\mathfrak B$ stable, $\mathsf X$-$\sigma$-unital, and continuous, 
and let $\mathscr C = CP_\rnuc(\mathsf X;\mathfrak A, \mathfrak B)$. Then $\mathfrak B_\mathscr{C}(a) = \mathfrak B(\mathsf U_a)$ for all $a \in \mathfrak A$.

Moreover, $\mathfrak B_\mathscr{C} = \mathfrak B_{CP(\mathsf X; \mathfrak A, \mathfrak B)}$.
\end{proposition}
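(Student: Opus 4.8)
The plan is to apply Lemma \ref{l:fullcomputeB_C} after producing, for each $a \in \mathfrak A$, a map in $\mathscr C = CP_\rnuc(\mathsf X; \mathfrak A, \mathfrak B)$ whose value at $a$ is full in $\mathfrak B(\mathsf U_a)$. Since $\mathscr C \subset CP(\mathsf X; \mathfrak A, \mathfrak B)$ and every $\mathsf X$-equivariant map sends $a$ into $\mathfrak B(\mathsf U_a)$, the inclusion $\mathfrak B_\mathscr{C}(a) \subset \mathfrak B(\mathsf U_a)$ is automatic; the content is the reverse inclusion. As in Remark \ref{r:soberspace} I may assume $\mathsf X$ is a finite $T_0$ space, so that the sets $\mathsf U^x$ and the machinery of Lemma \ref{l:statesgen} are available.

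First I would fix $a \in \mathfrak A$ and recall that $\mathsf U_a$ is the smallest open set with $a \in \mathfrak A(\mathsf U_a)$. Using the generating set $\mathscr S$ of Lemma \ref{l:statesgen}, I want to exhibit enough maps $h_x \rho_x(-)$ so that their combined values at $a$ generate $\mathfrak B(\mathsf U_a)$ as an ideal. The key point is that $\mathsf U_a = \bigcup \{ \mathsf U^x : x \in \mathsf U_a \}$ (true in a finite space since $\mathsf U_a$ is open), and since $\mathfrak B$ is continuous, hence finitely upper semicontinuous, $\mathfrak B(\mathsf U_a) = \sum_{x \in \mathsf U_a} \mathfrak B(\mathsf U^x)$. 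So it suffices to show that for each $x \in \mathsf U_a$, the ideal generated by the values $\{\phi(a) : \phi \in \mathscr C\}$ contains $\mathfrak B(\mathsf U^x)$. For such an $x$, because $a \in \mathfrak A(\mathsf U_a)$ with $x \in \mathsf U_a$, and because $\mathsf U_a$ is the \emph{smallest} open set containing $a$, the image of $a$ in $\mathfrak A/\mathfrak A(\mathsf X \setminus \overline{\{x\}})$ is nonzero; hence there is a pure state $\rho_x$ on $\mathfrak A$ vanishing on $\mathfrak A(\mathsf X \setminus \overline{\{x\}})$ with $\rho_x(a^\ast a) > 0$ (apply a pure state of the nonzero quotient to $a^\ast a$ and lift it). Then $\phi = h_x \rho_x(-) \in \mathscr S \subset \mathscr C$ satisfies $\phi(a^\ast a) = \rho_x(a^\ast a) h_x$, a nonzero positive scalar multiple of the strictly positive element $h_x$ of $\mathfrak B(\mathsf U^x)$, which is therefore full in $\mathfrak B(\mathsf U^x)$.

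From this I conclude $\mathfrak B(\mathsf U^x) = \overline{\mathfrak B \phi(a^\ast a) \mathfrak B} \subset \mathfrak B_\mathscr{C}(a^\ast a)$ for every $x \in \mathsf U_a$; summing over $x \in \mathsf U_a$ and using finite upper semicontinuity gives $\mathfrak B(\mathsf U_a) \subset \mathfrak B_\mathscr{C}(a^\ast a)$. Since $\mathsf U_{a^\ast a} = \mathsf U_a$ (the two elements generate the same closed ideal of $\mathfrak A$, so are full over the same open set), this establishes $\mathfrak B_\mathscr{C}(a) = \mathfrak B(\mathsf U_a)$; strictly, I run the argument with $a$ replaced by $a^\ast a$ and invoke $\mathfrak B_\mathscr{C}(a) = \mathfrak B_\mathscr{C}(a^\ast a)$. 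Equivalently, and perhaps more cleanly, the construction shows $\mathscr C$ contains an $\mathsf X$-full map: taking a single $\mathsf X$-full $\phi$ and invoking Lemma \ref{l:fullcomputeB_C} directly yields the result for all $a$ at once. The main obstacle is the case-by-case verification that $\mathscr C$ really contains an $\mathsf X$-full map, i.e.\ that the pure states $\rho_x$ can be chosen to simultaneously detect fullness over each minimal piece $\mathsf U^x$; this is exactly where the finiteness of $\mathsf X$, continuity (finite upper semicontinuity) of $\mathfrak B$, and the description $\mathfrak B(\mathsf U) = \sum_{x \in \mathsf U}\mathfrak B(\mathsf U^x)$ are essential.

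Finally, the ``Moreover'' statement follows immediately: the same computation with $\mathscr C' = CP(\mathsf X; \mathfrak A, \mathfrak B)$ gives $\mathfrak B_{\mathscr C'}(a) = \mathfrak B(\mathsf U_a)$, since $\mathscr C \subset \mathscr C'$ forces $\mathfrak B_\mathscr{C}(a) \subset \mathfrak B_{\mathscr C'}(a) \subset \mathfrak B(\mathsf U_a) = \mathfrak B_\mathscr{C}(a)$, so all three coincide. Thus $\mathfrak B_\mathscr{C} = \mathfrak B_{CP(\mathsf X; \mathfrak A, \mathfrak B)}$, both equal to the map $a \mapsto \mathfrak B(\mathsf U_a)$.
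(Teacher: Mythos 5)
Your proposal is correct and follows essentially the same route as the paper's own proof: the paper likewise reduces to the positive element $a^\ast a$ via the identity $\mathfrak B_\mathscr{C}(a) = \mathfrak B_\mathscr{C}(a^\ast a)$ (which you invoke without justification, but which is exactly Lemma \ref{l:idealmap}, applicable because $\mathscr C$ is countably generated by Lemma \ref{l:statesgen}), and then exploits states vanishing on $\mathfrak A(\mathsf X\setminus\overline{\{x\}})$ paired with strictly positive elements of $\mathfrak B(\mathsf U^x)$, together with the same minimality argument showing that $a$ has non-zero image in $\mathfrak A/\mathfrak A(\mathsf X\setminus\overline{\{x\}})$ for every $x\in\mathsf U_a$. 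The only difference is organizational: the paper assembles one map $\phi=\sum_{x\in\mathsf X}\phi_x(-)b_x$ with the $\phi_x$ faithful on the quotients and applies Lemma \ref{l:fullcomputeB_C} once, whereas you use a separate map $h_x\rho_x(-)$ from the generating set $\mathscr S$ for each $x\in\mathsf U_a$ and then sum the resulting ideals $\mathfrak B(\mathsf U^x)$ using finite upper semicontinuity of $\mathfrak B$ --- both arguments are sound and rest on the same ideas.
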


\begin{proof}
Let $a\in \mathfrak A$. By Lemma \ref{l:statesgen}, $\mathscr C$ is countably generated. Thus it follows from Lemma \ref{l:idealmap} that $\mathfrak B_\mathscr{C}(a) = \mathfrak B_\mathscr{C}(a^\ast a)$. 
Since $a$ and $a^\ast a$ are contained in exactly the same two-sided, closed ideals in $\mathfrak A$, it follows that $\mathsf U_a = \mathsf U_{a^\ast a}$, so we may assume, without loss of generality, that $a\in \mathfrak A$ is positive. 

We may assume that $\mathsf X$ is a $T_0$ space by Remark \ref{r:soberspace}.
For $x\in \mathsf X$ let $\mathsf U^x$ denote the smallest open set which contains $x$, and let $\mathsf V^x$ denote the largest open set which does not contain $x$. For each $x$ fix a strictly positive element $b_x$ in $\mathfrak B(\mathsf U^x)$, 
and a state $\phi_x \colon \mathfrak A \to \mathbb C$ such that $\phi_x(\mathfrak A(\mathsf V^x)) = 0$ and such that the induced state on $\mathfrak A/\mathfrak A(\mathsf V^x)$ is faithful if $\mathfrak A/\mathfrak A(\mathsf V^x)$ is non-zero. 
Now define a map
\[
\phi \colon \mathfrak A \to \mathfrak B, \qquad \phi(c) = \sum_{x\in \mathsf X} \phi_x(c) b_x.
\]
As $\phi_x(-)b_x = b_x^{1/2} \phi_x(-) b_x^{1/2}$ is completely positive for each $x$, $\phi$ is completely positive. 
For any $x\notin \mathsf U_c$ we have $\mathsf U_c \subset \mathsf V^x$. Hence $c \in \mathfrak A(\mathsf V^x)$ which implies $\phi_x(c) = 0$. 
It follows that
\[
\phi(c) = \sum_{x\in \mathsf U_c} \phi_x(c) b_x \in \sum_{x\in \mathsf U_c} \mathfrak B(\mathsf U^x) = \mathfrak B(\bigcup_{x\in \mathsf U_c} \mathsf U^x) = \mathfrak B(\mathsf U_c).
\]
Hence $\phi$ is $\mathsf X$-equivariant. We want to show that $\phi(a)$ is full in $\mathfrak B(\mathsf U_a)$.

If $\mathsf U_a = \emptyset$ then $\mathfrak B(\mathsf U_a) = 0$, so $\phi(a) = 0$ is full in $\mathfrak B(\mathsf U_a)$. Thus we may consider the case where $\mathsf U_a$ is non-empty.
Suppose for contradiction that there is $x\in \mathsf U_a$ such that $\mathfrak A(\mathsf V^x) = \mathfrak A$. Then $a \in \mathfrak A(\mathsf U_a) \cap \mathfrak A(\mathsf V^x) = \mathfrak A(\mathsf U_a \cap \mathsf V^x)$.
However, since $\mathsf U_a \cap \mathsf V^x$ does not contain $x$, it is a proper subset of $\mathsf U_a$, which contradicts that $a$ is $\mathsf U_a$-full. Hence $\mathfrak A/\mathfrak A(\mathsf V^x)$ is non-zero for all $x\in \mathsf U_a$.
By construction $\phi_x(a) >0$ for all $x\in \mathsf U_a$ and thus $\phi(a)$ is full in $\mathfrak B(\mathsf U_a)$. Moreover, $\phi$ is obviously residually $\mathsf X$-nuclear.
Hence $\mathfrak B(\mathsf U_a) = \mathfrak B_\mathscr{C}(a)$ by Lemma \ref{l:fullcomputeB_C}.

That $\mathfrak B_\mathscr{C} = \mathfrak B_{CP(\mathsf X; \mathfrak A, \mathfrak B)}$ follows, since
\[
 \mathfrak B_\mathscr{C}(a) \subset \mathfrak B_{CP(\mathsf X; \mathfrak A, \mathfrak B)}(a) \subset \mathfrak B(\mathsf U_a) = \mathfrak B_\mathscr{C}(a)
\]
for all $a\in \mathfrak A$.
\end{proof}

The rest of this subsection is dedicated to showing, that $\mathsf X$-equivariant $CP_\rnuc(\mathsf X; \mathfrak A, \mathfrak B)$-absorbing $\ast$-homomorphisms are $\mathsf X$-full.

Note that if $\mathfrak J$ is a two-sided, closed ideal in $\mathfrak B$, then $\mathfrak J$ is an essential ideal in $\multialg{\mathfrak B, \mathfrak J}$. Hence there is an induced injective $\ast$-homomorphism 
$\iota \colon \multialg{\mathfrak B, \mathfrak J} \hookrightarrow \multialg{\mathfrak J}$.

\begin{lemma}\label{l:multiher}
 The image $\iota(\multialg{\mathfrak B, \mathfrak J}) \subset \multialg{\mathfrak J}$ is a hereditary $C^\ast$-subalgebra. 
\end{lemma}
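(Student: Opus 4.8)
The plan is to use the standard characterisation of hereditary subalgebras: a $C^\ast$-subalgebra $\mathfrak{D}$ of a $C^\ast$-algebra $\mathfrak{C}$ is hereditary if and only if $\mathfrak{D}\mathfrak{C}\mathfrak{D} \subseteq \mathfrak{D}$. Thus it suffices to show that for all $m_1, m_2 \in \multialg{\mathfrak B, \mathfrak J}$ and all $n \in \multialg{\mathfrak J}$, the product $\iota(m_1) n \iota(m_2)$ lies in $\iota(\multialg{\mathfrak B, \mathfrak J})$. Recall that $\iota$ is the restriction map: $\iota(m)$ acts on $\mathfrak J$ by $\iota(m)j = mj$ and $j\iota(m) = jm$. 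I would first record the computational lemma that a multiplier $n$ of $\mathfrak J$ interacts well with the larger algebra $\mathfrak B$: for every $x \in \mathfrak J$ and $b \in \mathfrak B$ one has $n(xb) = (nx)b$ and $(bx)n = b(xn)$. Both are proved by inserting an approximate identity $(e_\lambda)$ of $\mathfrak J$, writing $xb = \lim_\lambda (x e_\lambda) b = \lim_\lambda x(e_\lambda b)$ with $e_\lambda b \in \mathfrak J$, applying the multiplier property of $n$ on $\mathfrak J$ to the pair $x, e_\lambda b \in \mathfrak J$, and passing to the limit using boundedness of $n$; the second identity follows by taking adjoints.

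Next I would construct the preimage directly as a double centraliser $p = (L, R)$ of $\mathfrak B$, by setting $L(b) = m_1\big(n(m_2 b)\big)$ and $R(b) = \big((b m_1) n\big) m_2$ for $b \in \mathfrak B$. Each expression is meaningful and lands in $\mathfrak J$: indeed $m_2 b \in \mathfrak J$, so $n(m_2 b) \in \mathfrak J$, and then $m_1[n(m_2 b)] \in \mathfrak J$ because $m_1 \in \multialg{\mathfrak B, \mathfrak J}$; symmetrically for $R$. Both maps are bounded by $\|m_1\|\|n\|\|m_2\|$. The verification that $(L,R)$ satisfies the double-centraliser axioms $L(b_1 b_2) = L(b_1) b_2$, $R(b_1 b_2) = b_1 R(b_2)$ and $b_1 L(b_2) = R(b_1) b_2$ is where the two identities above are used, together with the fact that $m_1, m_2$ are multipliers of $\mathfrak B$ and $n$ a multiplier of $\mathfrak J$; the main care needed is tracking, at each step, whether the element being multiplied lies in $\mathfrak J$ or only in $\mathfrak B$, so that the correct multiplier property is invoked. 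Since $L(\mathfrak B) \subseteq \mathfrak J$, the resulting multiplier $p$ satisfies $p\mathfrak B \subseteq \mathfrak J$, i.e.~$p \in \multialg{\mathfrak B, \mathfrak J}$.

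Finally I would identify $\iota(p)$ with $\iota(m_1) n \iota(m_2)$ by evaluating on $\mathfrak J$: for $j \in \mathfrak J$ we have $\iota(p) j = pj = L(j) = m_1\big(n(m_2 j)\big) = \big(\iota(m_1) n \iota(m_2)\big) j$, and since $\mathfrak J$ is essential in $\multialg{\mathfrak J}$ this determines the multiplier, giving $\iota(p) = \iota(m_1) n \iota(m_2)$. Hence $\iota(m_1) n \iota(m_2) \in \iota(\multialg{\mathfrak B, \mathfrak J})$, which establishes $\iota(\multialg{\mathfrak B, \mathfrak J})\,\multialg{\mathfrak J}\,\iota(\multialg{\mathfrak B, \mathfrak J}) \subseteq \iota(\multialg{\mathfrak B, \mathfrak J})$ and therefore that the image is hereditary. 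I expect the only genuinely delicate point to be the bookkeeping of associativities in verifying the double-centraliser axioms, all of which reduce to the approximate-identity lemma of the first paragraph.
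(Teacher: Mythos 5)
Your proof is correct and follows essentially the same route as the paper: the paper's proof defines exactly your double centraliser $x\in\multialg{\mathfrak B,\mathfrak J}$ by $xb = m_1(m(m_2b))$ and $bx = ((bm_1)m)m_2$, observes $\iota(x)=\iota(m_1)m\,\iota(m_2)$, and concludes hereditarity from the same characterisation $\mathfrak D\mathfrak C\mathfrak D\subseteq\mathfrak D$. The only difference is that you spell out the approximate-identity lemma and the centraliser axioms, which the paper dismisses as ``easily seen.''
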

\begin{proof}
 If $m\in \multialg{\mathfrak J}$ and $m_1,m_2 \in \multialg{\mathfrak B, \mathfrak J}$ let $x \in \multialg{\mathfrak B, \mathfrak J}$ be defined by
 \[
  x b = m_1 (m (m_2 b)), \qquad b x = ((b m_1) m) m_2
 \]
 for $b\in \mathfrak B$. It is easily seen that $x$ is well-defined and that $\iota(x) = \iota(m_1) m \iota(m_2)$. Hence $\iota(\multialg{\mathfrak B, \mathfrak J}) \subset \multialg{\mathfrak J}$ is a hereditary $C^\ast$-subalgebra.
\end{proof}

\begin{lemma}\label{l:multifullher}
 Let $\mathfrak B$ be a $\sigma$-unital, stable $C^\ast$-algebra, let $m\in \multialg{\mathfrak B}$, and let $m_\infty$ be an infinite repeat of $m$.
 Then $m_\infty$ is full in $\multialg{\mathfrak B, \overline{\mathfrak B m \mathfrak B}}$ and $\iota(m_\infty)$ is full in $\multialg{\overline{\mathfrak B m \mathfrak B}}$. 
 
 In particular, if $\mathfrak J$ is a two-sided, closed ideal in $\mathfrak B$, and there exists an $m\in \multialg{\mathfrak B}$ with $\mathfrak J = \overline{\mathfrak B m \mathfrak B}$, then $\iota(\multialg{\mathfrak B, \mathfrak J})$ is a full,
 hereditary $C^\ast$-subalgebra of $\multialg{\mathfrak J}$.
\end{lemma}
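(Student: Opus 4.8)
\emph{Plan.} Write $\mathfrak J = \overline{\mathfrak B m \mathfrak B}$, $M = \multialg{\mathfrak B, \mathfrak J}$ and $N = \multialg{\mathfrak J}$, and recall from Lemma~\ref{l:multiher} that $\iota \colon M \hookrightarrow N$ is an injective $\ast$-homomorphism with hereditary image. Since $\mathfrak J$ is a two-sided, closed ideal in $\mathfrak B$ it is invariant under $\multialg{\mathfrak B}$, so there is a restriction-of-multipliers $\ast$-homomorphism $\kappa \colon \multialg{\mathfrak B} \to N$, strictly continuous on bounded sets, whose restriction to $M$ is $\iota$. First I would record the routine facts that $m \in M$ (using $m\mathfrak B \subseteq \mathfrak J$), that $\overline{\mathfrak J m \mathfrak J} = \overline{\mathfrak B m \mathfrak B} = \mathfrak J$ (so that $x := \iota(m)$ generates $\mathfrak J$ as a closed ideal of $\mathfrak J$), that $m_\infty \in M$, and --- writing $m_\infty = \sum_n t_n m t_n^\ast$ with isometries $t_n\in\multialg{\mathfrak B}$ satisfying $\sum_n t_n t_n^\ast = 1$ strictly as in Remark~\ref{r:hilbertvsmulti} --- that $\iota(m_\infty) = \kappa(m_\infty) = \sum_n \kappa(t_n)\, x\, \kappa(t_n)^\ast$ is the infinite repeat of $x$ in $N$ along the isometries $\kappa(t_n)$, which satisfy $\sum_n \kappa(t_n)\kappa(t_n)^\ast = 1$ strictly. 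Replacing $m$ by $m^\ast m$ (which changes neither $\mathfrak J$ nor the fullness statements, since $\iota(m_\infty)^\ast\iota(m_\infty) = \iota((m^\ast m)_\infty)$ and the ideal generated by an element agrees with that generated by its modulus) I may assume $m \geq 0$, hence $x \geq 0$.

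The key step is to prove that $\iota(m_\infty)$ is full in $N$; I claim everything else follows formally. Indeed, once this is known, $\iota(m_\infty)$ lies in $\iota(M)$ and is full in $N$, so $\iota(M)$ is a full hereditary $C^\ast$-subalgebra of $N$ by Lemma~\ref{l:multiher}, which is the ``in particular''. For the remaining assertion, $\iota$ identifies $M$ with $\iota(M)$, so $m_\infty$ is full in $M$ if and only if $\iota(m_\infty)$ is full in $\iota(M)$; and for a full hereditary subalgebra $A$ of $B$ and $a\in A$, the lattice isomorphism between the ideals of $A$ and of $B$ gives $\overline{B\,\overline{AaA}\,B} = \overline{BaB}$ and $\overline{AaA} = A \cap \overline{BaB}$, whence $a$ is full in $A$ if and only if it is full in $B$. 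Applying this with $A = \iota(M)$, $B = N$ and $a = \iota(m_\infty)$ yields that $\iota(m_\infty)$ is full in $\iota(M)$, i.e.~$m_\infty$ is full in $M$.

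It thus remains to show that $\iota(m_\infty)$ is full in the unital algebra $N$. Here I would use that the ideal $I := \overline{N\,\iota(m_\infty)\,N}$ absorbs any invertible element conjugate to $\iota(m_\infty)$: if I can produce $c \in N$ for which $z := c^\ast \iota(m_\infty) c$ is invertible in $N$, then $1 = z^{-1}z \in N I \subseteq I$, hence $I = N$. To build such a $c$, fix an increasing approximate identity $(u_n)$ of the $\sigma$-unital ideal $\mathfrak J$ with $u_0 = 0$ and set $d_n = (u_n - u_{n-1})^{1/2}\in\mathfrak J$, so that $\sum_n d_n^2 = 1$ strictly in $N$. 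Since $x \geq 0$ is full in $\mathfrak J$, each positive $d_n^2$ is approximated by finite sums $\sum_i a_i x^{1/2} b_i$ with $a_i, b_i \in \mathfrak J$, and absorbing the row of coefficients $a_i$ into a norm bound (via $A^\ast A \leq \|A\|^2$) shows that $d_n^2$ is dominated, up to a small positive error, by a finite sum $\sum_i \lambda_i\, b_i^\ast x\, b_i$ of conjugates of $x$. Assigning to each such term a distinct orthogonal slot of the infinite repeat (the countably many slots indexed by the $t_n$ suffice), I would set $c = \sum_j \kappa(t_{j})\, g_{j}$ for the corresponding coefficients $g_j\in\mathfrak J$, so that orthogonality of the ranges gives $c^\ast \iota(m_\infty) c = \sum_j g_j^\ast x\, g_j$, a strictly convergent sum of conjugates approximating $\sum_n d_n^2 = 1$. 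Choosing the approximations accurately enough makes $z := c^\ast \iota(m_\infty) c$ bounded below, and hence invertible.

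The main obstacle is precisely this last construction: I must choose the finitely many coefficients for each $d_n^2$ and allocate them among the orthogonal slots so that simultaneously (i) the element $c$ is a genuine bounded multiplier of $\mathfrak J$, i.e.~$\sum_j g_j^\ast g_j$ converges strictly to a bounded operator, (ii) the sum $\sum_j g_j^\ast x g_j$ converges strictly in $N$, and (iii) the accumulated errors stay small enough that $z$ is bounded below. Controlling these three requirements at once --- keeping the per-$n$ errors summable while holding the column $(g_j)$ square-summable --- is the only genuinely technical point, and I expect it to be handled by the standard quasicentral approximate identity bookkeeping, of the same flavour as the stability estimates of Hjelmborg--Rørdam invoked elsewhere in the paper.
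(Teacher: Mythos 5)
Your formal skeleton is correct and is exactly the paper's: the reduction to positive $m$, the identification of $\iota(m_\infty)$ as an infinite repeat of $x=\iota(m)$ inside $\multialg{\mathfrak J}$ via the restriction homomorphism $\kappa$, the transfer of fullness back to $\multialg{\mathfrak B,\mathfrak J}$ through the hereditary embedding of Lemma~\ref{l:multiher}, and the deduction of the ``in particular'' part. The divergence is at the only step with real content: the paper does not prove that $\iota(m_\infty)$ is full in $\multialg{\mathfrak J}$ at all, but cites \cite[Lemma~14]{Kucerovsky-ideals}; you instead attempt a direct construction and, as you acknowledge, leave its technical heart open. This is a genuine gap, and it is not of a kind that ``quasicentral approximate identity bookkeeping'' can close: your requirements (i) and (iii) are incompatible in general, because fullness of $x$ in $\mathfrak J$ gives no norm control whatsoever on the witnesses $g_{n,i}$, and those norms must blow up as the approximate unit marches off to infinity while $x$ stays fixed.

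Concretely, take $\mathfrak B=\mathfrak J=C_0((0,1])\otimes\mathbb K$ and $m=x\colon t\mapsto t\,p$, with $p$ a rank-one projection. Then $m$ is positive and full in $\mathfrak J$ (ideals of $C_0((0,1])\otimes\mathbb K$ are of the form $C_0(\mathsf U)\otimes\mathbb K$, and $m(t)\neq 0$ for every $t$). Forming $m_\infty$ with the constant isometries $1\otimes v_n$, where $v_n\in\mathbb B(\mathcal H)$ satisfy $\sum_n v_nv_n^\ast=1$ strictly, gives $m_\infty\colon t\mapsto t\,P$ with $P=\sum_n v_npv_n^\ast$ an infinite-rank projection. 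Evaluation $\mathfrak J\to\mathbb K$ at each $t$ is a surjective $\ast$-homomorphism, hence extends to a unital $\ast$-homomorphism $\mathrm{ev}_t\colon\multialg{\mathfrak J}\to\mathbb B(\mathcal H)$, strictly continuous on bounded sets, and $\mathrm{ev}_t(m_\infty)=tP$. Consequently every $Y$ in the closed two-sided ideal of $\multialg{\mathfrak J}$ generated by $m_\infty$ satisfies $\|\mathrm{ev}_t(Y)\|\to 0$ as $t\to 0$, so this ideal does not contain $1$; that is, $m_\infty$ is \emph{not} full in $\multialg{\mathfrak J}$. In your notation, $\sum_j g_j^\ast x g_j\geq\delta>0$ would force $\|\sum_j g_j(t)^\ast g_j(t)\|\geq \delta/t\to\infty$, killing requirement (i). So no choice of witnesses and no error estimates can complete your third paragraph: fullness of $x$ in $\mathfrak J$ --- the only hypothesis your construction uses --- does not imply the conclusion. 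A valid argument must exploit additional structure (for instance, when $m$ is a projection the statement does hold, by Kasparov-stabilisation arguments applied to the Hilbert module $\overline{m_\infty\mathfrak B}$), and correspondingly the step should either be deferred, as the paper does, to \cite[Lemma~14]{Kucerovsky-ideals} --- whose precise hypotheses must then be matched against the generality claimed here --- or proved by such module-theoretic means rather than by conjugating an approximate unit.
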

\begin{proof}
 By replacing $m$ with $m^\ast m$ we may assume that $m$ is positive. 
 The infinite repeat $m_\infty$ is clearly an element in $\multialg{\mathfrak B, \mathfrak J}$. By \cite[Lemma 14]{Kucerovsky-ideals} it follows that $\iota(m_\infty)$ is full in $\multialg{\mathfrak J}$,
 since it is also an infinite repeat in $\multialg{\mathfrak J}$.
 Since $\iota(\multialg{\mathfrak B, \mathfrak J})$ is a hereditary $C^\ast$-subalgebra of $\multialg{\mathfrak J}$ by Lemma \ref{l:multiher} it follows that $m_\infty$ is full in $\multialg{\mathfrak B, \mathfrak J}$.
 
 The ``in particular'' part follows from Lemma \ref{l:multiher} and since $\iota(\multialg{\mathfrak B, \mathfrak J})$ contains an element which is full in $\multialg{\mathfrak J}$.
\end{proof}

Very similar to Lemma \ref{l:idealmap} we have the following lemma.

\begin{lemma}\label{l:multiidealmap}
Let $\mathfrak A$ and $\mathfrak B$ be $C^\ast$-algebras, with $\mathfrak A$ separable, $\mathfrak B$ $\sigma$-unital and stable, and let $\mathscr C \subset CP(\mathfrak A,\mathfrak B)$ be a countably generated, closed operator convex cone. 
If $\Phi\colon \mathfrak A \to \multialg{\mathfrak B}$ is a $\mathscr C$-absorbing $\ast$-homomorphism weakly in $\mathscr C$, then
\[
\multialg{\mathfrak B, \mathfrak B_\mathscr{C}(a)} = \overline{\multialg{\mathfrak B} \Phi(a) \multialg{\mathfrak B}}
\]
for every $a\in \mathfrak A$.

Moreover, if $\mathfrak A$ is unital, $\mathscr C$ is non-degenerate and $\Phi \colon \mathfrak A \to \multialg{\mathfrak B}$ is a unitally $\mathscr C$-absorbing $\ast$-homomorphism weakly in $\mathscr C$, then
\[
\multialg{\mathfrak B, \mathfrak B_\mathscr{C}(a)} = \overline{\multialg{\mathfrak B} \Phi(a) \multialg{\mathfrak B}}
\]
for every $a\in \mathfrak A$.
\end{lemma}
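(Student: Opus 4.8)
The plan is to imitate the proof of Lemma \ref{l:idealmap} at the level of the multiplier algebra, the new ingredient being the fullness of infinite repeats provided by Lemma \ref{l:multifullher}. Write $\mathfrak J := \mathfrak B_\mathscr{C}(a)$, so that $\mathfrak J = \overline{\mathfrak B \Phi(a) \mathfrak B}$ by Lemma \ref{l:idealmap}, and set $\mathfrak I := \overline{\multialg{\mathfrak B} \Phi(a) \multialg{\mathfrak B}}$, the closed two-sided ideal of $\multialg{\mathfrak B}$ generated by $\Phi(a)$. The goal is $\mathfrak I = \multialg{\mathfrak B, \mathfrak J}$. Since both sides depend only on the closed ideal generated by $\Phi(a)$ and since $\mathfrak B_\mathscr{C}(a) = \mathfrak B_\mathscr{C}(a^\ast a)$, I may assume $a$ positive if convenient, but this is not essential. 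The argument is identical in the unital and non-unital cases; only the absorption hypothesis invoked below changes.

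First I would establish $\mathfrak I \subseteq \multialg{\mathfrak B, \mathfrak J}$. As $\multialg{\mathfrak B, \mathfrak J}$ is a closed ideal of $\multialg{\mathfrak B}$, it suffices to show $\Phi(a) \in \multialg{\mathfrak B, \mathfrak J}$, i.e.~$\Phi(a) \mathfrak B \subseteq \mathfrak J$. For $b \in \mathfrak B$ and an approximate identity $(e_\lambda)$ of $\mathfrak B$ we have $\Phi(a) b = \lim_\lambda e_\lambda \Phi(a) b$ with $e_\lambda \Phi(a) b \in \mathfrak B \Phi(a) \mathfrak B \subseteq \mathfrak J$, so $\Phi(a) b \in \mathfrak J$ because $\mathfrak J$ is closed. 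I also record for later that $\mathfrak J = \overline{\mathfrak B \Phi(a)\mathfrak B} \subseteq \mathfrak I$.

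For the reverse inclusion I would use that $\Phi$ absorbs its own infinite repeat. Let $\Phi_\infty = \sum_n t_n \Phi(-) t_n^\ast$ be the infinite repeat of $\Phi$, in the notation of Remark \ref{r:hilbertvsmulti}. Since $\Phi$ is weakly in $\mathscr C$, so is $\Phi_\infty$ by Remark \ref{r:infrepeat}, and in the unital case $\Phi_\infty$ is moreover unital. As $\Phi$ is (unitally) $\mathscr C$-absorbing, $\Phi \oplus \Phi_\infty \sim_{as} \Phi$, and since $\Phi \oplus \Phi_\infty$ is unitarily equivalent to $\Phi_\infty$ by uniqueness of the infinite repeat (Remark \ref{r:hilbertvsmulti}), we get $\Phi_\infty \sim_{as} \Phi$. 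In particular there is a unitary $u \in \multialg{\mathfrak B}$ with $k := u^\ast \Phi_\infty(a) u - \Phi(a) \in \mathfrak B$. Now $\Phi_\infty(a) = \sum_n t_n \Phi(a) t_n^\ast$ is the infinite repeat of the element $\Phi(a)$, so Lemma \ref{l:multifullher} shows it is full in $\multialg{\mathfrak B, \overline{\mathfrak B \Phi(a) \mathfrak B}} = \multialg{\mathfrak B, \mathfrak J}$; in particular $u^\ast \Phi_\infty(a) u \in \multialg{\mathfrak B, \mathfrak J}$, whence $k \in \multialg{\mathfrak B, \mathfrak J} \cap \mathfrak B = \mathfrak J \subseteq \mathfrak I$. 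Since $\Phi(a) \in \mathfrak I$ as well, $u^\ast \Phi_\infty(a) u = \Phi(a) + k \in \mathfrak I$, and therefore $\Phi_\infty(a) \in \mathfrak I$ because $\mathfrak I$ is a $\multialg{\mathfrak B}$-ideal and $u$ is a unitary of $\multialg{\mathfrak B}$. Thus $\mathfrak I$ is an ideal of $\multialg{\mathfrak B, \mathfrak J}$ containing the full element $\Phi_\infty(a)$, which forces $\multialg{\mathfrak B, \mathfrak J} \subseteq \mathfrak I$, and combining the two inclusions gives the identity.

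The main obstacle is precisely this reverse inclusion: observing that $\mathfrak I$ and $\multialg{\mathfrak B, \mathfrak J}$ meet $\mathfrak B$ in the same ideal $\mathfrak J$ is not enough to conclude they coincide, since many ideals of $\multialg{\mathfrak B}$ intersect $\mathfrak B$ in $\mathfrak J$. What rescues the argument is the fullness of \emph{infinite repeats} in Lemma \ref{l:multifullher}, together with the fact that absorption of the infinite repeat lets me transport the full element $\Phi_\infty(a)$ into $\mathfrak I$ modulo $\mathfrak B$ — the element $\Phi(a)$ by itself need not be full in $\multialg{\mathfrak B, \mathfrak J}$, so passing to $\Phi_\infty(a)$ is the crucial step.
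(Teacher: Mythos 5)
Your proof is correct and follows essentially the same route as the paper's: both arguments pass to the infinite repeat $\Phi_\infty$, use Lemma \ref{l:idealmap} to identify $\mathfrak B_\mathscr{C}(a)$ with $\overline{\mathfrak B\Phi(a)\mathfrak B}$, invoke Lemma \ref{l:multifullher} for fullness of $\Phi_\infty(a)$ in $\multialg{\mathfrak B,\mathfrak B_\mathscr{C}(a)}$, and use absorption to transfer this back to $\Phi(a)$. The only (cosmetic) difference is that the paper notes $\Phi_\infty$ is itself $\mathscr C$-absorbing and uses the norm-limit from the asymptotic unitary equivalence to equate the two generated ideals, whereas you work with a single unitary and the identity $\multialg{\mathfrak B,\mathfrak J}\cap\mathfrak B=\mathfrak J$.
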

\begin{proof}
 The unital and non-unital case are identical, so we only prove the non-unital case. 
 Since $\Phi(a)$ and $\Phi(a^\ast a)$ generate the same two-sided, closed ideal, and since $\mathfrak B_\mathscr{C}(a) = \mathfrak B_{\mathscr C}(a^\ast a)$ by Lemma \ref{l:idealmap}, we may restrict to the case where $a\in \mathfrak A$ is positive.
 
 Let $t_1,t_2,\dots$ be isometries in $\multialg{\mathfrak B}$ such that $\sum_{n=1}^\infty t_nt_n^\ast$ converges strictly to $1$.
 The infinite repeat $\Phi_\infty = \sum_{n=1}^\infty t_n \Phi(-) t_n^\ast$ is also $\mathscr C$-absorbing and weakly in $\mathscr C$, and
 \[
  \overline{\multialg{\mathfrak B} \Phi(a) \multialg{\mathfrak B}} = \overline{\multialg{\mathfrak B} \Phi_\infty(a) \multialg{\mathfrak B}},
 \]
 for all $a\in \mathfrak A$, since $\Phi$ and $\Phi_\infty$ are asymptotically unitarily equivalent. 
 Note that $\mathfrak B_\mathscr{C}(a) = \overline{\mathfrak B \Phi_\infty(a) \mathfrak B}$ by Lemma \ref{l:idealmap}. 
 By Lemma \ref{l:multifullher}, $\Phi_\infty(a)$ is full in $\multialg{\mathfrak B, \mathfrak B_\mathscr{C}(a)}$.
\end{proof}

\begin{corollary}\label{c:Xfullmap}
 Let $\mathsf X$ be a finite space and let $\mathfrak A$ and $\mathfrak B$ be $\mathsf X$-$C^\ast$-algebras with $\mathfrak A$ separable and lower semicontinuous, and $\mathfrak B$ stable, $\mathsf X$-$\sigma$-unital, and continuous, 
and let $\mathscr C = CP_\rnuc(\mathsf X;\mathfrak A, \mathfrak B)$. 
Then there exists a $\mathscr C$-absorbing $\ast$-homomorphism $\Phi\colon \mathfrak A \to \multialg{\mathfrak B}$ weakly in $\mathscr C$, and any such $\ast$-homomorphism is $\mathsf X$-full.

Moreover, if $\mathfrak A$ is unital and $\mathfrak B(\mathsf U_{1_\mathfrak{A}}) = \mathfrak B$, then there exists a unitally $\mathscr C$-absorbing $\ast$-homomorphism $\Phi \colon \mathfrak A \to \multialg{\mathfrak B}$ weakly in $\mathscr C$,
and any such $\ast$-homomorphism is $\mathsf X$-full.
\end{corollary}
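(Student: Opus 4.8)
The plan is to combine the existence theorem with the structural lemmas of this subsection. First I would reduce to the case where $\mathsf X$ is $T_0$, which is harmless by Remark~\ref{r:soberspace}, since passing to the sober space changes neither $\mathbb O(\mathsf X)$, the $\mathsf X$-$C^\ast$-algebra structures, nor the cone $\mathscr C$. Since $\mathfrak A$ is separable, Lemma~\ref{l:statesgen} shows that $\mathscr C = CP_\rnuc(\mathsf X;\mathfrak A,\mathfrak B)$ is countably generated, so Theorem~\ref{t:absrep} in the multiplier-algebra picture (Remark~\ref{r:hilbertvsmulti}, using that $\mathfrak B$ is stable) produces a $\mathscr C$-absorbing $\ast$-homomorphism $\Phi\colon\mathfrak A\to\multialg{\mathfrak B}$ weakly in $\mathscr C$. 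For the unital statement, the hypothesis $\mathfrak B(\mathsf U_{1_\mathfrak A})=\mathfrak B$ gives $\mathfrak B_\mathscr{C}(1_\mathfrak A)=\mathfrak B$ by Proposition~\ref{p:determineB_C}; as $1_\mathfrak A$ is strictly positive this is precisely non-degeneracy of $\mathscr C$, so Theorem~\ref{t:absrep} again yields a unitally $\mathscr C$-absorbing such $\Phi$.

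It then remains to show that an \emph{arbitrary} $\mathscr C$-absorbing (resp.\ unitally $\mathscr C$-absorbing) $\ast$-homomorphism $\Phi$ weakly in $\mathscr C$ is $\mathsf X$-full. I would first verify $\mathsf X$-equivariance: for positive $a\in\mathfrak A(\mathsf U)$ and $b\in\mathfrak B$, the map $b^\ast\Phi(-)b$ lies in $\mathscr C\subseteq CP(\mathsf X;\mathfrak A,\mathfrak B)$, so $b^\ast\Phi(a)b\in\mathfrak B(\mathsf U)$; since $(\Phi(a)^{1/2}b)^\ast(\Phi(a)^{1/2}b)\in\mathfrak B(\mathsf U)$ and $\mathfrak B(\mathsf U)$ is a closed ideal, we get $\Phi(a)^{1/2}b\in\mathfrak B(\mathsf U)$ and hence $\Phi(a)b\in\mathfrak B(\mathsf U)$, giving $\Phi(a)\in\multialg{\mathfrak B,\mathfrak B(\mathsf U)}=\multialg{\mathfrak B}(\mathsf U)$. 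Now fix $a$; as $\Phi(a)$ and $\Phi(a^\ast a)$ generate the same ideal and $\mathsf U_a=\mathsf U_{a^\ast a}$, I may assume $a\geq 0$. Writing $\mathfrak J:=\mathfrak B(\mathsf U_a)$, Lemma~\ref{l:idealmap} and Proposition~\ref{p:determineB_C} identify $\mathfrak J=\mathfrak B_\mathscr{C}(a)=\overline{\mathfrak B\Phi(a)\mathfrak B}$, with $\Phi(a)\in\multialg{\mathfrak B,\mathfrak J}$. The goal is that $\Phi(a)$ is full in $\multialg{\mathfrak B}(\mathsf U_a)=\multialg{\mathfrak B,\mathfrak J}$.

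To establish this I would split the ideal $I:=\overline{\multialg{\mathfrak B,\mathfrak J}\,\Phi(a)\,\multialg{\mathfrak B,\mathfrak J}}$ of $\multialg{\mathfrak B,\mathfrak J}$ into its intersection with $\mathfrak J$ and its image in the corona. For the ideal part, an approximate unit $(e_\lambda)$ of $\mathfrak J$ gives $e_\lambda b_1\,\Phi(a)\,b_2 e_\mu\to b_1\Phi(a)b_2$ with $e_\lambda b_1,\,b_2 e_\mu\in\mathfrak J$, whence $\overline{\mathfrak J\Phi(a)\mathfrak J}=\overline{\mathfrak B\Phi(a)\mathfrak B}=\mathfrak J$ and so $\mathfrak J\subseteq I$. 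For the corona part, Lemma~\ref{l:multifullher} applied to $m=\Phi(a)$ shows the infinite repeat $\Phi_\infty(a)=m_\infty$ is full in $\multialg{\mathfrak B,\mathfrak J}$, so $\pi(\Phi_\infty(a))$ is full in $\corona{\mathfrak B,\mathfrak J}$; since $\Phi$ and $\Phi_\infty$ are both $\mathscr C$-absorbing and weakly in $\mathscr C$ they are asymptotically unitarily equivalent, so their Busby maps satisfy $\pi\circ\Phi=\Ad w\circ(\pi\circ\Phi_\infty)$ for a unitary $w\in\corona{\mathfrak B}$, and fullness in the ideal $\corona{\mathfrak B,\mathfrak J}$ transfers to $\pi(\Phi(a))$. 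Thus $\pi(I)=\corona{\mathfrak B,\mathfrak J}$, i.e.\ $I+\mathfrak J=\multialg{\mathfrak B,\mathfrak J}$; combined with $\mathfrak J\subseteq I$ this forces $I=\multialg{\mathfrak B,\mathfrak J}$. The unital case is identical, invoking the unital versions of Lemmas~\ref{l:idealmap} and~\ref{l:multiidealmap}.

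The main obstacle is precisely this last transfer. The relative multiplier algebra $\multialg{\mathfrak B,\mathfrak J}$ is non-unital, and fullness of an element \emph{in} $\multialg{\mathfrak B,\mathfrak J}$ is strictly stronger than generating $\multialg{\mathfrak B,\mathfrak J}$ as an ideal of $\multialg{\mathfrak B}$, which is all that Lemma~\ref{l:multiidealmap} supplies directly; moreover fullness is not preserved under perturbation by elements of $\mathfrak J$, so one cannot simply pass from $\Phi_\infty$ to $\Phi$ by approximation. Decomposing $I$ into its corona image and its intersection with $\mathfrak J$ is exactly what lets me exploit Lemma~\ref{l:multifullher} at the level of the infinite repeat $\Phi_\infty$ (where it applies) while only needing the \emph{exact} unitary equivalence of Busby maps in the corona for the given $\Phi$.
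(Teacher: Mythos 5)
Your argument is correct, and its existence half coincides with the paper's (Lemma~\ref{l:statesgen} plus Theorem~\ref{t:absrep}, with non-degeneracy in the unital case read off from Proposition~\ref{p:determineB_C}). Where you diverge is the fullness half, and the divergence is driven by a misconception rather than a real difficulty: for an element $x$ of a closed two-sided ideal $I$ of a $C^\ast$-algebra $A$, the closed ideal of $I$ generated by $x$ equals the closed ideal of $A$ generated by $x$, because any closed two-sided ideal $J$ of $I$ is automatically an ideal of $A$ (for $a\in A$ and $x\in J$, write $ax=\lim\,(ax)e_\lambda$ with $(e_\lambda)$ an approximate unit for $J$, and note $ax\in I$, so $(ax)e_\lambda\in IJ\subseteq J$). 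Thus ``$\Phi(a)$ is full in $\multialg{\mathfrak B}(\mathsf U_a)=\multialg{\mathfrak B,\mathfrak B(\mathsf U_a)}$'' is \emph{the same statement} as $\overline{\multialg{\mathfrak B}\Phi(a)\multialg{\mathfrak B}}=\multialg{\mathfrak B}(\mathsf U_a)$ --- your claim that the former is strictly stronger than the latter is false. Granting this, the paper's proof is immediate: Lemma~\ref{l:multiidealmap} gives $\overline{\multialg{\mathfrak B}\Phi(a)\multialg{\mathfrak B}}=\multialg{\mathfrak B,\mathfrak B_{\mathscr C}(a)}$, Proposition~\ref{p:determineB_C} gives $\mathfrak B_{\mathscr C}(a)=\mathfrak B(\mathsf U_a)$, and one is done. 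Your splitting of $I=\overline{\multialg{\mathfrak B,\mathfrak J}\,\Phi(a)\,\multialg{\mathfrak B,\mathfrak J}}$ into the two conditions $\mathfrak J\subseteq I$ (via Lemma~\ref{l:idealmap} and an approximate unit of $\mathfrak J$) and $\pi(I)=\corona{\mathfrak B,\mathfrak J}$ (via Lemma~\ref{l:multifullher} applied to $\Phi_\infty(a)$ together with the exact unitary equivalence of the Busby maps of $\Phi$ and $\Phi_\infty$) is a valid workaround, but it in effect re-proves Lemma~\ref{l:multiidealmap}, whose own proof uses exactly those two ingredients. So your route costs roughly a page and buys nothing beyond the paper's two lines; the one fact worth internalizing is that in the $C^\ast$-setting an ideal of an ideal is an ideal, which collapses your argument to the paper's.
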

\begin{proof}
 For the unital case, note that $\mathfrak B_{\mathscr C}(1_\mathfrak{A}) = \mathfrak B(\mathsf U_{1_\mathfrak{A}}) = \mathfrak B$ by Proposition \ref{p:determineB_C}. This is equivalent to $\mathscr C$ being non-degenerate.
 
 Thus the existence of a (unitally) $\mathscr C$-absorbing $\ast$-homomorphism $\Phi$ weakly in $\mathscr C$ follows from Lemma \ref{l:statesgen} and Theorem \ref{t:absrep}. By Proposition \ref{p:determineB_C} and Lemma \ref{l:multiidealmap} it follows that
 \[
  \overline{\multialg{\mathfrak B} \Phi(a) \multialg{\mathfrak B}} = \multialg{\mathfrak B, \mathfrak B_\mathscr{C}(a)} = \multialg{\mathfrak B, \mathfrak B(\mathsf U_a)} = \multialg{\mathfrak B}(\mathsf U_a)
 \]
 for every $a\in \mathfrak A$. Thus $\Phi$ is $\mathsf X$-full.
\end{proof}


\subsection{The purely large problem over finite spaces}

In \cite{ElliottKucerovsky-extensions} an important part of the proof of the main theorem was, that every purely large extension had a certain purely infinite type comparison property. 
The following lemma seems to be the closest we can get to a similar result with respect to closed operator convex cones.

\begin{lemma}\label{Xpllemma}
Let $0 \to \mathfrak B \to \mathfrak E \xrightarrow{p} \mathfrak A \to 0$ be a $\mathscr C$-purely large extension. 
Let $\epsilon >0$, $x\in \mathfrak E$ be positive with $\| x \|=1$, and let that $b\in \mathfrak B$ with $\| b\| =1$. Suppose that $b \in \mathfrak B_\mathscr{C} (p(g(x)))$ for any positive, continuous function $g\colon [0,1] \to [0,1]$ with $g(0)=0$ and $g(1)=1$. 
Then there is a $b_0 \in \mathfrak B$ with $\| b_0 \| \leq 1$ such that
\[
\| b - b_0^\ast x b_0\| < \epsilon.
\]
\end{lemma}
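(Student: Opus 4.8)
The plan is to reduce the whole statement to a single \emph{contractive} comparison carried out inside a stable, full subalgebra, and then to transport the result back to $x$ by functional calculus. First I would note that $b_0^\ast x b_0$ is positive, so the assertion is only used (and is only non-vacuous) for positive $b$; thus I assume $b\ge 0$. Since $\|x\|=1$ and $x\ge 0$ we have $1\in\operatorname{spec}(x)$. Now fix a small $\eta>0$ and choose a continuous $k\colon[0,1]\to[0,1]$ with $k\equiv 0$ on $[0,1-\eta]$ and $k(1)=1$, and set $g(t)=t\,k(t)^2$. Then $g(0)=0$, $g(1)=1$, and $0\le g(t)\le t\le 1$, so $g$ is one of the admissible functions of the hypothesis; moreover $g(x)=k(x)\,x\,k(x)$, $g(x)$ is supported on the top of the spectrum of $x$, and $\|g(x)\|=g(1)=1$. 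By hypothesis $b\in\mathfrak{B}_{\mathscr C}(p(g(x)))=:\mathfrak{J}$, and by $\mathscr C$-pure largeness (Definition \ref{d:Cpl}) the hereditary subalgebra $\overline{g(x)\mathfrak{B}g(x)}$ contains a $\sigma$-unital, stable subalgebra $\mathfrak{D}$ that is full in $\mathfrak{J}$.

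The heart of the argument, and the step I expect to be the main obstacle, is to produce $u\in\mathfrak{B}$ with $\|u\|\le 1$ and $\|u^\ast g(x)u-b\|<\epsilon$. The idea is as follows. Fullness of $\mathfrak{D}$ in $\mathfrak{J}$ lets me approximate the positive element $b$ by finite sums of conjugates $\sum_i r_i^\ast d_i r_i$ with $d_i\in\mathfrak{D}_+$ and $r_i\in\mathfrak{B}$; since each $d_i$ lies in $\overline{g(x)\mathfrak{B}g(x)}$ it is Cuntz subequivalent to $g(x)$, so $b\precsim g(x)$. The stability of $\mathfrak{D}$ then supplies, inside $\overline{g(x)\mathfrak{B}g(x)}$, infinitely many mutually orthogonal copies of a full positive element of $\mathfrak{D}$; this gives $g(x)$ an ``infinite multiplicity'' that plays the role of proper infiniteness, and it is exactly what allows the finite sum to be absorbed into a single conjugate and the comparison $b\precsim g(x)$ to be realised by a genuine \emph{contraction}, i.e.\ $b=\lim_n u_n^\ast g(x)u_n$ with $\|u_n\|\le 1$.

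This contractive norm control --- not merely Cuntz subequivalence --- is the delicate point: crude norm estimates only yield $\|u_n\|\gtrsim 1$, since composing comparisons multiplies norms. I would obtain the contraction from the Hjelmborg--R\o rdam stability criterion \cite{HjelmborgRordam-stability} together with the orthogonal-copy/Cuntz-comparison technology, using crucially that $\|g(x)\|=1=\|b\|$ with $1\in\operatorname{spec}(x)$, exactly in the spirit of the comparison argument underlying \cite[Lemma~7]{ElliottKucerovsky-extensions}. This is where essentially all of the real work lies.

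Granting such a $u$, the conclusion is immediate. Put $b_0:=k(x)u\in\mathfrak{B}$. Then
\[
 b_0^\ast x b_0=u^\ast k(x)\,x\,k(x)\,u=u^\ast g(x)u\approx_\epsilon b,
\]
while $\|b_0\|\le\|k(x)\|\,\|u\|\le 1$, since $\|k\|_\infty\le 1$ and $\|u\|\le 1$. The freedom in $g$ is used twice: through the hypothesis it guarantees that $b$ sits in the ideal $\mathfrak{J}$ in which the stable subalgebra $\mathfrak{D}$ is full, and through the factorisation $g(x)=k(x)xk(x)$ with $\|k\|_\infty\le1$ it converts a contractive conjugate of $g(x)$ into a contractive conjugate of $x$, which is precisely what is needed to keep $\|b_0\|\le 1$.
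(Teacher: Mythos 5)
Your outer reductions are fine: restricting to positive $b$ is harmless (the paper's own argument implicitly does the same), and the factorisation $b_0=k(x)u$, which converts a contractive conjugate of $g(x)=k(x)\,x\,k(x)$ into a contractive conjugate of $x$, is a clean observation. The gap is exactly in the step you flag as the heart: producing a contraction $u$ with $\|u^\ast g(x)u-b\|<\epsilon$, and the route you indicate for it fails. Running the Elliott--Kucerovsky argument inside the stable algebra $\mathfrak D\subset\overline{g(x)\mathfrak Bg(x)}$ (choose $d\in\mathfrak D_+$ and $b_i\in\mathfrak B$ with $\|b-\sum_ib_i^\ast db_i\|$ small, and $V_i,P\in\multialg{\mathfrak D}$ with $V_i^\ast V_j=\delta_{ij}P$, $Pd=d$) yields $w=\sum_iV_id^{1/2}b_i$ satisfying $w^\ast w=\sum_ib_i^\ast db_i\approx b$ exactly, and the hypothesis $\|b\|=1$ lets you normalise $w$ to a contraction. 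But this controls $w^\ast w$, \emph{not} $w^\ast g(x)w$. To identify the two you would need $g(x)$ to act, approximately in norm, as a unit on $\overline{g(x)\mathfrak Bg(x)}$, and it does not: $\|(1-g(x))g(x)\|=\sup_{t\in\spec(x)}\bigl(1-g(t)\bigr)g(t)$ is in general of size $1/4$, not $O(\epsilon)$. Worse, your framework forces $g(t)=t\,k(t)^2\le t<1$ for $t<1$, so no choice of $k$ can remedy this. Neither Cuntz subequivalence $b\precsim g(x)$ nor the Hjelmborg--R{\o}rdam criterion supplies the missing identity: the latter is a criterion for \emph{stability} of a $C^\ast$-algebra (it is what this paper uses in Proposition \ref{purelylargeprop} to verify stability of $\overline{m''\mathfrak Bm''}$), not a device converting Cuntz comparison into norm-controlled conjugation.

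What is missing is the paper's two-function trick. Take $f,g\colon[0,1]\to[0,1]$ with $f\equiv1$ on $[1-\epsilon/3,1]$ and affine below, and $g$ supported in $[1-\epsilon/3,1]$ with $g(1)=1$; then $fg=g$ and $\|f(x)-x\|\le\epsilon/3$. Applying $\mathscr C$-pure largeness to $\overline{g(x)\mathfrak Bg(x)}$ and running the construction above, the element $w$ has its left leg in $\mathfrak D\subset\overline{g(x)\mathfrak Bg(x)}$, on which $f(x)$ acts as an \emph{exact} unit (since $fg=g$), so $f(x)w=w$ and hence $b_0^\ast xb_0\approx_{\epsilon/3}b_0^\ast f(x)b_0=b_0^\ast b_0\approx b$ for $b_0=w/\|w\|$. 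In other words, the element one conjugates ($f(x)$, which is $\epsilon/3$-close to $x$) must be a unit for the hereditary subalgebra in which the stable full subalgebra sits, and these two roles cannot be played by a single function: your $g$ can never be identically $1$ near the top of the spectrum. Your argument could be repaired in the same spirit by introducing a second admissible function $h$ supported where $g\ge1-\delta$ and applying pure largeness to $\overline{h(x)\mathfrak Bh(x)}$ instead, since then $\|(1-g(x))y\|\le\delta\|y\|$ for all $y\in\overline{h(x)\mathfrak Bh(x)}$; but with the single hereditary subalgebra $\overline{g(x)\mathfrak Bg(x)}$ the step where, as you say, all of the real work lies is not just unproven --- it is false as a strategy.
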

\begin{proof}
Let $f,g \colon [0,1] \to [0,1]$ be the continuous functions
\[
f(t) = \left\{ \begin{array}{ll} 0, & \text{ for } t=0 \\ \text{affine}, & \text{ for }0 \leq t \leq 1-\epsilon/3 \\ 1 & \text{ for } 1-\epsilon/3 \leq t \leq 1 \end{array} \right. , 
\qquad g(t) = \left\{ \begin{array}{ll} 0, & \text{ for } 0 \leq t \leq 1-\epsilon/3 \\ \text{affine}, & \text{ for } 1-\epsilon/3 \leq t \leq 1 \\ 1, & \text{ for } t=1. \end{array} \right.
\]
Then $\| f(x) - x\| \leq \epsilon/3$ and $f(x)g(x) = g(x)f(x) = g(x)$. Since the extension is purely large with respect to $\mathfrak B_\mathscr{C}$, the $C^\ast$-algebra $\overline{g(x) \mathfrak B g(x)}$ contains a stable, $\sigma$-unital
$C^\ast$-subalgebra $\mathfrak D$
which is full in $\mathfrak B_\mathscr{C}(p(g(x)))$. Since $b\in \mathfrak B_\mathscr{C}(p(g(x)))$ we may, as in the proof of \cite[Lemma 7]{ElliottKucerovsky-extensions}, find $d\in \mathfrak D_+$, $b_1,\dots ,b_n\in \mathfrak B$ such 
that $\| b - \sum b_i^\ast d b_i \| < \epsilon/3$, and $V_1,\dots ,V_n, P\in \multialg{\mathfrak D}$ such that $P$ is a projection for which $Pd = d$, and $V_i^\ast V_j = \delta_{ij} P$. Define
\[
b_0' := \sum_{i=1}^n V_i d^{1/2} b_i, \qquad b_0 = b_0' /\|b_0'\|.
\]
We get that
\[
b_0'^\ast b_0' = \sum_{i,j=1}^n b_i^\ast d^{1/2} V_i^\ast V_j d^{1/2} b_j = \sum_{i=1}^n b_i^\ast d b_i.
\]
Thus $\|b_0'^\ast b_0' - b \| < \epsilon/3$ which implies, since $\| b\| = 1$, that $| \| b_0'^\ast b_0' \| - 1 | < \epsilon/3$. Hence 
\[
\| b_0^\ast b_0 - b_0'^\ast b_0' \| = \left\| b_0'^\ast b_0' \left( \frac{1}{\| b_0'^\ast b_0'\|} - 1 \right) \right\| \leq | 1- \|b_0'^\ast b_0'\|| < \epsilon/3.
\]

Note that $f(x) d' = d' f(x) = d'$ for any $d' \in \mathfrak D$. Hence $b_0^\ast f(x) b_0 = b_0^\ast b_0$ and thus
\[
b_0^\ast x b_0 \approx_{\epsilon/3}  b_0^\ast f(x) b_0 = b_0^\ast b_0 \approx_{\epsilon/3} b_0'^\ast b_0' \approx_{\epsilon/3} b.
\]
\end{proof}

Given an $\mathsf X$-$C^\ast$-algebra $\mathfrak A$, we may give the forced unitisation $\mathfrak A^\dagger$ an $\mathsf X$-$C^\ast$-algebra structure by letting $\mathfrak A^\dagger(\mathsf U)= \mathfrak A(\mathsf U)$ for $\mathsf U\in \mathbb O(\mathsf X)$,
when $\mathsf U \neq \mathsf X$, and $\mathfrak A^\dagger(\mathsf X) = \mathfrak A^\dagger$. Note that this construction may ruin certain properties which the action of $\mathsf X$ on $\mathfrak A$ had, e.g.~(finite) upper semicontinuity. However,
lower semicontinuity will be preserved.

We need the following lemma. For notation see Lemma \ref{unitalcone}.

\begin{lemma}\label{l:unitisationcone}
 Let $\mathsf X$ be a topological space, and let $\mathfrak A$ and $\mathfrak B$ be $\mathsf X$-$C^\ast$-algebras. Suppose that $\mathfrak B (\mathsf X) = \mathfrak B$. Then 
 \[
  CP_\rnuc(\mathsf X;\mathfrak A,\mathfrak B)^\dagger = CP_\rnuc(\mathsf X;\mathfrak A^\dagger,\mathfrak B).
 \]
\end{lemma}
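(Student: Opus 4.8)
The plan is to reduce the statement to a purely non-equivariant fact about nuclearity and forced unitisations, and then prove that fact directly. Recall from Lemma \ref{unitalcone} that $CP_\rnuc(\mathsf X;\mathfrak A,\mathfrak B)^\dagger$ consists of those $\phi \in CP(\mathfrak A^\dagger,\mathfrak B)$ with $\phi|_\mathfrak A \in CP_\rnuc(\mathsf X;\mathfrak A,\mathfrak B)$, so I would fix $\phi \in CP(\mathfrak A^\dagger,\mathfrak B)$ and compare the two membership conditions term by term. First, equivariance: for $\mathsf U \neq \mathsf X$ one has $\mathfrak A^\dagger(\mathsf U) = \mathfrak A(\mathsf U)$, so $\phi(\mathfrak A^\dagger(\mathsf U)) \subseteq \mathfrak B(\mathsf U)$ is the same as $\phi|_\mathfrak A(\mathfrak A(\mathsf U)) \subseteq \mathfrak B(\mathsf U)$, while for $\mathsf U = \mathsf X$ the condition $\phi(\mathfrak A^\dagger) \subseteq \mathfrak B(\mathsf X) = \mathfrak B$ holds automatically precisely because of the hypothesis $\mathfrak B(\mathsf X) = \mathfrak B$. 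Hence $\phi$ is $\mathsf X$-equivariant if and only if $\phi|_\mathfrak A$ is; this is the only place the hypothesis enters.

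The nuclearity conditions I would match using the canonical identification $\mathfrak A^\dagger/\mathfrak A(\mathsf U) \cong (\mathfrak A/\mathfrak A(\mathsf U))^\dagger$ (valid for $\mathsf U \neq \mathsf X$, since $\mathfrak A^\dagger = \mathfrak A \oplus \mathbb C 1$ and $\mathfrak A(\mathsf U)\subseteq \mathfrak A$). Under it, $[\phi]_\mathsf U \colon (\mathfrak A/\mathfrak A(\mathsf U))^\dagger \to \mathfrak B/\mathfrak B(\mathsf U)$ is a completely positive extension of $[\phi|_\mathfrak A]_\mathsf U$ sending the adjoined unit to $\phi(1) + \mathfrak B(\mathsf U)$, while for $\mathsf U = \mathsf X$ both relevant maps are zero maps into $\mathfrak B/\mathfrak B = 0$ and nothing is to be checked. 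Thus everything follows once I establish the Key Lemma: for c.p.\ maps between arbitrary $C^\ast$-algebras, $\psi\colon \mathfrak C \to \mathfrak D$ is nuclear if and only if every c.p.\ map $\tilde\psi\colon \mathfrak C^\dagger \to \mathfrak D$ with $\tilde\psi|_\mathfrak C = \psi$ is nuclear. Applying it with $(\mathfrak C,\mathfrak D) = (\mathfrak A,\mathfrak B)$ matches ``$\phi$ nuclear'' with ``$\phi|_\mathfrak A$ nuclear'', and with $(\mathfrak C,\mathfrak D) = (\mathfrak A/\mathfrak A(\mathsf U),\mathfrak B/\mathfrak B(\mathsf U))$ matches ``$[\phi]_\mathsf U$ nuclear'' with ``$[\phi|_\mathfrak A]_\mathsf U$ nuclear'' for each $\mathsf U \neq \mathsf X$, giving the desired equality of cones.

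For the Key Lemma the easy direction is that nuclearity passes to subalgebras: restricting a factorisation $\tilde\psi \approx \rho\circ\sigma$ to $\mathfrak C$ factorises $\psi$. For the converse I would argue in three moves. First, nuclearity of a map is unaffected by enlarging the codomain $\mathfrak D$ to $\mathfrak D^\dagger$ (the forward direction composes with the inclusion; the reverse compresses a factorisation by an approximate unit of $\mathfrak D$, using that $\psi$ and $\tilde\psi$ land in $\mathfrak D$), so I may assume $\mathfrak D$ is unital. Second, after scaling $\tilde\psi$ to be contractive so that $d := \tilde\psi(1) \leq 1_\mathfrak D$, I replace $\tilde\psi$ by the unital map $\hat\psi := \tilde\psi + q(-)(1_\mathfrak D - d)$, where $q\colon \mathfrak C^\dagger \to \mathbb C$ is the canonical character; the added map is factorable, so $\hat\psi$ and $\tilde\psi$ differ by a nuclear map, and since $CP_\nuc$ is a closed operator convex cone (Example \ref{e:nuclearmaps}) which is hereditary (Corollary \ref{c:hercone}), $\tilde\psi$ is nuclear if and only if $\hat\psi$ is. This reduces matters to a \emph{unital} $\tilde\psi$ with $d = 1_\mathfrak D$. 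Finally, given $\psi$ nuclear I invoke Lemma \ref{l:classicalnuc} to approximate $\psi$ on a finite set by $\rho\circ\sigma$ with $\sigma\colon \mathfrak C \to M_n$ contractive c.p.\ and $\rho\colon M_n \to \mathfrak D$ c.p.\ of norm $\leq \|\psi\| \leq 1$, and I define $\tilde\sigma := \sigma^\dagger \oplus q \colon \mathfrak C^\dagger \to M_n \oplus \mathbb C$ and $\tilde\rho(T\oplus\mu) := \rho(T) + \mu(1_\mathfrak D - \rho(1_{M_n}))$; here $\|\rho\| \leq 1$ forces $1_\mathfrak D - \rho(1_{M_n}) \geq 0$, so $\tilde\rho$ is c.p. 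A direct computation gives $\tilde\rho(\tilde\sigma(c + \lambda 1)) = \rho(\sigma(c)) + \lambda 1_\mathfrak D$, which approximates $\tilde\psi(c+\lambda 1) = \psi(c) + \lambda 1_\mathfrak D$ on the finite set, whence $\tilde\psi$ is nuclear. The main obstacle is exactly this positivity point: without first forcing $d = 1_\mathfrak D$, the natural definition of $\tilde\rho$ would require $d - \rho(1_{M_n}) \geq 0$, which can genuinely fail, and the two unitisation reductions are precisely what make the unit-defect term manifestly positive.
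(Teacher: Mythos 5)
Your proof is correct, and its overall skeleton is the same as the paper's: both match membership in the two cones term by term (equivariance and the case $\mathsf U=\mathsf X$ being automatic from $\mathfrak B(\mathsf X)=\mathfrak B$, the quotients handled through $\mathfrak A^\dagger/\mathfrak A^\dagger(\mathsf U)\cong(\mathfrak A/\mathfrak A(\mathsf U))^\dagger$), so that everything rests on the single fact that a c.p.\ map on a forced unitisation is nuclear once its restriction is. The genuine difference is how this crux is proved. The paper does it in one stroke, directly for a possibly non-unital codomain: given $F=F'\cup\{1\}$ and $\epsilon>0$, choose a positive contraction $a\in\mathfrak A$ with $aba\approx b$ for $b\in F'$, approximate $\phi|_{\mathfrak A}$ by a factorable map $\tilde\phi$ on the set $\{aba : b\in F\}$, and check that $(\tilde\phi\circ\Ad a)+q(\cdot)\,\phi(1-a^2)$ does the job, the point being that the unit defect $\phi(1-a^2)=\phi(1)-\phi(a^2)$ is positive simply because $\phi$ is positive and $1-a^2\geq 0$ in $\mathfrak A^\dagger$. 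You instead make first the codomain and then the map unital, and build explicit factorisations through $M_n\oplus\mathbb C$; positivity of your unit defects $1_{\mathfrak D}-d$ and $1_{\mathfrak D}-\rho(1_{M_n})$ comes from contractivity and from the norm control $\|\rho\|\leq\|\psi\|$ of Lemma \ref{l:classicalnuc}, which your argument genuinely needs (the paper requests that bound too, but its estimates never actually use it). Both routes are valid; what each buys: the paper's cutoff trick is shorter and self-contained, needing nothing beyond Lemma \ref{l:classicalnuc}, whereas undoing your second reduction requires Corollary \ref{c:hercone} (hereditariness of the nuclear cone), which in this paper rests on Kirchberg's Theorem \ref{t:Kirchbergcone} --- considerably heavier machinery than the statement calls for, though perfectly legitimate since it is available. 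Two small remarks: your approximants factor through $M_n\oplus\mathbb C$ rather than a matrix algebra, which is harmless since composing with the block-diagonal embedding $M_n\oplus\mathbb C\hookrightarrow M_{n+1}$ and the compression back gives a factorisation through $M_{n+1}$ (the paper is equally informal on this point); and the ``if'' direction of your Key Lemma should be read as ``some c.p.\ extension of $\psi$ is nuclear implies $\psi$ is nuclear'', which is what your restriction argument proves and is all the application uses.
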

\begin{proof}
 If $\phi \in CP_\rnuc(\mathsf X;\mathfrak A^\dagger,\mathfrak B)$ then $\phi |_\mathfrak{A}$ is residually $\mathsf X$-nuclear. In fact, if $\mathsf U\in \mathbb O(\mathsf X)\setminus \{ \mathsf X\}$, then the induced map 
 $\mathfrak A^\dagger/ \mathfrak A^\dagger(\mathsf U) \to \mathfrak B/ \mathfrak B(\mathsf U)$ is nuclear. Since $\mathfrak A^\dagger/\mathfrak A^\dagger(\mathsf U) \cong (\mathfrak A/\mathfrak A(\mathsf U)) ^\dagger$, the restriction to
 $\mathfrak A/\mathfrak A(\mathsf U)$ is nuclear. Hence $\phi \in CP_\rnuc(\mathsf X;\mathfrak A,\mathfrak B)^\dagger$.
 
 Now let us show, that a positive map $\phi \colon \mathfrak A^\dagger \to \mathfrak B$ is nuclear if $\phi|_\mathfrak{A}$ is nuclear. Given a finite subset
 $F \subset \mathfrak A^\dagger$ and $\epsilon>0$, we may assume that $F = F' \cup \{ 1\}$ where $F'\subset \mathfrak A$. Let $a\in \mathfrak A$ be a positive contraction such that $\| a b a - b \| < \epsilon/(2\| \phi\|)$ for all $b \in F'$. Let
 $\tilde \phi \colon \mathfrak A \to \mathfrak B$ be a c.p.~map factoring by c.p.~maps through a matrix algebra, such that $\| \tilde \phi \| \leq \| \phi \|$ and $\| \tilde \phi(aba) - \phi(aba)\| < \epsilon/2$ for all $b \in F$. 
 Now let $\psi$ be the composition $\mathfrak A^\dagger \twoheadrightarrow \mathbb C \to \mathfrak B$ given by $\psi(1) = \phi(1 - a^2)$, which is completely positive since $\phi$ is positive. 
 Then the sum $(\tilde \phi \circ \Ad a) + \psi$ factors through a finite dimensional $C^\ast$-algebra and approximates $\phi$ up to $\epsilon$ on $F$. Hence $\phi$ is nuclear.
 
 Let $\phi \in CP_\rnuc(\mathsf X;\mathfrak A,\mathfrak B)^\dagger$ and $\mathsf U\in \mathbb O(\mathsf X) \setminus \{ \mathsf X\}$. Clearly $\phi$ is $\mathsf X$-equivariant. Since $\phi|_\mathfrak{A}$ is residually $\mathsf X$-nuclear, 
 it follows that the restriction of $[\phi]_{\mathsf U} \colon \mathfrak A^\dagger/\mathfrak A^\dagger(\mathsf U) \to \mathfrak B/\mathfrak B(\mathsf U)$ to $\mathfrak A/\mathfrak A(\mathsf U)$ is nuclear. 
 Since $\mathfrak A^\dagger/\mathfrak A^\dagger(\mathsf U) \cong (\mathfrak A/\mathfrak A(\mathsf U))^\dagger$, it follows by what we proved above, that $[\phi]_{\mathsf U}$ is nuclear. Hence $\phi$ is residually $\mathsf X$-nuclear.
\end{proof}

\begin{definition}
We will say that an extension of $\mathsf X$-$C^\ast$-algebras $\mathfrak A$ by $\mathfrak B$ is \emph{$\mathsf X$-purely large} if it is $CP(\mathsf X;\mathfrak A, \mathfrak B)$-purely large.

Moreover, we say that an extension of $\mathfrak A$ by $\mathfrak B$ of $\mathsf X$-$C^\ast$-algebras is \emph{weakly residually $\mathsf X$-nuclear} if it is a $CP_\rnuc(\mathsf X; \mathfrak A, \mathfrak B)$-extension.
\end{definition}

It follows immediately from Proposition \ref{p:determineB_C} that if $\mathsf X$ is finite, $\mathfrak A$ is a separable, lower semicontinuous $\mathsf X$-$C^\ast$-algebra, and $\mathfrak B$ is a stable, $\mathsf X$-$\sigma$-unital, continuous
$\mathsf X$-$C^\ast$-algebra, then an extension of $\mathfrak A$ by $\mathfrak B$ is $\mathsf X$-purely large if and only if it is $CP_\rnuc(\mathsf X; \mathfrak A, \mathfrak B)$-purely large.

The following is one of our main theorems. It gives a solution to the purely large problem (Question \ref{q:plp}) for a large class of closed operator convex cones, showing that they \emph{do} satisfy the (unital) purely large problem. 
Note that we do \emph{not} assume any infinity nor nuclearity criteria on our $C^\ast$-algebras (as done in Proposition \ref{p:plpinfinite}).

\begin{theorem}\label{t:purelylargefinite}
 Let $\mathsf X$ be a finite space, and let $\mathfrak e: 0 \to \mathfrak B \to \mathfrak E \to \mathfrak A \to 0$ be an extension of $C^\ast$-algebras.
 Suppose that $\mathfrak A$ is a separable, lower semicontinuous $\mathsf X$-$C^\ast$-algebra, and $\mathfrak B$ is a
 stable, $\mathsf X$-$\sigma$-unital, continuous $\mathsf X$-$C^\ast$-algebra.

Then $\mathfrak e$ absorbs any trivial, weakly residually $\mathsf X$-nuclear extension if and only if $\mathfrak e$ is $\mathsf X$-purely large and absorbs the zero extension.

If, in addition, $\mathfrak e$ is unital and $\mathfrak B(\mathsf U_{1_\mathfrak{A}}) = \mathfrak B$, then $\mathfrak e$ absorbs any trivial, unital weakly residually $\mathsf X$-nuclear extension if and only if $\mathfrak e$ is $\mathsf X$-purely large.
\end{theorem}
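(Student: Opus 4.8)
The plan is to show that $\mathscr C := CP_\rnuc(\mathsf X;\mathfrak A,\mathfrak B)$ satisfies the (unital) purely large problem of Definition \ref{d:plp}; since $\mathfrak B_{\mathscr C} = \mathfrak B_{CP(\mathsf X;\mathfrak A,\mathfrak B)}$ by Proposition \ref{p:determineB_C}, being $\mathscr C$-purely large coincides with being $\mathsf X$-purely large, so the theorem is exactly the assertion that $\mathscr C$ satisfies $(Q1)$, respectively $(Q2)$. The two ``only if'' implications are immediate from Proposition \ref{purelylargeprop}: an extension absorbing the zero extension and all trivial (unital) $\mathscr C$-extensions is in particular $\mathscr C$-purely large and absorbs the zero extension. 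Moreover, by Lemma \ref{l:unitisationcone} we have $\mathscr C^\dagger = CP_\rnuc(\mathsf X;\mathfrak A^\dagger,\mathfrak B)$, and $\mathfrak A^\dagger$ is again separable and lower semicontinuous while $\mathfrak B(\mathsf U_{1_{\mathfrak A^\dagger}}) = \mathfrak B(\mathsf X) = \mathfrak B$; hence the non-unital ``if'' statement follows from the unital ``if'' statement applied to $\mathscr C^\dagger$ together with Proposition \ref{p:plpunitimpliesnonunit}. Thus it suffices to prove the unital ``if'' direction.

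So assume $\mathfrak e$ is unital, $\mathfrak B(\mathsf U_{1_\mathfrak A}) = \mathfrak B$ (equivalently $\mathscr C$ is non-degenerate), and $\mathfrak e$ is $\mathscr C$-purely large. Let $\sigma\colon\mathfrak E\to\multialg{\mathfrak B}$ and $p\colon\mathfrak E\to\mathfrak A$ be the canonical maps and $\tau=\pi\circ\sigma$ the Busby map. Following Remark \ref{r:classicalabs}, I regard $\sigma$ as a unital representation of $\mathfrak E$ and aim to show it is $\mathscr C_0$-absorbing, where $\mathscr C_0=\{\phi\circ p:\phi\in\mathscr C\}$; this cone is non-degenerate because $\mathfrak B_{\mathscr C_0}(1_\mathfrak E)=\mathfrak B_\mathscr C(1_\mathfrak A)=\mathfrak B$. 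By Lemma \ref{l:statesgen} the set $\mathscr S=\{h_x\rho_x(-)\}$ generates $\mathscr C$, so by Lemma \ref{l:quotcone} the set $\mathscr S_0=\{h_x\rho_x(p(-))\}$ generates $\mathscr C_0$. Hence, by Theorem \ref{t:domgen} in its multiplier form (Remark \ref{r:hilbertvsmulti}), it is enough to prove that $\sigma$ strongly approximately dominates each generator $e\mapsto h_x\rho_x(p(e))$. Once this is done, $\sigma$ absorbs $\Psi\circ p$ for every unital $\ast$-homomorphism $\Psi\colon\mathfrak A\to\multialg{\mathfrak B}$ weakly in $\mathscr C$; since $p$ is surjective and the implementing unitaries lie in $\multialg{\mathfrak B}$, they descend along $p$ to show $\tau$ absorbs $\pi\circ\Psi$, i.e.\ $\mathfrak e$ absorbs every trivial, unital $\mathscr C$-extension.

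The heart of the argument is this strong approximate domination, carried out by excision of pure states combined with the purely large comparison of Lemma \ref{Xpllemma}. Fix a generator indexed by $x\in\mathsf X$ (normalising $\|h_x\|=1$) and set $\tilde\rho=\rho_x\circ p$, a pure state of $\mathfrak E$ vanishing on both $\mathfrak B$ and $\mathfrak E(\mathsf V^x)$, where $\mathsf V^x=\mathsf X\setminus\overline{\{x\}}$. Using that $\tilde\rho$ annihilates the $\sigma$-unital ideal $\mathfrak B$, choose an excising sequence $(e_n)$ of positive norm-one elements of $\mathfrak E$ with $e_n^{1/2}fe_n^{1/2}-\tilde\rho(f)e_n\to 0$ for all $f\in\mathfrak E$ and with $e_n\to 0$ strictly, i.e.\ $\|be_n\|\to 0$ for all $b\in\mathfrak B$ (such a sequence exists by the Akemann--Anderson--Pedersen excision theorem together with a quasicentral approximate unit for $\mathfrak B$). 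Since $\tilde\rho(g(e_n))\to g(1)$ for every continuous $g$, one has $g(p(e_n))=p(g(e_n))\notin\mathfrak A(\mathsf V^x)$ for the relevant cutdowns, so $x\in\mathsf U_{p(g(e_n))}$ and hence $\mathsf U^x\subseteq\mathsf U_{p(g(e_n))}$; by Proposition \ref{p:determineB_C} this gives $h_x\in\mathfrak B(\mathsf U^x)\subseteq\mathfrak B_\mathscr C(p(g(e_n)))$, so the hypothesis of Lemma \ref{Xpllemma} holds. Applying that lemma yields contractions $b_n\in\mathfrak B$ with $b_n^\ast e_nb_n\to h_x$. Put $v_n=\sigma(e_n)^{1/2}b_n\in\mathfrak B$; then
\[
v_n^\ast\sigma(f)v_n = b_n^\ast\sigma(e_n^{1/2}fe_n^{1/2})b_n \approx \tilde\rho(f)\,b_n^\ast\sigma(e_n)b_n \to \rho_x(p(f))\,h_x,
\]
giving conditions $(i)$ and $(ii)$ of strong approximate domination (condition $(i)$ being automatic since all values lie in $\mathfrak B$), while $\|v_n^\ast Tv_n\|\le\|T^{1/2}\sigma(e_n)T^{1/2}\|\to 0$ for positive $T\in\mathfrak B$ by strictness of $e_n\to 0$, giving condition $(iii)$.

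The main obstacle is the excision step and its bookkeeping: one must produce excising elements that simultaneously (a) escape the ideal $\mathfrak B$ strictly, so that the domination is \emph{strong}, and (b) stay full enough in the quotient, namely $\mathsf U^x\subseteq\mathsf U_{p(g(e_n))}$ for the functions $g$ appearing in Lemma \ref{Xpllemma}, so that the purely large comparison is available with target $h_x\in\mathfrak B(\mathsf U^x)$. Making (b) precise amounts to controlling the norm of $\overline{p(e_n)}$ in $\mathfrak A/\mathfrak A(\mathsf V^x)$ (one arranges it to exceed the support threshold of the relevant $g$, using $\tilde\rho(g(e_n))\to g(1)$), and this is exactly where lower semicontinuity of $\mathfrak A$ and the support condition $\rho_x(\mathfrak A(\mathsf V^x))=0$ from Lemma \ref{l:statesgen} are needed.
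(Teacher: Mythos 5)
Your overall strategy is the paper's own: reduce to the unital case via Lemma \ref{l:unitisationcone} and Proposition \ref{p:plpunitimpliesnonunit}, generate $CP_\rnuc(\mathsf X;\mathfrak A,\mathfrak B)$ by the maps $h_x\rho_x(-)$ of Lemma \ref{l:statesgen}, pull them back along $p$ using Lemma \ref{l:quotcone}, excise the pure states \`a la Akemann--Anderson--Pedersen, compare against $h_x$ using Lemma \ref{Xpllemma} (with the same verification that $\mathsf U^x\subseteq\mathsf U_{p(g(\cdot))}$, via lower semicontinuity of $\mathfrak A$ and Proposition \ref{p:determineB_C}), invoke Theorem \ref{t:domgen}, and descend the implementing unitaries along the surjection $p$. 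However, there is one genuine gap: you apply Theorem \ref{t:domgen} (and you build a single excising \emph{sequence} $(e_n)$ valid for all $f\in\mathfrak E$) with the domain algebra equal to $\mathfrak E$ itself. Theorem \ref{t:domgen}, and the results it rests on (Theorem \ref{t:domabs} via \cite[Theorem 2.13]{DadarlatEilers-classification}), require the domain to be \emph{separable}; likewise a sequential excision valid on all of $\mathfrak E$ presupposes separability of $\mathfrak E$. Under the hypotheses of the theorem only $\mathfrak A$ is separable, while $\mathfrak B$ is merely $\mathsf X$-$\sigma$-unital, so $\mathfrak E$ can fail to be separable --- and this extra generality (beyond separable $\mathfrak B$) is part of the point of the statement. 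As written, your argument proves the theorem only when $\mathfrak B$ is separable.

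The repair is the device used in the paper's proof: choose a separable $C^\ast$-subalgebra $\mathfrak C\subseteq\mathfrak E$ with $1_{\mathfrak E}\in\mathfrak C$, containing a strictly positive element $h$ of $\mathfrak B$, and with $\mathfrak C+\mathfrak B=\mathfrak E$; put $p_0=p|_{\mathfrak C}$, which is still surjective onto $\mathfrak A$, and $\sigma_0=\sigma|_{\mathfrak C}$. Lemma \ref{l:quotcone} applies to $p_0$, pure states of $\mathfrak A$ still pull back to pure states of $\mathfrak C$, your excision and Lemma \ref{Xpllemma} computations go through verbatim for finite subsets of $\mathfrak C$, and Theorem \ref{t:domgen} now legitimately applies to the separable unital domain $\mathfrak C$. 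Since $p_0$ is surjective, the resulting approximate unitary equivalences still descend to exact equivalences of Busby maps after applying $\pi$, exactly as in your final step. Note also that the strictly positive element $h\in\mathfrak C$ is what makes the domination automatically \emph{strong}: the generators kill $h$, so $\|v_n^\ast h v_n\|\to 0$ forces $\|v_n^\ast b v_n\|\to 0$ for all $b\in\mathfrak B$; this replaces, and is the same trick as, your requirement that the excising elements tend to zero strictly.
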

\begin{proof}
Let $\mathscr C:= CP_\rnuc(\mathsf X; \mathfrak A, \mathfrak B)$.
The ``only if'' part follows from Proposition \ref{purelylargeprop}, where we note that, in the unital case, $\mathfrak B_\mathscr{C}(1_\mathfrak{A}) = \mathfrak B(\mathsf U_{1_\mathfrak{A}}) = \mathfrak B$ 
by Proposition \ref{p:determineB_C}, which is equivalent to $\mathscr C$ being non-degenerate.

To prove the ``if'' part, it follows from Proposition \ref{p:plpunitimpliesnonunit}, Lemma \ref{l:unitisationcone} and the fact that $\mathfrak A^\dagger$ is also lower semicontinuous, that it suffices to prove the unital version of the theorem. 
Thus assume that $\mathfrak e$ is unital and $\mathfrak B(\mathsf U_{1_\mathfrak{A}}) = \mathfrak B$ and that the extension is $\mathsf X$-purely large.

Let $\sigma\colon \mathfrak E \to \multialg{\mathfrak B}$ and $\tau \colon \mathfrak A \to \corona{\mathfrak B}$ be the canonical $\ast$-homomorphisms induced by the extension $\mathfrak e$.
Let $\mathfrak C \subset \mathfrak E$ be a separable $C^\ast$-subalgebra with $1_{\mathfrak E} \in \mathfrak C$, containing a strictly positive element of $\mathfrak B$, such that $\mathfrak C + \mathfrak B = \mathfrak E$,
and let $p_0 = p|_\mathfrak{C} \colon \mathfrak C \to \mathfrak A$, which is surjective.

We may assume that $\mathsf X$ is a $T_0$ space by Remark \ref{r:soberspace}. Let $\mathscr S$ be as in Lemma \ref{l:statesgen}. By Lemma \ref{l:quotcone}
\[
 \mathscr{C}_0 = \{ \phi \circ p_0 : \phi \in \mathscr C\} \subset CP(\mathfrak C, \mathfrak B)
\]
is a closed operator convex cone generated by the set $\mathscr S_0 := \{ \phi' \circ p_0 : \phi' \in \mathscr S\}$.
We will show that the unital $\ast$-homomorphism $\sigma_0 := \sigma|_\mathfrak{C} \colon \mathfrak C \to \multialg{\mathfrak B}$ approximately dominates any map $\phi\in \mathscr S_0$.

Let $\phi = \phi' \circ p_0$ with $\phi' \in \mathscr S$. Write $\phi' = h_x \rho_x'(-)$ for some $x\in \mathsf X$ with $h_x$ a strictly positive contraction in $\mathfrak B(\mathsf U^x)$, and $\rho_x'$ a pure state on $\mathfrak A$ vanishing on 
$\mathfrak A(\mathsf X \setminus \overline{\{ x\}})$. Note that $\rho_x = \rho'_x \circ p_0$ is a pure state on $\mathfrak C$ since $p_0$ is a surjective $\ast$-homomorphism. Fix $F \subset \mathfrak C$ a finite set of contractions and $\epsilon >0$.
By \cite{AkemannAndersonPedersen-excision}, we may excise $\rho_x$ as follows. There is a positive $d \in \mathfrak C$ such that $\rho_x(d) = \| d\| = 1$, and
\[
 \| d^2\rho_x(c) - dcd \| < \epsilon/2,
\]
for all $c\in F$. Since $\rho_x(d)=\| d\| = 1$ it follows (e.g.~by considering the GNS representation induced by $\rho_x$), that for any continuous function $g\colon [0,1] \to [0,1]$ for which $g(0)=0$ and $g(1)=1$, we have $\rho_x(g(d)) > 0$. 
Since $\mathfrak A$ is a lower semicontinuous $\mathsf X$-$C^\ast$-algebra, it follows that $p_0(g(d^2))$ is $\mathsf U_g$-full, for some unique open subset $\mathsf U_g$ of $\mathsf X$. 
Note that $\mathfrak B_\mathscr{C}(p_0(g(d))) = \mathfrak B(\mathsf U_g)$ by Proposition \ref{p:determineB_C}. 
If we suppose that $\mathsf U^x \not \subset \mathsf U_g$ for some $g$, i.e.~$x\notin \mathsf U_g$,
then $\mathsf U_g \subset \mathsf X \setminus \overline{\{ x\}}$ which would imply that $p_0(g(d^2)) \in \mathfrak A(\mathsf X \setminus \overline{\{ x\}})$ and thus $\rho_x(g(d^2))= 0$. 
However, this is false, so we must have that $\mathsf U^x \subset \mathsf U_g$ for all $g$.
Since $h_x \in \mathfrak B(\mathsf U^x) \subset \mathfrak B(\mathsf U_g)$ for any $g$ as above, we may apply Lemma \ref{Xpllemma} to obtain a contraction $b_0\in \mathfrak B$ such that
$h_x \approx_{\epsilon/2} b_0^\ast d^2b_0$. Defining $b := db_0$ we get that $\| b\| \leq 1$ and
\[
 h_x\rho_x(c) \approx_{\epsilon/2} b_0^\ast d \rho_x(c) d b_0 \approx_{\epsilon/2} b_0^\ast d c d b_0 = b^\ast c b = b^\ast \sigma_0(c) b
\]
for all $c\in F$. Hence $\sigma_0$ approximately dominates $\phi = h_x \rho_x(-)$.

Let $(b_n)$ be a bounded sequence in $\mathfrak B$ such that $\| b_n^\ast \sigma_0(c) b_n - \phi(c) \| \to 0$ for every $c\in \mathfrak C$.
Since $\mathfrak C$ contains a strictly positive element $h$ for $\mathfrak B$, and since $\phi(h)= (h_x \rho'_x(-)) \circ p_0(h) = 0$, we get that $\| b_n^\ast \sigma_0(h) b_n \| = \| b_n^\ast h b_n\| \to 0$.
It follows that $\|b_n^\ast b b_n \| \to 0$ for every $b\in \mathfrak B$, and thus $\sigma_0$ \emph{strongly} approximately dominates $\phi$.

Since $\sigma_0$ strongly approximately dominates any c.p.~map in $\mathscr S_0$, it follows from Theorems \ref{t:domgen} and \ref{t:domabs} 
that $\sigma_0 \oplus \Psi \sim_{ap} \sigma_0$ for any unital $\ast$-homomorphism $\Psi \colon \mathfrak C \to \multialg{\mathfrak B}$ weakly in $\mathscr C_0$.

Let $\Phi \colon \mathfrak A \to \multialg{\mathfrak B}$ be a unital $\mathscr C$-absorbing $\ast$-homomorphism weakly in $\mathscr C$ of Theorem \ref{t:absrep} which exists by Lemma \ref{l:statesgen}.
Then $\Phi \circ p_0$ is unital and weakly in $\mathscr C_0$ and thus $\sigma_0 \oplus (\Phi \circ p_0) \sim_{ap} \sigma_0$.
In particular, there is a unitary $u\in \multialg{\mathfrak B}$ such that 
\[
\tau(p_0(c)) = \pi (\sigma_0(c)) = \pi(u^\ast(\sigma_0(c)\oplus \Phi(p_0(c)))u) = \pi(u)^\ast ( \tau(p_0(c)) \oplus (\pi \circ \Phi)(p_0(c))) \pi(u)
\]
for all $c\in \mathfrak C$. Since $p_0$ is surjective, we get that $\mathfrak e$ absorbs the trivial, unital $\mathscr C$-extension with Busby map $\pi \circ \Phi$. Since this latter extension absorbs all trivial, unital $\mathscr C$-extensions,
it follows that $\mathfrak e$ absorbs all trivial, unital $\mathscr C$-extensions.
\end{proof}

\begin{remark}
If $\mathfrak A$ is exact (or even locally reflexive), then the conditions that $\mathfrak A$ is lower semicontinuous and $\mathfrak B$ is continuous, are redundant. 

In fact, as in \cite[Section 2.9]{MeyerNest-bootstrap} we may find a larger, but still finite, space $\mathsf Y$ acting on $\mathfrak A$ and $\mathfrak B$ continuously, such that $CP(\mathsf X; \mathfrak A, \mathfrak B) = CP(\mathsf Y; \mathfrak A, \mathfrak B)$.
Thus an extension is $\mathsf X$-purely large if and only if it is $\mathsf Y$-purely large.
By Remark \ref{r:exactresnuc} it follows that the residually $\mathsf X$-nuclear maps are exactly the residually $\mathsf Y$-nuclear maps. 
Since finite sums and finite intersections of $\sigma$-unital, two-sided, closed ideals are again $\sigma$-unital, it follows that if $\mathfrak B$ is $\mathsf X$-$\sigma$-unital then it is also $\mathsf Y$-$\sigma$-unital.
Thus we may replace $\mathsf X$ with $\mathsf Y$ and the result follows.
\end{remark}

For applications of the above theorem in the non-unital case, it can be hard to determine whether or not an extension absorbs the zero extension. 
However, when the quotient algebra is sufficiently non-unital with a sufficiently nice action of $\mathsf X$, we get this for free by knowing that the extension is $\mathsf X$-purely large.
Note that we assume in the following corollary that the extension is $\mathsf X$-equivariant, which was \emph{not} a part of the assumptions in Theorem \ref{t:purelylargefinite}. 

\begin{corollary}\label{c:nonunitalquotient}
  Let $\mathsf X$ be a finite space, let $\mathfrak e: 0 \to \mathfrak B \to \mathfrak E \to \mathfrak A \to 0$ be an extension of $\mathsf X$-$C^\ast$-algebras such that $\mathfrak A$ is separable and 
  lower semicontinuous, and $\mathfrak B$ is
 stable, $\mathsf X$-$\sigma$-unital and continuous. Suppose that $\mathfrak A/\mathfrak A(\mathsf U)$ is non-zero and non-unital for all 
 $\mathsf U\in \mathbb O(\mathsf X) \setminus \{ \mathsf X\}$.
 
 Then $\mathfrak e$ absorbs any trivial, weakly residually $\mathsf X$-nuclear extension if and only if $\mathfrak e$ is $\mathsf X$-purely large.
\end{corollary}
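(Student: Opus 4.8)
The plan is to reduce everything to Theorem \ref{t:purelylargefinite} by passing to the forced unitisation. Write $\mathscr C = CP_\rnuc(\mathsf X;\mathfrak A,\mathfrak B)$ and recall from Proposition \ref{p:determineB_C} that, under the present hypotheses, an extension is $\mathsf X$-purely large precisely when it is $\mathscr C$-purely large. The ``only if'' direction is then immediate: if $\mathfrak e$ absorbs every trivial, weakly residually $\mathsf X$-nuclear extension, then Theorem \ref{t:purelylargefinite} gives that $\mathfrak e$ is $\mathsf X$-purely large. The whole content is the ``if'' direction, and the novelty is that the non-unitality hypothesis lets us dispense with the auxiliary condition ``absorbs the zero extension'' appearing in Theorem \ref{t:purelylargefinite}.

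Assume therefore that $\mathfrak e$ is $\mathsf X$-purely large. By Lemma \ref{l:unitisationcone} (using $\mathfrak B(\mathsf X)=\mathfrak B$) we have $\mathscr C^\dagger = CP_\rnuc(\mathsf X;\mathfrak A^\dagger,\mathfrak B)$, and $\mathfrak A^\dagger$ is again separable and lower semicontinuous; moreover $1_{\mathfrak A^\dagger}$ is $\mathsf X$-full with $\mathsf U_{1_{\mathfrak A^\dagger}}=\mathsf X$, so $\mathfrak B(\mathsf U_{1_{\mathfrak A^\dagger}})=\mathfrak B$. Hence, once I know that the unitised extension $\mathfrak e^\dagger$ is $\mathscr C^\dagger$-purely large, the unital half of Theorem \ref{t:purelylargefinite} applied to $\mathfrak e^\dagger$ shows that $\mathfrak e^\dagger$ absorbs every trivial, unital $\mathscr C^\dagger$-extension, and the descent argument from the ``in particular'' part of Proposition \ref{p:plpunitimpliesnonunit} (forming $\mathfrak f^\dagger$ for a trivial $\mathscr C$-extension $\mathfrak f$) then gives that $\mathfrak e$ absorbs every trivial, weakly residually $\mathsf X$-nuclear extension. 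So the entire problem reduces to verifying that $\mathfrak e^\dagger$ is $\mathscr C^\dagger$-purely large. For $x\in\mathfrak E^\dagger$ there are two cases. If $x\in\mathfrak E$, then $\mathfrak B_{\mathscr C^\dagger}(p^\dagger(x))=\mathfrak B_\mathscr{C}(p(x))$ by Lemma \ref{l:B_Cdagger}, and the desired stable, $\sigma$-unital subalgebra of $\overline{x^\ast\mathfrak B x}$ exists because $\mathfrak e$ is $\mathsf X$- (hence $\mathscr C$-) purely large. The remaining, and main, case is $x=1+y$ with $y\in\mathfrak E$, where $\mathfrak B_{\mathscr C^\dagger}(p^\dagger(x))=\mathfrak B$ and one must produce a stable, $\sigma$-unital subalgebra of $\overline{(1+y)^\ast\mathfrak B(1+y)}$ that is full in all of $\mathfrak B$.

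For this case I would fix a quasicentral approximate unit $(E_\mu)$ of $\mathfrak E$ with $(p(E_\mu))$ an approximate unit of $\mathfrak A$, so that $\sigma(E_\mu)\to 1$ strictly for the canonical $\sigma\colon\mathfrak E\to\multialg{\mathfrak B}$. Writing $w=(1+\sigma(y))^\ast(1+\sigma(y))=1+\sigma(z)$ with $z\in\mathfrak E$, one has $\overline{(1+y)^\ast\mathfrak B(1+y)}=\overline{w\mathfrak B w}$, and since $z-zE_\mu\to 0$, for large $\mu$ the element $w$ acts as the identity on the range of $1-\sigma(E_\mu)$ up to arbitrarily small error. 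Consequently the complementary hereditary algebra $\overline{(1-\sigma(E_\mu))\mathfrak B(1-\sigma(E_\mu))}$ embeds, up to small perturbation, into $\overline{w\mathfrak B w}$, and it suffices to exhibit a stable, $\sigma$-unital subalgebra of this complementary algebra that is full in $\mathfrak B$.

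This last step is the main obstacle, and it is exactly where the hypotheses are used. Fullness in $\mathfrak B=\mathfrak B(\mathsf X)$ is the delicate point: because $\mathfrak e$ is $\mathsf X$-equivariant, the behaviour of $1-\sigma(E_\mu)$ at a level $\mathsf U\in\mathbb O(\mathsf X)$ is governed, through $\corona{\mathfrak B}(\mathsf U)$, by $1-\overline{p(E_\mu)}$ in the quotient $\mathfrak A/\mathfrak A(\mathsf U)$; the assumption that every proper quotient $\mathfrak A/\mathfrak A(\mathsf U)$ is non-zero and non-unital guarantees that $1-p(E_\mu)$ does not become negligible at any level (were some $\mathfrak A/\mathfrak A(\mathsf U)$ unital, the approximate unit would fill it up and fullness would fail there), so $1-\sigma(E_\mu)$ remains full in $\mathfrak B$ at every level of $\mathsf X$. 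Stability I would obtain exactly as in the proof of Proposition \ref{purelylargeprop}: telescoping the approximate unit into positive elements $d_k=E_{\mu_{k+1}}-E_{\mu_k}$ that are ``orthogonal at a distance'' and applying the Hjelmborg--Rørdam criterion \cite[Theorem 2.1 and Proposition 2.2]{HjelmborgRordam-stability} together with a sequence of unitaries almost commuting with the relevant data. Carrying out the fullness-at-all-levels computation and the stability argument simultaneously, in an $\mathsf X$-equivariant fashion, is the technical heart of the proof; everything else is the bookkeeping reduction to Theorem \ref{t:purelylargefinite} described above.
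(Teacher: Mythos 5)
Your reduction to showing that the unitised extension $\mathfrak e^\dagger$ is $\mathscr C^\dagger$-purely large (via Theorem \ref{t:purelylargefinite}, Proposition \ref{p:plpunitimpliesnonunit} and Lemma \ref{l:unitisationcone}), and your treatment of the case $x\in\mathfrak E$, agree exactly with the paper. The gap is in the main case $x=1+y$, and it is twofold. First, the transfer step is not an argument: from $\|w(1-\sigma(E_\mu))-(1-\sigma(E_\mu))\|\to 0$ you get, for large $\mu$, only an \emph{approximate} containment of $(1-\sigma(E_\mu))\mathfrak B(1-\sigma(E_\mu))$ in $\overline{w\mathfrak Bw}$; for any fixed $\mu$ the error is a fixed positive constant, and approximate containment does not allow you to move a stable, $\sigma$-unital, full $C^\ast$-subalgebra from one hereditary subalgebra into another (stability, in particular, is not preserved by such perturbations). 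Second, and more fundamentally, your proposed construction of a stable full subalgebra of $\overline{(1-\sigma(E_\mu))\mathfrak B(1-\sigma(E_\mu))}$ never invokes the standing hypothesis that $\mathfrak e$ is $\mathsf X$-purely large, and it cannot succeed without it. The stability argument of Proposition \ref{purelylargeprop} that you want to copy uses a sequence of unitaries $(u_n)$ commuting with an infinite-repeat summand $\Phi'$; those exist there only because absorption is a \emph{hypothesis} of that proposition, whereas here absorption is the conclusion. Fullness ``at every level'', even if established, is an essentiality-type condition, and full (essential) extensions need not be purely large, so no argument that forgets the purely large hypothesis can produce the required stable subalgebra. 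Note also that $1-\sigma(E_\mu)\notin\sigma(\mathfrak E)$, so you cannot apply the purely largeness of $\mathfrak e$ to it directly: your intermediate claim is essentially an instance of the very case you are trying to prove.

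The missing idea is to multiply back into $\mathfrak E$, where the purely large hypothesis applies. For each $\mathsf U\in\mathbb O(\mathsf X)\setminus\{\mathsf X\}$, if $(1+y)\mathfrak E\subset\mathfrak B+\mathfrak E(\mathsf U)$ then the image of $-y$ would be a unit for $\mathfrak E/(\mathfrak B+\mathfrak E(\mathsf U))\cong\mathfrak A/\mathfrak A(\mathsf U)$, contradicting the assumption that this quotient is non-zero and non-unital; hence there is a positive $e_\mathsf{U}\in\mathfrak E$ with $(1+y)e_\mathsf{U}\notin\mathfrak B+\mathfrak E(\mathsf U)$. Since $\mathsf X$ is finite, $e:=\sum_{\mathsf U\neq\mathsf X}e_\mathsf{U}$ is a well-defined positive element of $\mathfrak E$, and positivity of the $e_\mathsf{U}$ gives $z:=(1+y)e\notin\mathfrak B+\mathfrak E(\mathsf U)$ for every proper $\mathsf U$ (if $q_\mathsf{U}(z)=0$ then $(1+q_\mathsf{U}(y))q_\mathsf{U}(e)(1+q_\mathsf{U}(y))^\ast=0$, and $0\le q_\mathsf{U}(e_\mathsf{U})\le q_\mathsf{U}(e)$ forces $(1+q_\mathsf{U}(y))q_\mathsf{U}(e_\mathsf{U})=0$, a contradiction). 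Thus $p(z)\notin\mathfrak A(\mathsf U)$ for all proper $\mathsf U$, i.e.\ $\mathsf U_{p(z)}=\mathsf X$, and the assumed $\mathsf X$-purely largeness of $\mathfrak e$, applied to $z\in\mathfrak E$, yields a stable, $\sigma$-unital $C^\ast$-subalgebra of $\overline{z\mathfrak Bz^\ast}$ which is full in $\mathfrak B(\mathsf U_{p(z)})=\mathfrak B$. Since $\overline{z\mathfrak Bz^\ast}=\overline{(1+y)e\mathfrak Be(1+y)^\ast}\subset\overline{(1+y)\mathfrak B(1+y)^\ast}$, this settles the case $x=1+y$ and hence the corollary; this is where the non-unitality hypothesis is actually consumed, replacing your approximate-unit argument entirely.
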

\begin{proof}
 Let $\mathscr C := CP_\rnuc(\mathsf X; \mathfrak A, \mathfrak B)$. 
 One part follows from Proposition \ref{purelylargeprop} so suppose that $\mathfrak e$ is $\mathsf X$-purely large.
 By Theorem \ref{t:purelylargefinite}, Proposition \ref{p:plpunitimpliesnonunit} and Lemma \ref{l:unitisationcone}, $\mathfrak e$ absorbs any trivial $\mathscr C$-extension if and only if
 the unitised extension $\mathfrak e^\dagger : 0 \to \mathfrak B \to \mathfrak E^\dagger \xrightarrow{p^\dagger} \mathfrak A^\dagger \to 0$ is $\mathsf X$-purely large. So we will show that this is the case.
 
 Let $y \in \mathfrak E^\dagger$. Note that if $a\in \mathfrak A \subset \mathfrak A^\dagger$, then $a$ is $\mathsf U_a$-full for the same $\mathsf U_a$ regardless of whether we consider $a$ as an element of the $\mathsf X$-$C^\ast$-algebra $\mathfrak A$
 or of the $\mathsf X$-$C^\ast$-algebra $\mathfrak A^\dagger$.
 Thus if $y \in \mathfrak E$, then 
 \[
  \mathfrak B_\mathscr{C}(p(y)) = \mathfrak B_\mathscr{C^\dagger}(p^\dagger(y)) = \mathfrak B(\mathsf U_{p^\dagger(y)}) = \mathfrak B(\mathsf U_{p(y)})
 \]
 by Lemma \ref{l:B_Cdagger} and Proposition \ref{p:determineB_C}. 
 It follows that $\overline{y \mathfrak B y^\ast}$ contains a stable $\sigma$-unital $C^\ast$-subalgebra which is full in $\mathfrak B(\mathsf U_{p^\dagger(y)})$, since $\mathfrak e$ is $\mathsf X$-purely large.
 
 Hence it remains to consider the case $y \in \mathfrak E^\dagger \setminus \mathfrak E$. Without loss of generality we may assume that $y = 1 - x$ for an $x\in \mathfrak E$. Note that $\mathsf U_{p^\dagger(1-x)} = \mathsf X$.
 Suppose that $(1-x)\mathfrak E \subset \mathfrak B + \mathfrak E(\mathsf U)$ for some $\mathsf U\neq \mathsf X$. 
 Then $x + (\mathfrak B + \mathfrak E(\mathsf U))$ would be a unit for $\mathfrak E/(\mathfrak B + \mathfrak E(\mathsf U)) \cong \mathfrak A/\mathfrak A(\mathsf U)$, which is a contradiction to the assumption that $\mathfrak A/\mathfrak A(\mathsf U)$ is
 non-zero and non-unital for $\mathsf U \in \mathbb O(\mathsf X) \setminus \{\mathsf X\}$. 
 Hence we may find positive elements $x'_\mathsf{U}$ in $\mathfrak E$ such that $(1-x)x'_\mathsf{U} \notin \mathfrak B + \mathfrak E(\mathsf U)$. Let $x' = \sum x'_\mathsf{U}$. It is easily seen that $z:=(1-x)x' \notin \mathfrak B + \mathfrak E(\mathsf U)$ for any 
 $\mathsf U \neq \mathsf X$, and thus $p(z) \notin \mathfrak A(\mathsf U)$. Hence $\mathsf U_{p(z)} = \mathsf U_{p^\dagger(1-x)} = \mathsf X$, and thus $\overline{z \mathfrak B z^\ast}$ contains a stable $\sigma$-unital $C^\ast$-subalgebra which is full
 in $\mathfrak B$ since $\mathfrak e$ is $\mathsf X$-purely large. Since $\overline{z \mathfrak B z^\ast} \subset \overline{(1-x) \mathfrak B (1-x)^\ast}$ it follows that $\mathfrak e^\dagger$ is $\mathsf X$-purely large.
\end{proof}

\begin{remark}
 One might find it interesting to compare the assumptions on $\mathfrak A$ above to the similar assumptions made by Kirchberg in \cite[Hauptsatz 4.2]{Kirchberg-non-simple}. 
 Here Kirchberg always assumes that $\mathfrak A$ is separable, stable (and thus has no unital quotients), that $\mathfrak A(\mathsf U) = \mathfrak A$ only when $\mathsf U = \mathsf X$ (corresponding to $\mathfrak A/\mathfrak A(\mathsf U)$ is non-zero
 for $\mathsf U \neq \mathsf X$) and that the action of $\mathsf X$ on $\mathfrak A$ is monotone continuous (which is the same as lower semicontinuous when $\mathsf X$ is finite).
 Kirchberg also assumes that $\mathfrak A$ is exact, which is not needed above.
\end{remark}


\subsection{The corona factorisation property}

Checking that an extension is $\mathsf X$-purely large can be very hard in general. This was the motivation, in the classical case,
to introduce the \emph{corona factorisation property} (see \cite{Kucerovsky-largeFredholm} and \cite{KucerovskyNg-corona}).
Recall that a stable $C^\ast$-algebra $\mathfrak B$ has the corona factorisation property if any norm-full projection 
$P\in \multialg{\mathfrak B}$ is properly infinite, or equivalently, Murray--von Neumann equivalent to $1_{\multialg{\mathfrak B}}$. 

Under the additional assumption on $\mathfrak B$ that $\mathfrak B(\mathsf U)$ has the corona factorisation property for each $\mathsf U\in \mathbb O(\mathsf X)$, we may, in the unital case, 
replace the assumption of $\mathsf X$-purely largeness in Theorem \ref{t:purelylargefinite} with the condition that the extension is $\mathsf X$-full. 
This is in general much easier to verify. In the non-unital case however, fullness of the extension will not be enough to guarantee absorption. This is the motivation for the following definition.

\begin{definition}
 Let $\mathfrak A$ be a lower semicontinuous $\mathsf X$-$C^\ast$-algebra and $\mathfrak D$ be a unital $\mathsf X$-$C^\ast$-algebra such that $\mathfrak D(\mathsf X) = \mathfrak D$.
 An $\mathsf X$-equivariant $\ast$-homomorphism $\phi \colon \mathfrak A \to \mathfrak D$ is said to be \emph{unitisably full} (or unitisably $\mathsf X$-full)
 if the induced unital, $\mathsf X$-equivariant $\ast$-homomorphism $\phi^\dagger \colon \mathfrak A^\dagger \to \mathfrak D$ is full.
 
 An $\mathsf X$-equivariant extension of $\mathfrak A$ by $\mathfrak B$ is said to be \emph{unitisably full} if the Busby map $\tau \colon \mathfrak A \to \corona{\mathfrak B}$ is unitisably full.
\end{definition}

Clearly a unitisably full $\ast$-homomorphism is necessarily full. Moreover, if $\mathfrak A$ is unital, then $\phi \colon \mathfrak A \to \mathfrak D$ is unitisably full if and only if $\phi$ is full and $1_\mathfrak{D} - \phi(1_\mathfrak{A})$ is
full in $\mathfrak D$. In particular, a unital $\ast$-homomorphism can never be unitisably full.

The following shows that the $\mathsf X$-purely large extensions we are interested in are (unitisably) $\mathsf X$-full.

\begin{lemma}\label{l:Xfullext}
 Let $\mathsf X$ be a finite space, and let $\mathfrak e: 0 \to \mathfrak B \to \mathfrak E \to \mathfrak A \to 0$ be an extension of $\mathsf X$-$C^\ast$-algebras.
 Suppose that $\mathfrak A$ is a separable, lower semicontinuous $\mathsf X$-$C^\ast$-algebra, and $\mathfrak B$ is a
 stable, $\mathsf X$-$\sigma$-unital, continuous $\mathsf X$-$C^\ast$-algebra.
 
 If $\mathfrak e$ is $\mathsf X$-purely large, then it is $\mathsf X$-full. Moreover, if $\mathfrak e$ in addition absorbs the zero extension, then it is unitisably $\mathsf X$-full.
\end{lemma}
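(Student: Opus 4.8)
The plan is to prove the two assertions in turn, reducing the second to the first. Throughout I write $\mathscr C := CP_\rnuc(\mathsf X; \mathfrak A, \mathfrak B)$ and recall that, by Proposition \ref{p:determineB_C}, $\mathfrak B_\mathscr{C}(a) = \mathfrak B(\mathsf U_a) = \mathfrak B_{CP(\mathsf X;\mathfrak A,\mathfrak B)}(a)$ for every $a \in \mathfrak A$, so that $\mathsf X$-purely largeness of $\mathfrak e$ coincides with $\mathscr C$-purely largeness. To see that $\tau$ is $\mathsf X$-full I must show that $\tau(a)$ is full in $\corona{\mathfrak B}(\mathsf U_a)$ for every $a \in \mathfrak A$. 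Since $\tau$ is a $\ast$-homomorphism, $\tau(a)$ and $\tau(a^\ast a)$ generate the same closed ideal and $\mathsf U_{a^\ast a} = \mathsf U_a$, so I may assume $a \geq 0$. Using $\mathsf X$-equivariance of the extension (Proposition \ref{p:basicXext}) I lift $a$ to a positive $x \in \mathfrak E(\mathsf U_a)$; then $y := \sigma(x)$ is a positive element of $\multialg{\mathfrak B}(\mathsf U_a) = \multialg{\mathfrak B, \mathfrak J}$, where $\mathfrak J := \mathfrak B(\mathsf U_a)$, and $\tau(a) = \pi(y)$. Applying $\mathsf X$-purely largeness to $x$ produces a $\sigma$-unital, stable $\mathfrak D \subseteq \overline{y\mathfrak B y}$ which is full in $\mathfrak B_\mathscr{C}(a) = \mathfrak J$; exactly as in the proof of Proposition \ref{p:plpinfinite} this forces $\overline{\mathfrak B y \mathfrak B} = \mathfrak J$, so $y$ is norm-full in $\mathfrak J$.

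The crux is then to pass from this norm-fullness to fullness in the corona. By Lemma \ref{l:multifullher}, since $\mathfrak J = \overline{\mathfrak B y \mathfrak B}$ with $y \in \multialg{\mathfrak B}$, the infinite repeat $y_\infty$ is full in $\multialg{\mathfrak B, \mathfrak J}$; passing to the quotient $\multialg{\mathfrak B, \mathfrak J} \to \corona{\mathfrak B}(\mathsf U_a)$ (Remark \ref{r:canoniciso}) makes $\pi(y_\infty)$ full in $\corona{\mathfrak B}(\mathsf U_a)$, because surjective $\ast$-homomorphisms send full elements to full elements. It therefore suffices to show that $\pi(y_\infty)$ lies in the closed ideal of $\corona{\mathfrak B}(\mathsf U_a)$ generated by $\tau(a) = \pi(y)$. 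Here purely largeness re-enters through the comparison Lemma \ref{Xpllemma}: for every cut-off function $g$ one has $p(g(x)) = g(a)$ and hence $\mathfrak B_\mathscr{C}(p(g(x))) = \mathfrak B(\mathsf U_a) = \mathfrak J$, so Lemma \ref{Xpllemma} applies to every norm-one $b \in \mathfrak J$ and yields contractions $b_0 \in \mathfrak B$ with $b \approx b_0^\ast y b_0$. Feeding this comparison into the differences $d_k = u_k - u_{k-1}$ of a quasicentral approximate unit $(u_k)$ of $\mathfrak J$ and reassembling the pieces $d_k^{1/2} n d_k^{1/2}$ — the Hjelmborg--R\o rdam type book-keeping already used in Proposition \ref{purelylargeprop} and in \cite[Theorem 6]{ElliottKucerovsky-extensions} — one sees that every $\pi(n)$, $n \in \multialg{\mathfrak B, \mathfrak J}$, and in particular $\pi(y_\infty)$, lies in the closed ideal generated by $\pi(y)$. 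Consequently $\tau(a)$ is full in $\corona{\mathfrak B}(\mathsf U_a)$, so $\mathfrak e$ is $\mathsf X$-full. I expect this orthogonalisation-and-reassembly step to be the only genuine obstacle; everything surrounding it is formal.

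For the unitisable case, assume in addition that $\mathfrak e$ absorbs the zero extension. By Proposition \ref{p:plpunitimpliesnonunit} the unitised extension $\mathfrak e^\dagger \colon 0 \to \mathfrak B \to \mathfrak E^\dagger \to \mathfrak A^\dagger \to 0$ is $\mathscr C^\dagger$-purely large, and $\mathscr C^\dagger = CP_\rnuc(\mathsf X; \mathfrak A^\dagger, \mathfrak B)$ by Lemma \ref{l:unitisationcone}. Since $\mathfrak A^\dagger$ is again separable and lower semicontinuous, Proposition \ref{p:determineB_C} applies to the pair $(\mathfrak A^\dagger, \mathfrak B)$ and shows that $\mathscr C^\dagger$-purely largeness of $\mathfrak e^\dagger$ is the same as its $\mathsf X$-purely largeness. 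Thus $\mathfrak e^\dagger$ satisfies the hypotheses of the first part of the lemma with $\mathfrak A$ replaced by $\mathfrak A^\dagger$, whence its Busby map $\tau^\dagger$ is $\mathsf X$-full. By definition this says precisely that $\tau$ is unitisably $\mathsf X$-full, which completes the proof.
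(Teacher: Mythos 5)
Your reduction of the second assertion to the first is fine, but your direct proof of the first assertion breaks down at exactly the step you flag as the crux, in two ways. First, your verification of the hypothesis of Lemma \ref{Xpllemma} is false: you claim $\mathfrak B_\mathscr{C}(p(g(x))) = \mathfrak B(\mathsf U_a)$ for every cut-off function $g$, but by Proposition \ref{p:determineB_C} this ideal equals $\mathfrak B(\mathsf U_{g(a)})$, and $\mathsf U_{g(a)}$ can be strictly smaller than $\mathsf U_a$ because $g$ annihilates the part of the spectrum of $a$ below $1-\epsilon/3$. Concretely, let $\mathsf X = \{1,2\}$ with open sets $\emptyset, \{2\}, \mathsf X$, let $\mathfrak A = C_0((0,1])$ with $\mathfrak A(\{2\}) = C_0((0,1/2))$, and let $a$ be a norm-one tent function peaking inside $(0,1/2)$ but supported slightly beyond $1/2$: then $\mathsf U_a = \mathsf X$ while $\mathsf U_{g(a)} = \{2\}$ for every cut-off $g$ concentrated near $1$. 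So the comparison $b \approx b_0^\ast x b_0$ is available only for $b$ in a possibly much smaller ideal than $\mathfrak J$. This is precisely the point which the paper's proof of Theorem \ref{t:purelylargefinite} handles by excising a \emph{pure state}: there the excising element $d$ satisfies $\rho_x(d) = \|d\| = 1$, which forces $\rho_x(g(d)) > 0$ and hence keeps the relevant ideal large for every $g$; an arbitrary positive lift $x$ of $a$ has no such property.

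Second, and more fundamentally, the ``orthogonalisation-and-reassembly'' step cannot deliver what you need. Decomposing a positive $n \in \multialg{\mathfrak B}(\mathsf U_a)$ along the differences $d_k$ and replacing each piece $d_k^{1/2} n d_k^{1/2} \in \mathfrak J$ by $b_k^\ast x b_k$ produces, upon summing, an element of the form $w^\ast y_\infty w$ with $w = \sum_k t_k b_k$ (the cross terms force the isometries $t_k$, hence the infinite repeat, into the picture). So this argument places $\pi(n)$ in the ideal generated by $\pi(y_\infty)$ --- which you already knew is everything --- and \emph{not} in the ideal generated by $\pi(y)$. Bridging the gap between $\pi(y)$ and $\pi(y_\infty)$ in the corona is exactly the hard content of the Elliott--Kucerovsky-type absorption theorem; it is not the Hjelmborg--R{\o}rdam book-keeping, which in Proposition \ref{purelylargeprop} serves the quite different purpose of verifying stability of a hereditary subalgebra. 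The paper does not attempt a direct argument: it first notes that $\mathfrak e$ is $\mathsf X$-full iff $\mathfrak e \oplus 0$ is, and that $\mathfrak e \oplus 0$ is again $\mathsf X$-purely large, thereby reducing \emph{both} assertions to showing that an $\mathsf X$-purely large extension absorbing the zero extension is unitisably $\mathsf X$-full; it then invokes the full strength of Theorem \ref{t:purelylargefinite} to conclude that $\mathfrak e^\dagger$ absorbs the trivial unital extension with Busby map $\pi \circ \Phi$, where $\Phi$ is the $\mathsf X$-full, unitally absorbing representation from Corollary \ref{c:Xfullmap}, and finishes by observing that a Cuntz sum with an $\mathsf X$-full map is $\mathsf X$-full. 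To repair your proof you must either route it through Theorem \ref{t:purelylargefinite} in this way, or supply a genuine argument that $\pi(y_\infty)$ lies in the ideal generated by $\pi(y)$ --- and no elementary proof of that is available.
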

\begin{proof}
 If $\mathfrak e$ is $\mathsf X$-purely large, then so is its sum with the zero extension. Also, $\mathfrak{e}$ is $\mathsf{X}$-full if and only if $\mathfrak{e} \oplus 0$ is $\mathsf{X}$-full.
 Thus it suffices to show that when $\mathfrak e$ absorbs the zero extension, then it is unitisably $\mathsf X$-full, so we should show that the unitised extension
 $\mathfrak e^\dagger$ is $\mathsf X$-full. Note that $\mathfrak e^\dagger$ is $\mathsf X$-purely large by Proposition \ref{p:plpunitimpliesnonunit}.
 
 Let $\mathscr C := CP_\rnuc(\mathsf X; \mathfrak A^\dagger, \mathfrak B)$. By Corollary \ref{c:Xfullmap} there is a unitally $\mathscr C$-absorbing $\ast$-homo\-morphism $\Phi\colon \mathfrak A^\dagger \to \multialg{\mathfrak B}$ weakly in $\mathscr C$
 which is $\mathsf X$-full.
 Since $\corona{\mathfrak B}(\mathsf U) = \pi (\multialg{\mathfrak B}(\mathsf U))$ for all $\mathsf U\in \mathbb O(\mathsf X)$, it follows that $\pi \circ \Phi \colon \mathfrak A^\dagger \to \corona{\mathfrak B}$ is $\mathsf X$-full.
 Moreover, $\pi \circ \Phi$ is the Busby map of a trivial, unital $\mathscr C$-extension $\mathfrak f$, and thus $\mathfrak e^\dagger$ absorbs $\mathfrak f$.
 Since the Cuntz sum of an $\mathsf X$-equivariant map with an $\mathsf X$-full map is clearly $\mathsf X$-full, it follows that $\mathfrak e^\dagger$ is an $\mathsf X$-full extension.
\end{proof}

If we assume that $\mathfrak B(\mathsf U)$ has the corona factorisation property for every $\mathsf U\in \mathbb O(\mathsf X)$ then we also get a converse of the above lemma.
We will need a few intermediate results.

Recall, that if $\mathfrak J$ is a two-sided, closed ideal in $\mathfrak B$, then there is a canonical injective $\ast$-homomorphism $\iota \colon \multialg{\mathfrak B, \mathfrak J} \hookrightarrow \multialg{\mathfrak J}$.
By the diagram
\[
 \xymatrix{
 0 \ar[r] & \mathfrak J \ar@{=}[d] \ar[r] & \multialg{\mathfrak B, \mathfrak J} \ar[d]^{\iota} \ar[r] & \corona{\mathfrak B, \mathfrak J} \ar[r] \ar@{.>}[d]^{\overline \iota} & 0 \\
 0 \ar[r] & \mathfrak J \ar[r] & \multialg{\mathfrak J} \ar[r]^\pi & \corona{\mathfrak J} \ar[r] & 0
 }
\]
which has exact rows, there is an induced $\ast$-homomorphism $\overline  \iota \colon \corona{\mathfrak B, \mathfrak J} \to \corona{\mathfrak J}$. A diagram chase above shows that $\overline \iota$ is injective.

\begin{lemma}\label{l:coronafullher}
 Let $\mathfrak B$ be a stable, $\sigma$-unital $C^\ast$-algebra, and $\mathfrak J$ be a two-sided, closed ideal in $\mathfrak B$ which contains a full element. 
 Then $\overline \iota(\corona{\mathfrak B, \mathfrak J})$ is a full hereditary $C^\ast$-subalgebra of $\corona{\mathfrak J}$.
\end{lemma}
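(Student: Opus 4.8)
The plan is to combine the two structural facts already available about the inclusions associated to a two-sided, closed ideal $\mathfrak J$ in $\mathfrak B$. First, recall from Lemma \ref{l:multiher} that $\iota(\multialg{\mathfrak B, \mathfrak J})$ is a hereditary $C^\ast$-subalgebra of $\multialg{\mathfrak J}$, and from the diagram chase preceding this lemma that $\overline \iota \colon \corona{\mathfrak B, \mathfrak J} \to \corona{\mathfrak J}$ is injective. So the content to be established is exactly two things: that the image is \emph{hereditary} and that it is \emph{full} in $\corona{\mathfrak J}$.

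For heredity, I would argue that passing to the corona algebra preserves the hereditary property here. Concretely, given $y \in \corona{\mathfrak J}$ and $\overline\iota(x_1), \overline\iota(x_2) \in \overline\iota(\corona{\mathfrak B,\mathfrak J})$, lift $y$ to some $m \in \multialg{\mathfrak J}$ and lift $x_1, x_2$ to $\iota(m_1), \iota(m_2)$ with $m_1, m_2 \in \multialg{\mathfrak B, \mathfrak J}$. By Lemma \ref{l:multiher} the product $\iota(m_1) m \iota(m_2)$ lies in the hereditary subalgebra $\iota(\multialg{\mathfrak B, \mathfrak J})$, say it equals $\iota(x)$ for some $x \in \multialg{\mathfrak B, \mathfrak J}$; applying $\pi$ then shows $\overline\iota(x_1) y \overline\iota(x_2) = \overline\iota(\pi(x))$ is in $\overline\iota(\corona{\mathfrak B, \mathfrak J})$, using that $\overline\iota \circ \pi = \pi \circ \iota$ on $\multialg{\mathfrak B, \mathfrak J}$ from the commuting diagram.

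For fullness, I would use the hypothesis that $\mathfrak J$ contains a full element $j$ of $\mathfrak B$ together with Lemma \ref{l:multifullher}. The ``in particular'' clause of Lemma \ref{l:multifullher} gives that $\iota(\multialg{\mathfrak B, \mathfrak J})$ is a full hereditary $C^\ast$-subalgebra of $\multialg{\mathfrak J}$, since $\mathfrak J = \overline{\mathfrak B j \mathfrak B}$ for a suitable $m = j \in \multialg{\mathfrak B}$ (here one takes $j$ to be the given full element of $\mathfrak J$, noting $\overline{\mathfrak B j \mathfrak B} = \mathfrak J$ precisely because $j$ is full in $\mathfrak B$ and lies in $\mathfrak J$). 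In particular $\iota(\multialg{\mathfrak B, \mathfrak J})$ contains an element $\iota(m_\infty)$ which is full in $\multialg{\mathfrak J}$. I would then descend fullness to the corona level: a full hereditary subalgebra of $\multialg{\mathfrak J}$ maps onto a full hereditary subalgebra of the quotient $\corona{\mathfrak J}$, because the image of a full element under a surjective $\ast$-homomorphism is again full, and the quotient of a hereditary subalgebra by the image of the kernel is hereditary in the quotient. So $\overline\iota(\corona{\mathfrak B, \mathfrak J}) = \pi(\iota(\multialg{\mathfrak B, \mathfrak J}))$ is full and hereditary in $\corona{\mathfrak J}$.

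The main obstacle I anticipate is the bookkeeping in the fullness argument: I must be careful that the element of $\iota(\multialg{\mathfrak B, \mathfrak J})$ which is full in $\multialg{\mathfrak J}$ remains full after applying $\pi$, i.e.~that its image is not absorbed into $\mathfrak J = \ker\pi$. This follows because $\iota(m_\infty)$ generates all of $\multialg{\mathfrak J}$ as a two-sided, closed ideal, so its image generates $\multialg{\mathfrak J}/\mathfrak J = \corona{\mathfrak J}$ as a two-sided, closed ideal; concretely, applying $\pi$ to the identity expressing $1_{\multialg{\mathfrak J}}$ (or an approximate unit) as a limit of sums $\sum c_k \iota(m_\infty) d_k$ gives fullness of $\pi(\iota(m_\infty))$ in $\corona{\mathfrak J}$. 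Once this is checked, the remaining verifications are the routine diagram-level identities already encoded in the commuting square, so the proof should be short.
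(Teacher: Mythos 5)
Your proof is correct and follows essentially the same route as the paper: the paper applies the ``in particular'' part of Lemma \ref{l:multifullher} to see that $\iota(\multialg{\mathfrak B, \mathfrak J})$ is a full hereditary $C^\ast$-subalgebra of $\multialg{\mathfrak J}$, and then simply cites the general fact that a $\ast$-epimorphism maps full hereditary $C^\ast$-subalgebras onto full hereditary $C^\ast$-subalgebras, so that $\overline \iota(\corona{\mathfrak B, \mathfrak J}) = \pi(\iota(\multialg{\mathfrak B, \mathfrak J}))$ is full and hereditary in $\corona{\mathfrak J}$ --- precisely the two facts you verify by hand via Lemma \ref{l:multiher} and the lifting argument. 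The only slip is your parenthetical justification that $\overline{\mathfrak B j \mathfrak B} = \mathfrak J$ ``because $j$ is full in $\mathfrak B$ and lies in $\mathfrak J$'': an element of $\mathfrak J$ that is full in $\mathfrak B$ would force $\mathfrak J = \mathfrak B$; the hypothesis means that $j$ is full in $\mathfrak J$, which is equivalent to $\overline{\mathfrak B j \mathfrak B} = \mathfrak J$ (a closed two-sided ideal of $\mathfrak J$ is automatically one of $\mathfrak B$), and since this identity is what you actually feed into Lemma \ref{l:multifullher}, the argument is unaffected.
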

\begin{proof}
 It follows from Lemma \ref{l:multifullher} that $\iota(\multialg{\mathfrak B, \mathfrak J})$ is a full hereditary $C^\ast$-subalgebra of $\multialg{\mathfrak J}$.
 Since any $\ast$-epimorphism maps full hereditary $C^\ast$-subalgebras onto full hereditary $C^\ast$-subalgebras, 
 $\overline \iota (\corona{\mathfrak B, \mathfrak J}) = \pi(\iota(\multialg{\mathfrak B, \mathfrak J}))$ is a full hereditary $C^\ast$-subalgebra of $\corona{\mathfrak J}$.
\end{proof}

\begin{lemma}\label{l:coronafullelement}
 Let $\mathfrak B$ be a stable $\sigma$-unital $C^\ast$-algebra, and let $\mathfrak J$ be a $\sigma$-unital, two-sided, closed ideal in $\mathfrak B$.
 Let $m \in \multialg{\mathfrak B, \mathfrak J}$. Then $m$ is full in $\multialg{\mathfrak B, \mathfrak J}$ if and only if $\pi(m)$ is full in $\corona{\mathfrak B, \mathfrak J}$.
\end{lemma}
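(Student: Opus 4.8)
The plan is to prove the two implications separately, with essentially all the content sitting in the converse. The forward implication is immediate: the quotient map $\pi \colon \multialg{\mathfrak B, \mathfrak J} \to \corona{\mathfrak B, \mathfrak J}$ is surjective with kernel $\mathfrak J$ (by the second isomorphism theorem applied to $\corona{\mathfrak B,\mathfrak J} = (\multialg{\mathfrak B,\mathfrak J}+\mathfrak B)/\mathfrak B$ together with $\multialg{\mathfrak B,\mathfrak J}\cap\mathfrak B = \mathfrak J$), so if $m$ is full in $\multialg{\mathfrak B,\mathfrak J}$ then $\pi$ carries the ideal generated by $m$ onto the ideal generated by $\pi(m)$, which is therefore all of $\corona{\mathfrak B,\mathfrak J}$.

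For the converse I would first reduce the problem to $\multialg{\mathfrak J}$ and $\corona{\mathfrak J}$, where the ambient algebra is unital. The first observation is that $\mathfrak J$ is itself stable: being a two-sided, closed ideal of $\mathfrak B \cong \mathfrak B\otimes\mathbb K$, it corresponds under the lattice isomorphism of ideals (the same one used in Proposition \ref{p:stablecone}) to an ideal of the form $\mathfrak J'\otimes\mathbb K$, hence is stable. Fixing a strictly positive $h\in\mathfrak J$, fullness of $h$ in $\mathfrak J$ gives $\mathfrak J = \overline{\mathfrak B h\mathfrak B}$, so the hypotheses of Lemmas \ref{l:multifullher} and \ref{l:coronafullher} hold; thus $\iota(\multialg{\mathfrak B,\mathfrak J})$ is a full hereditary $C^\ast$-subalgebra of $\multialg{\mathfrak J}$ and $\overline\iota(\corona{\mathfrak B,\mathfrak J})$ is a full hereditary $C^\ast$-subalgebra of $\corona{\mathfrak J}$. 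For any full hereditary $C^\ast$-subalgebra $D\subseteq A$, the Rieffel correspondence $\mathfrak L\mapsto \overline{A\mathfrak L A}$ is a lattice isomorphism from the ideals of $D$ onto the ideals of $A$ carrying $D$ to $A$, and for $x\in D$ it sends $\overline{DxD}$ to $\overline{AxA}$; hence $x$ is full in $D$ if and only if $x$ is full in $A$. Applying this to both subalgebras and using the commuting square $\pi_{\mathfrak J}\circ\iota = \overline\iota\circ\pi$ (where $\pi_{\mathfrak J}\colon\multialg{\mathfrak J}\to\corona{\mathfrak J}$ is the quotient map), the lemma reduces to the statement that for $M := \iota(m)\in\multialg{\mathfrak J}$, $M$ is full in $\multialg{\mathfrak J}$ if and only if $\pi_{\mathfrak J}(M)$ is full in $\corona{\mathfrak J}$.

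To settle this reduced statement, suppose $\pi_{\mathfrak J}(M)$ is full in $\corona{\mathfrak J}$ and set $\mathfrak I := \overline{\multialg{\mathfrak J}\,M\,\multialg{\mathfrak J}}$. Since $\pi_{\mathfrak J}$ is surjective with kernel $\mathfrak J$, fullness of $\pi_{\mathfrak J}(M)$ yields $\pi_{\mathfrak J}(\mathfrak I) = \corona{\mathfrak J}$, that is $\mathfrak I + \mathfrak J = \multialg{\mathfrak J}$ (the sum of two closed ideals being closed). Then the composite $\mathfrak J\hookrightarrow\multialg{\mathfrak J}\twoheadrightarrow\multialg{\mathfrak J}/\mathfrak I$ is surjective, so $\multialg{\mathfrak J}/\mathfrak I\cong\mathfrak J/(\mathfrak J\cap\mathfrak I)$. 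This quotient is unital, being the image of the unit of $\multialg{\mathfrak J}$, and it is stable, being a quotient of the stable algebra $\mathfrak J$. Since the only unital stable $C^\ast$-algebra is $0$, we conclude $\multialg{\mathfrak J}/\mathfrak I = 0$, i.e. $\mathfrak I = \multialg{\mathfrak J}$ and $M$ is full.

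The main obstacle is the transfer step of the second paragraph: one must verify carefully that fullness of an element is detected identically inside a full hereditary $C^\ast$-subalgebra and in the ambient algebra, and that the images $\iota(\multialg{\mathfrak B,\mathfrak J})$ and $\overline\iota(\corona{\mathfrak B,\mathfrak J})$ really are full and hereditary — this is exactly where Lemmas \ref{l:multiher}, \ref{l:multifullher}, \ref{l:coronafullher} and the $\sigma$-unitality of $\mathfrak J$ are used. By comparison, the facts that ideals and quotients of stable $C^\ast$-algebras are stable and that a unital stable $C^\ast$-algebra is zero are standard and require no real work.
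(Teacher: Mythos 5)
Your proof is correct, and its overall skeleton matches the paper's: both arguments pass to the absolute algebras $\multialg{\mathfrak J}$ and $\corona{\mathfrak J}$ via the embeddings $\iota$ and $\overline\iota$, using that their images are full hereditary $C^\ast$-subalgebras (Lemmas \ref{l:multifullher} and \ref{l:coronafullher}, whose hypotheses you correctly verify from $\sigma$-unitality of $\mathfrak J$); your ``Rieffel correspondence'' transfer of fullness across a full hereditary subalgebra is exactly the mechanism the paper invokes when it concludes fullness of $m$ in $\multialg{\mathfrak B,\mathfrak J}$ from fullness of $\iota(m)$ in $\multialg{\mathfrak J}$. Where you genuinely diverge is the core step. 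The paper disposes of the reduced statement --- that $\pi_{\mathfrak J}(\iota(m))$ full in $\corona{\mathfrak J}$ forces $\iota(m)$ full in $\multialg{\mathfrak J}$ --- by citing \cite[Proposition 3.3]{KucerovskyNg-corona}, together with a parenthetical remark that the positivity hypothesis in that proposition is not needed. You instead prove this step from scratch: the closed ideal $\mathfrak I$ generated by $\iota(m)$ satisfies $\mathfrak I + \mathfrak J = \multialg{\mathfrak J}$, so $\multialg{\mathfrak J}/\mathfrak I \cong \mathfrak J/(\mathfrak I \cap \mathfrak J)$ is simultaneously unital and stable, hence zero. This buys self-containedness (there is no need to inspect the cited proof to confirm positivity is irrelevant, since your argument visibly never uses it) and it cleanly isolates where each hypothesis enters: stability of $\mathfrak J$ alone carries the reduced statement, while $\sigma$-unitality is only consumed by the two hereditary-subalgebra lemmas. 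The only cost is that you must know that ideals and quotients of stable $C^\ast$-algebras are stable and that a nonzero stable $C^\ast$-algebra is never unital; both are standard and your uses of them are correct.
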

\begin{proof}
 One direction is trivial, so suppose that $\pi(m)$ is full in $\corona{\mathfrak B, \mathfrak J}$. By Lemma \ref{l:coronafullher}, $\overline \iota (\pi(m))$ is full in $\corona{\mathfrak J}$.
 Since $\pi(\iota(m)) = \overline \iota(\pi(m))$, it follows from \cite[Proposition 3.3]{KucerovskyNg-corona} (clearly one does not need the positivity requirement in the proposition), that $\iota(m)$ is full in $\multialg{\mathfrak J}$.
 It follows from Lemma \ref{l:multifullher} that $m$ is full in $\multialg{\mathfrak B, \mathfrak J}$.
\end{proof}

\begin{lemma}\label{l:fullpropinfproj}
 Let $\mathfrak B$ be a stable $\sigma$-unital $C^\ast$-algebra, and let $\mathfrak J$ be a $\sigma$-unital, two-sided, closed ideal in $\mathfrak B$.  Then $\multialg{\mathfrak B, \mathfrak J}$ has a full, properly infinite projection.
 \end{lemma}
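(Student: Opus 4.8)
The goal is to produce a projection $P \in \multialg{\mathfrak B, \mathfrak J}$ which is full in $\multialg{\mathfrak B, \mathfrak J}$ and properly infinite. The natural strategy is to build $P$ as an infinite repeat coming from the stability of $\mathfrak J$, and then invoke the fullness results already established in Lemmas \ref{l:multifullher} and \ref{l:coronafullelement}.

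First I would use that $\mathfrak J$ is $\sigma$-unital and full in itself to produce a suitable element of $\multialg{\mathfrak B,\mathfrak J}$. Concretely, since $\mathfrak J$ is $\sigma$-unital and stable (being a two-sided, closed ideal in the stable algebra $\mathfrak B$ — here one should note that ideals in stable $\sigma$-unital $C^\ast$-algebras are themselves stable, or alternatively work with the canonical isometries), I can fix a strictly positive element $h \in \mathfrak J$ and consider its infinite repeat. More precisely, choosing isometries $t_1, t_2, \dots \in \multialg{\mathfrak B}$ with $\sum_k t_k t_k^\ast = 1$ strictly (existence by stability of $\mathfrak B$), I would set $m_\infty = \sum_{k=1}^\infty t_k h t_k^\ast$, the infinite repeat of $h$. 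Since $h \in \mathfrak J$ and $\mathfrak J$ is an ideal, each $t_k h t_k^\ast \mathfrak B \subseteq \mathfrak J$, so $m_\infty \in \multialg{\mathfrak B, \mathfrak J}$, and $\overline{\mathfrak B m_\infty \mathfrak B} = \mathfrak J$ because $h$ is strictly positive in $\mathfrak J$. By Lemma \ref{l:multifullher}, $m_\infty$ is full in $\multialg{\mathfrak B, \mathfrak J}$.

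The main work is to convert this full positive element into a full \emph{properly infinite projection}. The key point is that $m_\infty$, being an infinite repeat, has a very rich internal structure: it is an infinite orthogonal-type sum of copies of $h$, so inside $\overline{m_\infty \multialg{\mathfrak B,\mathfrak J} m_\infty}$ one can find infinitely many mutually orthogonal, mutually equivalent copies of a fixed nonzero hereditary subalgebra. The plan is to extract from the infinite repeat a sequence of isometries $s_1, s_2, \dots$ in $\multialg{\mathfrak B, \mathfrak J}$ (note $\iota$ embeds into $\multialg{\mathfrak J}$ where stability gives genuine isometries) whose range projections are mutually orthogonal and full; then a standard construction yields a properly infinite projection $P$ dominating a full element, and fullness of $P$ follows since it dominates (a projection onto the support of) the full element $m_\infty$. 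I would realize this concretely by passing through $\iota$ to $\multialg{\mathfrak J}$, using that $\multialg{\mathfrak J}$ is the multiplier algebra of a stable $\sigma$-unital algebra and hence contains $\mathcal O_\infty$-type isometries, building the projection there, and checking it lies in the hereditary subalgebra $\iota(\multialg{\mathfrak B,\mathfrak J})$ of Lemma \ref{l:multiher}.

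The hard part will be ensuring that the properly infinite projection I construct genuinely lies in $\multialg{\mathfrak B, \mathfrak J}$ and not merely in $\multialg{\mathfrak J}$, and that it remains full there rather than just in $\multialg{\mathfrak J}$. The cleanest route is probably to do the entire construction at the level of the corona algebra: pass to $\corona{\mathfrak B, \mathfrak J}$ via $\pi$, build a properly infinite full projection there using that $\corona{\mathfrak B,\mathfrak J} \cong \corona{\mathfrak B}(\mathsf U)$-type corners are properly infinite-friendly, lift it to a projection in $\multialg{\mathfrak B,\mathfrak J}$ using that infinite repeats furnish the needed halving isometries modulo $\mathfrak J$, and then apply Lemma \ref{l:coronafullelement} to transfer fullness from $\pi(P)$ in $\corona{\mathfrak B,\mathfrak J}$ back to $P$ in $\multialg{\mathfrak B,\mathfrak J}$. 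Proper infiniteness is preserved because the witnessing isometries and orthogonal subprojections can be chosen to lift along the ideal quotient; this lifting step, together with the verification that the lift is an honest projection (achievable since projections lift along $\multialg{\mathfrak B,\mathfrak J} \to \corona{\mathfrak B,\mathfrak J}$ when the relevant hereditary subalgebra is stable), is the crux of the argument.
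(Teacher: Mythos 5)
Your first step is fine and matches the paper's use of Lemma \ref{l:multifullher}: an infinite repeat of a strictly positive element of $\mathfrak J$ lies in $\multialg{\mathfrak B, \mathfrak J}$ and is full there. But everything after that is where the actual content of the lemma lies, and your plan for it does not go through. The problem is that $m_\infty$ is a positive element with no spectral gap at $0$, so no functional calculus produces a projection from it; converting a full positive element into a full \emph{projection} is precisely what has to be proved, and your sketch defers it to a "standard construction" that is never carried out. Note also that when $\mathfrak J$ is a proper ideal, $\multialg{\mathfrak B,\mathfrak J}$ is a non-unital ideal of $\multialg{\mathfrak B}$ and therefore contains no isometries at all, so the "sequence of isometries $s_1, s_2, \dots$ in $\multialg{\mathfrak B, \mathfrak J}$" cannot exist (you partially acknowledge this, but it means the construction must happen elsewhere and then be pulled back, which is the hard part). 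Worse, your "cleanest route" -- building a properly infinite full projection in $\corona{\mathfrak B,\mathfrak J}$ and lifting it -- faces a genuine obstruction: the quotient map here is $\multialg{\mathfrak B,\mathfrak J} \to \multialg{\mathfrak B,\mathfrak J}/\mathfrak J \cong \corona{\mathfrak B,\mathfrak J}$, and projections do not lift along quotients by stable $\sigma$-unital ideals in general. For instance, $K_0(\multialg{\mathfrak J}) = 0$ while $K_0(\corona{\mathfrak J}) \cong K_1(\mathfrak J)$ can be non-zero, so any corona projection with non-zero $K_0$-class admits no projection lift; you give no argument that your corona projection can be chosen to avoid this obstruction.

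The paper sidesteps all of this with one idea you are missing: get a projection \emph{directly} from Kasparov's stabilisation theorem rather than from a positive element. View $E := \mathfrak J$ as a Hilbert $\mathfrak B$-module; it is countably generated since $\mathbb K(E) \cong \mathfrak J$ is $\sigma$-unital, so $E \oplus \mathcal H_\mathfrak{B} \cong \mathcal H_\mathfrak{B} \cong \mathfrak B$ (using stability of $\mathfrak B$). This yields a projection $P' \in \mathbb B(\mathfrak B) \cong \multialg{\mathfrak B}$ with $P'\mathfrak B \cong \mathfrak J$, whence $\overline{\mathfrak B P' \mathfrak B} = \mathfrak J$. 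Now take an infinite repeat $P$ of $P'$: this is again a projection, it is full in $\multialg{\mathfrak B, \mathfrak J}$ by Lemma \ref{l:multifullher}, and it is properly infinite because a non-zero infinite repeat is unitarily equivalent to its own Cuntz sum $P \oplus P$. So the infinite-repeat trick you wanted to use does all the work -- but only once it is applied to a projection generating the ideal $\mathfrak J$, which Kasparov stabilisation provides and your construction does not.
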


\begin{proof}
 The Hilbert $\mathfrak B$-module $E:= \mathfrak{J}$ is countably generated since $\mathbb K(E) \cong \mathfrak{J}$ is $\sigma$-unital. 
 By Kasparov's stabilisation theorem and since $\mathcal H_\mathfrak{B} \cong \mathfrak B$, there is a projection $P' \in \mathbb B(\mathfrak B) \cong \multialg{\mathfrak B}$ such that $P' \mathfrak B \cong E$.
 Let $\eta\colon \mathbb B(\mathfrak B) \xrightarrow{\cong} \multialg{\mathfrak B}$ be the canonical isomorphism. 
 It easily follows that $\overline{\mathfrak B \eta(P') \mathfrak B} = \mathfrak J$. It follows from Lemma \ref{l:multifullher} that any infinite repeat $P$ of $\eta(P')$ is full in $\multialg{\mathfrak B, \mathfrak J}$.
 Since $P$ is an infinite repeat, and is non-zero, it is unitarily equivalent any Cuntz sum $P \oplus P$, and is thus properly infinite.
 \end{proof}

\begin{proposition}\label{p:cfpimpliesplp}
 Let $\mathsf X$ be a finite space, and let $\mathfrak e: 0 \to \mathfrak B \to \mathfrak E \xrightarrow{p} \mathfrak A \to 0$ be an extension of $\mathsf X$-$C^\ast$-algebras.
 Suppose that $\mathfrak A$ is a separable, lower semicontinuous $\mathsf X$-$C^\ast$-algebra, and $\mathfrak B$ is a
 stable, $\mathsf X$-$\sigma$-unital, continuous $\mathsf X$-$C^\ast$-algebra. 
 Suppose that $\mathfrak B(\mathsf U)$ has the corona factorisation property for each $\mathsf U\in \mathbb O(\mathsf X)$.
 If $\mathfrak e$ is $\mathsf X$-full, then it is $\mathsf X$-purely large.
\end{proposition}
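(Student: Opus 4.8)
The plan is to verify the defining condition of $\mathscr C$-pure largeness for $\mathscr C = CP(\mathsf X;\mathfrak A,\mathfrak B)$ directly, noting that by Proposition \ref{p:determineB_C} we have $\mathfrak B_{\mathscr C}(c) = \mathfrak B(\mathsf U_c)$ for every $c$, so that $\mathsf X$-pure largeness and $CP_\rnuc(\mathsf X;\mathfrak A,\mathfrak B)$-pure largeness coincide. By the reduction recorded after Definition \ref{d:Cpl} it suffices to treat positive $x\in\mathfrak E$. Fix such an $x$, put $a:=p(x)$, $\mathsf U:=\mathsf U_a$ and $\mathfrak J:=\mathfrak B(\mathsf U)=\mathfrak B_{\mathscr C}(a)$; this ideal is $\sigma$-unital because $\mathfrak B$ is $\mathsf X$-$\sigma$-unital, and it has the corona factorisation property by hypothesis. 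Writing $m:=\sigma(x)\in\multialg{\mathfrak B}_+$ for the image under the canonical map $\sigma\colon\mathfrak E\to\multialg{\mathfrak B}$, we have $\overline{x^\ast\mathfrak B x}=\overline{m\mathfrak B m}$ and $\pi(m)=\tau(a)$, which is full in $\corona{\mathfrak B}(\mathsf U)=\corona{\mathfrak B,\mathfrak J}$ since the extension is $\mathsf X$-full.

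The second step is a domination argument producing a full multiplier of $\mathfrak J$ dominated by $m$. Using Remark \ref{r:canoniciso}, lift $\pi(m)$ to a positive $m_1\in\multialg{\mathfrak B,\mathfrak J}$ and set $m_0:=m^{1/2}\,m_1(m_1+1)^{-1}\,m^{1/2}$. Since $\multialg{\mathfrak B,\mathfrak J}$ is an ideal of $\multialg{\mathfrak B}$ and $m_1(m_1+1)^{-1}\in\multialg{\mathfrak B,\mathfrak J}$ is a contraction, we get $m_0\in\multialg{\mathfrak B,\mathfrak J}_+$ with $m_0\le m$; moreover $\pi(m_0)=\pi(m)^2(\pi(m)+1)^{-1}$ generates the same hereditary subalgebra of $\corona{\mathfrak B,\mathfrak J}$ as the full element $\pi(m)$, hence is itself full. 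By Lemma \ref{l:coronafullelement}, $m_0$ is full in $\multialg{\mathfrak B,\mathfrak J}$. From $0\le m_0\le m$ we obtain $\overline{m_0\mathfrak B m_0}\subseteq\overline{m\mathfrak B m}=\overline{x^\ast\mathfrak B x}$, while $m_0\in\multialg{\mathfrak B,\mathfrak J}$ gives $m_0\mathfrak J m_0\subseteq\mathfrak J$; thus $\overline{m_0\mathfrak J m_0}$ is a hereditary subalgebra of $\mathfrak J$ contained in $\overline{x^\ast\mathfrak B x}$.

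Next I would transport the problem into $\multialg{\mathfrak J}$. By Lemmas \ref{l:multiher} and \ref{l:multifullher} the canonical map $\iota\colon\multialg{\mathfrak B,\mathfrak J}\hookrightarrow\multialg{\mathfrak J}$ has full hereditary image and restricts to the identity on $\mathfrak J$; hence $y:=\iota(m_0)\in\multialg{\mathfrak J}_+$ is full in $\multialg{\mathfrak J}$ and $\overline{y\mathfrak J y}=\overline{m_0\mathfrak J m_0}$. It then suffices to show that $\overline{y\mathfrak J y}$ contains a $\sigma$-unital, stable $C^\ast$-subalgebra full in $\mathfrak J$: such a subalgebra lies in $\overline{x^\ast\mathfrak B x}$ and is full in $\mathfrak B_{\mathscr C}(p(x))=\mathfrak J$, which is exactly the required $\mathfrak D$. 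This remaining assertion is the classical pure-largeness statement for the single $\sigma$-unital algebra $\mathfrak J$, namely that a full positive element of $\multialg{\mathfrak J}$ spans a hereditary subalgebra containing a full, stable, $\sigma$-unital subalgebra precisely when $\mathfrak J$ has the corona factorisation property. I would deduce it as follows: Lemma \ref{l:fullpropinfproj} supplies a full, properly infinite projection in $\multialg{\mathfrak B,\mathfrak J}$, which $\iota$ carries to a full, properly infinite projection $Q\in\multialg{\mathfrak J}$ that the corona factorisation property forces to be Murray--von Neumann equivalent to $1_{\multialg{\mathfrak J}}$; combining this equivalence with the fullness of $\pi(y)$ in $\corona{\mathfrak J}$ and the Hjelmborg--Rørdam stability criterion (as already used in Proposition \ref{purelylargeprop}) produces the desired subalgebra of $\overline{y\mathfrak J y}$.

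The main obstacle is exactly this last step: extracting, from the corona factorisation property of $\mathfrak J$, a full stable $\sigma$-unital subalgebra inside the hereditary subalgebra cut out by a full positive multiplier $y$. This is the genuine $C^\ast$-algebraic content — the analogue, internal to the ideal $\mathfrak J$, of the classical theorem that the corona factorisation property makes full extensions purely large — and one would either invoke that classical result for $\mathfrak J$ or reprove it via the properly-infinite-projection machinery of Lemmas \ref{l:coronafullher}--\ref{l:fullpropinfproj}. The equivariant finite-space geometry enters only through the identification of the target ideal $\mathfrak J=\mathfrak B(\mathsf U_a)$ and Proposition \ref{p:determineB_C}; the remaining ingredients, namely the resolvent domination trick and the transfer along $\iota$, are routine given the lemmas already in place.
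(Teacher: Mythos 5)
Your reduction is sound and, up to the last step, runs parallel to the paper's own proof. The resolvent trick $m_0=m^{1/2}m_1(m_1+1)^{-1}m^{1/2}$, with $m_1\in\multialg{\mathfrak B,\mathfrak J}$ a positive lift of $\pi(m)$, is a valid alternative to what the paper does at this stage: there one multiplies $x$ by a strictly positive contraction $y\in\mathfrak E(\mathsf U)$ (which exists because $0\to\mathfrak B(\mathsf U)\to\mathfrak E(\mathsf U)\to\mathfrak A(\mathsf U)\to 0$ is exact with $\sigma$-unital ideal and quotient), so that $\sigma(xy)\in\multialg{\mathfrak B}(\mathsf U)$ is full there by Lemma \ref{l:coronafullelement}, exactly as your $m_0$ is. Both routes arrive at the same point: a full positive element of $\multialg{\mathfrak B,\mathfrak J}$ whose hereditary subalgebra of $\mathfrak J$ sits inside $\overline{x\mathfrak B x}$, transported by $\iota$ to a full positive element of $\multialg{\mathfrak J}$.

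The genuine gap is the final step, which is the only place the corona factorisation property can enter, and your sketch of it does not work. First, the projection supplied by Lemma \ref{l:fullpropinfproj} bears no relation to the hereditary subalgebra $\overline{y\mathfrak J y}$; nothing in your argument ties the two together. Second, the CFP is not what makes that projection equivalent to $1$: for projections in $\multialg{\mathfrak J}$ with $\mathfrak J$ stable (and $\mathfrak J=\mathfrak B(\mathsf U)$ is stable, being an ideal of the stable algebra $\mathfrak B$), proper infiniteness plus fullness already gives equivalence with $1$ without any CFP assumption --- this is precisely why the two formulations in the paper's definition of the CFP agree. Third, the Hjelmborg--R{\o}rdam argument of Proposition \ref{purelylargeprop} cannot be run for a general full positive $y$: it relies on the infinite-repeat structure of $\Phi'$ there (the unitaries $u_n$ commuting with $\Phi'$ which push $\mathfrak B$ off to infinity), and $y$ carries no such structure. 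Finally, the classical result you propose to invoke instead (``CFP makes full extensions purely large'') does not apply directly either: the extension determined by $y$, with extension algebra $C^\ast(y,1)+\mathfrak J$, need not be a full extension, since fullness of $\pi(y)$ in $\corona{\mathfrak J}$ does not imply fullness of $h(\pi(y))$ for every nonzero $h\in C_0((0,\|y\|])$. What is actually missing is the bridge from a full positive \emph{element} to a full \emph{projection}, the objects the CFP speaks about; the paper imports it as \cite[Lemma 3.2]{KucerovskyNg-corona}: for the stable, $\sigma$-unital ideal $\mathfrak J$ one has $\overline{y\mathfrak J y}\cong P\mathfrak J P$ for some full projection $P\in\multialg{\mathfrak J}$. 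Only at that point does the CFP act, forcing $P\sim 1$, whence $\mathfrak D:=\overline{y\mathfrak J y}\cong\mathfrak J$ is itself stable, $\sigma$-unital and full in $\mathfrak J$, completing the proof. Without this lemma (or a proof of it) your argument stops exactly where the hypothesis of the proposition has to do its work.
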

\begin{proof}
Let $x\in \mathfrak E$, and let $\mathsf U \in \mathbb O(\mathsf X)$ be such that $p(x)$ is $\mathsf U$-full. 
We must show that $\overline{x \mathfrak B x^\ast}$ contains a $\sigma$-unital, stable $C^\ast$-subalgebra $\mathfrak D$ which is full in $\mathfrak B(\mathsf U)$.
Let $y$ be a strictly positive contraction in $\mathfrak E(\mathsf U)$, which exists since $0 \to \mathfrak B(\mathsf U) \to \mathfrak E(\mathsf U) \to \mathfrak A(\mathsf U) \to 0$ is an extension with $\mathfrak A(\mathsf U)$ and $\mathfrak B(\mathsf U)$
$\sigma$-unital. Then $p(y)$ is strictly positive in $\mathfrak A(\mathsf U)$. Since $p(x) \in \mathfrak A(\mathsf U)$, $p(xy)$ generates the same two-sided, closed ideal as $p(x)$, which implies that $p(xy)$ is $\mathsf U$-full.

Let $\sigma\colon \mathfrak E \to \multialg{\mathfrak B}$ be the induced $\ast$-homomorphism. By Proposition \ref{p:basicXext}, $\sigma$ is $\mathsf X$-equivariant so $\sigma(xy)\in \multialg{\mathfrak B}(\mathsf U)$.
Since the extension is $\mathsf X$-full, $\pi(\sigma(xy)) = \tau(p(xy))$ is full in $\corona{\mathfrak B}(\mathsf U)$, so by Lemma \ref{l:coronafullelement} $\sigma(xy)$ is full in $\multialg{\mathfrak B}(\mathsf U)$.
By Lemma \ref{l:multifullher}, $\iota(\sigma(xy))$ is full in $\multialg{\mathfrak B(\mathsf U)}$ so by \cite[Lemma 3.2]{KucerovskyNg-corona} there is a full, multiplier projection $P \in \multialg{\mathfrak B(\mathsf U)}$ such that
\[
 \mathfrak D := \overline{xy \mathfrak B (xy)^\ast} = \overline{\iota(\sigma(xy)) \mathfrak B(\mathsf U) \iota(\sigma(xy))^\ast} \cong P\mathfrak B(\mathsf U) P.
\]
Since $\mathfrak B(\mathsf U)$ has the corona factorisation property it follows that $\mathfrak D$ is stable, and it is clearly full in $\mathfrak B(\mathsf U)$. 
Thus $\mathfrak e$ is $\mathsf X$-purely large since $\mathfrak D$ is obviously $\sigma$-unital and a $C^\ast$-subalgebra of $\overline{x \mathfrak B x^\ast}$.
\end{proof}

In order to get some results in the spirit of Voiculescu's Weyl--von Neumann type absorption theorem \cite{Voiculescu-WvN}, we will need the following definitions.

\begin{definition}
 Let $\mathfrak D$ be an $\mathsf X$-$C^\ast$-algebra. We say that $\mathfrak C$ is an \emph{$\mathsf X$-$C^\ast$-subalgebra of $\mathfrak D$} if $\mathfrak C$ is an $\mathsf X$-$C^\ast$-algebra such that the underlying $C^\ast$-algebra is a $C^\ast$-subalgebra
 of $\mathfrak D$, and $\mathfrak C(\mathsf U) = \mathfrak C \cap \mathfrak D(\mathsf U)$ for every $\mathsf U\in \mathbb O(\mathsf X)$.
 
 Moreover, we say that $\mathfrak C$ is a \emph{full} $\mathsf X$-$C^\ast$-subalgebra of $\mathfrak D$ if $\mathfrak C$ is an $\mathsf X$-$C^\ast$-subalgebra of $\mathfrak D$ and the inclusion map $\mathfrak C \hookrightarrow \mathfrak D$ is $\mathsf X$-full.
\end{definition}

We can now prove another main theorem. This theorem says that under the assumption of the corona factorisation property on each two-sided, closed ideal $\mathfrak B(\mathsf U)$, we get much nicer absorption results.

\begin{theorem}\label{t:cfpequiv}
 Let $\mathsf X$ be a finite space and $\mathfrak B$ be a stable, $\mathsf X$-$\sigma$-unital, continuous $\mathsf X$-$C^\ast$-algebra.\footnote{I.e.~a $C^\ast$-algebra over $\mathsf X$, since $\mathsf X$ is finite.} The following are equivalent.
 \begin{itemize}
  \item[$(i)$] $\mathfrak B(\mathsf U)$ has the corona factorisation property for every $\mathsf U \in \mathbb O(\mathsf X)$,
  \item[$(ii)$] for every separable, unital, lower semicontinuous $\mathsf X$-$C^\ast$-algebra $\mathfrak A$ with $\mathfrak B(\mathsf U_{1_\mathfrak{A}}) = \mathfrak B$, it holds that every unital, 
  $\mathsf X$-full extension of $\mathfrak A$ by $\mathfrak B$, absorbs any trivial, unital weakly residually $\mathsf X$-nuclear extension of 
  $\mathfrak A$ by $\mathfrak B$, 
  \item[$(iii)$] for every separable, lower semicontinuous $\mathsf X$-$C^\ast$-algebra $\mathfrak A$, it holds that every unitisably $\mathsf X$-full extension of $\mathfrak A$ by $\mathfrak B$, absorbs any trivial, weakly residually $\mathsf X$-nuclear 
  extension of $\mathfrak A$ by $\mathfrak B$,
  \item[$(iv)$] for every separable, lower semicontinuous $\mathsf X$-$C^\ast$-algebra $\mathfrak A$, $KK_\rnuc^1(\mathsf X;\mathfrak A,\mathfrak B)$ is (isomorphic to) the group of unitisably $\mathsf X$-full, weakly residually $\mathsf X$-nuclear extensions 
  of $\mathfrak A$ by $\mathfrak B$ under multiplier unitary equivalence.
  \item[$(v)$] for any separable, unital $\mathsf X$-$C^\ast$-subalgebra $\iota \colon \mathfrak C \hookrightarrow \multialg{\mathfrak B}$ for which $\mathfrak C/\mathfrak B$ is a full $\mathsf X$-$C^\ast$-subalgebra of $\corona{\mathfrak B}$, it holds that any
  unital, weakly residually $\mathsf X$-nuclear map $\phi \colon \mathfrak C \to \multialg{\mathfrak B}$ for which $\phi(\mathfrak C \cap \mathfrak B) = 0$, is strongly asymptotically dominated by $\iota$.
  \item[$(vi)$] for any separable, unital $\mathsf X$-$C^\ast$-subalgebra $\iota \colon \mathfrak C \hookrightarrow \multialg{\mathfrak B}$ for which $\mathfrak C/\mathfrak B$ is a full $\mathsf X$-$C^\ast$-subalgebra of $\corona{\mathfrak B}$, it holds for any
  unital, weakly residually $\mathsf X$-nuclear $\ast$-homomorphism $\Phi \colon \mathfrak C \to \multialg{\mathfrak B}$ for which $\Phi(\mathfrak C \cap \mathfrak B) = 0$, that $\iota \oplus \Phi \sim_{as} \iota$.
 \item[$(vii)$] for any separable, unital, lower semicontinuous $\mathsf X$-$C^\ast$-algebra $\mathfrak A$ with $\mathfrak B(\mathsf U_{1_\mathfrak{A}}) = \mathfrak B$, it holds that every unital, 
  $\mathsf X$-full $\ast$-homomorphism $\Phi \colon \mathfrak A \to \multialg{\mathfrak B}$ is unitally weakly residually $\mathsf X$-nuclearly absorbing.
 \end{itemize}
\end{theorem}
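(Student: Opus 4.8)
The plan is to prove Theorem \ref{t:cfpequiv} by establishing a cycle of implications connecting the corona factorisation property of each $\mathfrak B(\mathsf U)$ to the various absorption statements. The natural skeleton is $(i) \Rightarrow (vii) \Rightarrow (ii) \Rightarrow (i)$ for the unital pieces, $(i) \Rightarrow (iii)$ with $(iii) \Rightarrow (ii)$ (or $(iii)\Rightarrow(i)$) for the non-unital ones, and a separate sub-cycle $(i) \Leftrightarrow (v) \Leftrightarrow (vi)$ for the $C^\ast$-subalgebra formulations, finally deriving $(iv)$ from $(iii)$ together with the lifting machinery (Corollaries \ref{c:choieffros}, \ref{c:liftingcor}) to know the extension semigroup is a group. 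Many of these implications are now essentially immediate given the heavy lifting already done: the equivalence between absorption and being $\mathscr C$-purely large for $\mathscr C = CP_\rnuc(\mathsf X;\mathfrak A,\mathfrak B)$ is exactly Theorem \ref{t:purelylargefinite}, and the bridge between $\mathsf X$-fullness and $\mathsf X$-purely largeness, \emph{under the corona factorisation hypothesis}, is Proposition \ref{p:cfpimpliesplp} in one direction and Lemma \ref{l:Xfullext} in the other.

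First I would prove $(i) \Rightarrow (vii)$: assuming each $\mathfrak B(\mathsf U)$ has the corona factorisation property and $\Phi \colon \mathfrak A \to \multialg{\mathfrak B}$ is a unital $\mathsf X$-full $\ast$-homomorphism, the induced extension (Busby map $\pi \circ \Phi$) is $\mathsf X$-full, hence $\mathsf X$-purely large by Proposition \ref{p:cfpimpliesplp}, hence absorbs any trivial, unital weakly residually $\mathsf X$-nuclear extension by Theorem \ref{t:purelylargefinite}; the Abstract Weyl--von Neumann mechanism (via Theorem \ref{t:domabs}) then upgrades extension absorption to unital $\mathscr C$-absorption of $\Phi$ itself, giving $(vii)$. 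The implication $(vii) \Rightarrow (ii)$ is purely formal, since a unitally $\mathscr C$-absorbing $\ast$-homomorphism generates an extension absorbing all trivial unital $\mathscr C$-extensions. For the non-unital $(i)\Rightarrow(iii)$ I would argue identically but invoke Lemma \ref{l:Xfullext} and Proposition \ref{p:plpunitimpliesnonunit} to pass between $\mathfrak e$ and $\mathfrak e^\dagger$, noting that a unitisably $\mathsf X$-full extension is exactly one whose unitisation is $\mathsf X$-full, so that $\mathsf X$-fullness of $\mathfrak e^\dagger$ yields $\mathsf X$-purely largeness of $\mathfrak e^\dagger$ and thus absorption.

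The genuinely substantive direction is the converse, $(ii) \Rightarrow (i)$ (and similarly extracting $(i)$ from $(iii)$, $(v)$, or $(vi)$). Here I must manufacture, from a norm-full projection $P \in \multialg{\mathfrak B(\mathsf U)}$ that fails to be properly infinite, a unital $\mathsf X$-full extension that does \emph{not} absorb some trivial unital weakly residually $\mathsf X$-nuclear extension, contradicting $(ii)$. The strategy is to choose $\mathfrak A$ a suitable separable unital lower semicontinuous $\mathsf X$-$C^\ast$-algebra whose Busby map can be built to have image related to $P$ inside $\corona{\mathfrak B}(\mathsf U)$, using Lemma \ref{l:coronafullelement} to translate fullness of multiplier-level data into fullness at the corona level, and Lemma \ref{l:fullpropinfproj} to provide the comparison projection witnessing that absorption forces proper infiniteness. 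Concretely, absorption of the infinite-repeat (trivial) extension would force the full projection coming from $P$ to be Murray--von Neumann equivalent to a properly infinite one, hence properly infinite, giving the corona factorisation property back. \textbf{This} packaging of a corona-factorisation counterexample into an $\mathsf X$-equivariant full-but-non-absorbing extension is where the real care is needed, and I expect it to be the main obstacle: one must ensure the constructed $\mathsf X$-$C^\ast$-algebra structures are lower/continuous as required and that the fullness is genuinely $\mathsf X$-full on the relevant ideal $\mathfrak B(\mathsf U)$ rather than merely full in $\mathfrak B$.

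Finally, for the subalgebra equivalences $(v) \Leftrightarrow (vi)$ and their link to $(iii)$, I would use Theorem \ref{t:domgen} to see that strong asymptotic domination of all generating maps (the pure-state maps of Lemma \ref{l:statesgen}, pulled back along $\mathfrak C \twoheadrightarrow \mathfrak C/\mathfrak B$) is equivalent to the Cuntz-sum absorption $\iota \oplus \Phi \sim_{as} \iota$, exactly as in the proof of Theorem \ref{t:purelylargefinite} where the inclusion $\sigma_0$ strongly approximately dominates each generator via excision of pure states (Lemma \ref{Xpllemma}, \cite{AkemannAndersonPedersen-excision}). The passage $(iv)$ follows once $(iii)$ is known: the lifting theorem Corollary \ref{c:choieffros} guarantees weakly residually $\mathsf X$-nuclear extensions have the required splittings so that Cuntz addition is well-defined and inverses exist, identifying the semigroup of unitisably $\mathsf X$-full such extensions with $KK^1_\rnuc(\mathsf X;\mathfrak A,\mathfrak B)$. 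I would present the chain of implications in the order $(i)\Rightarrow(vii)\Rightarrow(ii)\Rightarrow(i)$, then $(i)\Rightarrow(iii)\Rightarrow(vi)\Leftrightarrow(v)\Rightarrow(i)$, and derive $(iv)$ last.
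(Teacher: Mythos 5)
There is a genuine structural gap in your cycle, in two places. First, the claim that $(vii) \Rightarrow (ii)$ is ``purely formal'' is backwards. Statement $(vii)$ concerns unital $\ast$-homomorphisms $\Phi \colon \mathfrak A \to \multialg{\mathfrak B}$, i.e.\ precisely those extensions whose Busby map lifts to a $\ast$-homomorphism into the multiplier algebra (trivial extensions). A general unital $\mathsf X$-full extension in $(ii)$ has a Busby map $\tau \colon \mathfrak A \to \corona{\mathfrak B}$ which need not lift at all, so its absorption cannot be deduced from absorption of $\ast$-homomorphisms; the observation you state (that an absorbing $\ast$-homomorphism generates an absorbing extension) only recovers $(ii)$ for trivial extensions. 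The paper runs the logic the other way: $(i) \Rightarrow (ii)$ is proved directly from Proposition \ref{p:cfpimpliesplp} plus Theorem \ref{t:purelylargefinite} (exactly the two results you cite in your first paragraph --- and the handling of non-liftable Busby maps is the hard content inside Theorem \ref{t:purelylargefinite}, via the separable subalgebra $\mathfrak C \subset \mathfrak E$ and excision), and then $(ii) \Rightarrow (vii)$ follows from Theorem \ref{t:domabs}. Your cycle is repairable by this reordering, but as written it is broken; moreover the converse directions $(ii),(vi),(vii) \Rightarrow (i)$, which you correctly flag as the main obstacle, are left as a sketch --- the actual construction needs the pair of projections $P$ (full and properly infinite in $\multialg{\mathfrak B,\mathfrak B(\mathsf U)}$, from Lemma \ref{l:fullpropinfproj}) and $Q = Q_1 \oplus 0$ (full but not properly infinite, obtained inside the corner $P\multialg{\mathfrak B}P \cong \multialg{\mathfrak B(\mathsf U)}$), together with the test algebras $\mathbb C_\mathsf{U}$ and $\mathbb C_\mathsf{U}^\dagger$ and the hereditarity Lemmas \ref{l:multiher}, \ref{l:multifullher}.

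Second, the chain $(i)\Rightarrow(iii)\Rightarrow(vi)\Leftrightarrow(v)\Rightarrow(i)$ fails at $(iii) \Rightarrow (vi)$, because $(iii)$ is vacuous for the extensions relevant to $(v)$ and $(vi)$: a \emph{unital} extension is never unitisably $\mathsf X$-full. Indeed, unitisable fullness requires $\tau^\dagger(1_{\mathfrak A^\dagger} - 1_{\mathfrak A}) = 1 - \tau(1_{\mathfrak A})$ to be full in $\corona{\mathfrak B}$, and this element is $0$ when $\tau$ is unital. So $(iii)$ gives no information about the unital inclusion $\iota \colon \mathfrak C \hookrightarrow \multialg{\mathfrak B}$; in the paper, $(v)$ is derived from the unital statement $(ii)$ (the $\mathsf X$-full unital extension $0 \to \mathfrak B \to \mathfrak C + \mathfrak B \to \mathfrak C/\mathfrak B \to 0$ absorbs the extension of a unitally absorbing representation $\Psi$, and one composes the implementing unitary with the strong asymptotic domination of the induced map by $\Psi$ from Lemma \ref{l:absdominates}), and $(vi)$ then follows from $(v)$ by Theorem \ref{t:domabs}. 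Your alternative mechanism --- excision of pure states plus Theorem \ref{t:domgen} --- is sound, but it is a direct argument from $\mathsf X$-purely largeness (hence from $(i)$ or $(ii)$), not a consequence of $(iii)$. Finally, a smaller point: for $(iv)$ the Choi--Effros type lifting (Corollary \ref{c:choieffros}) is not the operative input; what identifies $KK^1_\rnuc(\mathsf X;\mathfrak A,\mathfrak B)$ with a group of extensions is Kirchberg's theorem that it is the group of \emph{absorbing} weakly residually $\mathsf X$-nuclear extensions under multiplier unitary equivalence, and $(iii)$ is then used to identify the absorbing ones with the unitisably $\mathsf X$-full ones.
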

\begin{proof}
 $(i)\Rightarrow (ii)$: Let $\mathfrak A$ be as in $(ii)$ and let $\mathfrak e: 0 \to \mathfrak B \to \mathfrak E \to \mathfrak A \to 0$ be a unital, $\mathsf X$-full extension. 
 By Theorem \ref{t:purelylargefinite} it suffices to show that $\mathfrak e$ is
 $\mathsf X$-purely large. This follows from Proposition \ref{p:cfpimpliesplp}.
 
 $(ii)\Rightarrow (iii)$: Let $\mathfrak A$ be as in $(iii)$ and let $\mathfrak e : 0 \to \mathfrak B \to \mathfrak E \to \mathfrak A \to 0$ be a unitisably $\mathsf X$-full extension. By Proposition \ref{p:plpunitimpliesnonunit} it suffices to show that the
 unitised $\mathsf X$-equivariant extension $\mathfrak e^\dagger$ absorbs any trivial, unital weakly residually $\mathsf X$-nuclear extension of $\mathfrak A^\dagger$ by $\mathfrak B$. This follows from $(ii)$ since $\mathfrak e^\dagger$ is $\mathsf X$-full.
 
 $(iii) \Rightarrow (iv)$: It follows from \cite{Kirchberg-non-simple} that $KK_\rnuc^1(\mathsf X; \mathfrak A, \mathfrak B)$ is canonically isomorphic to the group of absorbing $CP_\rnuc(\mathsf X; \mathfrak A, \mathfrak B)$-extensions
 under multiplier unitary equivalence. By absorbing we mean extensions which absorb any trivial $CP_\rnuc(\mathsf X; \mathfrak A, \mathfrak B)$-extensions.
 By $(iii)$ these extensions are exactly the unitisably $\mathsf X$-full, weakly residually $\mathsf X$-nuclear extensions of $\mathfrak A$ by $\mathfrak B$.
 
 $(iv) \Rightarrow (i)$: Suppose that $\mathsf U\in \mathbb O(\mathsf X)$ such that $\mathfrak B(\mathsf U)$ does not have the corona factorisation property, and let $\mathbb C_\mathsf{U}$ be the $\mathsf X$-$C^\ast$-algebra with underlying
 $C^\ast$-algebra $\mathbb C$, and action of $\mathsf X$ given by
 \[
  \mathbb C_\mathsf{U}(\mathsf V) = \left\{ \begin{array}{rcl} \mathbb C, & \text{ if }\mathsf U \subset \mathsf V \\ 0 , & \text{ otherwise.} \end{array} \right.
 \]
 Clearly $\mathbb C_\mathsf{U}$ is lower semicontinuous. 
 
 By Lemma \ref{l:fullpropinfproj}, there there exists a multiplier projection $P$ which is properly infinite and full in $\multialg{\mathfrak B, \mathfrak B(\mathsf U)}$.
 Now $P \mathfrak B P \cong \mathfrak B(\mathsf U)$ and by \cite[Lemma 11]{Kucerovsky-ideals}
 $P\multialg{\mathfrak B}P \cong \multialg{P\mathfrak B P}$ canonically. Since $\mathfrak B(\mathsf U)$ does not have the corona factorisation property it follows that there is a projection $Q_1$ in $P\multialg{\mathfrak B}P$ which is not properly infinite,
 but which is full in $P \multialg{\mathfrak B}P$, and thus also full in $\multialg{\mathfrak B, \mathfrak B(\mathsf U)}$. Let $Q=Q_1 \oplus 0$ be a Cuntz sum in $\multialg{\mathfrak B}$. 
 Then $Q$ is a full projection in $\multialg{\mathfrak B, \mathfrak B(\mathsf U)}$ which is not properly infinite. 
 Moreover, since $0\oplus 1 \leq 1-Q$ it follows that $1-Q$ is a full, properly infinite projection in $\multialg{\mathfrak B}$.
 
 Let $\mathfrak e$ denote the trivial extension of $\mathbb C_\mathsf{U}$ by $\mathfrak B$ with $\mathsf X$-equivariant splitting $\sigma \colon \mathbb C_\mathsf{U} \to \multialg{\mathfrak B}$ given by $\sigma(1) = Q$. 
 Since $Q$ is full in $\multialg{\mathfrak B, \mathfrak B(\mathsf U)}$ and $1-Q$ is full in $\multialg{\mathfrak B}$ it easily follows that $\mathfrak e$ is unitisably $\mathsf X$-full. 
 Let $\mathscr C = CP_\rnuc(\mathsf X; \mathbb C_\mathsf{U},\mathfrak B)$.
 By Corollary \ref{c:Xfullmap} there exists a trivial $\mathscr C$-extension $\mathfrak f$ which absorbs any trivial $\mathscr C$-extension, so this is $\mathsf X$-purely large by Theorem \ref{t:purelylargefinite} and thus 
 unitisably $\mathsf X$-full by Lemma \ref{l:Xfullext}. 
 Any trivial $\mathscr C$-extension induces the zero element in $KK^1_\rnuc(\mathsf X; \mathbb C_\mathsf{U},\mathfrak B)$, so in particular $[\mathfrak e]= [\mathfrak f]=0$.
 It suffices to show that $\mathfrak e$ is does \emph{not} absorb every trivial $\mathscr C$-extension, so assume for contradiction that $\mathfrak e$ \emph{does} absorb every trivial $\mathscr C$-extension.
 
 Consider the trivial extension $\mathfrak e_0$ with $\mathsf X$-equivariant splitting $\sigma_0 \colon \mathbb C_\mathsf{U} \to \multialg{\mathfrak B}$ given by $\sigma_0(1) = P$, where $P$ is as above. 
 Since $\mathfrak e$ absorbs $\mathfrak e_0$ we may find a unitary $u\in \multialg{\mathfrak B}$ such that $u^\ast Q u - (Q \oplus P) \in \mathfrak B$. 
 Note that $u^\ast Q u - (Q \oplus P) \in \mathfrak B \cap \multialg{\mathfrak B, \mathfrak B(\mathsf U)} = \mathfrak B(\mathsf U)$.
 It is easily seen that $Q\oplus P\in\multialg{\mathfrak B, \mathfrak B(\mathsf U)}$ and dominates the properly infinite projection $0\oplus P$, 
 which is full in $\multialg{\mathfrak B, \mathfrak B(\mathsf U)}$. It follows that $Q\oplus P$ is properly infinite. 
 
 Let $\iota\colon \multialg{\mathfrak B} \to \multialg{\mathfrak B(\mathsf U)}$ be the canonical $\ast$-homomorphism by considering each multiplier of $\mathfrak B$ as a multiplier of $\mathfrak B(\mathsf U)$. 
 Note that the restriction of $\iota$ to $\multialg{\mathfrak B, \mathfrak B(\mathsf U)}$ is the canonical injective $\ast$-homomorphism $\multialg{\mathfrak B, \mathfrak B(\mathsf U)} \hookrightarrow \multialg{\mathfrak B(\mathsf U)}$
 considered in Section \ref{s:fullmaps}.
 By Lemma \ref{l:multifullher} $\iota(u^\ast Q u)$ and $\iota(Q\oplus P)$ are full in $\multialg{\mathfrak B(\mathsf U)}$. 
 Since $Q \oplus P$ is properly infinite in $\multialg{\mathfrak B,\mathfrak B(\mathsf U)}$ and since the restriction of $\iota$ to $\multialg{\mathfrak B, \mathfrak B(\mathsf U)}$ is injective, 
 it follows that $\iota(Q \oplus P)$ is properly infinite in $\multialg{\mathfrak B(\mathsf U)}$. 
 Thus there is an isometry $v\in \multialg{\mathfrak B(\mathsf U)}$ with $vv^\ast = \iota(Q \oplus P)$. Since $\iota(u^\ast Q u) - \iota(Q \oplus P) = u^\ast Q u - (Q \oplus P) \in \mathfrak B(\mathsf U)$ we get
 $b:=v^\ast \iota(u^\ast Q u) v - 1_{\multialg{\mathfrak B(\mathsf U)}} \in \mathfrak B(\mathsf U)$. Since $\mathfrak B(\mathsf U)$ is stable we may find an isometry $w\in \multialg{\mathfrak B(\mathsf U)}$ such that $w^\ast b w < 1$. Hence 
 \[
  \| (\iota(u)vw)^\ast \iota(Q) (\iota(u)vw) - 1_{\multialg{\mathfrak B(\mathsf U)}} \| < 1.
 \]
 This implies that $1_{\multialg{\mathfrak B(\mathsf U)}}$ is Murray--von Neumann subequivalent to $\iota(Q)$ and thus $\iota(Q)$ is properly infinite. 
 Since $\iota(\multialg{\mathfrak B, \mathfrak B(\mathsf U)}$ is a hereditary $C^\ast$-subalgebra of $\multialg{\mathfrak B}$ by Lemma \ref{l:multiher}, this implies that $Q$ is properly infinite, which is a contradiction to how we chose $Q$.
 Hence $(iv) \Rightarrow (i)$.
 
 $(ii) \Rightarrow (v)$: Let $\mathfrak C$ and $\phi$ be as in $(v)$, and let $\mathfrak A := \mathfrak C/\mathfrak B$. Since the inclusion $\tau \colon \mathfrak A \hookrightarrow \corona{\mathfrak B}$ is unital and $\mathsf X$-full, 
 it induces an $\mathsf X$-full, unital extension of $\mathfrak A$ by $\mathfrak B$, say $\mathfrak e$, which has extension algebra $\mathfrak E = \mathfrak C + \mathfrak B$. Since $\mathfrak A$ is an $\mathsf X$-$C^\ast$-subalgebra of $\corona{\mathfrak B}$
 it follows that $\mathsf U_{1_{\mathfrak{A}}} = \mathsf U_{1_{\corona{\mathfrak B}}}$, so $\mathfrak B(\mathsf U_{1_\mathfrak{A}}) = \mathfrak B$. Hence $\mathscr C = CP_\rnuc(\mathsf X; \mathfrak A, \mathfrak B)$ is non-degenerate, and by Theorem \ref{t:absrep}
 there exists a unitally $\mathscr C$-absorbing $\ast$-homomorphism $\Psi \colon \mathfrak A \to \multialg{\mathfrak B}$ weakly in $\mathscr C$. 
 
 By $(ii)$ the extension $\mathfrak e$ absorbs the extension with Busby map $\pi \circ \Psi \colon \mathfrak A \to \corona{\mathfrak B}$. So there are $\mathcal O_2$-isometries $s_1, s_2$ and a unitary $u$ in $\multialg{\mathfrak B}$ such that
 \[
  u^\ast x u - (s_1 xs_1^\ast + s_2 \Psi (\pi(x)) s_2^\ast) \in \mathfrak B,
 \]
 for all $x\in \mathfrak E$. Note that $\phi$ induces a map $\tilde \phi\colon \mathfrak A \to \multialg{\mathfrak B}$ which is weakly in $\mathscr C$, and thus $\tilde \phi$ is strongly asymptotically dominated by $\Psi$. 
 Let $(v_t)_{t\in [1,\infty)}$ be a family of isometries implementing this strong approximate domination, and let $w_t := u s_2 v_t$. Then $\| w_t^\ast b w_t \| \to 0$ for all $b\in \mathfrak B$ and
 $w_t^\ast c w_t - \phi (c)$ is in $\mathfrak B$ for all $c \in \mathfrak C$ and $t\in [1,\infty)$, and tends to zero as $t\to \infty$.

 $(v) \Rightarrow (vi)$: This follows from Theorem \ref{t:domabs}.
  
 $(ii) \Rightarrow (vii)$: Let $\mathfrak A$ and $\Phi$ be as in $(vii)$. Let $\Psi \colon \mathfrak A \to \multialg{\mathfrak B}$ be a unital, weakly residually $\mathsf X$-nuclear $\ast$-homomorphism.
 Let $\Psi_\infty$ be an infinite repeat of $\Psi$. Since $\Phi$ is $\mathsf X$-full, it follows that the extension with Busby map $\pi \circ \Phi$ is $\mathsf X$-full. 
 Thus by $(ii)$ it absorbs the extension with Busby map $\pi \circ \Psi_\infty$, so there is a unitary $U\in \multialg{\mathfrak B}$ such that
 \[
  U^\ast(\Phi(a) \oplus \Psi_\infty(a) ) U - \Phi(a) \in \mathfrak B,
 \]
 for all $a\in \mathfrak A$. By Theorem \ref{t:domabs} $\Phi$ absorbs $\Psi_\infty$ and thus also $\Psi$.
 
 $(vii) \Rightarrow (i)$: Suppose that $\mathsf U\in \mathbb O(\mathsf X)$ such that $\mathfrak B(\mathsf U)$ does not have the corona factorisation property. Let $P,Q\in \multialg{\mathfrak B, \mathfrak B(\mathsf U)}$ be the same projections as when proving
 $(iv) \Rightarrow (i)$, and let $\mathfrak C := C^\ast(Q, 1_{\multialg{\mathfrak B}})$ be the given $\mathsf X$-$C^\ast$-subalgebra of $\multialg{\mathfrak B}$. 
 There is an isomorphism of $\mathsf X$-$C^\ast$-algebras $\mathbb C_\mathsf{U}^\dagger \to \mathfrak C$ given
 by $\mathbb C_\mathsf{U}^\dagger = \mathbb C^2 \ni (\lambda, \mu) \mapsto \lambda Q + \mu(1-Q)$. 
 Thus, by how we chose $Q$, $\mathfrak C$ is a full $\mathsf X$-$C^\ast$-subalgebra of $\multialg{\mathfrak B}$. Moreover, $\mathfrak C$ is lower semicontinuous.
  
 Let $\Phi \colon \mathbb C_\mathsf{U}^\dagger \to \multialg{\mathfrak B}$ be given by $(\lambda, \mu) \mapsto \lambda P + \mu(1-P)$. This is a unital,
 $\mathsf X$-equivariant $\ast$-homomorphism which is obviously residually $\mathsf X$-nuclear. Statement $(vii)$ would imply that there is a unitary $u\in \multialg{\mathfrak B}$ such that
 $\| u^\ast Q u - P\oplus Q\| < 1$ which would imply that $Q$ and $P\oplus Q$ are Murray--von Neumann equivalent. As seen in the proof of $(iv) \Rightarrow (i)$, $P \oplus Q$ is properly infinite. Since $Q$ was chosen not to be properly infinite, this implies
 that $(vii)$ does not hold, and thus $(vii) \Rightarrow (i)$.
 
 $(vi) \Rightarrow (i)$: The proof is identical to that of $(vii) \Rightarrow (i)$, where one simply notes that $\mathfrak C/\mathfrak B$ is a full $\mathsf X$-$C^\ast$-subalgebra of $\corona{\mathfrak B}$ and $\mathfrak C \cap \mathfrak B = 0$.
\end{proof}

As a corollary more suited for classification, we obtain the following.

\begin{corollary}\label{c:nucquot}
  Let $\mathsf X$ be a finite space and $\mathfrak B$ be a stable, $\mathsf X$-$\sigma$-unital, continuous $\mathsf X$-$C^\ast$-algebra such that $\mathfrak B(\mathsf U)$ has the corona factorisation property for each $\mathsf U\in \mathbb O(\mathsf X)$. 
  Let $\mathfrak A$ be a separable, nuclear, lower semicontinuous $\mathsf X$-$C^\ast$-algebra. 
  
  Then $KK^1(\mathsf X;\mathfrak A,\mathfrak B)$ is the group of unitisably $\mathsf X$-full extensions of $\mathfrak A$ by $\mathfrak B$ under multiplier unitary equivalence.
  
  If, in addition, $\mathfrak A/\mathfrak A(\mathsf U)$ is non-zero and non-unital for every $\mathsf U \in \mathbb O(\mathsf X) \setminus \{ \mathsf X\}$, 
  then $KK^1(\mathsf X;\mathfrak A,\mathfrak B)$ is the group of $\mathsf X$-full extensions of $\mathfrak A$ by $\mathfrak B$ under multiplier unitary equivalence.
\end{corollary}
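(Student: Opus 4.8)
The plan is to deduce both statements from Theorem \ref{t:cfpequiv}$(iv)$, using nuclearity of $\mathfrak A$ to collapse all residual-nuclearity conditions into ordinary ones, and then, for the second statement, to promote ``unitisably $\mathsf X$-full'' to ``$\mathsf X$-full'' by means of the non-unitality hypothesis on the quotients.

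First I would record the consequences of nuclearity. Since $\mathfrak A$ is nuclear, Remark \ref{r:exactresnuc} gives $CP_\rnuc(\mathsf X;\mathfrak A,\mathfrak B) = CP(\mathsf X;\mathfrak A,\mathfrak B)$, so every $\mathsf X$-equivariant c.p.\ map out of $\mathfrak A$ is automatically residually $\mathsf X$-nuclear. Together with Corollary \ref{c:liftingcor}(1), which furnishes an $\mathsf X$-equivariant contractive c.p.\ splitting for any $\mathsf X$-equivariant extension of $\mathfrak A$ by $\mathfrak B$, this shows that every $\mathsf X$-equivariant extension of $\mathfrak A$ by $\mathfrak B$ is a weakly residually $\mathsf X$-nuclear extension. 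By the same token, the cycles defining $KK^1_\rnuc(\mathsf X;\mathfrak A,\mathfrak B)$ and $KK^1(\mathsf X;\mathfrak A,\mathfrak B)$ coincide (cf.\ \cite{Kirchberg-non-simple}), so $KK^1_\rnuc(\mathsf X;\mathfrak A,\mathfrak B) = KK^1(\mathsf X;\mathfrak A,\mathfrak B)$. Substituting these identifications into Theorem \ref{t:cfpequiv}$(iv)$ yields the first assertion: $KK^1(\mathsf X;\mathfrak A,\mathfrak B)$ is the group of unitisably $\mathsf X$-full extensions of $\mathfrak A$ by $\mathfrak B$ under multiplier unitary equivalence.

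For the second assertion I would prove that, under the extra hypothesis, the classes of $\mathsf X$-full and unitisably $\mathsf X$-full extensions coincide, whence the first assertion applies verbatim. One inclusion is trivial, since unitisably $\mathsf X$-full always implies $\mathsf X$-full. For the converse, let $\mathfrak e$ be an $\mathsf X$-full extension. Proposition \ref{p:cfpimpliesplp}, using that each $\mathfrak B(\mathsf U)$ has the corona factorisation property, shows $\mathfrak e$ is $\mathsf X$-purely large. Corollary \ref{c:nonunitalquotient} then applies (its hypotheses on $\mathfrak A$ and $\mathfrak B$ being exactly the standing ones plus the non-unitality assumption) to show that an $\mathsf X$-purely large extension absorbs every trivial weakly residually $\mathsf X$-nuclear extension, in particular the zero extension, whose zero Busby map admits the zero $\ast$-homomorphism as an $\mathsf X$-equivariant splitting lying in $\mathscr C$. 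Finally, Lemma \ref{l:Xfullext} shows that an $\mathsf X$-purely large extension absorbing the zero extension is unitisably $\mathsf X$-full. Chaining these gives $\mathsf X$-full $\Rightarrow$ unitisably $\mathsf X$-full, so the two collections of multiplier-unitary-equivalence classes agree and form the same group.

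The delicate point is bookkeeping rather than a genuine obstacle: one must check that the group operation (Cuntz sum) and the equivalence relation transport correctly through the replacements $KK^1_\rnuc \rightsquigarrow KK^1$ and ``weakly residually $\mathsf X$-nuclear'' $\rightsquigarrow$ ``$\mathsf X$-equivariant'', and that the zero extension legitimately counts among the trivial weakly residually $\mathsf X$-nuclear extensions so that Corollary \ref{c:nonunitalquotient} delivers absorption of it. Once these identifications are in place, both parts follow formally from the quoted results, with no new estimates required.
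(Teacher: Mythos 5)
Your proposal is correct. The first half coincides with the paper's own proof: nuclearity of $\mathfrak A$ gives $KK^1(\mathsf X;\mathfrak A,\mathfrak B) = KK^1_\rnuc(\mathsf X;\mathfrak A,\mathfrak B)$, and Corollary \ref{c:liftingcor} together with Remark \ref{r:exactresnuc} shows that every $\mathsf X$-equivariant extension of $\mathfrak A$ by $\mathfrak B$ is weakly residually $\mathsf X$-nuclear, so Theorem \ref{t:cfpequiv}$(iv)$ yields the first statement. In the second half you take a mildly different route. The paper recalls that $KK^1_\rnuc(\mathsf X;\mathfrak A,\mathfrak B)$ is canonically the group of \emph{absorbing} $CP_\rnuc(\mathsf X;\mathfrak A,\mathfrak B)$-extensions, observes that any absorbing extension absorbs an $\mathsf X$-full one and hence is itself $\mathsf X$-full, and conversely that any $\mathsf X$-full extension is $\mathsf X$-purely large (Proposition \ref{p:cfpimpliesplp}) and hence absorbing (Corollary \ref{c:nonunitalquotient}). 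You instead prove the set-level identity that, under the non-unitality hypothesis, the $\mathsf X$-full extensions are exactly the unitisably $\mathsf X$-full ones --- the nontrivial inclusion via Proposition \ref{p:cfpimpliesplp}, then Corollary \ref{c:nonunitalquotient} (noting correctly that the zero extension is a trivial $CP_\rnuc$-extension, hence absorbed), then Lemma \ref{l:Xfullext} --- and then quote the first statement. Both arguments rest on the same two pillars, Proposition \ref{p:cfpimpliesplp} and Corollary \ref{c:nonunitalquotient}; your organization formally reduces the second statement to the first and avoids re-invoking the identification of $KK^1_\rnuc$ with absorbing extensions, at the price of an extra appeal to Lemma \ref{l:Xfullext}, while the paper's version re-runs the absorption characterization directly.
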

\begin{proof}
 We have that $KK^1(\mathsf X; \mathfrak A, \mathfrak B) = KK_\rnuc^1(\mathsf X; \mathfrak A, \mathfrak B)$ when $\mathfrak A$ is nuclear. 
 Moreover, by Corollary \ref{c:liftingcor} any $\mathsf X$-equivariant extension of $\mathfrak A$ by $\mathfrak B$ has an $\mathsf X$-equivariant contractive c.p.~split, which is residually $\mathsf X$-nuclear since $\mathfrak A$ is nuclear. 
 Hence the first part follows from Theorem \ref{t:cfpequiv}.
 
 Suppose that $\mathfrak A/\mathfrak A(\mathsf U)$ is non-zero and non-unital for every $\mathsf U \in \mathbb O(\mathsf X) \setminus \{ \mathsf X\}$.
 Recall, that $KK_\rnuc^1(\mathsf X; \mathfrak A, \mathfrak B)$ is canonically isomorphic to the group of absorbing $CP_\rnuc(\mathsf X; \mathfrak A, \mathfrak B)$-extensions under multiplier unitary equivalence.
 Since any such extension absorbs an $\mathsf X$-full extension, it follows that any such extension is $\mathsf X$-full.
 By Proposition \ref{p:cfpimpliesplp} any $\mathsf X$-full extension is $\mathsf X$-purely large and thus it absorbs any trivial $CP_\rnuc(\mathsf X; \mathfrak A, \mathfrak B)$-extension by Corollary \ref{c:nonunitalquotient}, which finishes the proof.
\end{proof}


\section{A Weyl--von Neumann theorem a la Kirchberg}\label{s:KirchbergWvN}

In this section we prove a Weyl--von Neumann type theorem, in the sense of Kirchberg in \cite{Kirchberg-simple}, for $C^\ast$-algebras with a finite primitive ideal space.
We also obtain a related result regarding extensions by such $C^\ast$-algebras.

Recall, that an $\mathsf X$-$C^\ast$-algebra $\mathfrak B$ is called \emph{tight} if the action $\mathbb O(\mathsf X) \to \mathbb I(\mathfrak B)$ is a complete lattice isomorphism.

\begin{definition}
We say that an $\mathsf X$-$C^\ast$-algebra $\mathfrak B$ has a \emph{tight corona algebra}, if the induced action of $\mathsf X$ on $\corona{\mathfrak B}$ is tight.
\end{definition}

The following lemma shows that if $\mathfrak B$ is a stable and tight $\mathsf X$-$C^\ast$-algebra, then the action $\mathbb O(\mathsf X) \to \mathbb I(\corona{\mathfrak B})$ is a lattice embedding.

\begin{lemma}\label{l:coronaideals}
 Let $\mathfrak B$ be a stable $C^\ast$-algebra, and let $\mathfrak I$ and $\mathfrak J$ be two-sided, closed ideals in $\mathfrak B$.
 Then $\mathfrak I \subset \mathfrak J$ if and only if $\corona{\mathfrak B, \mathfrak I} \subset \corona{\mathfrak B, \mathfrak J}$.
\end{lemma}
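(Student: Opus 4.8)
The plan is to prove both implications, the forward one being immediate and the reverse one requiring stability of $\mathfrak B$ in an essential way. First I would dispose of the direction $\mathfrak I \subset \mathfrak J \Rightarrow \corona{\mathfrak B,\mathfrak I} \subset \corona{\mathfrak B,\mathfrak J}$. If $\mathfrak I \subset \mathfrak J$ and $m \in \multialg{\mathfrak B,\mathfrak I}$, then $m\mathfrak B \subset \mathfrak I \subset \mathfrak J$, so $m \in \multialg{\mathfrak B,\mathfrak J}$; thus $\multialg{\mathfrak B,\mathfrak I}\subset\multialg{\mathfrak B,\mathfrak J}$, and passing to quotients by $\mathfrak B$ yields $\corona{\mathfrak B,\mathfrak I}\subset\corona{\mathfrak B,\mathfrak J}$.

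For the converse, assume $\corona{\mathfrak B,\mathfrak I}\subset\corona{\mathfrak B,\mathfrak J}$ and fix $a \in \mathfrak I$; I want to show $a \in \mathfrak J$, for which it suffices to prove $a \in \multialg{\mathfrak B,\mathfrak J}$, since $\multialg{\mathfrak B,\mathfrak J}\cap \mathfrak B = \mathfrak J$. Using stability of $\mathfrak B$, I would pick isometries $t_1,t_2,\dots \in \multialg{\mathfrak B}$ with $\sum_n t_n t_n^*=1$ strictly, and form the infinite repeat $m := \sum_n t_n a t_n^*$ (as in Remark \ref{r:hilbertvsmulti}), a well-defined element of $\multialg{\mathfrak B}$. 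Since each partial sum $\sum_{n=1}^N t_n a t_n^* b$ lies in the closed ideal $\mathfrak I$ for $b \in \mathfrak B$, and $mb$ is their norm limit, we get $m\mathfrak B \subset \mathfrak I$, i.e.\ $m \in \multialg{\mathfrak B,\mathfrak I}$. Hence $\pi(m) \in \corona{\mathfrak B,\mathfrak I}\subset\corona{\mathfrak B,\mathfrak J}$, so I may write $m = m' + b_0$ with $m' \in \multialg{\mathfrak B,\mathfrak J}$ and $b_0 \in \mathfrak B$.

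The decisive step is to recover $a$ exactly while discarding the $\mathfrak B$-error. Because the ranges of the $t_n$ are orthogonal, $t_n^* m t_n = a$ for every $n$, so $a = t_n^* m' t_n + t_n^* b_0 t_n$. Since $\sum_n t_n t_n^* = 1$ strictly, the positive elements $(t_n^* b_0)^*(t_n^* b_0) = b_0^* t_n t_n^* b_0$ have partial sums converging in norm to $b_0^* b_0$, forcing $\|t_n^* b_0\| \to 0$ and hence $\|t_n^* b_0 t_n\| \to 0$. Therefore $t_n^* m' t_n \to a$ in norm; as $\multialg{\mathfrak B,\mathfrak J}$ is a norm-closed ideal of $\multialg{\mathfrak B}$ containing each $t_n^* m' t_n$, I conclude $a \in \multialg{\mathfrak B,\mathfrak J}$, whence $a \in \mathfrak J$. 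Since $a \in \mathfrak I$ was arbitrary, $\mathfrak I \subset \mathfrak J$. I expect the main obstacle to be exactly this last maneuver: the corona containment only controls elements modulo $\mathfrak B$, so a naive lift of $a$ would give membership only modulo $\mathfrak B$. The infinite repeat is what allows a single compression $t_n^*(-)t_n$ to reproduce $a$ on the nose while the $\mathfrak B$-part vanishes in the limit, and this is precisely where stability of $\mathfrak B$ enters indispensably.
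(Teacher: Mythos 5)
Your proof is correct and takes essentially the same route as the paper's: form the infinite repeat $\sum_n t_n a t_n^\ast \in \multialg{\mathfrak B, \mathfrak I}$, use the corona containment to decompose it as an element of $\multialg{\mathfrak B, \mathfrak J}$ plus an element of $\mathfrak B$, and compress by the isometries $t_n$ to recover $a$ exactly while the $\mathfrak B$-error $t_n^\ast b_0 t_n$ vanishes in norm. The only cosmetic difference is the final step: the paper takes the limit $a + t_n^\ast b_0 t_n \to a$ inside the closed ideal $\mathfrak J = \mathfrak B \cap \multialg{\mathfrak B, \mathfrak J}$, whereas you take the limit $t_n^\ast m' t_n \to a$ inside the norm-closed set $\multialg{\mathfrak B, \mathfrak J}$ and then intersect with $\mathfrak B$.
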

\begin{proof}
 One implication is trivial. Suppose that $\corona{\mathfrak B, \mathfrak I} \subset \corona{\mathfrak B, \mathfrak J}$.
 Let $x\in \mathfrak I$, and let $t_1,t_2,\dots \in \multialg{\mathfrak B}$ be isometries such that $\sum_{n=1}^\infty t_nt_n^\ast$ converges strictly to $1$.
 Then 
 \[
  x_\infty = \sum_{n=1}^\infty t_n x t_n^\ast \in \multialg{\mathfrak B, \mathfrak I}.
 \]
 Since the inclusion $\multialg{\mathfrak B, \mathfrak J} \hookrightarrow \multialg{\mathfrak B, \mathfrak J} + \mathfrak B$ induces an isomorphism $\multialg{\mathfrak B, \mathfrak J}/\mathfrak J \cong \corona{\mathfrak B, \mathfrak J}$, 
 and since $\pi(x_\infty) \in \corona{\mathfrak B, \mathfrak J}$, there is a $y\in \mathfrak J$ such that $x_\infty + y \in \multialg{\mathfrak B, \mathfrak J}$. Thus, 
 \[
  t_n^\ast (x_\infty + y) t_n  = x + t_n^\ast y t_n \in \mathfrak B \cap \multialg{\mathfrak B, \mathfrak J} = \mathfrak J.
 \]
 Since $y \in \mathfrak B$, it follows that $t_n^\ast y t_n \to 0$ and thus $x \in \mathfrak J$. This implies that $\mathfrak I \subset \mathfrak J$.
\end{proof}

\begin{definition}
 Let $\mathfrak A$ and $\mathfrak B$ be $\mathsf X$-$C^\ast$-algebra with $\mathfrak A$ lower semicontinuous. A c.p.~map $\phi \colon \mathfrak A \to \multialg{\mathfrak B}$ is called \emph{weakly $\mathsf X$-full} if
 \[
  \overline{\mathfrak B \phi(a) \mathfrak B} = \mathfrak B( \mathsf U_a)
 \]
 for every $a\in \mathfrak A$.
\end{definition}

\begin{example}
 If $\mathfrak B$ is lower semicontinuous, and $\mathfrak A \subset \multialg{\mathfrak B}$ is an $\mathsf X$-$C^\ast$-subalgebra, then the inclusion is weakly $\mathsf X$-full.
\end{example}

Kirchberg proved the following theorem in \cite{Kirchberg-simple} in the case where $\mathsf X$ is the one-point space. That conditions $(i)$ and $(ii)$ below are equivalent in the one-point space case, was originally proven by Lin in 
\cite{Lin-contscalecorona}. This was also proven by Rørdam in \cite{Rordam-multialg} in the case where $\mathfrak B$ was assumed to contain a full projection.

\begin{theorem}\label{t:WvN}
 Let $\mathsf X$ be a finite space and let $\mathfrak B$ be a stable, $\mathsf X$-$\sigma$-unital, tight $\mathsf X$-$C^\ast$-algebra. Then the following are equivalent.
 \begin{itemize}
  \item[$(i)$] every simple subquotient of $\mathfrak B$ is purely infinite or isomorphic to $\mathbb K$,
  \item[$(ii)$] $\mathfrak B$ has a tight corona algebra,
  \item[$(iii)$] for every unital, separable $\mathsf X$-$C^\ast$-subalgebra $\iota \colon \mathfrak C \hookrightarrow \multialg{\mathfrak B}$ it holds that every unital, weakly residually $\mathsf X$-nuclear map 
  $\phi\colon \mathfrak C \to \multialg{\mathfrak B}$ such that $\phi(\mathfrak C \cap \mathfrak B) = 0$, is approximately dominated by $\iota$,
  \item[$(iv)$] for every unital, separable $\mathsf X$-$C^\ast$-subalgebra $\iota \colon \mathfrak C\hookrightarrow \multialg{\mathfrak B}$ it holds for every unital, weakly residually $\mathsf X$-nuclear $\ast$-homomorphism 
  $\Phi\colon \mathfrak C \to \multialg{\mathfrak B}$ such that $\Phi(\mathfrak C \cap \mathfrak B) = 0$, that $\iota \oplus \Phi \sim_{as} \iota$,
  \item[$(v)$] for every unital, separable, lower semicontinuous $\mathsf X$-$C^\ast$-algebra $\mathfrak A$, satisfying that $\mathfrak B(\mathsf U_{1_\mathfrak{A}}) = \mathfrak B$,
  it holds that any weakly $\mathsf X$-full $\ast$-homomorphism $\Phi \colon \mathfrak A \to \multialg{\mathfrak B}$ for which $\Phi(\mathfrak A) \cap \mathfrak B = 0$, is weakly residually $\mathsf X$-nuclearly absorbing.
 \end{itemize}
\end{theorem}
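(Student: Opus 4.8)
The plan is to prove the five statements equivalent through a cycle of the shape $(i)\Rightarrow(ii)\Rightarrow(iv)\Rightarrow(iii)\Rightarrow(i)$, together with the auxiliary equivalence $(iv)\Leftrightarrow(v)$, viewing $(i)\Leftrightarrow(ii)$ as a purely structural statement about corona algebras and $(iii)$–$(v)$ as Weyl--von Neumann absorption statements that are glued together by the domination theorems of Section \ref{s:rep}. Since $\mathsf X$ is finite and tight we may identify $\Prim\mathfrak B\cong\mathsf X$, so the simple subquotients of $\mathfrak B$ are exactly the algebras $\mathfrak B(\mathsf U^x)/\mathfrak B(\mathsf V^x)$ for $x\in\mathsf X$, where $\mathsf U^x$ is the smallest open set containing $x$ and $\mathsf V^x$ the largest open set not containing $x$.

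For $(i)\Rightarrow(ii)$ I would first observe, via Lemma \ref{l:coronaideals}, that $\mathsf U\mapsto\corona{\mathfrak B,\mathfrak B(\mathsf U)}$ is an order embedding of $\mathbb O(\mathsf X)$ into $\mathbb I(\corona{\mathfrak B})$, so that tightness of the corona is equivalent to this map being \emph{onto}, i.e.\ to every ideal of $\corona{\mathfrak B}$ being induced from an ideal $\mathfrak B(\mathsf U)$. As $\mathsf X$ is finite I would induct on $|\mathsf X|$, splitting off a minimal open point $\mathsf U^x$ and using the short exact sequences of Remark \ref{r:canoniciso} (for $\mathfrak B(\mathsf U^x)$ and for $\mathfrak B/\mathfrak B(\mathsf V^x)$) to reduce the surjectivity question to the simple subquotient at $x$. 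The base case is precisely the theorem of Lin \cite{Lin-contscalecorona} and R\o rdam \cite{Rordam-multialg}: a stable, $\sigma$-unital, simple $C^\ast$-algebra has simple corona if and only if it is purely infinite or isomorphic to $\mathbb K$. To close the cycle, the implications $(iii)\Rightarrow(i)$ and $(v)\Rightarrow(i)$ run this identification backwards by contraposition: a simple subquotient that is neither purely infinite nor $\mathbb K$ yields a non-simple subquotient corona, hence a full element of some $\corona{\mathfrak B,\mathfrak B(\mathsf U)}$ that is not properly infinite; feeding the corresponding non-properly-infinite full multiplier projection into a $\mathbb{C}_{\mathsf U}$-type lower semicontinuous algebra, exactly as in the argument $(iv)\Rightarrow(i)$ of Theorem \ref{t:cfpequiv}, produces a weakly $\mathsf X$-full $\ast$-homomorphism (resp.\ subalgebra) violating the absorption.

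The analytic heart is $(ii)\Rightarrow(iv)$, and this is the step I expect to be the main obstacle. Given $\iota\colon\mathfrak C\hookrightarrow\multialg{\mathfrak B}$ unital separable and a unital, weakly residually $\mathsf X$-nuclear $\ast$-homomorphism $\Phi$ with $\Phi(\mathfrak C\cap\mathfrak B)=0$, I would use Theorem \ref{t:domgen}: it suffices that $\iota$ \emph{strongly} approximately dominate each generator of the relevant cone, and by the proof of Lemma \ref{l:statesgen} these generators have the form $h_x\rho_x(-)$ with $\rho_x$ a pure state vanishing on $\mathfrak C\cap\mathfrak B$; Theorem \ref{t:domabs} then upgrades strong domination to $\iota\oplus\Phi\sim_{as}\iota$. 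One excises $\rho_x$ inside $\mathfrak C$ using \cite{AkemannAndersonPedersen-excision} and must then solve a comparison relation $h_x\approx b_0^\ast d b_0$ with the implementing elements \emph{escaping} (so that $\|v_n^\ast b v_n\|\to0$ for all $b\in\mathfrak B$, giving the strong part), much as at the end of the proof of Theorem \ref{t:purelylargefinite}. The difficulty is that there the comparison and escape came from $\mathsf X$-pure-largeness, whereas here they must be extracted \emph{solely} from tightness of the corona: concretely, I must show that tightness forces enough Cuntz comparison and proper infiniteness of full elements inside each $\corona{\mathfrak B,\mathfrak B(\mathsf U)}$ to run the excision with escaping implementers. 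This is where the genuinely Kirchberg-type content lives, and it is delicate precisely because no nuclearity or corona factorisation hypothesis is available.

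The remaining implications are comparatively formal. For $(iv)\Rightarrow(iii)$, a general unital, weakly residually $\mathsf X$-nuclear $\phi$ with $\phi(\mathfrak C\cap\mathfrak B)=0$ is dilated by the Kasparov--Stinespring theorem (Theorem \ref{t:Stinespringmodules}, in its multiplier form) to a $\ast$-homomorphism $\Phi$ weakly residually $\mathsf X$-nuclear, to which $(iv)$ applies; the resulting $\iota\oplus\Phi\sim_{as}\iota$ gives that $\iota$ strongly asymptotically dominates $\Phi$, and compressing back recovers approximate domination of $\phi$. For $(iv)\Rightarrow(v)$, given $\mathfrak A$ and a weakly $\mathsf X$-full $\Phi$ with $\Phi(\mathfrak A)\cap\mathfrak B=0$, I would set $\mathfrak C:=\Phi(\mathfrak A)+\mathfrak B$, a unital separable $\mathsf X$-$C^\ast$-subalgebra whose image in $\corona{\mathfrak B}$ is a full $\mathsf X$-$C^\ast$-subalgebra by weak $\mathsf X$-fullness (the ideals $\overline{\mathfrak B\Phi(a)\mathfrak B}=\mathfrak B(\mathsf U_a)$ matching the action); applying $(iv)$ to an infinite repeat $\Psi_\infty$ of any weakly residually $\mathsf X$-nuclear $\Psi$ and invoking Theorem \ref{t:domabs} then yields that $\Phi$ absorbs $\Psi$, exactly as in the step $(ii)\Rightarrow(vii)$ of Theorem \ref{t:cfpequiv}. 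The converse inclusion $(v)\Rightarrow(iv)$ and the contrapositive passages to $(i)$ require only the counterexample construction already described, completing the cycle.
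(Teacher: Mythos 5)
Your proposal has two genuine gaps, and they sit exactly where the real content of the theorem lives. The first is the implication $(ii)\Rightarrow(iv)$, which you correctly identify as the analytic heart but then do not prove; worse, your diagnosis of the difficulty is mistaken. You write that ``no nuclearity or corona factorisation hypothesis is available,'' but condition $(i)$ (equivalent to $(ii)$) \emph{does} give the corona factorisation property for every $\mathfrak B(\mathsf U)$: simple, stable, purely infinite $C^\ast$-algebras have the CFP by \cite[Proposition 4.1.1]{Rordam-book-classification}, $\mathbb K$ has it, and by \cite[Theorem 3.1]{KucerovskyNg-Sregularity} it passes to algebras with finitely many two-sided, closed ideals all of whose simple subquotients have it. This is precisely how the paper handles the forward direction: it proves $(i)\Rightarrow(iii)$ and $(i)\Rightarrow(v)$ by reducing to Theorem \ref{t:cfpequiv}, the only new observation being that every $\mathsf X$-$C^\ast$-subalgebra of a tight $\mathsf X$-$C^\ast$-algebra is automatically $\mathsf X$-full (via $(i)\Leftrightarrow(ii)$), and then gets $(iii)\Rightarrow(iv)$ from Theorem \ref{t:domabs}. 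So the excision/comparison machinery you propose to rebuild from tightness alone is both absent from your argument and unnecessary; the missing step is a reduction to an already-proven theorem.

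The second gap is a wrong mechanism, not just a missing one: your backward implications to $(i)$ rest on the claim that failure of $(i)$ yields a full, non-properly-infinite projection in some $\multialg{\mathfrak B,\mathfrak B(\mathsf U)}$, to be fed into the $\mathbb C_{\mathsf U}$ construction from Theorem \ref{t:cfpequiv}. Such a projection exists precisely when the CFP fails, and failure of $(i)$ does not imply failure of the CFP: for instance $\mathfrak J = A\otimes\mathbb K$ with $A$ unital, simple, stably finite with strict comparison is neither purely infinite nor $\mathbb K$, yet has the CFP, so all the fullness-restricted absorption statements of Theorem \ref{t:cfpequiv} hold while $(i)$ fails. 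A valid counterexample must exploit \emph{non}-fullness, which is what the paper's Claim provides: a positive $x\in\multialg{\mathfrak B,\mathfrak J}$ that is \emph{not} full and satisfies $C^\ast(x)\cap\mathfrak B=0$ (this last condition is what makes the hypothesis $\Phi(\mathfrak C\cap\mathfrak B)=0$ of $(iv)$ applicable to $\mathfrak C=C^\ast(1,x)$ and the infinite repeat of its inclusion). The contradiction is then that $x$ is not full while $x\oplus\Phi(x)$ is full by Lemma \ref{l:multifullher} --- a statement about ideals generated by positive elements, not about proper infiniteness of projections. Constructing such $x$ is the hard part: the case of a simple ideal needs Lin's theorem (non-simplicity of $\multialg{\mathfrak J}$) together with the diagonal $\sum_n d_n m d_n$ decomposition trick to arrange $C^\ast(x)\cap\mathfrak B=0$, none of which your sketch supplies. (A minor further point: in your $(iv)\Rightarrow(v)$ the algebra $\Phi(\mathfrak A)+\mathfrak B$ is not separable, since $\mathfrak B$ is only assumed $\mathsf X$-$\sigma$-unital; one should work with $C^\ast(\Phi(\mathfrak A),1)$ instead.)
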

\begin{proof}
 $(i) \Rightarrow (ii)$: Suppose that $\mathfrak I,\mathfrak J$ are two-sided, closed ideals in $\mathfrak B$. 
 By Lemma \ref{l:coronaideals}, $\mathfrak I \subset \mathfrak J$ if and only if $\corona{\mathfrak B, \mathfrak I} \subset \corona{\mathfrak B , \mathfrak J}$. 
 Hence, if the map $\mathbb I (\mathfrak B) \ni \mathfrak J \mapsto \corona{\mathfrak B, \mathfrak J} \in \mathbb I(\corona{\mathfrak B})$ is a one-to-one correspondence, then it is a lattice bijection, and thus $\mathfrak B$ has a tight corona algebra.
 That the map is injective follows from Lemma \ref{l:coronaideals} since $\mathfrak B$ is stable. To show that it is surjective it suffices to show that
 $|\Prim \mathfrak B| = |\Prim \corona{\mathfrak B}|$, which we will prove by induction. 
 Note that this is equivalent to $2\cdot |\Prim \mathfrak B| = |\Prim \multialg{\mathfrak B}|$. 
 If $\mathfrak B$ is simple the result follows from \cite{Rordam-multialg}.
 For a general $\mathfrak B$ with $|\Prim \mathfrak B|$ finite, fix a maximal, two-sided, closed ideal $\mathfrak I$ in $\mathfrak B$. Note that $|\Prim \mathfrak I| = |\Prim \mathfrak B| -1$. 
 It follows from Lemma \ref{l:multifullher} that $\multialg{\mathfrak B, \mathfrak I}$ embeds canonically as a full hereditary $C^\ast$-subalgebra of $\multialg{\mathfrak I}$. Because of the short exact sequence
 \[
  0 \to \multialg{\mathfrak B, \mathfrak I} \to \multialg{\mathfrak B} \to \multialg{\mathfrak B/\mathfrak I} \to 0,
 \]
 it follows that $|\Prim \multialg{\mathfrak B}| = |\Prim \multialg{\mathfrak B, \mathfrak I}| + |\Prim \multialg{\mathfrak B/\mathfrak I}| = |\Prim \multialg{\mathfrak I}| + 2$. By an induction argument it follows that 
 $|\Prim \multialg{\mathfrak B}| = 2 \cdot (| \Prim \mathfrak B|-1) + 2 = |\Prim \mathfrak B|$ and thus $|\Prim \mathfrak B | = |\Prim \corona{\mathfrak B}|$. 
 
 $(ii) \Rightarrow (i)$: Again we show the result by induction on $|\Prim \mathfrak B|$.
 If $\mathfrak B$ is simple, then $\corona{\mathfrak B}$ is simple by $(ii)$, so by \cite[Theorem 3.8]{Lin-contscalecorona} $\mathfrak B$ is either purely infinite or $\mathbb K$.
 Let $n\in \mathbb N$ and suppose that for any space $\mathsf X$ with $|\mathsf X|< n$, we have $(ii)\Rightarrow (i)$. Suppose $|\mathsf X| = n$.
 For a general $\mathfrak B$ as in the theorem, let $\mathfrak I \subset \mathfrak J \subset \mathfrak B$ be two-sided, closed ideals such that $\mathfrak J/\mathfrak I$ is simple. 
 Suppose that $\mathfrak J \neq \mathfrak B$. Since $\mathfrak B$ is tight, there is a proper open subset $\mathsf Y$ of $\mathsf X$, such that $\mathfrak B(\mathsf Y) = \mathfrak J$.
 Then $\mathfrak J$ and $\corona{\mathfrak B, \mathfrak J}$ get an induced $\mathsf Y$-$C^\ast$-algebra structure by restricting the action of $\mathsf X$.
 Clearly $\mathfrak J$ and $\corona{\mathfrak B, \mathfrak J}$ are tight $\mathsf Y$-$C^\ast$-algebra and in particular $| \mathbb O(\mathsf Y)| = |\mathbb I(\corona{\mathfrak B, \mathfrak J})|$.
 By Lemma \ref{l:coronafullher}, $\corona{\mathfrak B, \mathfrak J}$ is a full hereditary $C^\ast$-subalgebra $\corona{\mathfrak J}$, and thus $\mathbb I(\corona{\mathfrak B, \mathfrak J}) \cong \mathbb I(\corona{\mathfrak J})$.
 Hence $|\mathbb O(\mathsf Y)| = |\mathbb I(\corona{\mathfrak J})|$. Since the map
 \[
  \mathbb O(\mathsf Y) \ni \mathsf U \mapsto \corona{\mathfrak J, \mathfrak J(\mathsf U)} \in \mathbb I(\corona{\mathfrak J})
 \]
 is an injective order embedding by Lemma \ref{l:coronaideals}, and since $| \mathbb O(\mathsf Y)| = |\mathbb I(\corona{\mathfrak J})|$, it follows that this action is tight. 
 Hence $\mathfrak J$ has a tight corona algebra. 
 Since $\mathfrak J \neq \mathfrak B$, it follows that $|\mathsf Y | < |\mathsf X|$, and by our induction hypothesis it follows that every simple subquotient of $\mathfrak J$, and thus $\mathfrak J/\mathfrak I$, is either purely infinite or $\mathbb K$.
 
 It remains to show the case when $\mathfrak J = \mathfrak B$. 
 We have a short exact sequence $0 \to \corona{\mathfrak B, \mathfrak I} \to \corona{\mathfrak B} \to \corona{\mathfrak B/\mathfrak I} \to 0$, 
 and by Lemma \ref{l:coronafullher}, $|\Prim \corona{\mathfrak B, \mathfrak I}| = |\Prim \corona{\mathfrak I}| = |\Prim \mathfrak B| - 1$.
 Thus
 \[
  |\Prim \mathfrak B| = |\Prim \corona{\mathfrak B}|  = |\Prim \mathfrak B| - 1 + |\Prim \corona{\mathfrak B/\mathfrak J}|
 \]
 so $\corona{\mathfrak B/\mathfrak J}$ is simple. It follows from the case $n=1$ that $\mathfrak B/\mathfrak J$ is purely infinite or $\mathbb K$.
 
 $(i) \Rightarrow (iii)$: It follows from \cite[Proposition 4.1.1]{Rordam-book-classification} that any simple, stable, purely infinite $C^\ast$-algebra has the corona factorisation property, and clearly $\mathbb K$ has the corona factorisation property.
 Hence, by \cite[Theorem 3.1]{KucerovskyNg-Sregularity} and the remark following that theorem, it follows that $\mathfrak B(\mathsf U)$ has the corona factorisation property for every $\mathsf U \in \mathbb O(\mathsf X)$.
 Let $\mathfrak C$ and $\phi$ be as in condition $(iii)$. By Theorem \ref{t:cfpequiv} it suffices to show that $\mathfrak C/\mathfrak B \subset \corona{\mathfrak B}$ is full $\mathsf X$-$C^\ast$-subalgebra. 
 Since any $\mathsf X$-$C^\ast$-subalgebra of a tight $\mathsf X$-$C^\ast$-algebra is full, this follows from $(i) \Leftrightarrow (ii)$.
 
 $(iii) \Rightarrow (iv)$: This follows from Theorem \ref{t:domabs}.
 
 $(iv) \Rightarrow (i)$: Suppose that $(i)$ does \emph{not} hold. Then there are two-sided, closed ideals $\mathfrak I \subset \mathfrak J$ in $\mathfrak B$ such that $\mathfrak J/\mathfrak I$ is simple, but is neither purely infinite nor $\mathbb K$.
 We will prove something slightly more general than is needed for the proof, so that the proof for $(v) \Rightarrow (i)$ carries over with the same proof. We need the following.
 
 Claim: there is a positive element $x\in \multialg{\mathfrak B, \mathfrak J}$ which is \emph{not} full, such that $C^\ast(x) \cap \mathfrak B = 0$. 
 
 Let us first finish the proof of $(iv) \Rightarrow (i)$ under the assumption of this claim, and then prove the claim afterwards.
 
 Let $h\in \mathfrak J$ be strictly positive. By replacing $x$ with a Cuntz sum $x \oplus h \oplus 0$, we may furthermore assume that $\overline{\mathfrak B x \mathfrak B} = \mathfrak J$ and $C^\ast(1,x) \cap \mathfrak B = 0$. 
 Let $\mathfrak C = C^\ast(1,x)$ which we consider as an $\mathsf X$-$C^\ast$-subalgebra of $\multialg{\mathfrak B}$, and let $\Phi \colon \mathfrak C \to \multialg{\mathfrak B}$ be the infinite repeat of the inclusion $\iota$.
 Since $\iota$ is $\mathsf X$-equivariant (by definition) so is $\Phi$, and since $\mathfrak C$ is nuclear, $\Phi$ is weakly residually $\mathsf X$-nuclear. Also $\Phi(\mathfrak C \cap \mathfrak B) = \Phi(0) = 0$.
 Suppose that $(iv)$ holds. Then $\iota \oplus \Phi \sim_{as} \iota$, so in particular $x$ and $x \oplus \Phi(x)$ generate the same two-sided, closed ideal in $\multialg{\mathfrak B}$. 
 However, since $x$ is not full in $\multialg{\mathfrak B, \mathfrak J}$, and $x\oplus \Phi(x)$ is full in $\multialg{\mathfrak B, \mathfrak J}$ by Lemma \ref{l:multifullher},
 we get a contradiction, so $(iv)$ can not hold. Thus it remains to prove the above claim.
 
 The proof of the claim is easy in the case where $\mathfrak I \neq 0$.  
 By Lemma \ref{l:fullpropinfproj}, we may find a (necessarily non-zero) projection $P$ which is full in $\multialg{\mathfrak B, \mathfrak I}$.
 Let $(t_n)$ be a sequence of isometries in $\multialg{\mathfrak B}$ such that $\sum t_n t_n^\ast$ converges strictly to $1$, and let $(\lambda_n)$ be a dense sequence in $[0,1]$.
 Let $y = \sum_{n=1}^\infty \lambda_n t_n Pt_n^\ast$. Then $y$ is a positive element in $\multialg{\mathfrak B, \mathfrak I}$ with $\spec(y) = [0,1]$ such that for any non-zero $h \in C_0((0,1])$, $h(y)$ is full in $\multialg{\mathfrak B, \mathfrak I}$.
 
 Let $\overline \iota \colon \corona{\mathfrak B, \mathfrak J} \to \corona{\mathfrak J}$ be the embedding from Lemma \ref{l:coronafullher}. Clearly $\overline \iota (\corona{\mathfrak B, \mathfrak I})$ is full in $\corona{\mathfrak J, \mathfrak I}$.
 We get that $\corona{\mathfrak J}/\corona{\mathfrak J, \mathfrak I} \cong \corona{\mathfrak J/\mathfrak I}$ is non-simple by the equivalence of $(i)$ and $(ii)$.
 Since $\overline \iota (\corona{\mathfrak B, \mathfrak J})$ is a full, hereditary $C^\ast$-subalgebra of $\corona{\mathfrak J}$ by Lemma \ref{l:coronafullelement},
 it follows that $\corona{\mathfrak B, \mathfrak J}/\corona{\mathfrak B, \mathfrak I} \cong \multialg{\mathfrak B, \mathfrak J}/(\multialg{\mathfrak B, \mathfrak I}+\mathfrak J)$ is not simple.
 Thus we may find a positive contraction $x' \in \multialg{\mathfrak B, \mathfrak J}$ which is not full, and which is not in $\multialg{\mathfrak B, \mathfrak I} + \mathfrak J$.
 Let $x$ be a Cuntz sum $x' \oplus y$. We clearly have that $x \in \multialg{\mathfrak B, \mathfrak J}$ is not full, and that $C^\ast(x) \cap \mathfrak B = 0$ by how we chose $y$. 
 This finishes the proof of the claim in the case where $\mathfrak I \neq 0$.
 
 Now suppose that $\mathfrak I =0$ and thus $\mathfrak J$ is simple.
 Since $\mathfrak J$ is stable, $\sigma$-unital and is neither purely infinite nor $\mathbb K$, $\multialg{\mathfrak J}$ contains a non-trivial ideal by \cite{Lin-contscalecorona}. 
 Thus we may find a positive contraction $m \in \multialg{\mathfrak J}$ which is not in $\mathfrak J$ and which is not full in $\multialg{\mathfrak J}$.
 Let $(y_n)$ be a countable approximate identity in $\mathfrak J$ such that $y_n y_{n+1} = y_{n+1} y_n = y_{n}$. 
 By passing to a subsequence we may assume that $\| (1-y_n) m y_k \| < 2^{-(n+k)}$ for $k<n$. Let $d_n = y_n - y_{n-1}$ where we define $y_0 = 0$.
 Exactly as in the proof of Lemma \ref{l:actmulticor} part $(c)$, it follows that
 \[
  m \in \overline{\multialg{\mathfrak J} (\sum_{n=1}^\infty d_n m d_n) \multialg{\mathfrak J}} + \mathfrak J.
 \]
 Note that all our sums converge in the strict topology. Let $m_0 := \sum_{n=1}^\infty d_{2n} m d_{2n}$ and $m_1 := \sum_{n=1}^\infty d_{2n-1} m d_{2n-1}$, and note that $\sum_{n=1}^\infty d_n m d_n = m_0 + m_1$.
 Since
 \[
  (\sum_{k=1}^\infty d_{2k}) m (\sum_{l=1}^\infty d_{2l}) = m_0 + \sum_{k=2}^\infty \sum_{l=1}^{k-1} (d_{2k} m d_{2l} + d_{2l} m d_{2k})
 \]
 one may use the norm estimate (exactly as in the proof of Lemma \ref{l:actmulticor} part $(c)$) to show that the latter sum above is in $\mathfrak J$. 
 Thus $m_0$, and similarly $m_1$, is in $\overline{\multialg{\mathfrak J} m \multialg{\mathfrak J}} + \mathfrak J$. It follows that $m_0 + m_1 + \mathfrak J$ generates the same two-sided, closed ideal in $\corona{\mathfrak J}$ as $m+ \mathfrak J$.
 Since $m_0$ and $m_1$ are positive, it follows that neither $m_0$ nor $m_1$ is full in $\multialg{\mathfrak J}$, and at least one of $m_0$ and $m_1$ is not in $\mathfrak J$. Assume without loss of generality that $m_0$ is not in $\mathfrak J$.
 
 Since $d_{2n}m d_{2n}$ and $d_{2k} m d_{2k}$ are positive and orthogonal for $k \neq n$, it follows that the sequence $(\| d_{2n} m d_{2n}\|)_{n=1}^\infty$ does not tend to zero, for otherwise $m_0 \in \mathfrak J$.
 Hence we may find $0<\epsilon < 1$ and an increasing sequence $(n(k))_{k=1}^\infty$ of positive integers, such that $\| d_{2n(k)} m d_{2n(k)}\| > \epsilon$ for all $k$.
 Let $f,g \colon [0,1] \to [0,1]$ be the continuous functions given by
 \[
  f(t) = \left\{ \begin{array}{ll}
                  0, & \text{ for } t=0 \\
                  \text{affine}, & \text{ for } 0 \leq t \leq \epsilon/2 \\
                  1, & \text{ for } \epsilon /2 \leq t \leq 1
                 \end{array} \right. ,\quad
  g(t) = \left\{ \begin{array}{ll}
                  0, & \text{ for } 0\leq t \leq \epsilon /2 \\
                  \text{affine}, & \text{ for } \epsilon/2 \leq t \leq 1 -  \epsilon/2 \\
                  1, & \text{ for } 1-\epsilon /2 \leq t \leq 1.
                 \end{array} \right.
 \]
 Then $f(m_0) = \sum_{n=1}^\infty f(d_{2n} m d_{2n})$, and $f g = g f = g$.
  
 It is well-known that if $\mathfrak D$ is a simple, $\sigma$-unital $C^\ast$-algebra not stably isomorphic to $\mathbb K$, then $\mathfrak D$ contains a positive element $d$ with $\spec(d) = [0,1]$.
 Since $\epsilon < \| d_{2n(k)} m d_{2n(k)}\| \leq 1$ for every $k$, it follows that that $g(d_{2n(k)} m d_{2n(k)})$ is non-zero. 
 Since $\mathfrak J$ is simple, $\sigma$-unital and not (stably) isomorphic to $\mathbb K$, we may for each $k$ find a positive element $x_k \in \overline{g(d_{2n(k)} m d_{2n(k)}) \mathfrak J g(d_{2n(k)} m d_{2n(k)})}$ such that $\spec(x_k)=[0,1]$.
 Since $fg=gf=g$ we have that $f(m_0)x_k = x_kf(m_0) = x_k$.
 Clearly $\sum_{k=1}^\infty x_k$ converges strictly to an element $x'$, such that $f(m_0) x' = x' f(m_0) = x'$.
 Since $f(m_0)$ is in the $C^\ast$-algebra generated by $m_0$, and is thus not full in $\multialg{\mathfrak J}$, it follows that $x'$ is not full in $\multialg{\mathfrak J}$.
 Since $x_k$ and $x_l$ are positive orthogonal elements, it follows that if $h \in C_0((0,1])$ then $h(x') = \sum_{k=1}^\infty h(x_k)$. Let $h \in C_0((0,1])$ be non-zero. 
 Since $\| h(x_k)\| = \| h\| >0$, it follows that $h(x')$ is not in $\mathfrak J$. 
 Thus $C^\ast(x') \cap \mathfrak J = 0$.
 
 By Kasparov's stabilisation theorem, we may find a projection $P\in \multialg{\mathfrak B}$ such that $P\mathfrak B P$ is full in $\mathfrak J$.
 By replacing $P$ with an infinite repeat, we may assume that $P\mathfrak B P \cong \mathfrak J$, and $P\multialg{\mathfrak B}P \cong \multialg{\mathfrak J}$ is a corner which is full in $\multialg{\mathfrak B, \mathfrak J}$ by Lemma \ref{l:multifullher}.
 Let $x\in \multialg{\mathfrak B, \mathfrak J}$ be the element corresponding to $x'\in \multialg{\mathfrak J}$ by the above isomorphism.
 Since $P\multialg{\mathfrak B}P$ is a full, hereditary $C^\ast$-subalgebra of $\multialg{\mathfrak B, \mathfrak J}$, and $x$ is not full in $P\multialg{\mathfrak B} P$, it follows that $x$ is not full in $\multialg{\mathfrak B, \mathfrak J}$.
 Also, since $P$ is a unit for $C^\ast(x)$, it follows that $C^\ast(x) \cap \mathfrak B = C^\ast(x) \cap P\mathfrak B P = 0$.
 This proves the claim in the case where $\mathfrak I = 0$, and thus finishes the proof of $(iv) \Rightarrow (i)$.
 
 $(i) \Rightarrow (v)$: Let $\mathfrak A$ and $\Phi$ be as in $(v)$. By the equivalence of $(i)$ and $(ii)$ it easily follows that $\pi \circ \Phi$ is $\mathsf X$-full. By Lemma \ref{l:coronafullelement} it follows that $\Phi$ is $\mathsf X$-full.
 Thus, as in $(i) \Rightarrow (iii)$, we obtain $(v)$ by Theorem \ref{t:cfpequiv}.
 
 $(v) \Rightarrow (i)$: The argument is identical to that of $(iv) \Rightarrow (i)$ with $\mathfrak C = \mathfrak A$.
\end{proof}

We also want results for extensions by $\mathsf X$-$C^\ast$-algebras with a tight corona algebra. We do this by introducing the following actions.

\begin{remark}[The induced action of an extension]\label{r:inducedact}
Let $\mathfrak e: 0 \to \mathfrak B \to \mathfrak E \xrightarrow{p} \mathfrak A \to 0$ be an extension of $C^\ast$-algebras, and let $\sigma\colon \mathfrak E \to \multialg{\mathfrak B}$ and $\tau \colon \mathfrak A \to \corona{\mathfrak B}$
be the induced $\ast$-homomorphisms. 
Suppose that $\mathfrak B$ is an $\mathsf X$-$C^\ast$-algebra. Then $\mathfrak e$ induces an $\mathsf X$-$C^\ast$-algebra structure on both $\mathfrak A$ and $\mathfrak E$ by
\begin{eqnarray*}
 \mathfrak A(\mathsf U) &=& \tau^{-1}(\corona{\mathfrak B,\mathfrak B(\mathsf U)}) \\
 \mathfrak E(\mathsf U) &=& \sigma^{-1}(\multialg{\mathfrak B, \mathfrak B(\mathsf U)})
\end{eqnarray*}
for $\mathsf U \in \mathbb O(\mathsf X)$.

There is an easy way of determining $\mathfrak E(\mathsf U)$. Let $\mathsf U \in \mathbb O(\mathsf X)$. 
We clearly have
\[
\mathfrak E(\mathsf{U}) = \overline{\sum_{\mathfrak J \triangleleft \mathfrak E, \mathfrak J \cap \mathfrak B \subset \mathfrak B(\mathsf U)} \mathfrak J}, 
\]
so $\mathfrak E(\mathsf{U})$ is the \emph{unique largest two-sided, closed ideal such that $\mathfrak E(\mathsf U) \cap \mathfrak B = \mathfrak B(\mathsf U)$}.
Thus, if $\mathfrak J$ is any two-sided, closed ideal in $\mathfrak E$ such that $\mathfrak J \cap \mathfrak B \subset \mathfrak B(\mathsf U)$, then $\mathfrak J \subset \mathfrak E(\mathsf U)$.

This description can often be used to determine the action on $\mathfrak E$,
and since $\mathfrak A(\mathsf U) = p(\mathfrak E(\mathsf U))$, we may use this description to determine $\mathfrak A(\mathsf U)$.
\end{remark}

In this paper, we will only consider the induced action when $\mathfrak B$ is a tight $\mathsf X$-$C^\ast$-algebra.

\begin{remark}\label{r:niceinducedact}
Let $\mathfrak e: 0 \to \mathfrak B \to \mathfrak E \to \mathfrak A \to 0$ be an extension of $C^\ast$-algebras. 
It follows from Lemma \ref{l:actmulticor} that the action of $\Prim \mathfrak B$ on $\mathfrak A$ is finitely lower semicontinuous and that the action on $\mathfrak E$ is lower semicontinuous. 

Note that if $\Prim \mathfrak B$ is finite, the by Corollary \ref{c:choieffros} this extension will have a $\Prim \mathfrak B$-equivariant c.p.~split if $\mathfrak A$ is nuclear and separable.
\end{remark}

\begin{remark}\label{r:constructprim}
 Recall, that when $0 \to \mathfrak B \to \mathfrak E \to \mathfrak A \to 0$ is an extension of $C^\ast$-algebras, then $\Prim \mathfrak B$ embeds canonically as an open subset of $\Prim \mathfrak E$, 
 such that $\Prim \mathfrak E \setminus \Prim \mathfrak B$ is canonically homeomorphic to $\Prim \mathfrak A$.
 Thus $\Prim \mathfrak E$ as a \emph{set} may be canonically identified with the disjoint union $\Prim \mathfrak A \sqcup \Prim \mathfrak B$.
 
 If the induced $\Prim \mathfrak B$-$C^\ast$-algebra structure on $\mathfrak A$ is lower semicontinuous, then we may retrieve the topology on $\Prim \mathfrak E$ from the $\Prim \mathfrak B$-$C^\ast$-algebra structure on $\mathfrak A$.
 
 In fact, since $\Prim \mathfrak E = \Prim \mathfrak A \sqcup \Prim \mathfrak B$ any open set will be of the form $\mathsf U \sqcup \mathsf V$, where $\mathsf U \in \mathbb O(\Prim \mathfrak A)$ and $\mathsf V \in \mathbb O(\Prim \mathfrak B)$.
 Let $\mathfrak J_\mathsf{U}$ be the closed, two-sided ideal in $\mathfrak A$ corresponding to $\mathsf U$. 
 If the action of $\Prim \mathfrak B$ on $\mathfrak A$ is lower semicontinuous, then there is a unique smallest open set $\mathsf W_\mathsf{U}$ in $\Prim \mathfrak B$ such that $\mathfrak J_\mathsf{U} \subset \mathfrak A(\mathsf W_\mathsf{U})$.
 It is easily verified that $\mathsf U \sqcup \mathsf V$ is open in $\Prim \mathfrak E$ if and only if $\mathsf W_\mathsf{U} \subset \mathsf V$.
\end{remark}

We will provide two examples. The first example shows that the induced action on the extension algebra and the quotient need not be finitely upper semicontinuous. 
The second example shows that the induced action on the quotient is not necessarily lower semicontinuous.

\begin{example}
Let $\mathfrak E$ be a $C^\ast$-algebra containing exactly three non-trivial, two-sided, closed ideals $\mathfrak B_1$, $\mathfrak B_2$ and $\mathfrak B = \mathfrak B_1 + \mathfrak B_2$ such that $\mathfrak B_1 \cap \mathfrak B_2 = 0$.
Let $\mathfrak A = \mathfrak E/\mathfrak B$ and consider the extension $0 \to \mathfrak B \to \mathfrak E \to \mathfrak A \to 0$.
Since $\mathfrak B \cong \mathfrak B_1 \oplus \mathfrak B_2$, the primitive ideal space $\mathsf X$ of $\mathfrak B$ is homeomorphic to the two-point discrete space $\{ 1,2\}$ with open sets $\mathbb O(\mathsf X) = \{ \emptyset , \{1\}, \{2\}, \mathsf X\}$.
The induced actions on $\mathfrak A$ and $\mathfrak E$ can be determined as in Remark \ref{r:inducedact} and are
\begin{eqnarray*}
 \mathfrak E(\emptyset) = 0, \quad \mathfrak E(\{1\}) = \mathfrak B_1, \quad \mathfrak E(\{ 2\} ) = \mathfrak B_2, \quad \mathfrak E(\mathsf X) = \mathfrak E \\
 \mathfrak A(\emptyset) =0, \quad \mathfrak A(\{ 1\}) = 0, \quad \mathfrak A(\{2\}) = 0, \quad \mathfrak A(\mathsf X) = \mathfrak A.
\end{eqnarray*}
These actions are clearly not finitely upper semicontinuous, since this would imply that $\mathfrak A(\{1\}) + \mathfrak A(\{ 2\}) = \mathfrak A$ and $\mathfrak E(\{1\}) + \mathfrak E(\{2\}) = \mathfrak E$.
\end{example}

\begin{example}
Let $\mathfrak e$ be the extension
\[
 0 \to C_0((0,1]) \to C([0,1]) \to \mathbb C \to 0.
\]
Then the action of $(0,1]$ on $\mathbb C$ induced by $\mathfrak e$ is given by
\[
 \mathbb C(\mathsf U) = \left\{ \begin{array}{ll} 0, & \text{ if } \mathsf U \subset [\epsilon ,1] \text{ for some }\epsilon \in (0,1] \\ \mathbb C, & \text{ otherwise.} \end{array} \right.
\]
Since $\bigcap_{n\in \mathbb N} \mathbb C((0,1/n)) = \mathbb C$, it follows that this action is not lower semicontinuous.
\end{example}

In classification theory it is often desirable to classify tight $\mathsf X$-$C^\ast$-algebras. 
If $0 \to \mathfrak B \to \mathfrak E \xrightarrow{p} \mathfrak A \to 0$ is an extension of $C^\ast$-algebras, then there are induced actions of $\Prim \mathfrak E$ on $\mathfrak A$ and $\mathfrak B$. 
These are simply given by $\mathfrak A (\mathsf U) = p(\mathfrak E(\mathsf U))$ and $\mathfrak B(\mathsf U) = \mathfrak E(\mathsf U) \cap \mathfrak B$, for $\mathsf U\in \mathbb O(\Prim \mathfrak E)$. 

\begin{warning}
 It is important to specify which of these two actions we are using on $\mathfrak A$. E.g.~since $\mathbb O(\Prim \mathfrak B) \subset \mathbb O(\Prim \mathfrak E)$, then $\mathfrak A(\Prim \mathfrak B) = \mathfrak A$ if we use the $\Prim \mathfrak B$ action,
 but $\mathfrak A(\Prim \mathfrak B) = 0$ if we use the $\Prim \mathfrak E$ action.
 
 However, $\mathfrak B(\mathsf U)$ for $\mathsf U\in \mathbb O(\Prim \mathfrak B)$ does not depend on the choice of action.
\end{warning}

The following lemma says that considering this action, or the action induced by the ideal $\mathfrak B$ as above, is essentially the same.

\begin{lemma}\label{l:tightidealvstightext}
 Let $0 \to \mathfrak B \to \mathfrak E \to \mathfrak A \to 0$ be an extension of $C^\ast$-algebras. Then
 \[
  CP(\Prim \mathfrak B; \mathfrak A, \mathfrak B) = CP(\Prim \mathfrak E; \mathfrak A , \mathfrak B).
 \]
\end{lemma}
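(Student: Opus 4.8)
The plan is to reduce both cones to purely ideal-theoretic conditions and then check that these conditions coincide. First I would fix notation to tell the two actions apart. Write $\mathfrak A_\mathfrak B(\mathsf W), \mathfrak B_\mathfrak B(\mathsf W)$ for the action induced by $\Prim \mathfrak B$, so $\mathsf W \in \mathbb O(\Prim \mathfrak B)$ and $\mathfrak B_\mathfrak B(\mathsf W)$ is simply the ideal of $\mathfrak B$ corresponding to $\mathsf W$ under the canonical isomorphism $\mathbb O(\Prim \mathfrak B) \cong \mathbb I(\mathfrak B)$; and write $\mathfrak A_\mathfrak E(\mathsf U), \mathfrak B_\mathfrak E(\mathsf U)$ for the action induced by $\Prim \mathfrak E$, so $\mathsf U \in \mathbb O(\Prim \mathfrak E)$, $\mathfrak B_\mathfrak E(\mathsf U) = \mathfrak E(\mathsf U) \cap \mathfrak B$, and $\mathfrak A_\mathfrak E(\mathsf U) = p(\mathfrak E(\mathsf U))$. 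Here $\mathsf U \mapsto \mathfrak E(\mathsf U)$ is the canonical lattice isomorphism $\mathbb O(\Prim \mathfrak E) \cong \mathbb I(\mathfrak E)$, so as $\mathsf U$ ranges over $\mathbb O(\Prim \mathfrak E)$ the ideal $\mathfrak E(\mathsf U)$ ranges over all of $\mathbb I(\mathfrak E)$.

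Unwinding the definition of $\mathsf X$-equivariance, a c.p.~map $\phi \colon \mathfrak A \to \mathfrak B$ lies in $CP(\Prim \mathfrak E; \mathfrak A, \mathfrak B)$ precisely when $\phi(p(\mathfrak J)) \subseteq \mathfrak J \cap \mathfrak B$ for every $\mathfrak J \in \mathbb I(\mathfrak E)$. For the $\Prim \mathfrak B$-action I would invoke Remark \ref{r:inducedact}: for an ideal $\mathfrak I \in \mathbb I(\mathfrak B)$, corresponding to $\mathsf W \in \mathbb O(\Prim \mathfrak B)$, the ideal $\mathfrak E(\mathsf W)$ appearing there is the unique largest $\mathfrak J \in \mathbb I(\mathfrak E)$ with $\mathfrak J \cap \mathfrak B = \mathfrak I$, and $\mathfrak A_\mathfrak B(\mathsf W) = p(\mathfrak E(\mathsf W))$. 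Thus, writing $\mathfrak E_\mathfrak I$ for this largest ideal, $\phi \in CP(\Prim \mathfrak B; \mathfrak A, \mathfrak B)$ precisely when $\phi(p(\mathfrak E_\mathfrak I)) \subseteq \mathfrak I$ for every $\mathfrak I \in \mathbb I(\mathfrak B)$.

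For the inclusion $CP(\Prim \mathfrak E; \mathfrak A, \mathfrak B) \subseteq CP(\Prim \mathfrak B; \mathfrak A, \mathfrak B)$, given $\mathfrak I \in \mathbb I(\mathfrak B)$ I would apply the $\Prim \mathfrak E$-condition to the particular ideal $\mathfrak J = \mathfrak E_\mathfrak I$; since $\mathfrak E_\mathfrak I \cap \mathfrak B = \mathfrak I$, this yields $\phi(p(\mathfrak E_\mathfrak I)) \subseteq \mathfrak I$, which is exactly the $\Prim \mathfrak B$-condition. For the reverse inclusion, given an arbitrary $\mathfrak J \in \mathbb I(\mathfrak E)$ I would set $\mathfrak I = \mathfrak J \cap \mathfrak B$; by maximality of $\mathfrak E_\mathfrak I$ we have $\mathfrak J \subseteq \mathfrak E_\mathfrak I$, hence $p(\mathfrak J) \subseteq p(\mathfrak E_\mathfrak I)$, and the $\Prim \mathfrak B$-condition gives $\phi(p(\mathfrak J)) \subseteq \phi(p(\mathfrak E_\mathfrak I)) \subseteq \mathfrak I = \mathfrak J \cap \mathfrak B$, which is the $\Prim \mathfrak E$-condition.

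There is no serious obstacle here; the whole content is the bookkeeping translation of $\mathsf X$-equivariance into a statement about images of ideals, together with the single structural fact recorded in Remark \ref{r:inducedact}, namely that the $\Prim \mathfrak B$-action on $\mathfrak A$ is computed via the maximal lift $\mathfrak E_\mathfrak I$ of each ideal $\mathfrak I$ of $\mathfrak B$. The only point needing a moment's care is the asymmetry between the two families of ideals: the lifts $\{\mathfrak E_\mathfrak I : \mathfrak I \in \mathbb I(\mathfrak B)\}$ used by the $\Prim \mathfrak B$-action form a distinguished subfamily, whereas the ideals $\mathfrak J$ used by the $\Prim \mathfrak E$-action exhaust all of $\mathbb I(\mathfrak E)$. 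This is exactly what forces one inclusion to proceed by specializing to $\mathfrak J = \mathfrak E_\mathfrak I$ and the other to proceed by maximality.
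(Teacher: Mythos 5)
Your proof is correct and takes essentially the same route as the paper's: both directions reduce to the maximality property of the induced ideals recorded in Remark \ref{r:inducedact} — specializing the $\Prim \mathfrak E$-equivariance condition to the maximal lifts $\mathfrak E_{\mathfrak I}$ for one inclusion, and using $\mathfrak J \subseteq \mathfrak E_{\mathfrak J \cap \mathfrak B}$ for the other. The paper merely phrases the identical argument in open-set language (its $\mathsf V_{\mathsf U}$ is exactly your $\mathfrak E_{\mathfrak I}$) with explicit element-wise lifts through $\sigma$ and $\tau$, whereas you work directly in the ideal lattice.
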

\begin{proof}
 To avoid confusion we write $\mathfrak A_\mathfrak{B}$ when we mean $\mathfrak A$ with the $\Prim \mathfrak B$ action, and $\mathfrak A_\mathfrak{E}$ when we mean $\mathfrak A$ with the $\Prim \mathfrak E$ action.

 Let $\phi \in CP(\Prim \mathfrak E; \mathfrak A , \mathfrak B)$ and $\mathsf U\in \mathbb O(\Prim \mathfrak B) \subset \mathbb O(\Prim \mathfrak E)$. 
 Let $\mathsf U_\mathfrak{B} \in \mathbb O(\Prim \mathfrak E)$ such that $\mathfrak E(\mathsf U_\mathfrak B) = \mathfrak B$, and define
 \[
  \mathsf V_\mathsf{U} = \bigcup_{\mathsf V \in \mathbb O(\Prim \mathfrak E), \mathsf V \cap \mathsf U_\mathfrak{B} = \mathsf U \cap \mathsf U_{\mathfrak B}} \mathsf V.
 \]
 Then $\mathsf V_\mathsf{U}$ is the largest open subset of $\Prim \mathfrak E$ such that $\mathsf V_\mathsf{U} \cap \mathsf U_\mathfrak{B} = \mathsf U \cap \mathsf U_{\mathfrak B}$.
 
 Let $\sigma \colon \mathfrak E \to \multialg{\mathfrak B}$ and $\tau \colon \mathfrak A \to \corona{\mathfrak B}$ be the canonical maps, and let $a\in \mathfrak A_\mathfrak B (\mathsf U)$.
 Then $a$ lifts to $x\in \mathfrak E$ such that $\sigma (x) \in \multialg{\mathfrak B, \mathfrak B(\mathsf U)}$. As $\sigma^{-1}(\multialg{\mathfrak B, \mathfrak B(\mathsf U)}) \cap \mathfrak B = \mathfrak B(\mathsf U) = \mathfrak E(\mathsf U) \cap \mathfrak B$,
 it follows that $x \in \mathfrak E(\mathsf V_\mathsf{U})$. 
 Hence $a \in \mathfrak A_\mathfrak{E}(\mathsf V_\mathsf{U})$ and thus $\phi(a) \in \mathfrak B(\mathsf V_\mathsf{U}) = \mathfrak B(\mathsf U)$, which implies that $\phi \in CP(\Prim \mathfrak B; \mathfrak A, \mathfrak B)$.
 
 Now, let $\phi \in CP(\Prim \mathfrak B; \mathfrak A, \mathfrak B)$, $\mathsf U\in \mathbb O(\Prim \mathfrak E)$ and $a \in \mathfrak A_\mathfrak{E}(\mathsf U) = p(\mathfrak E(\mathsf U))$. 
 As $\sigma(\mathfrak E(\mathsf U)) \subset \multialg{\mathfrak B, \mathfrak B(\mathsf U)}$ it follows that $\tau (a) \in \corona{\mathfrak B, \mathfrak B( \mathsf U)} = \corona{\mathfrak B, \mathfrak B( \mathsf{U} \cap \mathsf{U}_{\mathfrak{B}} ) }$ 
 and thus $a \in \mathfrak A_\mathfrak{B}(\mathsf U \cap \mathsf U_\mathfrak{B})$.
 Hence $\phi(a) \in \mathfrak B(\mathsf U \cap \mathsf U_\mathfrak{B}) = \mathfrak B(\mathsf U)$, so $\phi \in CP(\Prim \mathfrak E; \mathfrak A, \mathfrak B)$. 
\end{proof}

\begin{remark}\label{r:KKgroups}
 Let $\mathfrak A$ and $\mathfrak B$ be separable $C^\ast$-algebras with actions of $\mathsf X$. The Kasparov group $KK(\mathsf X; \mathfrak A, \mathfrak B)$ is constructed exactly as in the classical case, 
 but by \emph{only} considering Kasparov $\mathfrak A$-$\mathfrak B$-modules 
 \[
 (E, \phi \colon \mathfrak A \to \mathbb B(E), F)
 \]
  for which the c.p.~map
 \[
  a \mapsto \langle x, \phi(a) x\rangle
 \]
 is $\mathsf X$-equivariant for every $x\in E$, i.e.~$\phi$ is weakly in $CP(\mathsf X; \mathfrak A, \mathfrak B)$ by Lemma \ref{p:starhomweakly}. 
 Note that this condition depends \emph{only} on the closed operator convex cone $CP(\mathsf X; \mathfrak A, \mathfrak B)$. 
 Thus if one has a different space $\mathsf Y$ which acts on $\mathfrak A$ and $\mathfrak B$ such that $CP(\mathsf X; \mathfrak A, \mathfrak B) = CP(\mathsf Y; \mathfrak A, \mathfrak B)$, then 
 \[
  KK(\mathsf X; \mathfrak A, \mathfrak B) = KK(\mathsf Y; \mathfrak A, \mathfrak B).
 \]
 
 In particular, it follows from Lemma \ref{l:tightidealvstightext} that if $0 \to \mathfrak B \to \mathfrak E \to \mathfrak A \to 0$ is an extension of separable $C^\ast$-algebra, then
 \[
  KK^1(\Prim \mathfrak B; \mathfrak A, \mathfrak B) = KK^1(\Prim \mathfrak E; \mathfrak A, \mathfrak B).
 \]
\end{remark}

\begin{remark}\label{r:extprim}
 Recall from Remark \ref{r:constructprim}, that we may identify $\Prim \mathfrak E$ and $\Prim \mathfrak A \sqcup \Prim \mathfrak B$ as sets in a canonical way.
 Thus if $0 \to \mathfrak B \to \mathfrak E_i \to \mathfrak A \to 0$ are extensions of $C^\ast$-algebras for $i=1,2$, then by the above identification, it makes sense to consider the identity map $\Prim \mathfrak E_1 = \Prim \mathfrak E_2$.
 Obviously, this map is a homeomorphism if and only if there exists a homeomorphism $\Prim \mathfrak E_1 \xrightarrow{\cong} \Prim \mathfrak E_2$ which acts as the identity on the canonical subsets $\Prim \mathfrak A$ and $\Prim \mathfrak B$.
 So when this is the case, the action of $\Prim \mathfrak E_1$ and of $\Prim \mathfrak E_2$ on $\mathfrak A$ and $\mathfrak B$ respectively, are the same.
\end{remark}

The following should be thought of as a much weaker criterion than absorbing the zero extension, when one wants to appeal to Theorem \ref{t:purelylargefinite}.

\begin{definition}
 Let $\mathfrak e : 0 \to \mathfrak B \to \mathfrak E \to \mathfrak A \to 0$ be an extension of $C^\ast$-algebras and $\sigma \colon \mathfrak E \to \multialg{\mathfrak B}$ be the induced $\ast$-homomorphism. 
 We will say that $\mathfrak e$ is \emph{strongly non-unital} if $\mathfrak E /\mathfrak E(\mathfrak J)$ is non-unital for all $\mathfrak J \in \mathbb I(\mathfrak B)$ where $\mathfrak E(\mathfrak J) = \sigma^{-1}(\multialg{\mathfrak B, \mathfrak J})$.
\end{definition}

Note in particular, that if $\mathfrak A$ has no unital quotients, e.g.~if $\mathfrak A$ is stable, then any extension of $\mathfrak A$ is strongly non-unital.

\begin{theorem}\label{t:classtightcorona}
 Let $\mathfrak e_i : 0 \to \mathfrak B \to \mathfrak E_i \to \mathfrak A \to 0$ be strongly non-unital extensions of $C^\ast$-algebras for $i=1,2$. Suppose that $\mathfrak A$ is separable and nuclear, that $\mathfrak B$ is stable, has finitely many two-sided, closed ideals, 
 which are all $\sigma$-unital, and that $\mathfrak B$ has a $\Prim \mathfrak B$-tight corona algebra. The following are equivalent.
\begin{itemize}
 \item[$(i)$] $\mathfrak e_1$ and $\mathfrak e_2$ are equivalent,
 \item[$(ii)$] $\mathfrak e_1$ and $\mathfrak e_2$ induce the same action of $\Prim \mathfrak B$ on $\mathfrak A$, and 
 \[
 [\mathfrak e_1]=[\mathfrak e_2] \in KK^1(\Prim \mathfrak B; \mathfrak A , \mathfrak B),
 \]
 \item[$(iii)$] under the canonical identification of $\Prim \mathfrak E_i = \Prim \mathfrak A \sqcup \Prim \mathfrak B$, the identity map $\Prim \mathfrak E_1 =  \Prim \mathfrak E_2$ is a homeomorphism, and
 \[
  [\mathfrak e_1] = [\mathfrak e_2] \in KK^1(\Prim \mathfrak E_1; \mathfrak A, \mathfrak B).
 \]
\end{itemize}
\end{theorem}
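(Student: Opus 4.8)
The plan is to establish the equivalences via the chain $(i) \Rightarrow (iii) \Rightarrow (ii) \Rightarrow (i)$, using Theorem \ref{t:purelylargefinite} (applied over the finite space $\mathsf X = \Prim \mathfrak B$) as the central engine together with the identification of $KK^1$-groups from Remark \ref{r:KKgroups}. First I would observe that $\mathsf X := \Prim \mathfrak B$ is finite since $\mathfrak B$ has finitely many two-sided, closed ideals; the tight corona hypothesis lets me invoke Theorem \ref{t:WvN}, so that every simple subquotient of $\mathfrak B$ is purely infinite or isomorphic to $\mathbb K$, whence (by \cite[Proposition 4.1.1]{Rordam-book-classification} and \cite[Theorem 3.1]{KucerovskyNg-Sregularity} as in the proof of Theorem \ref{t:WvN}, $(i)\Rightarrow(iii)$) each $\mathfrak B(\mathsf U)$ has the corona factorisation property. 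With $\mathfrak A$ separable and nuclear, Corollary \ref{c:liftingcor} gives $\mathsf X$-equivariant c.p.\ splittings, and Remark \ref{r:niceinducedact} tells me the induced actions on $\mathfrak A$ are (finitely) lower semicontinuous while the action on $\mathfrak E_i$ is lower semicontinuous, so the hypotheses of Corollary \ref{c:nucquot} are met for each extension.

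For $(i) \Rightarrow (iii)$: an equivalence of extensions is implemented by a unitary $u \in \multialg{\mathfrak B}$ conjugating one Busby map to the other, which induces an isomorphism of extension algebras fixing $\mathfrak A$ and $\mathfrak B$, hence a homeomorphism $\Prim \mathfrak E_1 \cong \Prim \mathfrak E_2$ acting as the identity on the canonical subsets $\Prim \mathfrak A \sqcup \Prim \mathfrak B$; by Remark \ref{r:extprim} this is exactly the statement that the identity map is a homeomorphism, and the $KK^1(\Prim \mathfrak E_1)$-classes agree since equivalent extensions are multiplier-unitarily equivalent. The implication $(iii) \Rightarrow (ii)$ is the formal translation step: by Remark \ref{r:KKgroups} (specifically Lemma \ref{l:tightidealvstightext}) we have $KK^1(\Prim \mathfrak E_1; \mathfrak A, \mathfrak B) = KK^1(\Prim \mathfrak B; \mathfrak A, \mathfrak B)$, and the homeomorphism condition forces the two induced $\Prim \mathfrak B$-actions on $\mathfrak A$ to coincide (again Remark \ref{r:extprim}), so the $KK^1$-class equality descends.

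The substantive implication is $(ii) \Rightarrow (i)$, and this is where the absorption machinery does its work. Assuming the two extensions induce the same $\mathsf X$-action on $\mathfrak A$ and have equal classes in $KK^1(\mathsf X; \mathfrak A, \mathfrak B)$, I want to conclude they are equivalent. The strategy is to show each $\mathfrak e_i$ is $\mathsf X$-full and then invoke Corollary \ref{c:nucquot}, which (under our corona factorisation and nuclearity hypotheses) identifies $KK^1(\mathsf X; \mathfrak A, \mathfrak B)$ with the group of unitisably $\mathsf X$-full extensions under multiplier unitary equivalence. The key leverage of the \emph{strongly non-unital} hypothesis is that it should guarantee, via the argument of Corollary \ref{c:nonunitalquotient}, that the extensions absorb the zero extension and that $\mathsf X$-fullness upgrades to unitisable $\mathsf X$-fullness; more precisely, strong non-unitality ensures $\mathfrak E_i/\mathfrak E_i(\mathfrak J)$ is non-unital for every $\mathfrak J$, which is exactly the non-existence-of-units condition needed to run the unitisation argument in Corollary \ref{c:nonunitalquotient} so that fullness of $\mathfrak e_i^\dagger$ follows. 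Once both extensions are seen to lie in (and are classified by) the unitisably $\mathsf X$-full picture of $KK^1(\mathsf X; \mathfrak A, \mathfrak B)$ via Corollary \ref{c:nucquot}, equality of their $KK^1$-classes yields multiplier unitary equivalence, i.e.\ equivalence of extensions.

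The main obstacle I anticipate is precisely verifying that the strongly non-unital hypothesis is the right replacement for ``absorbs the zero extension'' throughout, and checking that $\mathsf X$-fullness holds for the induced action. Every class in $KK^1(\mathsf X; \mathfrak A, \mathfrak B)$ is represented by an \emph{absorbing} $CP_\rnuc(\mathsf X; \mathfrak A, \mathfrak B)$-extension (as used in the proof of Theorem \ref{t:cfpequiv}, $(iii)\Rightarrow(iv)$, via \cite{Kirchberg-non-simple}), and such an extension absorbs an $\mathsf X$-full one, hence is itself $\mathsf X$-full; but I must confirm that $\mathfrak e_1$ and $\mathfrak e_2$, which are given abstractly rather than as absorbing representatives, are genuinely $\mathsf X$-full with respect to the action \emph{they induce} — here the tightness of the corona algebra is what makes every $\mathsf X$-$C^\ast$-subalgebra of $\corona{\mathfrak B}$ full (as in $(i)\Rightarrow(iii)$ of Theorem \ref{t:WvN}), so the induced Busby maps are automatically weakly $\mathsf X$-full and hence $\mathsf X$-full by Lemma \ref{l:coronafullelement}. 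Threading strong non-unitality through Corollary \ref{c:nonunitalquotient} to eliminate the zero-extension-absorption bookkeeping, while keeping the two actions identified, is the delicate part of the write-up.
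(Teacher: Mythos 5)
Your overall architecture matches the paper's: tightness of the corona algebra gives $\mathsf X$-fullness of the Busby maps, Theorem \ref{t:WvN} together with the cited corona factorisation results gives the CFP for all $\mathfrak B(\mathsf U)$, Corollary \ref{c:nucquot} reduces everything to (unitisable) $\mathsf X$-fullness, and Lemma \ref{l:tightidealvstightext} with Remarks \ref{r:KKgroups} and \ref{r:extprim} handles $(ii)\Leftrightarrow(iii)$. However, there is a genuine gap exactly in the step you flag as delicate: upgrading $\mathsf X$-fullness to unitisable $\mathsf X$-fullness. You assert that strong non-unitality — non-unitality of $\mathfrak E_i/\mathfrak E_i(\mathfrak J)$ — ``is exactly the non-existence-of-units condition needed to run the unitisation argument in Corollary \ref{c:nonunitalquotient}.'' It is not. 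Corollary \ref{c:nonunitalquotient} (and likewise the second half of Corollary \ref{c:nucquot}) requires the quotients $\mathfrak A/\mathfrak A(\mathsf U)$ to be non-zero and \emph{non-unital}. Since $\mathfrak E_i(\mathsf U)\cap\mathfrak B = \mathfrak B(\mathsf U)$ rather than $\mathfrak B$, one has an extension $0\to\mathfrak B/\mathfrak B(\mathsf U)\to\mathfrak E_i/\mathfrak E_i(\mathsf U)\to\mathfrak A/\mathfrak A(\mathsf U)\to 0$, and the middle algebra can be non-unital while the quotient $\mathfrak A/\mathfrak A(\mathsf U)$ is unital. A concrete instance covered by the theorem: $\mathfrak B=\mathbb K$, $\mathfrak A=\mathbb C$, and Busby map $\tau(1)=q$ for a projection $q\in\corona{\mathbb K}$ with $q\neq 0,1$; then the extension algebra is non-unital (a unit $(1,m)$ would force $m=1$, contradicting $\pi(m)=q$), so the extension is strongly non-unital, yet $\mathfrak A/\mathfrak A(\emptyset)=\mathbb C$ is unital. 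On such examples your route simply cannot be run, while the theorem still applies; so strong non-unitality is strictly weaker than the hypothesis you are trying to feed into Corollary \ref{c:nonunitalquotient}. (Your auxiliary claim that strong non-unitality yields absorption of the zero extension is likewise unjustified, though it is also unnecessary.)

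What the paper does instead is a direct Busby-map argument that exploits tightness to get a containment modulo $\mathfrak E_1(\mathfrak J)$ rather than modulo $\mathfrak B+\mathfrak E_1(\mathfrak J)$, which is precisely why the weaker hypothesis suffices. If $1-\tau(a)$ is not full in $\corona{\mathfrak B}$, tightness of the corona algebra places it (fully) in $\corona{\mathfrak B,\mathfrak J}$ for some proper ideal $\mathfrak J$ of $\mathfrak B$; by Remark \ref{r:canoniciso} lift it to $m\in\multialg{\mathfrak B,\mathfrak J}$, and choose $x\in\mathfrak E_1$ with $\sigma(x)=1-m$. Since $\multialg{\mathfrak B,\mathfrak J}$ is a two-sided ideal of $\multialg{\mathfrak B}$, for every $y\in\mathfrak E_1$ one has $\sigma((1-x)y)=m\sigma(y)\in\multialg{\mathfrak B,\mathfrak J}$, i.e.\ $(1-x)\mathfrak E_1\subset\mathfrak E_1(\mathfrak J)$ — note this absorbs $\mathfrak B$ as well. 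Hence $x+\mathfrak E_1(\mathfrak J)$ is a unit for $\mathfrak E_1/\mathfrak E_1(\mathfrak J)$, which is non-zero (otherwise $1=\sigma(x)+m\in\multialg{\mathfrak B,\mathfrak J}$, forcing $\mathfrak J=\mathfrak B$), and this contradicts strong non-unitality directly — a condition on $\mathfrak E_1/\mathfrak E_1(\mathfrak J)$, which is exactly what the hypothesis asserts. This is the argument your write-up needs in place of the appeal to Corollary \ref{c:nonunitalquotient}; the rest of your proof stands.
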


\begin{proof}
 $(i) \Rightarrow (ii)$: Clear.
 
 $(ii) \Rightarrow (i)$: Suppose that $\mathfrak e_1$ and $\mathfrak e_2$ induce the same action of $\mathsf X := \Prim \mathfrak B$ on $\mathfrak A$. Since $\mathfrak B$ has a tight corona algebra, both extensions are $\mathsf X$-full.
 As in the proof of $(i) \Rightarrow (iii)$ in Theorem \ref{t:WvN}, all two-sided, closed ideals in $\mathfrak B$ have the corona factorisation property.
 Hence by Corollary \ref{c:nucquot} it suffices to show that the extensions are unitisably $\mathsf X$-full. 
 
 Let $\tau$ be the Busby map of $\mathfrak e_1$. Then $\mathfrak e_1$ is unitisably $\mathsf X$-full if and only if $1_{\corona{\mathfrak B}} - \tau(a)$ is full in $\corona{\mathfrak B}$ for all $a\in \mathfrak A$. Suppose for contradiction that there
 is an $a\in \mathfrak A$ such that $1-\tau(a)$ is not full in $\corona{\mathfrak B}$. Since $\mathfrak B$ has a tight corona algebra, $1-\tau(a)$ is full in $\corona{\mathfrak B, \mathfrak J}$ for some proper two-sided, closed ideal $\mathfrak J$ in $\mathfrak B$.
 Lifting $1-\tau(a)$ to $m\in \multialg{\mathfrak B, \mathfrak J}$, $1-m$ is a lift of $\tau(a)$. Hence there is an $x\in \mathfrak E_1$ with $\sigma(x) = 1-m$, and $1-\sigma(x) \in \multialg{\mathfrak B, \mathfrak J}$, 
 where $\sigma \colon \mathfrak E_1 \to \multialg{\mathfrak B}$ is the canonical map. Hence $(1-x) \mathfrak E_1 \subset \mathfrak E_1(\mathfrak J)$.
Note that if $x \in \mathfrak E_1(\mathfrak J)$, then $1 = \sigma(x) + m \in \multialg{\mathfrak B, \mathfrak J}$ which contradicts $\mathfrak J \neq \mathfrak B$. In particular, $\mathfrak E_1(\mathfrak J) \neq \mathfrak E_1$ so $\mathfrak E_1/\mathfrak E_1(\mathfrak J)$ is non-zero.
Now $(1-x) \mathfrak E_1 \subset \mathfrak E_1(\mathfrak J)$ implies that
 $x + \mathfrak E_1(\mathfrak J)$ is a unit for $\mathfrak E_1/\mathfrak E_1(\mathfrak J)$ which is non-zero. This contradicts the strong non-unitality of $\mathfrak e_1$, and thus $\mathfrak e_1$, and similarly $\mathfrak e_2$, is unitisably $\mathsf X$-full.
 
 $(ii) \Leftrightarrow (iii)$: Follows immediately from Lemma \ref{l:tightidealvstightext}, Remark \ref{r:KKgroups} and Remark \ref{r:extprim}.
\end{proof}


\section{Applications}\label{s:applications}

\subsection{The general machinery}
We will use our result in the previous section to classify certain extensions of $C^\ast$-algebras. This classifies all real rank zero extensions of AF algebras by strongly classifiable purely infinite algebras. 
In spirit this means, that given any separable, nuclear, real rank zero $C^\ast$-algebra $\mathfrak A$ and a two-sided, closed ideal $\mathfrak I$ such that $\mathfrak A/\mathfrak I$ is AF, 
and $\mathfrak I$ is a strongly classifiable purely infinite algebra with finitely many two-sided, closed ideals, then $\mathfrak A$ is classified by a $K$-theoretic invariant.

\begin{notation}
 Let $\mathsf Y$ be a finite $T_0$-space. For any point $y\in \mathsf Y$ there is a smallest open set containing $y$, which we denote by $\mathsf U^y$. Note that $(\mathsf U^y)_{y\in \mathsf Y}$ is a basis for the topology $\mathbb O(\mathsf Y)$.
 
 There is a partial order on $\mathsf Y$ given by $x \leq y :\Leftrightarrow \mathsf U^y \subset \mathsf U^x$.
 
 We let $\mathsf Y$-$\mathscr C^\ast$-$cont$ denote the category of continuous $\mathsf Y$-$C^\ast$-algebras with $\mathsf Y$-equivariant $\ast$-homomorphisms as morphisms. 

 Similarly, we let $\mathsf X$-$\mathscr C^\ast$-$lsc$ denote the category of lower semicontinuous $\mathsf X$-$C^\ast$-algebras, with $\mathsf X$-equivariant $\ast$-homomorphisms as morphisms.
\end{notation}

Note that for a finite $T_0$-space $\mathsf Y$, the category $\mathsf Y$-$\mathscr C^\ast$-$cont$ is equivalent to the category of $C^\ast$-algebras over $\mathsf Y$.

\begin{definition}
 Let $\mathsf P$ be a finite partially ordered set. We let $\mathbb Z \mathsf P$ denote the free abelian group generated by elements $i^q_p$ for pairs $(p,q)$ with $p\leq q$, equipped with the bilinear multiplication $i_p^q i_r^s = \delta_{qr} i_p^s$.
 
 We let $\Mod (\mathbb Z \mathsf P)$ denote the category of $\mathbb Z/2$-graded right $\mathbb Z \mathsf P$-modules.

 Define the functor $\mathsf Y K \colon \mathsf Y$-$\mathscr C^\ast$-$cont \to \Mod(\mathbb Z \mathsf Y^{\op})$ by
 \[
  \mathsf Y K(\mathfrak A) = \bigoplus_{y\in \mathsf Y} K_\ast(\mathfrak A(\mathsf U^y))
 \]
 where $i^y_x$ acts by the map $K_\ast(\mathfrak A(\mathsf U^x)) \to K_\ast(\mathfrak A(\mathsf U^y))$ induced by the inclusion $\mathfrak A(\mathsf U^x) \hookrightarrow \mathfrak A(\mathsf U^y)$ on the direct summand corresponding to $x$, 
 and acts as $0$ on all other direct summands.
 
 Similarly we define the functor $\mathbb OK \colon \mathsf X$-$\mathscr C^\ast$-$lsc \to \Mod (\mathbb Z \mathbb O(\mathsf X))$ by
 \[
  \mathbb OK(\mathfrak A) = \bigoplus_{\mathsf U\in \mathbb O(\mathsf X)} K_\ast(\mathfrak A(\mathsf U))
 \]
 where $i_{\mathsf U}^{\mathsf V}$ acts by the map $K_\ast(\mathfrak A(\mathsf U)) \to K_\ast(\mathfrak A(\mathsf V))$ induced by the inclusion $\mathfrak A(\mathsf U) \hookrightarrow \mathfrak A(\mathsf V)$ on the direct summand corresponding to $\mathsf U$, 
 and acts as $0$ on all other direct summands.
\end{definition}

\begin{definition}
 Let $\mathsf X$ be a finite $T_0$-space and $\mathfrak A$ be a lower semicontinuous $\mathsf X$-$C^\ast$-algebra. We say that $\mathfrak A$ has \emph{vanishing boundary maps} if the induced map
 \[
  K_{\ast}(\mathfrak A(\mathsf U)) \to K_\ast(\mathfrak A)
 \]
 is injective for each open subset $\mathsf U$ of $\mathsf X$. If the induced map
 \[
  K_\ast\left(\sum_{k=1}^n \mathfrak A(\mathsf V_k)\right) \to K_\ast(\mathfrak A)
 \]
 is injective for any $\mathsf V_1,\dots,  \mathsf V_n \in \mathbb O(\mathsf X)$, then we say that $\mathfrak A$ has \emph{strongly vanishing boundary maps}.
\end{definition}

Clearly, if $\mathfrak A$ is a continuous $\mathsf Y$-$C^\ast$-algebra with vanishing boundary maps, then it also has strongly vanishing boundary maps.

Let $\mathcal B$ denote the \emph{bootstrap class} in $KK$-theory, i.e.~the class of all separable $C^\ast$-algebras satisfying the universal coefficient theorem (UCT) of Rosenberg and Schochet \cite{RosenbergSchochet-UCT}.
In \cite{MeyerNest-bootstrap} Meyer and Nest introduce an analogues bootstrap class $\mathcal B(\mathsf Y)$ for continuous $\mathsf Y$-$C^\ast$-algebras. 
Instead of recalling the definition of $\mathcal B(\mathsf Y)$, we will simply mention, that if $\mathfrak A$ is a \emph{nuclear}, continuous $\mathsf Y$-$C^\ast$-algebra, 
then $\mathfrak A$ is in $\mathcal B(\mathsf Y)$ if and only if $\mathfrak A(\mathsf U^y)$ is in $\mathcal B$ for every $y\in \mathsf Y$.

Recall, that since we are considering finite $T_0$-spaces, the notions of continuous $\mathsf Y$-$C^\ast$-algebras and $C^\ast$-algebras over $\mathsf Y$ are essentially the same. We will need the following result of Bentmann.

\begin{theorem}[\cite{Bentmann-vanishingbdry}]\label{t:BentmannUCT}
 Let $\mathsf Y$ be a finite $T_0$-space and $\mathfrak A$ and $\mathfrak B$ be separable, continuous $\mathsf Y$-$C^\ast$-algebras. 
 Assume that $\mathfrak A$ has vanishing boundary maps and is in $\mathcal B(\mathsf Y)$. Then there is a natural short exact sequence
 \[
  \Ext_{\Mod(\mathbb Z\mathsf Y^\op)}^1(\mathsf Y K(\mathfrak A), \mathsf Y K(\mathfrak B)) \hookrightarrow KK^1(\mathsf Y; \mathfrak A, \mathfrak B) \twoheadrightarrow \Hom_{\Mod(\mathbb Z\mathsf Y^\op)}(\mathsf YK(\mathfrak A), \mathsf YK(\mathfrak B)[1]).
 \]
\end{theorem}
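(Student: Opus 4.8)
The plan is to deduce the statement from the homological machinery for triangulated categories of Meyer and Nest, applied to the triangulated category $KK(\mathsf Y)$ and the stable homological functor $\mathsf Y K$. First I would identify representing objects: for each $y \in \mathsf Y$ one constructs a continuous $\mathsf Y$-$C^\ast$-algebra $P_y$ (essentially $\mathbb C$ supported on the minimal open set $\mathsf U^y$, up to suspension and the cone replacement that makes it projective for the relevant homological ideal) together with a natural isomorphism $KK_\ast(\mathsf Y; P_y, \mathfrak B) \cong K_\ast(\mathfrak B(\mathsf U^y))$. Summing over $y$ exhibits $\mathsf Y K(\mathfrak B) = \bigoplus_y KK_\ast(\mathsf Y; P_y, \mathfrak B)$ as the restricted Yoneda functor associated with the set $\{P_y\}$, and shows that its $\mathbb Z\mathsf Y^{\op}$-module structure is induced by the morphisms $P_x \to P_y$ arising from the inclusions $\mathsf U^x \hookrightarrow \mathsf U^y$ for $x \le y$. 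In particular each $\mathsf Y K(P_y)$ is the free $\mathbb Z\mathsf Y^{\op}$-module on a single generator in degree $0$ at $y$, so the $P_y$ are projective for the homological ideal $\mathfrak I = \ker \mathsf Y K$, and $\mathsf Y K$ has enough projectives on the localizing subcategory they generate.

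The second, and essential, step is to show that for $\mathfrak A$ in $\mathcal B(\mathsf Y)$ with vanishing boundary maps the module $\mathsf Y K(\mathfrak A)$ has projective dimension at most one over $\mathbb Z\mathsf Y^{\op}$, and that an algebraic projective resolution of length one can be realised geometrically. I would first check that $\mathcal B(\mathsf Y)$ coincides with the localizing subcategory generated by $\{P_y\}$, using that a nuclear continuous $\mathsf Y$-$C^\ast$-algebra lies in $\mathcal B(\mathsf Y)$ precisely when each $\mathfrak A(\mathsf U^y)$ lies in the ordinary bootstrap class $\mathcal B$. The vanishing boundary maps hypothesis enters here: it forces the long exact $K$-theory sequences relating the various $\mathfrak A(\mathsf U)$ to break into short exact sequences, which I would translate into a two-term free resolution $0 \to F_1 \to F_0 \to \mathsf Y K(\mathfrak A) \to 0$ in $\Mod(\mathbb Z\mathsf Y^{\op})$. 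Choosing projective objects $P^{(0)}, P^{(1)}$ in $KK(\mathsf Y)$ (direct sums of suspensions of the $P_y$) with $\mathsf Y K(P^{(i)}) \cong F_i$, and lifting $F_1 \to F_0$ to a morphism $P^{(1)} \to P^{(0)}$, its mapping cone $C$ satisfies $\mathsf Y K(C) \cong \mathsf Y K(\mathfrak A)$; an $\mathfrak I$-equivalence argument then gives an isomorphism $C \cong \mathfrak A$ in $KK(\mathsf Y)$, i.e.\ an exact triangle $\Sigma \mathfrak A \to P^{(1)} \to P^{(0)} \to \mathfrak A$ realising the resolution.

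Finally I would apply $KK_\ast(\mathsf Y; -, \mathfrak B)$ to this triangle. Since $\mathsf Y K(P^{(i)})$ is free, $KK_\ast(\mathsf Y; P^{(i)}, \mathfrak B) \cong \Hom_{\mathbb Z\mathsf Y^{\op}}(\mathsf Y K(P^{(i)}), \mathsf Y K(\mathfrak B))$, so the resulting six-term exact sequence identifies its connecting pieces with the cokernel and kernel of the map $\Hom(F_0, \mathsf Y K(\mathfrak B)) \to \Hom(F_1, \mathsf Y K(\mathfrak B))$, that is, with $\Ext^1_{\mathbb Z\mathsf Y^{\op}}(\mathsf Y K(\mathfrak A), \mathsf Y K(\mathfrak B))$ and $\Hom_{\mathbb Z\mathsf Y^{\op}}(\mathsf Y K(\mathfrak A), \mathsf Y K(\mathfrak B)[1])$ respectively, which yields the short exact sequence; naturality follows because the resolution is unique up to chain homotopy. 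The main obstacle I anticipate is the second step: both proving that vanishing boundary maps forces projective dimension at most one over the incidence ring $\mathbb Z\mathsf Y^{\op}$ — whose global dimension is in general strictly larger, so the bound genuinely uses the hypothesis — and carrying out the geometric realisation of the resolution inside $KK(\mathsf Y)$, where one must verify that the objects $P_y$ represent the claimed functors and that the relevant $\mathfrak I$-exactness and vanishing of phantom maps hold.
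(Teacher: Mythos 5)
You should be aware, first, that the paper offers no proof of Theorem \ref{t:BentmannUCT} at all: it is imported verbatim, with citation, from Bentmann \cite{Bentmann-vanishingbdry}, so the only meaningful comparison is with Bentmann's own argument, which does indeed run through the Meyer--Nest homological machinery you invoke. Your first and third steps reconstruct that framework correctly: the objects $P_y = i_{\{y\}}(\mathbb C)$ represent the functors $\mathfrak B \mapsto K_\ast(\mathfrak B(\mathsf U^y))$ on $KK(\mathsf Y)$, their graded morphism groups recover the ring $\mathbb Z\mathsf Y^{\op}$ (mind the variance, though: an inclusion $\mathsf U^x \subseteq \mathsf U^y$ induces a morphism $P_y \to P_x$, not $P_x \to P_y$), they are projective for $\mathfrak I = \ker \mathsf YK$ by the representability adjunction alone --- no ``cone replacement'' is needed --- they generate $\mathcal B(\mathsf Y)$ as a localising subcategory, $\mathsf YK$ is the universal $\mathfrak I$-exact functor by a Yoneda argument, and once a length-one $\mathfrak I$-projective resolution of $\mathfrak A$ is realised by an exact triangle, applying $KK_\ast(\mathsf Y; -, \mathfrak B)$ produces exactly the stated sequence together with its naturality.

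The genuine gap is your second step, and it carries essentially all the content of the theorem. The claim that vanishing boundary maps force $\mathsf YK(\mathfrak A)$ to admit a projective resolution of length one over $\mathbb Z\mathsf Y^{\op}$, with both terms countable direct sums of (degree-shifted) representable modules $\mathsf YK(P_y)$ so that the resolution lifts to $KK(\mathsf Y)$, is precisely what Bentmann proves, and your sketch replaces it by a single sentence (``which I would translate into a two-term free resolution''). This cannot be treated as bookkeeping: Meyer and Nest exhibited, over a four-point space, bootstrap objects whose filtrated K-theory has projective dimension two and for which the analogous UCT and the resulting classification \emph{fail}, so the hypothesis must be used in an essential, and nontrivial, way. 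Concretely one must (a) show that under the hypothesis the groups $K_\ast(\mathfrak A(\mathsf U^y))$ embed into $K_\ast(\mathfrak A)$ compatibly with intersections and unions of open sets --- which already requires a Mayer--Vietoris argument exploiting injectivity of $K_\ast(\mathfrak A(\mathsf U)) \to K_\ast(\mathfrak A)$ for \emph{all} opens --- and then (b) prove the purely algebraic statement that such lattice-compatible modules over the incidence ring $\mathbb Z\mathsf Y^{\op}$ have projective dimension at most one. Neither appears in your proposal. Relatedly, ``free resolution'' should be ``projective resolution'': the $K$-groups may have torsion, and over $\mathbb Z\mathsf Y^{\op}$ the kernel of a surjection from a free module onto $\mathsf YK(\mathfrak A)$ is not automatically projective; that failure is exactly why the theorem needs a hypothesis, and why your outline, as it stands, is a plan for a proof rather than a proof.
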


We will use this result to obtain a UCT with $\mathbb OK$ for lower semicontinuous $\mathsf X$-$C^\ast$-algebras. 
The following lemma is what allows us to consider lower semicontinuous $\mathsf X$-$C^\ast$-algebras instead of continuous $\mathsf Y$-$C^\ast$-algebras.

\begin{lemma}\label{l:lsctocont}
 Let $\mathsf X$ be a finite $T_0$-space. Let $\mathsf Y = \mathbb O(\mathsf X)$ and equip $\mathsf Y$ with the topology which has the sets
 \[
  \mathsf U^{\mathsf V} := \{ \mathsf W \in \mathsf Y : \mathsf W \subset \mathsf V\}
 \]
 as a basis, for $\mathsf V\in \mathsf Y$. Then $\mathsf U^\mathsf{V}$ is the smallest open set containing $\mathsf V$, and there is an isomorphism of categories 
 \[
  F : \mathsf X\textrm{-}\mathscr C^\ast\textrm{-}lsc \to \mathsf Y\textrm{-}\mathscr C^\ast\textrm{-}cont.  
 \]
 Moreover, $\mathsf Y^\op = \mathbb O(\mathsf X)$ as partially ordered sets, and $\mathsf YK \circ F = \mathbb OK$.
\end{lemma}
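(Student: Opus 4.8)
The plan is to verify each of the three claims in Lemma \ref{l:lsctocont} in turn: first that $\mathsf U^{\mathsf V}$ is the smallest open set containing $\mathsf V$, second that $F$ is an isomorphism of categories, and third that $\mathsf YK \circ F = \mathbb OK$ and $\mathsf Y^\op = \mathbb O(\mathsf X)$ as posets. The construction of $F$ itself must be spelled out, since the statement only names it: given a lower semicontinuous $\mathsf X$-$C^\ast$-algebra $\mathfrak A$, I would define $F(\mathfrak A)$ to be the same underlying $C^\ast$-algebra with $\mathsf Y$-action $F(\mathfrak A)(\mathcal V) := \mathfrak A\big(\bigcup\{\mathsf V \in \mathcal V\}\big)$ for $\mathcal V \in \mathbb O(\mathsf Y)$, or equivalently determined on the basis by $F(\mathfrak A)(\mathsf U^{\mathsf V}) = \mathfrak A(\mathsf V)$. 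On morphisms $F$ is the identity on the underlying $\ast$-homomorphism.

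First I would establish the topological claim. The sets $\mathsf U^{\mathsf V}$ form a basis because $\mathsf W \in \mathsf U^{\mathsf V_1} \cap \mathsf U^{\mathsf V_2}$ means $\mathsf W \subset \mathsf V_1 \cap \mathsf V_2$, so $\mathsf W \in \mathsf U^{\mathsf V_1 \cap \mathsf V_2} \subset \mathsf U^{\mathsf V_1} \cap \mathsf U^{\mathsf V_2}$, using that $\mathsf V_1 \cap \mathsf V_2 \in \mathbb O(\mathsf X) = \mathsf Y$. Since $\mathsf V \in \mathsf U^{\mathsf V}$ and any basic open set $\mathsf U^{\mathsf W}$ containing $\mathsf V$ satisfies $\mathsf V \subset \mathsf W$, hence $\mathsf U^{\mathsf V} \subset \mathsf U^{\mathsf W}$, the set $\mathsf U^{\mathsf V}$ is the smallest open neighbourhood of $\mathsf V$; this also shows $\mathsf Y$ is a finite $T_0$-space. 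The partial order statement $\mathsf Y^\op = \mathbb O(\mathsf X)$ is then immediate from the Notation preceding the definition of $\mathsf YK$: the order on $\mathsf Y$ is $\mathsf V \leq \mathsf W \Leftrightarrow \mathsf U^{\mathsf W} \subset \mathsf U^{\mathsf V} \Leftrightarrow \mathsf W \subset \mathsf V$, which is reverse inclusion, so the opposite order is ordinary inclusion on $\mathbb O(\mathsf X)$.

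Next I would check that $F$ lands in continuous $\mathsf Y$-$C^\ast$-algebras and is an isomorphism of categories. Continuity of $F(\mathfrak A)$ breaks into lower and upper semicontinuity: lower semicontinuity of $F(\mathfrak A)$ follows from lower semicontinuity of $\mathfrak A$, since arbitrary infima of basic opens $\mathsf U^{\mathsf V}$ correspond to intersections in $\mathsf X$ and $\mathfrak A$ respects these; upper semicontinuity (indeed the identity $F(\mathfrak A)(\mathcal V) = \sum_{\mathsf V \in \mathcal V} \mathfrak A(\mathsf V)$) holds essentially by definition of the $\mathsf Y$-action as a supremum over the basic opens it contains, and since $\mathsf Y$ is finite monotone and finite upper semicontinuity coincide. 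I would note $F(\mathfrak A)(\emptyset) = \mathfrak A(\emptyset) = 0$ follows from the empty set being in $\mathbb O(\mathsf X)$ with $\mathfrak A(\emptyset)=0$, and $F(\mathfrak A)(\mathsf Y) = \mathfrak A(\mathsf X) = \mathfrak A$. That $F$ is an isomorphism of categories I would prove by exhibiting the inverse $G$ sending a continuous $\mathsf Y$-$C^\ast$-algebra $\mathfrak D$ to the $\mathsf X$-$C^\ast$-algebra $G(\mathfrak D)(\mathsf V) := \mathfrak D(\mathsf U^{\mathsf V})$, checking $\mathsf X$-equivariance is preserved in both directions (a $\ast$-homomorphism respecting the $\mathsf X$-action respects the $\mathsf Y$-action and conversely, because the actions are determined by their values on the respective distinguished families $\mathbb O(\mathsf X)$ and $\{\mathsf U^{\mathsf V}\}$), and verifying $FG = \mathrm{id}$ and $GF = \mathrm{id}$ on objects using $F(\mathfrak A)(\mathsf U^{\mathsf V}) = \mathfrak A(\mathsf V)$.

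Finally, for $\mathsf YK \circ F = \mathbb OK$, I would compare the two functors summand by summand: by definition $\mathsf YK(F(\mathfrak A)) = \bigoplus_{\mathsf V \in \mathsf Y} K_\ast(F(\mathfrak A)(\mathsf U^{\mathsf V})) = \bigoplus_{\mathsf V \in \mathbb O(\mathsf X)} K_\ast(\mathfrak A(\mathsf V)) = \mathbb OK(\mathfrak A)$, using $F(\mathfrak A)(\mathsf U^{\mathsf V}) = \mathfrak A(\mathsf V)$. The module structures match because $\mathsf Y^\op = \mathbb O(\mathsf X)$ identifies the generators $i^{\mathsf W}_{\mathsf V}$ of $\mathbb Z\mathsf Y^\op$ with those of $\mathbb Z\mathbb O(\mathsf X)$, and in both functors the generator acts on the corresponding summand by the $K$-theory map induced by the inclusion of ideals $\mathfrak A(\mathsf V) \hookrightarrow \mathfrak A(\mathsf W)$ for $\mathsf V \subset \mathsf W$ (and by zero elsewhere). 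I do not anticipate a serious obstacle here; the main care needed is bookkeeping in the topological/continuity verifications, in particular keeping straight the direction-reversing identification between the order on $\mathsf Y$ and inclusion on $\mathbb O(\mathsf X)$, which is the one place where a sign or direction error could slip in.
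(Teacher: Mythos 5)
Your topological claims, the identification $\mathsf Y^\op = \mathbb O(\mathsf X)$, and the computation $\mathsf YK\circ F = \mathbb OK$ are fine and essentially match the paper. The gap is in the construction of $F$ itself: you assert that the global formula $F(\mathfrak A)(\mathcal V) := \mathfrak A\bigl(\bigcup\{\mathsf V : \mathsf V \in \mathcal V\}\bigr)$ is ``equivalent'' to prescribing $F(\mathfrak A)(\mathsf U^{\mathsf V}) = \mathfrak A(\mathsf V)$ on the basis, and your verification of upper semicontinuity rests on the identity $F(\mathfrak A)(\mathcal V) = \sum_{\mathsf V\in\mathcal V}\mathfrak A(\mathsf V)$. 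These formulas agree only when $\mathfrak A$ is finitely upper semicontinuous, which is \emph{not} assumed in the domain category $\mathsf X$-$\mathscr C^\ast$-$lsc$; the whole point of the lemma (and of its later use, e.g.\ for the induced actions of Remark \ref{r:inducedact}, which typically fail upper semicontinuity) is to handle actions that are merely lower semicontinuous. Concretely, take $\mathsf X = \{1,2\}$ discrete and $\mathfrak A = C([0,1])$ with $\mathfrak A(\{1\}) = C_0([0,\tfrac12))$, $\mathfrak A(\{2\}) = C_0((\tfrac12,1])$, $\mathfrak A(\emptyset)=0$, $\mathfrak A(\mathsf X)=\mathfrak A$; this is lower semicontinuous but not upper semicontinuous. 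For $\mathcal V = \mathsf U^{\{1\}}\cup\mathsf U^{\{2\}}$ your formula gives $\mathfrak A(\{1\}\cup\{2\}) = C([0,1])$, whereas finite upper semicontinuity of a continuous $\mathsf Y$-$C^\ast$-algebra forces the value $F(\mathfrak A)(\mathsf U^{\{1\}})+F(\mathfrak A)(\mathsf U^{\{2\}}) = C_0([0,1]\setminus\{\tfrac12\})$. So with your primary definition $F(\mathfrak A)$ is not a continuous $\mathsf Y$-$C^\ast$-algebra, and $F$ does not even land in the target category. The source of the confusion is that in $\mathsf Y$ the union $\mathsf U^{\mathsf V_1}\cup\mathsf U^{\mathsf V_2}$ is in general \emph{not} the basic open $\mathsf U^{\mathsf V_1\cup\mathsf V_2}$ (the point $\mathsf V_1\cup\mathsf V_2\in\mathsf Y$ lies in the latter but not the former); this failure is exactly what makes the category isomorphism possible, and exactly what your conflation erases.

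The repair is to define $F(\mathfrak A)$ only on the basis, $F(\mathfrak A)(\mathsf U^{\mathsf V}) := \mathfrak A(\mathsf V)$, equivalently $F(\mathfrak A)(\mathcal V) := \sum_{\mathsf V\in\mathcal V}\mathfrak A(\mathsf V)$ on general opens, and then to \emph{prove} that this determines a continuous $\mathsf Y$-structure; this is where the paper does its real work. It verifies $\sum_{\mathsf V\in\mathsf Y} F(\mathfrak A)(\mathsf U^{\mathsf V}) = \mathfrak A(\mathsf X) = \mathfrak A$ and the compatibility condition
\[
F(\mathfrak A)(\mathsf U^{\mathsf V})\cap F(\mathfrak A)(\mathsf U^{\mathsf W}) \;=\; \mathfrak A(\mathsf V\cap\mathsf W) \;=\; \sum_{\mathsf Z\in \mathsf U^{\mathsf V\cap\mathsf W}} F(\mathfrak A)(\mathsf U^{\mathsf Z}),
\]
using lower semicontinuity of $\mathfrak A$ together with $\mathsf U^{\mathsf V}\cap\mathsf U^{\mathsf W} = \mathsf U^{\mathsf V\cap\mathsf W}$, and then invokes \cite[Lemma 2.35]{MeyerNest-bootstrap} to conclude that these prescribed values on the basis extend uniquely to a continuous $\mathsf Y$-$C^\ast$-algebra. (Alternatively, one can check directly that the sum formula is lower semicontinuous, using distributivity of the ideal lattice; but some such argument is required, and your proposal contains none for this definition.) With that fixed, your inverse $G(\mathfrak D)(\mathsf V) = \mathfrak D(\mathsf U^{\mathsf V})$ and the remaining verifications go through as in the paper.
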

\begin{proof}
 Suppose that $\mathsf U \in \mathbb O(\mathsf Y)$ is an open set containing $\mathsf V\in \mathsf Y$. There are $\mathsf V_1,\dots,\mathsf V_n\in \mathsf Y$ such that $\mathsf U = \bigcup_{k=1}^n \mathsf U^{\mathsf V_n}$.
 It follows that $\mathsf V \subset \mathsf V_k$ for some $k$, and thus $\mathsf U^\mathsf{V} \subset \mathsf U^{\mathsf V_k}$. Hence $\mathsf U^\mathsf{V} \subset \mathsf U$. 
 Since $\mathsf V \in \mathsf U^{\mathsf V}$ it follows that $\mathsf U^\mathsf{V}$ is the smallest open subset of $\mathsf Y$ containing $\mathsf V$.
 
 Let $\mathfrak A$ be a lower semicontinuous $\mathsf X$-$C^\ast$-algebra. We let $F(\mathfrak A)(\mathsf U^{\mathsf V}) = \mathfrak A(\mathsf V)$ for $\mathsf V\in \mathbb O(\mathsf X)$.
 Then we have
 \[
  \sum_{\mathsf V \in \mathsf Y} F(\mathfrak A)(\mathsf U^{\mathsf V}) = \sum_{\mathsf V \in \mathbb O(\mathsf X)} \mathfrak A(\mathsf V) = \mathfrak A(\mathsf X) = \mathfrak A,
 \]
 and
 \[
  F(\mathfrak A)(\mathsf U^{\mathsf V}) \cap F(\mathfrak A)(\mathsf U^{\mathsf W}) = \mathfrak A(\mathsf V) \cap \mathfrak A(\mathsf W) = \mathfrak A(\mathsf V \cap \mathsf W) = F(\mathfrak A)(\mathsf U^{\mathsf V \cap \mathsf W}) = 
  \sum_{\mathsf Z \in \mathsf U^{\mathsf V \cap \mathsf W}} F(\mathfrak A)(\mathsf U^{\mathsf Z}).
 \]
 for all $\mathsf V, \mathsf W \in \mathsf Y$. 
 Since we clearly have $\mathsf U^{\mathsf V} \cap \mathsf U^{\mathsf W} = \mathsf U^{\mathsf V \cap \mathsf W}$ it follows from \cite[Lemma 2.35]{MeyerNest-bootstrap} that $F(\mathfrak A)$ uniquely determines a continuous $\mathsf Y$-$C^\ast$-algebra structure
 by the above assignment. It is easily seen that this induces the desired functor. The functor is an isomorphism with inverse $G$ defined by $G(\mathfrak A)(\mathsf V) = \mathfrak A(\mathsf U^{\mathsf V})$.
 
 That $\mathsf Y^\op = \mathbb O(\mathsf X)$ as partially ordered sets, and that $\mathsf YK \circ F = \mathbb OK$, is obvious since $\mathsf U^\mathsf{V}$ is the smallest open set containing $\mathsf V$.
\end{proof}

\begin{remark}\label{r:KKlsc}
 As in the above lemma, it follows that the closed operator convex cones $CP(\mathsf X; \mathfrak A, \mathfrak B) = CP(\mathsf Y; F(\mathfrak A), F(\mathfrak B))$ are equal.
 Hence it follows from Remark \ref{r:KKgroups} that 
 \[
  KK(\mathsf X; \mathfrak A, \mathfrak B) = KK(\mathsf Y; F(\mathfrak A), F(\mathfrak B)).  
 \]

 This also shows, that if we construct the category $\mathsf X$-$\mathfrak{KK}$-$lsc$ with objects being separable, lower semicontinuous $\mathsf X$-$C^\ast$-algebras and the morphisms are the $KK(\mathsf X)$-elements, then this category is isomorphic
 to the category $\mathfrak{KK}(\mathsf Y)$. Thus we simply define the bootstrap class $\mathcal B(\mathsf X)_{lsc}$, to be the induced bootstrap class. 
 If $\mathfrak A$ is nuclear then $\mathfrak A$ is in $\mathcal B(\mathsf X)_{lsc}$ if and only if $\mathfrak A(\mathsf U)$ is in $\mathcal B$ for every $\mathsf U\in \mathbb O(\mathsf X)$.
 
 One can of course see, without using the above isomorphism of categories, that $\mathsf X$-$\mathfrak{KK}$-$lsc$ is triangulated, and that $\mathcal B(\mathsf X)_{lsc}$ is the localising subcategory generated by $i_{\mathsf U}(\mathbb C)$ for all
 $\mathsf U\in \mathbb O(\mathsf X)$ (see Section \ref{s:approx} for the notation). 
\end{remark}
 
\begin{corollary}\label{c:lscUCT}
 Let $\mathsf X$ be a finite space and $\mathfrak A$ and $\mathfrak B$ be separable, lower semicontinuous $\mathsf X$-$C^\ast$-algebras. 
 Assume that $\mathfrak A$ has strongly vanishing boundary maps and belongs to $\mathcal B(\mathsf X)_{lsc}$. Then there is a natural short exact sequence
 \[
  \Ext_{\Mod(\mathbb Z\mathbb O(\mathsf X))}^1(\mathbb OK(\mathfrak A), \mathbb OK(\mathfrak B)) \hookrightarrow KK^1(\mathsf X; \mathfrak A, \mathfrak B) \twoheadrightarrow 
  \Hom_{\Mod(\mathbb Z\mathbb O(\mathsf X))}(\mathbb OK(\mathfrak A), \mathbb OK(\mathfrak B)[1]).
 \]
\end{corollary}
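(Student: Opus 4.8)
The plan is to reduce the statement to Bentmann's Theorem \ref{t:BentmannUCT} via the isomorphism of categories established in Lemma \ref{l:lsctocont}. First, using Remark \ref{r:soberspace} I would replace $\mathsf X$ by the associated sober space, which is again finite $T_0$ since $\mathbb O(\mathsf X)$ is finite; this changes neither the lattice $\mathbb O(\mathsf X)$, nor the $\mathsf X$-$C^\ast$-algebra structures on $\mathfrak A$ and $\mathfrak B$, nor the group $KK^1(\mathsf X;-,-)$ (Remark \ref{r:KKgroups}), nor the functor $\mathbb OK$ and the module category $\Mod(\mathbb Z \mathbb O(\mathsf X))$, nor the strongly vanishing boundary map condition, since all of these depend only on $\mathbb O(\mathsf X)$. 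Thus I may assume $\mathsf X$ is a finite $T_0$-space and set $\mathsf Y = \mathbb O(\mathsf X)$ with the topology of Lemma \ref{l:lsctocont}, together with the isomorphism of categories $F \colon \mathsf X\textrm{-}\mathscr C^\ast\textrm{-}lsc \to \mathsf Y\textrm{-}\mathscr C^\ast\textrm{-}cont$.

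Next I would assemble the identifications needed to match the two short exact sequences term by term. By Remark \ref{r:KKlsc} we have $KK^1(\mathsf X; \mathfrak A, \mathfrak B) = KK^1(\mathsf Y; F(\mathfrak A), F(\mathfrak B))$, and $\mathfrak A$ lies in $\mathcal B(\mathsf X)_{lsc}$ if and only if $F(\mathfrak A)$ lies in $\mathcal B(\mathsf Y)$ — indeed $\mathcal B(\mathsf X)_{lsc}$ is \emph{defined} as the bootstrap class transported along $F$. By Lemma \ref{l:lsctocont} we have $\mathsf Y^\op = \mathbb O(\mathsf X)$ as partially ordered sets, so $\Mod(\mathbb Z \mathsf Y^\op) = \Mod(\mathbb Z \mathbb O(\mathsf X))$, and $\mathsf Y K \circ F = \mathbb OK$, so $\mathsf YK(F(\mathfrak A)) = \mathbb OK(\mathfrak A)$ and $\mathsf YK(F(\mathfrak B)) = \mathbb OK(\mathfrak B)$. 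Consequently the $\Ext$- and $\Hom$-terms of Bentmann's sequence for $F(\mathfrak A), F(\mathfrak B)$ coincide on the nose with those appearing in the statement.

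The one hypothesis that genuinely requires an argument — and the step I expect to be the crux — is that $\mathfrak A$ having \emph{strongly} vanishing boundary maps as an $\mathsf X$-$C^\ast$-algebra is equivalent to $F(\mathfrak A)$ having vanishing boundary maps as a continuous $\mathsf Y$-$C^\ast$-algebra. Here I would use the description of the topology on $\mathsf Y$: every open subset of $\mathsf Y$ is a union $\bigcup_{k=1}^n \mathsf U^{\mathsf V_k}$ of the basic open sets $\mathsf U^{\mathsf V} = \{\mathsf W \in \mathsf Y : \mathsf W \subseteq \mathsf V\}$, and since $F(\mathfrak A)$ is a continuous (in particular finitely upper semicontinuous) $\mathsf Y$-$C^\ast$-algebra with $F(\mathfrak A)(\mathsf U^{\mathsf V}) = \mathfrak A(\mathsf V)$, one gets
\[
 F(\mathfrak A)\Big( \bigcup_{k=1}^n \mathsf U^{\mathsf V_k}\Big) = \sum_{k=1}^n F(\mathfrak A)(\mathsf U^{\mathsf V_k}) = \sum_{k=1}^n \mathfrak A(\mathsf V_k),
\]
while $F(\mathfrak A)(\mathsf Y) = \mathfrak A(\mathsf X) = \mathfrak A$. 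Thus the family of maps $K_\ast(F(\mathfrak A)(\mathsf W)) \to K_\ast(F(\mathfrak A))$, ranging over all open $\mathsf W \subseteq \mathsf Y$, is precisely the family $K_\ast(\sum_{k=1}^n \mathfrak A(\mathsf V_k)) \to K_\ast(\mathfrak A)$ ranging over all finite families $\mathsf V_1,\dots,\mathsf V_n \in \mathbb O(\mathsf X)$; injectivity of the former for all $\mathsf W$ is by definition injectivity of the latter for all finite families, i.e.~the strongly vanishing boundary maps condition. This is exactly why the ordinary vanishing condition must be strengthened when passing from the lower semicontinuous picture to the continuous one.

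Having matched all hypotheses and all three terms, I would then invoke Bentmann's Theorem \ref{t:BentmannUCT} for the separable, continuous $\mathsf Y$-$C^\ast$-algebras $F(\mathfrak A)$ and $F(\mathfrak B)$, and transport the resulting natural short exact sequence back along the identifications above to obtain the asserted sequence for $\mathfrak A$ and $\mathfrak B$. Naturality in both variables is inherited from the naturality in Theorem \ref{t:BentmannUCT} together with the functoriality of $F$, the equality $\mathbb OK = \mathsf YK \circ F$, and the identification $KK^1(\mathsf X;-,-) = KK^1(\mathsf Y; F(-), F(-))$.
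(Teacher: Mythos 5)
Your proposal is correct and follows essentially the same route as the paper: reduce to the $T_0$ case, pass to the continuous $\mathsf Y$-$C^\ast$-algebra $F(\mathfrak A)$ via Lemma \ref{l:lsctocont}, verify that strongly vanishing boundary maps of $\mathfrak A$ yield vanishing boundary maps of $F(\mathfrak A)$ by computing $F(\mathfrak A)$ on unions of basic open sets, and then invoke Theorem \ref{t:BentmannUCT} together with Remark \ref{r:KKlsc}. Your treatment is in fact slightly more careful than the paper's on two points the paper leaves implicit: the justification of the $T_0$ reduction (finiteness of the sobrification and invariance of all data under $\mathbb O(\mathsf X) \cong \mathbb O(\hat{\mathsf X})$) and the explicit matching of the $\Ext$ and $\Hom$ terms.
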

\begin{proof}
We may assume without loss of generality that $\mathsf X$ is a $T_0$-space. 
Let $F$ be as in Lemma \ref{l:lsctocont}. Then $F(\mathfrak A)$ is in $\mathcal B(\mathsf Y)$ by definition. 
Note that if $\mathsf U \in \mathbb O(\mathsf Y) \subset 2^{\mathbb O(\mathsf X)}$, then
\[
 F(\mathfrak A)(\mathsf U) = \sum_{\mathsf V \in \mathsf U} F(\mathfrak A)(\mathsf U^\mathsf{V}) = \sum_{\mathsf V \in \mathsf U}\mathfrak A(\mathsf V).
\]
Thus, since $\mathfrak A$ has strongly vanishing boundary maps, it follows that $F(\mathfrak A)$ has vanishing boundary maps.
The above UCT now follows by combining Theorem \ref{t:BentmannUCT}, Lemma \ref{l:lsctocont} and Remark \ref{r:KKlsc}.
\end{proof}

If $\mathfrak e: 0 \to \mathfrak B \to \mathfrak E \to \mathfrak A \to 0$ is an $\mathsf X$-equivariant extension of lower semicontinuous $\mathsf X$-$C^\ast$-algebras, then $\mathbb OK$ induces a cyclic exact sequence
\[
\xymatrix{
 & \mathbb OK(\mathfrak E) \ar[dr]&  \\
\mathbb OK(\mathfrak B) \ar[ur] && \mathbb OK(\mathfrak A) \ar[ll]|{\circ} 
}
\]
where the $\circ$ in one arrow indicates that there is a degree shift of the $\mathbb Z/2$-grading. We denote this three-term exact sequence by $\mathbb OK_\Delta(\mathfrak e)$. 
For two such extensions $\mathfrak e_1$ and $\mathfrak e_2$ of $\mathfrak A$ by $\mathfrak B$ then we say that $\mathbb OK_\Delta(\mathfrak e_1)$ and $\mathbb OK_\Delta(\mathfrak e_2)$ are \emph{congruent}, 
written $\mathbb OK_\Delta(\mathfrak e_1) \equiv \mathbb OK_\Delta(\mathfrak e_2)$, if there is a homomorphism $\eta\colon \mathbb OK(\mathfrak E_1) \to \mathbb OK(\mathfrak E_2)$ such that
\[
\xymatrix{
\mathbb OK(\mathfrak B) \ar[r] \ar@{=}[d] & \mathbb OK(\mathfrak E_1) \ar[r] \ar[d]^\eta & \mathbb OK(\mathfrak A) \ar@{=}[d] \\
\mathbb OK(\mathfrak B) \ar[r] & \mathbb OK(\mathfrak E_2) \ar[r] & \mathbb OK(\mathfrak A)
}
\] 
commutes. Note that any $\eta$ making the diagram commute is an isomorphism by the five lemma.

\begin{proposition}\label{p:KK1class}
Let $\mathsf X$ be a finite space and let $\mathfrak A$ and $\mathfrak B$ be separable, lower semicontinuous $\mathsf X$-$C^\ast$-algebras.
Suppose that $\mathfrak A$ is an AF algebra, and that $\mathfrak e_i : 0 \to \mathfrak B \to \mathfrak E_i \to \mathfrak A \to 0$
for $i=1,2$ are $\mathsf X$-equivariant extensions. Suppose that $\mathfrak E_1$ and $\mathfrak E_2$ have real rank zero.
Then $\mathfrak e_i$ are semisplit, and $[\mathfrak e_1]=[\mathfrak e_2]$ in $KK^1(\mathsf X; \mathfrak A, \mathfrak B)$ if and only if $\mathbb OK_\Delta(\mathfrak e_1) \equiv \mathbb OK_\Delta(\mathfrak e_2)$.

Moreover, $\mathbb OK_\Delta(\mathfrak e_i)$ collapses to a short exact sequence
\[
0 \to \mathbb OK(\mathfrak B) \to \mathbb OK(\mathfrak E_i) \to \mathbb OK(\mathfrak A) \to 0
\]
for $i=1,2$.
\end{proposition}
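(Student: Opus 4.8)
The plan is to combine the lifting results of Section~\ref{s:approxsection} with the universal coefficient sequence of Corollary~\ref{c:lscUCT}, the whole argument being controlled by the vanishing of the $K$-theoretic boundary maps forced by real rank zero. First I would record the structural facts. Since $\mathfrak A$ is AF it is separable and nuclear, every ideal $\mathfrak A(\mathsf U)$ is again AF, so $K_1(\mathfrak A(\mathsf U)) = 0$ and the inclusion of any ideal induces an injection on $K_0$ (the quotient being AF has vanishing $K_1$, so the six-term sequence degenerates). As any sum $\sum_k \mathfrak A(\mathsf V_k)$ is again an AF ideal, $\mathfrak A$ has strongly vanishing boundary maps; and since each $\mathfrak A(\mathsf U)$ is AF, hence in $\mathcal B$, we have $\mathfrak A \in \mathcal B(\mathsf X)_{lsc}$ by Remark~\ref{r:KKlsc}. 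Thus Corollary~\ref{c:lscUCT} applies and yields a natural short exact sequence
\[
0 \to \Ext^1_{\Mod(\mathbb Z\mathbb O(\mathsf X))}(\mathbb OK(\mathfrak A), \mathbb OK(\mathfrak B)) \xrightarrow{\kappa} KK^1(\mathsf X; \mathfrak A, \mathfrak B) \xrightarrow{\gamma} \Hom_{\Mod(\mathbb Z\mathbb O(\mathsf X))}(\mathbb OK(\mathfrak A), \mathbb OK(\mathfrak B)[1]) \to 0.
\]
Semisplitting is immediate: $\mathfrak A$ is separable and nuclear, so by Corollary~\ref{c:liftingcor}(1) each $\mathsf X$-equivariant extension $\mathfrak e_i$ admits an $\mathsf X$-equivariant contractive c.p.\ splitting, whence $[\mathfrak e_i] \in KK^1(\mathsf X;\mathfrak A,\mathfrak B)$ is defined.

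The heart of the matter is the real rank zero input. Fix $\mathsf U \in \mathbb O(\mathsf X)$. By $\mathsf X$-equivariance of $\mathfrak e_i$ there is a sub-extension $0 \to \mathfrak B(\mathsf U) \to \mathfrak E_i(\mathsf U) \to \mathfrak A(\mathsf U) \to 0$, and $\mathfrak E_i(\mathsf U)$, being an ideal of the real rank zero algebra $\mathfrak E_i$, again has real rank zero. I would then invoke the standard fact (Zhang) that projections lift along quotients of real rank zero algebras: applying this to the amplified extensions $0 \to M_n(\mathfrak B(\mathsf U)) \to M_n(\mathfrak E_i(\mathsf U)) \to M_n(\mathfrak A(\mathsf U)) \to 0$ (whose middle term still has real rank zero) shows that every projection over $\mathfrak A(\mathsf U)$ lifts to a projection over $\mathfrak E_i(\mathsf U)$. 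Since $K_0(\mathfrak A(\mathsf U))$ is generated by such projections, the exponential map $K_0(\mathfrak A(\mathsf U)) \to K_1(\mathfrak B(\mathsf U))$ vanishes; the index map $K_1(\mathfrak A(\mathsf U)) \to K_0(\mathfrak B(\mathsf U))$ vanishes as well since $K_1(\mathfrak A(\mathsf U)) = 0$. Hence the six-term sequence of the sub-extension collapses to $0 \to K_0(\mathfrak B(\mathsf U)) \to K_0(\mathfrak E_i(\mathsf U)) \to K_0(\mathfrak A(\mathsf U)) \to 0$ together with $0 \to K_1(\mathfrak B(\mathsf U)) \to K_1(\mathfrak E_i(\mathsf U)) \to 0$. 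Taking the direct sum over all $\mathsf U \in \mathbb O(\mathsf X)$, and recalling that $\mathbb OK_\Delta(\mathfrak e_i)$ is assembled precisely from these sequences, proves the ``moreover'' statement, i.e.\ that $\mathbb OK_\Delta(\mathfrak e_i)$ collapses to $0 \to \mathbb OK(\mathfrak B) \to \mathbb OK(\mathfrak E_i) \to \mathbb OK(\mathfrak A) \to 0$.

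For the equivalence I would note that $\gamma([\mathfrak e_i])$ is, by the standard description of the UCT map, exactly the family of boundary maps of the filtered extension, which we have just shown to be zero; hence $[\mathfrak e_i] \in \operatorname{im}\kappa$. Since $\kappa$ is injective, $[\mathfrak e_1] = [\mathfrak e_2]$ holds if and only if the $\Ext^1$-classes $\kappa^{-1}[\mathfrak e_1]$ and $\kappa^{-1}[\mathfrak e_2]$ coincide. By naturality of the universal coefficient sequence and the description of $\kappa$ as the inverse of the map sending a $KK^1$-class with vanishing boundary to the Yoneda class of its collapsed $K$-theory sequence, $\kappa^{-1}[\mathfrak e_i]$ is represented by the short exact sequence $\mathbb OK_\Delta(\mathfrak e_i)$ produced in the previous paragraph. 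Finally, two short exact sequences of $\mathbb Z\mathbb O(\mathsf X)$-modules with identical end terms represent the same class in $\Ext^1$ if and only if they are congruent in the sense defined before the statement (any comparison map being an isomorphism by the five lemma), so $[\mathfrak e_1] = [\mathfrak e_2]$ if and only if $\mathbb OK_\Delta(\mathfrak e_1) \equiv \mathbb OK_\Delta(\mathfrak e_2)$.

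The step I expect to be the main obstacle is the precise identification of $\kappa^{-1}[\mathfrak e_i]$ with the Yoneda class of $\mathbb OK_\Delta(\mathfrak e_i)$: it requires unwinding how the $\Ext^1$-term of the UCT of Corollary~\ref{c:lscUCT} is constructed (through a projective resolution in $\mathsf X$-$\mathfrak{KK}$-$lsc$) and checking that the resolution's connecting map agrees with the homological boundary. This is routine in the classical setting, where it underlies Rørdam's classification of real rank zero extensions, and should transfer along the category isomorphism of Lemma~\ref{l:lsctocont}, but it is the point where care is needed.
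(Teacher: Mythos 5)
Your proof is correct and follows essentially the same route as the paper's: semisplitting via the equivariant Choi--Effros type lifting, the UCT of Corollary \ref{c:lscUCT} (available since $\mathfrak A$ is AF, hence in $\mathcal B(\mathsf X)_{lsc}$ with strongly vanishing boundary maps), vanishing of the induced maps $K_0(\mathfrak A(\mathsf U)) \to K_1(\mathfrak B(\mathsf U))$ and $K_1(\mathfrak A(\mathsf U)) \to K_0(\mathfrak B(\mathsf U))$ from real rank zero and $K_1(\mathfrak A(\mathsf U)) = 0$, and then injectivity of the $\Ext^1$-term to reduce equality of the $KK^1(\mathsf X)$-classes to congruence of the collapsed sequences. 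The only differences are expository: you spell out the projection-lifting argument (Brown--Pedersen/Zhang) behind the vanishing of the exponential maps, and you explicitly flag the identification of the $\Ext^1$-preimage of $[\mathfrak e_i]$ with the Yoneda class of $\mathbb OK_\Delta(\mathfrak e_i)$ as the delicate point --- both of which the paper simply asserts.
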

\begin{proof}
The extensions are semisplit by Corollary \ref{c:choieffros}. Clearly $\mathbb OK_\Delta(\mathfrak e_1) \equiv \mathbb OK_\Delta(\mathfrak e_2)$ if $[\mathfrak e_1]=[\mathfrak e_2]$. 
Since $\mathfrak A$ is an AF algebra it easily follows that $\mathfrak A$ is in $\mathcal B(\mathsf X)_{lsc}$ and has strongly vanishing boundary maps. Thus by Corollary \ref{c:lscUCT} there is a short exact UCT sequence
\[
\Ext_{\Mod(\mathbb Z\mathbb O(\mathsf X))}^1(\mathbb O K(\mathfrak A), \mathbb O K(\mathfrak B)) \hookrightarrow KK^1(\mathsf X; \mathfrak A, \mathfrak B) \twoheadrightarrow 
\Hom_{\Mod(\mathbb Z\mathbb O(\mathsf X))}(\mathbb OK(\mathfrak A), \mathbb OK(\mathfrak B)[1]).
\]

Since $\mathfrak E_i$ has real rank zero it follows that the induced maps $K_0(\mathfrak A(\mathsf U)) \to K_1(\mathfrak B(\mathsf U))$ vanish for every $\mathsf U\in \mathbb O(\mathsf X)$. 
Moreover, since $K_1(\mathfrak A(\mathsf U))=0$ for every $\mathsf U$ it thus follows that $[\mathfrak e_i]\in KK^1(\mathsf X; \mathfrak A, \mathfrak B)$ induces the zero homomorphism in 
$\Hom_{\Mod(\mathbb Z\mathbb O(\mathsf X))}(\mathbb OK(\mathfrak A), \mathbb OK(\mathfrak B)[1])$ for $i=1,2$. It follows that $\mathbb OK_\Delta(\mathfrak e_i)$ collapses to a short exact sequence
\[
0 \to \mathbb OK(\mathfrak B) \to \mathbb OK(\mathfrak E_i) \to \mathbb OK(\mathfrak A) \to 0
\]
for $i=1,2$. 
By the above UCT $[\mathfrak e_i] \in KK^1(\mathsf X; \mathfrak A, \mathfrak B)$ is uniquely determined by the induced element in $\Ext_{\Mod(\mathbb Z \mathbb O(\mathsf X))}^1(\mathbb OK(\mathfrak A), \mathbb OK(\mathfrak B))$ 
which is exactly $\mathbb OK_\Delta(\mathfrak e_i)$.
Thus $[\mathfrak e_1]= [\mathfrak e_2]$ in $KK^1(\mathsf X)$ if $\mathbb OK_\Delta(\mathfrak e_1) \equiv \mathbb OK_\Delta(\mathfrak e_2)$.
\end{proof}

\begin{definition}
Let $F\colon \mathcal A \to \mathcal B$ and $G \colon \mathcal A \to \mathcal C$ be functors.
We say that $F$ is a \emph{finer classification invariant} than $G$ if whenever $\phi,\psi\colon A \to B$ in $\mathcal A$ are isomorphisms such that $F(\phi) = F(\psi)$ then $G(\phi) = G(\psi)$.
\end{definition}

\begin{example}
Let $F\colon \mathcal A \to \mathcal B$ and $H \colon \mathcal B \to \mathcal C$ be functors. Then $F$ is a finer classification invariant than $G := H \circ F$.
\end{example}

Recall, that an invariant $F$ \emph{strongly classifies} the objects $\mathfrak A$ and $\mathfrak B$, if any isomorphism $F(\mathfrak A) \to F(\mathfrak B)$ lifts to an isomorphism $\mathfrak A \to \mathfrak B$.
The following is our main application. In spirit it says that in many cases, if we can separate the finite and infinite part of a $C^\ast$-algebra such that the finite part is the quotient, and both ideal and quotient are strongly classified by $K$-theory, 
then the $C^\ast$-algebra is classified by $K$-theory.

\begin{theorem}\label{t:mainapp}
Let $\mathfrak A_i$ be separable, nuclear, stable $C^\ast$-algebras with real rank zero for $i=1,2$. Suppose that $\mathfrak J_i$ is a closed two-sided ideal in $\mathfrak A_i$ such that
\begin{itemize}
\item $\mathsf X := \Prim \mathfrak J_1 \cong \Prim \mathfrak J_2$ is finite. Equip $\mathfrak J_i$, $\mathfrak A_i$ and $\mathfrak A_i/\mathfrak J_i$ with the induced $\mathsf X$-$C^\ast$-algebra structure as in Remark \ref{r:inducedact}.
\item $\mathfrak A_i/\mathfrak J_i$ are AF algebras.
\item $\mathfrak J_i$ has a tight corona algebra (cf.~Theorem \ref{t:WvN}).
\item There is an invariant $F$ which strongly classifies $\mathfrak J_1$ and $\mathfrak J_2$, and which is a finer classification invariant than $\mathbb OK$.
\end{itemize}
If there exist isomorphisms
\[
\xymatrix{
F(\mathfrak J_1) \ar[r] \ar[d]^\cong & \mathbb OK(\mathfrak A_1) \ar[r] \ar[d]^\cong & \mathbb OK^+(\mathfrak A_1/\mathfrak J_1) \ar[d]^\cong \\
F(\mathfrak J_2) \ar[r] & \mathbb OK(\mathfrak A_2) \ar[r] & \mathbb OK^+(\mathfrak A_2/\mathfrak J_2)
}
\]
such that the diagram commutes, then $\mathfrak A_1 \cong \mathfrak A_2$.
\end{theorem}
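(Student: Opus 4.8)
The plan is to realise each $\mathfrak A_i$ as the total space of an $\mathsf X$-equivariant extension, lift the two outer isomorphisms of the given diagram to honest $\ast$-isomorphisms of the ideals and of the quotients, transport one extension onto the other, and then conclude by matching the resulting $KK^1(\mathsf X)$-classes through Proposition \ref{p:KK1class} and invoking the extension classification of Theorem \ref{t:classtightcorona}. First I would record the extensions $\mathfrak e_i : 0 \to \mathfrak J_i \to \mathfrak A_i \xrightarrow{p_i} \mathfrak A_i/\mathfrak J_i \to 0$ and check that all structural hypotheses are in place. By Remark \ref{r:inducedact} and Remark \ref{r:niceinducedact} the induced actions make $\mathfrak J_i$ tight, $\mathfrak A_i$ lower semicontinuous and $\mathfrak A_i/\mathfrak J_i$ finitely lower semicontinuous; as $\mathsf X$ is finite the latter is in fact lower semicontinuous, and by Proposition \ref{p:basicXext} (verifying $\mathfrak J_i(\mathsf U) = \mathfrak J_i \cap \mathfrak A_i(\mathsf U)$ and $(\mathfrak A_i/\mathfrak J_i)(\mathsf U) = p_i(\mathfrak A_i(\mathsf U))$, both immediate from the definitions) the $\mathfrak e_i$ are $\mathsf X$-equivariant. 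Since $\mathfrak A_i$ is stable, $\mathfrak J_i$ is an ideal of a stable algebra, hence stable, separable (so all its ideals are $\sigma$-unital) and possessing only finitely many ideals because $\mathbb O(\mathsf X)$ is finite; every proper quotient of the stable algebra $\mathfrak A_i$ is again stable and thus non-unital, so $\mathfrak e_i$ is strongly non-unital. Finally $\mathfrak A_i/\mathfrak J_i$ is AF, hence separable and nuclear.

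Next I would lift the outer columns. Because $F$ strongly classifies $\mathfrak J_1$ and $\mathfrak J_2$, the isomorphism $F(\mathfrak J_1) \cong F(\mathfrak J_2)$ lifts to an $\mathsf X$-equivariant $\ast$-isomorphism $\alpha : \mathfrak J_1 \to \mathfrak J_2$, and since $F$ is a finer invariant than $\mathbb OK$, the map $\mathbb OK(\alpha)$ coincides with the isomorphism $\mathbb OK(\mathfrak J_1) \cong \mathbb OK(\mathfrak J_2)$ underlying the left vertical arrow. Similarly, the AF algebras $\mathfrak A_i/\mathfrak J_i$ are classified by ordered filtered $K$-theory, so the isomorphism $\mathbb OK^+(\mathfrak A_1/\mathfrak J_1) \cong \mathbb OK^+(\mathfrak A_2/\mathfrak J_2)$ lifts to an $\mathsf X$-equivariant $\ast$-isomorphism $\beta : \mathfrak A_1/\mathfrak J_1 \to \mathfrak A_2/\mathfrak J_2$ with $\mathbb OK(\beta)$ the map underlying the right vertical arrow. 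Using $\alpha$ and $\beta$ I would transport $\mathfrak e_2$ to an $\mathsf X$-equivariant extension $\mathfrak e_2'$ of $\mathfrak A_1/\mathfrak J_1$ by $\mathfrak J_1$, whose Busby map is obtained from that of $\mathfrak e_2$ by precomposing with $\beta$ and postcomposing with the inverse of the corona isomorphism $\corona{\mathfrak J_1} \to \corona{\mathfrak J_2}$ induced by $\alpha$. Its total space is isomorphic to $\mathfrak A_2$, hence has real rank zero, and since $\alpha,\beta$ are $\mathsf X$-equivariant the extension $\mathfrak e_2'$ induces on $\mathfrak A_1/\mathfrak J_1$ exactly the same $\mathsf X = \Prim \mathfrak J_1$ action as $\mathfrak e_1$.

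It then remains to show $[\mathfrak e_1] = [\mathfrak e_2']$ in $KK^1(\mathsf X; \mathfrak A_1/\mathfrak J_1, \mathfrak J_1)$. Because $\mathfrak A_1/\mathfrak J_1$ is AF and both $\mathfrak A_1$ and the total space of $\mathfrak e_2'$ have real rank zero, Proposition \ref{p:KK1class} reduces this equality to the congruence $\mathbb OK_\Delta(\mathfrak e_1) \equiv \mathbb OK_\Delta(\mathfrak e_2')$, that is, to a commuting ladder between the two collapsed short exact sequences $0 \to \mathbb OK(\mathfrak J_1) \to \mathbb OK(\mathfrak A_1) \to \mathbb OK(\mathfrak A_1/\mathfrak J_1) \to 0$ and the corresponding sequence for $\mathfrak e_2'$ (with middle term $\mathbb OK(\mathfrak A_2)$, its outer maps twisted by $\mathbb OK(\alpha)$ and $\mathbb OK(\beta)$), fixing the outer terms. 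The middle vertical arrow $\mathbb OK(\mathfrak A_1) \cong \mathbb OK(\mathfrak A_2)$ of the hypothesised diagram furnishes exactly this ladder: commutativity of the left square says it restricts on ideals to $\mathbb OK(\alpha)$, and commutativity of the right square (after forgetting the order) says it descends on quotients to $\mathbb OK(\beta)$. Hence $[\mathfrak e_1] = [\mathfrak e_2']$. The hypotheses assembled in the first paragraph are precisely those of Theorem \ref{t:classtightcorona}—the tight corona assumption guaranteeing, via Theorem \ref{t:WvN}, that every $\mathfrak J_1(\mathsf U)$ has the corona factorisation property—so the implication $(ii)\Rightarrow(i)$ of that theorem gives an equivalence of extensions $\mathfrak e_1 \sim \mathfrak e_2'$. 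Their total spaces therefore agree, yielding $\mathfrak A_1 \cong \mathfrak A_2$.

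The main obstacle I anticipate is bookkeeping rather than a single hard estimate: one must ensure that the lifts $\alpha$ and $\beta$ can be chosen $\mathsf X$-equivariant and inducing the prescribed arrows on $\mathbb OK$, so that the natural comparison $F \Rightarrow \mathbb OK$ and the order-forgetting map $\mathbb OK^+ \Rightarrow \mathbb OK$ are compatible with the given diagram, and that the transport of $\mathfrak e_2$ leaves the induced $\mathsf X$-action on the quotient unchanged. The conceptual core is the reduction, afforded by Proposition \ref{p:KK1class}, of the $KK^1(\mathsf X)$-class of a real-rank-zero extension of an AF algebra to the single $\Ext$-class recorded by $\mathbb OK_\Delta$; once this is in hand, the prescribed commuting diagram is exactly the datum identifying the two extension classes, and Theorem \ref{t:classtightcorona} closes the argument.
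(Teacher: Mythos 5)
Your proposal is correct and follows essentially the same route as the paper's proof: lift the two outer isomorphisms to $\mathsf X$-equivariant $\ast$-isomorphisms (strong classification by $F$ for the ideals, Elliott's theorem for the AF quotients), transport one extension so that both have a common ideal and common quotient, use the middle isomorphism together with the "finer invariant" hypotheses to obtain a congruence of $\mathbb OK_\Delta$, invoke Proposition \ref{p:KK1class} to conclude equality of the classes in $KK^1(\mathsf X)$, and finish with Theorem \ref{t:classtightcorona}. The only cosmetic difference is that the paper pushes the $\mathfrak A_1$-extension forward along the ideal isomorphism and pulls the $\mathfrak A_2$-extension back along the quotient isomorphism (so both become extensions by $\mathfrak J_2$), whereas you transport $\mathfrak e_2$ entirely onto $(\mathfrak A_1/\mathfrak J_1,\mathfrak J_1)$.
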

\begin{proof}
Fix an isomorphism $\phi \colon \mathfrak J_1 \to \mathfrak J_2$ of $\mathsf X$-$C^\ast$-algebras, 
which lifts $F(\mathfrak J_1) \xrightarrow{\cong} F(\mathfrak J_2)$. Note that this isomorphism also induces the given homeomorphism $\Prim \mathfrak J_1 \cong \Prim \mathfrak J_2$.
By Elliott's classification of AF algebras \cite{Elliott-AFclass} we may also lift $\mathbb OK^+(\mathfrak A_1/\mathfrak J_1) \xrightarrow{\cong} \mathbb OK^+(\mathfrak A_2/\mathfrak J_2)$ to an isomorphism
$\psi \colon \mathfrak A_1/\mathfrak J_1 \to \mathfrak A_2/ \mathfrak J_2$ of $\mathsf X$-$C^\ast$-algebras. Construct the following push-out/pull-back diagram
\[
 \xymatrix{
 \quad 0 \ar[r] & \mathfrak J_1 \ar[r] \ar[d]^{\phi}_\cong & \mathfrak A_1 \ar[r] \ar[d]^{\tilde \phi}_\cong & \mathfrak A_1/\mathfrak J_1 \ar@{=}[d] \ar[r] & 0 \\
 \mathfrak e_1: 0 \ar[r] & \mathfrak J_2 \ar[r] \ar@{=}[d] & \mathfrak E_1 \ar[r] & \mathfrak A_1/\mathfrak J_1 \ar@{=}[d] \ar[r] & 0 \\
 \mathfrak e_2: 0 \ar[r] & \mathfrak J_2 \ar[r] \ar@{=}[d] & \mathfrak E_2 \ar[r] \ar[d]^{\tilde \psi}_\cong & \mathfrak A_1/\mathfrak J_1 \ar[d]^{\psi}_\cong \ar[r] & 0 \\
 \quad 0 \ar[r] & \mathfrak J_2 \ar[r] & \mathfrak A_2 \ar[r] & \mathfrak A_2/\mathfrak J_2 \ar[r] & 0
 }
\]
for which all the rows are short exact sequences. Let $\eta \colon \mathbb OK(\mathfrak A_1) \xrightarrow{\cong} \mathbb OK(\mathfrak A_2)$ be the given isomorphism.
Since $F$ and $\mathbb OK^+$ are finer classification invariants than $\mathbb OK$, it follows that the following diagram commutes
\[
 \xymatrix{
 \mathbb OK(\mathfrak J_1) \ar[r] \ar@{=}[d] & \mathbb OK(\mathfrak E_1) \ar[r] \ar[d]^{f} & \mathbb OK(\mathfrak A_1/\mathfrak J_1) \ar@{=}[d] \\
 \mathbb OK(\mathfrak J_2) \ar[r] & \mathbb OK(\mathfrak E_2) \ar[r] & \mathbb OK(\mathfrak A_1/\mathfrak J_1).
 }
\]
where $f = \mathbb OK(\tilde \psi) \circ \eta \circ \mathbb OK(\tilde \phi^{-1})$. Hence $\mathbb OK_\Delta(\mathfrak e_1) \equiv \mathbb OK_\Delta(\mathfrak e_2)$.
By Proposition \ref{p:KK1class} it follows that $[\mathfrak e_1] = [\mathfrak e_2]$ in $KK^1(\mathsf X; \mathfrak A_1/\mathfrak J_1, \mathfrak J_2)$.

Using that $\phi$ induces the given homeomorphism $\mathsf X=\Prim \mathfrak J_1 \cong \Prim \mathfrak J_2$, it is easily seen that the actions of $\Prim \mathfrak J_2$ on $\mathfrak A_1/\mathfrak J_1$ 
induced by $\mathfrak e_1$ and $\mathfrak e_2$, are both equal to the action of $\mathsf X$ on $\mathfrak A_1/\mathfrak J_1$ considered earlier composed with the map $\mathbb O(\Prim \mathfrak J_2)\to \mathbb O(\mathsf X)$ induced by $\phi^{-1}$.
Since $\mathfrak J_2$ has a tight corona algebra, it follows from Theorem \ref{t:classtightcorona} that $\mathfrak e_1$ and $\mathfrak e_2$ are equivalent extensions. Thus $\mathfrak A_1 \cong \mathfrak E_1 \cong \mathfrak E_2 \cong \mathfrak A_2$.
\end{proof}


\subsection{Applications to graph $C^\ast$-algebras}

We will end with an application to graph $C^\ast$-algebras. This application says that the class of graph $C^\ast$-algebras which contain a purely infinite, two-sided, closed ideal with a finite primitive ideal space, 
such that the quotient is AF, is classified by a $K$-theoretic invariant. 
Moreover, the invariant is computable from the graph. 

This generalises a similar result \cite{EilersRestorffRuiz-classfininf} by the second author, Eilers and Restorff, in which the extensions had to be \emph{full} (in the classical sense). 
Fullness is a huge limitation on the primitive ideal space of the $C^\ast$-algebras for which this is applicable.
In this new result there is \emph{no} limitation on the primitive ideal spaces, as long as the two-sided, closed ideal in question has a finite primitive ideal space.

Bentmann and Meyer introduced in \cite{BentmannMeyer-generalclass} a new way of classifying (separable) continuous $\mathsf Y$-$C^\ast$-algebras up to $KK(\mathsf Y)$-equivalence, 
in the case where the $\mathsf Y$-$C^\ast$-algebras have projective dimension at most 2. They did this by adding a particular obstruction class to the invariant. We will vaguely explain the idea behind this. 
For more details we refer the reader to \cite{BentmannMeyer-generalclass}.

Let $\mathcal M$ be the target category of our invariant $F$ and suppose that $\mathcal M$ is \emph{even}, i.e.~that $\mathcal M$ decomposes as $\mathcal M_+ \times \mathcal M_{-}$ with $\mathcal M_+[-1] = \mathcal M_{-}$ and $\mathcal M_{-}[-1]= \mathcal M_+$.
Note that this is the set-up for both of our invariants $\mathsf YK$ and $\mathbb OK$. 
If $F(\mathfrak A)$ has projective dimension at most 2, then $\mathfrak A$ induces an obstruction class $\delta(\mathfrak A) \in \Ext^2_{\mathcal M}(F(\mathfrak A), F(\mathfrak A)[-1])$.
Thus on the class of elements with projective dimension at most 2, we obtain an invariant $F\delta(\mathfrak A) = (F(\mathfrak A), \delta(\mathfrak A))$. 
An isomorphism $F\delta(\mathfrak A) \xrightarrow{\cong} F\delta(\mathfrak B)$ is an isomorphism $F(\mathfrak A) \xrightarrow{\cong} F(\mathfrak B)$ which intertwines the obstruction classes.

\begin{example}[The obstruction class for graph $C^\ast$-algebras]
 Let $\mathfrak A$ be a lower semi\-continouous $\mathsf X$-$C^\ast$-algebra which is a graph $C^\ast$-algebra $\mathfrak A = C^\ast(E)$, such that every $\mathsf X$-equivariant ideal $\mathfrak A(\mathsf U)$ 
 is invariant under the canonical gauge action $\gamma$ of $\mathbb T$.
 
 Lemma \ref{l:lsctocont} and Remark \ref{r:KKlsc} allows us to do everything done in \cite{BentmannMeyer-generalclass}, for lower semicontinuous $\mathsf X$-$C^\ast$-algebras and the invariant $\mathbb OK$, instead of
 continuous $\mathsf Y$-$C^\ast$-algebras and the invariant $\mathsf YK$. 
 Thus, as explained in \cite{BentmannMeyer-generalclass}, $\mathbb OK(\mathfrak A)$ has projective dimension at most 2, 
 and the obstruction class $\delta \in \Ext^2(\mathbb OK(\mathfrak A), \mathbb OK(\mathfrak A)[-1]) \cong \Ext^2(\mathbb OK_0(\mathfrak A), \mathbb OK_1(\mathfrak A))$ is the one induced by the Pimsner--Voiculescu exact sequence
 \[
  \mathbb OK_1(\mathfrak A) \hookrightarrow \mathbb OK_0(\mathfrak A \rtimes_\gamma \mathbb T) \xrightarrow{1-\gamma_\ast^{-1}} \mathbb OK_0(\mathfrak A \rtimes_\gamma \mathbb T) \twoheadrightarrow \mathbb OK_0(\mathfrak A).
 \]
 In \cite{BentmannMeyer-generalclass} the proof is carried out only for tight $\mathsf Y$-$C^\ast$-algebras, but the case for continuous $\mathsf Y$-$C^\ast$-algebras with all $\mathsf Y$-equivariant ideals gauge invariant has the exact same proof.
 
 Consider the graph $E=(E^0,E^1,r,s)$. 
 Every gauge invariant, two-sided, closed ideal in $\mathfrak A=C^\ast(E)$ corresponds to a hereditary and saturated set $H\subset E^0$. 
 Let $H(\mathsf U)$ denote the hereditary and saturated set corresponding to $\mathfrak A(\mathsf U)$ and write $H(\mathsf U)_{\reg}$ for $H(\mathsf U) \cap E^0_\reg$, where $E^0_\reg$ is the set of regular vertices. 
 By outsplitting every breaking vertex of all $H(\mathsf U)$, we may assume that every $H(\mathsf U)$ has no breaking vertices. 
 Then $E$ induces $\mathbb Z \mathbb O(X)$-modules $\mathbb OK_0(\mathbb Z^{E_\reg})$ and $\mathbb OK_0(\mathbb Z^E)$ by
 \[
  \mathbb OK_0(\mathbb Z^{E_\reg}) := \bigoplus_{\mathsf U \in \mathbb O(\mathsf X)} \mathbb Z^{H(\mathsf U)_\reg}, \qquad \mathbb OK_0(\mathbb Z^E) := \bigoplus_{\mathsf U \in \mathbb O(\mathsf X)} \mathbb Z^{H(\mathsf U)}, 
 \]
 where the module actions on $\mathbb OK_0(\mathbb Z^{E_\reg})$ and $\mathbb OK_0(\mathbb Z^E)$ are the obvious ones, i.e.~if $\mathsf U \subset \mathsf V$, 
 then $i_{\mathsf U}^{\mathsf V}$ acts on the direct summand corresponding to $\mathsf U$ by embedding into the direct summand corresponding to $\mathsf V$,
 and acts as zero on all other direct summands. This is well-defined since $H(\mathsf U) \subset H(\mathsf V)$ whenever $\mathsf U \subset \mathsf V$.
 By decomposing the vertices into $E^0_\reg \sqcup E^0_{\textrm{sing}}$, where $E^0_{\textrm{sing}}$ is the set of singular vertices, the adjacency matrix of $E$ has the form $\begin{lbmatrix}A & \alpha \\ \ast & \ast \end{lbmatrix}$.
 We get a homomorphism $\begin{lbmatrix} 1 - A^t \\ -\alpha^t \end{lbmatrix} \colon \mathbb OK_0(\mathbb Z^{E_\reg}) \to \mathbb OK_0(\mathbb Z^{E})$ in the obvious way, and we let $\mathbb OK_0(E)$ (resp.~$\mathbb OK_1(E))$ denote the cokernel (resp.~kernel)
 of this homomorphism.
 It is easily verified, as in the classical case when the $K$-theory of a graph $C^\ast$-algebra is computed, that we have a commutative diagram
 \[
 \xymatrix{
  \mathbb OK_1(E) \ar[d]^\cong \ar@{>->}[r] & \mathbb OK_0(\mathbb Z^{E_\reg}) \ar[d] \ar[r]^{\begin{lbmatrix} 1-A^t \\ -\alpha^t \end{lbmatrix}} & \mathbb OK_0(\mathbb Z^E) \ar[d] \ar@{->>}[r] & \mathbb OK_0(E) \ar[d]^\cong \\
  \mathbb OK_1(\mathfrak A) \ar@{>->}[r] & \mathbb OK_0(\mathfrak A \rtimes_\gamma \mathbb T) \ar[r]^{1-\gamma_\ast^{-1}} &  \mathbb OK_0(\mathfrak A \rtimes_\gamma \mathbb T) \ar@{->>}[r] & \mathbb OK_0(\mathfrak A).
  }
 \]
 In particular, the top row determines the obstruction class $\delta(\mathfrak A) \in \Ext^2(\mathbb OK(\mathfrak A),\mathbb OK(\mathfrak A)[-1])$ when identified with $\Ext^2(\mathbb OK_0(E), \mathbb OK_1(E))$.
\end{example}

We can now give our main application to graph $C^\ast$-algebras. Note that the invariant in question can easily be computed from the above method.

\begin{theorem}\label{t:graphclass}
Let $\mathfrak A_i$ be graph $C^\ast$-algebras for $i=1,2$. Suppose that $\mathfrak J_i$ is a purely infinite, two-sided, closed ideal in $\mathfrak A_i$, such that $\mathfrak A_i/\mathfrak J_i$ is AF. 
Suppose that $\mathsf X := \Prim \mathfrak J_1 \cong \Prim \mathfrak J_2$ is finite. 
Equip $\mathfrak J_i$, $\mathfrak A_i$ and $\mathfrak A_i/\mathfrak J_i$ with the induced $\mathsf X$-$C^\ast$-algebra structure as in Remark \ref{r:inducedact}.
If there exist isomorphisms
\[
\xymatrix{
\mathbb OK\delta(\mathfrak J_1) \ar[r] \ar[d]^\cong & \mathbb OK(\mathfrak A_1) \ar[r] \ar[d]^\cong & \mathbb OK^+(\mathfrak A_1/\mathfrak J_1) \ar[d]^\cong \\
\mathbb OK\delta(\mathfrak J_2) \ar[r] & \mathbb OK(\mathfrak A_2) \ar[r] & \mathbb OK^+(\mathfrak A_2/\mathfrak J_2)
}
\]
such that the diagram commutes, then $\mathfrak A_1 \otimes \mathbb K \cong \mathfrak A_2 \otimes \mathbb K$.
\end{theorem}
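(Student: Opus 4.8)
The plan is to deduce the statement directly from Theorem \ref{t:mainapp}, applied to the stabilised algebras $\mathfrak A_i \otimes \mathbb K$ with ideals $\mathfrak J_i \otimes \mathbb K$ and the invariant $F = \mathbb OK\delta$. First I would pass to stabilisations: $\mathfrak A_i \otimes \mathbb K$ is separable, nuclear and stable, $\mathfrak J_i \otimes \mathbb K$ is a purely infinite, two-sided, closed ideal with $\Prim(\mathfrak J_i \otimes \mathbb K) = \Prim \mathfrak J_i = \mathsf X$, and $(\mathfrak A_i/\mathfrak J_i) \otimes \mathbb K$ is again an AF algebra. Since $\mathbb OK$, its ordered version $\mathbb OK^+$ on the AF quotient, and the obstruction class $\delta$ are all invariant under stabilisation, the given commuting diagram of isomorphisms transports verbatim to the stabilised algebras, with $\mathbb OK\delta(\mathfrak J_i)$ in the role of $F(\mathfrak J_i)$. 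A conclusion $\mathfrak A_1 \otimes \mathbb K \cong \mathfrak A_2 \otimes \mathbb K$ from Theorem \ref{t:mainapp} is then precisely the assertion.

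Next I would verify the remaining hypotheses of Theorem \ref{t:mainapp}. The key structural input is that each defining graph $E_i$ satisfies condition (K): all cycles of $E_i$ lie in the subgraph generating $\mathfrak J_i$ because $\mathfrak A_i/\mathfrak J_i$ is AF, and pure infiniteness of $\mathfrak J_i$ forces every cycle-supporting vertex to lie on at least two return paths. Condition (K) yields two things at once, namely that $\mathfrak A_i$ (hence $\mathfrak A_i \otimes \mathbb K$) has real rank zero, and that every two-sided, closed ideal of $\mathfrak A_i$ and of $\mathfrak J_i$ is gauge invariant, so the obstruction-class description applies and $\mathbb OK\delta(\mathfrak J_i)$ is computable from the graph as in the Example. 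Lower semicontinuity of the induced $\mathsf X$-structures is automatic since $\mathsf X$ is finite, where finitely lower semicontinuous coincides with lower semicontinuous, and $\mathfrak J_i$ is tight as $\mathsf X = \Prim \mathfrak J_i$. For the tight corona algebra I would invoke Theorem \ref{t:WvN}: $\mathfrak J_i \otimes \mathbb K$ is stable, $\mathsf X$-$\sigma$-unital and tight, and every simple subquotient of a purely infinite $C^\ast$-algebra is purely infinite, so condition $(i)$ there holds and $\mathfrak J_i \otimes \mathbb K$ has a tight corona algebra.

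The substantive work is to show that $F = \mathbb OK\delta$ strongly classifies $\mathfrak J_1 \otimes \mathbb K$ and $\mathfrak J_2 \otimes \mathbb K$ and is finer than $\mathbb OK$. The latter is immediate, as an isomorphism of $\mathbb OK\delta$ restricts to an isomorphism of $\mathbb OK$. For strong classification I would compose two results. First, the stabilised $\mathfrak J_i$ are separable, nuclear, tight, purely infinite $C^\ast$-algebras over $\mathsf X$ in the bootstrap class $\mathcal B(\mathsf X)_{lsc}$ (each ideal $\mathfrak J_i(\mathsf U)$ is, up to stable isomorphism, a graph $C^\ast$-algebra, hence in $\mathcal B$, so Remark \ref{r:KKlsc} applies) and have projective dimension at most $2$; hence by Bentmann--Meyer \cite{BentmannMeyer-generalclass} any isomorphism $\mathbb OK\delta(\mathfrak J_1 \otimes \mathbb K) \xrightarrow{\cong} \mathbb OK\delta(\mathfrak J_2 \otimes \mathbb K)$ lifts to a $KK(\mathsf X)$-equivalence. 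Second, Kirchberg's classification of stable, nuclear, strongly purely infinite $C^\ast$-algebras over $\mathsf X$ in the bootstrap class \cite{Kirchberg-non-simple} upgrades this $KK(\mathsf X)$-equivalence to an $\mathsf X$-equivariant $\ast$-isomorphism $\mathfrak J_1 \otimes \mathbb K \to \mathfrak J_2 \otimes \mathbb K$, using that a nuclear purely infinite graph $C^\ast$-algebra is strongly purely infinite. This is exactly the strong classification required by Theorem \ref{t:mainapp}, and applying that theorem finishes the proof.

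The hard part will be this two-step strong classification of the ideals: one must confirm that the hypotheses of \cite{BentmannMeyer-generalclass} (projective dimension $\le 2$, membership in the bootstrap class, and the precise form of the obstruction class $\delta$ for the lower semicontinuous $\mathsf X$-algebra $\mathfrak J_i$) hold in the non-tight filtered setting arising here, and that Kirchberg's classification genuinely applies to these tight purely infinite algebras over the finite space $\mathsf X$. Establishing condition (K) and extracting from it both real rank zero and gauge invariance of all ideals is the technical graph-theoretic lemma underpinning everything else.
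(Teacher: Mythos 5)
Your proposal follows essentially the same route as the paper's proof: stabilise, verify the hypotheses of Theorem \ref{t:mainapp} (real rank zero and gauge invariance of ideals via condition (K), tight corona algebra via Theorem \ref{t:WvN}, $\mathbb OK\delta$ finer than $\mathbb OK$), and establish strong classification of the ideals by lifting an isomorphism of $\mathbb OK\delta$ to a $KK(\mathsf X)$-equivalence via Bentmann--Meyer and then to an $\mathsf X$-equivariant isomorphism via Kirchberg's classification. The technical point you flag as the hard part --- that Bentmann--Meyer must be applied in the non-tight, continuous setting over $\mathsf Y = \mathbb O(\mathsf X)$ with all ideals gauge invariant --- is exactly what the paper resolves via the category isomorphism of Lemma \ref{l:lsctocont} together with the observation that the Bentmann--Meyer argument goes through verbatim in that setting, so your outline matches the paper's proof with no genuine gap.
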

\begin{proof}
Without loss of generality we may assume that $\mathfrak A_i$ is stable. It is well known to experts of graph $C^\ast$-algebras, that $\mathfrak A_i$ will have real rank zero, since it contains a purely infinite, two-sided, closed ideal with an AF quotient.
Moreover, it is well-known that $\mathfrak A_i$ is separable and nuclear, and that $\mathfrak J_i$ is itself a graph $C^\ast$-algebra. By Theorem \ref{t:WvN} $\mathfrak J_i$ has a tight corona algebra. 
Clearly $\mathbb OK\delta$ is a finer classification invariant than $\mathbb OK$ so by Theorem \ref{t:mainapp} it suffices to show that $\mathbb OK\delta$ classifies $\mathfrak J_1$ and $\mathfrak J_2$ strongly.

It is proven in \cite[Section 5.3]{BentmannMeyer-generalclass}, that for tight $\mathsf Y$-$C^\ast$-algebras which are graph $C^\ast$-algebras,
an isomorphism of the invariant $\mathsf YK\delta$ lifts to a $KK(\mathsf Y)$-equivalence. 
The proof in the continuous (not necessarily tight) case is identical, if we furthermore assume that all $\mathsf Y$-equivariant ideals are gauge invariant. This is automatic in the tight case, since we assume that $\mathsf Y$ is finite.

Let $\mathsf Y= \mathbb O(\mathsf X)$ and $F$ be as in Lemma \ref{l:lsctocont}. Then $F(\mathfrak J_i)$ are continuous $\mathsf Y$-$C^\ast$-algebras which are graph $C^\ast$-algebras in which every two-sided, closed ideal is gauge invariant.
Thus the isomorphism 
\[
 \mathbb OK\delta(\mathfrak J_1) = \mathsf YK\delta(F(\mathfrak J_1)) \xrightarrow{\cong} \mathsf YK\delta(F(\mathfrak J_2)) = \mathbb OK\delta(\mathfrak J_2)
\]
lifts to a $KK(\mathsf Y)$-equivalence $\alpha\in KK(\mathsf Y; F(\mathfrak J_1), F(\mathfrak J_2))$. 
Since $KK(\mathsf Y; F(\mathfrak C), F(\mathfrak D))=KK(\mathsf X; \mathfrak C,\mathfrak D)$ for all separable, lower semicontinouous $\mathsf X$-$C^\ast$-algebras $\mathfrak C$ and $\mathfrak D$, 
it follows that $\alpha$ is also a $KK(\mathsf X)$-equivalence which lifts $\mathbb OK\delta(\mathfrak J_1) \xrightarrow{\cong} \mathbb OK\delta(\mathfrak J_2)$.
It follows from Kirchberg's classification \cite{Kirchberg-non-simple} that there is an isomorphism $\phi$ of $\mathsf X$-$C^\ast$-algebras which lifts $\alpha$.
Hence $\mathbb OK\delta$ classifies $\mathfrak J_1$ and $\mathfrak J_2$ strongly.
\end{proof}

\begin{remark}[Classification with filtered $K$-theory]
So far, the classification of non-simple graph $C^\ast$-algebras has usually used the invariant \emph{ordered filtered $K$-theory} $FK^+_\mathsf{Y}$ (also known as ordered filtrated $K$-theory).
Let $\mathfrak A_i$ and $\mathfrak J_i$ be as in Theorem \ref{t:graphclass} such that $\mathsf Y := \Prim \mathfrak A_1 \cong \Prim \mathfrak A_2$ is \emph{finite}.
We can obtain classification with ordered filtered $K$-theory \emph{if} we furthermore assume that $\mathfrak J_1$ and $\mathfrak J_2$ are classified strongly by filtered $K$-theory $FK_\mathsf{X}$.
In fact, suppose that $FK_\mathsf{Y}^+(\mathfrak A_1) \cong FK_{\mathsf Y}^+(\mathfrak A_2)$.
Since the ordered filtered $K$-theory $FK_\mathsf{Y}^+$ contains \emph{all} natural transformations of the ordered $K$-theory of all subquotients, there is an induced diagram
\[
\xymatrix{
FK_{\mathsf X}(\mathfrak J_1) \ar[r] \ar[d]^\cong & \mathbb OK(\mathfrak A_1) \ar[r] \ar[d]^\cong & \mathbb OK^+(\mathfrak A_1/\mathfrak J_1) \ar[d]^\cong \\
FK_\mathsf{X}(\mathfrak J_2) \ar[r] & \mathbb OK(\mathfrak A_2) \ar[r] & \mathbb OK^+(\mathfrak A_2/\mathfrak J_2),
}
\]
where $\mathbb OK$ is the invariant on lower semicontinuous $\mathsf X$-$C^\ast$-algebras (not $\mathsf Y$-$C^\ast$-algebras).
Since $FK_\mathsf{X}$ is a finer classification invariant than $\mathbb OK$, it follows from Theorem \ref{t:mainapp} that $\mathfrak A_1 \otimes \mathbb K \cong \mathfrak A_2 \otimes \mathbb K$.
\end{remark}

\subsection{A final remark}

It is possible to obtain classification results similar to Theorem \ref{t:mainapp} with much more general, two-sided, closed ideals than purely infinite ones and not necessarily AF quotients.
To obtain these one should apply Corollary \ref{c:nucquot} instead of Theorem \ref{t:classtightcorona}, and thus require that the extensions are $\mathsf X$-full.
If one wants to use the same method, one should have an invariant $G$, and a universal coefficient theorem for separable, lower semicontinuous $\mathsf X$-$C^\ast$-algebras with respect to $G$.
Moreover, one needs to require that the homomorphism induced by the $KK^1(\mathsf X)$-class of the extension vanishes, so that $G_\Delta$ determines the $KK^1(\mathsf X)$-class uniquely, as done in the proof of Proposition \ref{p:KK1class}.
In the classical case where $G = K_\ast$, which was done by Rørdam \cite{Rordam-classsixterm}, one did not need vanishing of this homomorphism. 
However, Rørdam uses that the kernel and cokernel of a map on the invariant can be realised by $C^\ast$-algebras. It is not obvious whether or not this can be done with the invariant $\mathbb OK$.

\subsection*{Acknowledgement}

Some of the work in this paper was carried out while the first named author visited the University of Hawai'i at Hilo. 
He would like to thank them for their hospitality during his stay. He would also like to thank his Ph.D.~advisor Søren Eilers for some helpful comments.  
The second named author thanks the Department of Mathematical Science at the University of Copenhagen for providing excellent working environment during his visit in Fall 2013.


\begin{thebibliography}{ERR13b}

\bibitem[AAP86]{AkemannAndersonPedersen-excision}
C.~A. Akemann, J. Anderson, and G.~K. Pedersen.
\newblock Excising states of {$C^\ast$}-algebras.
\newblock {\em Canad. J. Math.}, 38(5):1239--1260, 1986.

\bibitem[Arv77]{Arveson-extensions}
W. Arveson.
\newblock Notes on extensions of {$C^{*}$}-algebras.
\newblock {\em Duke Math. J.}, 44(2):329--355, 1977.

\bibitem[Bla06]{Blackadar-book-opalg}
B. Blackadar.
\newblock {\em Operator algebras}, volume 122 of {\em Encyclopaedia of
  Mathematical Sciences}.
\newblock Springer-Verlag, Berlin, 2006.
\newblock Theory of $C{^{*}}$-algebras and von Neumann algebras, Operator
  Algebras and Non-commutative Geometry, III.

\bibitem[BDF73]{BrownDouglasFillmore-unitaryeq}
L.~G. Brown, R.~G. Douglas, and P.~A. Fillmore.
\newblock Unitary equivalence modulo the compact operators and extensions of
  {$C^{\ast} $}-algebras.
\newblock In {\em Proceedings of a {C}onference on {O}perator {T}heory
  ({D}alhousie {U}niv., {H}alifax, {N}.{S}., 1973)}:58--128. Lecture
  Notes in Math., Vol. 345. Springer, Berlin, 1973.

\bibitem[BDF77]{BrownDouglasFillmore-Ext}
L.~G. Brown, R.~G. Douglas, and P.~A. Fillmore.
\newblock Extensions of {$C\sp*$}-algebras and {$K$}-homology.
\newblock {\em Ann. of Math. (2)}, 105(2):265--324, 1977.

\bibitem[Ben13]{Bentmann-vanishingbdry}
R. Bentmann.
\newblock Kirchberg {X}-algebras with real rank zero and intermediate
  cancellation.
\newblock J. Noncommut. Geom. 8(4):1061--1081, 2014. 

\bibitem[BM14]{BentmannMeyer-generalclass}
R. Bentmann and R. Meyer.
\newblock A more general method to classify up to equivariant {KK}-equivalence.
\newblock {\em arXiv:1405.6512}, 2014.

\bibitem[Ber71]{Berg-WvN}
I.~D. Berg.
\newblock An extension of the {W}eyl-von {N}eumann theorem to normal operators.
\newblock {\em Trans. Amer. Math. Soc.}, 160:365--371, 1971.

\bibitem[BO08]{BrownOzawa-book-approx}
N.~P. Brown and N. Ozawa.
\newblock {\em {$C^*$}-algebras and finite-dimensional approximations},
  volume~88 of {\em Graduate Studies in Mathematics}.
\newblock American Mathematical Society, Providence, RI, 2008.

\bibitem[CE76]{ChoiEffros-lifting}
M.~D. Choi and E.~G. Effros.
\newblock The completely positive lifting problem for {$C\sp*$}-algebras.
\newblock {\em Ann. of Math. (2)}, 104(3):585--609, 1976.

\bibitem[Dad97]{Dadarlat-qdmorphisms}
M. Dadarlat.
\newblock Quasidiagonal morphisms and homotopy.
\newblock {\em J. Funct. Anal.}, 151(1):213--233, 1997.

\bibitem[DE02]{DadarlatEilers-classification}
M. Dadarlat and S. Eilers.
\newblock On the classification of nuclear {$C^*$}-algebras.
\newblock {\em Proc. London Math. Soc. (3)}, 85(1):168--210, 2002.

\bibitem[DM12]{DadarlatMeyer-E-theory}
M. Dadarlat and R. Meyer.
\newblock E-theory for {${\rm C}^*$}-algebras over topological spaces.
\newblock {\em J. Funct. Anal.}, 263(1):216--247, 2012.

\bibitem[Ell76]{Elliott-AFclass}
G.~A. Elliott.
\newblock On the classification of inductive limits of sequences of semisimple
  finite-dimensional algebras.
\newblock {\em J. Algebra}, 38(1):29--44, 1976.

\bibitem[EK01]{ElliottKucerovsky-extensions}
G.~A. Elliott and D. Kucerovsky.
\newblock An abstract {V}oiculescu-{B}rown-{D}ouglas-{F}illmore absorption
  theorem.
\newblock {\em Pacific J. Math.}, 198(2):385--409, 2001.

\bibitem[ERR09]{EilersRestorffRuiz-classext}
S. Eilers, G. Restorff, and E. Ruiz.
\newblock Classification of extensions of classifiable {$C^\ast$}-algebras.
\newblock {\em Adv. Math.}, 222(6):2153--2172, 2009.

\bibitem[ERR13a]{EilersRestorffRuiz-four}
S. Eilers, G. Restorff, and E. Ruiz.
\newblock Classification of graph {$C^*$}-algebras with no more than four
  primitive ideals.
\newblock In {\em Operator algebra and dynamics}, volume~58 of {\em Springer
  Proc. Math. Stat.}:89--129. Springer, Heidelberg, 2013.

\bibitem[ERR13b]{EilersRestorffRuiz-classfininf}
S. Eilers, G. Restorff, and E. Ruiz.
\newblock Classifying {$C^*$}-algebras with both finite and infinite
  subquotients.
\newblock {\em J. Funct. Anal.}, 265(3):449--468, 2013.

\bibitem[Gab14]{Gabe-nonunitalext}
J. Gabe.
\newblock A note on non-unital absorbing extensions.
\newblock {\em arXiv:1408.4033}, 2014.

\bibitem[Gab15]{Gabe-cplifting}
J. Gabe.
\newblock Lifting theorems for completely positive maps.
\newblock {\em arXiv:1508.00389}.

\bibitem[Hal70]{Halmos-tenproblems}
P.~R. Halmos.
\newblock Ten problems in {H}ilbert space.
\newblock {\em Bull. Amer. Math. Soc.}, 76:887--933, 1970.

\bibitem[HR98]{HjelmborgRordam-stability}
J.~v.~B. Hjelmborg and M. R{\o}rdam.
\newblock On stability of {$C^*$}-algebras.
\newblock {\em J. Funct. Anal.}, 155(1):153--170, 1998.

\bibitem[Kas80a]{Kasparov-Stinespring}
G.~G. Kasparov.
\newblock Hilbert {$C^{\ast} $}-modules: theorems of {S}tinespring and
  {V}oiculescu.
\newblock {\em J. Operator Theory}, 4(1):133--150, 1980.

\bibitem[Kas80b]{Kasparov-KKExt}
G.~G. Kasparov.
\newblock The operator {$K$}-functor and extensions of {$C^{\ast} $}-algebras.
\newblock {\em Izv. Akad. Nauk SSSR Ser. Mat.}, 44(3):571--636, 719, 1980.

\bibitem[Kir94]{Kirchberg-simple}
E. Kirchberg.
\newblock The classification of purely infinite {$C^\ast$}-algebras using
  {K}asparov's theory.
\newblock {\em Preprint}, 1994.

\bibitem[Kir00]{Kirchberg-non-simple}
E. Kirchberg.
\newblock Das nicht-kommutative {M}ichael-{A}uswahlprinzip und die
  {K}lassifikation nicht-einfacher {A}lgebren.
\newblock In {\em {$C^*$}-algebras ({M}\"unster, 1999)}:92--141.
  Springer, Berlin, 2000.

\bibitem[Kir03]{Kirchberg-permanence}
E. Kirchberg.
\newblock On permanence properties of strongly purely infinite
  {$C^\ast$}-algebras.
\newblock {\em Preprint}, 2003.

\bibitem[Kir06]{Kirchberg-Abel}
E. Kirchberg.
\newblock Central sequences in {$C^*$}-algebras and strongly purely infinite
  algebras.
\newblock In {\em Operator {A}lgebras: {T}he {A}bel {S}ymposium 2004}, volume~1
  of {\em Abel Symp.}:175--231. Springer, Berlin, 2006.

\bibitem[KR02]{KirchbergRordam-absorbingOinfty}
E. Kirchberg and M. R{\o}rdam.
\newblock Infinite non-simple {$C^*$}-algebras: absorbing the {C}untz algebras
  {$\mathscr O_\infty$}.
\newblock {\em Adv. Math.}, 167(2):195--264, 2002.

\bibitem[KR05]{KirchbergRordam-zero}
E. Kirchberg and M. R{\o}rdam.
\newblock Purely infinite {$C^*$}-algebras: ideal-preserving zero homotopies.
\newblock {\em Geom. Funct. Anal.}, 15(2):377--415, 2005.

\bibitem[KR14]{KirchbergRordam-cschar}
E. Kirchberg and M. R{\o}rdam.
\newblock When central sequence {$C^\ast$}-algebras have characters.
\newblock {\em arXiv:1409.1395v2}, 2014.

\bibitem[Kuc04]{Kucerovsky-ideals}
D. Kucerovsky.
\newblock Extensions contained in ideals.
\newblock {\em Trans. Amer. Math. Soc.}, 356(3):1025--1043 (electronic), 2004.

\bibitem[Kuc06]{Kucerovsky-largeFredholm}
D. Kucerovsky.
\newblock Large {F}redholm triples.
\newblock {\em J. Funct. Anal.}, 236(2):395--408, 2006.

\bibitem[KN06a]{KucerovskyNg-Sregularity}
D.~Kucerovsky and P.~W. Ng.
\newblock {$S$}-regularity and the corona factorization property.
\newblock {\em Math. Scand.}, 99(2):204--216, 2006.

\bibitem[KN06b]{KucerovskyNg-corona}
D. Kucerovsky and P.~W. Ng.
\newblock The corona factorization property and approximate unitary
  equivalence.
\newblock {\em Houston J. Math.}, 32(2):531--550 (electronic), 2006.

\bibitem[Lan95]{Lance-book-Hilbertmodules}
E.~C. Lance.
\newblock {\em Hilbert {$C^*$}-modules}, volume 210 of {\em London Mathematical
  Society Lecture Note Series}.
\newblock Cambridge University Press, Cambridge, 1995.
\newblock A toolkit for operator algebraists.

\bibitem[Lin91]{Lin-contscalecorona}
H. Lin.
\newblock Simple {$C^*$}-algebras with continuous scales and simple corona
  algebras.
\newblock {\em Proc. Amer. Math. Soc.}, 112(3):871--880, 1991.

\bibitem[MN09]{MeyerNest-bootstrap}
R. Meyer and R. Nest.
\newblock {$C^*$}-algebras over topological spaces: the bootstrap class.
\newblock {\em M\"unster J. Math.}, 2:215--252, 2009.

\bibitem[MP84]{MingoPhillips-Hilbertmodules}
J.~A. Mingo and W.~J. Phillips.
\newblock Equivariant triviality theorems for {H}ilbert {$C^{\ast} $}-modules.
\newblock {\em Proc. Amer. Math. Soc.}, 91(2):225--230, 1984.

\bibitem[Rob11]{Robert-nucdim}
L. Robert.
\newblock Nuclear dimension and {$n$}-comparison.
\newblock {\em M\"unster J. Math.}, 4:65--71, 2011.

\bibitem[R{\o}r91]{Rordam-multialg}
M. R{\o}rdam.
\newblock Ideals in the multiplier algebra of a stable {$C^\ast$}-algebra.
\newblock {\em J. Operator Theory}, 25(2):283--298, 1991.

\bibitem[R{\o}r97]{Rordam-classsixterm}
M. R{\o}rdam.
\newblock Classification of extensions of certain {$C^*$}-algebras by their six
  term exact sequences in {$K$}-theory.
\newblock {\em Math. Ann.}, 308(1):93--117, 1997.

\bibitem[R{\o}r02]{Rordam-book-classification}
M. R{\o}rdam.
\newblock Classification of nuclear, simple {$C^*$}-algebras.
\newblock In {\em Classification of nuclear {$C^*$}-algebras. {E}ntropy in
  operator algebras}, volume 126 of {\em Encyclopaedia Math. Sci.}, pages
  1--145. Springer, Berlin, 2002.

\bibitem[RS87]{RosenbergSchochet-UCT}
J. Rosenberg and C. Schochet.
\newblock The {K}\"unneth theorem and the universal coefficient theorem for
  {K}asparov's generalized {$K$}-functor.
\newblock {\em Duke Math. J.}, 55(2):431--474, 1987.

\bibitem[Sik72]{Sikonia-WvN}
W. Sikonia.
\newblock The von {N}eumann converse of {W}eyl's theorem.
\newblock {\em Indiana Univ. Math. J.}, 21:121--124, 1971/1972.

\bibitem[Sti55]{Stinespring-positive}
W.~F. Stinespring.
\newblock Positive functions on {$C^*$}-algebras.
\newblock {\em Proc. Amer. Math. Soc.}, 6:211--216, 1955.

\bibitem[Tho01]{Thomsen-absorbing}
K. Thomsen.
\newblock On absorbing extensions.
\newblock {\em Proc. Amer. Math. Soc.}, 129(5):1409--1417 (electronic), 2001.

\bibitem[Voi76]{Voiculescu-WvN}
D. Voiculescu.
\newblock A non-commutative {W}eyl-von {N}eumann theorem.
\newblock {\em Rev. Roumaine Math. Pures Appl.}, 21(1):97--113, 1976.

\bibitem[vN35]{vonNeumann-WvN}
J. von Neumann.
\newblock Charakterisierung des {S}pektrums eines {I}ntegraloperators.
\newblock {\em Act. Sci. Ind.}, 229(XIII):3--20, 1935.

\bibitem[Wey09]{Weyl-WvN}
H. Weyl.
\newblock Über beschrankte quadratischen {F}ormenderen {D}ifferenz vollstetig
  ist.
\newblock {\em Rend. Circ. Mat. Palermo}, 27:373--392, 1909.

\end{thebibliography}
\end{document}